\providecommand{\U}[1]{\protect\rule{.1in}{.1in}}
\theoremstyle{definition}
\newtheorem{theo}{Theorem}[section]
\newenvironment{theorem}[1][]
{\begin{theo}[#1]\begin{leftbar}}
{\end{leftbar}\end{theo}}
\newtheorem{lem}[theo]{Lemma}
\newenvironment{lemma}[1][]
{\begin{lem}[#1]\begin{leftbar}}
{\end{leftbar}\end{lem}}
\newtheorem{prop}[theo]{Proposition}
\newenvironment{proposition}[1][]
{\begin{prop}[#1]\begin{leftbar}}
{\end{leftbar}\end{prop}}
\newtheorem{defi}[theo]{Definition}
\newenvironment{definition}[1][]
{\begin{defi}[#1]\begin{leftbar}}
{\end{leftbar}\end{defi}}
\newtheorem{remk}[theo]{Remark}
\newenvironment{remark}[1][]
{\begin{remk}[#1]\begin{leftbar}}
{\end{leftbar}\end{remk}}
\newtheorem{coro}[theo]{Corollary}
\newenvironment{corollary}[1][]
{\begin{coro}[#1]\begin{leftbar}}
{\end{leftbar}\end{coro}}
\newtheorem{conv}[theo]{Convention}
\newtheorem{quest}[theo]{TODO}
\newtheorem{warn}[theo]{Warning}
\newtheorem{conj}[theo]{Conjecture}
\newtheorem{exmp}[theo]{Example}
\newenvironment{example}[1][]
{\begin{exmp}[#1]\begin{leftbar}}
{\end{leftbar}\end{exmp}}
\newenvironment{statement}{\begin{quote}}{\end{quote}}
\newenvironment{verlong}{}{}
\newenvironment{vershort}{}{}
\let\sumnonlimits\sum
\let\prodnonlimits\prod
\let\bigcapnonlimits\bigcap
\let\bigcupnonlimits\bigcup
\renewcommand{\sum}{\sumnonlimits\limits}
\renewcommand{\prod}{\prodnonlimits\limits}
\renewcommand{\bigcap}{\bigcapnonlimits\limits}
\renewcommand{\bigcup}{\bigcupnonlimits\limits}
\begin{document}

\title{Generalized Whitney formulas for broken circuits in ambigraphs and
matroids\thanks{This article was formerly titled \textquotedblleft A note on
non-broken-circuit sets and the chromatic polynomial\textquotedblright.}}
\author{Darij Grinberg}
\date{version 2.1, \today}
\maketitle

\begin{abstract}
\textbf{Abstract.} We explore several generalizations of Whitney's theorem --
a classical formula for the chromatic polynomial of a graph. Following
Stanley, we replace the chromatic polynomial by the chromatic symmetric
function. Following Dohmen and Trinks, we exclude not all but only an
(arbitrarily selected) set of broken circuits, or even weigh these broken
circuits with weight monomials instead of excluding them. Following Crew and
Spirkl, we put weights on the vertices of the graph. Following Gebhard and
Sagan, we lift the chromatic symmetric function to noncommuting variables. In
addition, we replace the graph by an \textquotedblleft
ambigraph\textquotedblright, an apparently new concept that includes both
hypergraphs and multigraphs as particular cases.

We show that Whitney's formula endures all these generalizations, and a fairly
simple sign-reversing involution can be used to prove it in each setting.
Furthermore, if we restrict ourselves to the chromatic polynomial, then the
graph can be replaced by a matroid.

We discuss an application to transitive digraphs (i.e., posets), and reprove
an alternating-sum identity by Dahlberg and van Willigenburg.

\end{abstract}
\tableofcontents

\section*{***}

The purpose of this paper is to demonstrate several generalizations of
Whitney's Broken-Circuit theorem \cite[\S 7]{Whitne32} -- a classical formula
for the chromatic polynomial of a graph $\left(  V,E\right)  $ as an
alternating sum over subsets of $E$ that contain no broken circuits. We shall
generalize this formula in the following directions:

\begin{itemize}
\item Instead of summing over the sets that contain no broken circuits, we can
sum over the sets that are \textquotedblleft$\mathfrak{K}$%
-free\textquotedblright\ (i.e., contain no element of $\mathfrak{K}$ as a
subset), where $\mathfrak{K}$ is some fixed set of broken circuits (in
particular, $\mathfrak{K}$ can be $\varnothing$, yielding another well-known
formula for the chromatic polynomial). In other words, instead of excluding
all broken circuits, we can choose to exclude any given set of broken circuits.

This generalization has already been proposed by Dohmen and Trinks in
\cite[\S 3.1]{DohTri14}; however, we give a new and self-contained proof that
does not rely on Whitney's original formula.

\item Even more generally, instead of summing over $\mathfrak{K}$-free
subsets, we can form a weighted sum over all subsets, where the weight depends
on the broken circuits contained in the subset.

\item We can replace the graph by an \emph{ambigraph}: a more general notion
in which the edges are replaced by packages of edges (\textquotedblleft
edgeries\textquotedblright), and a proper coloring has to leave at least one
edge in each such package dichromatic (i.e., color its two endpoints
differently). The concept of ambigraph generalizes both multigraphs and
hypergraphs. We will discuss this concept in Sections \ref{sec.ambi} and
\ref{sec.weigh}.

\item Analogous (and more general) results hold for Stanley's chromatic
symmetric functions \cite{Stanley-chsym} along with two of their more recent
variants: the weighted chromatic symmetric functions of Crew and Spirkl
\cite{CreSpi19} and the noncommutative chromatic symmetric functions of
Gebhard and Sagan \cite{GebSag01}. The latter variants will be studied (and
generalized to ambigraphs) in Section \ref{sec.weigh}.

\item Analogous (and more general) results hold for matroids instead of
graphs. These will be discussed in Section \ref{sec.matroid}.
\end{itemize}

Note that, to my knowledge, the last two generalizations cannot be combined:
Unlike graphs, matroids do not seem to have a well-defined notion of a
chromatic symmetric function.

We will explore these generalizations in the work that follows. We shall also
use them to prove an apparently new formula for the chromatic polynomial of a
graph obtained from a transitive digraph by forgetting the orientations of the
edges (Proposition \ref{prop.digraph.2pf-chrom}). This latter formula was
suggested to me as a conjecture by Alexander Postnikov, during a discussion on
hyperplane arrangements on a space with a bilinear form; it is this formula
which gave rise to this whole paper. The topic of hyperplane arrangements,
however, will not be broached here.

As a further application, we will generalize and reprove an alternating-sum
identity for chromatic polynomials found by Dahlberg and van Willigenburg
(Section \ref{sec.alt-sum}), as well as an analogous identity for
characteristic polynomials of matroids (Subsection
\ref{subsec.matroid.alt-sum}).

\subsection*{Acknowledgments}

I thank Alexander Postnikov and Richard P. Stanley for discussions on
hyperplane arrangements that led to the results described here.

\section{\label{sec.defs-and-main}Definitions and a main result}

\subsection{Graphs and colorings}

We begin by recalling some basic features of finite graphs. Let us start with
the definition of a graph that we shall be using:

\begin{definition}
\label{def.graph}\textbf{(a)} If $V$ is any set, then $\mathcal{P}\left(
V\right)  $ will denote the powerset of $V$. This is the set of all subsets of
$V$.

\textbf{(b)} If $V$ is any set, then $\dbinom{V}{2}$ will denote the set of
all $2$-element subsets of $V$. In other words, if $V$ is any set, then we set%
\begin{align*}
\dbinom{V}{2}  &  =\left\{  S\in\mathcal{P}\left(  V\right)  \ \mid
\ \left\vert S\right\vert =2\right\} \\
&  =\left\{  \left\{  s,t\right\}  \ \mid\ s\in V,\ t\in V,\ s\neq t\right\}
.
\end{align*}

\textbf{(c)} A \emph{graph} means a pair $\left(  V,E\right)  $, where $V$ is
a set, and where $E$ is a subset of $\dbinom{V}{2}$. A graph $\left(
V,E\right)  $ is said to be \emph{finite} if the set $V$ is finite. If
$G=\left(  V,E\right)  $ is a graph, then the elements of $V$ are called the
\emph{vertices} of the graph $G$, while the elements of $E$ are called the
\emph{edges} of the graph $G$. If $e$ is an edge of a graph $G$, then the two
elements of $e$ are called the \emph{endpoints} of the edge $e$. If
$e=\left\{  s,t\right\}  $ is an edge of a graph $G$, then we say that the
edge $e$ \emph{connects the vertices }$s$ \emph{and }$t$ of $G$.
\end{definition}

Comparing our definition of a graph with some of the other definitions used in
the literature, we thus observe that our graphs are undirected (i.e., their
edges are sets, not pairs), loopless (i.e., the two endpoints of an edge must
always be distinct), edge-unlabelled (i.e., their edges are just $2$-element
sets of vertices, rather than objects with \textquotedblleft their own
identity\textquotedblright), and do not have multiple edges (or, more
precisely, there is no notion of several edges connecting two vertices, since
the edges form a set, not a multiset, and do not have labels). Such graphs are
commonly known as \emph{simple graphs}.

\begin{definition}
\label{def.coloring}Let $G=\left(  V,E\right)  $ be a graph. Let $X$ be a set.

\textbf{(a)} An $X$\emph{-coloring} of $G$ is defined to mean a map
$V\rightarrow X$.

\textbf{(b)} An $X$-coloring $f$ of $G$ is said to be \emph{proper} if every
edge $\left\{  s,t\right\}  \in E$ satisfies $f\left(  s\right)  \neq f\left(
t\right)  $.
\end{definition}

If $f$ is an $X$-coloring of a graph $G=\left(  V,E\right)  $, then the value
$f\left(  v\right)  $ for a given vertex $v\in V$ is called the \emph{color}
of this vertex $v$ under the coloring $f$. We shall not use this terminology
here, but we are mentioning it since it allows for a rather intuitive mental
model and explains the word \textquotedblleft coloring\textquotedblright. An
$X$-coloring of $G$ is then proper if and only if each edge of $G$ has two
endpoints of different colors.

\subsection{Symmetric functions}

We shall now briefly introduce the notion of symmetric functions. We shall not
use any nontrivial results about symmetric functions; we will merely need some
notations.\footnote{For an introduction to symmetric functions, see any of
\cite[Chapter 7]{Stanley-EC2}, \cite[Chapter 9]{Martin22} and \cite[Chapter
2]{Reiner} (and a variety of other texts).}

In the following, $\mathbb{N}$ means the set $\left\{  0,1,2,\ldots\right\}
$. Also, $\mathbb{N}_{+}$ shall mean the set $\left\{  1,2,3,\ldots\right\}  $.

A \emph{partition} will mean a sequence $\left(  \lambda_{1},\lambda
_{2},\lambda_{3},\ldots\right)  \in\mathbb{N}^{\infty}$ of nonnegative
integers such that $\lambda_{1}\geq\lambda_{2}\geq\lambda_{3}\geq\cdots$ and
such that all sufficiently high integers $i\geq1$ satisfy $\lambda_{i}=0$. If
$\lambda=\left(  \lambda_{1},\lambda_{2},\lambda_{3},\ldots\right)  $ is a
partition, and if a positive integer $n$ is such that all integers $i\geq n$
satisfy $\lambda_{i}=0$, then we shall identify the partition $\lambda$ with
the finite sequence $\left(  \lambda_{1},\lambda_{2},\ldots,\lambda
_{n-1}\right)  $. Thus, for example, the sequences $\left(  3,1\right)  $ and
$\left(  3,1,0\right)  $ and the partition $\left(  3,1,0,0,0,\ldots\right)  $
are all identified. Every weakly decreasing finite list of positive integers
thus is identified with a unique partition.

Let $\mathbf{k}$ be a commutative ring with unity. We shall keep $\mathbf{k}$
fixed throughout the paper. The reader will not be missing out on anything if
she assumes that $\mathbf{k}=\mathbb{Z}$.

We consider the $\mathbf{k}$-algebra $\mathbf{k}\left[  \left[  x_{1}%
,x_{2},x_{3},\ldots\right]  \right]  $ of (commutative) power series in
countably many distinct indeterminates $x_{1},x_{2},x_{3},\ldots$ over
$\mathbf{k}$. It is a topological $\mathbf{k}$-algebra\footnote{See
\cite[Section 2.6]{Reiner} or \cite[\S 2]{Grinbe16} for the definition of its
topology. This topology makes sure that a sequence $\left(  P_{n}\right)
_{n\in\mathbb{N}}$ of power series converges to some power series $P$ if and
only if, for every monomial $\mathfrak{m}$, all sufficiently high
$n\in\mathbb{N}$ satisfy%
\[
\left(  \text{the }\mathfrak{m}\text{-coefficient of }P_{n}\right)  =\left(
\text{the }\mathfrak{m}\text{-coefficient of }P\right)
\]
(where the meaning of \textquotedblleft sufficiently high\textquotedblright%
\ can depend on the $\mathfrak{m}$).}. A power series $P\in\mathbf{k}\left[
\left[  x_{1},x_{2},x_{3},\ldots\right]  \right]  $ is said to be
\emph{bounded-degree} if there exists an $N\in\mathbb{N}$ such that every
monomial of degree $>N$ appears with coefficient $0$ in $P$. A power series
$P\in\mathbf{k}\left[  \left[  x_{1},x_{2},x_{3},\ldots\right]  \right]  $ is
said to be \emph{symmetric} if and only if $P$ is invariant under any
permutation of the indeterminates. We let $\Lambda$ be the subset of
$\mathbf{k}\left[  \left[  x_{1},x_{2},x_{3},\ldots\right]  \right]  $
consisting of all symmetric bounded-degree power series $P\in\mathbf{k}\left[
\left[  x_{1},x_{2},x_{3},\ldots\right]  \right]  $. This subset $\Lambda$ is
a $\mathbf{k}$-subalgebra of $\mathbf{k}\left[  \left[  x_{1},x_{2}%
,x_{3},\ldots\right]  \right]  $, and is called the $\mathbf{k}$\emph{-algebra
of symmetric functions} over $\mathbf{k}$.

We shall now define the few families of symmetric functions that we will be
concerned with in this work. The first are the \emph{power-sum symmetric
functions}:

\begin{definition}
\label{def.powersum}Let $n$ be a positive integer. We define a power series
$p_{n}\in\mathbf{k}\left[  \left[  x_{1},x_{2},x_{3},\ldots\right]  \right]  $
by%
\begin{equation}
p_{n}=x_{1}^{n}+x_{2}^{n}+x_{3}^{n}+\cdots=\sum_{j\geq1}x_{j}^{n}.
\label{eq.def.powersum.pn}%
\end{equation}
This power series $p_{n}$ lies in $\Lambda$, and is called the $n$\emph{-th
power-sum symmetric function}.

We also set $p_{0}=1\in\Lambda$. Thus, $p_{n}$ is defined not only for all
positive integers $n$, but also for all $n\in\mathbb{N}$.
\end{definition}

\begin{definition}
\label{def.powersum2}Let $\lambda=\left(  \lambda_{1},\lambda_{2},\lambda
_{3},\ldots\right)  $ be a partition. We define a power series $p_{\lambda}%
\in\mathbf{k}\left[  \left[  x_{1},x_{2},x_{3},\ldots\right]  \right]  $ by%
\[
p_{\lambda}=\prod_{i\geq1}p_{\lambda_{i}}.
\]
This is well-defined, because the infinite product $\prod_{i\geq1}%
p_{\lambda_{i}}$ converges (indeed, all but finitely many of its factors are
$1$ (because every sufficiently high integer $i$ satisfies $\lambda_{i}=0$ and
thus $p_{\lambda_{i}}=p_{0}=1$)).
\end{definition}

We notice that every partition $\lambda=\left(  \lambda_{1},\lambda_{2}%
,\ldots,\lambda_{k}\right)  $ (written as a finite list of nonnegative
integers) satisfies%
\begin{equation}
p_{\lambda}=p_{\lambda_{1}}p_{\lambda_{2}}\cdots p_{\lambda_{k}}.
\label{eq.def.powersum2.finite-expression}%
\end{equation}

\subsection{Chromatic symmetric functions}

The next symmetric functions we introduce are the actual subject of this work;
they are the \emph{chromatic symmetric functions} and have been introduced by
Stanley in \cite[Definition 2.1]{Stanley-chsym}:

\Needspace{4cm}

\begin{definition}
\label{def.chromsym}Let $G=\left(  V,E\right)  $ be a finite graph.

\textbf{(a)} For every $\mathbb{N}_{+}$-coloring $f:V\rightarrow\mathbb{N}%
_{+}$ of $G$, we let $\mathbf{x}_{f}$ denote the monomial $\prod_{v\in
V}x_{f\left(  v\right)  }$ in the indeterminates $x_{1},x_{2},x_{3},\ldots$.

\textbf{(b)} We define a power series $X_{G}\in\mathbf{k}\left[  \left[
x_{1},x_{2},x_{3},\ldots\right]  \right]  $ by%
\[
X_{G}=\sum_{\substack{f:V\rightarrow\mathbb{N}_{+}\text{ is a}\\\text{proper
}\mathbb{N}_{+}\text{-coloring of }G}}\mathbf{x}_{f}.
\]

This power series $X_{G}$ is called the \emph{chromatic symmetric function} of
$G$.
\end{definition}

We have $X_{G}\in\Lambda$ for every finite graph $G=\left(  V,E\right)  $;
this will follow from Theorem \ref{thm.chromsym.empty} further below (but is
also rather obvious).

We remark that $X_{G}$ is denoted by $\Psi\left[  G\right]  $ in
\cite[\S 7.3.3]{Reiner}.

\subsection{Connected components}

\begin{vershort}
We shall now briefly recall the notion of connected components of a graph.
\end{vershort}

\begin{verlong}
We shall now recall the notion of connected components of a graph. This notion
is a particular case of the notion of a quotient set by an equivalence
relation; thus, we shall briefly recall the latter first.

A \emph{binary relation} on a set $X$ means a subset of $X\times X$. If $\sim$
is a binary relation on a set $X$, and if $x$ and $y$ are two elements of $X$,
then one says that $x$ \emph{is related to }$y$ \emph{by }$\sim$ if and only
if $\left(  x,y\right)  $ belongs to the set $\sim$. An \emph{equivalence
relation} is a binary relation (on a set) that is reflexive, symmetric and transitive.

\begin{definition}
\label{def.relquot}Let $X$ be a set. Let $\sim$ be an equivalence relation on
$X$. We shall write $\sim$ infix (that is, for any $x\in X$ and $y\in X$, we
shall abbreviate \textquotedblleft$\left(  x,y\right)  \in\left.  \sim\right.
$\textquotedblright\ as \textquotedblleft$x\sim y$\textquotedblright). For
every $x\in X$, we let $\left[  x\right]  _{\sim}$ be the subset $\left\{
y\in X\ \mid\ x\sim y\right\}  $ of $X$; this subset $\left[  x\right]
_{\sim}$ is called the $\sim$\emph{-equivalence class} of $x$ (or the
\emph{equivalence class of }$x$ \emph{with respect to the relation }$\sim$). A
$\sim$\emph{-equivalence class} means a subset of $X$ which is the $\sim
$-equivalence class of some $x\in X$. It is well-known that any two $\sim
$-equivalence classes are either identical or disjoint, and that $X$ is the
union of all $\sim$-equivalence classes.

We let $X/\left(  \sim\right)  $ denote the set of all $\sim$-equivalence
classes. This set $X/\left(  \sim\right)  $ is called the \emph{quotient of
the set }$X$ \emph{by the equivalence relation }$\sim$. We define a map
$\pi_{X}:X\rightarrow X/\left(  \sim\right)  $ by setting%
\[
\left(  \pi_{X}\left(  x\right)  =\left[  x\right]  _{\sim}%
\ \ \ \ \ \ \ \ \ \ \text{for every }x\in X\right)  .
\]
Thus, the map $\pi_{X}$ sends every $x\in X$ to its $\sim$-equivalence class.
\end{definition}

Let us introduce a few more notations:

\begin{definition}
\label{def.relquot.maps}Let $X$ and $Y$ be two sets.

\textbf{(a)} Then, $Y^{X}$ denotes the set of all maps $X\rightarrow Y$.

\textbf{(b)} Let $\sim$ be any binary relation on $X$. We shall write $\sim$
infix (that is, for any $x\in X$ and $y\in X$, we shall abbreviate
\textquotedblleft$\left(  x,y\right)  \in\left.  \sim\right.  $%
\textquotedblright\ as \textquotedblleft$x\sim y$\textquotedblright). Then, we
let $Y_{\sim}^{X}$ denote the subset%
\[
\left\{  g\in Y^{X}\ \mid\ g\left(  x\right)  =g\left(  y\right)  \text{ for
any }x\in X\text{ and }y\in X\text{ satisfying }x\sim y\right\}
\]
of $Y^{X}$.
\end{definition}

The map $\pi_{X}$ defined in Definition \ref{def.relquot} has an important
universal property:

\begin{proposition}
\label{prop.relquot.uniprop}Let $X$ be a set. Let $\sim$ be an equivalence
relation on $X$.

\textbf{(a)} The map $\pi_{X}:X\rightarrow X/\left(  \sim\right)  $ is
surjective and belongs to $\left(  X/\left(  \sim\right)  \right)  _{\sim}%
^{X}$.

\textbf{(b)} Let $Y$ be any set. Then, the map%
\[
Y^{X/\left(  \sim\right)  }\rightarrow Y_{\sim}^{X}%
,\ \ \ \ \ \ \ \ \ \ f\mapsto f\circ\pi_{X}%
\]
is a bijection.
\end{proposition}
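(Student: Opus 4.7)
The plan is to prove the two parts in order, with part (a) a matter of unfolding definitions and part (b) the standard universal property that follows from (a).

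For part (a), I would first check surjectivity: by the definition of $X/\left(\sim\right)$, every element of $X/\left(\sim\right)$ is an equivalence class $\left[x\right]_{\sim}$ for some $x\in X$, and $\pi_{X}(x)=\left[x\right]_{\sim}$ exhibits it as a value of $\pi_{X}$. To show $\pi_{X}\in\left(X/\left(\sim\right)\right)_{\sim}^{X}$, I would take $x,y\in X$ with $x\sim y$ and establish $\left[x\right]_{\sim}=\left[y\right]_{\sim}$; this is the standard fact that equivalent elements share an equivalence class, and reduces immediately to symmetry and transitivity of $\sim$.

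For part (b), I would first verify that the assignment $f\mapsto f\circ\pi_{X}$ does land in $Y_{\sim}^{X}$: if $x\sim y$, then $\pi_{X}(x)=\pi_{X}(y)$ by part (a), hence $(f\circ\pi_{X})(x)=(f\circ\pi_{X})(y)$. Injectivity is then immediate from the surjectivity of $\pi_{X}$: if $f\circ\pi_{X}=g\circ\pi_{X}$, then any $c\in X/\left(\sim\right)$ may be written as $c=\pi_{X}(x)$ for some $x$, and so $f(c)=(f\circ\pi_{X})(x)=(g\circ\pi_{X})(x)=g(c)$. For surjectivity, given $g\in Y_{\sim}^{X}$, I would define $f:X/\left(\sim\right)\to Y$ by $f(\left[x\right]_{\sim})=g(x)$ and observe that $f\circ\pi_{X}=g$ holds by construction.

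The only step with any real content is checking that this $f$ is well-defined in the surjectivity argument: if $\left[x\right]_{\sim}=\left[y\right]_{\sim}$, then since $y\in\left[y\right]_{\sim}=\left[x\right]_{\sim}$ by reflexivity we have $x\sim y$, and therefore $g(x)=g(y)$ precisely because $g\in Y_{\sim}^{X}$. This is the unique moment in the proof where the defining property of $Y_{\sim}^{X}$ is actually invoked, and I expect it to be the main (and essentially only) obstacle in the entire argument.
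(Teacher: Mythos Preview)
Your proposal is correct and follows essentially the same approach as the paper: surjectivity of $\pi_X$ from the definition of $X/(\sim)$, the $\sim$-invariance of $\pi_X$ from equality of equivalence classes, and then the map $f\mapsto f\circ\pi_X$ shown to be well-defined, injective via surjectivity of $\pi_X$, and surjective by defining $f([x]_\sim)=g(x)$ with well-definedness coming from $g\in Y_\sim^X$. The paper's version is simply more verbose in writing out each step.
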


\begin{proof}
[Proof of Proposition \ref{prop.relquot.uniprop}.]\textbf{(a)} The map
$\pi_{X}$ is surjective\footnote{\textit{Proof.} Let $u\in X/\left(
\sim\right)  $. Thus, $u$ is a $\sim$-equivalence class (since $X/\left(
\sim\right)  $ is the set of all $\sim$-equivalence classes). In other words,
there exists some $x\in X$ such that $u$ is the $\sim$-equivalence class of
$x$ (by the definition of a \textquotedblleft$\sim$-equivalence
class\textquotedblright). Consider this $x$.
\par
We know that $\left[  x\right]  _{\sim}$ is the $\sim$-equivalence class of
$x$. In other words, $\left[  x\right]  _{\sim}$ is $u$ (since $u$ is the
$\sim$-equivalence class of $x$). Thus, $\left[  x\right]  _{\sim}=u$. Now,
the definition of $\pi_{X}$ yields $\pi_{X}\left(  x\right)  =\left[
x\right]  _{\sim}=u$. Thus, $u=\pi_{X}\left(  \underbrace{x}_{\in X}\right)
\in\pi_{X}\left(  X\right)  $.
\par
Now, let us forget that we fixed $u$. We thus have shown that $u\in\pi
_{X}\left(  X\right)  $ for every $u\in X/\left(  \sim\right)  $. In other
words, $X/\left(  \sim\right)  \subseteq\pi_{X}\left(  X\right)  $. In other
words, the map $\pi_{X}$ is surjective. Qed.}.

The definition of $\left(  X/\left(  \sim\right)  \right)  _{\sim}^{X}$ shows
that%
\begin{align}
&  \left(  X/\left(  \sim\right)  \right)  _{\sim}^{X}\nonumber\\
&  =\left\{  g\in\left(  X/\left(  \sim\right)  \right)  ^{X}\ \mid\ g\left(
x\right)  =g\left(  y\right)  \text{ for any }x\in X\text{ and }y\in X\text{
satisfying }x\sim y\right\}  . \label{pf.prop.relquot.uniprop.a.def}%
\end{align}

Now, we claim that
\begin{equation}
\pi_{X}\left(  x\right)  =\pi_{X}\left(  y\right)
\ \ \ \ \ \ \ \ \ \ \text{for any }x\in X\text{ and }y\in X\text{ satisfying
}x\sim y. \label{pf.prop.relquot.uniprop.a.eq}%
\end{equation}

\textit{Proof of (\ref{pf.prop.relquot.uniprop.a.eq}):} Let $x\in X$ and $y\in
X$ be any two elements satisfying $x\sim y$. We shall show that $\pi
_{X}\left(  x\right)  =\pi_{X}\left(  y\right)  $.

Recall that $\pi_{X}\left(  x\right)  =\left[  x\right]  _{\sim}$. Thus,
$\pi_{X}\left(  x\right)  $ is the $\sim$-equivalence class of $x$ (since
$\left[  x\right]  _{\sim}$ is the $\sim$-equivalence class of $x$). The same
argument (applied to $y$ instead of $x$) shows that $\pi_{X}\left(  y\right)
$ is the $\sim$-equivalence class of $y$.

We have $x\sim y$. Thus, the elements $x$ and $y$ of $X$ lie in the same
$\sim$-equivalence class. In other words, the $\sim$-equivalence class of $x$
is the $\sim$-equivalence class of $y$. In other words, $\pi_{X}\left(
x\right)  $ is $\pi_{X}\left(  y\right)  $ (since $\pi_{X}\left(  x\right)  $
is the $\sim$-equivalence class of $x$, and since $\pi_{X}\left(  y\right)  $
is the $\sim$-equivalence class of $y$). In other words, $\pi_{X}\left(
x\right)  =\pi_{X}\left(  y\right)  $. This proves
(\ref{pf.prop.relquot.uniprop.a.eq}).

Now, $\pi_{X}$ is a map $g\in\left(  X/\left(  \sim\right)  \right)  ^{X}$
which satisfies $g\left(  x\right)  =g\left(  y\right)  $ for any $x\in X$ and
$y\in X$ satisfying $x\sim y$ (according to
(\ref{pf.prop.relquot.uniprop.a.eq})). In other words,%
\begin{align*}
\pi_{X}  &  \in\left\{  g\in\left(  X/\left(  \sim\right)  \right)  ^{X}%
\ \mid\ g\left(  x\right)  =g\left(  y\right)  \text{ for any }x\in X\text{
and }y\in X\text{ satisfying }x\sim y\right\} \\
&  =\left(  X/\left(  \sim\right)  \right)  _{\sim}^{X}%
\ \ \ \ \ \ \ \ \ \ \left(  \text{by (\ref{pf.prop.relquot.uniprop.a.def}%
)}\right)  .
\end{align*}
This completes the proof of Proposition \ref{prop.relquot.uniprop}
\textbf{(a)}.

\textbf{(b)} We have%
\begin{equation}
f\circ\pi_{X}\in Y_{\sim}^{X}\ \ \ \ \ \ \ \ \ \ \text{for every }f\in
Y^{X/\left(  \sim\right)  } \label{pf.prop.relquot.uniprop.wd}%
\end{equation}
\footnote{\textit{Proof of (\ref{pf.prop.relquot.uniprop.wd}):} Let $f\in
Y^{X/\left(  \sim\right)  }$. Then, clearly, $f\circ\pi_{X}\in Y^{X}$ (since
$\pi_{X}\in\left(  X/\left(  \sim\right)  \right)  ^{X}$). Now, let $x\in X$
and $y\in X$ be any two elements satisfying $x\sim y$. We shall show that
$\left(  f\circ\pi_{X}\right)  \left(  x\right)  =\left(  f\circ\pi
_{X}\right)  \left(  y\right)  $.
\par
From (\ref{pf.prop.relquot.uniprop.a.eq}), we obtain $\pi_{X}\left(  x\right)
=\pi_{X}\left(  y\right)  $. Now, $\left(  f\circ\pi_{X}\right)  \left(
x\right)  =f\left(  \underbrace{\pi_{X}\left(  x\right)  }_{=\pi_{X}\left(
y\right)  }\right)  =f\left(  \pi_{X}\left(  y\right)  \right)  =\left(
f\circ\pi_{X}\right)  \left(  y\right)  $.
\par
Now, let us forget that we fixed $x$ and $y$. We thus have shown that $\left(
f\circ\pi_{X}\right)  \left(  x\right)  =\left(  f\circ\pi_{X}\right)  \left(
y\right)  $ for any $x\in X$ and $y\in X$ satisfying $x\sim y$. Thus,
$f\circ\pi_{X}$ is a map $g\in Y^{X}$ which satisfies $g\left(  x\right)
=g\left(  y\right)  $ for any $x\in X$ and $y\in X$ satisfying $x\sim y$. In
other words,%
\begin{align*}
f\circ\pi_{X}  &  \in\left\{  g\in Y^{X}\ \mid\ g\left(  x\right)  =g\left(
y\right)  \text{ for any }x\in X\text{ and }y\in X\text{ satisfying }x\sim
y\right\} \\
&  =Y_{\sim}^{X}%
\end{align*}
(because $Y_{\sim}^{X}$ was defined to be $\left\{  g\in Y^{X}\ \mid\ g\left(
x\right)  =g\left(  y\right)  \text{ for any }x\in X\text{ and }y\in X\text{
satisfying }x\sim y\right\}  $). This proves (\ref{pf.prop.relquot.uniprop.wd}%
).}. Hence, we can define a map $\Phi:Y^{X/\left(  \sim\right)  }\rightarrow
Y_{\sim}^{X}$ by%
\begin{equation}
\left(  \Phi\left(  f\right)  =f\circ\pi_{X}\ \ \ \ \ \ \ \ \ \ \text{for
every }f\in Y^{X/\left(  \sim\right)  }\right)  .
\label{pf.prop.relquot.uniprop.Phi}%
\end{equation}
Consider this map $\Phi$. We shall now show that the map $\Phi$ is a bijection.

The map $\Phi$ is injective\footnote{\textit{Proof.} Let $f$ and $g$ be two
elements of $Y^{X/\left(  \sim\right)  }$ such that $\Phi\left(  f\right)
=\Phi\left(  g\right)  $. We shall show that $f=g$.
\par
Let $u\in X/\left(  \sim\right)  $. Thus, $u\in X/\left(  \sim\right)
=\pi_{X}\left(  X\right)  $ (since the map $\pi_{X}$ is surjective). In other
words, there exists some $x\in X$ such that $u=\pi_{X}\left(  x\right)  $.
Consider this $x$.
\par
Now, the definition of $\Phi$ yields $\Phi\left(  f\right)  =f\circ\pi_{X}$.
Hence, $\left(  \underbrace{\Phi\left(  f\right)  }_{=f\circ\pi_{X}}\right)
\left(  x\right)  =\left(  f\circ\pi_{X}\right)  \left(  x\right)  =f\left(
\underbrace{\pi_{X}\left(  x\right)  }_{=u}\right)  =f\left(  u\right)  $. The
same argument (but applied to $g$ instead of $f$) yields $\left(  \Phi\left(
g\right)  \right)  \left(  x\right)  =g\left(  u\right)  $. Now, $f\left(
u\right)  =\left(  \underbrace{\Phi\left(  f\right)  }_{=\Phi\left(  g\right)
}\right)  \left(  x\right)  =\left(  \Phi\left(  g\right)  \right)  \left(
x\right)  =g\left(  u\right)  $.
\par
Let us now forget that we fixed $u$. We thus have shown that $f\left(
u\right)  =g\left(  u\right)  $ for every $u\in X/\left(  \sim\right)  $. In
other words, $f=g$.
\par
Now, let us forget that we fixed $f$ and $g$. We thus have proven that if $f$
and $g$ are two elements of $Y^{X/\left(  \sim\right)  }$ such that
$\Phi\left(  f\right)  =\Phi\left(  g\right)  $, then $f=g$. In other words,
the map $\Phi$ is injective. Qed.}. We shall now show that the map $\Phi$ is surjective.

Indeed, let $h\in Y_{\sim}^{X}$.

We shall now define a map $f\in Y^{X/\left(  \sim\right)  }$ as follows: Let
$u\in X/\left(  \sim\right)  $. Thus, $u\in X/\left(  \sim\right)  =\pi
_{X}\left(  X\right)  $ (since the map $\pi_{X}$ is surjective). Thus, there
exists some $x\in X$ satisfying $u=\pi_{X}\left(  x\right)  $. Pick such an
$x$. Then, $h\left(  x\right)  $ is independent of the choice of
$x$\ \ \ \ \footnote{\textit{Proof.} Let $x_{1}$ and $x_{2}$ be any two $x\in
X$ satisfying $u=\pi_{X}\left(  x\right)  $. We shall show that $h\left(
x_{1}\right)  =h\left(  x_{2}\right)  $.
\par
Indeed, $x_{1}$ is an $x\in X$ satisfying $u=\pi_{X}\left(  x\right)  $. In
other words, $x_{1}$ is an element of $X$ and satisfies $u=\pi_{X}\left(
x_{1}\right)  $. But the definition of $\pi_{X}$ shows that $\pi_{X}\left(
x_{1}\right)  =\left[  x_{1}\right]  _{\sim}$. Thus, $u=\pi_{X}\left(
x_{1}\right)  =\left[  x_{1}\right]  _{\sim}$. In other words, $u$ is $\left[
x_{1}\right]  _{\sim}$. In other words, $u$ is the $\sim$-equivalence class of
$x_{1}$ (since $\left[  x_{1}\right]  _{\sim}$ is the $\sim$-equivalence class
of $x_{1}$ (by the definition of $\left[  x_{1}\right]  _{\sim}$)).
\par
The same argument (applied to $x_{2}$ instead of $x_{1}$) shows that $u$ is
the $\sim$-equivalence class of $x_{2}$. Thus, the $\sim$-equivalence classes
of $x_{1}$ and $x_{2}$ are both $u$. Hence, these two $\sim$-equivalence
classes are identical. In other words, $x_{1}$ and $x_{2}$ belong to the same
$\sim$-equivalence class. In other words, $x_{1}\sim x_{2}$.
\par
But%
\[
h\in Y_{\sim}^{X}=\left\{  g\in Y^{X}\ \mid\ g\left(  x\right)  =g\left(
y\right)  \text{ for any }x\in X\text{ and }y\in X\text{ satisfying }x\sim
y\right\}
\]
(by the definition of $Y_{\sim}^{X}$). In other words, $h$ is a $g\in Y^{X}$
which satisfies $g\left(  x\right)  =g\left(  y\right)  $ for any $x\in X$ and
$y\in X$ satisfying $x\sim y$. In other words, $h$ is an element of $Y^{X}$
and satisfies%
\begin{equation}
h\left(  x\right)  =h\left(  y\right)  \text{ for any }x\in X\text{ and }y\in
X\text{ satisfying }x\sim y. \label{pf.prop.relquot.uniprop.b.surj.fn1.1}%
\end{equation}
\par
Now, we can apply (\ref{pf.prop.relquot.uniprop.b.surj.fn1.1}) to $x=x_{1}$
and $y=x_{2}$ (since $x_{1}\sim x_{2}$). As a result, we obtain $h\left(
x_{1}\right)  =h\left(  x_{2}\right)  $.
\par
Let us now forget that we fixed $x_{1}$ and $x_{2}$. We thus have shown that
if $x_{1}$ and $x_{2}$ are any two $x\in X$ satisfying $u=\pi_{X}\left(
x\right)  $, then $h\left(  x_{1}\right)  =h\left(  x_{2}\right)  $. In other
words, $h\left(  x\right)  $ is independent of the choice of $x$ (when $x$ is
chosen as an element of $X$ satisfying $u=\pi_{X}\left(  x\right)  $). Qed.}.
Hence, we can set $f\left(  u\right)  =h\left(  x\right)  $.

Thus, we have defined a map $f\in Y^{X/\left(  \sim\right)  }$. This map has
the following property: If $u\in X/\left(  \sim\right)  $, and if $x\in X$ is
such that $u=\pi_{X}\left(  x\right)  $, then%
\begin{equation}
f\left(  u\right)  =h\left(  x\right)  \label{pf.prop.relquot.uniprop.surj.1}%
\end{equation}
(because this is how $f\left(  u\right)  $ was defined).

Now, $f\circ\pi_{X}=h$\ \ \ \ \footnote{\textit{Proof.} Every $x\in X$
satisfies $\left(  f\circ\pi_{X}\right)  \left(  x\right)  =f\left(  \pi
_{X}\left(  x\right)  \right)  =h\left(  x\right)  $ (by
(\ref{pf.prop.relquot.uniprop.surj.1}), applied to $u=\pi_{X}\left(  x\right)
$). In other words, $f\circ\pi_{X}=h$, qed.}. The definition of $\Phi$ now
yields $\Phi\left(  f\right)  =f\circ\pi_{X}=h$. Hence, $h=\Phi\left(
\underbrace{f}_{\in Y^{X/\left(  \sim\right)  }}\right)  \in\Phi\left(
Y^{X/\left(  \sim\right)  }\right)  $.

Now, let us forget that we fixed $h$. We thus have proven that $h\in
\Phi\left(  Y^{X/\left(  \sim\right)  }\right)  $ for every $h\in Y_{\sim}%
^{X}$. In other words, the map $\Phi$ is surjective.

Thus, we know that the map $\Phi$ is injective and surjective. In other words,
$\Phi$ is bijective. In other words, $\Phi$ is a bijection. Since $\Phi$ is
the map
\[
Y^{X/\left(  \sim\right)  }\rightarrow Y_{\sim}^{X}%
,\ \ \ \ \ \ \ \ \ \ f\mapsto f\circ\pi_{X}%
\]
(because $\Phi$ is a map $Y^{X/\left(  \sim\right)  }\rightarrow Y_{\sim}^{X}$
and satisfies (\ref{pf.prop.relquot.uniprop.Phi})), this shows that the map
\[
Y^{X/\left(  \sim\right)  }\rightarrow Y_{\sim}^{X}%
,\ \ \ \ \ \ \ \ \ \ f\mapsto f\circ\pi_{X}%
\]
is a bijection. This proves Proposition \ref{prop.relquot.uniprop}
\textbf{(b)}.
\end{proof}

We shall now recall what connected components are:
\end{verlong}

\begin{definition}
\label{def.walk}Let $G=\left(  V,E\right)  $ be a graph. Let $u$ and $v$ be
two elements of $V$ (that is, two vertices of $G$). A \emph{walk} from $u$ to
$v$ in $G$ will mean a sequence $\left(  w_{0},w_{1},\ldots,w_{k}\right)  $ of
elements of $V$ such that $w_{0}=u$ and $w_{k}=v$ and%
\[
\left(  \left\{  w_{i},w_{i+1}\right\}  \in E\ \ \ \ \ \ \ \ \ \ \text{for
every }i\in\left\{  0,1,\ldots,k-1\right\}  \right)  .
\]
We say that $u$ and $v$ are \emph{connected (in }$G$\emph{)} if there exists a
walk from $u$ to $v$ in $G$.
\end{definition}

\begin{definition}
\label{def.connectedness}Let $G=\left(  V,E\right)  $ be a graph.

\textbf{(a)} We define a binary relation $\sim_{G}$ (written infix) on the set
$V$ as follows: Given $u\in V$ and $v\in V$, we set $u\sim_{G}v$ if and only
if $u$ and $v$ are connected (in $G$). It is well-known that this relation
$\sim_{G}$ is an equivalence relation. The $\sim_{G}$-equivalence classes are
called the \emph{connected components} of $G$.

\textbf{(b)} Assume that the graph $G$ is finite. We let $\lambda\left(
G\right)  $ denote the list of the sizes of all connected components of $G$,
in weakly decreasing order. (Each connected component should contribute only
one entry to the list.) We view $\lambda\left(  G\right)  $ as a partition
(since $\lambda\left(  G\right)  $ is a weakly decreasing finite list of
positive integers).
\end{definition}

Now, we can state a formula for chromatic symmetric functions:

\begin{theorem}
\label{thm.chromsym.empty}Let $G=\left(  V,E\right)  $ be a finite graph.
Then,%
\[
X_{G}=\sum_{F\subseteq E}\left(  -1\right)  ^{\left\vert F\right\vert
}p_{\lambda\left(  V,F\right)  }.
\]
(Here, of course, the pair $\left(  V,F\right)  $ is regarded as a graph, and
the expression $\lambda\left(  V,F\right)  $ is understood according to
Definition \ref{def.connectedness} \textbf{(b)}.)
\end{theorem}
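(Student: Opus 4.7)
The plan is to derive the identity by a standard inclusion--exclusion on the edge set. Start from the definition
\[
X_{G}=\sum_{f:V\rightarrow\mathbb{N}_{+}}\mathbf{x}_{f}\cdot\left[f\text{ is proper}\right],
\]
where $\left[\cdot\right]$ denotes the Iverson bracket. The key observation is that the indicator of properness factors over the edge set: for every $f:V\to\mathbb{N}_{+}$,
\[
\left[f\text{ is proper}\right]=\prod_{\{s,t\}\in E}\bigl(1-\left[f(s)=f(t)\right]\bigr).
\]
Expanding this product over subsets $F\subseteq E$ and substituting gives
\[
X_{G}=\sum_{F\subseteq E}(-1)^{|F|}\sum_{f:V\rightarrow\mathbb{N}_{+}}\mathbf{x}_{f}\prod_{\{s,t\}\in F}\left[f(s)=f(t)\right].
\]

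The next step is to identify the inner sum with a power-sum product. For a fixed $F\subseteq E$, the product $\prod_{\{s,t\}\in F}\left[f(s)=f(t)\right]$ equals $1$ if and only if $f$ is constant on each connected component of the graph $(V,F)$, and vanishes otherwise. Thus, if the connected components of $(V,F)$ are $C_{1},\ldots,C_{r}$, the inner sum factors as
\[
\sum_{f \text{ constant on each }C_i}\mathbf{x}_{f}=\prod_{i=1}^{r}\sum_{j\geq 1}x_{j}^{|C_{i}|}=\prod_{i=1}^{r}p_{|C_{i}|}=p_{\lambda(V,F)},
\]
using the definition of $\lambda(V,F)$ and identity (\ref{eq.def.powersum2.finite-expression}). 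Plugging this back produces the desired formula.

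The only non-routine issue is to justify the manipulations in the topological $\mathbf{k}$-algebra $\mathbf{k}\left[\left[x_{1},x_{2},\ldots\right]\right]$: interchanging the sum over $F$ (finite) with the sum over $f$ (infinite), and factoring the sum over $f$ constant on components into a product of power sums. Both boil down to checking the corresponding coefficient-wise statement, which is straightforward since for every fixed monomial $\mathfrak{m}$ only finitely many $f$ contribute. I expect no serious obstacle here; the main conceptual step is simply writing the properness indicator as a product and recognizing the resulting inner sum as $p_{\lambda(V,F)}$. The bulk of the work is bookkeeping.
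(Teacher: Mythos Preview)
Your proposal is correct and follows essentially the same approach as the paper. The paper formally derives Theorem~\ref{thm.chromsym.empty} as the special case $\mathfrak{K}=\varnothing$ of the more general Theorem~\ref{thm.chromsym.varis}, but the underlying argument for that theorem is exactly yours: rewrite $[f\text{ is proper}]$ via inclusion--exclusion over subsets of $E\cap\operatorname{Eqs}f$ (your product expansion $\prod_{\{s,t\}\in E}(1-[f(s)=f(t)])$ is the same thing), swap the sums, and then recognize the inner sum $\sum_{f:\,F\subseteq\operatorname{Eqs}f}\mathbf{x}_f$ as $p_{\lambda(V,F)}$ --- which is precisely the content of the paper's Lemma~\ref{lem.Eqs.sum}.
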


This theorem is not new; it appears, e.g., in \cite[Theorem 2.5]%
{Stanley-chsym}. We shall show a far-reaching generalization of it (Theorem
\ref{thm.chromsym.varis}) soon.

\subsection{Circuits and broken circuits}

Let us now define the notions of cycles and circuits of a graph:

\begin{definition}
\label{def.cycle}Let $G=\left(  V,E\right)  $ be a graph. A \emph{cycle} of
$G$ denotes a list $\left(  v_{1},v_{2},\ldots,v_{m+1}\right)  $ of elements
of $V$ with the following properties:

\begin{itemize}
\item We have $m>2$.

\item We have $v_{m+1}=v_{1}$.

\item The vertices $v_{1},v_{2},\ldots,v_{m}$ are pairwise distinct.

\item We have $\left\{  v_{i},v_{i+1}\right\}  \in E$ for every $i\in\left\{
1,2,\ldots,m\right\}  $.
\end{itemize}

If $\left(  v_{1},v_{2},\ldots,v_{m+1}\right)  $ is a cycle of $G$, then the
set $\left\{  \left\{  v_{1},v_{2}\right\}  ,\left\{  v_{2},v_{3}\right\}
,\ldots,\left\{  v_{m},v_{m+1}\right\}  \right\}  $ is called a \emph{circuit}
of $G$.
\end{definition}

For instance, if $\left(  1,3,5,7,1\right)  $ is a cycle of a graph $G$, then
the corresponding circuit is $\left\{  \left\{  1,3\right\}  ,\ \left\{
3,5\right\}  ,\ \left\{  5,7\right\}  ,\ \left\{  7,1\right\}  \right\}  $.

\begin{definition}
\label{def.BC}Let $G=\left(  V,E\right)  $ be a graph. Let $X$ be a totally
ordered set. Let $\ell:E\rightarrow X$ be a function. We shall refer to $\ell$
as the \emph{labeling function}. For every edge $e$ of $G$, we shall refer to
$\ell\left(  e\right)  $ as the \emph{label} of $e$.

A \emph{broken circuit} of $G$ means a subset of $E$ having the form
$C\setminus\left\{  e\right\}  $, where $C$ is a circuit of $G$, and where $e$
is the unique edge in $C$ having maximum label (among the edges in $C$). Of
course, the notion of a broken circuit of $G$ depends on the function $\ell$;
however, we suppress the mention of $\ell$ in our notation, since we will not
consider situations where two different $\ell$'s coexist.
\end{definition}

Thus, if $G$ is a graph with a labeling function $\ell$, then any circuit $C$
of $G$ gives rise to a broken circuit provided that among the edges in $C$,
only one attains the maximum label. (If more than one of the edges of $C$
attains the maximum label, then $C$ does not give rise to a broken circuit.)
Notice that two different circuits may give rise to one and the same broken circuit.

For instance, if $\left\{  a,b,c,d\right\}  $ is a circuit of a graph $G$ such
that $\ell\left(  a\right)  \leq\ell\left(  b\right)  \leq\ell\left(
c\right)  <\ell\left(  d\right)  $, then it gives rise to the broken circuit
$\left\{  a,b,c\right\}  $, since its unique edge having maximum label is $d$.
On the other hand, a circuit of the form $\left\{  a,b,c,d\right\}  $ with
$\ell\left(  a\right)  \leq\ell\left(  b\right)  \leq\ell\left(  c\right)
=\ell\left(  d\right)  $ (and $c\neq d$) does not give rise to any broken
circuit, since its edge with maximum label is not unique.

The notion of a broken circuit always depends on a labeling function
$\ell:E\rightarrow X$. Any time we speak about broken circuits, we shall
tacitly understand that the function $\ell:E\rightarrow X$ is used as the
labeling function.

\begin{example}
\label{exa.BC.graph1}Let $G$ be the graph $\left(  V,E\right)  $, where
$V=\left\{  1,2,3,4\right\}  $ and $E=\left\{  a,b,c,d,e\right\}  $ with%
\[
a=\left\{  1,2\right\}  ,\qquad b=\left\{  2,3\right\}  ,\qquad c=\left\{
1,3\right\}  ,\qquad d=\left\{  1,4\right\}  ,\qquad e=\left\{  3,4\right\}
.
\]
According to the standard conventions of graph theory, this graph $G$ can be
drawn as follows:%
\[%
\begin{tikzpicture}[scale=2]
\begin{scope}[every node/.style={circle,thick,draw=green!60!black}]
\node(1) at (0,1) {$1$};
\node(2) at (1,0) {$2$};
\node(3) at (0,-1) {$3$};
\node(4) at (-1,0) {$4$};
\end{scope}
\begin{scope}[every edge/.style={draw=black,very thick}, every loop/.style={}]
\path[-] (1) edge node[above] {$a$} (2);
\path[-] (1) edge node[left] {$c$} (3);
\path[-] (1) edge node[above] {$d$} (4);
\path[-] (2) edge node[below] {$b$} (3);
\path[-] (4) edge node[below] {$e$} (3);
\end{scope}
\end{tikzpicture}%
\ \ .
\]
Let $\ell:E\rightarrow X$ be a labeling function satisfying $\ell\left(
a\right)  <\ell\left(  b\right)  <\ell\left(  c\right)  <\ell\left(  d\right)
<\ell\left(  e\right)  $. Then, the circuits of $G$ are%
\[
\left\{  a,b,c\right\}  ,\qquad\left\{  a,b,d,e\right\}  ,\qquad\left\{
c,d,e\right\}  .
\]
The broken circuits of $G$ are therefore%
\[
\left\{  a,b\right\}  ,\qquad\left\{  a,b,d\right\}  ,\qquad\left\{
c,d\right\}  .
\]

\end{example}

\subsection{The main results}

We are now ready to state one of our main results:

\begin{theorem}
\label{thm.chromsym.varis}Let $G=\left(  V,E\right)  $ be a finite graph. Let
$X$ be a totally ordered set. Let $\ell:E\rightarrow X$ be a labeling
function. Let $\mathfrak{K}$ be some set of broken circuits of $G$ (not
necessarily containing all of them). Let $a_{K}$ be an element of $\mathbf{k}$
for every $K\in\mathfrak{K}$. Then,%
\[
X_{G}=\sum_{F\subseteq E}\left(  -1\right)  ^{\left\vert F\right\vert }\left(
\prod_{\substack{K\in\mathfrak{K};\\K\subseteq F}}a_{K}\right)  p_{\lambda
\left(  V,F\right)  }.
\]
(Here, of course, the pair $\left(  V,F\right)  $ is regarded as a graph, and
the expression $\lambda\left(  V,F\right)  $ is understood according to
Definition \ref{def.connectedness} \textbf{(b)}.)
\end{theorem}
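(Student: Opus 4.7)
I would prove this by strong induction on $|\mathfrak{K}|$, with the base case $\mathfrak{K} = \varnothing$ being exactly Theorem~\ref{thm.chromsym.empty}.

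For the inductive step, the first move is to choose $K_{0} \in \mathfrak{K}$ wisely. Observe that every broken circuit $K$ is a simple path in $G$ (namely a circuit minus one edge), and therefore has a unique \emph{completing edge} $e_{K}$---the edge joining the two endpoints of $K$, or equivalently the unique edge such that $K \cup \{e_{K}\}$ is a circuit whose max-label edge is $e_{K}$. I would pick $K_{0} \in \mathfrak{K}$ whose completing edge $e_{0}$ has maximum label $\ell(e_{0})$ among the completing edges of all members of $\mathfrak{K}$. Writing $a_{K_{0}} = 1 + (a_{K_{0}} - 1)$ inside the product and expanding, the right-hand side of the theorem breaks into two pieces: one sum over all $F \subseteq E$ with weights coming from $\mathfrak{K} \setminus \{K_{0}\}$, which equals $X_{G}$ by the inductive hypothesis, plus $(a_{K_{0}} - 1)$ times a residual sum over those $F$ that contain $K_{0}$. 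It thus suffices to prove the vanishing
\[
\sum_{\substack{F \subseteq E;\\ F \supseteq K_{0}}} (-1)^{|F|} \left(\prod_{\substack{K \in \mathfrak{K} \setminus \{K_{0}\};\\ K \subseteq F}} a_{K}\right) p_{\lambda(V, F)} = 0.
\]

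For this vanishing I would exhibit a sign-reversing involution $\phi(F) = F \bigtriangleup \{e_{0}\}$ on the set $\{F \subseteq E : F \supseteq K_{0}\}$. Since $e_{0} \notin K_{0}$, the map $\phi$ is a fixed-point-free involution preserving the condition $F \supseteq K_{0}$ and flipping the sign $(-1)^{|F|}$. The factor $p_{\lambda(V, F)}$ is preserved because $K_{0}$ already forms a path between the two endpoints of $e_{0}$ inside $(V, F)$ (as $K_{0} \cup \{e_{0}\}$ is a circuit), so toggling $e_{0}$ neither merges nor separates any connected components.

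The main obstacle---and the whole reason for the clever choice of $K_{0}$---is checking that $\phi$ also preserves the product $\prod_{K \in \mathfrak{K} \setminus \{K_{0}\},\, K \subseteq F} a_{K}$. This reduces to showing that no $K \in \mathfrak{K} \setminus \{K_{0}\}$ contains the edge $e_{0}$, for then the membership $K \subseteq F$ is unaffected by toggling $e_{0}$. Suppose, for contradiction, that some such $K$ satisfies $e_{0} \in K$; let $e_{K}$ denote its completing edge. Since $e_{K} \notin K$ but $e_{0} \in K$, we have $e_{0} \neq e_{K}$, and both $e_{0}$ and $e_{K}$ lie in the circuit $K \cup \{e_{K}\}$, of which $e_{K}$ is the unique maximum-label edge. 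This forces $\ell(e_{0}) < \ell(e_{K})$, contradicting the maximality of $\ell(e_{0})$ among completing edges of $\mathfrak{K}$. Once this point is settled, the involution gives a perfect cancellation, the residual sum vanishes, and the induction closes.
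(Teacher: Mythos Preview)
Your proof is correct and takes a genuinely different route from the paper's. The paper does \emph{not} induct on $|\mathfrak{K}|$; instead it proves the identity directly by expressing $X_G$ as a sum over colorings $f:V\to\mathbb{N}_+$, invoking a key lemma (Lemma~\ref{lem.NBCm.moeb}) which shows that
\[
\sum_{B\subseteq E\cap\operatorname{Eqs}f}(-1)^{|B|}\prod_{\substack{K\in\mathfrak{K};\\K\subseteq B}}a_K=[E\cap\operatorname{Eqs}f=\varnothing],
\]
and then swapping the order of summation and using Lemma~\ref{lem.Eqs.sum}. The sign-reversing involution in the paper lives inside Lemma~\ref{lem.NBCm.moeb}: one toggles an edge $d\in E\cap\operatorname{Eqs}f$ of maximum label, and the crucial observation is that no broken circuit $K\subseteq E\cap\operatorname{Eqs}f$ can contain $d$ (via Lemma~\ref{lem.Eqs.circuit}). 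Your involution, by contrast, toggles the \emph{completing edge} $e_0$ of a broken circuit $K_0\in\mathfrak{K}$ chosen to have $\ell(e_0)$ maximal; the analogous crucial observation is that no other $K\in\mathfrak{K}$ contains $e_0$. Both arguments hinge on the same maximality trick, but applied to different objects: the paper compares $d$ against the removed edge $e$ of a circuit $C$, you compare $e_0$ against the completing edge $e_K$ of another $K$. Your approach is more elementary in that it avoids the $\operatorname{Eqs}f$ machinery entirely and works purely at the level of the edge-subset sum; the paper's approach is more self-contained in that it yields the base case $\mathfrak{K}=\varnothing$ simultaneously.

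One caveat worth flagging: in this paper, Theorem~\ref{thm.chromsym.empty} is \emph{derived} from Theorem~\ref{thm.chromsym.varis}, so citing it as your base case is circular within the paper's logical structure. You should either note that Theorem~\ref{thm.chromsym.empty} has a short independent proof (it is classical inclusion--exclusion: $X_G=\sum_f [f\text{ proper}]\,\mathbf{x}_f=\sum_f\sum_{B\subseteq E\cap\operatorname{Eqs}f}(-1)^{|B|}\mathbf{x}_f$, then swap sums and apply Lemma~\ref{lem.Eqs.sum}), or cite it from the literature (e.g., \cite[Theorem 2.5]{Stanley-chsym}).
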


Before we come to the proof of this result, let us explore some of its
particular cases. First, a definition is in order:

\begin{definition}
\label{def.K-free}Let $E$ be a set. Let $\mathfrak{K}$ be a subset of the
powerset of $E$ (that is, a set of subsets of $E$). A subset $F$ of $E$ is
said to be $\mathfrak{K}$\emph{-free} if $F$ contains no $K\in\mathfrak{K}$ as
a subset. (For instance, if $\mathfrak{K}=\varnothing$, then every subset $F$
of $E$ is $\mathfrak{K}$-free.)
\end{definition}

Here is a slightly more substantial example: If $E=\left\{  1,2,3,4\right\}  $
and $\mathfrak{K}=\left\{  \left\{  1,2\right\}  ,\ \left\{  2,3\right\}
\right\}  $, then the subset $\left\{  1,3\right\}  $ of $E$ is $\mathfrak{K}%
$-free whereas the subset $\left\{  2,3,4\right\}  $ is not (since it contains
$\left\{  2,3\right\}  \in\mathfrak{K}$ as a subset).

\begin{corollary}
\label{cor.chromsym.K-free}Let $G=\left(  V,E\right)  $ be a finite graph. Let
$X$ be a totally ordered set. Let $\ell:E\rightarrow X$ be a labeling
function. Let $\mathfrak{K}$ be some set of broken circuits of $G$ (not
necessarily containing all of them). Then,%
\[
X_{G}=\sum_{\substack{F\subseteq E;\\F\text{ is }\mathfrak{K}\text{-free}%
}}\left(  -1\right)  ^{\left\vert F\right\vert }p_{\lambda\left(  V,F\right)
}.
\]

\end{corollary}

\begin{corollary}
\label{cor.chromsym.NBC}Let $G=\left(  V,E\right)  $ be a finite graph. Let
$X$ be a totally ordered set. Let $\ell:E\rightarrow X$ be a labeling
function. Then,%
\[
X_{G}=\sum_{\substack{F\subseteq E;\\F\text{ contains no broken}%
\\\text{circuit of }G\text{ as a subset}}}\left(  -1\right)  ^{\left\vert
F\right\vert }p_{\lambda\left(  V,F\right)  }.
\]

\end{corollary}

Corollary \ref{cor.chromsym.NBC} appears in \cite[Theorem 2.9]{Stanley-chsym},
at least in the particular case in which $\ell$ is supposed to be injective.

\begin{example}
Let $G=\left(  V,E\right)  $ be the graph from Example \ref{exa.BC.graph1},
and let $\ell:E\rightarrow X$ be a labeling function as in Example
\ref{exa.BC.graph1}. Then, the subsets of $E$ that contain no broken circuits
of $G$ as subsets are the $18$ sets%
\begin{align*}
&  \varnothing,\qquad\left\{  a\right\}  ,\qquad\left\{  b\right\}
,\qquad\left\{  c\right\}  ,\qquad\left\{  d\right\}  ,\qquad\left\{
e\right\}  ,\qquad\left\{  a,c\right\}  ,\\
&  \left\{  a,d\right\}  ,\qquad\left\{  a,e\right\}  ,\qquad\left\{
b,c\right\}  ,\qquad\left\{  b,d\right\}  ,\qquad\left\{  b,e\right\}
,\qquad\left\{  c,e\right\}  ,\\
&  \left\{  d,e\right\}  ,\qquad\left\{  a,c,e\right\}  ,\qquad\left\{
a,d,e\right\}  ,\qquad\left\{  b,c,e\right\}  ,\qquad\left\{  b,d,e\right\}  .
\end{align*}
Thus, the sum on the right-hand side of Corollary \ref{cor.chromsym.NBC} has
$18$ addends. In contrast, the sums on the right-hand sides of Theorem
\ref{thm.chromsym.varis} and of Theorem \ref{thm.chromsym.empty} have $32$
addends. The number of addends in the sum on the right-hand side of Corollary
\ref{cor.chromsym.K-free} depends on the choice of $\mathfrak{K}$.
\end{example}

Let us now see how Theorem \ref{thm.chromsym.empty}, Corollary
\ref{cor.chromsym.K-free} and Corollary \ref{cor.chromsym.NBC} can be derived
from Theorem \ref{thm.chromsym.varis}:

\begin{vershort}
\begin{proof}
[Proof of Corollary \ref{cor.chromsym.K-free} using Theorem
\ref{thm.chromsym.varis}.]For every subset $F$ of $E$, we have%
\begin{equation}
\prod_{\substack{K\in\mathfrak{K};\\K\subseteq F}}0=%
\begin{cases}
1, & \text{if }F\text{ is }\mathfrak{K}\text{-free;}\\
0, & \text{if }F\text{ is not }\mathfrak{K}\text{-free}%
\end{cases}
\label{pf.cor.chromsym.K-free.short.prod0}%
\end{equation}
(because if $F$ is $\mathfrak{K}$-free, then the product $\prod
_{\substack{K\in\mathfrak{K};\\K\subseteq F}}0$ is empty and thus equals $1$;
otherwise, the product $\prod_{\substack{K\in\mathfrak{K};\\K\subseteq F}}0$
contains at least one factor and thus equals $0$). Now, Theorem
\ref{thm.chromsym.varis} (applied to $0$ instead of $a_{K}$) yields%
\begin{align*}
X_{G}  &  =\sum_{F\subseteq E}\left(  -1\right)  ^{\left\vert F\right\vert
}\underbrace{\left(  \prod_{\substack{K\in\mathfrak{K};\\K\subseteq
F}}0\right)  }_{\substack{=%
\begin{cases}
1, & \text{if }F\text{ is }\mathfrak{K}\text{-free;}\\
0, & \text{if }F\text{ is not }\mathfrak{K}\text{-free}%
\end{cases}
\\\text{(by (\ref{pf.cor.chromsym.K-free.short.prod0}))}}}p_{\lambda\left(
V,F\right)  }\\
&  =\sum_{F\subseteq E}\left(  -1\right)  ^{\left\vert F\right\vert }%
\begin{cases}
1, & \text{if }F\text{ is }\mathfrak{K}\text{-free;}\\
0, & \text{if }F\text{ is not }\mathfrak{K}\text{-free}%
\end{cases}
\ \ p_{\lambda\left(  V,F\right)  }=\sum_{\substack{F\subseteq E;\\F\text{ is
}\mathfrak{K}\text{-free}}}\left(  -1\right)  ^{\left\vert F\right\vert
}p_{\lambda\left(  V,F\right)  }.
\end{align*}
This proves Corollary \ref{cor.chromsym.K-free}.
\end{proof}
\end{vershort}

\begin{verlong}
\begin{proof}
[Proof of Corollary \ref{cor.chromsym.K-free} using Theorem
\ref{thm.chromsym.varis}.]We can apply Theorem \ref{thm.chromsym.varis} to $0$
instead of $a_{K}$. As a result, we obtain%
\begin{equation}
X_{G}=\sum_{F\subseteq E}\left(  -1\right)  ^{\left\vert F\right\vert }\left(
\prod_{\substack{K\in\mathfrak{K};\\K\subseteq F}}0\right)  p_{\lambda\left(
V,F\right)  }. \label{pf.cor.chromsym.K-free.0}%
\end{equation}
Now, if $F$ is any subset of $E$, then%
\begin{equation}
\prod_{\substack{K\in\mathfrak{K};\\K\subseteq F}}0=%
\begin{cases}
1, & \text{if }F\text{ is }\mathfrak{K}\text{-free;}\\
0, & \text{if }F\text{ is not }\mathfrak{K}\text{-free}%
\end{cases}
\label{pf.cor.chromsym.K-free.1}%
\end{equation}
\footnote{\textit{Proof of (\ref{pf.cor.chromsym.K-free.1}):} Let $F$ be any
subset of $E$. Recall that the set $F$ is $\mathfrak{K}$-free if and only if
$F$ contains no $K\in\mathfrak{K}$ as a subset (by the definition of
\textquotedblleft$\mathfrak{K}$-free\textquotedblright). Taking the
contrapositive of this equivalence statement, we obtain the following: The set
$F$ is \textbf{not} $\mathfrak{K}$-free if and only if $F$ contains some
$K\in\mathfrak{K}$ as a subset.
\par
We are in one of the following two cases:
\par
\textit{Case 1:} The set $F$ is $\mathfrak{K}$-free.
\par
\textit{Case 2:} The set $F$ is not $\mathfrak{K}$-free.
\par
Let us first consider Case 1. In this case, the set $F$ is $\mathfrak{K}%
$-free. In other words, $F$ contains no $K\in\mathfrak{K}$ as a subset
(because the set $F$ is $\mathfrak{K}$-free if and only if $F$ contains no
$K\in\mathfrak{K}$ as a subset (by the definition of \textquotedblleft%
$\mathfrak{K}$-free\textquotedblright)). In other words, there exists no
$K\in\mathfrak{K}$ such that $F$ contains $K$ as a subset. In other words,
there exists no $K\in\mathfrak{K}$ such that $K\subseteq F$. Hence, the
product $\prod_{\substack{K\in\mathfrak{K};\\K\subseteq F}}0$ is empty. Thus,
$\prod_{\substack{K\in\mathfrak{K};\\K\subseteq F}}0=\left(  \text{empty
product}\right)  =1$. Comparing this with
\[%
\begin{cases}
1, & \text{if }F\text{ is }\mathfrak{K}\text{-free;}\\
0, & \text{if }F\text{ is not }\mathfrak{K}\text{-free}%
\end{cases}
=1\ \ \ \ \ \ \ \ \ \ \left(  \text{since }F\text{ is }\mathfrak{K}%
\text{-free}\right)  ,
\]
we obtain $\prod_{\substack{K\in\mathfrak{K};\\K\subseteq F}}0=%
\begin{cases}
1, & \text{if }F\text{ is }\mathfrak{K}\text{-free;}\\
0, & \text{if }F\text{ is not }\mathfrak{K}\text{-free}%
\end{cases}
$. Thus, (\ref{pf.cor.chromsym.K-free.1}) is proven in Case 1.
\par
Let us now consider Case 2. In this case, the set $F$ is \textbf{not}
$\mathfrak{K}$-free. In other words, $F$ contains some $K\in\mathfrak{K}$ as a
subset (since the set $F$ is \textbf{not} $\mathfrak{K}$-free if and only if
$F$ contains some $K\in\mathfrak{K}$ as a subset). Let $L$ be such a $K$.
Thus, $L$ is an element of $\mathfrak{K}$, and the set $F$ contains $L$ as a
subset.
\par
Now, $L\subseteq F$ (since $F$ contains $L$ as a subset). Hence, the product
$\prod_{\substack{K\in\mathfrak{K};\\K\subseteq F}}0$ has at least one factor
(namely, the factor for $K=L$). This factor is clearly $0$. Therefore, the
whole product $\prod_{\substack{K\in\mathfrak{K};\\K\subseteq F}}0$ equals $0$
(because if a product contains a factor which is $0$, then the whole product
must equal $0$). In other words, $\prod_{\substack{K\in\mathfrak{K}%
;\\K\subseteq F}}0=0$. Comparing this with
\[%
\begin{cases}
1, & \text{if }F\text{ is }\mathfrak{K}\text{-free;}\\
0, & \text{if }F\text{ is not }\mathfrak{K}\text{-free}%
\end{cases}
=0\ \ \ \ \ \ \ \ \ \ \left(  \text{since }F\text{ is \textbf{not}
}\mathfrak{K}\text{-free}\right)  ,
\]
we obtain $\prod_{\substack{K\in\mathfrak{K};\\K\subseteq F}}0=%
\begin{cases}
1, & \text{if }F\text{ is }\mathfrak{K}\text{-free;}\\
0, & \text{if }F\text{ is not }\mathfrak{K}\text{-free}%
\end{cases}
$. Thus, (\ref{pf.cor.chromsym.K-free.1}) is proven in Case 2.
\par
We now have proven (\ref{pf.cor.chromsym.K-free.1}) in each of the two Cases 1
and 2. Since these two Cases cover all possibilities, this shows that
(\ref{pf.cor.chromsym.K-free.1}) always holds.}.

Thus, (\ref{pf.cor.chromsym.K-free.0}) becomes%
\begin{align*}
X_{G}  &  =\sum_{F\subseteq E}\left(  -1\right)  ^{\left\vert F\right\vert
}\underbrace{\left(  \prod_{\substack{K\in\mathfrak{K};\\K\subseteq
F}}0\right)  }_{\substack{=%
\begin{cases}
1, & \text{if }F\text{ is }\mathfrak{K}\text{-free;}\\
0, & \text{if }F\text{ is not }\mathfrak{K}\text{-free}%
\end{cases}
\\\text{(by (\ref{pf.cor.chromsym.K-free.1}))}}}p_{\lambda\left(  V,F\right)
}\\
&  =\sum_{F\subseteq E}\left(  -1\right)  ^{\left\vert F\right\vert }%
\begin{cases}
1, & \text{if }F\text{ is }\mathfrak{K}\text{-free;}\\
0, & \text{if }F\text{ is not }\mathfrak{K}\text{-free}%
\end{cases}
\ \ p_{\lambda\left(  V,F\right)  }\\
&  =\sum_{\substack{F\subseteq E;\\F\text{ is }\mathfrak{K}\text{-free}%
}}\left(  -1\right)  ^{\left\vert F\right\vert }\underbrace{%
\begin{cases}
1, & \text{if }F\text{ is }\mathfrak{K}\text{-free;}\\
0, & \text{if }F\text{ is not }\mathfrak{K}\text{-free}%
\end{cases}
}_{\substack{=1\\\text{(since }F\text{ is }\mathfrak{K}\text{-free)}%
}}\ \ p_{\lambda\left(  V,F\right)  }\\
&  \ \ \ \ \ \ \ \ \ \ +\sum_{\substack{F\subseteq E;\\F\text{ is not
}\mathfrak{K}\text{-free}}}\left(  -1\right)  ^{\left\vert F\right\vert
}\underbrace{%
\begin{cases}
1, & \text{if }F\text{ is }\mathfrak{K}\text{-free;}\\
0, & \text{if }F\text{ is not }\mathfrak{K}\text{-free}%
\end{cases}
}_{\substack{=0\\\text{(since }F\text{ is not }\mathfrak{K}\text{-free)}%
}}\ \ p_{\lambda\left(  V,F\right)  }\\
&  \ \ \ \ \ \ \ \ \ \ \ \ \ \ \ \ \ \ \ \ \left(  \text{since each subset
}F\text{ of }E\text{ either is }\mathfrak{K}\text{-free or is not}\right) \\
&  =\sum_{\substack{F\subseteq E;\\F\text{ is }\mathfrak{K}\text{-free}%
}}\left(  -1\right)  ^{\left\vert F\right\vert }p_{\lambda\left(  V,F\right)
}+\underbrace{\sum_{\substack{F\subseteq E;\\F\text{ is not }\mathfrak{K}%
\text{-free}}}\left(  -1\right)  ^{\left\vert F\right\vert }0p_{\lambda\left(
V,F\right)  }}_{=0}\\
&  =\sum_{\substack{F\subseteq E;\\F\text{ is }\mathfrak{K}\text{-free}%
}}\left(  -1\right)  ^{\left\vert F\right\vert }p_{\lambda\left(  V,F\right)
}.
\end{align*}
This proves Corollary \ref{cor.chromsym.K-free}.
\end{proof}
\end{verlong}

\begin{vershort}
\begin{proof}
[Proof of Corollary \ref{cor.chromsym.NBC} using Corollary
\ref{cor.chromsym.K-free}.]Corollary \ref{cor.chromsym.NBC} follows from
Corollary \ref{cor.chromsym.K-free} when $\mathfrak{K}$ is set to be the set
of \textbf{all} broken circuits of $G$.
\end{proof}
\end{vershort}

\begin{verlong}
\begin{proof}
[Proof of Corollary \ref{cor.chromsym.NBC} using Corollary
\ref{cor.chromsym.K-free}.]Let $\mathfrak{K}$ be the set of all broken
circuits of $G$. Thus, the elements of $\mathfrak{K}$ are the broken circuits
of $G$.

Now, for every subset $F$ of $E$, we have the following equivalence of
statements:%
\begin{align*}
&  \ \left(  F\text{ is }\mathfrak{K}\text{-free}\right) \\
&  \Longleftrightarrow\ \left(  F\text{ contains no }K\in\mathfrak{K}\text{ as
a subset}\right) \\
&  \ \ \ \ \ \ \ \ \ \ \left(
\begin{array}
[c]{c}%
\text{because }F\text{ is }\mathfrak{K}\text{-free if and only if }F\text{
contains no }K\in\mathfrak{K}\text{ as a subset}\\
\text{(by the definition of \textquotedblleft}\mathfrak{K}%
\text{-free\textquotedblright)}%
\end{array}
\right) \\
&  \Longleftrightarrow\ \left(  F\text{ contains no element of }%
\mathfrak{K}\text{ as a subset}\right) \\
&  \Longleftrightarrow\ \left(  F\text{ contains no broken circuit of }G\text{
as a subset}\right)
\end{align*}
(since the elements of $\mathfrak{K}$ are the broken circuits of $G$). Hence,
\[
\sum_{\substack{F\subseteq E;\\F\text{ is }\mathfrak{K}\text{-free}}%
}=\sum_{\substack{F\subseteq E;\\F\text{ contains no broken}\\\text{circuit of
}G\text{ as a subset}}}
\]
(an equality between summation signs). Now, Corollary
\ref{cor.chromsym.K-free} yields%
\[
X_{G}=\underbrace{\sum_{\substack{F\subseteq E;\\F\text{ is }\mathfrak{K}%
\text{-free}}}}_{=\sum_{\substack{F\subseteq E;\\F\text{ contains no
broken}\\\text{circuit of }G\text{ as a subset}}}}\left(  -1\right)
^{\left\vert F\right\vert }p_{\lambda\left(  V,F\right)  }=\sum
_{\substack{F\subseteq E;\\F\text{ contains no broken}\\\text{circuit of
}G\text{ as a subset}}}\left(  -1\right)  ^{\left\vert F\right\vert
}p_{\lambda\left(  V,F\right)  }.
\]
This proves Corollary \ref{cor.chromsym.NBC}.
\end{proof}
\end{verlong}

\begin{vershort}
\begin{proof}
[Proof of Theorem \ref{thm.chromsym.empty} using Theorem
\ref{thm.chromsym.varis}.]Let $X$ be the totally ordered set $\left\{
1\right\}  $, and let $\ell:E\rightarrow X$ be the only possible map. Let
$\mathfrak{K}$ be the empty set. Clearly, $\mathfrak{K}$ is a set of broken
circuits of $G$. For every $F\subseteq E$, the product $\prod_{\substack{K\in
\mathfrak{K};\\K\subseteq F}}0$ is empty (since $\mathfrak{K}$ is the empty
set), and thus equals $1$. Now, Theorem \ref{thm.chromsym.varis} (applied to
$0$ instead of $a_{K}$) yields%
\[
X_{G}=\sum_{F\subseteq E}\left(  -1\right)  ^{\left\vert F\right\vert
}\underbrace{\left(  \prod_{\substack{K\in\mathfrak{K};\\K\subseteq
F}}0\right)  }_{=1}p_{\lambda\left(  V,F\right)  }=\sum_{F\subseteq E}\left(
-1\right)  ^{\left\vert F\right\vert }p_{\lambda\left(  V,F\right)  }.
\]
This proves Theorem \ref{thm.chromsym.empty}.
\end{proof}
\end{vershort}

\begin{verlong}
\begin{proof}
[Proof of Theorem \ref{thm.chromsym.empty} using Theorem
\ref{thm.chromsym.varis}.]Let $X$ be the totally ordered set $\left\{
1\right\}  $ (equipped with the only possible order on this set). Let
$\ell:E\rightarrow X$ be the function sending each $e\in E$ to $1\in X$. Let
$\mathfrak{K}$ be the empty set. Clearly, $\mathfrak{K}$ is a set of broken
circuits of $G$. Theorem \ref{thm.chromsym.varis} (applied to $0$ instead of
$a_{K}$) yields%
\begin{align*}
X_{G}  &  =\sum_{F\subseteq E}\left(  -1\right)  ^{\left\vert F\right\vert
}\underbrace{\left(  \prod_{\substack{K\in\mathfrak{K};\\K\subseteq
F}}0\right)  }_{\substack{=\left(  \text{empty product}\right)  \\\text{(since
}\mathfrak{K}\text{ is the empty set)}}}p_{\lambda\left(  V,F\right)  }\\
&  =\sum_{F\subseteq E}\left(  -1\right)  ^{\left\vert F\right\vert
}\underbrace{\left(  \text{empty product}\right)  }_{=1}p_{\lambda\left(
V,F\right)  }=\sum_{F\subseteq E}\left(  -1\right)  ^{\left\vert F\right\vert
}p_{\lambda\left(  V,F\right)  }.
\end{align*}
This proves Theorem \ref{thm.chromsym.empty}.
\end{proof}
\end{verlong}

\section{\label{sec.first-proofs}Proof of Theorem \ref{thm.chromsym.varis}}

We shall now prepare for the proof of Theorem \ref{thm.chromsym.varis} with
some notations and some lemmas. Our proof will imitate \cite[proof of
Whitney's theorem]{BlaSag86}. We note that Theorem \ref{thm.chromsym.varis}
can also be easily obtained as a consequence of \cite[\S 2 and \S 3.1]%
{DohTri14} using Theorem \ref{thm.chromsym.empty}, but our proof has the
advantage of not relying on Theorem \ref{thm.chromsym.empty} (so that it leads
to a new proof of Theorem \ref{thm.chromsym.empty}).

\subsection{$\operatorname*{Eqs}f$ and basic lemmas}

We introduce a simple notion that measures \textquotedblleft how
non-proper\textquotedblright\ a given coloring of a graph is:\footnote{If $V$
is a set, then $V^{2}$ denotes the Cartesian product $V\times V$, that is, the
set of all ordered pairs of elements of $V$.}

\begin{definition}
\label{def.Eqs}Let $V$ and $X$ be two sets. Let $f:V\rightarrow X$ be a map.
We let $\operatorname*{Eqs}f$ denote the subset%
\[
\left\{  \left\{  s,t\right\}  \ \mid\ \left(  s,t\right)  \in V^{2},\ s\neq
t\text{ and }f\left(  s\right)  =f\left(  t\right)  \right\}
\]
of $\dbinom{V}{2}$. (This is well-defined, because any two elements $s$ and
$t$ of $V$ satisfying $s\neq t$ clearly satisfy $\left\{  s,t\right\}
\in\dbinom{V}{2}$.)
\end{definition}

\begin{example}
Let $V=\left\{  1,2,3,4,5\right\}  $ and $X=\left\{  1,2,3\right\}  $, and let
$f:V\rightarrow X$ be the map that sends the three numbers $1,2,3$ to $1$ and
the remaining two numbers $4,5$ to $2$. Then,%
\[
\operatorname*{Eqs}f=\left\{  \left\{  1,2\right\}  ,\ \left\{  1,3\right\}
,\ \left\{  2,3\right\}  ,\ \left\{  4,5\right\}  \right\}  .
\]

\end{example}

We shall now state some first properties of this notion:

\begin{lemma}
\label{lem.Eqs.proper}Let $G=\left(  V,E\right)  $ be a graph. Let $X$ be a
set. Let $f:V\rightarrow X$ be a map. Then, the $X$-coloring $f$ of $G$ is
proper if and only if $E\cap\operatorname*{Eqs}f=\varnothing$.
\end{lemma}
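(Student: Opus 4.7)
The plan is to prove the equivalence by unwinding the two definitions and checking that they say exactly the same thing, in both directions. The key observation is that an edge of $G$ is, by Definition \ref{def.graph}, a $2$-element subset of $V$, so any $e \in E$ automatically has the form $\{s,t\}$ with $s,t \in V$ and $s \neq t$; this is precisely the shape of the elements of $\operatorname{Eqs} f$ as given in Definition \ref{def.Eqs}.

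For the forward implication, I would assume $f$ is proper and show $E \cap \operatorname{Eqs} f = \varnothing$ by contradiction: suppose some $e$ lies in the intersection. Write $e = \{s,t\}$ with $s \neq t$ and $f(s) = f(t)$ (using the definition of $\operatorname{Eqs} f$). Since $e \in E$, properness (Definition \ref{def.coloring} \textbf{(b)}) gives $f(s) \neq f(t)$, contradiction.

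For the reverse implication, assume $E \cap \operatorname{Eqs} f = \varnothing$ and take any edge $\{s,t\} \in E$. I need to show $f(s) \neq f(t)$. Note $s \neq t$ because $\{s,t\}$ is a $2$-element set. If we had $f(s) = f(t)$, then $\{s,t\}$ would meet the defining condition of $\operatorname{Eqs} f$, hence $\{s,t\} \in E \cap \operatorname{Eqs} f$, contradicting our assumption. So $f(s) \neq f(t)$, and $f$ is proper.

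The only thing to be careful about is making sure the quantifiers and the \emph{distinctness} condition $s \neq t$ line up correctly: $\operatorname{Eqs} f$ explicitly requires $s \neq t$, while in the definition of a proper coloring this distinctness is built into the fact that an edge is a $2$-element set. No real obstacle is expected; the lemma is essentially a translation between two equivalent reformulations.
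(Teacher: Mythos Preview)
Your proposal is correct and follows essentially the same approach as the paper: both directions are obtained by unwinding the definitions of ``proper'' and of $\operatorname{Eqs} f$, with the key point being that edges are $2$-element subsets so the distinctness condition $s \neq t$ is automatic. The paper's short version compresses this into the single observation that $E \cap \operatorname{Eqs} f$ is exactly the set of edges $\{s,t\}$ with $f(s) = f(t)$, while its long version spells out the two implications just as you do.
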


\begin{vershort}
\begin{proof}
[Proof of Lemma \ref{lem.Eqs.proper}.]The set $E\cap\operatorname*{Eqs}f$ is
precisely the set of edges $\left\{  s,t\right\}  $ of $G$ satisfying
$f\left(  s\right)  =f\left(  t\right)  $; meanwhile, the $X$-coloring $f$ is
called proper if and only if no such edges exist. Thus, Lemma
\ref{lem.Eqs.proper} becomes obvious.
\end{proof}
\end{vershort}

\begin{verlong}
\begin{proof}
[Proof of Lemma \ref{lem.Eqs.proper}.]The definition of $\operatorname*{Eqs}f$
shows that%
\begin{align}
\operatorname*{Eqs}f  &  =\left\{  \left\{  s,t\right\}  \ \mid\ \left(
s,t\right)  \in V^{2},\ s\neq t\text{ and }f\left(  s\right)  =f\left(
t\right)  \right\} \nonumber\\
&  =\left\{  \left\{  x,y\right\}  \ \mid\ \left(  x,y\right)  \in
V^{2},\ x\neq y\text{ and }f\left(  x\right)  =f\left(  y\right)  \right\}
\label{pf.lem.Eqs.proper.0}%
\end{align}
(here, we renamed the index $\left(  s,t\right)  $ as $\left(  x,y\right)  $).

We shall first prove the logical implication%
\begin{equation}
\left(  \text{the }X\text{-coloring }f\text{ of }G\text{ is proper}\right)
\ \Longrightarrow\ \left(  E\cap\operatorname*{Eqs}f=\varnothing\right)  .
\label{pf.lem.Eqs.proper.im1}%
\end{equation}

\textit{Proof of (\ref{pf.lem.Eqs.proper.im1}):} Assume that the $X$-coloring
$f$ of $G$ is proper. We must show that $E\cap\operatorname*{Eqs}%
f=\varnothing$.

Recall that the $X$-coloring $f$ of $G$ is proper if and only if every edge
$\left\{  s,t\right\}  \in E$ satisfies $f\left(  s\right)  \neq f\left(
t\right)  $ (by the definition of \textquotedblleft proper\textquotedblright).
Thus,%
\begin{equation}
\text{every edge }\left\{  s,t\right\}  \in E\text{ satisfies }f\left(
s\right)  \neq f\left(  t\right)  \label{pf.lem.Eqs.proper.im1.pf.1}%
\end{equation}
(since the $X$-coloring $f$ of $G$ is proper).

Now, let $e\in E\cap\operatorname*{Eqs}f$. Thus,%
\[
e\in E\cap\operatorname*{Eqs}f\subseteq\operatorname*{Eqs}f=\left\{  \left\{
s,t\right\}  \ \mid\ \left(  s,t\right)  \in V^{2},\ s\neq t\text{ and
}f\left(  s\right)  =f\left(  t\right)  \right\}
\]
(by the definition of $\operatorname*{Eqs}f$). In other words, $e=\left\{
s,t\right\}  $ for some $\left(  s,t\right)  \in V^{2}$ satisfying $s\neq t$
and $f\left(  s\right)  =f\left(  t\right)  $. Consider this $\left(
s,t\right)  $. We have $\left\{  s,t\right\}  =e\in E\cap\operatorname*{Eqs}%
f\subseteq E$ and therefore $f\left(  s\right)  \neq f\left(  t\right)  $ (by
(\ref{pf.lem.Eqs.proper.im1.pf.1})). This contradicts $f\left(  s\right)
=f\left(  t\right)  $.

Now, let us forget that we fixed $e$. We thus have found a contradiction for
every $e\in E\cap\operatorname*{Eqs}f$. Therefore, no $e\in E\cap
\operatorname*{Eqs}f$ exists. In other words, the set $E\cap
\operatorname*{Eqs}f$ is empty. In other words, $E\cap\operatorname*{Eqs}%
f=\varnothing$. Thus, the implication (\ref{pf.lem.Eqs.proper.im1}) is proven.

Now, we shall prove the implication%
\begin{equation}
\left(  E\cap\operatorname*{Eqs}f=\varnothing\right)  \ \Longrightarrow
\ \left(  \text{the }X\text{-coloring }f\text{ of }G\text{ is proper}\right)
. \label{pf.lem.Eqs.proper.im2}%
\end{equation}

\textit{Proof of (\ref{pf.lem.Eqs.proper.im2}):} Assume that $E\cap
\operatorname*{Eqs}f=\varnothing$. We have to show that the $X$-coloring $f$
of $G$ is proper.

Let $\left\{  s,t\right\}  \in E$ be an edge. We shall now show that $f\left(
s\right)  \neq f\left(  t\right)  $.

Indeed, assume the contrary. Thus, $f\left(  s\right)  =f\left(  t\right)  $.
Now, $\left\{  s,t\right\}  \in E\subseteq\dbinom{V}{2}$. In other words,
$\left\{  s,t\right\}  $ is a $2$-element subset of $V$ (since $\dbinom{V}{2}$
is the set of all $2$-element subsets of $V$). Thus, $\left\vert \left\{
s,t\right\}  \right\vert =2$, so that $s\neq t$. Also, $\left\{  s,t\right\}
$ is a subset of $V$; thus, $s\in V$ and $t\in V$. Hence, $\left(  s,t\right)
\in V^{2}$. So we know that $\left\{  s,t\right\}  $ has the form $\left\{
x,y\right\}  $ for some $\left(  x,y\right)  \in V^{2}$ satisfying $x\neq y$
and $f\left(  x\right)  =f\left(  y\right)  $ (namely, this $\left(
x,y\right)  $ is $\left(  s,t\right)  $). In other words,%
\[
\left\{  s,t\right\}  \in\left\{  \left\{  x,y\right\}  \ \mid\ \left(
x,y\right)  \in V^{2},\ x\neq y\text{ and }f\left(  x\right)  =f\left(
y\right)  \right\}  =\operatorname*{Eqs}f
\]
(by (\ref{pf.lem.Eqs.proper.0})). Combining this with $\left\{  s,t\right\}
\in E$, we obtain $\left\{  s,t\right\}  \in E\cap\operatorname*{Eqs}%
f=\varnothing$. Thus, the set $\varnothing$ has an element (namely, $\left\{
s,t\right\}  $). This contradicts the fact that the set $\varnothing$ is
empty. Thus, we have obtained a contradiction. This shows that our assumption
was wrong. Hence, $f\left(  s\right)  \neq f\left(  t\right)  $ is proven.

Let us now forget that we fixed $\left\{  s,t\right\}  $. We thus have shown
that every edge $\left\{  s,t\right\}  \in E$ satisfies $f\left(  s\right)
\neq f\left(  t\right)  $. Therefore, the $X$-coloring $f$ of $G$ is proper
(since the $X$-coloring $f$ of $G$ is proper if and only if every edge
$\left\{  s,t\right\}  \in E$ satisfies $f\left(  s\right)  \neq f\left(
t\right)  $ (by the definition of \textquotedblleft proper\textquotedblright%
)). This proves the implication (\ref{pf.lem.Eqs.proper.im2}).

Now we have proven the two implications (\ref{pf.lem.Eqs.proper.im1}) and
(\ref{pf.lem.Eqs.proper.im2}). Combining these two implications, we obtain the
equivalence%
\[
\left(  \text{the }X\text{-coloring }f\text{ of }G\text{ is proper}\right)
\ \Longleftrightarrow\ \left(  E\cap\operatorname*{Eqs}f=\varnothing\right)
.
\]
This proves Lemma \ref{lem.Eqs.proper}.
\end{proof}
\end{verlong}

\begin{lemma}
\label{lem.Eqs.circuit}Let $G=\left(  V,E\right)  $ be a graph. Let $X$ be a
set. Let $f:V\rightarrow X$ be a map. Let $C$ be a circuit of $G$. Let $e\in
C$ be such that $C\setminus\left\{  e\right\}  \subseteq\operatorname*{Eqs}f$.
Then, $e\in E\cap\operatorname*{Eqs}f$.
\end{lemma}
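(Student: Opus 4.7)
The plan is direct: use the cyclic structure of a circuit to propagate the equality of $f$-values along the edges in $C \setminus \{e\}$ and conclude that $f$ takes the same value on both endpoints of $e$.

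First, since $C$ is a circuit, it arises from a cycle $\left(v_1, v_2, \ldots, v_{m+1}\right)$ of $G$ with $m > 1$, $v_{m+1} = v_1$, pairwise distinct $v_1, \ldots, v_m$, and each edge $\{v_i, v_{i+1}\} \in E$. In particular, $C \subseteq E$, so $e \in C \subseteq E$; it remains to show $e \in \operatorname{Eqs} f$.

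Since $e \in C$, we have $e = \{v_j, v_{j+1}\}$ for some $j \in \{1, \ldots, m\}$. By cyclically relabeling the cycle (i.e., replacing $(v_1, \ldots, v_{m+1})$ by $(v_j, v_{j+1}, \ldots, v_m, v_1, v_2, \ldots, v_j)$), we may assume without loss of generality that $e = \{v_1, v_2\}$. Then $C \setminus \{e\} = \{\{v_2, v_3\}, \{v_3, v_4\}, \ldots, \{v_m, v_{m+1}\}\}$. By hypothesis, each of these edges lies in $\operatorname{Eqs} f$, which by the definition of $\operatorname{Eqs} f$ means that $f(v_i) = f(v_{i+1})$ for each $i \in \{2, 3, \ldots, m\}$. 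Chaining these equalities gives
\[
f(v_2) = f(v_3) = \cdots = f(v_m) = f(v_{m+1}) = f(v_1).
\]
Since $v_1 \neq v_2$ (the vertices $v_1, \ldots, v_m$ are pairwise distinct), we conclude that $e = \{v_1, v_2\}$ belongs to
\[
\left\{\{s, t\} \;\mid\; (s, t) \in V^2,\ s \neq t,\ f(s) = f(t)\right\} = \operatorname{Eqs} f,
\]
and combining with $e \in E$ yields $e \in E \cap \operatorname{Eqs} f$.

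The argument is essentially routine; the only mildly subtle point is the cyclic relabeling, which is needed to ensure that $e$ sits at a distinguished position in the cycle so that the remaining edges form a connected path between its endpoints. No other step poses any real obstacle.
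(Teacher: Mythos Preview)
Your proof is correct and follows essentially the same approach as the paper's: write the circuit as the edge set of a cycle $(v_1,\ldots,v_{m+1})$, cyclically relabel so that $e$ sits at a fixed position, and chain the equalities $f(v_i)=f(v_{i+1})$ along the remaining edges to close the loop. The only cosmetic difference is that the paper places $e$ at $\{v_m,v_{m+1}\}$ whereas you place it at $\{v_1,v_2\}$; this is immaterial.
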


\begin{vershort}
\begin{proof}
[Proof of Lemma \ref{lem.Eqs.circuit}.]The set $C$ is a circuit of $G$. Hence,
we can write $C$ in the form%
\[
C=\left\{  \left\{  v_{1},v_{2}\right\}  ,\left\{  v_{2},v_{3}\right\}
,\ldots,\left\{  v_{m},v_{m+1}\right\}  \right\}
\]
for some cycle $\left(  v_{1},v_{2},\ldots,v_{m+1}\right)  $ of $G$. Consider
this cycle $\left(  v_{1},v_{2},\ldots,v_{m+1}\right)  $. According to the
definition of a \textquotedblleft cycle\textquotedblright, the cycle $\left(
v_{1},v_{2},\ldots,v_{m+1}\right)  $ is a list of elements of $V$ having the
following properties:

\begin{itemize}
\item We have $m>2$.

\item We have $v_{m+1}=v_{1}$.

\item The vertices $v_{1},v_{2},\ldots,v_{m}$ are pairwise distinct.

\item We have $\left\{  v_{i},v_{i+1}\right\}  \in E$ for every $i\in\left\{
1,2,\ldots,m\right\}  $.
\end{itemize}

From the first three of these properties, we can easily conclude that the $m$
sets $\left\{  v_{1},v_{2}\right\}  ,\left\{  v_{2},v_{3}\right\}
,\ldots,\left\{  v_{m},v_{m+1}\right\}  $ are distinct.

Recall that $e\in C$. Therefore, $e=\left\{  v_{i},v_{i+1}\right\}  $ for some
$i\in\left\{  1,2,\ldots,m\right\}  $. We can thus WLOG assume that
$e=\left\{  v_{m},v_{m+1}\right\}  $ (since otherwise, we can simply relabel
the vertices along the cycle $\left(  v_{1},v_{2},\ldots,v_{m+1}\right)  $).
Assume this. Since $C=\left\{  \left\{  v_{1},v_{2}\right\}  ,\left\{
v_{2},v_{3}\right\}  ,\ldots,\left\{  v_{m},v_{m+1}\right\}  \right\}  $ and
$e=\left\{  v_{m},v_{m+1}\right\}  $, we have%
\[
C\setminus\left\{  e\right\}  =\left\{  \left\{  v_{1},v_{2}\right\}
,\left\{  v_{2},v_{3}\right\}  ,\ldots,\left\{  v_{m-1},v_{m}\right\}
\right\}
\]
(since the $m$ sets $\left\{  v_{1},v_{2}\right\}  ,\left\{  v_{2}%
,v_{3}\right\}  ,\ldots,\left\{  v_{m},v_{m+1}\right\}  $ are distinct). For
every $i\in\left\{  1,2,\ldots,m-1\right\}  $, we have $f\left(  v_{i}\right)
=f\left(  v_{i+1}\right)  $ (since
\[
\left\{  v_{i},v_{i+1}\right\}  \in\left\{  \left\{  v_{1},v_{2}\right\}
,\left\{  v_{2},v_{3}\right\}  ,\ldots,\left\{  v_{m-1},v_{m}\right\}
\right\}  =C\setminus\left\{  e\right\}  \subseteq\operatorname*{Eqs}f
\]
). Hence, $f\left(  v_{1}\right)  =f\left(  v_{2}\right)  =\cdots=f\left(
v_{m}\right)  $, so that $f\left(  v_{m}\right)  =f\left(  v_{1}\right)
=f\left(  v_{m+1}\right)  $ (because $v_{1}=v_{m+1}$). Thus, $\left\{
v_{m},v_{m+1}\right\}  \in\operatorname*{Eqs}f$. Thus, $e=\left\{
v_{m},v_{m+1}\right\}  \in\operatorname*{Eqs}f$. Combined with $e\in E$, this
yields $e\in E\cap\operatorname*{Eqs}f$. This proves Lemma
\ref{lem.Eqs.circuit}.
\end{proof}
\end{vershort}

\begin{verlong}
\begin{proof}
[Proof of Lemma \ref{lem.Eqs.circuit}.]The set $C$ is a circuit of $G$. In
other words, the set $C$ has the form $\left\{  \left\{  v_{1},v_{2}\right\}
,\left\{  v_{2},v_{3}\right\}  ,\ldots,\left\{  v_{m},v_{m+1}\right\}
\right\}  $, where $\left(  v_{1},v_{2},\ldots,v_{m+1}\right)  $ is a cycle of
$G$ (by the definition of a \textquotedblleft circuit\textquotedblright).
Consider this cycle $\left(  v_{1},v_{2},\ldots,v_{m+1}\right)  $. We thus
have
\begin{equation}
C=\left\{  \left\{  v_{1},v_{2}\right\}  ,\left\{  v_{2},v_{3}\right\}
,\ldots,\left\{  v_{m},v_{m+1}\right\}  \right\}  .
\label{pf.lem.Eqs.circuit.1}%
\end{equation}

The list $\left(  v_{1},v_{2},\ldots,v_{m+1}\right)  $ is a cycle of $G$.
According to the definition of a \textquotedblleft cycle\textquotedblright,
this means that this list is a list of elements of $V$ satisfying the
following four properties:

\begin{itemize}
\item We have $m>2$.

\item We have $v_{m+1}=v_{1}$.

\item The vertices $v_{1},v_{2},\ldots,v_{m}$ are pairwise distinct.

\item We have $\left\{  v_{i},v_{i+1}\right\}  \in E$ for every $i\in\left\{
1,2,\ldots,m\right\}  $.
\end{itemize}

Thus, $\left(  v_{1},v_{2},\ldots,v_{m+1}\right)  $ is a list of elements of
$V$ satisfying the four properties that we have just mentioned. In particular,
$v_{m+1}=v_{1}$. Also,%
\begin{equation}
\left\{  v_{i},v_{i+1}\right\}  \in E\ \ \ \ \ \ \ \ \ \ \text{for every }%
i\in\left\{  1,2,\ldots,m\right\}  . \label{pf.lem.Eqs.circuit.inE}%
\end{equation}

Any two distinct elements $p$ and $q$ of $\left\{  1,2,\ldots,m\right\}  $
satisfy%
\begin{equation}
\left\{  v_{p},v_{p+1}\right\}  \neq\left\{  v_{q},v_{q+1}\right\}
\label{pf.lem.Eqs.circuit.distinct-edges}%
\end{equation}
\footnote{\textit{Proof of (\ref{pf.lem.Eqs.circuit.distinct-edges}):} Let $p$
and $q$ be two distinct elements of $\left\{  1,2,\ldots,m\right\}  $. We must
prove (\ref{pf.lem.Eqs.circuit.distinct-edges}).
\par
Assume the contrary. Thus, $\left\{  v_{p},v_{p+1}\right\}  =\left\{
v_{q},v_{q+1}\right\}  $.
\par
The vertices $v_{1},v_{2},\ldots,v_{m}$ are pairwise distinct. In other words,
any two distinct elements $a$ and $b$ of $\left\{  1,2,\ldots,m\right\}  $
satisfy $v_{a}\neq v_{b}$. Applying this to $a=p$ and $b=q$, we obtain
$v_{p}\neq v_{q}$. Combining $v_{p}\in\left\{  v_{p},v_{p+1}\right\}
=\left\{  v_{q},v_{q+1}\right\}  $ with $v_{p}\neq v_{q}$, we obtain $v_{p}%
\in\left\{  v_{q},v_{q+1}\right\}  \setminus\left\{  v_{q}\right\}
\subseteq\left\{  v_{q+1}\right\}  $. Thus, $v_{p}=v_{q+1}$. The same argument
(with $p$ and $q$ replaced by $q$ and $p$) yields $v_{q}=v_{p+1}$.
\par
We have $p\neq q$ (since $p$ and $q$ are distinct). Thus, we can WLOG assume
that $p<q$ (since otherwise, we can simply switch $p$ with $q$). Assume this.
From $q\in\left\{  1,2,\ldots,m\right\}  $, we obtain $q\leq m$, so that
$p<q\leq m$. Since $p$ and $m$ are integers, this shows that $p\leq m-1$.
Thus, $p+1\leq m$. Hence, $p+1\in\left\{  1,2,\ldots,m\right\}  $.
\par
Recall again that any two distinct elements $a$ and $b$ of $\left\{
1,2,\ldots,m\right\}  $ satisfy $v_{a}\neq v_{b}$. In other words, if two
elements $a$ and $b$ of $\left\{  1,2,\ldots,m\right\}  $ satisfy $v_{a}%
=v_{b}$, then $a=b$. Applying this to $a=q$ and $b=p+1$, we obtain $q=p+1$
(since $q\in\left\{  1,2,\ldots,m\right\}  $ and $p+1\in\left\{
1,2,\ldots,m\right\}  $ and $v_{q}=v_{p+1}$). The same argument (but with the
roles of $p$ and $q$ switched) shows that $p=q+1$ if $q+1\in\left\{
1,2,\ldots,m\right\}  $. Since $p=q+1$ is impossible (because $q+1>q=p+1>p$),
we thus conclude that $q+1\in\left\{  1,2,\ldots,m\right\}  $ is impossible as
well. Thus, we have $q+1\notin\left\{  1,2,\ldots,m\right\}  $. In other
words, $q\notin\left\{  0,1,\ldots,m-1\right\}  $.
\par
Combining $q\in\left\{  1,2,\ldots,m\right\}  $ with $q\notin\left\{
0,1,\ldots,m-1\right\}  $, we find $q\in\left\{  1,2,\ldots,m\right\}
\setminus\left\{  0,1,\ldots,m-1\right\}  =\left\{  m\right\}  $. In other
words, $q=m$. Comparing this with $q=p+1$, we obtain $p+1=m$, so that $p=m-1$.
Hence, $v_{p}=v_{m-1}$, so that $v_{m-1}=v_{p}=v_{q+1}=v_{m+1}$ (since $q=m$).
Therefore, $v_{m-1}=v_{m+1}=v_{1}$.
\par
However, $m>2$, so that $m-1>1$ and thus $m-1\neq1$. In other words, $m-1$ and
$1$ are distinct. Recall again that any two distinct elements $a$ and $b$ of
$\left\{  1,2,\ldots,m\right\}  $ satisfy $v_{a}\neq v_{b}$. Applying this to
$a=m-1$ and $b=1$, we obtain $v_{m-1}\neq v_{1}$ (since $m-1$ and $1$ are
distinct). This contradicts $v_{m-1}=v_{1}$.
\par
We thus have found a contradiction. This contradiction proves that our
assumption was wrong. Hence, (\ref{pf.lem.Eqs.circuit.distinct-edges}) is
proven.}.

We have $e\in C=\left\{  \left\{  v_{1},v_{2}\right\}  ,\left\{  v_{2}%
,v_{3}\right\}  ,\ldots,\left\{  v_{m},v_{m+1}\right\}  \right\}  $. Thus,
$e=\left\{  v_{i},v_{i+1}\right\}  $ for some $i\in\left\{  1,2,\ldots
,m\right\}  $. Consider this $i$.

Now, we have%
\begin{equation}
f\left(  v_{j}\right)  =f\left(  v_{j+1}\right)  \ \ \ \ \ \ \ \ \ \ \text{for
every }j\in\left\{  1,2,\ldots,m\right\}  \setminus\left\{  i\right\}
\label{pf.lem.Eqs.circuit.2}%
\end{equation}
\footnote{\textit{Proof of (\ref{pf.lem.Eqs.circuit.2}):} Let $j\in\left\{
1,2,\ldots,m\right\}  \setminus\left\{  i\right\}  $. Thus, $j\in\left\{
1,2,\ldots,m\right\}  $ and $j\notin\left\{  i\right\}  $. From $j\notin%
\left\{  i\right\}  $, we obtain $j\neq i$. Therefore, the two elements $j$
and $i$ of $\left\{  1,2,\ldots,m\right\}  $ are distinct. Thus,
(\ref{pf.lem.Eqs.circuit.distinct-edges}) (applied to $p=j$ and $q=i$) shows
that $\left\{  v_{j},v_{j+1}\right\}  \neq\left\{  v_{i},v_{i+1}\right\}  =e$.
But from $j\in\left\{  1,2,\ldots,m\right\}  $, we obtain%
\begin{align*}
\left\{  v_{j},v_{j+1}\right\}   &  \in\left\{  \left\{  v_{k},v_{k+1}%
\right\}  \ \mid\ k\in\left\{  1,2,\ldots,m\right\}  \right\} \\
&  =\left\{  \left\{  v_{1},v_{2}\right\}  ,\left\{  v_{2},v_{3}\right\}
,\ldots,\left\{  v_{m},v_{m+1}\right\}  \right\}  =C
\end{align*}
(by (\ref{pf.lem.Eqs.circuit.1})). Combining this with $\left\{  v_{j}%
,v_{j+1}\right\}  \neq e$, we obtain%
\begin{align*}
\left\{  v_{j},v_{j+1}\right\}   &  \in C\setminus\left\{  e\right\}
\subseteq\operatorname*{Eqs}f\\
&  =\left\{  \left\{  s,t\right\}  \ \mid\ \left(  s,t\right)  \in
V^{2},\ s\neq t\text{ and }f\left(  s\right)  =f\left(  t\right)  \right\}  .
\end{align*}
In other words, $\left\{  v_{j},v_{j+1}\right\}  $ has the form $\left\{
s,t\right\}  $ for some $\left(  s,t\right)  \in V^{2}$ satisfying $s\neq t$
and $f\left(  s\right)  =f\left(  t\right)  $. Consider this $\left(
s,t\right)  $. Thus, $\left\{  v_{j},v_{j+1}\right\}  =\left\{  s,t\right\}
$.
\par
We have $f\left(  s\right)  =f\left(  t\right)  $. Therefore, set $g=f\left(
s\right)  =f\left(  t\right)  $. We have%
\[
f\left(  \left\{  s,t\right\}  \right)  =\left\{  \underbrace{f\left(
s\right)  }_{=g},\underbrace{f\left(  t\right)  }_{=g}\right\}  =\left\{
g,g\right\}  =\left\{  g\right\}  .
\]
Now, $v_{j}\in\left\{  v_{j},v_{j+1}\right\}  =\left\{  s,t\right\}  $, and
thus $f\left(  \underbrace{v_{j}}_{\in\left\{  s,t\right\}  }\right)  \in
f\left(  \left\{  s,t\right\}  \right)  =\left\{  g\right\}  $. In other
words, $f\left(  v_{j}\right)  =g$. Also, $v_{j+1}\in\left\{  v_{j}%
,v_{j+1}\right\}  =\left\{  s,t\right\}  $, and thus $f\left(
\underbrace{v_{j+1}}_{\in\left\{  s,t\right\}  }\right)  \in f\left(  \left\{
s,t\right\}  \right)  =\left\{  g\right\}  $. In other words, $f\left(
v_{j+1}\right)  =g$. Comparing this with $f\left(  v_{j}\right)  =g$, we
obtain $f\left(  v_{j}\right)  =f\left(  v_{j+1}\right)  $. This proves
(\ref{pf.lem.Eqs.circuit.2}).}. Hence,%
\begin{equation}
f\left(  v_{1}\right)  =f\left(  v_{i}\right)  \label{pf.lem.Eqs.circuit.3a}%
\end{equation}
\footnote{\textit{Proof of (\ref{pf.lem.Eqs.circuit.3a}):} Let $j\in\left\{
1,2,\ldots,i-1\right\}  $. Thus, $j\in\left\{  1,2,\ldots,i-1\right\}
\subseteq\left\{  1,2,\ldots,m\right\}  $. Combining this with $j\neq i$
(since $j<i$ (since $j\in\left\{  1,2,\ldots,i-1\right\}  $)), we obtain
$j\in\left\{  1,2,\ldots,m\right\}  \setminus\left\{  i\right\}  $. Hence,
$f\left(  v_{j}\right)  =f\left(  v_{j+1}\right)  $ (by
(\ref{pf.lem.Eqs.circuit.2})).
\par
Now, let us forget that we fixed $j$. We thus have proven that $f\left(
v_{j}\right)  =f\left(  v_{j+1}\right)  $ for every $j\in\left\{
1,2,\ldots,i-1\right\}  $. In other words, $f\left(  v_{1}\right)  =f\left(
v_{2}\right)  =\cdots=f\left(  v_{i}\right)  $. Hence, $f\left(  v_{1}\right)
=f\left(  v_{i}\right)  $. This proves (\ref{pf.lem.Eqs.circuit.3a}).}. Also,%
\begin{equation}
f\left(  v_{i+1}\right)  =f\left(  v_{m+1}\right)
\label{pf.lem.Eqs.circuit.3b}%
\end{equation}
\footnote{\textit{Proof of (\ref{pf.lem.Eqs.circuit.3b}):} Let $j\in\left\{
i+1,i+2,\ldots,m\right\}  $. Thus, $j\in\left\{  i+1,i+2,\ldots,m\right\}
\subseteq\left\{  1,2,\ldots,m\right\}  $. Combining this with $j\neq i$
(since $j>i$ (since $j\in\left\{  i+1,i+2,\ldots,m\right\}  $)), we obtain
$j\in\left\{  1,2,\ldots,m\right\}  \setminus\left\{  i\right\}  $. Hence,
$f\left(  v_{j}\right)  =f\left(  v_{j+1}\right)  $ (by
(\ref{pf.lem.Eqs.circuit.2})).
\par
Now, let us forget that we fixed $j$. We thus have proven that $f\left(
v_{j}\right)  =f\left(  v_{j+1}\right)  $ for every $j\in\left\{
i+1,i+2,\ldots,m\right\}  $. In other words, $f\left(  v_{i+1}\right)
=f\left(  v_{i+2}\right)  =\cdots=f\left(  v_{m+1}\right)  $. Hence, $f\left(
v_{i+1}\right)  =f\left(  v_{m+1}\right)  $. This proves
(\ref{pf.lem.Eqs.circuit.3b}).}. Now, (\ref{pf.lem.Eqs.circuit.3a}) yields%
\[
f\left(  v_{i}\right)  =f\left(  \underbrace{v_{1}}_{=v_{m+1}}\right)
=f\left(  v_{m+1}\right)  =f\left(  v_{i+1}\right)
\ \ \ \ \ \ \ \ \ \ \left(  \text{by (\ref{pf.lem.Eqs.circuit.3b})}\right)  .
\]

Moreover, $v_{i}$ and $v_{i+1}$ are elements of $V$ (since $\left(
v_{1},v_{2},\ldots,v_{m+1}\right)  $ is a list of elements of $V$). In other
words, $v_{i}\in V$ and $v_{i+1}\in V$. Hence, $\left(  v_{i},v_{i+1}\right)
\in V^{2}$.

Furthermore, $v_{i}\neq v_{i+1}$\ \ \ \ \footnote{\textit{Proof.} Assume the
contrary. Thus, $v_{i}=v_{i+1}$.
\par
Let us first assume (for the sake of contradiction) that $i=m$. Thus,
$v_{i}=v_{m}$. Also, from $i=m$, we obtain $v_{i+1}=v_{m+1}=v_{1}$. Hence,
$v_{m}=v_{i}=v_{i+1}=v_{1}$.
\par
The vertices $v_{1},v_{2},\ldots,v_{m}$ are pairwise distinct. In other words,
any two distinct elements $a$ and $b$ of $\left\{  1,2,\ldots,m\right\}  $
satisfy $v_{a}\neq v_{b}$. Applying this to $a=m$ and $b=1$, we obtain
$v_{m}\neq v_{1}$ (since $m$ and $1$ are distinct (since $m>2>1$)). This
contradicts $v_{m}=v_{1}$.
\par
This contradiction proves that our assumption (that $i=m$) was wrong. Hence,
we cannot have $i=m$. We thus have $i\neq m$. Combined with $i\in\left\{
1,2,\ldots,m\right\}  $, this yields $i\in\left\{  1,2,\ldots,m\right\}
\setminus\left\{  m\right\}  \subseteq\left\{  1,2,\ldots,m-1\right\}  $.
Thus, $i+1\in\left\{  2,3,\ldots,m\right\}  \subseteq\left\{  1,2,\ldots
,m\right\}  $.
\par
Now, recall that any two distinct elements $a$ and $b$ of $\left\{
1,2,\ldots,m\right\}  $ satisfy $v_{a}\neq v_{b}$. We can apply this to $a=i$
and $b=i+1$ (since $i+1\in\left\{  1,2,\ldots,m\right\}  $). Thus, we obtain
$v_{i}\neq v_{i+1}$. This contradicts $v_{i}=v_{i+1}$. This contradiction
shows that our assumption was wrong. Qed.}.

Now, the definition of $\operatorname*{Eqs}f$ shows that%
\begin{equation}
\operatorname*{Eqs}f=\left\{  \left\{  s,t\right\}  \ \mid\ \left(
s,t\right)  \in V^{2},\ s\neq t\text{ and }f\left(  s\right)  =f\left(
t\right)  \right\}  . \label{pf.lem.Eqs.circuit.Eqs-def}%
\end{equation}

However, we have $\left(  v_{i},v_{i+1}\right)  \in V^{2}$, $v_{i}\neq
v_{i+1}$ and $f\left(  v_{i}\right)  =f\left(  v_{i+1}\right)  $. Hence, the
set $\left\{  v_{i},v_{i+1}\right\}  $ has the form $\left\{  s,t\right\}  $
for some $\left(  s,t\right)  \in V^{2}$ satisfying $s\neq t$ and $f\left(
s\right)  =f\left(  t\right)  $ (namely, for $\left(  s,t\right)  =\left(
v_{i},v_{i+1}\right)  $). Thus,%
\[
\left\{  v_{i},v_{i+1}\right\}  \in\left\{  \left\{  s,t\right\}
\ \mid\ \left(  s,t\right)  \in V^{2},\ s\neq t\text{ and }f\left(  s\right)
=f\left(  t\right)  \right\}  =\operatorname*{Eqs}f
\]
(by (\ref{pf.lem.Eqs.circuit.Eqs-def})). Thus, $e=\left\{  v_{i}%
,v_{i+1}\right\}  \in\operatorname*{Eqs}f$.

But $e=\left\{  v_{i},v_{i+1}\right\}  \in E$ (by
(\ref{pf.lem.Eqs.circuit.inE})). Combining this with $e\in\operatorname*{Eqs}%
f$, we obtain $e\in E\cap\operatorname*{Eqs}f$. This proves Lemma
\ref{lem.Eqs.circuit}.
\end{proof}
\end{verlong}

\begin{verlong}
\begin{lemma}
\label{lem.Eqs.sum-aux}Let $\left(  V,B\right)  $ be a finite graph. Let
$\sim$ denote the equivalence relation $\sim_{\left(  V,B\right)  }$ (defined
as in Definition \ref{def.connectedness} \textbf{(a)}).

Let $Y$ be a set. A set $Y_{\sim}^{V}$ is defined (according to Definition
\ref{def.relquot.maps} \textbf{(b)}). Let $f:V\rightarrow Y$ be any map. Then,
we have the following logical equivalence of statements:%
\[
\left(  B\subseteq\operatorname*{Eqs}f\right)  \ \Longleftrightarrow\ \left(
f\in Y_{\sim}^{V}\right)  .
\]

\end{lemma}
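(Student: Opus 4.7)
The plan is to prove the two implications separately, unpacking the definitions directly. Recall that by Definition \ref{def.Eqs}, an edge $\{s,t\} \in \dbinom{V}{2}$ lies in $\operatorname*{Eqs}f$ precisely when $f(s) = f(t)$; and by Definition \ref{def.connectedness}, $u \sim v$ holds iff there is a walk from $u$ to $v$ in $(V,B)$. The whole lemma is essentially saying that $f$ is constant on each connected component of $(V,B)$ iff $f$ agrees on the two endpoints of every edge of $B$. Connecting these is exactly the role of walks.

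For the forward implication, I would assume $B \subseteq \operatorname*{Eqs}f$ and take arbitrary $u,v \in V$ with $u \sim v$. By the definition of $\sim$, there is a walk $(w_0, w_1, \ldots, w_k)$ from $u$ to $v$ in $(V,B)$. Each consecutive edge $\{w_i, w_{i+1}\}$ belongs to $B$, hence to $\operatorname*{Eqs}f$, so $f(w_i) = f(w_{i+1})$. Chaining these equalities yields $f(u) = f(w_0) = f(w_k) = f(v)$. Since $u,v$ were arbitrary (subject to $u \sim v$), this shows $f \in Y_{\sim}^{V}$.

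For the reverse implication, I would assume $f \in Y_{\sim}^{V}$ and pick any $e \in B$. Since $e \in B \subseteq \dbinom{V}{2}$, we can write $e = \{s,t\}$ with $s,t \in V$ and $s \neq t$. The length-$1$ walk $(s,t)$ witnesses $s \sim t$ (as $\{s,t\} = e \in B$). The assumption $f \in Y_{\sim}^{V}$ then gives $f(s) = f(t)$, so that $e = \{s,t\}$ has the form required by the definition of $\operatorname*{Eqs}f$, i.e., $e \in \operatorname*{Eqs}f$. Hence $B \subseteq \operatorname*{Eqs}f$.

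There is no real obstacle here; both directions are almost immediate from the definitions. The only point requiring a little care is translating between the set-theoretic description of $\operatorname*{Eqs}f$ (which quantifies over ordered pairs $(s,t) \in V^2$ with $s \neq t$) and the unordered edges $\{s,t\} \in B$, but this is straightforward because every element of $B \subseteq \dbinom{V}{2}$ is automatically a two-element subset of $V$.
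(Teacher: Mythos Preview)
Your proof is correct and follows essentially the same approach as the paper's own proof: both directions are proved separately, the forward implication via chaining equalities along a walk witnessing $u\sim v$, and the reverse implication by observing that any edge $\{s,t\}\in B$ yields a length-$1$ walk so that $s\sim t$ and hence $f(s)=f(t)$.
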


\begin{proof}
[Proof of Lemma \ref{lem.Eqs.sum-aux}.]We have%
\begin{equation}
Y_{\sim}^{V}=\left\{  g\in Y^{V}\ \mid\ g\left(  x\right)  =g\left(  y\right)
\text{ for any }x\in V\text{ and }y\in V\text{ satisfying }x\sim y\right\}
\label{pf.lem.Eqs.sum.defN+Vsim}%
\end{equation}
(by the definition of $Y_{\sim}^{V}$).

The definition of $\operatorname*{Eqs}f$ shows that%
\begin{align}
\operatorname*{Eqs}f  &  =\left\{  \left\{  s,t\right\}  \ \mid\ \left(
s,t\right)  \in V^{2},\ s\neq t\text{ and }f\left(  s\right)  =f\left(
t\right)  \right\} \nonumber\\
&  =\left\{  \left\{  x,y\right\}  \ \mid\ \left(  x,y\right)  \in
V^{2},\ x\neq y\text{ and }f\left(  x\right)  =f\left(  y\right)  \right\}
\label{pf.lem.Eqs.sum.defEqs}%
\end{align}
(here, we renamed the index $\left(  s,t\right)  $ as $\left(  x,y\right)  $).

We shall now show the following logical implication:%
\begin{equation}
\left(  B\subseteq\operatorname*{Eqs}f\right)  \ \Longrightarrow\ \left(  f\in
Y_{\sim}^{V}\right)  \label{pf.lem.Eqs.sum.im1}%
\end{equation}

\textit{Proof of (\ref{pf.lem.Eqs.sum.im1}):} Assume that $B\subseteq
\operatorname*{Eqs}f$. We must show that $f\in Y_{\sim}^{V}$.

Let $x\in V$ and $y\in V$ be such that $x\sim y$. We have $x\sim y$. In other
words, $x\sim_{\left(  V,B\right)  }y$ (since $\sim$ is the equivalence
relation $\sim_{\left(  V,B\right)  }$). In other words, $x$ and $y$ are
connected in the graph $\left(  V,B\right)  $ (since $x\sim_{\left(
V,B\right)  }y$ holds if and only if $x$ and $y$ are connected in the graph
$\left(  V,B\right)  $ (by the definition of the relation $\sim_{\left(
V,B\right)  }$)). In other words, there exists a walk from $x$ to $y$ in
$\left(  V,B\right)  $ (since $x$ and $y$ are connected in $\left(
V,B\right)  $ if and only if there exists a walk from $x$ to $y$ in $\left(
V,B\right)  $ (by the definition of \textquotedblleft
connected\textquotedblright)). Let $\mathfrak{w}$ be this walk. Thus,
$\mathfrak{w}$ is a walk from $x$ to $y$ in $\left(  V,B\right)  $. In other
words, $\mathfrak{w}$ is a sequence $\left(  w_{0},w_{1},\ldots,w_{k}\right)
$ of elements of $V$ such that $w_{0}=x$ and $w_{k}=y$ and%
\begin{equation}
\left(  \left\{  w_{i},w_{i+1}\right\}  \in B\ \ \ \ \ \ \ \ \ \ \text{for
every }i\in\left\{  0,1,\ldots,k-1\right\}  \right)
\label{pf.lem.Eqs.sum.im1.pf.1}%
\end{equation}
(since a walk from $x$ to $y$ in $\left(  V,B\right)  $ is the same as a
sequence $\left(  w_{0},w_{1},\ldots,w_{k}\right)  $ of elements of $V$ such
that $w_{0}=x$ and $w_{k}=y$ and \newline$\left(  \left\{  w_{i}%
,w_{i+1}\right\}  \in B\ \ \ \ \ \ \ \ \ \ \text{for every }i\in\left\{
0,1,\ldots,k-1\right\}  \right)  $ (by the definition of a \textquotedblleft
walk\textquotedblright)). Consider this sequence $\left(  w_{0},w_{1}%
,\ldots,w_{k}\right)  $.

For every $i\in\left\{  0,1,\ldots,k-1\right\}  $, we have $f\left(
w_{i}\right)  =f\left(  w_{i+1}\right)  $\ \ \ \ \footnote{\textit{Proof.} Let
$i\in\left\{  0,1,\ldots,k-1\right\}  $. Thus, (\ref{pf.lem.Eqs.sum.im1.pf.1})
shows that
\[
\left\{  w_{i},w_{i+1}\right\}  \in B\subseteq\operatorname*{Eqs}f=\left\{
\left\{  s,t\right\}  \ \mid\ \left(  s,t\right)  \in V^{2},\ s\neq t\text{
and }f\left(  s\right)  =f\left(  t\right)  \right\}
\]
(by the definition of $\operatorname*{Eqs}f$). In other words, $\left\{
w_{i},w_{i+1}\right\}  =\left\{  s,t\right\}  $ for some $\left(  s,t\right)
\in V^{2}$ satisfying $s\neq t$ and $f\left(  s\right)  =f\left(  t\right)  $.
Consider this $\left(  s,t\right)  $.
\par
We have $f\left(  s\right)  =f\left(  t\right)  $. Therefore, set $g=f\left(
s\right)  =f\left(  t\right)  $. Then, $f\left(  \left\{  s,t\right\}
\right)  =\left\{  \underbrace{f\left(  s\right)  }_{=g},\underbrace{f\left(
t\right)  }_{=g}\right\}  =\left\{  g,g\right\}  =\left\{  g\right\}  $. Now,
$f\left(  \underbrace{w_{i}}_{\in\left\{  w_{i},w_{i+1}\right\}  =\left\{
s,t\right\}  }\right)  \in f\left(  \left\{  s,t\right\}  \right)  =\left\{
g\right\}  $, so that $f\left(  w_{i}\right)  =g$. Also, $f\left(
\underbrace{w_{i+1}}_{\in\left\{  w_{i},w_{i+1}\right\}  =\left\{
s,t\right\}  }\right)  \in f\left(  \left\{  s,t\right\}  \right)  =\left\{
g\right\}  $, so that $f\left(  w_{i+1}\right)  =g$. Hence, $f\left(
w_{i}\right)  =g=f\left(  w_{i+1}\right)  $, qed.}. In other words, $f\left(
w_{0}\right)  =f\left(  w_{1}\right)  =\cdots=f\left(  w_{k}\right)  $. Thus,
$f\left(  w_{0}\right)  =f\left(  w_{k}\right)  $. This rewrites as $f\left(
x\right)  =f\left(  y\right)  $ (since $w_{0}=x$ and $w_{k}=y$).

Now, let us forget that we fixed $x$ and $y$. We thus have shown that
$f\left(  x\right)  =f\left(  y\right)  $ for any $x\in V$ and $y\in V$
satisfying $x\sim y$. Hence, $f$ is a $g\in Y^{V}$ which satisfies $g\left(
x\right)  =g\left(  y\right)  $ for any $x\in V$ and $y\in V$ satisfying
$x\sim y$. In other words,%
\begin{align*}
f  &  \in\left\{  g\in Y^{V}\ \mid\ g\left(  x\right)  =g\left(  y\right)
\text{ for any }x\in V\text{ and }y\in V\text{ satisfying }x\sim y\right\} \\
&  =Y_{\sim}^{V}\ \ \ \ \ \ \ \ \ \ \left(  \text{by
(\ref{pf.lem.Eqs.sum.defN+Vsim})}\right)  .
\end{align*}
Thus, the implication (\ref{pf.lem.Eqs.sum.im1}) is proven.

Next, we shall show the following logical implication:%
\begin{equation}
\left(  f\in Y_{\sim}^{V}\right)  \ \Longrightarrow\ \left(  B\subseteq
\operatorname*{Eqs}f\right)  . \label{pf.lem.Eqs.sum.im2}%
\end{equation}

\textit{Proof of (\ref{pf.lem.Eqs.sum.im2}):} Assume that $f\in Y_{\sim}^{V}$.
We must show that $B\subseteq\operatorname*{Eqs}f$.

Since $\left(  V,B\right)  $ is a graph, we must have $B\subseteq\dbinom{V}%
{2}$.

Let $e\in B$. Then, $e\in B\subseteq\dbinom{V}{2}$. In other words, $e$ is a
$2$-element subset of $V$ (since $\dbinom{V}{2}$ is the set of all $2$-element
subsets of $V$). In other words, $e=\left\{  s,t\right\}  $ for two distinct
elements $s$ and $t$ of $V$. Consider these $s$ and $t$.

We have $\left\{  s,t\right\}  =e\in B$. Thus, $s\sim t$%
\ \ \ \ \footnote{\textit{Proof.} We have $s\in V$ and $t\in V$. Thus,
$\left(  s,t\right)  $ is a sequence of elements of $V$. Let us denote this
sequence $\left(  s,t\right)  $ by $\left(  p_{0},p_{1},\ldots,p_{\ell
}\right)  $. Thus, $\ell=1$, $p_{0}=s$ and $p_{1}=t$.
\par
We have $\ell=1$ and thus $p_{\ell}=p_{1}=t$.
\par
Let $i\in\left\{  0,1,\ldots,\ell-1\right\}  $. Thus, $i\geq0$ and
$i\leq\underbrace{\ell}_{=1}-1=1-1=0$. Combining $i\leq0$ and $i\geq0$, we
obtain $i=0$, so that $p_{i}=p_{0}=s$ and $p_{i+1}=p_{0+1}=p_{1}=t$. Now,
$\left\{  \underbrace{p_{i}}_{=s},\underbrace{p_{i+1}}_{=t}\right\}  =\left\{
s,t\right\}  \in B$.
\par
Now, let us forget that we fixed $i$. We thus have shown that $\left\{
p_{i},p_{i+1}\right\}  \in B$ for every $i\in\left\{  0,1,\ldots
,\ell-1\right\}  $. Thus, the sequence $\left(  p_{0},p_{1},\ldots,p_{\ell
}\right)  $ is a sequence of elements of $V$ satisfying $p_{0}=s$, $p_{\ell
}=t$ and $\left(  \left\{  p_{i},p_{i+1}\right\}  \in
B\ \ \ \ \ \ \ \ \ \ \text{for every }i\in\left\{  0,1,\ldots,\ell-1\right\}
\right)  $. In other words, the sequence $\left(  p_{0},p_{1},\ldots,p_{\ell
}\right)  $ is a sequence $\left(  w_{0},w_{1},\ldots,w_{k}\right)  $ of
elements of $V$ such that $w_{0}=s$ and $w_{k}=t$ and $\left(  \left\{
w_{i},w_{i+1}\right\}  \in B\ \ \ \ \ \ \ \ \ \ \text{for every }i\in\left\{
0,1,\ldots,k-1\right\}  \right)  $. In other words, the sequence $\left(
p_{0},p_{1},\ldots,p_{\ell}\right)  $ is a walk from $s$ to $t$ in $\left(
V,B\right)  $ (since a walk from $s$ to $t$ in $\left(  V,B\right)  $ is the
same as a sequence $\left(  w_{0},w_{1},\ldots,w_{k}\right)  $ of elements of
$V$ such that $w_{0}=s$ and $w_{k}=t$ and $\left(  \left\{  w_{i}%
,w_{i+1}\right\}  \in B\ \ \ \ \ \ \ \ \ \ \text{for every }i\in\left\{
0,1,\ldots,k-1\right\}  \right)  $ (by the definition of a \textquotedblleft
walk\textquotedblright)). Hence, there exists a walk from $s$ to $t$ in
$\left(  V,B\right)  $. In other words, $s$ and $t$ are connected in the graph
$\left(  V,B\right)  $ (since $s$ and $t$ are connected in $\left(
V,B\right)  $ if and only if there exists a walk from $s$ to $t$ in $\left(
V,B\right)  $ (by the definition of \textquotedblleft
connected\textquotedblright)). In other words, $s\sim_{\left(  V,B\right)  }t$
(since $s\sim_{\left(  V,B\right)  }t$ holds if and only if $s$ and $t$ are
connected in the graph $\left(  V,B\right)  $ (by the definition of the
relation $\sim_{\left(  V,B\right)  }$)). In other words, $s\sim t$ (since
$\sim$ is the equivalence relation $\sim_{\left(  V,B\right)  }$). Qed.}.

But $f\in Y_{\sim}^{V}=\left\{  g\in Y^{V}\ \mid\ g\left(  x\right)  =g\left(
y\right)  \text{ for any }x\in V\text{ and }y\in V\text{ satisfying }x\sim
y\right\}  $ (by (\ref{pf.lem.Eqs.sum.defN+Vsim})). In other words, $f$ is a
$g\in Y^{V}$ such that $g\left(  x\right)  =g\left(  y\right)  $ for any $x\in
V$ and $y\in V$ satisfying $x\sim y$. In other words, $f$ is an element of
$Y^{V}$ and satisfies%
\begin{equation}
f\left(  x\right)  =f\left(  y\right)  \ \ \ \ \ \ \ \ \ \ \text{for any }x\in
V\text{ and }y\in V\text{ satisfying }x\sim y. \label{pf.lem.Eqs.sum.im2.pf.3}%
\end{equation}

Applying (\ref{pf.lem.Eqs.sum.im2.pf.3}) to $x=s$ and $y=t$, we obtain
$f\left(  s\right)  =f\left(  t\right)  $ (since $s\sim t$).

We have $s\in V$ and $t\in V$. Thus, $\left(  s,t\right)  \in V^{2}$. Also,
$s$ and $t$ are distinct; thus, $s\neq t$. Hence, $\left(  s,t\right)  $ is an
element of $V^{2}$ satisfying $s\neq t$ and $f\left(  s\right)  =f\left(
t\right)  $. In other words, $\left(  s,t\right)  $ is an $\left(  x,y\right)
\in V^{2}$ satisfying $x\neq y$ and $f\left(  x\right)  =f\left(  y\right)  $.
Therefore, $\left\{  s,t\right\}  $ has the form $\left\{  x,y\right\}  $ for
some $\left(  x,y\right)  \in V^{2}$ satisfying $x\neq y$ and $f\left(
x\right)  =f\left(  y\right)  $ (namely, for $\left(  x,y\right)  =\left(
s,t\right)  $). In other words,%
\[
\left\{  s,t\right\}  \in\left\{  \left\{  x,y\right\}  \ \mid\ \left(
x,y\right)  \in V^{2},\ x\neq y\text{ and }f\left(  x\right)  =f\left(
y\right)  \right\}  =\operatorname*{Eqs}f
\]
(by (\ref{pf.lem.Eqs.sum.defEqs})). Thus, $e=\left\{  s,t\right\}
\in\operatorname*{Eqs}f$.

Now, let us forget that we fixed $e$. We thus have shown that $e\in
\operatorname*{Eqs}f$ for every $e\in B$. In other words, $B\subseteq
\operatorname*{Eqs}f$. This proves the implication (\ref{pf.lem.Eqs.sum.im2}).

Now, we can combine the two implications (\ref{pf.lem.Eqs.sum.im1}) and
(\ref{pf.lem.Eqs.sum.im2}). As a result, we obtain the equivalence $\left(
B\subseteq\operatorname*{Eqs}f\right)  \ \Longleftrightarrow\ \left(  f\in
Y_{\sim}^{V}\right)  $. This proves Lemma \ref{lem.Eqs.sum-aux}.
\end{proof}

The next lemma is a fundamental fact about counting:

\begin{lemma}
\label{lem.function-count}Let $W$ be a finite set. Let $\left(  C_{1}%
,C_{2},\ldots,C_{k}\right)  $ be a list of all elements of $W$ which contains
each of these elements exactly once. Let $Y$ be any set. Then, the map
\begin{align*}
Y^{W}  &  \rightarrow Y^{k},\\
f  &  \mapsto\left(  f\left(  C_{1}\right)  ,f\left(  C_{2}\right)
,\ldots,f\left(  C_{k}\right)  \right)
\end{align*}
is a bijection.
\end{lemma}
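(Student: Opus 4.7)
The plan is to exhibit an explicit two-sided inverse of the map in question. Denote the map by
\[
\Phi : Y^{W} \to Y^{k}, \qquad f \mapsto \bigl(f(C_{1}), f(C_{2}), \ldots, f(C_{k})\bigr).
\]
The hypothesis that $(C_{1}, C_{2}, \ldots, C_{k})$ lists each element of $W$ exactly once is equivalent to saying that the map $\iota : \{1, 2, \ldots, k\} \to W$ defined by $\iota(i) = C_{i}$ is a bijection. This single observation is the heart of the argument; everything else is bookkeeping.

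First I would use $\iota$ to construct the candidate inverse. Given $(y_{1}, y_{2}, \ldots, y_{k}) \in Y^{k}$, define $g \in Y^{W}$ by $g(w) = y_{\iota^{-1}(w)}$ for every $w \in W$; equivalently, $g(C_{i}) = y_{i}$ for every $i \in \{1, 2, \ldots, k\}$. This is well-defined precisely because every $w \in W$ equals $C_{i}$ for exactly one $i$. Let $\Psi : Y^{k} \to Y^{W}$ denote the resulting map $(y_{1}, \ldots, y_{k}) \mapsto g$.

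Next I would verify $\Phi \circ \Psi = \operatorname{id}_{Y^{k}}$ and $\Psi \circ \Phi = \operatorname{id}_{Y^{W}}$. For the first, note that if $g = \Psi(y_{1}, \ldots, y_{k})$, then by construction $g(C_{i}) = y_{i}$ for every $i$, so $\Phi(g) = (y_{1}, \ldots, y_{k})$. For the second, given $f \in Y^{W}$, write $(y_{1}, \ldots, y_{k}) = \Phi(f) = (f(C_{1}), \ldots, f(C_{k}))$; then $\Psi(\Phi(f))$ is the map $g$ satisfying $g(C_{i}) = y_{i} = f(C_{i})$ for every $i$. Since every element of $W$ is some $C_{i}$, this forces $g = f$.

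There is no real obstacle here; the only thing requiring minimal care is arguing that $\Psi$ is well-defined and that $g$ is forced to agree with $f$ on all of $W$, both of which follow immediately from the fact that the list $(C_{1}, \ldots, C_{k})$ is a listing of each element of $W$ exactly once. Thus $\Phi$ has a two-sided inverse and is therefore a bijection, proving the lemma.
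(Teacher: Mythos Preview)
Your proof is correct and essentially the same as the paper's: the paper proves injectivity and surjectivity separately, while you package the same construction as an explicit two-sided inverse $\Psi$, but the map you call $\Psi$ is exactly the map the paper builds in its surjectivity step. The content is identical.
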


\begin{proof}
[Proof of Lemma \ref{lem.function-count}.]Let $\Phi$ denote the map%
\begin{align*}
Y^{W}  &  \rightarrow Y^{k},\\
f  &  \mapsto\left(  f\left(  C_{1}\right)  ,f\left(  C_{2}\right)
,\ldots,f\left(  C_{k}\right)  \right)  .
\end{align*}
We shall show that the map $\Phi$ is a bijection.

First, we notice that the map $\Phi$ is injective\footnote{\textit{Proof.} Let
$f$ and $g$ be two elements of $Y^{W}$ such that $\Phi\left(  f\right)
=\Phi\left(  g\right)  $. We shall show that $f=g$.
\par
Let $w\in W$. Recall that $\left(  C_{1},C_{2},\ldots,C_{k}\right)  $ is a
list of all elements of $W$. Thus, $W=\left\{  C_{1},C_{2},\ldots
,C_{k}\right\}  $. Now, $w\in W=\left\{  C_{1},C_{2},\ldots,C_{k}\right\}  $.
Hence, there exists some $i\in\left\{  1,2,\ldots,k\right\}  $ such that
$w=C_{i}$. Consider this $i$.
\par
Now, the definition of $\Phi\left(  f\right)  $ yields $\Phi\left(  f\right)
=\left(  f\left(  C_{1}\right)  ,f\left(  C_{2}\right)  ,\ldots,f\left(
C_{k}\right)  \right)  $. Hence,%
\begin{align*}
&  \left(  \text{the }i\text{-th entry of }\underbrace{\Phi\left(  f\right)
}_{=\left(  f\left(  C_{1}\right)  ,f\left(  C_{2}\right)  ,\ldots,f\left(
C_{k}\right)  \right)  }\right) \\
&  =\left(  \text{the }i\text{-th entry of }\left(  f\left(  C_{1}\right)
,f\left(  C_{2}\right)  ,\ldots,f\left(  C_{k}\right)  \right)  \right)
=f\left(  \underbrace{C_{i}}_{=w}\right)  =f\left(  w\right)  .
\end{align*}
The same argument (applied to $g$ instead of $f$) yields%
\[
\left(  \text{the }i\text{-th entry of }\Phi\left(  g\right)  \right)
=g\left(  w\right)  .
\]
Hence, $f\left(  w\right)  =\left(  \text{the }i\text{-th entry of
}\underbrace{\Phi\left(  f\right)  }_{=\Phi\left(  g\right)  }\right)
=\left(  \text{the }i\text{-th entry of }\Phi\left(  g\right)  \right)
=g\left(  w\right)  $.
\par
Now, let us forget that we fixed $w$. We thus have proven that $f\left(
w\right)  =g\left(  w\right)  $ for every $w\in W$. In other words, $f=g$.
\par
Let us now forget that we fixed $f$ and $g$. We thus have shown that if $f$
and $g$ are two elements of $Y^{W}$ such that $\Phi\left(  f\right)
=\Phi\left(  g\right)  $, then $f=g$. In other words, the map $\Phi$ is
injective. Qed.}. Now, let $s\in Y^{k}$ be arbitrary. Let us write $s$ in the
form $\left(  y_{1},y_{2},\ldots,y_{k}\right)  $. Thus, $s=\left(  y_{1}%
,y_{2},\ldots,y_{k}\right)  $.

We now define a map $f:W\rightarrow Y$ as follows: Let $w\in W$. Then, there
exists a unique $i\in\left\{  1,2,\ldots,k\right\}  $ such that $w=C_{i}$
(since $\left(  C_{1},C_{2},\ldots,C_{k}\right)  $ is a list of all elements
of $W$ which contains each of these elements exactly once). Consider this $i$.
Then, we set $f\left(  w\right)  =y_{i}$.

Thus, we have defined a map $f:W\rightarrow Y$. It is clear that if $w\in W$,
and if $i\in\left\{  1,2,\ldots,k\right\}  $ is such that $w=C_{i}$, then%
\begin{equation}
f\left(  w\right)  =y_{i} \label{pf.lem.function-count.1}%
\end{equation}
(by the definition of $f\left(  w\right)  $).

Now, every $i\in\left\{  1,2,\ldots,k\right\}  $ satisfies $f\left(
C_{i}\right)  =y_{i}$ (by (\ref{pf.lem.function-count.1}), applied to
$w=C_{i}$). Hence, $\left(  f\left(  C_{1}\right)  ,f\left(  C_{2}\right)
,\ldots,f\left(  C_{k}\right)  \right)  =\left(  y_{1},y_{2},\ldots
,y_{k}\right)  $. Now, the definition of $\Phi$ yields $\Phi\left(  f\right)
=\left(  f\left(  C_{1}\right)  ,f\left(  C_{2}\right)  ,\ldots,f\left(
C_{k}\right)  \right)  =\left(  y_{1},y_{2},\ldots,y_{k}\right)  =s$. Thus,
$s=\Phi\left(  \underbrace{f}_{\in Y^{W}}\right)  \in\Phi\left(  Y^{W}\right)
$.

Now, let us forget that we fixed $s$. We thus have proven that $s\in
\Phi\left(  Y^{W}\right)  $ for every $s\in Y^{k}$. In other words,
$Y^{k}\subseteq\Phi\left(  Y^{W}\right)  $. In other words, the map $\Phi$ is surjective.

Since the map $\Phi$ is both injective and surjective, we see that the map
$\Phi$ is bijective. In other words, the map $\Phi$ is a bijection. In other
words, the map
\begin{align*}
Y^{W}  &  \rightarrow Y^{k},\\
f  &  \mapsto\left(  f\left(  C_{1}\right)  ,f\left(  C_{2}\right)
,\ldots,f\left(  C_{k}\right)  \right)
\end{align*}
is a bijection (since this map is $\Phi$). Lemma \ref{lem.function-count} is
thus proven.
\end{proof}
\end{verlong}

\begin{lemma}
\label{lem.Eqs.sum}Let $\left(  V,B\right)  $ be a finite graph. Then,%
\[
\sum_{\substack{f:V\rightarrow\mathbb{N}_{+};\\B\subseteq\operatorname*{Eqs}%
f}}\mathbf{x}_{f}=p_{\lambda\left(  V,B\right)  }.
\]
(Here, $\mathbf{x}_{f}$ is defined as in Definition \ref{def.chromsym}
\textbf{(a)}, and the expression $\lambda\left(  V,B\right)  $ is understood
according to Definition \ref{def.connectedness} \textbf{(b)}.)
\end{lemma}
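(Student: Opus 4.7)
The plan is to show that the condition $B\subseteq\operatorname*{Eqs}f$ forces $f$ to be constant on each connected component of $(V,B)$; granted this, the monomial $\mathbf{x}_f$ factors along components and each component contributes a power-sum.

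First, I would invoke Lemma \ref{lem.Eqs.sum-aux} with $Y=\mathbb{N}_+$ to rewrite the summation condition $B\subseteq\operatorname*{Eqs}f$ as $f\in(\mathbb{N}_+)_\sim^V$, where $\sim$ denotes $\sim_{(V,B)}$. Proposition \ref{prop.relquot.uniprop}(b) then identifies $(\mathbb{N}_+)_\sim^V$ with $(\mathbb{N}_+)^{V/\sim}$ via $\bar f\mapsto\bar f\circ\pi_V$. Let $V_1,V_2,\ldots,V_k$ be the $\sim$-equivalence classes (equivalently, the connected components of $(V,B)$), numbered so that $\lambda_i:=|V_i|$ satisfies $\lambda_1\geq\lambda_2\geq\cdots\geq\lambda_k$; then $\lambda(V,B)=(\lambda_1,\ldots,\lambda_k)$ by Definition \ref{def.connectedness}(b), and in particular each $\lambda_i\geq 1$.

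Next, under the identification above, if $f=\bar f\circ\pi_V$ then $f(v)=\bar f(V_i)$ for every $v\in V_i$, so
$$\mathbf{x}_f \;=\; \prod_{v\in V} x_{f(v)} \;=\; \prod_{i=1}^k \prod_{v\in V_i} x_{\bar f(V_i)} \;=\; \prod_{i=1}^k x_{\bar f(V_i)}^{\lambda_i}.$$
Summing over $\bar f$ and swapping the finite product with the (topologically convergent) infinite sums over each $\bar f(V_i)\in\mathbb{N}_+$ independently, I obtain
$$\sum_{\substack{f:V\to\mathbb{N}_+;\\B\subseteq\operatorname*{Eqs}f}}\mathbf{x}_f \;=\; \sum_{\bar f:V/\sim\,\to\,\mathbb{N}_+}\prod_{i=1}^k x_{\bar f(V_i)}^{\lambda_i} \;=\; \prod_{i=1}^k \sum_{a\in\mathbb{N}_+} x_a^{\lambda_i} \;=\; \prod_{i=1}^k p_{\lambda_i} \;=\; p_{\lambda(V,B)},$$
where the last two equalities invoke (\ref{eq.def.powersum.pn}) and (\ref{eq.def.powersum2.finite-expression}).

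The argument is largely routine; the only mild obstacle is the bookkeeping, namely aligning the abstract bijection of Proposition \ref{prop.relquot.uniprop}(b) with a concrete enumeration $V_1,\ldots,V_k$ of the components (for which Lemma \ref{lem.function-count} is well suited), and justifying the interchange of the infinite sums over $\mathbb{N}_+$ with the finite product over $i$. The latter is automatic in the topology on $\mathbf{k}[[x_1,x_2,\ldots]]$: for any fixed monomial, both sides reduce to the same finite subsum of contributing terms. Finally, the fact that $\lambda_i\geq 1$ for all $i$ (connected components are nonempty) ensures that no degenerate $p_0=1$ factors arise on the right-hand side.
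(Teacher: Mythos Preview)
Your proof is correct and follows essentially the same route as the paper's (long-version) proof: both invoke Lemma~\ref{lem.Eqs.sum-aux} to recast the condition $B\subseteq\operatorname{Eqs}f$ as $f\in(\mathbb{N}_+)_\sim^V$, then use Proposition~\ref{prop.relquot.uniprop}(b) to pass to maps $V/\!\sim\,\to\mathbb{N}_+$, enumerate the connected components, factor $\mathbf{x}_f$ componentwise, and apply the product rule together with (\ref{eq.def.powersum.pn}) and (\ref{eq.def.powersum2.finite-expression}). The paper is a bit more explicit about the bookkeeping (it spells out the use of Lemma~\ref{lem.function-count} and the identity $\pi_V^{-1}(w)=w$), but the argument is the same.
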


\begin{vershort}
\begin{proof}
[Proof of Lemma \ref{lem.Eqs.sum}.]Let $\left(  C_{1},C_{2},\ldots
,C_{k}\right)  $ be a list of all connected components of $\left(  V,B\right)
$, ordered such that $\left\vert C_{1}\right\vert \geq\left\vert
C_{2}\right\vert \geq\cdots\geq\left\vert C_{k}\right\vert $%
.\ \ \ \ \footnote{Every connected component of $\left(  V,B\right)  $ should
appear exactly once in this list.} Then, $\lambda\left(  V,B\right)  =\left(
\left\vert C_{1}\right\vert ,\left\vert C_{2}\right\vert ,\ldots,\left\vert
C_{k}\right\vert \right)  $ (by the definition of $\lambda\left(  V,B\right)
$). Hence, (\ref{eq.def.powersum2.finite-expression}) (applied to
$\lambda\left(  V,B\right)  $ and $\left\vert C_{i}\right\vert $ instead of
$\lambda$ and $\lambda_{i}$) shows that{}%
\begin{equation}
p_{\lambda\left(  V,B\right)  }=p_{\left\vert C_{1}\right\vert }p_{\left\vert
C_{2}\right\vert }\cdots p_{\left\vert C_{k}\right\vert }=\prod_{i=1}%
^{k}p_{\left\vert C_{i}\right\vert }. \label{pf.lem.Eqs.sum.short.p1}%
\end{equation}

However, for every $i\in\left\{  1,2,\ldots,k\right\}  $, we have
$p_{\left\vert C_{i}\right\vert }=\sum_{s\in\mathbb{N}_{+}}x_{s}^{\left\vert
C_{i}\right\vert }$ (by the definition of $p_{\left\vert C_{i}\right\vert }$).
Hence, (\ref{pf.lem.Eqs.sum.short.p1}) becomes%
\begin{align}
p_{\lambda\left(  V,B\right)  }  &  =\prod_{i=1}^{k}\underbrace{p_{\left\vert
C_{i}\right\vert }}_{=\sum_{s\in\mathbb{N}_{+}}x_{s}^{\left\vert
C_{i}\right\vert }}=\prod_{i=1}^{k}\ \ \sum_{s\in\mathbb{N}_{+}}%
x_{s}^{\left\vert C_{i}\right\vert }\nonumber\\
&  =\sum_{\left(  s_{1},s_{2},\ldots,s_{k}\right)  \in\left(  \mathbb{N}%
_{+}\right)  ^{k}}\ \ \prod_{i=1}^{k}x_{s_{i}}^{\left\vert C_{i}\right\vert }
\label{pf.lem.Eqs.sum.short.p2}%
\end{align}
(by the product rule).

The list $\left(  C_{1},C_{2},\ldots,C_{k}\right)  $ contains all connected
components of $\left(  V,B\right)  $, each exactly once. Thus, $V=\bigsqcup
_{i=1}^{k}C_{i}$.

We now define a map%
\[
\Phi:\left(  \mathbb{N}_{+}\right)  ^{k}\rightarrow\left\{  f:V\rightarrow
\mathbb{N}_{+}\ \mid\ B\subseteq\operatorname*{Eqs}f\right\}
\]
as follows: Given any $\left(  s_{1},s_{2},\ldots,s_{k}\right)  \in\left(
\mathbb{N}_{+}\right)  ^{k}$, we let $\Phi\left(  s_{1},s_{2},\ldots
,s_{k}\right)  $ be the map $V\rightarrow\mathbb{N}_{+}$ which sends every
$v\in V$ to $s_{i}$, where $i\in\left\{  1,2,\ldots,k\right\}  $ is such that
$v\in C_{i}$. (This is well-defined, because for every $v\in V$, there exists
a unique $i\in\left\{  1,2,\ldots,k\right\}  $ such that $v\in C_{i}$; this
follows from $V=\bigsqcup_{i=1}^{k}C_{i}$.) This map $\Phi$ is well-defined,
because for every $\left(  s_{1},s_{2},\ldots,s_{k}\right)  \in\left(
\mathbb{N}_{+}\right)  ^{k}$, the map $\Phi\left(  s_{1},s_{2},\ldots
,s_{k}\right)  $ actually belongs to $\left\{  f:V\rightarrow\mathbb{N}%
_{+}\ \mid\ B\subseteq\operatorname*{Eqs}f\right\}  $%
\ \ \ \ \footnote{\textit{Proof.} We just need to check that $B\subseteq
\operatorname*{Eqs}\left(  \Phi\left(  s_{1},s_{2},\ldots,s_{k}\right)
\right)  $. But this is easy: For every $\left\{  u,v\right\}  \in B$, the
vertices $u$ and $v$ of $\left(  V,B\right)  $ lie in one and the same
connected component $C_{i}$ of the graph $\left(  V,B\right)  $, and thus (by
the definition of $\Phi\left(  s_{1},s_{2},\ldots,s_{k}\right)  $) the map
$\Phi\left(  s_{1},s_{2},\ldots,s_{k}\right)  $ sends both of them to $s_{i}$;
but this shows that $\left\{  u,v\right\}  \in\operatorname*{Eqs}\left(
\Phi\left(  s_{1},s_{2},\ldots,s_{k}\right)  \right)  $.}.

A moment's thought reveals that the map $\Phi$ is injective\footnote{In fact,
we can reconstruct $\left(  s_{1},s_{2},\ldots,s_{k}\right)  \in\left(
\mathbb{N}_{+}\right)  ^{k}$ from its image $\Phi\left(  s_{1},s_{2}%
,\ldots,s_{k}\right)  $, because each $s_{i}$ is the image of any element of
$C_{i}$ under $\Phi\left(  s_{1},s_{2},\ldots,s_{k}\right)  $ (and this allows
us to compute $s_{i}$, since $C_{i}$ is nonempty).}. Let us now show that the
map $\Phi$ is surjective.

In order to show this, we must prove that every map $f:V\rightarrow
\mathbb{N}_{+}$ satisfying $B\subseteq\operatorname*{Eqs}f$ has the form
$\Phi\left(  s_{1},s_{2},\ldots,s_{k}\right)  $ for some $\left(  s_{1}%
,s_{2},\ldots,s_{k}\right)  \in\left(  \mathbb{N}_{+}\right)  ^{k}$. So let us
fix a map $f:V\rightarrow\mathbb{N}_{+}$ satisfying $B\subseteq
\operatorname*{Eqs}f$. We must find some $\left(  s_{1},s_{2},\ldots
,s_{k}\right)  \in\left(  \mathbb{N}_{+}\right)  ^{k}$ such that
$f=\Phi\left(  s_{1},s_{2},\ldots,s_{k}\right)  $.

We have $B\subseteq\operatorname*{Eqs}f$. Thus, for every $\left\{
s,t\right\}  \in B$, we have $\left\{  s,t\right\}  \in B\subseteq
\operatorname*{Eqs}f$ and thus%
\begin{equation}
f\left(  s\right)  =f\left(  t\right)  . \label{pf.lem.Eqs.sum.short.surj.1}%
\end{equation}

Now, if $x$ and $y$ are two elements of $V$ lying in the same connected
component of $\left(  V,B\right)  $, then%
\begin{equation}
f\left(  x\right)  =f\left(  y\right)  \label{pf.lem.Eqs.sum.short.surj.2}%
\end{equation}
\footnote{\textit{Proof of (\ref{pf.lem.Eqs.sum.short.surj.2}):} Let $x$ and
$y$ be two elements of $V$ lying in the same connected component of $\left(
V,B\right)  $. Then, the vertices $x$ and $y$ are connected by a walk in the
graph $\left(  V,B\right)  $ (by the definition of a \textquotedblleft
connected component\textquotedblright). Let $\left(  v_{0},v_{1},\ldots
,v_{j}\right)  $ be this walk (regarded as a sequence of vertices); thus,
$v_{0}=x$ and $v_{j}=y$. For every $i\in\left\{  0,1,\ldots,j-1\right\}  $, we
have $\left\{  v_{i},v_{i+1}\right\}  \in B$ (since $\left(  v_{0}%
,v_{1},\ldots,v_{j}\right)  $ is a walk in the graph $\left(  V,B\right)  $)
and thus $f\left(  v_{i}\right)  =f\left(  v_{i+1}\right)  $ (by
(\ref{pf.lem.Eqs.sum.short.surj.1}), applied to $\left(  s,t\right)  =\left(
v_{i},v_{i+1}\right)  $). In other words, $f\left(  v_{0}\right)  =f\left(
v_{1}\right)  =\cdots=f\left(  v_{j}\right)  $. Hence, $f\left(  v_{0}\right)
=f\left(  v_{j}\right)  $, so that $f\left(  \underbrace{x}_{=v_{0}}\right)
=f\left(  v_{0}\right)  =f\left(  \underbrace{v_{j}}_{=y}\right)  =f\left(
y\right)  $, qed.}. In other words, the map $f$ is constant on each connected
component of $\left(  V,B\right)  $. Thus, the map $f$ is constant on $C_{i}$
for each $i\in\left\{  1,2,\ldots,k\right\}  $ (since $C_{i}$ is a connected
component of $\left(  V,B\right)  $). Hence, for each $i\in\left\{
1,2,\ldots,k\right\}  $, we can define a positive integer $s_{i}\in
\mathbb{N}_{+}$ to be the image of any element of $C_{i}$ under $f$ (this is
well-defined, because $f$ is constant on $C_{i}$ and thus the choice of the
element does not matter). Define $s_{i}\in\mathbb{N}_{+}$ for each
$i\in\left\{  1,2,\ldots,k\right\}  $ this way. Thus, we have defined a
$k$-tuple $\left(  s_{1},s_{2},\ldots,s_{k}\right)  \in\left(  \mathbb{N}%
_{+}\right)  ^{k}$. Now, $f=\Phi\left(  s_{1},s_{2},\ldots,s_{k}\right)  $
(this follows immediately by recalling the definitions of $\Phi$ and $s_{i}$).

Let us now forget that we fixed $f$. We thus have shown that for every map
$f:V\rightarrow\mathbb{N}_{+}$ satisfying $B\subseteq\operatorname*{Eqs}f$,
there exists some $\left(  s_{1},s_{2},\ldots,s_{k}\right)  \in\left(
\mathbb{N}_{+}\right)  ^{k}$ such that $f=\Phi\left(  s_{1},s_{2},\ldots
,s_{k}\right)  $. In other words, the map $\Phi$ is surjective. Since $\Phi$
is both injective and surjective, we conclude that $\Phi$ is a bijection.

Moreover, it is straightforward to see that every $k$-tuple $\left(
s_{1},s_{2},\ldots,s_{k}\right)  \in\left(  \mathbb{N}_{+}\right)  ^{k}$
satisfies%
\begin{equation}
\mathbf{x}_{\Phi\left(  s_{1},s_{2},\ldots,s_{k}\right)  }=\prod_{i=1}%
^{k}x_{s_{i}}^{\left\vert C_{i}\right\vert } \label{pf.lem.Eqs.sum.short.4}%
\end{equation}
(by the definitions of $\mathbf{x}_{\Phi\left(  s_{1},s_{2},\ldots
,s_{k}\right)  }$ and of $\Phi$). Now,%
\begin{align*}
&  \sum_{\substack{f:V\rightarrow\mathbb{N}_{+};\\B\subseteq
\operatorname*{Eqs}f}}\mathbf{x}_{f}\\
&  =\sum_{\left(  s_{1},s_{2},\ldots,s_{k}\right)  \in\left(  \mathbb{N}%
_{+}\right)  ^{k}}\underbrace{\mathbf{x}_{\Phi\left(  s_{1},s_{2},\ldots
,s_{k}\right)  }}_{\substack{=\prod_{i=1}^{k}x_{s_{i}}^{\left\vert
C_{i}\right\vert }\\\text{(by (\ref{pf.lem.Eqs.sum.short.4}))}}}\\
&  \ \ \ \ \ \ \ \ \ \ \ \ \ \ \ \ \ \ \ \ \left(
\begin{array}
[c]{c}%
\text{here, we have substituted }\Phi\left(  s_{1},s_{2},\ldots,s_{k}\right)
\text{ for }f\text{ in the sum,}\\
\text{since the map }\Phi:\left(  \mathbb{N}_{+}\right)  ^{k}\rightarrow
\left\{  f:V\rightarrow\mathbb{N}_{+}\ \mid\ B\subseteq\operatorname*{Eqs}%
f\right\} \\
\text{is a bijection}%
\end{array}
\right) \\
&  =\sum_{\left(  s_{1},s_{2},\ldots,s_{k}\right)  \in\left(  \mathbb{N}%
_{+}\right)  ^{k}}\ \ \prod_{i=1}^{k}x_{s_{i}}^{\left\vert C_{i}\right\vert
}=p_{\lambda\left(  V,B\right)  }\ \ \ \ \ \ \ \ \ \ \left(  \text{by
(\ref{pf.lem.Eqs.sum.short.p2})}\right)  .
\end{align*}
This proves Lemma \ref{lem.Eqs.sum}.
\end{proof}
\end{vershort}

\begin{verlong}
\begin{proof}
[Proof of Lemma \ref{lem.Eqs.sum}.]Let $\sim$ denote the equivalence relation
$\sim_{\left(  V,B\right)  }$ (defined as in Definition
\ref{def.connectedness} \textbf{(a)}). The connected components of $\left(
V,B\right)  $ are the $\sim_{\left(  V,B\right)  }$-equivalence classes
(because this is how the connected components of $\left(  V,B\right)  $ are
defined). In other words, the connected components of $\left(  V,B\right)  $
are the $\sim$-equivalence classes (since $\sim$ is the relation
$\sim_{\left(  V,B\right)  }$). In other words, the connected components of
$\left(  V,B\right)  $ are the elements of $V/\left(  \sim\right)  $ (since
the elements of $V/\left(  \sim\right)  $ are the $\sim$-equivalence classes
(by the definition of $V/\left(  \sim\right)  $)).

A set $\left(  \mathbb{N}_{+}\right)  _{\sim}^{V}$ is defined (according to
Definition \ref{def.relquot.maps} \textbf{(b)}).

Proposition \ref{prop.relquot.uniprop} \textbf{(b)} (applied to $X=V$ and
$Y=\mathbb{N}_{+}$) shows that the map\footnote{Here, the map $\pi_{V}$ is
defined as in Definition \ref{def.relquot}.}%
\[
\left(  \mathbb{N}_{+}\right)  ^{V/\left(  \sim\right)  }\rightarrow\left(
\mathbb{N}_{+}\right)  _{\sim}^{V},\ \ \ \ \ \ \ \ \ \ f\mapsto f\circ\pi_{V}%
\]
is a bijection.

For every map $f:V\rightarrow\mathbb{N}_{+}$, we have the following
equivalence:%
\begin{equation}
\left(  B\subseteq\operatorname*{Eqs}f\right)  \ \Longleftrightarrow\ \left(
f\in\left(  \mathbb{N}_{+}\right)  _{\sim}^{V}\right)
\label{pf.lem.Eqs.sum.equivalence}%
\end{equation}
(according to Lemma \ref{lem.Eqs.sum-aux}, applied to $Y=\mathbb{N}_{+}$).
Thus, we have the following equality of summation signs:
\[
\sum_{\substack{f:V\rightarrow\mathbb{N}_{+};\\B\subseteq\operatorname*{Eqs}%
f}}=\sum_{\substack{f:V\rightarrow\mathbb{N}_{+};\\f\in\left(  \mathbb{N}%
_{+}\right)  _{\sim}^{V}}}=\sum_{\substack{f\in\left(  \mathbb{N}_{+}\right)
^{V};\\f\in\left(  \mathbb{N}_{+}\right)  _{\sim}^{V}}}=\sum_{f\in\left(
\mathbb{N}_{+}\right)  _{\sim}^{V}}%
\]
(since $\left(  \mathbb{N}_{+}\right)  _{\sim}^{V}$ is a subset of $\left(
\mathbb{N}_{+}\right)  ^{V}$). Hence,%
\begin{equation}
\underbrace{\sum_{\substack{f:V\rightarrow\mathbb{N}_{+};\\B\subseteq
\operatorname*{Eqs}f}}}_{=\sum_{f\in\left(  \mathbb{N}_{+}\right)  _{\sim}%
^{V}}}\mathbf{x}_{f}=\sum_{f\in\left(  \mathbb{N}_{+}\right)  _{\sim}^{V}%
}\mathbf{x}_{f}=\sum_{f\in\left(  \mathbb{N}_{+}\right)  ^{V/\left(
\sim\right)  }}\mathbf{x}_{f\circ\pi_{V}} \label{pf.lem.Eqs.sum.1}%
\end{equation}
(here, we have substituted $f\circ\pi_{V}$ for $f$ in the sum, since the map
$\left(  \mathbb{N}_{+}\right)  ^{V/\left(  \sim\right)  }\rightarrow\left(
\mathbb{N}_{+}\right)  _{\sim}^{V},\ f\mapsto f\circ\pi_{V}$ is a bijection).

Now, let $\left(  C_{1},C_{2},\ldots,C_{k}\right)  $ be a list of all
connected components of $\left(  V,B\right)  $, ordered such that $\left\vert
C_{1}\right\vert \geq\left\vert C_{2}\right\vert \geq\cdots\geq\left\vert
C_{k}\right\vert $.\ \ \ \ \footnote{Every connected component of $\left(
V,B\right)  $ should appear exactly once in this list.} Then, $\left(
\left\vert C_{1}\right\vert ,\left\vert C_{2}\right\vert ,\ldots,\left\vert
C_{k}\right\vert \right)  $ is the list of the sizes of all connected
components of $\left(  V,B\right)  $, in weakly decreasing order (since
$\left\vert C_{1}\right\vert \geq\left\vert C_{2}\right\vert \geq\cdots
\geq\left\vert C_{k}\right\vert $). In other words, $\left(  \left\vert
C_{1}\right\vert ,\left\vert C_{2}\right\vert ,\ldots,\left\vert
C_{k}\right\vert \right)  $ is $\lambda\left(  V,B\right)  $ (since
$\lambda\left(  V,B\right)  $ is the list of the sizes of all connected
components of $\left(  V,B\right)  $, in weakly decreasing order (by the
definition of $\lambda\left(  V,B\right)  $)). In other words, $\lambda\left(
V,B\right)  =\left(  \left\vert C_{1}\right\vert ,\left\vert C_{2}\right\vert
,\ldots,\left\vert C_{k}\right\vert \right)  $. Thus,
(\ref{eq.def.powersum2.finite-expression}) (applied to $\lambda\left(
V,B\right)  $ and $\left\vert C_{i}\right\vert $ instead of $\lambda$ and
$\lambda_{i}$) shows that{}%
\begin{equation}
p_{\lambda\left(  V,B\right)  }=p_{\left\vert C_{1}\right\vert }p_{\left\vert
C_{2}\right\vert }\cdots p_{\left\vert C_{k}\right\vert }=\prod_{i=1}%
^{k}p_{\left\vert C_{i}\right\vert }. \label{pf.lem.Eqs.sum.p1}%
\end{equation}

However, for every $i\in\left\{  1,2,\ldots,k\right\}  $, we have
\begin{equation}
p_{\left\vert C_{i}\right\vert }=\sum_{s\in\mathbb{N}_{+}}x_{s}^{\left\vert
C_{i}\right\vert } \label{pf.lem.Eqs.sum.pCi=}%
\end{equation}
\footnote{\textit{Proof.} Let $i\in\left\{  1,2,\ldots,k\right\}  $. Then,
$C_{i}$ is a connected component of $\left(  V,B\right)  $ (since $\left(
C_{1},C_{2},\ldots,C_{k}\right)  $ is a list of all connected components of
$\left(  V,B\right)  $). Hence, $C_{i}$ is a nonempty subset of $V$ (since
every connected component of $\left(  V,B\right)  $ is a nonempty subset of
$V$). Hence, $\left\vert C_{i}\right\vert $ is a positive integer. Thus,
(\ref{eq.def.powersum.pn}) (applied to $n=\left\vert C_{i}\right\vert $) shows
that $p_{\left\vert C_{i}\right\vert }=\underbrace{\sum_{j\geq1}}_{=\sum
_{j\in\mathbb{N}_{+}}}x_{j}^{\left\vert C_{i}\right\vert }=\sum_{j\in
\mathbb{N}_{+}}x_{j}^{\left\vert C_{i}\right\vert }=\sum_{s\in\mathbb{N}_{+}%
}x_{s}^{\left\vert C_{i}\right\vert }$ (here, we have renamed the summation
index $j$ as $s$). Qed.}. Hence, (\ref{pf.lem.Eqs.sum.p1}) becomes%
\begin{align}
p_{\lambda\left(  V,B\right)  }  &  =\prod_{i=1}^{k}\underbrace{p_{\left\vert
C_{i}\right\vert }}_{\substack{=\sum_{s\in\mathbb{N}_{+}}x_{s}^{\left\vert
C_{i}\right\vert }\\\text{(by (\ref{pf.lem.Eqs.sum.pCi=}))}}}=\prod_{i=1}%
^{k}\ \ \sum_{s\in\mathbb{N}_{+}}x_{s}^{\left\vert C_{i}\right\vert
}\nonumber\\
&  =\sum_{\left(  s_{1},s_{2},\ldots,s_{k}\right)  \in\left(  \mathbb{N}%
_{+}\right)  ^{k}}\ \ \prod_{i=1}^{k}x_{s_{i}}^{\left\vert C_{i}\right\vert }
\label{pf.lem.Eqs.sum.p2}%
\end{align}
(by the product rule).

Recall that $\left(  C_{1},C_{2},\ldots,C_{k}\right)  $ is a list of all
connected components of $\left(  V,B\right)  $. In other words, $\left(
C_{1},C_{2},\ldots,C_{k}\right)  $ is a list of all elements of $V/\left(
\sim\right)  $ (since the elements of $V/\left(  \sim\right)  $ are the
connected components of $\left(  V,B\right)  $). Moreover, every element of
$V/\left(  \sim\right)  $ appears exactly once in this list $\left(
C_{1},C_{2},\ldots,C_{k}\right)  $ (since the entries of the list $\left(
C_{1},C_{2},\ldots,C_{k}\right)  $ are pairwise distinct\footnote{since every
connected component of $\left(  V,B\right)  $ appears exactly once in this
list}). Thus, $\left(  C_{1},C_{2},\ldots,C_{k}\right)  $ is a list of all
elements of $V/\left(  \sim\right)  $, and contains each of these elements
exactly once. Hence, the map%
\begin{align*}
\left(  \mathbb{N}_{+}\right)  ^{V/\left(  \sim\right)  }  &  \rightarrow
\left(  \mathbb{N}_{+}\right)  ^{k},\\
f  &  \mapsto\left(  f\left(  C_{1}\right)  ,f\left(  C_{2}\right)
,\ldots,f\left(  C_{k}\right)  \right)
\end{align*}
is a bijection (by Lemma \ref{lem.function-count}, applied to $W=V/\left(
\sim\right)  $ and $Y=\mathbb{N}_{+}$).

For every $\gamma\in V/\left(  \sim\right)  $, we have%
\begin{equation}
\pi_{V}^{-1}\left(  \gamma\right)  =\gamma\label{pf.lem.Eqs.sum.pi-1}%
\end{equation}
\footnote{\textit{Proof of (\ref{pf.lem.Eqs.sum.pi-1}):} Let $\gamma\in
V/\left(  \sim\right)  $. Thus, $\gamma$ is an element of $V/\left(
\sim\right)  $. In other words, $\gamma$ is an $\sim$-equivalence class (since
the elements of $V/\left(  \sim\right)  $ are the $\sim$-equivalence classes).
In particular, $\gamma\subseteq V$.
\par
Let $x\in\gamma$. Then, $x\in\gamma\subseteq V$. The $\sim$-equivalence class
of $x$ must be $\gamma$ (since $x$ lies in the $\sim$-equivalence class
$\gamma$ (since $x\in\gamma$)). In other words, $\left[  x\right]  _{\sim}$
must be $\gamma$ (since $\left[  x\right]  _{\sim}$ is the $\sim$-equivalence
class of $x$). In other words, $\left[  x\right]  _{\sim}=\gamma$. Now, the
definition of $\pi_{V}$ yields $\pi_{V}\left(  x\right)  =\left[  x\right]
_{\sim}=\gamma$. Hence, $x\in\pi_{V}^{-1}\left(  \gamma\right)  $.
\par
Let us now forget that we fixed $x$. We thus have shown that $x\in\pi_{V}%
^{-1}\left(  \gamma\right)  $ for every $x\in\gamma$. In other words,
$\gamma\subseteq\pi_{V}^{-1}\left(  \gamma\right)  $.
\par
Let now $y\in\pi_{V}^{-1}\left(  \gamma\right)  $. Thus, $y\in V$ and $\pi
_{V}\left(  y\right)  =\gamma$. The definition of $\pi_{V}$ yields $\pi
_{V}\left(  y\right)  =\left[  y\right]  _{\sim}$. Thus, $\gamma=\pi
_{V}\left(  y\right)  =\left[  y\right]  _{\sim}$. Hence, $\gamma$ is the
$\sim$-equivalence class of $y$ (since $\left[  y\right]  _{\sim}$ is the
$\sim$-equivalence class of $y$). Consequently, $y$ must belong to $\gamma$.
In other words, $y\in\gamma$.
\par
Let us now forget that we fixed $y$. We thus have shown that $y\in\gamma$ for
every $y\in\pi_{V}^{-1}\left(  \gamma\right)  $. In other words, $\pi_{V}%
^{-1}\left(  \gamma\right)  \subseteq\gamma$. Combining this with
$\gamma\subseteq\pi_{V}^{-1}\left(  \gamma\right)  $, we obtain $\pi_{V}%
^{-1}\left(  \gamma\right)  =\gamma$. This proves (\ref{pf.lem.Eqs.sum.pi-1}%
).}.

Also, the map $\left\{  1,2,\ldots,k\right\}  \rightarrow V/\left(
\sim\right)  ,\ i\mapsto C_{i}$ is a bijection (since $\left(  C_{1}%
,C_{2},\ldots,C_{k}\right)  $ is a list of all elements of $V/\left(
\sim\right)  $, and contains each of these elements exactly once).

We have%
\begin{equation}
\mathbf{x}_{f\circ\pi_{V}}=\prod_{i=1}^{k}x_{f\left(  C_{i}\right)
}^{\left\vert C_{i}\right\vert }\ \ \ \ \ \ \ \ \ \ \text{for every }%
f\in\left(  \mathbb{N}_{+}\right)  ^{V/\left(  \sim\right)  }
\label{pf.lem.Eqs.sum.3}%
\end{equation}
\footnote{\textit{Proof of (\ref{pf.lem.Eqs.sum.3}):} Let $f\in\left(
\mathbb{N}_{+}\right)  ^{V/\left(  \sim\right)  }$. Then, the definition of
$\mathbf{x}_{f\circ\pi_{V}}$ yields%
\begin{align*}
\mathbf{x}_{f\circ\pi_{V}}  &  =\prod_{v\in V}x_{\left(  f\circ\pi_{V}\right)
\left(  v\right)  }=\prod_{\gamma\in V/\left(  \sim\right)  }%
\ \ \underbrace{\prod_{\substack{v\in V;\\\pi_{V}\left(  v\right)  =\gamma}%
}}_{\substack{=\prod_{v\in\pi_{V}^{-1}\left(  \gamma\right)  }=\prod
_{v\in\gamma}\\\text{(since }\pi_{V}^{-1}\left(  \gamma\right)  =\gamma
\\\text{(by (\ref{pf.lem.Eqs.sum.pi-1})))}}}\underbrace{x_{\left(  f\circ
\pi_{V}\right)  \left(  v\right)  }}_{\substack{=x_{f\left(  \gamma\right)
}\\\text{(since }\left(  f\circ\pi_{V}\right)  \left(  v\right)  =f\left(
\pi_{V}\left(  v\right)  \right)  =f\left(  \gamma\right)  \\\text{(since }%
\pi_{V}\left(  v\right)  =\gamma\text{))}}}\\
&  \ \ \ \ \ \ \ \ \ \ \left(
\begin{array}
[c]{c}%
\text{because for every }v\in V\text{, there exists a unique }\gamma\in
V/\left(  \sim\right) \\
\text{such that }\pi_{V}\left(  v\right)  =\gamma\text{ (since }\pi_{V}\text{
is a map }V\rightarrow V/\left(  \sim\right)  \text{)}%
\end{array}
\right) \\
&  =\prod_{\gamma\in V/\left(  \sim\right)  }\ \ \underbrace{\prod_{v\in
\gamma}x_{f\left(  \gamma\right)  }}_{=x_{f\left(  \gamma\right)
}^{\left\vert \gamma\right\vert }}=\prod_{\gamma\in V/\left(  \sim\right)
}x_{f\left(  \gamma\right)  }^{\left\vert \gamma\right\vert }=\prod
_{i\in\left\{  1,2,\ldots,k\right\}  }x_{f\left(  C_{i}\right)  }^{\left\vert
C_{i}\right\vert }%
\end{align*}
(here, we have substituted $C_{i}$ for $\gamma$ in the product, since the map
$\left\{  1,2,\ldots,k\right\}  \rightarrow V/\left(  \sim\right)  ,\ i\mapsto
C_{i}$ is a bijection). Thus,%
\[
\mathbf{x}_{f\circ\pi_{V}}=\underbrace{\prod_{i\in\left\{  1,2,\ldots
,k\right\}  }}_{=\prod_{i=1}^{k}}x_{f\left(  C_{i}\right)  }^{\left\vert
C_{i}\right\vert }=\prod_{i=1}^{k}x_{f\left(  C_{i}\right)  }^{\left\vert
C_{i}\right\vert }.
\]
This proves (\ref{pf.lem.Eqs.sum.3}).}.

Now, (\ref{pf.lem.Eqs.sum.1}) becomes%
\begin{align*}
\sum_{\substack{f:V\rightarrow\mathbb{N}_{+};\\B\subseteq\operatorname*{Eqs}%
f}}\mathbf{x}_{f}  &  =\sum_{f\in\left(  \mathbb{N}_{+}\right)  ^{V/\left(
\sim\right)  }}\underbrace{\mathbf{x}_{f\circ\pi_{V}}}_{\substack{=\prod
_{i=1}^{k}x_{f\left(  C_{i}\right)  }^{\left\vert C_{i}\right\vert
}\\\text{(by (\ref{pf.lem.Eqs.sum.3}))}}}=\sum_{f\in\left(  \mathbb{N}%
_{+}\right)  ^{V/\left(  \sim\right)  }}\ \ \prod_{i=1}^{k}x_{f\left(
C_{i}\right)  }^{\left\vert C_{i}\right\vert }\\
&  =\sum_{\left(  s_{1},s_{2},\ldots,s_{k}\right)  \in\left(  \mathbb{N}%
_{+}\right)  ^{k}}\ \ \prod_{i=1}^{k}x_{s_{i}}^{\left\vert C_{i}\right\vert }%
\end{align*}
(here, we have substituted $\left(  s_{1},s_{2},\ldots,s_{k}\right)  $ for
$\left(  f\left(  C_{1}\right)  ,f\left(  C_{2}\right)  ,\ldots,f\left(
C_{k}\right)  \right)  $ in the sum, since the map $\left(  \mathbb{N}%
_{+}\right)  ^{V/\left(  \sim\right)  }\rightarrow\left(  \mathbb{N}%
_{+}\right)  ^{k},\ f\mapsto\left(  f\left(  C_{1}\right)  ,f\left(
C_{2}\right)  ,\ldots,f\left(  C_{k}\right)  \right)  $ is a bijection).
Comparing this with (\ref{pf.lem.Eqs.sum.p2}), we obtain $\sum
_{\substack{f:V\rightarrow\mathbb{N}_{+};\\B\subseteq\operatorname*{Eqs}%
f}}\mathbf{x}_{f}=p_{\lambda\left(  V,B\right)  }$. This proves Lemma
\ref{lem.Eqs.sum}.
\end{proof}
\end{verlong}

\begin{lemma}
\label{lem.BC.nonempty}Let $G=\left(  V,E\right)  $ be a finite graph. Let $X$
be a totally ordered set. Let $\ell:E\rightarrow X$ be a labeling function.
Let $K$ be a broken circuit of $G$. Then, $K\neq\varnothing$.
\end{lemma}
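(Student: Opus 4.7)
The plan is to unpack the definition of a broken circuit and show that the underlying circuit $C$ has at least two edges, so that removing one edge still leaves a nonempty set.

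First I would use the definition of a broken circuit to write $K = C \setminus \{e\}$, where $C$ is a circuit of $G$ and $e$ is the unique edge in $C$ with maximum label. By the definition of a circuit, there exists a cycle $(v_1, v_2, \ldots, v_{m+1})$ of $G$ such that
\[
C = \left\{ \{v_1, v_2\}, \{v_2, v_3\}, \ldots, \{v_m, v_{m+1}\} \right\},
\]
and the definition of a cycle gives $m > 1$, $v_{m+1} = v_1$, and the vertices $v_1, v_2, \ldots, v_m$ pairwise distinct.

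Next I would show that the $m$ edges $\{v_i, v_{i+1}\}$ for $i \in \{1, 2, \ldots, m\}$ are pairwise distinct; this is exactly the fact (\ref{pf.lem.Eqs.circuit.distinct-edges}) established inside the proof of Lemma \ref{lem.Eqs.circuit}. The argument is the standard one: if $\{v_p, v_{p+1}\} = \{v_q, v_{q+1}\}$ with $p < q$ both in $\{1, \ldots, m\}$, then using the pairwise distinctness of $v_1, \ldots, v_m$ we would force $v_p = v_{q+1}$ and $v_{p+1} = v_q$, and then a case analysis (specifically, ruling out $q = m$ via $v_{m+1} = v_1$) yields a contradiction with the distinctness of $v_p$ and $v_{p+1}$.

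Once the edges are known to be distinct, we get $|C| = m \geq 2$. Since $e \in C$, we have $|K| = |C \setminus \{e\}| = |C| - 1 = m - 1 \geq 1$, so $K \neq \varnothing$, as desired. The only subtlety is the distinctness of the edges, but since this is routine (and already done inside the proof of Lemma \ref{lem.Eqs.circuit}), there is no real obstacle.
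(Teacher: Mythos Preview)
Your approach coincides with the paper's proof: the paper simply asserts that every circuit of $G$ contains at least two edges and concludes immediately, while you supply a justification of this assertion by arguing that the $m$ edges $\{v_i, v_{i+1}\}$ of the cycle are pairwise distinct, whence $|C| = m \geq 2$.

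One caveat worth flagging, which affects your argument, the paper's proof, and strictly speaking the lemma itself: Definition~\ref{def.cycle} only requires $m > 1$, so $m = 2$ is formally permitted. But when $m = 2$, the two listed edges $\{v_1, v_2\}$ and $\{v_2, v_3\} = \{v_2, v_1\}$ coincide, the circuit $C$ is a singleton, and the resulting broken circuit is empty. Your edge-distinctness argument breaks down exactly here: after deducing $v_p = v_{q+1}$ and $v_{p+1} = v_q$, the case $(p,q,m) = (1,2,2)$ yields only $v_1 = v_3 = v_1$ and $v_2 = v_2$, with no contradiction; so your ``ruling out $q = m$'' does not succeed in this degenerate case. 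The intended reading is surely that cycles have length $m \geq 3$ (the standard convention for simple graphs, and consistent with how circuits are used elsewhere in the paper, e.g.\ in Example~\ref{exam.matroid.graphical}); under that reading your argument goes through without difficulty.
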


\begin{vershort}
\begin{proof}
[Proof of Lemma \ref{lem.BC.nonempty}.]The set $K$ is a broken circuit of $G$,
and thus is a circuit of $G$ with an edge removed (by the definition of a
broken circuit). Thus, the set $K$ contains at least $1$ edge (since every
circuit of $G$ contains at least $2$ edges). This proves Lemma
\ref{lem.BC.nonempty}.
\end{proof}
\end{vershort}

\begin{verlong}
\begin{proof}
[Proof of Lemma \ref{lem.BC.nonempty}.]The set $K$ is a broken circuit of $G$.
In other words, the set $K$ is a subset of $E$ having the form $C\setminus
\left\{  e\right\}  $, where $C$ is a circuit of $G$, and where $e$ is the
unique edge in $C$ having maximum label (among the edges in $C$%
)\ \ \ \ \footnote{because a broken circuit of $G$ is the same as a subset of
$E$ having the form $C\setminus\left\{  e\right\}  $, where $C$ is a circuit
of $G$, and where $e$ is the unique edge in $C$ having maximum label (among
the edges in $C$) (by the definition of a \textquotedblleft broken
circuit\textquotedblright)}. Consider this $C$ and this $e$. Thus, we have the
following facts:

\begin{itemize}
\item The set $C$ is a circuit of $G$.

\item The element $e$ is the unique edge in $C$ having maximum label (among
the edges in $C$).

\item We have $K=C\setminus\left\{  e\right\}  $.
\end{itemize}

Now, assume (for the sake of contradiction) that $K=\varnothing$. Consider the
map $\operatorname*{id}:V\rightarrow V$. Then, $\operatorname*{Eqs}%
\operatorname*{id}=\varnothing$\ \ \ \ \footnote{\textit{Proof.} Let
$f\in\operatorname*{Eqs}\operatorname*{id}$. Thus,%
\begin{equation}
f\in\operatorname*{Eqs}\operatorname*{id}=\left\{  \left\{  s,t\right\}
\ \mid\ \left(  s,t\right)  \in V^{2},\ s\neq t\text{ and }\operatorname*{id}%
\left(  s\right)  =\operatorname*{id}\left(  t\right)  \right\} \nonumber
\end{equation}
(by the definition of $\operatorname*{Eqs}\operatorname*{id}$). In other
words, $f$ has the form $\left\{  s,t\right\}  $ for some $\left(  s,t\right)
\in V^{2}$ satisfying $s\neq t$ and $\operatorname*{id}\left(  s\right)
=\operatorname*{id}\left(  t\right)  $. Consider this $\left(  s,t\right)  $.
We have $s=\operatorname*{id}\left(  s\right)  =\operatorname*{id}\left(
t\right)  =t$; but this contradicts $s\neq t$.
\par
Now, let us forget that we fixed $f$. We thus have obtained a contradiction
for every $f\in\operatorname*{Eqs}\operatorname*{id}$. Thus, there exists no
$f\in\operatorname*{Eqs}\operatorname*{id}$. In other words,
$\operatorname*{Eqs}\operatorname*{id}$ is the empty set. Thus,
$\operatorname*{Eqs}\operatorname*{id}=\varnothing$, qed.}. But $C\setminus
\left\{  e\right\}  =K=\varnothing\subseteq\varnothing=\operatorname*{Eqs}%
\operatorname*{id}$. Hence, Lemma \ref{lem.Eqs.circuit} (applied to $V$ and
$\operatorname*{id}$ instead of $X$ and $f$) yields $e\in E\cap
\underbrace{\operatorname*{Eqs}\operatorname*{id}}_{=\varnothing}%
=E\cap\varnothing=\varnothing$. Thus, the set $\varnothing$ has at least one
element (namely, $e$). This contradicts the fact that this set $\varnothing$
is empty. This contradiction shows that our assumption (that $K=\varnothing$)
was wrong. Hence, we cannot have $K=\varnothing$. We thus have $K\neq
\varnothing$. This proves Lemma \ref{lem.BC.nonempty}.
\end{proof}
\end{verlong}

\subsection{Alternating sums}

We shall now come to less simple lemmas.

\begin{definition}
\label{def.iverson}We shall use the so-called \emph{Iverson bracket notation}:
If $\mathcal{S}$ is any logical statement, then $\left[  \mathcal{S}\right]  $
shall mean the integer $%
\begin{cases}
1, & \text{if }\mathcal{S}\text{ is true;}\\
0, & \text{if }\mathcal{S}\text{ is false}%
\end{cases}
$.
\end{definition}

The following lemma is probably the most crucial one in this paper:

\begin{lemma}
\label{lem.NBCm.moeb}Let $G=\left(  V,E\right)  $ be a finite graph. Let $X$
be a totally ordered set. Let $\ell:E\rightarrow X$ be a labeling function.
Let $\mathfrak{K}$ be some set of broken circuits of $G$ (not necessarily
containing all of them). Let $a_{K}$ be an element of $\mathbf{k}$ for every
$K\in\mathfrak{K}$.

Let $Y$ be any set. Let $f:V\rightarrow Y$ be any map. Then,%
\[
\sum_{B\subseteq E\cap\operatorname*{Eqs}f}\left(  -1\right)  ^{\left\vert
B\right\vert }\prod_{\substack{K\in\mathfrak{K};\\K\subseteq B}}a_{K}=\left[
E\cap\operatorname*{Eqs}f=\varnothing\right]  .
\]

\end{lemma}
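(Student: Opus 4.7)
Write $E' := E \cap \operatorname*{Eqs} f$ for brevity. The case $E' = \varnothing$ is immediate: the sum has the single term $B = \varnothing$, contributing $(-1)^{0}\cdot 1 = 1$, which matches the Iverson bracket on the right. So the task reduces to showing that the sum is $0$ whenever $E' \neq \varnothing$, and I would do this by constructing a fixed-point-free sign-reversing involution on $\mathcal{P}(E')$ under which the weight $\prod_{K \in \mathfrak{K},\, K \subseteq B} a_{K}$ is invariant.

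Assume $E' \neq \varnothing$. Since $E'$ is a nonempty finite subset of $E$ and the labels $\ell(e)$ lie in the totally ordered set $X$, I can fix an edge $e^{\ast} \in E'$ whose label $\ell(e^{\ast})$ is maximal among $\{\ell(e) \mid e \in E'\}$ (ties broken arbitrarily). The proposed involution is
\[
\sigma : \mathcal{P}(E') \to \mathcal{P}(E'), \qquad \sigma(B) = B \triangle \{e^{\ast}\}.
\]
It is a fixed-point-free involution, and $(-1)^{|\sigma(B)|} = -(-1)^{|B|}$.

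The crux is to verify the following key claim: $e^{\ast} \notin K$ for every $K \in \mathfrak{K}$ with $K \subseteq E'$. To see this, use the definition of a broken circuit to pick a circuit $C$ of $G$ and its unique maximum-label edge $e_{C}$ such that $K = C \setminus \{e_{C}\}$. Since $K \subseteq E' \subseteq \operatorname*{Eqs} f$, Lemma \ref{lem.Eqs.circuit} applied to $C$ and $e_{C}$ yields $e_{C} \in E \cap \operatorname*{Eqs} f = E'$, so $\ell(e_{C}) \leq \ell(e^{\ast})$ by the choice of $e^{\ast}$. If $e^{\ast}$ were in $K = C \setminus \{e_{C}\}$, then $e^{\ast} \in C$ with $e^{\ast} \neq e_{C}$, and the uniqueness of $e_{C}$ as the max-label edge of $C$ would force $\ell(e^{\ast}) < \ell(e_{C})$, contradicting $\ell(e_{C}) \leq \ell(e^{\ast})$.

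Granting the key claim, for every $K \in \mathfrak{K}$ and every $B \subseteq E'$ the conditions $K \subseteq B$ and $K \subseteq \sigma(B)$ are equivalent: if $K \not\subseteq E'$ then both fail (since $B, \sigma(B) \subseteq E'$), while if $K \subseteq E'$ then $e^{\ast} \notin K$ by the claim, so toggling $e^{\ast}$ in $B$ does not affect whether $K$ is contained. Hence $\prod_{K \in \mathfrak{K},\, K \subseteq B} a_{K} = \prod_{K \in \mathfrak{K},\, K \subseteq \sigma(B)} a_{K}$, and the contributions of $B$ and $\sigma(B)$ in the sum cancel. Grouping the sum into the $2$-element orbits of $\sigma$ yields $0$. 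The only nontrivial step is the key claim in the third paragraph; once the interaction between the max-label edge of $E'$ and the unique max-label edge of the circuit completing $K$ is extracted via Lemma \ref{lem.Eqs.circuit}, the rest is formal.
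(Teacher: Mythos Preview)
Your proof is correct and essentially identical to the paper's: both pick a maximum-label edge $e^{\ast}$ (the paper calls it $d$) in $E\cap\operatorname*{Eqs}f$ and use Lemma~\ref{lem.Eqs.circuit} to show that toggling $e^{\ast}$ preserves the weight $\prod_{K}a_{K}$, yielding a sign-reversing involution; your packaging via the symmetric difference $B\mapsto B\triangle\{e^{\ast}\}$ and the clean ``key claim'' $e^{\ast}\notin K$ is just a slight rephrasing of the paper's bijection $\Phi:\mathcal{U}\to\mathcal{V}$ and its equivalence (\ref{pf.lem.NBCm.moeb.short.Phi.equiv}). The only small omission is in your base case, where the assertion that the $B=\varnothing$ term contributes $1$ tacitly uses Lemma~\ref{lem.BC.nonempty} (no broken circuit is empty) to ensure the product $\prod_{K\in\mathfrak{K},\,K\subseteq\varnothing}a_{K}$ is empty.
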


\begin{vershort}
\begin{proof}
[Proof of Lemma \ref{lem.NBCm.moeb}.]We WLOG assume that $E\cap
\operatorname*{Eqs}f\neq\varnothing$ (since otherwise, the claim is
obvious\footnote{In (slightly) more detail: If $E\cap\operatorname*{Eqs}%
f=\varnothing$, then the sum $\sum_{B\subseteq E\cap\operatorname*{Eqs}%
f}\left(  -1\right)  ^{\left\vert B\right\vert }\prod_{\substack{K\in
\mathfrak{K};\\K\subseteq B}}a_{K}$ has only one addend (namely, the addend
for $B=\varnothing$), and thus simplifies to
\begin{align*}
\underbrace{\left(  -1\right)  ^{\left\vert \varnothing\right\vert }%
}_{=\left(  -1\right)  ^{0}=1}\underbrace{\prod_{\substack{K\in\mathfrak{K}%
;\\K\subseteq\varnothing}}}_{=\prod_{\substack{K\in\mathfrak{K}%
;\\K=\varnothing}}}a_{K}  &  =\prod_{\substack{K\in\mathfrak{K}%
;\\K=\varnothing}}a_{K}=\left(  \text{empty product}\right)
\ \ \ \ \ \ \ \ \ \ \left(
\begin{array}
[c]{c}%
\text{since no }K\in\mathfrak{K}\text{ satisfies }K=\varnothing\\
\text{(by Lemma \ref{lem.BC.nonempty})}%
\end{array}
\right) \\
&  =1=\left[  E\cap\operatorname*{Eqs}f=\varnothing\right]  .
\end{align*}
}). Thus, $\left[  E\cap\operatorname*{Eqs}f=\varnothing\right]  =0$.

Pick any $d\in E\cap\operatorname*{Eqs}f$ with maximum $\ell\left(  d\right)
$ (among all $d\in E\cap\operatorname*{Eqs}f$). (This is clearly possible,
since $E\cap\operatorname*{Eqs}f\neq\varnothing$.) Define two subsets
$\mathcal{U}$ and $\mathcal{V}$ of $\mathcal{P}\left(  E\cap
\operatorname*{Eqs}f\right)  $ as follows:%
\begin{align*}
\mathcal{U}  &  =\left\{  F\in\mathcal{P}\left(  E\cap\operatorname*{Eqs}%
f\right)  \ \mid\ d\notin F\right\}  ;\\
\mathcal{V}  &  =\left\{  F\in\mathcal{P}\left(  E\cap\operatorname*{Eqs}%
f\right)  \ \mid\ d\in F\right\}  .
\end{align*}
Thus, we have $\mathcal{P}\left(  E\cap\operatorname*{Eqs}f\right)
=\mathcal{U}\cup\mathcal{V}$, and the sets $\mathcal{U}$ and $\mathcal{V}$ are
disjoint. Now, we define a map $\Phi:\mathcal{U}\rightarrow\mathcal{V}$ by%
\[
\left(  \Phi\left(  B\right)  =B\cup\left\{  d\right\}
\ \ \ \ \ \ \ \ \ \ \text{for every }B\in\mathcal{U}\right)  .
\]
This map $\Phi$ is well-defined (because for every $B\in\mathcal{U}$, we have
$B\cup\left\{  d\right\}  \in\mathcal{V}$\ \ \ \ \footnote{This follows from
the fact that $d\in E\cap\operatorname*{Eqs}f$.}) and a bijection\footnote{Its
inverse is the map $\Psi:\mathcal{V}\rightarrow\mathcal{U}$ defined by
$\left(  \Psi\left(  B\right)  =B\setminus\left\{  d\right\}
\ \ \ \ \ \ \ \ \ \ \text{for every }B\in\mathcal{V}\right)  $.}. Moreover,
every $B\in\mathcal{U}$ satisfies%
\begin{equation}
\left(  -1\right)  ^{\left\vert \Phi\left(  B\right)  \right\vert }=-\left(
-1\right)  ^{\left\vert B\right\vert } \label{pf.lem.NBCm.moeb.short.Phi.-1}%
\end{equation}
\footnote{\textit{Proof.} Let $B\in\mathcal{U}$. Thus, $d\notin B$ (by the
definition of $\mathcal{U}$). Now, $\left\vert \underbrace{\Phi\left(
B\right)  }_{=B\cup\left\{  d\right\}  }\right\vert =\left\vert B\cup\left\{
d\right\}  \right\vert =\left\vert B\right\vert +1$ (since $d\notin B$), so
that $\left(  -1\right)  ^{\left\vert \Phi\left(  B\right)  \right\vert
}=-\left(  -1\right)  ^{\left\vert B\right\vert }$, qed.}.

Now, we claim that, for every $B\in\mathcal{U}$ and every $K\in\mathfrak{K}$,
we have the following logical equivalence:%
\begin{equation}
\left(  K\subseteq B\right)  \ \Longleftrightarrow\ \left(  K\subseteq
\Phi\left(  B\right)  \right)  . \label{pf.lem.NBCm.moeb.short.Phi.equiv}%
\end{equation}

\textit{Proof of (\ref{pf.lem.NBCm.moeb.short.Phi.equiv}):} Let $B\in
\mathcal{U}$ and $K\in\mathfrak{K}$. We must prove the equivalence
(\ref{pf.lem.NBCm.moeb.short.Phi.equiv}). The definition of $\Phi$ yields
$\Phi\left(  B\right)  =B\cup\left\{  d\right\}  \supseteq B$, so that
$B\subseteq\Phi\left(  B\right)  $. Hence, if $K\subseteq B$, then $K\subseteq
B\subseteq\Phi\left(  B\right)  $. Therefore, the forward implication of the
equivalence (\ref{pf.lem.NBCm.moeb.short.Phi.equiv}) is proven. It thus
remains to prove the backward implication of this equivalence. In other words,
it remains to prove that if $K\subseteq\Phi\left(  B\right)  $, then
$K\subseteq B$. So let us assume that $K\subseteq\Phi\left(  B\right)  $.

We want to prove that $K\subseteq B$. Assume the contrary. Thus,
$K\not \subseteq B$. We have $K\in\mathfrak{K}$. Thus, $K$ is a broken circuit
of $G$ (since $\mathfrak{K}$ is a set of broken circuits of $G$). In other
words, $K$ is a subset of $E$ having the form $C\setminus\left\{  e\right\}
$, where $C$ is a circuit of $G$, and where $e$ is the unique edge in $C$
having maximum label (among the edges in $C$) (because this is how a broken
circuit is defined). Consider these $C$ and $e$. Thus, $K=C\setminus\left\{
e\right\}  $.

The element $e$ is the unique edge in $C$ having maximum label (among the
edges in $C$). Thus, if $e^{\prime}$ is any edge in $C$ satisfying
$\ell\left(  e^{\prime}\right)  \geq\ell\left(  e\right)  $, then%
\begin{equation}
e^{\prime}=e. \label{pf.lem.NBCm.moeb.short.Phi.equiv.pf.e'}%
\end{equation}

But $\underbrace{K}_{\subseteq\Phi\left(  B\right)  =B\cup\left\{  d\right\}
}\setminus\left\{  d\right\}  \subseteq\left(  B\cup\left\{  d\right\}
\right)  \setminus\left\{  d\right\}  \subseteq B$.

If we had $d\notin K$, then we would have $K\setminus\left\{  d\right\}  =K$
and therefore $K=K\setminus\left\{  d\right\}  \subseteq B$; this would
contradict $K\not \subseteq B$. Hence, we cannot have $d\notin K$. We thus
must have $d\in K$. Hence, $d\in K=C\setminus\left\{  e\right\}  $. Hence,
$d\in C$ and $d\neq e$.

But $C\setminus\left\{  e\right\}  =K\subseteq\Phi\left(  B\right)  \subseteq
E\cap\operatorname*{Eqs}f$ (since $\Phi\left(  B\right)  \in\mathcal{P}\left(
E\cap\operatorname*{Eqs}f\right)  $), so that $C\setminus\left\{  e\right\}
\subseteq E\cap\operatorname*{Eqs}f\subseteq\operatorname*{Eqs}f$. Hence,
Lemma \ref{lem.Eqs.circuit} (applied to $Y$ instead of $X$) shows that $e\in
E\cap\operatorname*{Eqs}f$. Thus, $\ell\left(  d\right)  \geq\ell\left(
e\right)  $ (since $d$ was defined to be an element of $E\cap
\operatorname*{Eqs}f$ with maximum $\ell\left(  d\right)  $ among all $d\in
E\cap\operatorname*{Eqs}f$).

Also, $d\in C$. Since $\ell\left(  d\right)  \geq\ell\left(  e\right)  $, we
can therefore apply (\ref{pf.lem.NBCm.moeb.short.Phi.equiv.pf.e'}) to
$e^{\prime}=d$. We thus obtain $d=e$. This contradicts $d\neq e$. This
contradiction proves that our assumption was wrong. Hence, $K\subseteq B$ is
proven. Thus, we have proven the backward implication of the equivalence
(\ref{pf.lem.NBCm.moeb.short.Phi.equiv}); this completes the proof of
(\ref{pf.lem.NBCm.moeb.short.Phi.equiv}).

Now, recall that we have $\mathcal{P}\left(  E\cap\operatorname*{Eqs}f\right)
=\mathcal{U}\cup\mathcal{V}$, and the sets $\mathcal{U}$ and $\mathcal{V}$ are
disjoint. Hence, the sum $\sum_{B\subseteq E\cap\operatorname*{Eqs}f}\left(
-1\right)  ^{\left\vert B\right\vert }\prod_{\substack{K\in\mathfrak{K}%
;\\K\subseteq B}}a_{K}$ can be split into two sums as follows:
\begin{align}
&  \sum_{B\subseteq E\cap\operatorname*{Eqs}f}\left(  -1\right)  ^{\left\vert
B\right\vert }\prod_{\substack{K\in\mathfrak{K};\\K\subseteq B}}a_{K}%
\nonumber\\
&  =\sum_{B\in\mathcal{U}}\underbrace{\left(  -1\right)  ^{\left\vert
B\right\vert }}_{\substack{=-\left(  -1\right)  ^{\left\vert \Phi\left(
B\right)  \right\vert }\\\text{(by (\ref{pf.lem.NBCm.moeb.short.Phi.-1}))}%
}}\underbrace{\prod_{\substack{K\in\mathfrak{K};\\K\subseteq B}}}%
_{\substack{=\prod_{\substack{K\in\mathfrak{K};\\K\subseteq\Phi\left(
B\right)  }}\\\text{(because of the equivalence
(\ref{pf.lem.NBCm.moeb.short.Phi.equiv}))}}}a_{K}+\underbrace{\sum
_{B\in\mathcal{V}}\left(  -1\right)  ^{\left\vert B\right\vert }%
\prod_{\substack{K\in\mathfrak{K};\\K\subseteq B}}a_{K}}_{\substack{=\sum
_{B\in\mathcal{U}}\left(  -1\right)  ^{\left\vert \Phi\left(  B\right)
\right\vert }\prod_{\substack{K\in\mathfrak{K};\\K\subseteq\Phi\left(
B\right)  }}a_{K}\\\text{(here, we have substituted }\Phi\left(  B\right)
\text{ for }B\text{ in the sum,}\\\text{since the map }\Phi:\mathcal{U}%
\rightarrow\mathcal{V}\text{ is a bijection)}}}\nonumber\\
&  =\sum_{B\in\mathcal{U}}\left(  -\left(  -1\right)  ^{\left\vert \Phi\left(
B\right)  \right\vert }\right)  \prod_{\substack{K\in\mathfrak{K}%
;\\K\subseteq\Phi\left(  B\right)  }}a_{K}+\sum_{B\in\mathcal{U}}\left(
-1\right)  ^{\left\vert \Phi\left(  B\right)  \right\vert }\prod
_{\substack{K\in\mathfrak{K};\\K\subseteq\Phi\left(  B\right)  }%
}a_{K}\nonumber\\
&  =-\sum_{B\in\mathcal{U}}\left(  -1\right)  ^{\left\vert \Phi\left(
B\right)  \right\vert }\prod_{\substack{K\in\mathfrak{K};\\K\subseteq
\Phi\left(  B\right)  }}a_{K}+\sum_{B\in\mathcal{U}}\left(  -1\right)
^{\left\vert \Phi\left(  B\right)  \right\vert }\prod_{\substack{K\in
\mathfrak{K};\\K\subseteq\Phi\left(  B\right)  }}a_{K}\nonumber\\
&  =0=\left[  E\cap\operatorname*{Eqs}f=\varnothing\right]
\ \ \ \ \ \ \ \ \ \ \left(  \text{since }\left[  E\cap\operatorname*{Eqs}%
f=\varnothing\right]  =0\right)  . \label{pf.lem.NBCm.moeb.short.there}%
\end{align}
This proves Lemma \ref{lem.NBCm.moeb}.
\end{proof}
\end{vershort}

\begin{verlong}
\begin{proof}
[Proof of Lemma \ref{lem.NBCm.moeb}.]If $E\cap\operatorname*{Eqs}%
f=\varnothing$, then Lemma \ref{lem.NBCm.moeb} holds\footnote{\textit{Proof.}
Assume that $E\cap\operatorname*{Eqs}f=\varnothing$. We need to check that
Lemma \ref{lem.NBCm.moeb} holds.
\par
We have%
\[
\sum_{B\subseteq E\cap\operatorname*{Eqs}f}\left(  -1\right)  ^{\left\vert
B\right\vert }\prod_{\substack{K\in\mathfrak{K};\\K\subseteq B}}a_{K}%
=\sum_{B\subseteq\varnothing}\left(  -1\right)  ^{\left\vert B\right\vert
}\prod_{\substack{K\in\mathfrak{K};\\K\subseteq B}}a_{K}%
\ \ \ \ \ \ \ \ \ \ \left(  \text{since }E\cap\operatorname*{Eqs}%
f=\varnothing\right)  .
\]
But the only subset $B$ of $\varnothing$ is the set $\varnothing$. Thus, the
only addend of the sum $\sum_{B\subseteq\varnothing}\left(  -1\right)
^{\left\vert B\right\vert }\prod_{\substack{K\in\mathfrak{K};\\K\subseteq
B}}a_{K}$ is the addend for $B=\varnothing$. Hence,%
\[
\sum_{B\subseteq\varnothing}\left(  -1\right)  ^{\left\vert B\right\vert
}\prod_{\substack{K\in\mathfrak{K};\\K\subseteq B}}a_{K}=\underbrace{\left(
-1\right)  ^{\left\vert \varnothing\right\vert }}_{\substack{=1\\\text{(since
}\left\vert \varnothing\right\vert =0\text{ is even)}}}\prod_{\substack{K\in
\mathfrak{K};\\K\subseteq\varnothing}}a_{K}=\prod_{\substack{K\in
\mathfrak{K};\\K\subseteq\varnothing}}a_{K}.
\]
\par
But let $K\in\mathfrak{K}$ be such that $K\subseteq\varnothing$. Then, $K$ is
an element of $\mathfrak{K}$, and thus a broken circuit of $G$ (since
$\mathfrak{K}$ is a set of broken circuits of $G$). Hence, Lemma
\ref{lem.BC.nonempty} shows that $K\neq\varnothing$. But from $K\subseteq
\varnothing$, we obtain $K=\varnothing$; this contradicts $K\neq\varnothing$.
\par
Now, let us forget that we fixed $K$. Thus, we have obtained a contradiction
for every $K\in\mathfrak{K}$ satisfying $K\subseteq\varnothing$. Hence, there
exists no $K\in\mathfrak{K}$ satisfying $K\subseteq\varnothing$. Therefore,
the product $\prod_{\substack{K\in\mathfrak{K};\\K\subseteq\varnothing}}a_{K}$
is empty, and thus equals $1$. In other words, $\prod_{\substack{K\in
\mathfrak{K};\\K\subseteq\varnothing}}a_{K}=1$.
\par
Now,%
\[
\sum_{B\subseteq E\cap\operatorname*{Eqs}f}\left(  -1\right)  ^{\left\vert
B\right\vert }\prod_{\substack{K\in\mathfrak{K};\\K\subseteq B}}a_{K}%
=\sum_{B\subseteq\varnothing}\left(  -1\right)  ^{\left\vert B\right\vert
}\prod_{\substack{K\in\mathfrak{K};\\K\subseteq B}}a_{K}=\prod_{\substack{K\in
\mathfrak{K};\\K\subseteq\varnothing}}a_{K}=1=\left[  E\cap\operatorname*{Eqs}%
f=\varnothing\right]
\]
(since $\left[  E\cap\operatorname*{Eqs}f=\varnothing\right]  =1$ (since
$E\cap\operatorname*{Eqs}f=\varnothing$ holds)). Thus, Lemma
\ref{lem.NBCm.moeb} holds, qed.}. Thus, we can WLOG assume that we don't have
$E\cap\operatorname*{Eqs}f=\varnothing$. Assume this.

We don't have $E\cap\operatorname*{Eqs}f=\varnothing$. Thus, we have $\left[
E\cap\operatorname*{Eqs}f=\varnothing\right]  =0$.

We know that $V$ is finite (since the graph $\left(  V,E\right)  $ is finite).
Thus, $E$ is finite (since $E\subseteq\dbinom{V}{2}$), and therefore the set
$E\cap\operatorname*{Eqs}f$ is also finite.

We have $E\cap\operatorname*{Eqs}f\neq\varnothing$ (since we don't have
$E\cap\operatorname*{Eqs}f=\varnothing$). Thus, $E\cap\operatorname*{Eqs}f$ is
a nonempty finite set. Hence, there exists some $d\in E\cap\operatorname*{Eqs}%
f$ with maximum $\ell\left(  d\right)  $ (among all $d\in E\cap
\operatorname*{Eqs}f$). Pick such a $d$. (If there are several such $d$, then
it does not matter which one we pick.)

We have chosen $d$ to be the element of $E\cap\operatorname*{Eqs}f$ with
maximum $\ell\left(  d\right)  $ (among all $d\in E\cap\operatorname*{Eqs}f$).
Thus,%
\begin{equation}
\ell\left(  d\right)  \geq\ell\left(  e\right)  \ \ \ \ \ \ \ \ \ \ \text{for
every }e\in E\cap\operatorname*{Eqs}f. \label{pf.lem.NBCm.moeb.maxl}%
\end{equation}

As usual, we let $\mathcal{P}\left(  S\right)  $ denote the powerset of any
set $S$. We now define two subsets $\mathcal{U}$ and $\mathcal{V}$ of
$\mathcal{P}\left(  E\cap\operatorname*{Eqs}f\right)  $ as follows:%
\begin{align*}
\mathcal{U}  &  =\left\{  F\in\mathcal{P}\left(  E\cap\operatorname*{Eqs}%
f\right)  \ \mid\ d\notin F\right\}  ;\\
\mathcal{V}  &  =\left\{  F\in\mathcal{P}\left(  E\cap\operatorname*{Eqs}%
f\right)  \ \mid\ d\in F\right\}  .
\end{align*}
Every $B\in\mathcal{U}$ satisfies $B\cup\left\{  d\right\}  \in\mathcal{V}%
$\ \ \ \ \footnote{\textit{Proof.} Let $B\in\mathcal{U}$. Thus, $B\in
\mathcal{U}=\left\{  F\in\mathcal{P}\left(  E\cap\operatorname*{Eqs}f\right)
\ \mid\ d\notin F\right\}  $. In other words, $B$ is an element $F$ of
$\mathcal{P}\left(  E\cap\operatorname*{Eqs}f\right)  $ satisfying $d\notin
F$. In other words, $B$ is an element of $\mathcal{P}\left(  E\cap
\operatorname*{Eqs}f\right)  $ and satisfies $d\notin B$. We have
$B\in\mathcal{P}\left(  E\cap\operatorname*{Eqs}f\right)  $; in other words,
$B$ is a subset of $E\cap\operatorname*{Eqs}f$. Also, $\left\{  d\right\}
\subseteq E\cap\operatorname*{Eqs}f$ (since $d\in E\cap\operatorname*{Eqs}f$).
Thus, both $B$ and $\left\{  d\right\}  $ are subsets of $E\cap
\operatorname*{Eqs}f$. Hence, their union $B\cup\left\{  d\right\}  $ is a
subset of $E\cap\operatorname*{Eqs}f$. In other words, $B\cup\left\{
d\right\}  \in\mathcal{P}\left(  E\cap\operatorname*{Eqs}f\right)  $. Also,
$d\in\left\{  d\right\}  \subseteq B\cup\left\{  d\right\}  $. Hence,
$B\cup\left\{  d\right\}  $ is an element of $\mathcal{P}\left(
E\cap\operatorname*{Eqs}f\right)  $ and satisfies $d\in B\cup\left\{
d\right\}  $. In other words, $B\cup\left\{  d\right\}  $ is an element $F$ of
$\mathcal{P}\left(  E\cap\operatorname*{Eqs}f\right)  $ satisfying $d\in F$.
In other words, $B\cup\left\{  d\right\}  \in\left\{  F\in\mathcal{P}\left(
E\cap\operatorname*{Eqs}f\right)  \ \mid\ d\in F\right\}  =\mathcal{V}$,
qed.}. Thus, we can define a map $\Phi:\mathcal{U}\rightarrow\mathcal{V}$ by%
\[
\left(  \Phi\left(  B\right)  =B\cup\left\{  d\right\}
\ \ \ \ \ \ \ \ \ \ \text{for every }B\in\mathcal{U}\right)  .
\]
Consider this map $\Phi$.

Every $B\in\mathcal{V}$ satisfies $B\setminus\left\{  d\right\}
\in\mathcal{U}$\ \ \ \ \footnote{\textit{Proof.} Let $B\in\mathcal{V}$. Thus,
$B\in\mathcal{V}=\left\{  F\in\mathcal{P}\left(  E\cap\operatorname*{Eqs}%
f\right)  \ \mid\ d\in F\right\}  $. In other words, $B$ is an element $F$ of
$\mathcal{P}\left(  E\cap\operatorname*{Eqs}f\right)  $ satisfying $d\in F$.
In other words, $B$ is an element of $\mathcal{P}\left(  E\cap
\operatorname*{Eqs}f\right)  $ and satisfies $d\in B$. We have $B\in
\mathcal{P}\left(  E\cap\operatorname*{Eqs}f\right)  $; in other words, $B$ is
a subset of $E\cap\operatorname*{Eqs}f$. Hence, $B\setminus\left\{  d\right\}
$ is a subset of $E\cap\operatorname*{Eqs}f$ (since $B\setminus\left\{
d\right\}  \subseteq B$). In other words, $B\setminus\left\{  d\right\}
\in\mathcal{P}\left(  E\cap\operatorname*{Eqs}f\right)  $. Also, $d\notin
B\setminus\left\{  d\right\}  $ (since $d\in\left\{  d\right\}  $). Hence,
$B\setminus\left\{  d\right\}  $ is an element of $\mathcal{P}\left(
E\cap\operatorname*{Eqs}f\right)  $ and satisfies $d\notin B\setminus\left\{
d\right\}  $. In other words, $B\setminus\left\{  d\right\}  $ is an element
$F$ of $\mathcal{P}\left(  E\cap\operatorname*{Eqs}f\right)  $ satisfying
$d\notin F$. In other words, $B\setminus\left\{  d\right\}  \in\left\{
F\in\mathcal{P}\left(  E\cap\operatorname*{Eqs}f\right)  \ \mid\ d\notin
F\right\}  =\mathcal{U}$, qed.}. Thus, we can define a map $\Psi
:\mathcal{V}\rightarrow\mathcal{U}$ by%
\[
\left(  \Psi\left(  B\right)  =B\setminus\left\{  d\right\}
\ \ \ \ \ \ \ \ \ \ \text{for every }B\in\mathcal{V}\right)  .
\]
Consider this map $\Psi$.

We have $\Phi\circ\Psi=\operatorname*{id}$\ \ \ \ \footnote{\textit{Proof.}
Let $B\in\mathcal{V}$. We have%
\[
\left(  \Phi\circ\Psi\right)  \left(  B\right)  =\Phi\left(  \underbrace{\Psi
\left(  B\right)  }_{\substack{=B\setminus\left\{  d\right\}  \\\text{(by the
definition of }\Psi\text{)}}}\right)  =\Phi\left(  B\setminus\left\{
d\right\}  \right)  =\left(  B\setminus\left\{  d\right\}  \right)
\cup\left\{  d\right\}
\]
(by the definition of $\Phi$).
\par
We have $B\in\mathcal{V}=\left\{  F\in\mathcal{P}\left(  E\cap
\operatorname*{Eqs}f\right)  \ \mid\ d\in F\right\}  $. In other words, $B$ is
an element $F$ of $\mathcal{P}\left(  E\cap\operatorname*{Eqs}f\right)  $
satisfying $d\in F$. In other words, $B$ is an element of $\mathcal{P}\left(
E\cap\operatorname*{Eqs}f\right)  $ and satisfies $d\in B$. From $d\in B$, we
obtain $\left\{  d\right\}  \subseteq B$. Now, $\left(  \Phi\circ\Psi\right)
\left(  B\right)  =\left(  B\setminus\left\{  d\right\}  \right)  \cup\left\{
d\right\}  =B$ (since $\left\{  d\right\}  \subseteq B$). Thus, $\left(
\Phi\circ\Psi\right)  \left(  B\right)  =B=\operatorname*{id}\left(  B\right)
$.
\par
Now, let us forget that we fixed $B$. We thus have proven that $\left(
\Phi\circ\Psi\right)  \left(  B\right)  =\operatorname*{id}\left(  B\right)  $
for every $B\in\mathcal{V}$. In other words, $\Phi\circ\Psi=\operatorname*{id}%
$, qed.} and $\Psi\circ\Phi=\operatorname*{id}$%
\ \ \ \ \footnote{\textit{Proof.} Let $B\in\mathcal{U}$. We have%
\[
\left(  \Psi\circ\Phi\right)  \left(  B\right)  =\Psi\left(  \underbrace{\Phi
\left(  B\right)  }_{\substack{=B\cup\left\{  d\right\}  \\\text{(by the
definition of }\Phi\text{)}}}\right)  =\Psi\left(  B\cup\left\{  d\right\}
\right)  =\left(  B\cup\left\{  d\right\}  \right)  \setminus\left\{
d\right\}
\]
(by the definition of $\Psi$).
\par
We have $B\in\mathcal{U}=\left\{  F\in\mathcal{P}\left(  E\cap
\operatorname*{Eqs}f\right)  \ \mid\ d\notin F\right\}  $. In other words, $B$
is an element $F$ of $\mathcal{P}\left(  E\cap\operatorname*{Eqs}f\right)  $
satisfying $d\notin F$. In other words, $B$ is an element of $\mathcal{P}%
\left(  E\cap\operatorname*{Eqs}f\right)  $ and satisfies $d\notin B$. From
$d\notin B$, we see that the sets $\left\{  d\right\}  $ and $B$ are disjoint.
Now, $\left(  \Psi\circ\Phi\right)  \left(  B\right)  =\left(  B\cup\left\{
d\right\}  \right)  \setminus\left\{  d\right\}  =B$ (since the sets $\left\{
d\right\}  $ and $B$ are disjoint). Thus, $\left(  \Psi\circ\Phi\right)
\left(  B\right)  =B=\operatorname*{id}\left(  B\right)  $.
\par
Now, let us forget that we fixed $B$. We thus have proven that $\left(
\Psi\circ\Phi\right)  \left(  B\right)  =\operatorname*{id}\left(  B\right)  $
for every $B\in\mathcal{U}$. In other words, $\Psi\circ\Phi=\operatorname*{id}%
$, qed.}. Thus, the maps $\Phi$ and $\Psi$ are mutually inverse. Hence, the
map $\Phi$ is a bijection.

Moreover, for every $B\in\mathcal{U}$ and every $K\in\mathfrak{K}$, we have
the following logical equivalence:%
\begin{equation}
\left(  K\subseteq B\right)  \ \Longleftrightarrow\ \left(  K\subseteq
\Phi\left(  B\right)  \right)  . \label{pf.lem.NBCm.moeb.equiv}%
\end{equation}

\textit{Proof of (\ref{pf.lem.NBCm.moeb.equiv}):} Let $B\in\mathcal{U}$ and
$K\in\mathfrak{K}$. We need to prove the logical equivalence
(\ref{pf.lem.NBCm.moeb.equiv}).

The definition of $\Phi$ yields $\Phi\left(  B\right)  =B\cup\left\{
d\right\}  \supseteq B$, so that $B\subseteq\Phi\left(  B\right)  $.

We have $B\in\mathcal{U}=\left\{  F\in\mathcal{P}\left(  E\cap
\operatorname*{Eqs}f\right)  \ \mid\ d\notin F\right\}  $. In other words, $B$
is an element $F$ of $\mathcal{P}\left(  E\cap\operatorname*{Eqs}f\right)  $
satisfying $d\notin F$. In other words, $B$ is an element of $\mathcal{P}%
\left(  E\cap\operatorname*{Eqs}f\right)  $ and satisfies $d\notin B$. We have
$B\in\mathcal{P}\left(  E\cap\operatorname*{Eqs}f\right)  $; in other words,
$B$ is a subset of $E\cap\operatorname*{Eqs}f$.

Also, $\Phi\left(  B\right)  \in\mathcal{V}=\left\{  F\in\mathcal{P}\left(
E\cap\operatorname*{Eqs}f\right)  \ \mid\ d\in F\right\}  \subseteq
\mathcal{P}\left(  E\cap\operatorname*{Eqs}f\right)  $. In other words,
$\Phi\left(  B\right)  \subseteq E\cap\operatorname*{Eqs}f$.

We have $K\in\mathfrak{K}$. Thus, $K$ is a broken circuit of $G$ (since
$\mathfrak{K}$ is a set of broken circuits of $G$). In other words, $K$ is a
subset of $E$ having the form $C\setminus\left\{  e\right\}  $, where $C$ is a
circuit of $G$, and where $e$ is the unique edge in $C$ having maximum label
(among the edges in $C$)\ \ \ \ \footnote{because a broken circuit of $G$ is
the same as a subset of $E$ having the form $C\setminus\left\{  e\right\}  $,
where $C$ is a circuit of $G$, and where $e$ is the unique edge in $C$ having
maximum label (among the edges in $C$) (by the definition of a
\textquotedblleft broken circuit\textquotedblright)}. Consider this $C$ and
this $e$. Thus, we have the following facts:

\begin{itemize}
\item The set $C$ is a circuit of $G$.

\item The element $e$ is the unique edge in $C$ having maximum label (among
the edges in $C$).

\item We have $K=C\setminus\left\{  e\right\}  $.
\end{itemize}

The element $e$ is the unique edge in $C$ having maximum label (among the
edges in $C$). Thus, the only edge in $C$ whose label is greater or equal to
the label of $e$ is $e$ itself. In other words, if $e^{\prime}$ is any edge in
$C$ satisfying $\ell\left(  e^{\prime}\right)  \geq\ell\left(  e\right)  $,
then%
\begin{equation}
e^{\prime}=e. \label{pf.lem.NBCm.moeb.equiv.pf.maxlab}%
\end{equation}

Let us now assume that $K\subseteq\Phi\left(  B\right)  $. Thus,
$K\subseteq\Phi\left(  B\right)  =B\cup\left\{  d\right\}  $ (by the
definition of $\Phi$). Hence, $\underbrace{K}_{\subseteq B\cup\left\{
d\right\}  }\setminus\left\{  d\right\}  \subseteq\left(  B\cup\left\{
d\right\}  \right)  \setminus\left\{  d\right\}  \subseteq B$.

We shall now prove that $K\subseteq B$.

Indeed, assume the contrary. Thus, $K\not \subseteq B$. If we had $d\notin K$,
then we would have $K\setminus\left\{  d\right\}  =K$ and therefore
$K=K\setminus\left\{  d\right\}  \subseteq B$; this would contradict
$K\not \subseteq B$. Hence, we cannot have $d\notin K$. We thus must have
$d\in K$. Hence, $d\in K=C\setminus\left\{  e\right\}  $. Hence, $d\in C$ and
$d\notin\left\{  e\right\}  $. From $d\notin\left\{  e\right\}  $, we obtain
$d\neq e$.

But $C\setminus\left\{  e\right\}  =K\subseteq\Phi\left(  B\right)  \subseteq
E\cap\operatorname*{Eqs}f\subseteq\operatorname*{Eqs}f$. Hence, Lemma
\ref{lem.Eqs.circuit} (applied to $Y$ instead of $X$) shows that $e\in
E\cap\operatorname*{Eqs}f$. Thus, (\ref{pf.lem.NBCm.moeb.maxl}) shows that
$\ell\left(  d\right)  \geq\ell\left(  e\right)  $.

Also, $d\in C$. In other words, $d$ is an edge in $C$. Since $\ell\left(
d\right)  \geq\ell\left(  e\right)  $, we can therefore apply
(\ref{pf.lem.NBCm.moeb.equiv.pf.maxlab}) to $e^{\prime}=d$. We thus obtain
$d=e$. This contradicts $d\neq e$. This contradiction proves that our
assumption was wrong. Hence, $K\subseteq B$ is proven.

Now, let us forget that we assumed that $K\subseteq\Phi\left(  B\right)  $. We
thus have proven that $K\subseteq B$ under the assumption that $K\subseteq
\Phi\left(  B\right)  $. In other words, we have proven the implication%
\begin{equation}
\left(  K\subseteq\Phi\left(  B\right)  \right)  \ \Longrightarrow\ \left(
K\subseteq B\right)  . \label{pf.lem.NBCm.moeb.equiv.pf.dir1}%
\end{equation}

On the other hand, if $K\subseteq B$, then $K\subseteq B\subseteq\Phi\left(
B\right)  $. Hence, the implication%
\[
\left(  K\subseteq B\right)  \ \Longrightarrow\ \left(  K\subseteq\Phi\left(
B\right)  \right)
\]
holds. Combining this implication with (\ref{pf.lem.NBCm.moeb.equiv.pf.dir1}),
we obtain the logical equivalence $\left(  K\subseteq B\right)
\ \Longleftrightarrow\ \left(  K\subseteq\Phi\left(  B\right)  \right)  $.
Thus, (\ref{pf.lem.NBCm.moeb.equiv}) is proven.

Also, every $B\in\mathcal{U}$ satisfies%
\begin{equation}
\left(  -1\right)  ^{\left\vert B\right\vert }=-\left(  -1\right)
^{\left\vert \Phi\left(  B\right)  \right\vert }
\label{pf.lem.NBCm.moeb.phisize}%
\end{equation}
\footnote{\textit{Proof of (\ref{pf.lem.NBCm.moeb.phisize}):} Let
$B\in\mathcal{U}$. We have $B\in\mathcal{U}=\left\{  F\in\mathcal{P}\left(
E\cap\operatorname*{Eqs}f\right)  \ \mid\ d\notin F\right\}  $. In other
words, $B$ is an element $F$ of $\mathcal{P}\left(  E\cap\operatorname*{Eqs}%
f\right)  $ satisfying $d\notin F$. In other words, $B$ is an element of
$\mathcal{P}\left(  E\cap\operatorname*{Eqs}f\right)  $ and satisfies $d\notin
B$. From $d\notin B$, we see that $\left\vert B\cup\left\{  d\right\}
\right\vert =\left\vert B\right\vert +1$. Now, the definition of $\Phi$ yields
$\Phi\left(  B\right)  =B\cup\left\{  d\right\}  $. Hence, $\left\vert
\Phi\left(  B\right)  \right\vert =\left\vert B\cup\left\{  d\right\}
\right\vert =\left\vert B\right\vert +1$. Thus, $\left(  -1\right)
^{\left\vert \Phi\left(  B\right)  \right\vert }=\left(  -1\right)
^{\left\vert B\right\vert +1}=-\left(  -1\right)  ^{\left\vert B\right\vert }%
$. Therefore, $\left(  -1\right)  ^{\left\vert B\right\vert }=-\left(
-1\right)  ^{\left\vert \Phi\left(  B\right)  \right\vert }$. This proves
(\ref{pf.lem.NBCm.moeb.phisize}).}. In other words, every $B\in\mathcal{U}$
satisfies%
\begin{equation}
\left(  -1\right)  ^{\left\vert \Phi\left(  B\right)  \right\vert }=-\left(
-1\right)  ^{\left\vert B\right\vert }. \label{pf.lem.NBCm.moeb.phisize-1}%
\end{equation}

Now,%
\begin{align}
&  \sum_{B\subseteq E\cap\operatorname*{Eqs}f}\left(  -1\right)  ^{\left\vert
B\right\vert }\prod_{\substack{K\in\mathfrak{K};\\K\subseteq B}}a_{K}%
\nonumber\\
&  =\underbrace{\sum_{\substack{B\subseteq E\cap\operatorname*{Eqs}f;\\d\in
B}}}_{\substack{=\sum_{B\in\left\{  F\in\mathcal{P}\left(  E\cap
\operatorname*{Eqs}f\right)  \ \mid\ d\in F\right\}  }=\sum_{B\in\mathcal{V}%
}\\\text{(since }\left\{  F\in\mathcal{P}\left(  E\cap\operatorname*{Eqs}%
f\right)  \ \mid\ d\in F\right\}  =\mathcal{V}\text{)}}}\left(  -1\right)
^{\left\vert B\right\vert }\prod_{\substack{K\in\mathfrak{K};\\K\subseteq
B}}a_{K}+\underbrace{\sum_{\substack{B\subseteq E\cap\operatorname*{Eqs}%
f;\\d\notin B}}}_{\substack{=\sum_{B\in\left\{  F\in\mathcal{P}\left(
E\cap\operatorname*{Eqs}f\right)  \ \mid\ d\notin F\right\}  }=\sum
_{B\in\mathcal{U}}\\\text{(since }\left\{  F\in\mathcal{P}\left(
E\cap\operatorname*{Eqs}f\right)  \ \mid\ d\notin F\right\}  =\mathcal{U}%
\text{)}}}\left(  -1\right)  ^{\left\vert B\right\vert }\prod_{\substack{K\in
\mathfrak{K};\\K\subseteq B}}a_{K}\nonumber\\
&  \ \ \ \ \ \ \ \ \ \ \ \ \ \ \ \ \ \ \ \ \left(
\begin{array}
[c]{c}%
\text{here, we have split the sum into two parts:}\\
\text{one containing all addends with }d\in B\text{,}\\
\text{and one containing all addends with }d\notin B
\end{array}
\right) \nonumber\\
&  =\underbrace{\sum_{B\in\mathcal{V}}\left(  -1\right)  ^{\left\vert
B\right\vert }\prod_{\substack{K\in\mathfrak{K};\\K\subseteq B}}a_{K}%
}_{\substack{=\sum_{B\in\mathcal{U}}\left(  -1\right)  ^{\left\vert
\Phi\left(  B\right)  \right\vert }\prod_{\substack{K\in\mathfrak{K}%
;\\K\subseteq\Phi\left(  B\right)  }}a_{K}\\\text{(here, we have}%
\\\text{substituted }\Phi\left(  B\right)  \text{ for }B\text{ in the
sum,}\\\text{since the map }\Phi:\mathcal{U}\rightarrow\mathcal{V}\text{ is a
bijection)}}}+\sum_{B\in\mathcal{U}}\underbrace{\left(  -1\right)
^{\left\vert B\right\vert }}_{\substack{=-\left(  -1\right)  ^{\left\vert
\Phi\left(  B\right)  \right\vert }\\\text{(by (\ref{pf.lem.NBCm.moeb.phisize}%
))}}}\underbrace{\prod_{\substack{K\in\mathfrak{K};\\K\subseteq B}%
}}_{\substack{=\prod_{\substack{K\in\mathfrak{K};\\K\subseteq\Phi\left(
B\right)  }}\\\text{(because for every }K\in\mathfrak{K}\text{,}\\\text{the
condition }\left(  K\subseteq B\right)  \\\text{is equivalent to }\left(
K\subseteq\Phi\left(  B\right)  \right)  \\\text{(by
(\ref{pf.lem.NBCm.moeb.equiv})))}}}a_{K}\nonumber\\
&  =\sum_{B\in\mathcal{U}}\left(  -1\right)  ^{\left\vert \Phi\left(
B\right)  \right\vert }\prod_{\substack{K\in\mathfrak{K};\\K\subseteq
\Phi\left(  B\right)  }}a_{K}+\sum_{B\in\mathcal{U}}\left(  -\left(
-1\right)  ^{\left\vert \Phi\left(  B\right)  \right\vert }\right)
\prod_{\substack{K\in\mathfrak{K};\\K\subseteq\Phi\left(  B\right)  }%
}a_{K}\nonumber\\
&  =\sum_{B\in\mathcal{U}}\left(  -1\right)  ^{\left\vert \Phi\left(
B\right)  \right\vert }\prod_{\substack{K\in\mathfrak{K};\\K\subseteq
\Phi\left(  B\right)  }}a_{K}-\sum_{B\in\mathcal{U}}\left(  -1\right)
^{\left\vert \Phi\left(  B\right)  \right\vert }\prod_{\substack{K\in
\mathfrak{K};\\K\subseteq\Phi\left(  B\right)  }}a_{K}\nonumber\\
&  =0=\left[  E\cap\operatorname*{Eqs}f=\varnothing\right]
\ \ \ \ \ \ \ \ \ \ \left(  \text{since }\left[  E\cap\operatorname*{Eqs}%
f=\varnothing\right]  =0\right)  . \label{pf.lem.NBCm.moeb.there}%
\end{align}
This proves Lemma \ref{lem.NBCm.moeb}.
\end{proof}
\end{verlong}

We now finally proceed to the proof of Theorem \ref{thm.chromsym.varis}:

\begin{vershort}
\begin{proof}
[Proof of Theorem \ref{thm.chromsym.varis}.]The definition of $X_{G}$ shows
that
\begin{align*}
X_{G}  &  =\sum_{\substack{f:V\rightarrow\mathbb{N}_{+}\text{ is
a}\\\text{proper }\mathbb{N}_{+}\text{-coloring of }G}}\mathbf{x}_{f}\\
&  =\sum_{f:V\rightarrow\mathbb{N}_{+}}\left[  \underbrace{f\text{ is a proper
}\mathbb{N}_{+}\text{-coloring of }G}_{\substack{\Longleftrightarrow\ \left(
\text{the }\mathbb{N}_{+}\text{-coloring }f\text{ of }G\text{ is
proper}\right)  \\\Longleftrightarrow\ \left(  E\cap\operatorname*{Eqs}%
f=\varnothing\right)  \\\text{(by Lemma \ref{lem.Eqs.proper}, applied to
}\mathbb{N}_{+}\text{ instead of }X\text{)}}}\right]  \mathbf{x}_{f}\\
&  =\sum_{f:V\rightarrow\mathbb{N}_{+}}\underbrace{\left[  E\cap
\operatorname*{Eqs}f=\varnothing\right]  }_{\substack{=\sum_{B\subseteq
E\cap\operatorname*{Eqs}f}\left(  -1\right)  ^{\left\vert B\right\vert }%
\prod_{\substack{K\in\mathfrak{K};\\K\subseteq B}}a_{K}\\\text{(by Lemma
\ref{lem.NBCm.moeb}, applied to }Y=\mathbb{N}_{+}\text{)}}}\mathbf{x}_{f}\\
&  =\sum_{f:V\rightarrow\mathbb{N}_{+}}\ \ \underbrace{\sum_{B\subseteq
E\cap\operatorname*{Eqs}f}}_{=\sum_{\substack{B\subseteq E;\\B\subseteq
\operatorname*{Eqs}f}}}\left(  -1\right)  ^{\left\vert B\right\vert }\left(
\prod_{\substack{K\in\mathfrak{K};\\K\subseteq B}}a_{K}\right)  \mathbf{x}%
_{f}\\
&  =\underbrace{\sum_{f:V\rightarrow\mathbb{N}_{+}}\ \ \sum
_{\substack{B\subseteq E;\\B\subseteq\operatorname*{Eqs}f}}}_{=\sum
_{B\subseteq E}\sum_{\substack{f:V\rightarrow\mathbb{N}_{+};\\B\subseteq
\operatorname*{Eqs}f}}}\left(  -1\right)  ^{\left\vert B\right\vert }\left(
\prod_{\substack{K\in\mathfrak{K};\\K\subseteq B}}a_{K}\right)  \mathbf{x}%
_{f}\\
&  =\sum_{B\subseteq E}\ \ \sum_{\substack{f:V\rightarrow\mathbb{N}%
_{+};\\B\subseteq\operatorname*{Eqs}f}}\left(  -1\right)  ^{\left\vert
B\right\vert }\left(  \prod_{\substack{K\in\mathfrak{K};\\K\subseteq B}%
}a_{K}\right)  \mathbf{x}_{f}=\sum_{B\subseteq E}\left(  -1\right)
^{\left\vert B\right\vert }\left(  \prod_{\substack{K\in\mathfrak{K}%
;\\K\subseteq B}}a_{K}\right)  \underbrace{\sum_{\substack{f:V\rightarrow
\mathbb{N}_{+};\\B\subseteq\operatorname*{Eqs}f}}\mathbf{x}_{f}}%
_{\substack{=p_{\lambda\left(  V,B\right)  }\\\text{(by Lemma
\ref{lem.Eqs.sum})}}}\\
&  =\sum_{B\subseteq E}\left(  -1\right)  ^{\left\vert B\right\vert }\left(
\prod_{\substack{K\in\mathfrak{K};\\K\subseteq B}}a_{K}\right)  p_{\lambda
\left(  V,B\right)  }=\sum_{F\subseteq E}\left(  -1\right)  ^{\left\vert
F\right\vert }\left(  \prod_{\substack{K\in\mathfrak{K};\\K\subseteq F}%
}a_{K}\right)  p_{\lambda\left(  V,F\right)  }%
\end{align*}
(here, we have renamed the summation index $B$ as $F$). This proves Theorem
\ref{thm.chromsym.varis}.
\end{proof}
\end{vershort}

\begin{verlong}
\begin{proof}
[Proof of Theorem \ref{thm.chromsym.varis}.]We have
\begin{equation}
X_{G}=\sum_{\substack{f:V\rightarrow\mathbb{N}_{+}\text{ is a}\\\text{proper
}\mathbb{N}_{+}\text{-coloring of }G}}\mathbf{x}_{f}
\label{pf.thm.chromsym.varis.XG-def}%
\end{equation}
(by the definition of $X_{G}$). Now, if $f:V\rightarrow\mathbb{N}_{+}$ is a
map, then we have the following logical equivalence:%
\begin{equation}
\left(  \text{the }\mathbb{N}_{+}\text{-coloring }f\text{ of }G\text{ is
proper}\right)  \ \Longleftrightarrow\ \left(  E\cap\operatorname*{Eqs}%
f=\varnothing\right)  \label{pf.thm.chromsym.varis.equiv}%
\end{equation}
(because the $\mathbb{N}_{+}$-coloring $f$ of $G$ is proper if and only if
$E\cap\operatorname*{Eqs}f=\varnothing$\ \ \ \ \footnote{by Lemma
\ref{lem.Eqs.proper} (applied to $\mathbb{N}_{+}$ instead of $X$)}). Now,%
\begin{align}
&  \sum_{f:V\rightarrow\mathbb{N}_{+}}\left[  \underbrace{E\cap
\operatorname*{Eqs}f=\varnothing}_{\substack{\Longleftrightarrow\ \left(
\text{the }\mathbb{N}_{+}\text{-coloring }f\text{ of }G\text{ is
proper}\right)  \\\text{(by (\ref{pf.thm.chromsym.varis.equiv}))}}}\right]
\mathbf{x}_{f}\nonumber\\
&  =\sum_{f:V\rightarrow\mathbb{N}_{+}}\left[  \underbrace{\text{the
}\mathbb{N}_{+}\text{-coloring }f\text{ of }G\text{ is proper}}%
_{\Longleftrightarrow\ \left(  f\text{ is a proper }\mathbb{N}_{+}%
\text{-coloring of }G\right)  }\right]  \mathbf{x}_{f}\nonumber\\
&  =\sum_{f:V\rightarrow\mathbb{N}_{+}}\left[  f\text{ is a proper }%
\mathbb{N}_{+}\text{-coloring of }G\right]  \mathbf{x}_{f}\nonumber\\
&  =\sum_{\substack{f:V\rightarrow\mathbb{N}_{+}\text{ is a}\\\text{proper
}\mathbb{N}_{+}\text{-coloring of }G}}\underbrace{\left[  f\text{ is a proper
}\mathbb{N}_{+}\text{-coloring of }G\right]  }_{\substack{=1\\\text{(since
}f\text{ is a proper }\mathbb{N}_{+}\text{-coloring of }G\text{)}}%
}\mathbf{x}_{f}\nonumber\\
&  \ \ \ \ \ \ \ \ \ \ +\sum_{\substack{f:V\rightarrow\mathbb{N}_{+}\text{ is
not a}\\\text{proper }\mathbb{N}_{+}\text{-coloring of }G}}\underbrace{\left[
f\text{ is a proper }\mathbb{N}_{+}\text{-coloring of }G\right]
}_{\substack{=0\\\text{(since }f\text{ is not a proper }\mathbb{N}%
_{+}\text{-coloring of }G\text{)}}}\mathbf{x}_{f}\nonumber\\
&  \ \ \ \ \ \ \ \ \ \ \ \ \ \ \ \ \ \ \ \ \left(  \text{since each
}f:V\rightarrow\mathbb{N}_{+}\text{ is either a proper }\mathbb{N}%
_{+}\text{-coloring of }G\text{ or not}\right) \nonumber\\
&  =\sum_{\substack{f:V\rightarrow\mathbb{N}_{+}\text{ is a}\\\text{proper
}\mathbb{N}_{+}\text{-coloring of }G}}\mathbf{x}_{f}+\underbrace{\sum
_{\substack{f:V\rightarrow\mathbb{N}_{+}\text{ is not a}\\\text{proper
}\mathbb{N}_{+}\text{-coloring of }G}}0\mathbf{x}_{f}}_{=0}=\sum
_{\substack{f:V\rightarrow\mathbb{N}_{+}\text{ is a}\\\text{proper }%
\mathbb{N}_{+}\text{-coloring of }G}}\mathbf{x}_{f}\nonumber\\
&  =X_{G} \label{pf.thm.chromsym.varis.step1}%
\end{align}
(by (\ref{pf.thm.chromsym.varis.XG-def})).

However, for every $f:V\rightarrow\mathbb{N}_{+}$, we have%
\begin{equation}
\sum_{B\subseteq E\cap\operatorname*{Eqs}f}\left(  -1\right)  ^{\left\vert
B\right\vert }\prod_{\substack{K\in\mathfrak{K};\\K\subseteq B}}a_{K}=\left[
E\cap\operatorname*{Eqs}f=\varnothing\right]
\label{pf.thm.chromsym.varis.moeb}%
\end{equation}
(by Lemma \ref{lem.NBCm.moeb} (applied to $\mathbb{N}_{+}$ instead of $Y$)).

For every $f:V\rightarrow\mathbb{N}_{+}$, we have%
\begin{equation}
\left\{  F\subseteq E\ \mid\ F\subseteq\operatorname*{Eqs}f\right\}
=\mathcal{P}\left(  E\cap\operatorname*{Eqs}f\right)
\label{pf.thm.chromsym.varis.cut}%
\end{equation}
\footnote{\textit{Proof of (\ref{pf.thm.chromsym.varis.cut}):} Let
$f:V\rightarrow\mathbb{N}_{+}$.
\par
Let $B\in\left\{  F\subseteq E\ \mid\ F\subseteq\operatorname*{Eqs}f\right\}
$. Thus, $B$ is a subset $F$ of $E$ satisfying $F\subseteq\operatorname*{Eqs}%
f$. In other words, $B$ is a subset of $E$ and satisfies $B\subseteq
\operatorname*{Eqs}f$. Since $B$ is a subset of $E$, we have $B\subseteq E$.
Combining this with $B\subseteq\operatorname*{Eqs}f$, we obtain $B\subseteq
E\cap\operatorname*{Eqs}f$. In other words, $B\in\mathcal{P}\left(
E\cap\operatorname*{Eqs}f\right)  $.
\par
Let us now forget that we fixed $B$. We thus have proven that every
$B\in\left\{  F\subseteq E\ \mid\ F\subseteq\operatorname*{Eqs}f\right\}  $
satisfies $B\in\mathcal{P}\left(  E\cap\operatorname*{Eqs}f\right)  $. In
other words,%
\begin{equation}
\left\{  F\subseteq E\ \mid\ F\subseteq\operatorname*{Eqs}f\right\}
\subseteq\mathcal{P}\left(  E\cap\operatorname*{Eqs}f\right)  .
\label{pf.thm.chromsym.varis.cut.pf.1}%
\end{equation}
\par
On the other hand, let $C\in\mathcal{P}\left(  E\cap\operatorname*{Eqs}%
f\right)  $. Thus, $C$ is a subset of $E\cap\operatorname*{Eqs}f$. Hence,
$C\subseteq E\cap\operatorname*{Eqs}f\subseteq E$, so that $C$ is a subset of
$E$. Also, $C\subseteq E\cap\operatorname*{Eqs}f\subseteq\operatorname*{Eqs}%
f$. Thus, $C$ is a subset of $E$ and satisfies $C\subseteq\operatorname*{Eqs}%
f$. In other words, $C$ is a subset $F$ of $E$ satisfying $F\subseteq
\operatorname*{Eqs}f$. In other words, $C\in\left\{  F\subseteq E\ \mid
\ F\subseteq\operatorname*{Eqs}f\right\}  $.
\par
Let us now forget that we fixed $C$. We thus have proven that every
$C\in\mathcal{P}\left(  E\cap\operatorname*{Eqs}f\right)  $ satisfies
$C\in\left\{  F\subseteq E\ \mid\ F\subseteq\operatorname*{Eqs}f\right\}  $.
In other words,%
\[
\mathcal{P}\left(  E\cap\operatorname*{Eqs}f\right)  \subseteq\left\{
F\subseteq E\ \mid\ F\subseteq\operatorname*{Eqs}f\right\}  .
\]
Combining this inclusion with (\ref{pf.thm.chromsym.varis.cut.pf.1}), we
obtain $\left\{  F\subseteq E\ \mid\ F\subseteq\operatorname*{Eqs}f\right\}
=\mathcal{P}\left(  E\cap\operatorname*{Eqs}f\right)  $. This proves
(\ref{pf.thm.chromsym.varis.cut}).}.

For every $f:V\rightarrow\mathbb{N}_{+}$, we have%
\begin{align}
&  \underbrace{\sum_{\substack{B\subseteq E;\\B\subseteq\operatorname*{Eqs}%
f}}}_{\substack{=\sum_{B\in\left\{  F\subseteq E\ \mid\ F\subseteq
\operatorname*{Eqs}f\right\}  }=\sum_{B\in\mathcal{P}\left(  E\cap
\operatorname*{Eqs}f\right)  }\\\text{(because }\left\{  F\subseteq
E\ \mid\ F\subseteq\operatorname*{Eqs}f\right\}  =\mathcal{P}\left(
E\cap\operatorname*{Eqs}f\right)  \\\text{(by (\ref{pf.thm.chromsym.varis.cut}%
)))}}}\left(  -1\right)  ^{\left\vert B\right\vert }\prod_{\substack{K\in
\mathfrak{K};\\K\subseteq B}}a_{K}\nonumber\\
&  =\underbrace{\sum_{B\in\mathcal{P}\left(  E\cap\operatorname*{Eqs}f\right)
}}_{=\sum_{B\subseteq E\cap\operatorname*{Eqs}f}}\left(  -1\right)
^{\left\vert B\right\vert }\prod_{\substack{K\in\mathfrak{K};\\K\subseteq
B}}a_{K}=\sum_{B\subseteq E\cap\operatorname*{Eqs}f}\left(  -1\right)
^{\left\vert B\right\vert }\prod_{\substack{K\in\mathfrak{K};\\K\subseteq
B}}a_{K}\nonumber\\
&  =\left[  E\cap\operatorname*{Eqs}f=\varnothing\right]
\label{pf.thm.chromsym.varis.moeb2}%
\end{align}
(by (\ref{pf.thm.chromsym.varis.moeb})).

Now, (\ref{pf.thm.chromsym.varis.step1}) yields%
\begin{align*}
X_{G}  &  =\sum_{f:V\rightarrow\mathbb{N}_{+}}\underbrace{\left[
E\cap\operatorname*{Eqs}f=\varnothing\right]  }_{\substack{=\sum
_{\substack{B\subseteq E;\\B\subseteq\operatorname*{Eqs}f}}\left(  -1\right)
^{\left\vert B\right\vert }\prod_{\substack{K\in\mathfrak{K};\\K\subseteq
B}}a_{K}\\\text{(by (\ref{pf.thm.chromsym.varis.moeb2}))}}}\mathbf{x}_{f}\\
&  =\sum_{f:V\rightarrow\mathbb{N}_{+}}\left(  \sum_{\substack{B\subseteq
E;\\B\subseteq\operatorname*{Eqs}f}}\left(  -1\right)  ^{\left\vert
B\right\vert }\prod_{\substack{K\in\mathfrak{K};\\K\subseteq B}}a_{K}\right)
\mathbf{x}_{f}=\underbrace{\sum_{f:V\rightarrow\mathbb{N}_{+}}\ \ \sum
_{\substack{B\subseteq E;\\B\subseteq\operatorname*{Eqs}f}}}_{=\sum
_{B\subseteq E}\ \ \sum_{\substack{f:V\rightarrow\mathbb{N}_{+};\\B\subseteq
\operatorname*{Eqs}f}}}\left(  -1\right)  ^{\left\vert B\right\vert }\left(
\prod_{\substack{K\in\mathfrak{K};\\K\subseteq B}}a_{K}\right)  \mathbf{x}%
_{f}\\
&  =\sum_{B\subseteq E}\ \ \sum_{\substack{f:V\rightarrow\mathbb{N}%
_{+};\\B\subseteq\operatorname*{Eqs}f}}\left(  -1\right)  ^{\left\vert
B\right\vert }\left(  \prod_{\substack{K\in\mathfrak{K};\\K\subseteq B}%
}a_{K}\right)  \mathbf{x}_{f}\\
&  =\sum_{B\subseteq E}\left(  -1\right)  ^{\left\vert B\right\vert }\left(
\prod_{\substack{K\in\mathfrak{K};\\K\subseteq B}}a_{K}\right)
\underbrace{\sum_{\substack{f:V\rightarrow\mathbb{N}_{+};\\B\subseteq
\operatorname*{Eqs}f}}\mathbf{x}_{f}}_{\substack{=p_{\lambda\left(
V,B\right)  }\\\text{(by Lemma \ref{lem.Eqs.sum}}\\\text{(since }\left(
V,B\right)  \text{ is a finite graph}\\\text{(since }V\text{ is a finite set
and }B\subseteq E\subseteq\dbinom{V}{2}\text{)))}}}\\
&  =\sum_{B\subseteq E}\left(  -1\right)  ^{\left\vert B\right\vert }\left(
\prod_{\substack{K\in\mathfrak{K};\\K\subseteq B}}a_{K}\right)  p_{\lambda
\left(  V,B\right)  }=\sum_{F\subseteq E}\left(  -1\right)  ^{\left\vert
F\right\vert }\left(  \prod_{\substack{K\in\mathfrak{K};\\K\subseteq F}%
}a_{K}\right)  p_{\lambda\left(  V,F\right)  }%
\end{align*}
(here, we have renamed the summation index $B$ as $F$). This proves Theorem
\ref{thm.chromsym.varis}.
\end{proof}
\end{verlong}

Thus, Theorem \ref{thm.chromsym.varis} is proven; as we know, this entails the
correctness of Theorem \ref{thm.chromsym.empty}, Corollary
\ref{cor.chromsym.K-free} and Corollary \ref{cor.chromsym.NBC}.

\section{\label{sec.chrompol}The chromatic polynomial}

\subsection{Definition}

We have so far studied the chromatic symmetric function. We shall now apply
the above results to the chromatic polynomial. The definition of the chromatic
polynomial rests upon the following fact:

\begin{theorem}
\label{thm.chrompol.exist}Let $G=\left(  V,E\right)  $ be a finite graph.
Then, there exists a unique polynomial $P\in\mathbb{Z}\left[  x\right]  $ such
that every $q\in\mathbb{N}$ satisfies%
\[
P\left(  q\right)  =\left(  \text{the number of all proper }\left\{
1,2,\ldots,q\right\}  \text{-colorings of }G\right)  .
\]

\end{theorem}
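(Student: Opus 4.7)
The plan is to define the polynomial by an explicit formula and then verify that it has the required values. Uniqueness is immediate: two polynomials in $\mathbb{Z}[x]$ that agree at all $q \in \mathbb{N}$ agree at infinitely many points, and hence coincide.

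For existence, I would set
\[
P = \sum_{F \subseteq E} \left(-1\right)^{\left\vert F \right\vert} x^{c\left(V,F\right)} \in \mathbb{Z}\left[x\right],
\]
where $c\left(V,F\right)$ denotes the number of connected components of the graph $\left(V,F\right)$ (equivalently, the number of parts of the partition $\lambda\left(V,F\right)$). Since $E$ is finite, this is a finite sum of monomials with integer coefficients, so $P$ is well-defined in $\mathbb{Z}\left[x\right]$.

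The core step is to verify, for each fixed $q \in \mathbb{N}$, that $P\left(q\right)$ equals the number of proper $\left\{1, 2, \ldots, q\right\}$-colorings of $G$. I would do this by specializing Theorem \ref{thm.chromsym.empty} under the substitution $x_1 = x_2 = \cdots = x_q = 1$ and $x_j = 0$ for all $j > q$. Under this substitution, the power sum $p_n$ evaluates to $q$ for every positive integer $n$, and consequently $p_{\lambda\left(V,F\right)}$ evaluates to $q^{c\left(V,F\right)}$ for every $F \subseteq E$; thus the right-hand side of Theorem \ref{thm.chromsym.empty} specializes to $P\left(q\right)$. Meanwhile, the monomial $\mathbf{x}_f$ specializes to $1$ when $f\left(V\right) \subseteq \left\{1, \ldots, q\right\}$ and to $0$ otherwise, so the defining sum of $X_G$ specializes to the number of proper $\left\{1, \ldots, q\right\}$-colorings of $G$. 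Equating the two sides gives the desired identity.

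The only subtle point is the legitimacy of the specialization, but this is harmless because $X_G$ is bounded-degree (it lies in $\Lambda$). Setting all but finitely many indeterminates to $0$ turns each symmetric function appearing in Theorem \ref{thm.chromsym.empty} into an ordinary polynomial in $x_1, \ldots, x_q$, which may be evaluated unambiguously, and this evaluation commutes with the finite sums and products involved. This yields the existence of $P$ with the required values and, combined with the uniqueness argument above, completes the proof.
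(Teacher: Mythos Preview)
Your proof is correct. Both you and the paper use the same candidate polynomial $P=\sum_{F\subseteq E}(-1)^{|F|}x^{\operatorname{conn}(V,F)}$ and the same uniqueness argument (two integer polynomials agreeing on $\mathbb{N}$ must coincide). The difference lies in how the identity $P(q)=(\text{number of proper }\{1,\ldots,q\}\text{-colorings})$ is established. You obtain it by specializing Theorem~\ref{thm.chromsym.empty} at $x_1=\cdots=x_q=1$, $x_j=0$ for $j>q$, which is clean and reuses work already done. The paper instead reproves this identity from scratch (Lemma~\ref{lem.chrompol.weak.empty}, via Lemma~\ref{lem.chrompol.weak.varis}), mimicking the structure of the symmetric-function argument rather than invoking it. The paper explicitly mentions both routes and chooses the direct one so that the chromatic-polynomial section stands independently of the symmetric-function machinery; your specialization approach is the second route the paper alludes to, and it is shorter precisely because it leans on Theorem~\ref{thm.chromsym.empty}.
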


\begin{definition}
\label{def.chrompol}Let $G=\left(  V,E\right)  $ be a finite graph. Theorem
\ref{thm.chrompol.exist} shows that there exists a polynomial $P\in
\mathbb{Z}\left[  x\right]  $ such that every $q\in\mathbb{N}$ satisfies
$P\left(  q\right)  =\left(  \text{the number of all proper }\left\{
1,2,\ldots,q\right\}  \text{-colorings of }G\right)  $. This polynomial $P$ is
called the \emph{chromatic polynomial} of $G$, and will be denoted by
$\chi_{G}$.
\end{definition}

We shall later prove Theorem \ref{thm.chrompol.exist} (as a consequence of
something stronger that we show). First, we shall state some formulas for the
chromatic polynomial which are analogues of results proven before for the
chromatic symmetric function.

\subsection{Formulas for $\chi_{G}$}

Before we state several formulas for $\chi_{G}$, we need to introduce one more notation:

\begin{definition}
\label{def.conn}Let $G$ be a finite graph. We let $\operatorname*{conn}G$
denote the number of connected components of $G$.
\end{definition}

The following results are analogues of Theorem \ref{thm.chromsym.empty},
Theorem \ref{thm.chromsym.varis}, Corollary \ref{cor.chromsym.K-free} and
Corollary \ref{cor.chromsym.NBC}, respectively:

\begin{theorem}
\label{thm.chrompol.empty}Let $G=\left(  V,E\right)  $ be a finite graph.
Then,%
\[
\chi_{G}=\sum_{F\subseteq E}\left(  -1\right)  ^{\left\vert F\right\vert
}x^{\operatorname*{conn}\left(  V,F\right)  }.
\]
(Here, of course, the pair $\left(  V,F\right)  $ is regarded as a graph, and
the expression $\operatorname*{conn}\left(  V,F\right)  $ is understood
according to Definition \ref{def.conn}.)
\end{theorem}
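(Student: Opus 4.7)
The plan is to mirror the proof of Theorem \ref{thm.chromsym.empty}, but counting colorings rather than weighting them by monomials. In particular, because Theorem \ref{thm.chrompol.exist} has not yet been established, I would actually prove both theorems in one sweep: exhibit the sum on the right-hand side as a polynomial $P(x) \in \mathbb{Z}\left[ x\right] $, and show that $P\left( q\right) $ counts the proper $\left\{ 1,2,\ldots,q\right\} $-colorings of $G$ for every $q\in\mathbb{N}$.

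First, I would fix $q\in\mathbb{N}$ and rewrite the number $N_{q}$ of proper $\left\{  1,2,\ldots,q\right\} $-colorings of $G$ using Lemma \ref{lem.Eqs.proper} (which identifies \textquotedblleft proper\textquotedblright\ with $E\cap\operatorname*{Eqs}f=\varnothing$) as
\[
N_{q}=\sum_{f:V\rightarrow\left\{  1,2,\ldots,q\right\}  }\left[  E\cap\operatorname*{Eqs}f=\varnothing\right] .
\]
Next, apply Lemma \ref{lem.NBCm.moeb} with $\mathfrak{K}=\varnothing$ (so that the product $\prod_{K\in\mathfrak{K},\,K\subseteq B}a_{K}$ is empty and equals $1$), and $Y=\left\{  1,2,\ldots,q\right\} $, to replace $\left[  E\cap\operatorname*{Eqs}f=\varnothing\right] $ by $\sum_{B\subseteq E\cap\operatorname*{Eqs}f}\left(  -1\right)  ^{\left\vert B\right\vert }$. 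After swapping the order of summation (the same maneuver as in the proof of Theorem \ref{thm.chromsym.varis}), I obtain
\[
N_{q}=\sum_{F\subseteq E}\left(  -1\right)  ^{\left\vert F\right\vert }\cdot\#\left\{  f:V\rightarrow\left\{  1,2,\ldots,q\right\}  \ \mid\ F\subseteq\operatorname*{Eqs}f\right\} .
\]

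The remaining step is the counting analogue of Lemma \ref{lem.Eqs.sum}: for every $F\subseteq E$, the number of maps $f:V\rightarrow\left\{1,2,\ldots,q\right\}  $ with $F\subseteq\operatorname*{Eqs}f$ equals $q^{\operatorname*{conn}\left(  V,F\right)  }$. The argument is the natural one, and is parallel to (and simpler than) the corresponding part of Lemma \ref{lem.Eqs.sum}: such maps are precisely those that are constant on each connected component of $\left(  V,F\right)  $ (the \textquotedblleft only if\textquotedblright\ direction is a walk argument, the \textquotedblleft if\textquotedblright\ direction is trivial), so they are in bijection with the set of maps from the $\operatorname*{conn}\left(  V,F\right) $-element set of connected components of $\left(  V,F\right)  $ to $\left\{  1,2,\ldots,q\right\}  $. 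Substituting this count gives $N_{q}=\sum_{F\subseteq E}\left(  -1\right)  ^{\left\vert F\right\vert }q^{\operatorname*{conn}\left(  V,F\right)  }$.

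Finally, the polynomial $P\left(  x\right)  =\sum_{F\subseteq E}\left(-1\right)  ^{\left\vert F\right\vert }x^{\operatorname*{conn}\left(V,F\right)  }$ lies visibly in $\mathbb{Z}\left[  x\right]  $, and the computation just performed shows $P\left(  q\right)  =N_{q}$ for every $q\in\mathbb{N}$. This simultaneously establishes the existence asserted in Theorem \ref{thm.chrompol.exist} (uniqueness being immediate, since a polynomial in $\mathbb{Z}\left[  x\right]  $ is determined by its values on $\mathbb{N}$) and proves Theorem \ref{thm.chrompol.empty}. There is no real obstacle: the only nontrivial ingredient is the polynomial-identification step, and all genuine combinatorics has already been done in Lemmas \ref{lem.Eqs.proper} and \ref{lem.NBCm.moeb}, together with the aforementioned counting analogue of Lemma \ref{lem.Eqs.sum}.
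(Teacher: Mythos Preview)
Your proposal is correct and follows essentially the same route as the paper. The paper packages the counting analogue of Lemma~\ref{lem.Eqs.sum} as Lemma~\ref{lem.Eqs.sum1}, and packages your computation of $N_q$ as Lemma~\ref{lem.chrompol.weak.empty} (actually deriving it from the more general Lemma~\ref{lem.chrompol.weak.varis}, which keeps $\mathfrak{K}$ arbitrary); it then uses this to prove Theorem~\ref{thm.chrompol.exist} and subsequently Theorem~\ref{thm.chrompol.empty}, just as you do. The only organizational difference is that the paper takes the detour through the general $\mathfrak{K}$ version first and specializes afterward, whereas you go directly to $\mathfrak{K}=\varnothing$; the underlying ingredients (Lemmas~\ref{lem.Eqs.proper}, \ref{lem.NBCm.moeb}, and the $q^{\operatorname{conn}(V,F)}$ count) are identical.
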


\begin{theorem}
\label{thm.chrompol.varis}Let $G=\left(  V,E\right)  $ be a finite graph. Let
$X$ be a totally ordered set. Let $\ell:E\rightarrow X$ be a labeling
function. Let $\mathfrak{K}$ be some set of broken circuits of $G$ (not
necessarily containing all of them). Let $a_{K}$ be an element of $\mathbf{k}$
for every $K\in\mathfrak{K}$. Then,%
\[
\chi_{G}=\sum_{F\subseteq E}\left(  -1\right)  ^{\left\vert F\right\vert
}\left(  \prod_{\substack{K\in\mathfrak{K};\\K\subseteq F}}a_{K}\right)
x^{\operatorname*{conn}\left(  V,F\right)  }.
\]
(Here, of course, the pair $\left(  V,F\right)  $ is regarded as a graph, and
the expression $\operatorname*{conn}\left(  V,F\right)  $ is understood
according to Definition \ref{def.conn}. Moreover, the polynomial $\chi_{G}%
\in\mathbb{Z}\left[  x\right]  $ on the left-hand side is regarded as an
element of $\mathbf{k}\left[  x\right]  $ via the canonical ring morphism
$\mathbb{Z}\left[  x\right]  \rightarrow\mathbf{k}\left[  x\right]  $.)
\end{theorem}

\begin{corollary}
\label{cor.chrompol.K-free}Let $G=\left(  V,E\right)  $ be a finite graph. Let
$X$ be a totally ordered set. Let $\ell:E\rightarrow X$ be a labeling
function. Let $\mathfrak{K}$ be some set of broken circuits of $G$ (not
necessarily containing all of them). Then,%
\[
\chi_{G}=\sum_{\substack{F\subseteq E;\\F\text{ is }\mathfrak{K}\text{-free}%
}}\left(  -1\right)  ^{\left\vert F\right\vert }x^{\operatorname*{conn}\left(
V,F\right)  }.
\]

\end{corollary}

\begin{corollary}
\label{cor.chrompol.NBC}Let $G=\left(  V,E\right)  $ be a finite graph. Let
$X$ be a totally ordered set. Let $\ell:E\rightarrow X$ be a labeling
function. Then,%
\[
\chi_{G}=\sum_{\substack{F\subseteq E;\\F\text{ contains no broken}%
\\\text{circuit of }G\text{ as a subset}}}\left(  -1\right)  ^{\left\vert
F\right\vert }x^{\operatorname*{conn}\left(  V,F\right)  }.
\]

\end{corollary}

Except for Theorem \ref{thm.chrompol.varis}, these results are not new; in
fact, Corollary \ref{cor.chrompol.K-free} is a particular case of
\cite[(12)]{DohTri14}, and of course we can obtain Corollary
\ref{cor.chrompol.NBC} and Theorem \ref{thm.chrompol.empty} as particular
cases of Corollary \ref{cor.chrompol.K-free}.

\subsection{Proofs}

Nevertheless, for the sake of completeness, we shall give proofs of all the
five results above (Theorem \ref{thm.chrompol.exist}, Theorem
\ref{thm.chrompol.empty}, Theorem \ref{thm.chrompol.varis}, Corollary
\ref{cor.chrompol.K-free} and Corollary \ref{cor.chrompol.NBC}).

There are two approaches to these results (except for Theorem
\ref{thm.chrompol.exist}): One is to prove them similarly to how we proved the
analogous results about $X_{G}$; the other is to derive them from the latter.
We shall take the first approach, since it yields a proof of the classical
Theorem \ref{thm.chrompol.exist} \textquotedblleft for free\textquotedblright.
We begin with an analogue of Lemma \ref{lem.Eqs.sum}:

\begin{lemma}
\label{lem.Eqs.sum1}Let $\left(  V,B\right)  $ be a finite graph. Let
$q\in\mathbb{N}$. Then,%
\[
\sum_{\substack{f:V\rightarrow\left\{  1,2,\ldots,q\right\}  ;\\B\subseteq
\operatorname*{Eqs}f}}1=q^{\operatorname*{conn}\left(  V,B\right)  }.
\]
(Here, the expression $\operatorname*{conn}\left(  V,B\right)  $ is understood
according to Definition \ref{def.conn}.)
\end{lemma}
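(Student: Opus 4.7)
The plan is to mirror the structure of the proof of Lemma \ref{lem.Eqs.sum}, but with the sum over $\mathbb{N}_{+}$ replaced by a sum over $\left\{1,2,\ldots,q\right\}$ and the weight $\mathbf{x}_{f}$ replaced by $1$. The key observation is that a map $f:V\rightarrow\left\{1,2,\ldots,q\right\}$ satisfies $B\subseteq\operatorname*{Eqs}f$ if and only if $f$ is constant on each connected component of $(V,B)$. Once this is established, the maps in question are in bijection with tuples in $\left\{1,2,\ldots,q\right\}^{k}$, where $k=\operatorname*{conn}(V,B)$.

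First I would list the connected components of $(V,B)$ as $C_{1},C_{2},\ldots,C_{k}$ (so $k=\operatorname*{conn}(V,B)$ and $V=\bigsqcup_{i=1}^{k}C_{i}$). I would then define a map
\[
\Phi:\left\{1,2,\ldots,q\right\}^{k}\rightarrow\left\{f:V\rightarrow\left\{1,2,\ldots,q\right\}\ \mid\ B\subseteq\operatorname*{Eqs}f\right\}
\]
by sending a tuple $(s_{1},s_{2},\ldots,s_{k})$ to the map that assigns the value $s_{i}$ to every vertex of $C_{i}$. Well-definedness of $\Phi$ reduces to the observation that any edge $\{u,v\}\in B$ has both of its endpoints lying in the same $C_{i}$, so $\Phi(s_{1},\ldots,s_{k})(u)=s_{i}=\Phi(s_{1},\ldots,s_{k})(v)$, whence $\{u,v\}\in\operatorname*{Eqs}\Phi(s_{1},\ldots,s_{k})$.

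Next I would show that $\Phi$ is a bijection. Injectivity is immediate: given $\Phi(s_{1},\ldots,s_{k})$, each $s_{i}$ can be recovered as the image of any (nonempty) $C_{i}$ under this map. For surjectivity, let $f:V\rightarrow\left\{1,2,\ldots,q\right\}$ satisfy $B\subseteq\operatorname*{Eqs}f$. If $x$ and $y$ lie in a common connected component of $(V,B)$, pick a walk $(v_{0},v_{1},\ldots,v_{j})$ from $x$ to $y$; for each $i\in\{0,1,\ldots,j-1\}$, the edge $\left\{v_{i},v_{i+1}\right\}$ lies in $B\subseteq\operatorname*{Eqs}f$, so $f(v_{i})=f(v_{i+1})$, giving $f(x)=f(y)$. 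Thus $f$ is constant on each $C_{i}$; letting $s_{i}$ be that common value yields $f=\Phi(s_{1},\ldots,s_{k})$.

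From the bijection $\Phi$, I conclude
\[
\sum_{\substack{f:V\rightarrow\left\{1,2,\ldots,q\right\};\\ B\subseteq\operatorname*{Eqs}f}}1
=\left\vert\left\{1,2,\ldots,q\right\}^{k}\right\vert
=q^{k}
=q^{\operatorname*{conn}(V,B)},
\]
proving the lemma. There is no real obstacle here: the whole argument is a streamlined version of the proof of Lemma \ref{lem.Eqs.sum}, with the only subtle point being the ``constant on each connected component'' characterization, which is handled by the walk argument above.
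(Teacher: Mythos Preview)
Your proposal is correct and follows essentially the same approach as the paper: both proofs list the connected components $C_{1},\ldots,C_{k}$ (so $k=\operatorname*{conn}(V,B)$), define the map $\Phi:\{1,\ldots,q\}^{k}\to\{f:V\to\{1,\ldots,q\}\mid B\subseteq\operatorname*{Eqs}f\}$ sending $(s_{1},\ldots,s_{k})$ to the function constant equal to $s_{i}$ on $C_{i}$, verify that $\Phi$ is a bijection via the same ``constant on components'' walk argument, and conclude that the sum equals $|\{1,\ldots,q\}^{k}|=q^{k}$. Your write-up is a faithful specialization of the proof of Lemma~\ref{lem.Eqs.sum} with $\mathbb{N}_{+}$ replaced by $\{1,\ldots,q\}$ and $\mathbf{x}_{f}$ replaced by $1$, exactly as the paper does.
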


One way to prove Lemma \ref{lem.Eqs.sum1} is to evaluate the equality given by
Lemma \ref{lem.Eqs.sum} at $x_{k}=%
\begin{cases}
1, & \text{if }k\leq q;\\
0, & \text{if }k>q
\end{cases}
$. Another proof can be obtained by mimicking our proof of Lemma
\ref{lem.Eqs.sum}:

\begin{vershort}
\begin{proof}
[Proof of Lemma \ref{lem.Eqs.sum1}.]Define $\left(  C_{1},C_{2},\ldots
,C_{k}\right)  $ as in the proof of Lemma \ref{lem.Eqs.sum}. Thus,
$\operatorname*{conn}\left(  V,B\right)  =k$. Define a map $\Phi$ as in the
proof of Lemma \ref{lem.Eqs.sum}, but with $\mathbb{N}_{+}$ replaced by
$\left\{  1,2,\ldots,q\right\}  $. Then,
\[
\Phi:\left\{  1,2,\ldots,q\right\}  ^{k}\rightarrow\left\{  f:V\rightarrow
\left\{  1,2,\ldots,q\right\}  \ \mid\ B\subseteq\operatorname*{Eqs}f\right\}
\]
is a bijection\footnote{This can be shown in the same way as for the map
$\Phi$ in the proof of Lemma \ref{lem.Eqs.sum}; we just have to replace every
$\mathbb{N}_{+}$ by $\left\{  1,2,\ldots,q\right\}  $.}. Now,%
\begin{align*}
&  \sum_{\substack{f:V\rightarrow\left\{  1,2,\ldots,q\right\}  ;\\B\subseteq
\operatorname*{Eqs}f}}1\\
&  =\sum_{\left(  s_{1},s_{2},\ldots,s_{k}\right)  \in\left\{  1,2,\ldots
,q\right\}  ^{k}}1\\
&  \ \ \ \ \ \ \ \ \ \ \ \ \ \ \ \ \ \ \ \ \left(
\begin{array}
[c]{c}%
\text{here, we have substituted }\Phi\left(  s_{1},s_{2},\ldots,s_{k}\right)
\text{ for }f\text{ in the sum, since}\\
\text{the map }\Phi:\left\{  1,2,\ldots,q\right\}  ^{k}\rightarrow\left\{
f:V\rightarrow\left\{  1,2,\ldots,q\right\}  \ \mid\ B\subseteq
\operatorname*{Eqs}f\right\} \\
\text{is a bijection}%
\end{array}
\right) \\
&  =\left(  \text{the number of all }\left(  s_{1},s_{2},\ldots,s_{k}\right)
\in\left\{  1,2,\ldots,q\right\}  ^{k}\right) \\
&  =q^{k}=q^{\operatorname*{conn}\left(  V,B\right)  }%
\ \ \ \ \ \ \ \ \ \ \left(  \text{since }k=\operatorname*{conn}\left(
V,B\right)  \right)  .
\end{align*}
This proves Lemma \ref{lem.Eqs.sum1}.
\end{proof}
\end{vershort}

\begin{verlong}
\begin{proof}
[Proof of Lemma \ref{lem.Eqs.sum1}.]Let $\sim$ denote the equivalence relation
$\sim_{\left(  V,B\right)  }$ (defined as in Definition
\ref{def.connectedness} \textbf{(a)}). The connected components of $\left(
V,B\right)  $ are the $\sim_{\left(  V,B\right)  }$-equivalence classes
(because this is how the connected components of $\left(  V,B\right)  $ are
defined). In other words, the connected components of $\left(  V,B\right)  $
are the $\sim$-equivalence classes (since $\sim$ is the relation
$\sim_{\left(  V,B\right)  }$). In other words, the connected components of
$\left(  V,B\right)  $ are the elements of $V/\left(  \sim\right)  $ (since
the elements of $V/\left(  \sim\right)  $ are the $\sim$-equivalence classes
(by the definition of $V/\left(  \sim\right)  $)).

Let $Y$ be the set $\left\{  1,2,\ldots,q\right\}  $. Thus, $\left\vert
Y\right\vert =q$. A set $Y_{\sim}^{V}$ is defined (according to Definition
\ref{def.relquot.maps} \textbf{(b)}).

Proposition \ref{prop.relquot.uniprop} \textbf{(b)} (applied to $X=V$) shows
that the map%
\[
Y^{V/\left(  \sim\right)  }\rightarrow Y_{\sim}^{V}%
,\ \ \ \ \ \ \ \ \ \ f\mapsto f\circ\pi_{V}%
\]
is a bijection. Thus, there exists a bijection $Y^{V/\left(  \sim\right)
}\rightarrow Y_{\sim}^{V}$ (namely, this map). Hence, $\left\vert Y_{\sim}%
^{V}\right\vert =\left\vert Y^{V/\left(  \sim\right)  }\right\vert $.

For every map $f:V\rightarrow Y$, we have the following equivalence:%
\begin{equation}
\left(  B\subseteq\operatorname*{Eqs}f\right)  \ \Longleftrightarrow\ \left(
f\in Y_{\sim}^{V}\right)  \label{pf.lem.Eqs.sum1.equivalence}%
\end{equation}
(according to Lemma \ref{lem.Eqs.sum-aux}). Thus, we have the following
equality of summation signs:
\[
\sum_{\substack{f:V\rightarrow Y;\\B\subseteq\operatorname*{Eqs}f}%
}=\sum_{\substack{f:V\rightarrow Y;\\f\in Y_{\sim}^{V}}}=\sum_{\substack{f\in
Y^{V};\\f\in Y_{\sim}^{V}}}=\sum_{f\in Y_{\sim}^{V}}%
\]
(since $Y_{\sim}^{V}$ is a subset of $Y^{V}$). Hence,%
\begin{equation}
\underbrace{\sum_{\substack{f:V\rightarrow Y;\\B\subseteq\operatorname*{Eqs}%
f}}}_{=\sum_{f\in Y_{\sim}^{V}}}1=\sum_{f\in Y_{\sim}^{V}}1=\left\vert
Y_{\sim}^{V}\right\vert \cdot1=\left\vert Y_{\sim}^{V}\right\vert =\left\vert
Y^{V/\left(  \sim\right)  }\right\vert . \label{pf.lem.Eqs.sum1.1}%
\end{equation}

Now, let $\left(  C_{1},C_{2},\ldots,C_{k}\right)  $ be a list of all
connected components of $\left(  V,B\right)  $.\ \ \ \ \footnote{Every
connected component of $\left(  V,B\right)  $ should appear exactly once in
this list.} Thus, $k$ is the number of connected components of $\left(
V,B\right)  $. In other words, $k$ is $\operatorname*{conn}\left(  V,B\right)
$ (since $\operatorname*{conn}\left(  V,B\right)  $ is the number of connected
components of $\left(  V,B\right)  $ (by the definition of
$\operatorname*{conn}\left(  V,B\right)  $)). In other words,
$k=\operatorname*{conn}\left(  V,B\right)  $.

Recall that $\left(  C_{1},C_{2},\ldots,C_{k}\right)  $ is a list of all
connected components of $\left(  V,B\right)  $. In other words, $\left(
C_{1},C_{2},\ldots,C_{k}\right)  $ is a list of all elements of $V/\left(
\sim\right)  $ (since the elements of $V/\left(  \sim\right)  $ are the
connected components of $\left(  V,B\right)  $). Moreover, every element of
$V/\left(  \sim\right)  $ appears exactly once in this list $\left(
C_{1},C_{2},\ldots,C_{k}\right)  $ (since the entries of the list $\left(
C_{1},C_{2},\ldots,C_{k}\right)  $ are pairwise distinct\footnote{since every
connected component of $\left(  V,B\right)  $ appears exactly once in this
list}). Thus, $\left(  C_{1},C_{2},\ldots,C_{k}\right)  $ is a list of all
elements of $V/\left(  \sim\right)  $, and contains each of these elements
exactly once. Hence, the map%
\begin{align*}
Y^{V/\left(  \sim\right)  }  &  \rightarrow Y^{k},\\
f  &  \mapsto\left(  f\left(  C_{1}\right)  ,f\left(  C_{2}\right)
,\ldots,f\left(  C_{k}\right)  \right)
\end{align*}
is a bijection (by Lemma \ref{lem.function-count}, applied to $W=V/\left(
\sim\right)  $). Hence, there exists a bijection $Y^{V/\left(  \sim\right)
}\rightarrow Y^{k}$ (namely, this map). Thus, $\left\vert Y^{k}\right\vert
=\left\vert Y^{V/\left(  \sim\right)  }\right\vert $.

Comparing this with (\ref{pf.lem.Eqs.sum1.1}), we obtain%
\begin{align*}
\sum_{\substack{f:V\rightarrow Y;\\B\subseteq\operatorname*{Eqs}f}}1  &
=\left\vert Y^{k}\right\vert =\left\vert Y\right\vert ^{k}=q^{k}%
\ \ \ \ \ \ \ \ \ \ \left(  \text{since }\left\vert Y\right\vert =q\right) \\
&  =q^{\operatorname*{conn}\left(  V,B\right)  }\ \ \ \ \ \ \ \ \ \ \left(
\text{since }k=\operatorname*{conn}\left(  V,B\right)  \right)  .
\end{align*}
This proves Lemma \ref{lem.Eqs.sum1}.
\end{proof}
\end{verlong}

We shall now show a weaker version of Theorem \ref{thm.chrompol.varis} (as a
stepping stone to the actual theorem):

\begin{lemma}
\label{lem.chrompol.weak.varis}Let $G=\left(  V,E\right)  $ be a finite graph.
Let $X$ be a totally ordered set. Let $\ell:E\rightarrow X$ be a labeling
function. Let $\mathfrak{K}$ be some set of broken circuits of $G$ (not
necessarily containing all of them). Let $a_{K}$ be an element of $\mathbf{k}$
for every $K\in\mathfrak{K}$. Let $q\in\mathbb{N}$. Then,%
\begin{align*}
&  \left(  \text{the number of all proper }\left\{  1,2,\ldots,q\right\}
\text{-colorings of }G\right) \\
&  =\sum_{F\subseteq E}\left(  -1\right)  ^{\left\vert F\right\vert }\left(
\prod_{\substack{K\in\mathfrak{K};\\K\subseteq F}}a_{K}\right)
q^{\operatorname*{conn}\left(  V,F\right)  }.
\end{align*}
(Here, of course, the pair $\left(  V,F\right)  $ is regarded as a graph, and
the expression $\operatorname*{conn}\left(  V,F\right)  $ is understood
according to Definition \ref{def.conn}.)
\end{lemma}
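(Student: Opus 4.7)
The plan is to imitate the proof of Theorem \ref{thm.chromsym.varis} almost verbatim, with $\mathbb{N}_+$ replaced by the finite set $\{1,2,\ldots,q\}$, the weights $\mathbf{x}_f$ replaced by the constant $1$, and Lemma \ref{lem.Eqs.sum} replaced by its finite-coloring analogue Lemma \ref{lem.Eqs.sum1}. All the genuine combinatorial content has already been packaged into Lemma \ref{lem.NBCm.moeb}, Lemma \ref{lem.Eqs.proper} and Lemma \ref{lem.Eqs.sum1}, so the argument should be a mechanical chain of manipulations.

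First, I would rewrite the left-hand side as a sum over \emph{all} maps $f:V\to\{1,2,\ldots,q\}$, using an Iverson bracket to enforce properness. By Lemma \ref{lem.Eqs.proper} (applied with $X=\{1,2,\ldots,q\}$), the coloring $f$ is proper if and only if $E\cap\operatorname*{Eqs}f=\varnothing$; hence
\[
\left(\text{number of proper }\{1,\ldots,q\}\text{-colorings of }G\right)
=\sum_{f:V\to\{1,\ldots,q\}}\left[E\cap\operatorname*{Eqs}f=\varnothing\right].
\]
Next, I would replace the bracket by the alternating sum from Lemma \ref{lem.NBCm.moeb} (applied with $Y=\{1,2,\ldots,q\}$), which gives
\[
\left[E\cap\operatorname*{Eqs}f=\varnothing\right]
=\sum_{B\subseteq E\cap\operatorname*{Eqs}f}(-1)^{|B|}\prod_{\substack{K\in\mathfrak{K};\\K\subseteq B}}a_K.
\]

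Then I would observe that the condition $B\subseteq E\cap\operatorname*{Eqs}f$ is the same as the pair of conditions $B\subseteq E$ and $B\subseteq\operatorname*{Eqs}f$, so that after substituting and interchanging the two summations we obtain
\[
\sum_{B\subseteq E}(-1)^{|B|}\left(\prod_{\substack{K\in\mathfrak{K};\\K\subseteq B}}a_K\right)\sum_{\substack{f:V\to\{1,\ldots,q\};\\B\subseteq\operatorname*{Eqs}f}}1.
\]
The inner sum, by Lemma \ref{lem.Eqs.sum1} applied to the finite graph $(V,B)$, equals $q^{\operatorname*{conn}(V,B)}$. Renaming the summation index $B$ as $F$ yields the claimed identity.

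There is no real obstacle here; the only bookkeeping point to be careful about is the swap of summations (legitimate because everything is a finite sum) and the rewriting $\{B:B\subseteq E\cap\operatorname*{Eqs}f\}=\{B:B\subseteq E\text{ and }B\subseteq\operatorname*{Eqs}f\}$. Everything nontrivial — the sign-reversing involution at a maximally labeled edge, and the count of maps constant on connected components — is already done in the lemmas.
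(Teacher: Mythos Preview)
Your proposal is correct and matches the paper's own proof essentially step for step: rewrite the count via an Iverson bracket, invoke Lemma~\ref{lem.Eqs.proper} and Lemma~\ref{lem.NBCm.moeb}, split the condition $B\subseteq E\cap\operatorname*{Eqs}f$ into $B\subseteq E$ and $B\subseteq\operatorname*{Eqs}f$, swap the finite sums, and apply Lemma~\ref{lem.Eqs.sum1}. There is nothing to add.
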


\begin{vershort}
\begin{proof}
[Proof of Lemma \ref{lem.chrompol.weak.varis}.]We have\footnote{We are again
using the Iverson bracket notation, as defined in Definition \ref{def.iverson}%
.}%
\begin{align*}
&  \left(  \text{the number of all proper }\left\{  1,2,\ldots,q\right\}
\text{-colorings of }G\right) \\
&  =\sum_{f:V\rightarrow\left\{  1,2,\ldots,q\right\}  }\left[
\underbrace{f\text{ is a proper }\left\{  1,2,\ldots,q\right\}
\text{-coloring of }G}_{\substack{\Longleftrightarrow\ \left(  \text{the
}\left\{  1,2,\ldots,q\right\}  \text{-coloring }f\text{ of }G\text{ is
proper}\right)  \\\Longleftrightarrow\ \left(  E\cap\operatorname*{Eqs}%
f=\varnothing\right)  \\\text{(by Lemma \ref{lem.Eqs.proper}, applied to
}\left\{  1,2,\ldots,q\right\}  \text{ instead of }X\text{)}}}\right] \\
&  =\sum_{f:V\rightarrow\left\{  1,2,\ldots,q\right\}  }\underbrace{\left[
E\cap\operatorname*{Eqs}f=\varnothing\right]  }_{\substack{=\sum_{B\subseteq
E\cap\operatorname*{Eqs}f}\left(  -1\right)  ^{\left\vert B\right\vert }%
\prod_{\substack{K\in\mathfrak{K};\\K\subseteq B}}a_{K}\\\text{(by Lemma
\ref{lem.NBCm.moeb}, applied to }Y=\left\{  1,2,\ldots,q\right\}  \text{)}}}\\
&  =\sum_{f:V\rightarrow\left\{  1,2,\ldots,q\right\}  }\ \ \underbrace{\sum
_{B\subseteq E\cap\operatorname*{Eqs}f}}_{=\sum_{\substack{B\subseteq
E;\\B\subseteq\operatorname*{Eqs}f}}}\left(  -1\right)  ^{\left\vert
B\right\vert }\left(  \prod_{\substack{K\in\mathfrak{K};\\K\subseteq B}%
}a_{K}\right) \\
&  =\underbrace{\sum_{f:V\rightarrow\left\{  1,2,\ldots,q\right\}  }%
\ \ \sum_{\substack{B\subseteq E;\\B\subseteq\operatorname*{Eqs}f}}}%
_{=\sum_{B\subseteq E}\ \ \sum_{\substack{f:V\rightarrow\left\{
1,2,\ldots,q\right\}  ;\\B\subseteq\operatorname*{Eqs}f}}}\left(  -1\right)
^{\left\vert B\right\vert }\left(  \prod_{\substack{K\in\mathfrak{K}%
;\\K\subseteq B}}a_{K}\right) \\
&  =\sum_{B\subseteq E}\ \ \sum_{\substack{f:V\rightarrow\left\{
1,2,\ldots,q\right\}  ;\\B\subseteq\operatorname*{Eqs}f}}\left(  -1\right)
^{\left\vert B\right\vert }\left(  \prod_{\substack{K\in\mathfrak{K}%
;\\K\subseteq B}}a_{K}\right)  =\sum_{B\subseteq E}\left(  -1\right)
^{\left\vert B\right\vert }\left(  \prod_{\substack{K\in\mathfrak{K}%
;\\K\subseteq B}}a_{K}\right)  \underbrace{\sum_{\substack{f:V\rightarrow
\left\{  1,2,\ldots,q\right\}  ;\\B\subseteq\operatorname*{Eqs}f}%
}1}_{\substack{=q^{\operatorname*{conn}\left(  V,B\right)  }\\\text{(by Lemma
\ref{lem.Eqs.sum1})}}}\\
&  =\sum_{B\subseteq E}\left(  -1\right)  ^{\left\vert B\right\vert }\left(
\prod_{\substack{K\in\mathfrak{K};\\K\subseteq B}}a_{K}\right)
q^{\operatorname*{conn}\left(  V,B\right)  }=\sum_{F\subseteq E}\left(
-1\right)  ^{\left\vert F\right\vert }\left(  \prod_{\substack{K\in
\mathfrak{K};\\K\subseteq F}}a_{K}\right)  q^{\operatorname*{conn}\left(
V,F\right)  }%
\end{align*}
(here, we have renamed the summation index $B$ as $F$). This proves Lemma
\ref{lem.chrompol.weak.varis}.
\end{proof}
\end{vershort}

\begin{verlong}
\begin{proof}
[Proof of Lemma \ref{lem.chrompol.weak.varis}.]Let $Q=\left\{  1,2,\ldots
,q\right\}  $. If $f:V\rightarrow Q$ is a map, then we have the following
logical equivalence:%
\begin{equation}
\left(  \text{the }Q\text{-coloring }f\text{ of }G\text{ is proper}\right)
\ \Longleftrightarrow\ \left(  E\cap\operatorname*{Eqs}f=\varnothing\right)
\label{pf.lem.chrompol.weak.varis.equiv}%
\end{equation}
(because the $Q$-coloring $f$ of $G$ is proper if and only if $E\cap
\operatorname*{Eqs}f=\varnothing$\ \ \ \ \footnote{by Lemma
\ref{lem.Eqs.proper} (applied to $Q$ instead of $X$)}). Now,\footnote{We are
again using the Iverson bracket notation, as defined in Definition
\ref{def.iverson}.}%
\begin{align}
&  \sum_{f:V\rightarrow Q}\left[  \underbrace{E\cap\operatorname*{Eqs}%
f=\varnothing}_{\substack{\Longleftrightarrow\ \left(  \text{the
}Q\text{-coloring }f\text{ of }G\text{ is proper}\right)  \\\text{(by
(\ref{pf.lem.chrompol.weak.varis.equiv}))}}}\right] \nonumber\\
&  =\sum_{f:V\rightarrow Q}\left[  \underbrace{\text{the }Q\text{-coloring
}f\text{ of }G\text{ is proper}}_{\Longleftrightarrow\ \left(  f\text{ is a
proper }Q\text{-coloring of }G\right)  }\right] \nonumber\\
&  =\sum_{f:V\rightarrow Q}\left[  f\text{ is a proper }Q\text{-coloring of
}G\right] \nonumber\\
&  =\sum_{\substack{f:V\rightarrow Q\text{ is a}\\\text{proper }%
Q\text{-coloring of }G}}\underbrace{\left[  f\text{ is a proper }%
Q\text{-coloring of }G\right]  }_{\substack{=1\\\text{(since }f\text{ is a
proper }Q\text{-coloring of }G\text{)}}}\nonumber\\
&  \ \ \ \ \ \ \ \ \ \ +\sum_{\substack{f:V\rightarrow Q\text{ is not
a}\\\text{proper }Q\text{-coloring of }G}}\underbrace{\left[  f\text{ is a
proper }Q\text{-coloring of }G\right]  }_{\substack{=0\\\text{(since }f\text{
is not a proper }Q\text{-coloring of }G\text{)}}}\nonumber\\
&  =\sum_{\substack{f:V\rightarrow Q\text{ is a}\\\text{proper }%
Q\text{-coloring of }G}}1+\underbrace{\sum_{\substack{f:V\rightarrow Q\text{
is not a}\\\text{proper }Q\text{-coloring of }G}}0}_{=0}=\sum
_{\substack{f:V\rightarrow Q\text{ is a}\\\text{proper }Q\text{-coloring of
}G}}1\nonumber\\
&  =\left(  \text{the number of all proper }Q\text{-colorings of }G\right)
\cdot1\nonumber\\
&  =\left(  \text{the number of all proper }Q\text{-colorings of }G\right)
\nonumber\\
&  =\left(  \text{the number of all proper }\left\{  1,2,\ldots,q\right\}
\text{-colorings of }G\right)  \label{pf.lem.chrompol.weak.varis.1}%
\end{align}
(since $Q=\left\{  1,2,\ldots,q\right\}  $).

However, for every $f:V\rightarrow Q$, we have%
\begin{equation}
\sum_{B\subseteq E\cap\operatorname*{Eqs}f}\left(  -1\right)  ^{\left\vert
B\right\vert }\prod_{\substack{K\in\mathfrak{K};\\K\subseteq B}}a_{K}=\left[
E\cap\operatorname*{Eqs}f=\varnothing\right]
\label{pf.lem.chrompol.weak.varis.moeb}%
\end{equation}
(by Lemma \ref{lem.NBCm.moeb} (applied to $Q$ instead of $Y$)).

For every $f:V\rightarrow Q$, we have%
\begin{equation}
\left\{  F\subseteq E\ \mid\ F\subseteq\operatorname*{Eqs}f\right\}
=\mathcal{P}\left(  E\cap\operatorname*{Eqs}f\right)
\label{pf.lem.chrompol.weak.varis.cut}%
\end{equation}
\footnote{\textit{Proof of (\ref{pf.lem.chrompol.weak.varis.cut}):} Let
$f:V\rightarrow Q$.
\par
Let $B\in\left\{  F\subseteq E\ \mid\ F\subseteq\operatorname*{Eqs}f\right\}
$. Thus, $B$ is a subset $F$ of $E$ satisfying $F\subseteq\operatorname*{Eqs}%
f$. In other words, $B$ is a subset of $E$ and satisfies $B\subseteq
\operatorname*{Eqs}f$. Since $B$ is a subset of $E$, we have $B\subseteq E$.
Combining this with $B\subseteq\operatorname*{Eqs}f$, we obtain $B\subseteq
E\cap\operatorname*{Eqs}f$. In other words, $B\in\mathcal{P}\left(
E\cap\operatorname*{Eqs}f\right)  $.
\par
Let us now forget that we fixed $B$. We thus have proven that every
$B\in\left\{  F\subseteq E\ \mid\ F\subseteq\operatorname*{Eqs}f\right\}  $
satisfies $B\in\mathcal{P}\left(  E\cap\operatorname*{Eqs}f\right)  $. In
other words,%
\begin{equation}
\left\{  F\subseteq E\ \mid\ F\subseteq\operatorname*{Eqs}f\right\}
\subseteq\mathcal{P}\left(  E\cap\operatorname*{Eqs}f\right)  .
\label{pf.lem.chrompol.weak.varis.cut.pf.1}%
\end{equation}
\par
On the other hand, let $C\in\mathcal{P}\left(  E\cap\operatorname*{Eqs}%
f\right)  $. Thus, $C$ is a subset of $E\cap\operatorname*{Eqs}f$. Hence,
$C\subseteq E\cap\operatorname*{Eqs}f\subseteq E$, so that $C$ is a subset of
$E$. Also, $C\subseteq E\cap\operatorname*{Eqs}f\subseteq\operatorname*{Eqs}%
f$. Thus, $C$ is a subset of $E$ and satisfies $C\subseteq\operatorname*{Eqs}%
f$. In other words, $C$ is a subset $F$ of $E$ satisfying $F\subseteq
\operatorname*{Eqs}f$. In other words, $C\in\left\{  F\subseteq E\ \mid
\ F\subseteq\operatorname*{Eqs}f\right\}  $.
\par
Let us now forget that we fixed $C$. We thus have proven that every
$C\in\mathcal{P}\left(  E\cap\operatorname*{Eqs}f\right)  $ satisfies
$C\in\left\{  F\subseteq E\ \mid\ F\subseteq\operatorname*{Eqs}f\right\}  $.
In other words,%
\[
\mathcal{P}\left(  E\cap\operatorname*{Eqs}f\right)  \subseteq\left\{
F\subseteq E\ \mid\ F\subseteq\operatorname*{Eqs}f\right\}  .
\]
Combining this inclusion with (\ref{pf.lem.chrompol.weak.varis.cut.pf.1}), we
obtain $\left\{  F\subseteq E\ \mid\ F\subseteq\operatorname*{Eqs}f\right\}
=\mathcal{P}\left(  E\cap\operatorname*{Eqs}f\right)  $. This proves
(\ref{pf.lem.chrompol.weak.varis.cut}).}.

For every $f:V\rightarrow Q$, we have%
\begin{align}
&  \underbrace{\sum_{\substack{B\subseteq E;\\B\subseteq\operatorname*{Eqs}%
f}}}_{\substack{=\sum_{B\in\left\{  F\subseteq E\ \mid\ F\subseteq
\operatorname*{Eqs}f\right\}  }=\sum_{B\in\mathcal{P}\left(  E\cap
\operatorname*{Eqs}f\right)  }\\\text{(because }\left\{  F\subseteq
E\ \mid\ F\subseteq\operatorname*{Eqs}f\right\}  =\mathcal{P}\left(
E\cap\operatorname*{Eqs}f\right)  \\\text{(by
(\ref{pf.lem.chrompol.weak.varis.cut})))}}}\left(  -1\right)  ^{\left\vert
B\right\vert }\prod_{\substack{K\in\mathfrak{K};\\K\subseteq B}}a_{K}%
\nonumber\\
&  =\underbrace{\sum_{B\in\mathcal{P}\left(  E\cap\operatorname*{Eqs}f\right)
}}_{=\sum_{B\subseteq E\cap\operatorname*{Eqs}f}}\left(  -1\right)
^{\left\vert B\right\vert }\prod_{\substack{K\in\mathfrak{K};\\K\subseteq
B}}a_{K}=\sum_{B\subseteq E\cap\operatorname*{Eqs}f}\left(  -1\right)
^{\left\vert B\right\vert }\prod_{\substack{K\in\mathfrak{K};\\K\subseteq
B}}a_{K}\nonumber\\
&  =\left[  E\cap\operatorname*{Eqs}f=\varnothing\right]
\label{pf.lem.chrompol.weak.varis.moeb2}%
\end{align}
(by (\ref{pf.lem.chrompol.weak.varis.moeb})).

Now, (\ref{pf.lem.chrompol.weak.varis.1}) yields%
\begin{align*}
&  \left(  \text{the number of all proper }\left\{  1,2,\ldots,q\right\}
\text{-colorings of }G\right) \\
&  =\sum_{f:V\rightarrow Q}\underbrace{\left[  E\cap\operatorname*{Eqs}%
f=\varnothing\right]  }_{\substack{=\sum_{\substack{B\subseteq E;\\B\subseteq
\operatorname*{Eqs}f}}\left(  -1\right)  ^{\left\vert B\right\vert }%
\prod_{\substack{K\in\mathfrak{K};\\K\subseteq B}}a_{K}\\\text{(by
(\ref{pf.lem.chrompol.weak.varis.moeb2}))}}}\\
&  =\sum_{f:V\rightarrow Q}\left(  \sum_{\substack{B\subseteq E;\\B\subseteq
\operatorname*{Eqs}f}}\left(  -1\right)  ^{\left\vert B\right\vert }%
\prod_{\substack{K\in\mathfrak{K};\\K\subseteq B}}a_{K}\right)
=\underbrace{\sum_{f:V\rightarrow Q}\ \ \sum_{\substack{B\subseteq
E;\\B\subseteq\operatorname*{Eqs}f}}}_{=\sum_{B\subseteq E}\ \ \sum
_{\substack{f:V\rightarrow Q;\\B\subseteq\operatorname*{Eqs}f}}}\left(
-1\right)  ^{\left\vert B\right\vert }\underbrace{\left(  \prod
_{\substack{K\in\mathfrak{K};\\K\subseteq B}}a_{K}\right)  }_{=\left(
\prod_{\substack{K\in\mathfrak{K};\\K\subseteq B}}a_{K}\right)  1}\\
&  =\sum_{B\subseteq E}\ \ \sum_{\substack{f:V\rightarrow Q;\\B\subseteq
\operatorname*{Eqs}f}}\left(  -1\right)  ^{\left\vert B\right\vert }\left(
\prod_{\substack{K\in\mathfrak{K};\\K\subseteq B}}a_{K}\right)  1=\sum
_{B\subseteq E}\ \ \sum_{\substack{f:V\rightarrow\left\{  1,2,\ldots
,q\right\}  ;\\B\subseteq\operatorname*{Eqs}f}}\left(  -1\right)  ^{\left\vert
B\right\vert }\left(  \prod_{\substack{K\in\mathfrak{K};\\K\subseteq B}%
}a_{K}\right)  1\\
&  \ \ \ \ \ \ \ \ \ \ \ \ \ \ \ \ \ \ \ \ \left(  \text{since }Q=\left\{
1,2,\ldots,q\right\}  \right) \\
&  =\sum_{B\subseteq E}\left(  -1\right)  ^{\left\vert B\right\vert }\left(
\prod_{\substack{K\in\mathfrak{K};\\K\subseteq B}}a_{K}\right)
\underbrace{\sum_{\substack{f:V\rightarrow\left\{  1,2,\ldots,q\right\}
;\\B\subseteq\operatorname*{Eqs}f}}1}_{\substack{=q^{\operatorname*{conn}%
\left(  V,B\right)  }\\\text{(by Lemma \ref{lem.Eqs.sum1}}\\\text{(since
}\left(  V,B\right)  \text{ is a finite graph}\\\text{(since }V\text{ is a
finite set and }B\subseteq E\subseteq\dbinom{V}{2}\text{)))}}}\\
&  =\sum_{B\subseteq E}\left(  -1\right)  ^{\left\vert B\right\vert }\left(
\prod_{\substack{K\in\mathfrak{K};\\K\subseteq B}}a_{K}\right)
q^{\operatorname*{conn}\left(  V,B\right)  }=\sum_{F\subseteq E}\left(
-1\right)  ^{\left\vert F\right\vert }\left(  \prod_{\substack{K\in
\mathfrak{K};\\K\subseteq F}}a_{K}\right)  q^{\operatorname*{conn}\left(
V,F\right)  }%
\end{align*}
(here, we have renamed the summation index $B$ as $F$). This proves Lemma
\ref{lem.chrompol.weak.varis}.
\end{proof}
\end{verlong}

From Lemma \ref{lem.chrompol.weak.varis}, we obtain the following consequence:

\begin{lemma}
\label{lem.chrompol.weak.empty}Let $G=\left(  V,E\right)  $ be a finite graph.
Let $q\in\mathbb{N}$. Then,%
\begin{align*}
&  \left(  \text{the number of all proper }\left\{  1,2,\ldots,q\right\}
\text{-colorings of }G\right) \\
&  =\sum_{F\subseteq E}\left(  -1\right)  ^{\left\vert F\right\vert
}q^{\operatorname*{conn}\left(  V,F\right)  }.
\end{align*}
(Here, of course, the pair $\left(  V,F\right)  $ is regarded as a graph, and
the expression $\operatorname*{conn}\left(  V,F\right)  $ is understood
according to Definition \ref{def.conn}.)
\end{lemma}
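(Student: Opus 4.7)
The plan is to derive Lemma \ref{lem.chrompol.weak.empty} from Lemma \ref{lem.chrompol.weak.varis} in exactly the same way Theorem \ref{thm.chromsym.empty} was derived from Theorem \ref{thm.chromsym.varis}. Concretely, I would choose a trivial labeling setup so that the broken-circuit weight in Lemma \ref{lem.chrompol.weak.varis} disappears.

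First, I would take $X$ to be the one-element totally ordered set $\{1\}$ and let $\ell:E\to X$ be the unique map into $X$. Then I would set $\mathfrak{K}=\varnothing$, which is vacuously a set of broken circuits of $G$ (indeed, it contains none). With this choice there is no need to pick elements $a_K$, because the indexing set is empty; equivalently, one can pick $a_K=0$ (or anything else) since no such $K$ exists. I would then apply Lemma \ref{lem.chrompol.weak.varis} with these data and the given $q$.

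The key observation is that for every subset $F\subseteq E$, the product
\[
\prod_{\substack{K\in\mathfrak{K};\\ K\subseteq F}} a_K
\]
is an empty product (because $\mathfrak{K}=\varnothing$), hence equals $1$. Substituting this into the conclusion of Lemma \ref{lem.chrompol.weak.varis} collapses the weighted sum to
\[
\sum_{F\subseteq E}(-1)^{|F|}\, q^{\operatorname{conn}(V,F)},
\]
which is exactly the right-hand side of Lemma \ref{lem.chrompol.weak.empty}. There is no real obstacle here: the lemma is a specialization of Lemma \ref{lem.chrompol.weak.varis}, and the only thing to verify is the triviality of the empty product, so the proof will be a few lines long and essentially identical in structure to the earlier derivation of Theorem \ref{thm.chromsym.empty}.
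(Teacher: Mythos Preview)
Your proposal is correct and matches the paper's own proof essentially line for line: the paper also specializes Lemma \ref{lem.chrompol.weak.varis} with $X=\{1\}$, the unique labeling $\ell$, and $\mathfrak{K}=\varnothing$, then observes that the product $\prod_{K\in\mathfrak{K};\,K\subseteq F} a_K$ is empty and hence equals $1$.
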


\begin{vershort}
\begin{proof}
[Proof of Lemma \ref{lem.chrompol.weak.empty}.]This is derived from Lemma
\ref{lem.chrompol.weak.varis} in the same way as Theorem
\ref{thm.chromsym.empty} was derived from Theorem \ref{thm.chromsym.varis}.
\end{proof}
\end{vershort}

\begin{verlong}
\begin{proof}
[Proof of Lemma \ref{lem.chrompol.weak.empty}.]Let $X$ be the totally ordered
set $\left\{  1\right\}  $ (equipped with the only possible order on this
set). Let $\ell:E\rightarrow X$ be the function sending each $e\in E$ to $1\in
X$. Let $\mathfrak{K}$ be the empty set. Clearly, $\mathfrak{K}$ is a set of
broken circuits of $G$. Lemma \ref{lem.chrompol.weak.varis} (applied to $0$
instead of $a_{K}$) yields%
\begin{align*}
&  \left(  \text{the number of all proper }\left\{  1,2,\ldots,q\right\}
\text{-colorings of }G\right) \\
&  =\sum_{F\subseteq E}\left(  -1\right)  ^{\left\vert F\right\vert
}\underbrace{\left(  \prod_{\substack{K\in\mathfrak{K};\\K\subseteq
F}}0\right)  }_{\substack{=\left(  \text{empty product}\right)  \\\text{(since
}\mathfrak{K}\text{ is the empty set)}}}q^{\operatorname*{conn}\left(
V,F\right)  }\\
&  =\sum_{F\subseteq E}\left(  -1\right)  ^{\left\vert F\right\vert
}\underbrace{\left(  \text{empty product}\right)  }_{=1}%
q^{\operatorname*{conn}\left(  V,F\right)  }=\sum_{F\subseteq E}\left(
-1\right)  ^{\left\vert F\right\vert }q^{\operatorname*{conn}\left(
V,F\right)  }.
\end{align*}
This proves Lemma \ref{lem.chrompol.weak.empty}.
\end{proof}
\end{verlong}

Next, we recall a classical fact about polynomials over fields: Namely, if a
polynomial (in one variable) over a field has infinitely many roots, then this
polynomial is $0$. Let us state this more formally:

\begin{proposition}
\label{prop.poly0.field}Let $K$ be a field. Let $P\in K\left[  x\right]  $ be
a polynomial over $K$. Assume that there are infinitely many $\lambda\in K$
satisfying $P\left(  \lambda\right)  =0$. Then, $P=0$.
\end{proposition}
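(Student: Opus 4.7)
The plan is to prove the contrapositive: assuming $P \neq 0$, I will show that $P$ has only finitely many roots in $K$. Since $P \neq 0$, the polynomial $P$ has a well-defined degree $n := \deg P \in \mathbb{N}$. I will then establish, by induction on $n$, the classical bound that any nonzero polynomial $P \in K[x]$ of degree $n$ has at most $n$ roots in $K$; in particular, the set of $\lambda \in K$ satisfying $P(\lambda) = 0$ is finite. This contradicts the hypothesis of infinitely many roots, giving $P = 0$.

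The induction step will rest on the factor theorem over a field: if $\lambda \in K$ satisfies $P(\lambda) = 0$, then $(x - \lambda)$ divides $P$ in $K[x]$, so that $P = (x - \lambda) Q$ for some $Q \in K[x]$ with $\deg Q = n - 1$. The factor theorem itself is proved by polynomial division: writing $P = (x - \lambda) Q + R$ with $\deg R < 1$, so $R \in K$ is constant, and evaluating at $\lambda$ yields $R = P(\lambda) = 0$. Any other root $\mu \neq \lambda$ of $P$ must then satisfy $(\mu - \lambda) Q(\mu) = 0$, and since $K$ is a field (hence an integral domain) and $\mu - \lambda \neq 0$, we conclude $Q(\mu) = 0$. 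Thus every root of $P$ other than $\lambda$ is a root of $Q$, and by the inductive hypothesis $Q$ has at most $n-1$ roots, so $P$ has at most $n$ roots.

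The base case $n = 0$ is immediate: a nonzero constant polynomial has no roots at all, so at most $0 = n$ roots.

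The only mildly subtle point is the appeal to the fact that $K$ being a field ensures no zero divisors, which is exactly what allows the step \textquotedblleft$(\mu - \lambda) Q(\mu) = 0$ with $\mu - \lambda \neq 0$ forces $Q(\mu) = 0$\textquotedblright; without this, the bound on the number of roots could fail (as happens over rings like $\mathbb{Z}/6\mathbb{Z}$). Otherwise, the proof is entirely routine, and I do not anticipate any real obstacle beyond stating the factor theorem carefully enough to support the induction.
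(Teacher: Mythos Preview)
Your proposal is correct and is the standard textbook argument (factor theorem plus induction on degree to bound the number of roots by $\deg P$, then contrapositive). Note, however, that the paper does not actually supply its own proof of this proposition: it is introduced as ``a classical fact about polynomials over fields'' that is merely recalled, and is then used as a black box in the proof of Corollary~\ref{cor.poly0.intdom}. So there is no in-paper proof to compare against; your argument fills in exactly the classical proof the paper assumes.
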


We shall use the following two consequences of this proposition:

\begin{corollary}
\label{cor.poly0.intdom}Let $R$ be an integral domain. Assume that the
canonical ring homomorphism from the ring $\mathbb{Z}$ to the ring $R$ is
injective. Let $P\in R\left[  x\right]  $ be a polynomial over $R$. Assume
that $P\left(  q\cdot1_{R}\right)  =0$ for every $q\in\mathbb{N}$ (where
$1_{R}$ denotes the unity of $R$). Then, $P=0$.
\end{corollary}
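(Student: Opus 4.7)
The plan is to pass from the integral domain $R$ to its field of fractions $K = \operatorname{Frac}\left(R\right)$, where we can invoke Proposition \ref{prop.poly0.field}. Let $\iota : R \to K$ denote the canonical embedding, and let $\iota_* : R\left[x\right] \to K\left[x\right]$ be the induced ring homomorphism on polynomial rings (which applies $\iota$ to every coefficient). Since $R$ is an integral domain, $\iota$ is injective, and hence $\iota_*$ is injective as well.

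First I would show that the elements $q \cdot 1_R$ for $q \in \mathbb{N}$ are pairwise distinct in $R$. Indeed, the canonical ring homomorphism $\mathbb{Z} \to R$ sends each integer $n$ to $n \cdot 1_R$, and this homomorphism is injective by hypothesis; in particular, its restriction to $\mathbb{N}$ is injective, so the $q \cdot 1_R$ for $q \in \mathbb{N}$ are pairwise distinct elements of $R$. Applying the injection $\iota$, the elements $\iota\left(q \cdot 1_R\right) = q \cdot 1_K$ for $q \in \mathbb{N}$ are then pairwise distinct elements of $K$.

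Next I would consider the polynomial $\iota_*\left(P\right) \in K\left[x\right]$. For every $q \in \mathbb{N}$, we have
\[
\left(\iota_*\left(P\right)\right)\left(q \cdot 1_K\right) = \iota\left(P\left(q \cdot 1_R\right)\right) = \iota\left(0\right) = 0,
\]
where the first equality uses that $\iota_*$ is a ring homomorphism between polynomial rings and that $\iota\left(q \cdot 1_R\right) = q \cdot 1_K$. Thus $\iota_*\left(P\right)$ has infinitely many roots in $K$ (namely, the pairwise distinct elements $q \cdot 1_K$ for $q \in \mathbb{N}$). Since $K$ is a field, Proposition \ref{prop.poly0.field} then yields $\iota_*\left(P\right) = 0$. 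Finally, because $\iota_*$ is injective, we conclude $P = 0$, which proves Corollary \ref{cor.poly0.intdom}.

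No step is particularly delicate here; the only point that requires care is the bookkeeping around the two ring homomorphisms $\mathbb{Z} \to R$ and $R \to K$, and the resulting identification of the elements $q \cdot 1_R$ with their images in $K$. Everything else follows routinely from Proposition \ref{prop.poly0.field}.
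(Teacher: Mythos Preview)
Your proof is correct and follows essentially the same approach as the paper's: pass to the fraction field $K$ of $R$, observe that the elements $q\cdot 1_R$ (for $q\in\mathbb{N}$) yield infinitely many distinct roots of $P$ in $K$, and apply Proposition \ref{prop.poly0.field}. The only difference is cosmetic: the paper simply regards $R$ and $R[x]$ as subrings of $K$ and $K[x]$, whereas you explicitly name the embedding $\iota$ and its extension $\iota_*$ and then invoke the injectivity of $\iota_*$ at the end.
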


\begin{vershort}
\begin{proof}
[Proof of Corollary \ref{cor.poly0.intdom}.]Let $K$ denote the fraction field
of the integral domain $R$. We regard $R$ and $R\left[  x\right]  $ as
subrings of $K$ and $K\left[  x\right]  $, respectively. By assumption, we
have $P\left(  q\cdot1_{R}\right)  =0$ for every $q\in\mathbb{N}$. But the
elements $q\cdot1_{R}$ of $R$ for $q\in\mathbb{N}$ are pairwise distinct
(since the canonical ring homomorphism from the ring $\mathbb{Z}$ to the ring
$R$ is injective). Hence, there are infinitely many $\lambda\in K$ satisfying
$P\left(  \lambda\right)  =0$ (namely, $\lambda=q\cdot1_{R}$ for all
$q\in\mathbb{N}$). Thus, Proposition \ref{prop.poly0.field} shows that $P=0$.
This proves Corollary \ref{cor.poly0.intdom}.
\end{proof}
\end{vershort}

\begin{verlong}
\begin{proof}
[Proof of Corollary \ref{cor.poly0.intdom}.]Let $K$ denote the fraction field
of the integral domain $R$. Then, there is a canonical injective ring
homomorphism $R\rightarrow K$. We use this homomorphism to regard $R$ as a
subring of $K$. Consequently, $R\left[  x\right]  $ will be regarded as a
subring of $K\left[  x\right]  $. In particular, the polynomial $P\in R\left[
x\right]  $ will thus be regarded as a polynomial in $K\left[  x\right]  $.

Let $\iota:\mathbb{Z}\rightarrow R$ be the canonical ring homomorphism from
the ring $\mathbb{Z}$ to the ring $R$. (Thus, $\iota$ sends every
$q\in\mathbb{Z}$ to $q\cdot1_{R}\in R$.)

We have assumed that the canonical ring homomorphism from the ring
$\mathbb{Z}$ to the ring $R$ is injective. In other words, the map $\iota$ is
injective (since the map $\iota$ is the canonical ring homomorphism from the
ring $\mathbb{Z}$ to the ring $R$). Hence, $\left\vert \iota\left(
\mathbb{N}\right)  \right\vert =\left\vert \mathbb{N}\right\vert =\infty$.
Moreover, every $\lambda\in\iota\left(  \mathbb{N}\right)  $ satisfies
$P\left(  \lambda\right)  =0$\ \ \ \ \footnote{\textit{Proof.} Let $\lambda
\in\iota\left(  \mathbb{N}\right)  $. Thus, there exists some $h\in\mathbb{N}$
such that $\lambda=\iota\left(  h\right)  $. Consider this $h$.
\par
We have assumed that $P\left(  q\cdot1_{R}\right)  =0$ for every
$q\in\mathbb{N}$. Applying this to $q=h$, we obtain $P\left(  h\cdot
1_{R}\right)  =0$. But $\lambda=\iota\left(  h\right)  =h\cdot1_{R}$ (by the
definition of $\iota$). Hence, $P\left(  \underbrace{\lambda}_{=h\cdot1_{R}%
}\right)  =P\left(  h\cdot1_{R}\right)  =0$, qed.}. Hence, there are
infinitely many $\lambda\in K$ satisfying $P\left(  \lambda\right)  =0$
(because there are infinitely many $\lambda\in\iota\left(  \mathbb{N}\right)
$ (since $\left\vert \iota\left(  \mathbb{N}\right)  \right\vert =\infty$),
and because they all are elements of $K$ (since $\iota\left(  \mathbb{N}%
\right)  \subseteq R\subseteq K$)). Hence, Proposition \ref{prop.poly0.field}
shows that $P=0$. This proves Corollary \ref{cor.poly0.intdom}.
\end{proof}
\end{verlong}

\begin{corollary}
\label{cor.poly0.P1=P2}Let $R$ be an integral domain such that $\mathbb{Z}$ is
a subring of $R$. Let $P_{1}\in R\left[  x\right]  $ and $P_{2}\in R\left[
x\right]  $ be two polynomials over $R$. Assume that every $q\in\mathbb{N}$
satisfies%
\begin{equation}
P_{1}\left(  q\right)  =P_{2}\left(  q\right)  .
\label{eq.cor.poly0.P1=P2.ass}%
\end{equation}
Then, $P_{1}=P_{2}$.
\end{corollary}

\begin{vershort}
\begin{proof}
[Proof of Corollary \ref{cor.poly0.P1=P2}.]For every $q\in\mathbb{N}$, we have
$\left(  P_{1}-P_{2}\right)  \left(  q\right)  =P_{1}\left(  q\right)
-P_{2}\left(  q\right)  =0$ (by (\ref{eq.cor.poly0.P1=P2.ass})). Hence,
Corollary \ref{cor.poly0.intdom} (applied to $P=P_{1}-P_{2}$) yields that
$P_{1}-P_{2}=0$. In other words, $P_{1}=P_{2}$. This proves Corollary
\ref{cor.poly0.P1=P2}.
\end{proof}
\end{vershort}

\begin{verlong}
\begin{proof}
[Proof of Corollary \ref{cor.poly0.P1=P2}.]We have assumed that $\mathbb{Z}$
is a subring of $R$. Hence, the canonical ring homomorphism from the ring
$\mathbb{Z}$ to the ring $R$ is just the inclusion map $\mathbb{Z}\rightarrow
R$, and thus is injective.

Every $q\in\mathbb{N}$ satisfies
\[
\left(  P_{1}-P_{2}\right)  \left(  \underbrace{q\cdot1_{R}}_{=q}\right)
=\left(  P_{1}-P_{2}\right)  \left(  q\right)  =\underbrace{P_{1}\left(
q\right)  }_{\substack{=P_{2}\left(  q\right)  \\\text{(by
(\ref{eq.cor.poly0.P1=P2.ass}))}}}-P_{2}\left(  q\right)  =P_{2}\left(
q\right)  -P_{2}\left(  q\right)  =0.
\]
In other words, we have $\left(  P_{1}-P_{2}\right)  \left(  q\cdot
1_{R}\right)  =0$ for every $q\in\mathbb{N}$. Hence, Corollary
\ref{cor.poly0.intdom} (applied to $P=P_{1}-P_{2}$) yields that $P_{1}%
-P_{2}=0$. In other words, $P_{1}=P_{2}$. This proves Corollary
\ref{cor.poly0.P1=P2}.
\end{proof}
\end{verlong}

We can now prove the classical Theorem \ref{thm.chrompol.exist}:

\begin{vershort}
\begin{proof}
[Proof of Theorem \ref{thm.chrompol.exist}.]We need to show that there exists
a unique polynomial $P\in\mathbb{Z}\left[  x\right]  $ such that every
$q\in\mathbb{N}$ satisfies%
\[
P\left(  q\right)  =\left(  \text{the number of all proper }\left\{
1,2,\ldots,q\right\}  \text{-colorings of }G\right)  .
\]
To see that such a polynomial exists, we notice that $P=\sum_{F\subseteq
E}\left(  -1\right)  ^{\left\vert F\right\vert }x^{\operatorname*{conn}\left(
V,F\right)  }$ is such a polynomial (by Lemma \ref{lem.chrompol.weak.empty}).
It remains to prove that such a polynomial is unique. But this follows
directly from Corollary \ref{cor.poly0.P1=P2} (applied to $R=\mathbb{Z}$).
Theorem \ref{thm.chrompol.exist} is therefore proven.
\end{proof}
\end{vershort}

\begin{verlong}
\begin{proof}
[Proof of Theorem \ref{thm.chrompol.exist}.]We need to show that there exists
a unique polynomial $P\in\mathbb{Z}\left[  x\right]  $ such that every
$q\in\mathbb{N}$ satisfies%
\begin{equation}
P\left(  q\right)  =\left(  \text{the number of all proper }\left\{
1,2,\ldots,q\right\}  \text{-colorings of }G\right)  .
\label{pf.thm.chrompol.exist.P(q)=}%
\end{equation}

Let us first show that there exists at most one such polynomial. Indeed, let
$P_{1}$ and $P_{2}$ be two polynomials $P\in\mathbb{Z}\left[  x\right]  $ such
that every $q\in\mathbb{N}$ satisfies (\ref{pf.thm.chrompol.exist.P(q)=}). We
shall show that $P_{1}=P_{2}$.

We know that $P_{1}$ is a polynomial $P\in\mathbb{Z}\left[  x\right]  $ such
that every $q\in\mathbb{N}$ satisfies (\ref{pf.thm.chrompol.exist.P(q)=}). In
other words, $P_{1}$ is a polynomial in $\mathbb{Z}\left[  x\right]  $ and
every $q\in\mathbb{N}$ satisfies
\begin{equation}
P_{1}\left(  q\right)  =\left(  \text{the number of all proper }\left\{
1,2,\ldots,q\right\}  \text{-colorings of }G\right)  .
\label{pf.thm.chrompol.exist.P1(q)=}%
\end{equation}
The same argument (applied to $P_{2}$ instead of $P_{1}$) shows that $P_{2}$
is a polynomial in $\mathbb{Z}\left[  x\right]  $ and every $q\in\mathbb{N}$
satisfies
\begin{equation}
P_{2}\left(  q\right)  =\left(  \text{the number of all proper }\left\{
1,2,\ldots,q\right\}  \text{-colorings of }G\right)  .
\label{pf.thm.chrompol.exist.P2(q)=}%
\end{equation}

The ring $\mathbb{Z}$ is clearly a subring of $\mathbb{Z}$. Furthermore, every
$q\in\mathbb{N}$ satisfies%
\begin{align}
P_{1}\left(  q\right)   &  =\left(  \text{the number of all proper }\left\{
1,2,\ldots,q\right\}  \text{-colorings of }G\right) \nonumber\\
&  \ \ \ \ \ \ \ \ \ \ \ \ \ \ \ \ \ \ \ \ \left(  \text{by
(\ref{pf.thm.chrompol.exist.P1(q)=})}\right) \nonumber\\
&  =P_{2}\left(  q\right)  \ \ \ \ \ \ \ \ \ \ \left(  \text{by
(\ref{pf.thm.chrompol.exist.P2(q)=})}\right)  .
\label{pf.thm.chrompol.exist.4}%
\end{align}
Hence, Corollary \ref{cor.poly0.P1=P2} (applied to $\mathbb{Z}$ instead of
$R$) shows that $P_{1}=P_{2}$.

Now, let us forget that we fixed $P_{1}$ and $P_{2}$. We thus have shown that
if $P_{1}$ and $P_{2}$ are two polynomials $P\in\mathbb{Z}\left[  x\right]  $
such that every $q\in\mathbb{N}$ satisfies (\ref{pf.thm.chrompol.exist.P(q)=}%
), then $P_{1}=P_{2}$. In other words, there exists \textbf{at most one}
polynomial $P\in\mathbb{Z}\left[  x\right]  $ such that every $q\in\mathbb{N}$
satisfies (\ref{pf.thm.chrompol.exist.P(q)=}).

Let us now prove that there exists at least one such polynomial. Indeed,
define a polynomial $Q\in\mathbb{Z}\left[  x\right]  $ by%
\begin{equation}
Q=\sum_{F\subseteq E}\left(  -1\right)  ^{\left\vert F\right\vert
}x^{\operatorname*{conn}\left(  V,F\right)  }.
\label{pf.thm.chrompol.exist.Q=}%
\end{equation}
Then, every $q\in\mathbb{N}$ satisfies%
\begin{align*}
Q\left(  q\right)   &  =\sum_{F\subseteq E}\left(  -1\right)  ^{\left\vert
F\right\vert }q^{\operatorname*{conn}\left(  V,F\right)  }%
\ \ \ \ \ \ \ \ \ \ \left(  \text{here, we have substituted }q\text{ for
}x\text{ in (\ref{pf.thm.chrompol.exist.Q=})}\right) \\
&  =\left(  \text{the number of all proper }\left\{  1,2,\ldots,q\right\}
\text{-colorings of }G\right)
\end{align*}
(by Lemma \ref{lem.chrompol.weak.empty}). In other words, every $q\in
\mathbb{N}$ satisfies (\ref{pf.thm.chrompol.exist.P(q)=}) for $P=Q$. Thus, $Q$
is a polynomial $P\in\mathbb{Z}\left[  x\right]  $ such that every
$q\in\mathbb{N}$ satisfies (\ref{pf.thm.chrompol.exist.P(q)=}). Hence, there
exists \textbf{at least} one polynomial $P\in\mathbb{Z}\left[  x\right]  $
such that every $q\in\mathbb{N}$ satisfies (\ref{pf.thm.chrompol.exist.P(q)=})
(namely, $P=Q$). Consequently, there exists \textbf{exactly} one polynomial
$P\in\mathbb{Z}\left[  x\right]  $ such that every $q\in\mathbb{N}$ satisfies
(\ref{pf.thm.chrompol.exist.P(q)=}) (because we have already shown that there
exists \textbf{at most} one such polynomial). This proves Theorem
\ref{thm.chrompol.exist}.
\end{proof}
\end{verlong}

Next, it is the turn of Theorem \ref{thm.chrompol.varis}:

\begin{vershort}
\begin{proof}
[Proof of Theorem \ref{thm.chrompol.varis}.]Let $R$ be the polynomial ring
$\mathbb{Z}\left[  y_{K}\ \mid\ K\in\mathfrak{K}\right]  $, where $y_{K}$ is a
new indeterminate for each $K\in\mathfrak{K}$.

The claim of Theorem \ref{thm.chrompol.varis} is a polynomial identity in the
elements $a_{K}$ of $\mathbf{k}$. Hence, we can WLOG assume that
$\mathbf{k}=R$ and $a_{K}=y_{K}$ for each $K\in\mathfrak{K}$. Assume this.
Thus, $\mathbf{k}$ is an integral domain, and the ring $\mathbb{Z}$ is a
subring of $\mathbf{k}$.

For every $q\in\mathbb{N}$, we have%
\begin{align}
\chi_{G}\left(  q\right)   &  =\left(  \text{the number of all proper
}\left\{  1,2,\ldots,q\right\}  \text{-colorings of }G\right) \nonumber\\
&  \ \ \ \ \ \ \ \ \ \ \ \ \ \ \ \ \ \ \ \ \left(  \text{by the definition of
the chromatic polynomial }\chi_{G}\right) \nonumber\\
&  =\sum_{F\subseteq E}\left(  -1\right)  ^{\left\vert F\right\vert }\left(
\prod_{\substack{K\in\mathfrak{K};\\K\subseteq F}}a_{K}\right)
q^{\operatorname*{conn}\left(  V,F\right)  }
\label{pf.thm.chrompol.varis.short.eq1}%
\end{align}
(by Lemma \ref{lem.chrompol.weak.varis}). Define a polynomial $P\in
\mathbf{k}\left[  x\right]  $ by%
\begin{equation}
P=\sum_{F\subseteq E}\left(  -1\right)  ^{\left\vert F\right\vert }\left(
\prod_{\substack{K\in\mathfrak{K};\\K\subseteq F}}a_{K}\right)
x^{\operatorname*{conn}\left(  V,F\right)  }.
\label{pf.thm.chrompol.varis.short.P=}%
\end{equation}
Then, for every $q\in\mathbb{N}$, we have%
\[
P\left(  q\right)  =\sum_{F\subseteq E}\left(  -1\right)  ^{\left\vert
F\right\vert }\left(  \prod_{\substack{K\in\mathfrak{K};\\K\subseteq F}%
}a_{K}\right)  q^{\operatorname*{conn}\left(  V,F\right)  }=\chi_{G}\left(
q\right)  \ \ \ \ \ \ \ \ \ \ \left(  \text{by
(\ref{pf.thm.chrompol.varis.short.eq1})}\right)  .
\]
Thus, Corollary \ref{cor.poly0.P1=P2} (applied to $R=\mathbf{k}$ and $P_{1}=P$
and $P_{2}=\chi_{G}$) shows that $P=\chi_{G}$. Comparing this with
(\ref{pf.thm.chrompol.varis.short.P=}), we obtain%
\[
\chi_{G}=\sum_{F\subseteq E}\left(  -1\right)  ^{\left\vert F\right\vert
}\left(  \prod_{\substack{K\in\mathfrak{K};\\K\subseteq F}}a_{K}\right)
x^{\operatorname*{conn}\left(  V,F\right)  }.
\]
This proves Theorem \ref{thm.chrompol.varis}.
\end{proof}
\end{vershort}

\begin{verlong}
\begin{proof}
[Proof of Theorem \ref{thm.chrompol.varis}.]Let $R$ be the polynomial ring
$\mathbb{Z}\left[  y_{K}\ \mid\ K\in\mathfrak{K}\right]  $, where $y_{K}$ is a
new indeterminate for each $K\in\mathfrak{K}$. Clearly, $R$ is an integral
domain (since $R$ is a polynomial ring over $\mathbb{Z}$).

Moreover, $\mathbb{Z}$ is a subring of $R$ (since $R$ is a polynomial ring
over $\mathbb{Z}$). We therefore regard $\mathbb{Z}\left[  x\right]  $ as a
subring of $R\left[  x\right]  $.

The chromatic polynomial $\chi_{G}$ is the unique polynomial $P\in
\mathbb{Z}\left[  x\right]  $ such that every $q\in\mathbb{N}$ satisfies
$P\left(  q\right)  =\left(  \text{the number of all proper }\left\{
1,2,\ldots,q\right\}  \text{-colorings of }G\right)  $ (because this is how
$\chi_{G}$ is defined). Thus, $\chi_{G}$ is a polynomial in $\mathbb{Z}\left[
x\right]  $ and has the property that every $q\in\mathbb{N}$ satisfies%
\begin{equation}
\chi_{G}\left(  q\right)  =\left(  \text{the number of all proper }\left\{
1,2,\ldots,q\right\}  \text{-colorings of }G\right)  .
\label{pf.thm.chrompol.varis.chiG(q)=}%
\end{equation}

Lemma \ref{lem.chrompol.weak.varis} (applied to $R$ and $y_{K}$ instead of
$\mathbf{k}$ and $a_{K}$) yields that%
\begin{align*}
&  \left(  \text{the number of all proper }\left\{  1,2,\ldots,q\right\}
\text{-colorings of }G\right) \\
&  =\sum_{F\subseteq E}\left(  -1\right)  ^{\left\vert F\right\vert }\left(
\prod_{\substack{K\in\mathfrak{K};\\K\subseteq F}}y_{K}\right)
q^{\operatorname*{conn}\left(  V,F\right)  }%
\end{align*}
for every $q\in\mathbb{N}$. Thus, for every $q\in\mathbb{N}$, we have%
\begin{align}
&  \sum_{F\subseteq E}\left(  -1\right)  ^{\left\vert F\right\vert }\left(
\prod_{\substack{K\in\mathfrak{K};\\K\subseteq F}}y_{K}\right)
q^{\operatorname*{conn}\left(  V,F\right)  }\nonumber\\
&  =\left(  \text{the number of all proper }\left\{  1,2,\ldots,q\right\}
\text{-colorings of }G\right) \nonumber\\
&  =\chi_{G}\left(  q\right)  \ \ \ \ \ \ \ \ \ \ \left(  \text{by
(\ref{pf.thm.chrompol.varis.chiG(q)=})}\right)  .
\label{pf.thm.chrompol.varis.2}%
\end{align}

We have $\chi_{G}\in\mathbb{Z}\left[  x\right]  \subseteq R\left[  x\right]  $
(since $\mathbb{Z}\left[  x\right]  $ is a subring of $R\left[  x\right]  $).

On the other hand, let us define a polynomial $\widetilde{P}\in R\left[
x\right]  $ by%
\begin{equation}
\widetilde{P}=\sum_{F\subseteq E}\left(  -1\right)  ^{\left\vert F\right\vert
}\left(  \prod_{\substack{K\in\mathfrak{K};\\K\subseteq F}}y_{K}\right)
x^{\operatorname*{conn}\left(  V,F\right)  }.
\label{pf.thm.chrompol.varis.Ptilde=}%
\end{equation}
Every $q\in\mathbb{N}$ satisfies%
\begin{align}
\widetilde{P}\left(  q\right)   &  =\sum_{F\subseteq E}\left(  -1\right)
^{\left\vert F\right\vert }\left(  \prod_{\substack{K\in\mathfrak{K}%
;\\K\subseteq F}}y_{K}\right)  q^{\operatorname*{conn}\left(  V,F\right)
}\nonumber\\
&  \ \ \ \ \ \ \ \ \ \ \ \ \ \ \ \ \ \ \ \ \left(  \text{here, we have
substituted }q\text{ for }x\text{ in (\ref{pf.thm.chrompol.varis.Ptilde=}%
)}\right) \nonumber\\
&  =\chi_{G}\left(  q\right)  \ \ \ \ \ \ \ \ \ \ \left(  \text{by
(\ref{pf.thm.chrompol.varis.2})}\right)  .
\label{pf.thm.chrompol.varis.Ptildeq=}%
\end{align}
Thus, Corollary \ref{cor.poly0.P1=P2} (applied to $P_{1}=\widetilde{P}$ and
$P_{2}=\chi_{G}$) shows that $\widetilde{P}=\chi_{G}$. In other words,
$\chi_{G}=\widetilde{P}$.

Let $\mathbf{a}$ denote the family $\left(  a_{K}\right)  _{K\in\mathfrak{K}%
}\in\mathbf{k}^{\mathfrak{K}}$ of elements of $\mathbf{k}$.

Now, recall that $R$ is the polynomial ring $\mathbb{Z}\left[  y_{K}%
\ \mid\ K\in\mathfrak{K}\right]  $. Hence, $R$ satisfies the following
universal property (the well-known universal property of a polynomial ring):
For any commutative $\mathbb{Z}$-algebra $B$ and any family $\mathbf{b}%
=\left(  b_{K}\right)  _{K\in\mathfrak{K}}\in B^{\mathfrak{K}}$ of elements of
$B$, there exists a unique $\mathbb{Z}$-algebra homomorphism $\psi
:R\rightarrow B$ satisfying%
\[
\left(  \psi\left(  y_{K}\right)  =b_{K}\ \ \ \ \ \ \ \ \ \ \text{for every
}K\in\mathfrak{K}\right)  .
\]
This $\mathbb{Z}$-algebra homomorphism $\psi$ is denoted by
$\operatorname*{ev}\nolimits_{\mathbf{b}}$, and is called the \emph{evaluation
homomorphism} at the family $\mathbf{b}$.

Thus we have constructed a $\mathbb{Z}$-algebra homomorphism
$\operatorname*{ev}\nolimits_{\mathbf{b}}:R\rightarrow B$ for every
commutative $\mathbb{Z}$-algebra $B$ and every family $\mathbf{b}=\left(
b_{K}\right)  _{K\in\mathfrak{K}}\in B^{\mathfrak{K}}$ of elements of $B$.
Applying this construction to $B=\mathbf{k}$, $\mathbf{b}=\mathbf{a}$ and
$b_{K}=a_{K}$, we obtain a $\mathbb{Z}$-algebra homomorphism
$\operatorname*{ev}\nolimits_{\mathbf{a}}:R\rightarrow\mathbf{k}$. This
homomorphism, in turn, induces a $\mathbb{Z}\left[  x\right]  $-algebra
homomorphism $\operatorname*{ev}\nolimits_{\mathbf{a}}\left[  x\right]
:R\left[  x\right]  \rightarrow\mathbf{k}\left[  x\right]  $ satisfying
$\left(  \operatorname*{ev}\nolimits_{\mathbf{a}}\left[  x\right]  \right)
\left(  x\right)  =x$.\ \ \ \ \footnote{This homomorphism $\operatorname*{ev}%
\nolimits_{\mathbf{a}}\left[  x\right]  $ is explicitly given by the formula%
\[
\left(  \operatorname*{ev}\nolimits_{\mathbf{a}}\left[  x\right]  \right)
\left(  \sum_{i=0}^{\infty}r_{i}x^{i}\right)  =\sum_{i=0}^{\infty
}\operatorname*{ev}\nolimits_{\mathbf{a}}\left(  r_{i}\right)  \cdot x^{i}%
\]
for every sequence $\left(  r_{0},r_{1},r_{2},\ldots\right)  \in R^{\infty}$
of elements of $R$ which satisfies $r_{i}=0$ for all sufficiently high $i$.}.
Consider this homomorphism $\operatorname*{ev}\nolimits_{\mathbf{a}}\left[
x\right]  $.

Recall that $\mathbf{a}=\left(  a_{K}\right)  _{K\in\mathfrak{K}}$. Hence,
$\operatorname*{ev}\nolimits_{\mathbf{a}}$ is the unique $\mathbb{Z}$-algebra
homomorphism $\psi:R\rightarrow\mathbf{k}$ satisfying%
\[
\left(  \psi\left(  y_{K}\right)  =a_{K}\ \ \ \ \ \ \ \ \ \ \text{for every
}K\in\mathfrak{K}\right)
\]
(by the definition of $\operatorname*{ev}\nolimits_{\mathbf{a}}$). Thus,
$\operatorname*{ev}\nolimits_{\mathbf{a}}$ is a $\mathbb{Z}$-algebra
homomorphism and satisfies%
\begin{equation}
\operatorname*{ev}\nolimits_{\mathbf{a}}\left(  y_{K}\right)  =a_{K}%
\ \ \ \ \ \ \ \ \ \ \text{for every }K\in\mathfrak{K}.
\label{pf.thm.chrompol.varis.eva}%
\end{equation}

The construction of $\operatorname*{ev}\nolimits_{\mathbf{a}}\left[  x\right]
$ shows that
\begin{equation}
\left(  \operatorname*{ev}\nolimits_{\mathbf{a}}\left[  x\right]  \right)
\left(  u\right)  =\operatorname*{ev}\nolimits_{\mathbf{a}}\left(  u\right)
\ \ \ \ \ \ \ \ \ \ \text{for every }u\in R.
\label{pf.thm.chrompol.varis.eva(u)}%
\end{equation}
Now, for every $K\in\mathfrak{K}$, we have%
\begin{align}
\left(  \operatorname*{ev}\nolimits_{\mathbf{a}}\left[  x\right]  \right)
\left(  y_{K}\right)   &  =\operatorname*{ev}\nolimits_{\mathbf{a}}\left(
y_{K}\right)  \ \ \ \ \ \ \ \ \ \ \left(  \text{by
(\ref{pf.thm.chrompol.varis.eva(u)}), applied to }u=y_{K}\right) \nonumber\\
&  =a_{K}\ \ \ \ \ \ \ \ \ \ \left(  \text{by (\ref{pf.thm.chrompol.varis.eva}%
)}\right)  . \label{pf.thm.chrompol.varis.evax}%
\end{align}

By its definition, the homomorphism $\operatorname*{ev}\nolimits_{\mathbf{a}%
}\left[  x\right]  $ preserves $\mathbb{Z}\left[  x\right]  $ (or, rather,
sends every element of $\mathbb{Z}\left[  x\right]  \subseteq R\left[
x\right]  $ to the corresponding element of $\mathbb{Z}\left[  x\right]
\subseteq\mathbf{k}\left[  x\right]  $). In other words, $\left(
\operatorname*{ev}\nolimits_{\mathbf{a}}\left[  x\right]  \right)  \left(
Q\right)  =Q$ for every $Q\in\mathbb{Z}\left[  x\right]  $. Applying this to
$Q=\chi_{G}$, we obtain%
\[
\left(  \operatorname*{ev}\nolimits_{\mathbf{a}}\left[  x\right]  \right)
\left(  \chi_{G}\right)  =\chi_{G}%
\]
(since $\chi_{G}\in\mathbb{Z}\left[  x\right]  $). Thus,%
\begin{align*}
\chi_{G}  &  =\left(  \operatorname*{ev}\nolimits_{\mathbf{a}}\left[
x\right]  \right)  \left(  \underbrace{\chi_{G}}_{=\widetilde{P}%
=\sum_{F\subseteq E}\left(  -1\right)  ^{\left\vert F\right\vert }\left(
\prod_{\substack{K\in\mathfrak{K};\\K\subseteq F}}y_{K}\right)
x^{\operatorname*{conn}\left(  V,F\right)  }}\right) \\
&  =\left(  \operatorname*{ev}\nolimits_{\mathbf{a}}\left[  x\right]  \right)
\left(  \sum_{F\subseteq E}\left(  -1\right)  ^{\left\vert F\right\vert
}\left(  \prod_{\substack{K\in\mathfrak{K};\\K\subseteq F}}y_{K}\right)
x^{\operatorname*{conn}\left(  V,F\right)  }\right) \\
&  =\sum_{F\subseteq E}\left(  -1\right)  ^{\left\vert F\right\vert }\left(
\prod_{\substack{K\in\mathfrak{K};\\K\subseteq F}}\underbrace{\left(
\operatorname*{ev}\nolimits_{\mathbf{a}}\left[  x\right]  \right)  \left(
y_{K}\right)  }_{\substack{=a_{K}\\\text{(by (\ref{pf.thm.chrompol.varis.evax}%
))}}}\right)  \left(  \underbrace{\left(  \operatorname*{ev}%
\nolimits_{\mathbf{a}}\left[  x\right]  \right)  \left(  x\right)  }%
_{=x}\right)  ^{\operatorname*{conn}\left(  V,F\right)  }\\
&  \ \ \ \ \ \ \ \ \ \ \ \ \ \ \ \ \ \ \ \ \left(  \text{since }%
\operatorname*{ev}\nolimits_{\mathbf{a}}\left[  x\right]  \text{ is a
}\mathbb{Z}\text{-algebra homomorphism}\right) \\
&  =\sum_{F\subseteq E}\left(  -1\right)  ^{\left\vert F\right\vert }\left(
\prod_{\substack{K\in\mathfrak{K};\\K\subseteq F}}a_{K}\right)
x^{\operatorname*{conn}\left(  V,F\right)  }.
\end{align*}
This proves Theorem \ref{thm.chrompol.varis}.
\end{proof}
\end{verlong}

\begin{vershort}
Now that Theorem \ref{thm.chrompol.varis} is proven, we could derive Theorem
\ref{thm.chrompol.empty}, Corollary \ref{cor.chrompol.K-free} and Corollary
\ref{cor.chrompol.NBC} from it in the same way as we have derived Theorem
\ref{thm.chromsym.empty}, Corollary \ref{cor.chromsym.K-free} and Corollary
\ref{cor.chromsym.NBC} from Theorem \ref{thm.chromsym.varis}. We leave the
details to the reader.
\end{vershort}

\begin{verlong}
Now that Theorem \ref{thm.chrompol.varis} is proven, we can derive Theorem
\ref{thm.chrompol.empty}, Corollary \ref{cor.chrompol.K-free} and Corollary
\ref{cor.chrompol.NBC} from it in the same way as we have derived Theorem
\ref{thm.chromsym.empty}, Corollary \ref{cor.chromsym.K-free} and Corollary
\ref{cor.chromsym.NBC} from Theorem \ref{thm.chromsym.varis}. Here are the details:
\end{verlong}

\begin{verlong}
\begin{proof}
[Proof of Corollary \ref{cor.chrompol.K-free}.]We can apply Theorem
\ref{thm.chrompol.varis} to $0$ instead of $a_{K}$. As a result, we obtain%
\begin{equation}
\chi_{G}=\sum_{F\subseteq E}\left(  -1\right)  ^{\left\vert F\right\vert
}\left(  \prod_{\substack{K\in\mathfrak{K};\\K\subseteq F}}0\right)
x^{\operatorname*{conn}\left(  V,F\right)  }. \label{pf.cor.chrompol.K-free.0}%
\end{equation}
Now, if $F$ is any subset of $E$, then%
\begin{equation}
\prod_{\substack{K\in\mathfrak{K};\\K\subseteq F}}0=%
\begin{cases}
1, & \text{if }F\text{ is }\mathfrak{K}\text{-free;}\\
0, & \text{if }F\text{ is not }\mathfrak{K}\text{-free}%
\end{cases}
\label{pf.cor.chrompol.K-free.1}%
\end{equation}
\footnote{\textit{Proof of (\ref{pf.cor.chrompol.K-free.1}):} The equality
(\ref{pf.cor.chrompol.K-free.1}) has already been proven in our proof of
Corollary \ref{cor.chromsym.K-free}.}. Thus, (\ref{pf.cor.chrompol.K-free.0})
becomes%
\begin{align*}
\chi_{G}  &  =\sum_{F\subseteq E}\left(  -1\right)  ^{\left\vert F\right\vert
}\underbrace{\left(  \prod_{\substack{K\in\mathfrak{K};\\K\subseteq
F}}0\right)  }_{\substack{=%
\begin{cases}
1, & \text{if }F\text{ is }\mathfrak{K}\text{-free;}\\
0, & \text{if }F\text{ is not }\mathfrak{K}\text{-free}%
\end{cases}
\\\text{(by (\ref{pf.cor.chrompol.K-free.1}))}}}x^{\operatorname*{conn}\left(
V,F\right)  }\\
&  =\sum_{F\subseteq E}\left(  -1\right)  ^{\left\vert F\right\vert }%
\begin{cases}
1, & \text{if }F\text{ is }\mathfrak{K}\text{-free;}\\
0, & \text{if }F\text{ is not }\mathfrak{K}\text{-free}%
\end{cases}
\ \ x^{\operatorname*{conn}\left(  V,F\right)  }\\
&  =\sum_{\substack{F\subseteq E;\\F\text{ is }\mathfrak{K}\text{-free}%
}}\left(  -1\right)  ^{\left\vert F\right\vert }\underbrace{%
\begin{cases}
1, & \text{if }F\text{ is }\mathfrak{K}\text{-free;}\\
0, & \text{if }F\text{ is not }\mathfrak{K}\text{-free}%
\end{cases}
}_{\substack{=1\\\text{(since }F\text{ is }\mathfrak{K}\text{-free)}%
}}\ \ x^{\operatorname*{conn}\left(  V,F\right)  }\\
&  \ \ \ \ \ \ \ \ \ \ +\sum_{\substack{F\subseteq E;\\F\text{ is not
}\mathfrak{K}\text{-free}}}\left(  -1\right)  ^{\left\vert F\right\vert
}\underbrace{%
\begin{cases}
1, & \text{if }F\text{ is }\mathfrak{K}\text{-free;}\\
0, & \text{if }F\text{ is not }\mathfrak{K}\text{-free}%
\end{cases}
}_{\substack{=0\\\text{(since }F\text{ is not }\mathfrak{K}\text{-free)}%
}}\ \ x^{\operatorname*{conn}\left(  V,F\right)  }\\
&  =\sum_{\substack{F\subseteq E;\\F\text{ is }\mathfrak{K}\text{-free}%
}}\left(  -1\right)  ^{\left\vert F\right\vert }x^{\operatorname*{conn}\left(
V,F\right)  }+\underbrace{\sum_{\substack{F\subseteq E;\\F\text{ is not
}\mathfrak{K}\text{-free}}}\left(  -1\right)  ^{\left\vert F\right\vert
}0x^{\operatorname*{conn}\left(  V,F\right)  }}_{=0}\\
&  =\sum_{\substack{F\subseteq E;\\F\text{ is }\mathfrak{K}\text{-free}%
}}\left(  -1\right)  ^{\left\vert F\right\vert }x^{\operatorname*{conn}\left(
V,F\right)  }.
\end{align*}
This proves Corollary \ref{cor.chrompol.K-free}.
\end{proof}
\end{verlong}

\begin{verlong}
\begin{proof}
[Proof of Corollary \ref{cor.chrompol.NBC}.]Let $\mathfrak{K}$ be the set of
all broken circuits of $G$. Thus, the elements of $\mathfrak{K}$ are the
broken circuits of $G$.

Now, for every subset $F$ of $E$, we have the following equivalence of
statements:%
\begin{align*}
&  \ \left(  F\text{ is }\mathfrak{K}\text{-free}\right) \\
&  \Longleftrightarrow\ \left(  F\text{ contains no }K\in\mathfrak{K}\text{ as
a subset}\right) \\
&  \ \ \ \ \ \ \ \ \ \ \left(
\begin{array}
[c]{c}%
\text{because }F\text{ is }\mathfrak{K}\text{-free if and only if }F\text{
contains no }K\in\mathfrak{K}\text{ as a subset}\\
\text{(by the definition of \textquotedblleft}\mathfrak{K}%
\text{-free\textquotedblright)}%
\end{array}
\right) \\
&  \Longleftrightarrow\ \left(  F\text{ contains no element of }%
\mathfrak{K}\text{ as a subset}\right) \\
&  \Longleftrightarrow\ \left(  F\text{ contains no broken circuit of }G\text{
as a subset}\right)
\end{align*}
(since the elements of $\mathfrak{K}$ are the broken circuits of $G$). Hence,
$\sum_{\substack{F\subseteq E;\\F\text{ is }\mathfrak{K}\text{-free}}%
}=\sum_{\substack{F\subseteq E;\\F\text{ contains no broken}\\\text{circuit of
}G\text{ as a subset}}}$ (an equality between summation signs). Now, Corollary
\ref{cor.chrompol.K-free} yields%
\[
\chi_{G}=\underbrace{\sum_{\substack{F\subseteq E;\\F\text{ is }%
\mathfrak{K}\text{-free}}}}_{=\sum_{\substack{F\subseteq E;\\F\text{ contains
no broken}\\\text{circuit of }G\text{ as a subset}}}}\left(  -1\right)
^{\left\vert F\right\vert }x^{\operatorname*{conn}\left(  V,F\right)  }%
=\sum_{\substack{F\subseteq E;\\F\text{ contains no broken}\\\text{circuit of
}G\text{ as a subset}}}\left(  -1\right)  ^{\left\vert F\right\vert
}x^{\operatorname*{conn}\left(  V,F\right)  }.
\]
This proves Corollary \ref{cor.chrompol.NBC}.
\end{proof}
\end{verlong}

\begin{verlong}
\begin{proof}
[Proof of Theorem \ref{thm.chrompol.empty}.]Let $X$ be the totally ordered set
$\left\{  1\right\}  $ (equipped with the only possible order on this set).
Let $\ell:E\rightarrow X$ be the function sending each $e\in E$ to $1\in X$.
Let $\mathfrak{K}$ be the empty set. Clearly, $\mathfrak{K}$ is a set of
broken circuits of $G$. Theorem \ref{thm.chrompol.varis} (applied to $0$
instead of $a_{K}$) yields%
\begin{align*}
\chi_{G}  &  =\sum_{F\subseteq E}\left(  -1\right)  ^{\left\vert F\right\vert
}\underbrace{\left(  \prod_{\substack{K\in\mathfrak{K};\\K\subseteq
F}}0\right)  }_{\substack{=\left(  \text{empty product}\right)  \\\text{(since
}\mathfrak{K}\text{ is the empty set)}}}x^{\operatorname*{conn}\left(
V,F\right)  }\\
&  =\sum_{F\subseteq E}\left(  -1\right)  ^{\left\vert F\right\vert
}\underbrace{\left(  \text{empty product}\right)  }_{=1}%
x^{\operatorname*{conn}\left(  V,F\right)  }=\sum_{F\subseteq E}\left(
-1\right)  ^{\left\vert F\right\vert }x^{\operatorname*{conn}\left(
V,F\right)  }.
\end{align*}
This proves Theorem \ref{thm.chrompol.empty}.
\end{proof}
\end{verlong}

\subsection{Special case: Whitney's Broken-Circuit Theorem}

Corollary \ref{cor.chrompol.NBC} is commonly stated in the following
simplified (if less general) form:

\Needspace{4cm}

\begin{corollary}
\label{cor.chrompol.NBCfor}Let $G=\left(  V,E\right)  $ be a finite graph. Let
$X$ be a totally ordered set. Let $\ell:E\rightarrow X$ be an injective
labeling function. Then,%
\[
\chi_{G}=\sum_{\substack{F\subseteq E;\\F\text{ contains no broken}%
\\\text{circuit of }G\text{ as a subset}}}\left(  -1\right)  ^{\left\vert
F\right\vert }x^{\left\vert V\right\vert -\left\vert F\right\vert }.
\]

\end{corollary}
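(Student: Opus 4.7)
The plan is to deduce Corollary \ref{cor.chrompol.NBCfor} from Corollary \ref{cor.chrompol.NBC} by showing that, under the injectivity assumption on $\ell$, every $F \subseteq E$ appearing in the sum of Corollary \ref{cor.chrompol.NBC} satisfies $\operatorname{conn}(V,F) = |V| - |F|$. Since Corollary \ref{cor.chrompol.NBC} already provides
\[
\chi_G = \sum_{\substack{F \subseteq E;\\ F \text{ contains no broken}\\\text{circuit of }G\text{ as a subset}}} (-1)^{|F|} x^{\operatorname{conn}(V,F)},
\]
it will then suffice to replace each exponent $\operatorname{conn}(V,F)$ by $|V| - |F|$ term by term.

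The key step is to establish that the graph $(V,F)$ is a forest for every such $F$, and the cleanest way to do this is by contrapositive: I will show that if $F \subseteq E$ contains some circuit $C$ of $G$, then $F$ must contain a broken circuit of $G$. Given such a $C \subseteq F$, the injectivity of $\ell$ makes the labels $\ell(e)$ for $e \in C$ pairwise distinct, so there is a unique edge $e_{0} \in C$ of maximum label. By the definition of a broken circuit (Definition \ref{def.BC}), the set $C \setminus \{e_{0}\}$ is then a broken circuit of $G$, and the inclusion $C \setminus \{e_{0}\} \subseteq C \subseteq F$ exhibits it as a broken circuit contained in $F$. This is precisely where injectivity is essential; without it, a circuit might have several maximum-label edges and fail to produce any broken circuit at all, so the argument would break down.

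Knowing that $(V,F)$ is a forest for each $F$ in the sum, I will invoke the standard identity $\operatorname{conn}(V,F) = |V| - |F|$ valid for any forest on the vertex set $V$. This identity can be obtained by an easy induction on $|F|$ (removing an edge of a forest either splits one component into two, if the edge is a bridge, which it always is in a forest), or equivalently by noting that a forest is the disjoint union of its components, each of which is a tree satisfying $|\text{vertices}| - |\text{edges}| = 1$. Substituting $\operatorname{conn}(V,F) = |V| - |F|$ into Corollary \ref{cor.chrompol.NBC} then yields the desired formula and completes the proof of Corollary \ref{cor.chrompol.NBCfor}.
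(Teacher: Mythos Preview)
Your proposal is correct and follows essentially the same approach as the paper: derive the result from Corollary \ref{cor.chrompol.NBC} by showing (via the injectivity of $\ell$) that any $F$ containing no broken circuit contains no circuit at all, so that $(V,F)$ is a forest and $\operatorname{conn}(V,F)=\left\vert V\right\vert -\left\vert F\right\vert$. The paper packages the forest identity as Lemma \ref{lem.conn.forest} rather than sketching it inline, but the argument is otherwise identical.
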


Corollary \ref{cor.chrompol.NBCfor} is known as \emph{Whitney's Broken-Circuit
theorem} (see, e.g., \cite{BlaSag86}). In his original 1932 paper
\cite[\S 7]{Whitne32}, Whitney stated its claim as \textquotedblleft the
$x^{\left\vert V\right\vert -i}$-coefficient of $\chi_{G}$ is $\left(
-1\right)  ^{i}$ times the number of $i$-element subsets of $E$ that contain
no broken circuit as a subset\textquotedblright, which is easily seen to be
equivalent to our formulation.

Notice that $\ell$ is required to be injective in Corollary
\ref{cor.chrompol.NBCfor}; the purpose of this requirement is to ensure that
every circuit of $G$ has a unique edge $e$ with maximum $\ell\left(  e\right)
$, and thus induces a broken circuit of $G$. The proof of Corollary
\ref{cor.chrompol.NBCfor} relies on the following standard result:

\begin{lemma}
\label{lem.conn.forest}Let $\left(  V,F\right)  $ be a finite graph. Assume
that $\left(  V,F\right)  $ has no circuits. Then, $\operatorname*{conn}%
\left(  V,F\right)  =\left\vert V\right\vert -\left\vert F\right\vert $.
\end{lemma}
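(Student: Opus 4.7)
The plan is to prove Lemma \ref{lem.conn.forest} by induction on $\left\vert F\right\vert$, exploiting the fact that in an acyclic graph, each edge reduces the number of connected components by exactly one.

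For the base case $\left\vert F\right\vert =0$, the graph $\left(V,\varnothing\right)$ has no edges, so its $\sim_{\left(V,\varnothing\right)}$-equivalence classes are exactly the singletons $\left\{v\right\}$ for $v\in V$. Hence $\operatorname{conn}\left(V,\varnothing\right)=\left\vert V\right\vert =\left\vert V\right\vert -\left\vert \varnothing\right\vert$, as desired.

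For the inductive step, I would fix some edge $e=\left\{u,v\right\} \in F$ and set $F'=F\setminus\left\{e\right\}$. Since $\left(V,F\right)$ has no circuits and $F'\subseteq F$, the subgraph $\left(V,F'\right)$ has no circuits either (any circuit of $\left(V,F'\right)$ would be a circuit of $\left(V,F\right)$). By the inductive hypothesis, $\operatorname{conn}\left(V,F'\right)=\left\vert V\right\vert -\left\vert F'\right\vert =\left\vert V\right\vert -\left\vert F\right\vert +1$. It remains to show
\[
\operatorname{conn}\left(V,F\right)=\operatorname{conn}\left(V,F'\right)-1,
\]
which will follow once I check that $u$ and $v$ lie in different connected components of $\left(V,F'\right)$ (then the two components merge into one when $e$ is added back, while all other components remain unchanged, since $\sim_{\left(V,F'\right)}\subseteq\, \sim_{\left(V,F\right)}$ and the reverse inclusion fails only across this one pair of components).

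The main obstacle, and the only place the acyclicity of $\left(V,F\right)$ is used, is showing that $u$ and $v$ are not connected in $\left(V,F'\right)$. Assume for contradiction that they are. Then there is a walk $\left(w_{0},w_{1},\ldots,w_{k}\right)$ from $u$ to $v$ using only edges of $F'$. Choosing such a walk of minimum length ensures $w_{0},w_{1},\ldots,w_{k}$ are pairwise distinct (otherwise one could splice out a loop and obtain a shorter walk). Since $u\neq v$ (because $\left\{u,v\right\} =e\in F\subseteq\binom{V}{2}$ forces $u\neq v$) and the edge $e\notin F'$ is not traversed, we have $k\geq 2$ and the sequence $\left(w_{0},w_{1},\ldots,w_{k},w_{0}\right)=\left(u,w_{1},\ldots,w_{k-1},v,u\right)$ is then a cycle of $\left(V,F\right)$ (here the closing edge $\left\{v,u\right\} =e$ lies in $F$, and the remaining edges lie in $F'\subseteq F$). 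This cycle gives rise to a circuit of $\left(V,F\right)$, contradicting the acyclicity assumption. Hence $u$ and $v$ lie in different components of $\left(V,F'\right)$, completing the inductive step and thus the proof.
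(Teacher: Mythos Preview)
Your proof is correct. Note that the paper does not actually supply its own proof of this lemma: it simply cites \cite[Proposition 10.6]{Bona11} and \cite[\S I.2, Corollary 6]{Bollobas}, and the detailed version contains only a \texttt{TODO} placeholder. Your induction on $\left\vert F\right\vert$ is precisely the standard argument one finds in those references: remove an edge, observe that acyclicity forces its endpoints into distinct components of the smaller graph (else a minimal connecting walk plus the removed edge closes up into a cycle), and conclude that the component count drops by exactly one. The one step you sketch rather than spell out---that adding back $e$ merges exactly the two components $C_u$ and $C_v$ and leaves all others intact---is routine to verify (any walk in $\left(V,F\right)$ either avoids $e$ entirely, hence lies in $\left(V,F'\right)$, or passes through $u$ and $v$, placing both endpoints in $C_u\cup C_v$), and your parenthetical remark about $\sim_{\left(V,F'\right)}\subseteq\,\sim_{\left(V,F\right)}$ points in the right direction.
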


(A graph that has no circuits is commonly known as a \emph{forest}.)

Lemma \ref{lem.conn.forest} is both extremely elementary and well-known; for
example, it appears in \cite[Proposition 10.6]{Bona17}, in \cite[\S I.2,
Corollary 6]{Bollobas} and in \cite[Theorem 6.3.15 \textbf{(e)}]{21f-lec6}.
Let us now see how it entails Corollary \ref{cor.chrompol.NBCfor}:

\begin{vershort}
\begin{proof}
[Proof of Corollary \ref{cor.chrompol.NBCfor}.]Corollary
\ref{cor.chrompol.NBCfor} follows from Corollary \ref{cor.chrompol.NBC}.
Indeed, the injectivity of $\ell$ shows that every circuit of $G$ has a unique
edge $e$ with maximum $\ell\left(  e\right)  $, and thus contains a broken
circuit of $G$ as a subset. Therefore, if a subset $F$ of $E$ contains no
broken circuit of $G$ as a subset, then $F$ contains no circuit of $G$ either,
and therefore the graph $\left(  V,F\right)  $ has no circuits; but this
entails that $\operatorname*{conn}\left(  V,F\right)  =\left\vert V\right\vert
-\left\vert F\right\vert $ (by Lemma \ref{lem.conn.forest}). Hence, Corollary
\ref{cor.chrompol.NBC} immediately yields Corollary \ref{cor.chrompol.NBCfor}.
\end{proof}
\end{vershort}

\begin{verlong}
\begin{proof}
[Proof of Corollary \ref{cor.chrompol.NBCfor}.]We first claim that if $F$ is a
subset of $E$ such that $F$ contains no broken circuit of $G$ as a subset,
then%
\begin{equation}
\operatorname*{conn}\left(  V,F\right)  =\left\vert V\right\vert -\left\vert
F\right\vert . \label{pf.cor.chrompol.NBCfor.1}%
\end{equation}

\textit{Proof of (\ref{pf.cor.chrompol.NBCfor.1}):} Let $F$ be a subset of $E$
such that $F$ contains no broken circuit of $G$ as a subset. We shall now show
that the graph $\left(  V,F\right)  $ has no circuits.

Indeed, assume the contrary (for the sake of contradiction). Thus, the graph
$\left(  V,F\right)  $ has a circuit. In other words, there exists a circuit
$D$ of the graph $\left(  V,F\right)  $. Consider this $D$.

The set $D$ is a circuit of $\left(  V,F\right)  $. In other words, the set
$D$ has the form $\left\{  \left\{  v_{1},v_{2}\right\}  ,\left\{  v_{2}%
,v_{3}\right\}  ,\ldots,\left\{  v_{m},v_{m+1}\right\}  \right\}  $, where
$\left(  v_{1},v_{2},\ldots,v_{m+1}\right)  $ is a cycle of $\left(
V,F\right)  $ (by the definition of a \textquotedblleft
circuit\textquotedblright). Consider this cycle $\left(  v_{1},v_{2}%
,\ldots,v_{m+1}\right)  $. We thus have
\begin{equation}
D=\left\{  \left\{  v_{1},v_{2}\right\}  ,\left\{  v_{2},v_{3}\right\}
,\ldots,\left\{  v_{m},v_{m+1}\right\}  \right\}  .
\label{pf.cor.chrompol.NBCfor.5}%
\end{equation}

The list $\left(  v_{1},v_{2},\ldots,v_{m+1}\right)  $ is a cycle of $\left(
V,F\right)  $. According to the definition of a \textquotedblleft
cycle\textquotedblright, this means that this list is a list of elements of
$V$ satisfying the following four properties:

\begin{itemize}
\item We have $m>2$.

\item We have $v_{m+1}=v_{1}$.

\item The vertices $v_{1},v_{2},\ldots,v_{m}$ are pairwise distinct.

\item We have $\left\{  v_{i},v_{i+1}\right\}  \in F$ for every $i\in\left\{
1,2,\ldots,m\right\}  $.
\end{itemize}

Thus, $\left(  v_{1},v_{2},\ldots,v_{m+1}\right)  $ is a list of elements of
$V$ satisfying the four properties that we have just mentioned. Notice that%
\begin{align*}
D  &  =\left\{  \left\{  v_{1},v_{2}\right\}  ,\left\{  v_{2},v_{3}\right\}
,\ldots,\left\{  v_{m},v_{m+1}\right\}  \right\} \\
&  =\left\{  \left\{  v_{i},v_{i+1}\right\}  \ \mid\ i\in\left\{
1,2,\ldots,m\right\}  \right\}  \subseteq F
\end{align*}
(since $\left\{  v_{i},v_{i+1}\right\}  \in F$ for every $i\in\left\{
1,2,\ldots,m\right\}  $).

Now, we have $\left\{  v_{i},v_{i+1}\right\}  \in F\subseteq E$ for every
$i\in\left\{  1,2,\ldots,m\right\}  $. Hence, $\left(  v_{1},v_{2}%
,\ldots,v_{m+1}\right)  $ is a list of elements of $V$ satisfying the
following four properties:

\begin{itemize}
\item We have $m>2$.

\item We have $v_{m+1}=v_{1}$.

\item The vertices $v_{1},v_{2},\ldots,v_{m}$ are pairwise distinct.

\item We have $\left\{  v_{i},v_{i+1}\right\}  \in E$ for every $i\in\left\{
1,2,\ldots,m\right\}  $.
\end{itemize}

According to the definition of a \textquotedblleft cycle\textquotedblright,
this means that the list $\left(  v_{1},v_{2},\ldots,v_{m+1}\right)  $ is a
cycle of $\left(  V,E\right)  $.

Thus, we conclude that the list $\left(  v_{1},v_{2},\ldots,v_{m+1}\right)  $
is a cycle of $\left(  V,E\right)  $. In other words, the list $\left(
v_{1},v_{2},\ldots,v_{m+1}\right)  $ is a cycle of $G$ (since $G=\left(
V,E\right)  $). Thus, the set $\left\{  \left\{  v_{1},v_{2}\right\}
,\left\{  v_{2},v_{3}\right\}  ,\ldots,\left\{  v_{m},v_{m+1}\right\}
\right\}  $ is a circuit of $G$ (by the definition of a \textquotedblleft
circuit\textquotedblright). In view of (\ref{pf.cor.chrompol.NBCfor.5}), this
rewrites as follows: The set $D$ is a circuit of $G$.

No two distinct edges in $D$ have the same label\footnote{\textit{Proof.}
Assume the contrary. Thus, two distinct edges in $D$ have the same label. In
other words, there exist two distinct edges $e$ and $e^{\prime}$ in $D$ such
that $e$ and $e^{\prime}$ have the same label. Consider these $e$ and
$e^{\prime}$.
\par
The edges $e$ and $e^{\prime}$ have the same label. In other words, the label
of $e$ equals the label of $e^{\prime}$. In other words, $\ell\left(
e\right)  $ equals $\ell\left(  e^{\prime}\right)  $ (since the label of $e$
is $\ell\left(  e\right)  $ (by the definition of \textquotedblleft
label\textquotedblright), whereas the label of $e^{\prime}$ is $\ell\left(
e^{\prime}\right)  $ (by the definition of \textquotedblleft
label\textquotedblright)). In other words, $\ell\left(  e\right)  =\ell\left(
e^{\prime}\right)  $. Since the map $\ell$ is injective, this shows that
$e=e^{\prime}$. This contradicts the assumption that $e$ and $e^{\prime}$ are
distinct. This contradiction proves that our assumption was wrong. Qed.}.

From $m>2>1$, we obtain%
\[
\left\{  v_{1},v_{2}\right\}  \in\left\{  \left\{  v_{1},v_{2}\right\}
,\left\{  v_{2},v_{3}\right\}  ,\ldots,\left\{  v_{m},v_{m+1}\right\}
\right\}  =D
\]
(by (\ref{pf.cor.chrompol.NBCfor.5})). Hence, the set $D$ is nonempty (since
it contains $\left\{  v_{1},v_{2}\right\}  $). Thus, $D$ is a nonempty finite
set. Hence, there exists an edge in $D$ having maximum label. Let $f$ be this
edge. Clearly, $D\setminus\left\{  f\right\}  \subseteq D\subseteq F\subseteq
E$. Thus, $D\setminus\left\{  f\right\}  $ is a subset of $E$.

The edge $f$ is an edge in $D$ having maximum label. Since no other edge in
$D$ has the same label as $f$ (because no two distinct edges in $D$ have the
same label), this shows that the edge $f$ is the \textbf{unique} edge in $D$
having maximum label. Therefore, $D\setminus\left\{  f\right\}  $ is a subset
of $E$ having the form $C\setminus\left\{  e\right\}  $, where $C$ is a
circuit of $G$, and where $e$ is the unique edge in $C$ having maximum label
(among the edges in $C$)\ \ \ \ \footnote{Namely, $D\setminus\left\{
f\right\}  $ has this form for $C=D$ and $e=f$.}. In other words,
$D\setminus\left\{  f\right\}  $ is a broken circuit of $G$ (since
$D\setminus\left\{  f\right\}  $ is a broken circuit of $G$ if and only if
$D\setminus\left\{  f\right\}  $ is a subset of $E$ having the form
$C\setminus\left\{  e\right\}  $, where $C$ is a circuit of $G$, and where $e$
is the unique edge in $C$ having maximum label (among the edges in
$C$)\ \ \ \ \footnote{by the definition of a \textquotedblleft broken
circuit\textquotedblright}). This broken circuit $D\setminus\left\{
f\right\}  $ satisfies $D\setminus\left\{  f\right\}  \subseteq F$. Thus,
there exists a broken circuit $B$ of $G$ such that $B\subseteq F$ (namely,
$B=D\setminus\left\{  f\right\}  $).

But the set $F$ contains no broken circuit of $G$ as a subset. In other words,
there exists no broken circuit $B$ of $G$ such that $B\subseteq F$. This
contradicts the fact that there exists a broken circuit $B$ of $G$ such that
$B\subseteq F$. This contradiction proves that our assumption was wrong.
Hence, the graph $\left(  V,F\right)  $ has no circuits. Lemma
\ref{lem.conn.forest} thus shows that $\operatorname*{conn}\left(  V,F\right)
=\left\vert V\right\vert -\left\vert F\right\vert $. This proves
(\ref{pf.cor.chrompol.NBCfor.1}).

Now, Corollary \ref{cor.chrompol.NBC} yields%
\begin{align*}
\chi_{G}  &  =\sum_{\substack{F\subseteq E;\\F\text{ contains no
broken}\\\text{circuit of }G\text{ as a subset}}}\left(  -1\right)
^{\left\vert F\right\vert }\underbrace{x^{\operatorname*{conn}\left(
V,F\right)  }}_{\substack{=x^{\left\vert V\right\vert -\left\vert F\right\vert
}\\\text{(since }\operatorname*{conn}\left(  V,F\right)  =\left\vert
V\right\vert -\left\vert F\right\vert \\\text{(by
(\ref{pf.cor.chrompol.NBCfor.1})))}}}\\
&  =\sum_{\substack{F\subseteq E;\\F\text{ contains no broken}\\\text{circuit
of }G\text{ as a subset}}}\left(  -1\right)  ^{\left\vert F\right\vert
}x^{\left\vert V\right\vert -\left\vert F\right\vert }.
\end{align*}
This proves Corollary \ref{cor.chrompol.NBCfor}.
\end{proof}
\end{verlong}

\section{\label{sec.trans}Application: Transitive directed graphs}

We shall now see an application of Corollary \ref{cor.chrompol.K-free} to
graphs which are obtained from certain directed graphs by \textquotedblleft
forgetting the directions of the edges\textquotedblright. Let us first
introduce the notations involved:

\begin{definition}
\label{def.digraph}\textbf{(a)} A \emph{digraph} means a pair $\left(
V,A\right)  $, where $V$ is a set, and where $A$ is a subset of $V^{2}=V\times
V$. Digraphs are also called \emph{directed graphs}. A digraph $\left(
V,A\right)  $ is said to be \emph{finite} if the set $V$ is finite. If
$D=\left(  V,A\right)  $ is a digraph, then the elements of $V$ are called the
\emph{vertices} of the digraph $D$, while the elements of $A$ are called the
\emph{arcs} (or the \emph{directed edges}) of the digraph $D$. If $a=\left(
v,w\right)  $ is an arc of a digraph $D$, then $v$ is called the \emph{source}
of $a$, whereas $w$ is called the \emph{target} of $a$.

\textbf{(b)} A digraph $\left(  V,A\right)  $ is said to be \emph{loopless} if
every $v\in V$ satisfies $\left(  v,v\right)  \notin A$. (In other words, a
digraph is loopless if and only if it has no arc whose source and target are identical.)

\textbf{(c)} A digraph $\left(  V,A\right)  $ is said to be \emph{transitive}
if it has the following property: For any $u\in V$, $v\in V$ and $w\in V$
satisfying $\left(  u,v\right)  \in A$ and $\left(  v,w\right)  \in A$, we
have $\left(  u,w\right)  \in A$.

\textbf{(d)} A digraph $\left(  V,A\right)  $ is said to be $2$%
\emph{-step-free} if there exist no three elements $u$, $v$ and $w$ of $V$
satisfying $\left(  u,v\right)  \in A$ and $\left(  v,w\right)  \in A$.

\textbf{(e)} Let $D=\left(  V,A\right)  $ be a loopless digraph. Define a map
$\operatorname*{set}:A\rightarrow\dbinom{V}{2}$ by setting%
\[
\left(  \operatorname*{set}\left(  v,w\right)  =\left\{  v,w\right\}
\ \ \ \ \ \ \ \ \ \ \text{for every }\left(  v,w\right)  \in A\right)  .
\]
(It is easy to see that $\operatorname*{set}$ is well-defined, because
$\left(  V,A\right)  $ is loopless.) The graph $\left(  V,\operatorname*{set}%
A\right)  $ will be denoted by $\underline{D}$. (Here, $\operatorname*{set}A$
means the subset $\left\{  \operatorname*{set}a\ \mid\ a\in A\right\}  $ of
$\dbinom{V}{2}$.)
\end{definition}

\begin{example}
\textbf{(a)} The digraph $D=\left(  V,A\right)  $ with $V=\left\{
1,2,3\right\}  $ and $A=\left\{  \left(  1,2\right)  ,\ \left(  2,1\right)
,\ \left(  2,3\right)  ,\ \left(  3,3\right)  \right\}  $ is not loopless
(since the vertex $v=3$ does not satisfy $\left(  v,v\right)  \notin A$).

\textbf{(b)} The digraph $D=\left(  V,A\right)  $ with $V=\left\{
1,2,3\right\}  $ and $A=\left\{  \left(  1,2\right)  ,\ \left(  2,1\right)
,\ \left(  2,3\right)  \right\}  $ is loopless. The corresponding (undirected)
graph $\underline{D}$ is $\underline{D}=\left(  V,\operatorname*{set}A\right)
$ with $\operatorname*{set}A=\left\{  \left\{  1,2\right\}  ,\ \left\{
2,3\right\}  \right\}  $. (Note that the two distinct arcs $\left(
1,2\right)  $ and $\left(  2,1\right)  $ of $D$ yield the same edge $\left\{
1,2\right\}  $ of $\underline{D}$.) Note that this digraph $D$ is not
transitive, because the three vertices $u=1$, $v=2$ and $w=1$ satisfy $\left(
u,v\right)  \in A$ and $\left(  v,w\right)  \in A$ but don't satisfy $\left(
u,w\right)  \in A$.

\textbf{(c)} The digraph $D=\left(  V,A\right)  $ with $V=\left\{
1,2,3,4\right\}  $ and $A=\left\{  \left(  1,2\right)  ,\ \left(  2,3\right)
,\ \left(  1,3\right)  ,\ \left(  3,4\right)  \right\}  $ is not transitive,
since the three vertices $u=2$, $v=3$ and $w=4$ satisfy $\left(  u,v\right)
\in A$ and $\left(  v,w\right)  \in A$ but don't satisfy $\left(  u,w\right)
\in A$.

\textbf{(d)} The digraph $D=\left(  V,A\right)  $ with $V=\left\{
1,2,3,4\right\}  $ and $A=\left\{  \left(  1,2\right)  ,\ \left(  2,3\right)
,\ \left(  1,3\right)  ,\ \left(  4,2\right)  ,\ \left(  4,3\right)  \right\}
$ is loopless and transitive. It is not $2$-step-free, since the three
elements $u=1$, $v=2$ and $w=3$ satisfy $\left(  u,v\right)  \in A$ and
$\left(  v,w\right)  \in A$.

\textbf{(e)} The digraph $D=\left(  V,A\right)  $ with $V=\left\{
1,2,3,4\right\}  $ and $A=\left\{  \left(  1,3\right)  ,\ \left(  2,3\right)
,\ \left(  1,4\right)  ,\ \left(  2,4\right)  \right\}  $ is loopless,
transitive and $2$-step-free. (Actually, any $2$-step-free digraph is
transitive, for vacuous reasons.)
\end{example}

\begin{remark}
A transitive loopless digraph cannot have any (directed) cycles. We omit the
easy proof of this fact, as we will not use it in what follows, but it
illuminates some of the arguments below.
\end{remark}

\begin{remark}
A transitive loopless digraph is more or less the same as a poset (i.e.,
partially ordered set). Indeed:

\begin{itemize}
\item If $\left(  V,A\right)  $ is a transitive loopless digraph, then we can
equip the set $V$ with a (strict) partial order $<$ defined by
\[
\left(  u<v\right)  \ \Longleftrightarrow\ \left(  \left(  u,v\right)  \in
A\right)  ,
\]
which turns $V$ into a poset.

\item Conversely, if $V$ is a poset, then we obtain a transitive loopless
digraph $\left(  V,A\right)  $ by setting $A:=\left\{  \left(  u,v\right)  \in
V^{2}\ \mid\ u<v\right\}  $.
\end{itemize}

We find the language of digraphs to be more convenient, but the reader should
be aware of the possibility of restating everything in terms of posets.
\end{remark}

We can now state our application of Corollary \ref{cor.chrompol.K-free},
answering a question suggested by Alexander Postnikov:

\begin{proposition}
\label{prop.digraph.2pf-chrom}Let $D=\left(  V,A\right)  $ be a finite
transitive loopless digraph. Then,%
\[
\chi_{\underline{D}}=\sum_{\substack{F\subseteq A;\\\text{the digraph }\left(
V,F\right)  \text{ is }2\text{-step-free}}}\left(  -1\right)  ^{\left\vert
F\right\vert }x^{\operatorname*{conn}\left(  V,\operatorname*{set}F\right)
}.
\]
(Here, $\operatorname*{set}F$ means the subset $\left\{  \operatorname*{set}%
f\ \mid\ f\in F\right\}  $ of $\dbinom{V}{2}$.)
\end{proposition}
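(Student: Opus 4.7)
The plan is to apply Corollary \ref{cor.chrompol.K-free} to the graph $\underline{D} = (V, \operatorname*{set}A)$ with a carefully chosen labeling function and set of broken circuits, and then re-index the resulting sum via the natural bijection between $A$ and $\operatorname*{set}A$. A first observation is that, because $D$ is transitive and loopless, one cannot have both $(v,w) \in A$ and $(w,v) \in A$ for any $v \neq w$ (otherwise transitivity would force $(v,v) \in A$). Hence $(V,A)$ is actually a strict partial order on $V$, and the map $\operatorname*{set}:A \to \operatorname*{set}A$ is a bijection. Moreover, for any directed $2$-path $u \to v \to w$ in $D$ (i.e., $(u,v), (v,w) \in A$), transitivity gives $(u,w) \in A$, and the three vertices $u, v, w$ are forced to be pairwise distinct.

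Next, I will define a labeling $\ell: \operatorname*{set}A \to \mathbb{N}$ by letting $\ell(\{u,v\})$ be the length of the longest chain in the partial order $(V,A)$ with endpoints $u$ and $v$. If $u \to v \to w$ is a $2$-path in $D$, then any chain from $u$ to $v$ can be extended by appending $w$ to produce a strictly longer chain from $u$ to $w$, and similarly on the other side. Hence $\ell(\{u,w\}) > \ell(\{u,v\})$ and $\ell(\{u,w\}) > \ell(\{v,w\})$. Therefore the triangle
\[
C_{u,v,w} := \bigl\{\{u,v\},\ \{v,w\},\ \{u,w\}\bigr\}
\]
is a circuit of $\underline{D}$ (it is the edge-set of the cycle $(u,v,w,u)$) in which $\{u,w\}$ is the unique edge of maximum label. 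Consequently
\[
K_{u,v,w} := \bigl\{\{u,v\},\ \{v,w\}\bigr\} = C_{u,v,w} \setminus \bigl\{\{u,w\}\bigr\}
\]
is a broken circuit of $\underline{D}$ with respect to $\ell$.

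Let $\mathfrak{K}$ be the set of all such broken circuits $K_{u,v,w}$, as $u \to v \to w$ ranges over all $2$-paths in $D$. A subset $\widetilde{F} \subseteq \operatorname*{set}A$ is then $\mathfrak{K}$-free if and only if it contains no $K_{u,v,w}$ as a subset, which (using the bijectivity of $\operatorname*{set}$) is equivalent to the digraph $(V, \operatorname*{set}^{-1}(\widetilde{F}))$ being $2$-path-free. Applying Corollary \ref{cor.chrompol.K-free} to $\underline{D}$ with this choice of $\ell$ and $\mathfrak{K}$ gives
\[
\chi_{\underline{D}} = \sum_{\substack{\widetilde{F} \subseteq \operatorname*{set}A;\\ \widetilde{F} \text{ is } \mathfrak{K}\text{-free}}} (-1)^{|\widetilde{F}|}\, x^{\operatorname*{conn}(V, \widetilde{F})},
\]
and re-indexing this sum by substituting $\widetilde{F} = \operatorname*{set}F$ along the bijection $\operatorname*{set}: A \to \operatorname*{set}A$ (which preserves cardinalities) yields precisely the claimed identity.

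The main step requiring genuine verification is the construction of the labeling $\ell$ and the consequent identification of each $K_{u,v,w}$ as a bona fide broken circuit of $\underline{D}$; once this is done, the remainder is straightforward bookkeeping around Corollary \ref{cor.chrompol.K-free} and the bijection $\operatorname*{set}$. I do not foresee any deeper obstacle.
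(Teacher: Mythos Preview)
Your proof is correct and follows essentially the same approach as the paper: both arguments observe that $\operatorname*{set}$ is a bijection (since transitivity plus looplessness makes $A$ a strict partial order), choose a labeling on $\operatorname*{set}A$ under which the ``long'' edge $\{u,w\}$ of each transitive triangle has strictly larger label than the two short edges, take $\mathfrak{K}$ to consist of the resulting two-edge broken circuits, and then apply Corollary~\ref{cor.chrompol.K-free} and re-index via $\operatorname*{set}$. The only cosmetic difference is the choice of labeling---you use the longest-chain length between the endpoints, while the paper uses the size of the interval $[i,j]$ (or, in the detailed version, the number of elements strictly between)---but all of these serve the identical purpose of making $\{u,w\}$ the unique edge of maximum label in the triangle $\{u,v,w\}$.
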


Note that the graph $\left(  V,\operatorname*{set}F\right)  $ in Proposition
\ref{prop.digraph.2pf-chrom} can also be rewritten as $\underline{\left(
V,F\right)  }$.

\begin{vershort}
\begin{proof}
[Proof of Proposition \ref{prop.digraph.2pf-chrom}.]Let $E=\operatorname*{set}%
A$. Then, the definition of $\underline{D}$ yields $\underline{D}=\left(
V,\underbrace{\operatorname*{set}A}_{=E}\right)  =\left(  V,E\right)  $.

The map $\operatorname*{set}:A\rightarrow\dbinom{V}{2}$ (which sends every arc
$\left(  v,w\right)  \in A$ to $\left\{  v,w\right\}  \in\dbinom{V}{2}$)
restricts to a surjection $A\rightarrow E$ (since $E=\operatorname*{set}A$).
Let us denote this surjection by $\pi$. Thus, $\pi$ is a map from $A$ to $E$
sending each arc $\left(  v,w\right)  \in A$ to $\left\{  v,w\right\}  \in E$.
We shall soon see that $\pi$ is a bijection.

We define a partial order on the set $V$ as follows: For $i\in V$ and $j\in
V$, we set $i<j$ if and only if $\left(  i,j\right)  \in A$ (that is, if and
only if there is an arc from $i$ to $j$ in $D$). This is a well-defined strict
partial order\footnote{Indeed, the relation $<$ that we have just defined is
transitive (since the digraph $\left(  V,A\right)  $ is transitive) and
irreflexive (since the digraph $\left(  V,A\right)  $ is loopless). But any
such relation is a strict partial order.}. Thus, $V$ becomes a poset. For
every $i\in V$ and $j\in V$ satisfying $i\leq j$, we let $\left[  i,j\right]
$ denote the interval $\left\{  k\in V\ \mid\ i\leq k\leq j\right\}  $ of the
poset $V$.

There exist no $i,j\in V$ such that both $\left(  i,j\right)  $ and $\left(
j,i\right)  $ belong to $A$ (because if such $i$ and $j$ would exist, then
they would satisfy $i<j$ and $j<i$, but this would contradict the fact that
$V$ is a poset). Hence, the projection $\pi:A\rightarrow E$ is injective, and
thus bijective (since we already know that $\pi$ is surjective). Hence, its
inverse map $\pi^{-1}:E\rightarrow A$ is well-defined. For every subset $F$ of
$E$, we have%
\begin{align}
F  &  =\pi\left(  \pi^{-1}\left(  F\right)  \right)
\ \ \ \ \ \ \ \ \ \ \left(  \text{since }\pi\text{ is bijective}\right)
\nonumber\\
&  =\operatorname*{set}\left(  \pi^{-1}\left(  F\right)  \right)
\label{pf.prop.digraph.2pf-chrom.short.F=set}%
\end{align}
(since $\pi$ is a restriction of the map $\operatorname*{set}$).

For any $\left(  u,v\right)  \in A$ and any subset $F$ of $E$, we have the
following logical equivalence:%
\begin{equation}
\left(  \left\{  u,v\right\}  \in F\right)  \ \Longleftrightarrow\ \left(
\left(  u,v\right)  \in\pi^{-1}\left(  F\right)  \right)
\label{pf.prop.digraph.2pf-chrom.short.equiv}%
\end{equation}
\footnote{\textit{Proof of (\ref{pf.prop.digraph.2pf-chrom.short.equiv}):} Let
$\left(  u,v\right)  \in A$, and let $F$ be a subset of $E$. We need to prove
the equivalence (\ref{pf.prop.digraph.2pf-chrom.short.equiv}).
\par
From $\left(  u,v\right)  \in A$, we see that $\pi\left(  u,v\right)  $ is
well-defined. The definition of $\pi$ shows that $\pi\left(  u,v\right)
=\left\{  u,v\right\}  $. Hence, we have the following chain of equivalences:%
\[
\left(  \underbrace{\left\{  u,v\right\}  }_{=\pi\left(  u,v\right)  }\in
F\right)  \ \Longleftrightarrow\ \left(  \pi\left(  u,v\right)  \in F\right)
\ \Longleftrightarrow\ \left(  \left(  u,v\right)  \in\pi^{-1}\left(
F\right)  \right)  .
\]
This proves (\ref{pf.prop.digraph.2pf-chrom.short.equiv}).}.

Define a function $\ell^{\prime}:A\rightarrow\mathbb{N}$ by%
\[
\ell^{\prime}\left(  i,j\right)  =\left\vert \left[  i,j\right]  \right\vert
\ \ \ \ \ \ \ \ \ \ \text{for all }\left(  i,j\right)  \in A.
\]
Define a labeling function $\ell:E\rightarrow\mathbb{N}$ by $\ell=\ell
^{\prime}\circ\pi^{-1}$. Thus, $\ell\circ\pi=\ell^{\prime}$. Therefore,%
\begin{equation}
\ell\left(  \underbrace{\left\{  i,j\right\}  }_{=\pi\left(  i,j\right)
}\right)  =\underbrace{\left(  \ell\circ\pi\right)  }_{=\ell^{\prime}}\left(
i,j\right)  =\ell^{\prime}\left(  i,j\right)  =\left\vert \left[  i,j\right]
\right\vert \label{pf.prop.digraph.2pf-chrom.short.l}%
\end{equation}
for all $\left(  i,j\right)  \in A$.

Let $\mathfrak{K}$ be the set
\[
\left\{  \left\{  \left\{  i,k\right\}  ,\left\{  k,j\right\}  \right\}
\ \mid\ \left(  i,k\right)  \in A\text{ and }\left(  k,j\right)  \in
A\right\}  .
\]
Each $K\in\mathfrak{K}$ is a broken circuit of $\underline{D}$%
\ \ \ \ \footnote{\textit{Proof.} Let $K\in\mathfrak{K}$. Then, $K=\left\{
\left\{  i,k\right\}  ,\left\{  k,j\right\}  \right\}  $ for some $\left(
i,k\right)  \in A$ and $\left(  k,j\right)  \in A$ (by the definition of
$\mathfrak{K}$). Consider these $\left(  i,k\right)  $ and $\left(
k,j\right)  $. Since $\left(  V,A\right)  $ is transitive, we have $\left(
i,j\right)  \in A$. Thus, $\left\{  i,k\right\}  $, $\left\{  k,j\right\}  $
and $\left\{  i,j\right\}  $ are edges of $\underline{D}$. These edges form a
circuit of $\underline{D}$. In particular, $i$, $j$ and $k$ are pairwise
distinct (since $D$ is loopless).
\par
Applications of (\ref{pf.prop.digraph.2pf-chrom.short.l}) yield $\ell\left(
\left\{  i,j\right\}  \right)  =\left\vert \left[  i,j\right]  \right\vert $,
$\ell\left(  \left\{  i,k\right\}  \right)  =\left\vert \left[  i,k\right]
\right\vert $ and $\ell\left(  \left\{  k,j\right\}  \right)  =\left\vert
\left[  k,j\right]  \right\vert $.
\par
But we have $i<k$ (since $\left(  i,k\right)  \in A$) and $k<j$ (since
$\left(  k,j\right)  \in A$). Hence, $\left[  i,k\right]  $ is a proper subset
of $\left[  i,j\right]  $. (It is proper because it does not contain $j$,
whereas $\left[  i,j\right]  $ does.) Hence, $\left\vert \left[  i,k\right]
\right\vert <\left\vert \left[  i,j\right]  \right\vert $. Thus, $\ell\left(
\left\{  i,j\right\}  \right)  =\left\vert \left[  i,j\right]  \right\vert
>\left\vert \left[  i,k\right]  \right\vert =\ell\left(  \left\{  i,k\right\}
\right)  $. Similarly, $\ell\left(  \left\{  i,j\right\}  \right)
>\ell\left(  \left\{  k,j\right\}  \right)  $. The last two inequalities show
that $\left\{  i,j\right\}  $ is the unique edge of the circuit $\left\{
\left\{  i,k\right\}  ,\left\{  k,j\right\}  ,\left\{  i,j\right\}  \right\}
$ having maximum label. Hence, $\left\{  \left\{  i,k\right\}  ,\left\{
k,j\right\}  ,\left\{  i,j\right\}  \right\}  \setminus\left\{  \left\{
i,j\right\}  \right\}  $ is a broken circuit of $\underline{D}$. Since%
\begin{align*}
\left\{  \left\{  i,k\right\}  ,\left\{  k,j\right\}  ,\left\{  i,j\right\}
\right\}  \setminus\left\{  \left\{  i,j\right\}  \right\}   &  =\left\{
\left\{  i,k\right\}  ,\left\{  k,j\right\}  \right\}
\ \ \ \ \ \ \ \ \ \ \left(  \text{since }i\text{, }j\text{ and }k\text{ are
pairwise distinct}\right) \\
&  =K,
\end{align*}
this shows that $K$ is a broken circuit of $\underline{D}$, qed.}. Thus,
$\mathfrak{K}$ is a set of broken circuits of $\underline{D}$.

A subset $F$ of $E$ is $\mathfrak{K}$-free if and only if the digraph $\left(
V,\pi^{-1}\left(  F\right)  \right)  $ is $2$%
-step-free\footnote{\textit{Proof.} Let $F$ be a subset of $E$. Then, we have
the following equivalence of statements:%
\begin{align*}
&  \ \left(  F\text{ is }\mathfrak{K}\text{-free}\right) \\
&  \Longleftrightarrow\ \left(  \left\{  \left\{  i,k\right\}  ,\left\{
k,j\right\}  \right\}  \not \subseteq F\text{ whenever }\left(  i,k\right)
\in A\text{ and }\left(  k,j\right)  \in A\right) \\
&  \ \ \ \ \ \ \ \ \ \ \left(  \text{by the definition of }\mathfrak{K}\right)
\\
&  \Longleftrightarrow\ \left(  \text{no }\left(  i,k\right)  \in A\text{ and
}\left(  k,j\right)  \in A\text{ satisfy }\left\{  \left\{  i,k\right\}
,\left\{  k,j\right\}  \right\}  \subseteq F\right) \\
&  \Longleftrightarrow\ \left(  \text{no }\left(  i,k\right)  \in A\text{ and
}\left(  k,j\right)  \in A\text{ satisfy }\left\{  i,k\right\}  \in F\text{
and }\left\{  k,j\right\}  \in F\right) \\
&  \Longleftrightarrow\ \left(  \text{no }\left(  i,k\right)  \in A\text{ and
}\left(  k,j\right)  \in A\text{ satisfy }\left(  i,k\right)  \in\pi
^{-1}\left(  F\right)  \text{ and }\left\{  k,j\right\}  \in F\right) \\
&  \ \ \ \ \ \ \ \ \ \ \left(
\begin{array}
[c]{c}%
\text{because for }\left(  i,k\right)  \in A\text{, we have }\left\{
i,k\right\}  \in F\text{ if and only if }\left(  i,k\right)  \in\pi
^{-1}\left(  F\right) \\
\text{(by (\ref{pf.prop.digraph.2pf-chrom.short.equiv}), applied to }u=i\text{
and }v=k\text{)}%
\end{array}
\right) \\
&  \Longleftrightarrow\ \left(  \text{no }\left(  i,k\right)  \in A\text{ and
}\left(  k,j\right)  \in A\text{ satisfy }\left(  i,k\right)  \in\pi
^{-1}\left(  F\right)  \text{ and }\left(  k,j\right)  \in\pi^{-1}\left(
F\right)  \right) \\
&  \ \ \ \ \ \ \ \ \ \ \left(
\begin{array}
[c]{c}%
\text{because for }\left(  k,j\right)  \in A\text{, we have }\left\{
k,j\right\}  \in F\text{ if and only if }\left(  k,j\right)  \in\pi
^{-1}\left(  F\right) \\
\text{(by (\ref{pf.prop.digraph.2pf-chrom.short.equiv}), applied to }u=k\text{
and }v=j\text{)}%
\end{array}
\right) \\
&  \Longleftrightarrow\ \left(  \text{the digraph }\left(  V,\pi^{-1}\left(
F\right)  \right)  \text{ is }2\text{-step-free}\right)
\ \ \ \ \ \ \ \ \ \ \left(  \text{by the definition of \textquotedblleft%
}2\text{-step-free\textquotedblright}\right)  ,
\end{align*}
qed.}. Now, Corollary \ref{cor.chrompol.K-free} (applied to $X=\mathbb{N}$ and
$G=\underline{D}$) shows that%
\begin{align*}
\chi_{\underline{D}}  &  =\underbrace{\sum_{\substack{F\subseteq E;\\F\text{
is }\mathfrak{K}\text{-free}}}}_{\substack{=\sum_{\substack{F\subseteq
E;\\\text{the digraph }\left(  V,\pi^{-1}\left(  F\right)  \right)  \text{ is
}2\text{-step-free}}}\\\text{(since we have just shown that}\\\text{a subset
}F\text{ of }E\text{ is }\mathfrak{K}\text{-free if and only if}\\\text{the
digraph }\left(  V,\pi^{-1}\left(  F\right)  \right)  \text{ is }%
2\text{-step-free)}}}\underbrace{\left(  -1\right)  ^{\left\vert F\right\vert
}}_{\substack{=\left(  -1\right)  ^{\left\vert \pi^{-1}\left(  F\right)
\right\vert }\\\text{(since }\pi\text{ is bijective)}}%
}\underbrace{x^{\operatorname*{conn}\left(  V,F\right)  }}%
_{\substack{=x^{\operatorname*{conn}\left(  V,\operatorname*{set}\left(
\pi^{-1}\left(  F\right)  \right)  \right)  }\\\text{(by
(\ref{pf.prop.digraph.2pf-chrom.short.F=set}))}}}\\
&  =\sum_{\substack{F\subseteq E;\\\text{the digraph }\left(  V,\pi
^{-1}\left(  F\right)  \right)  \text{ is }2\text{-step-free}}}\left(
-1\right)  ^{\left\vert \pi^{-1}\left(  F\right)  \right\vert }%
x^{\operatorname*{conn}\left(  V,\operatorname*{set}\left(  \pi^{-1}\left(
F\right)  \right)  \right)  }\\
&  =\sum_{\substack{B\subseteq A;\\\text{the digraph }\left(  V,B\right)
\text{ is }2\text{-step-free}}}\left(  -1\right)  ^{\left\vert B\right\vert
}x^{\operatorname*{conn}\left(  V,\operatorname*{set}B\right)  }\\
&  \ \ \ \ \ \ \ \ \ \ \left(
\begin{array}
[c]{c}%
\text{here, we have substituted }B\text{ for }\pi^{-1}\left(  F\right)  \text{
in the sum,}\\
\text{since the map }\pi:A\rightarrow E\text{ is bijective and thus induces}\\
\text{a bijection from the subsets of }E\text{ to the subsets of }A\\
\text{sending each }F\subseteq E\text{ to }\pi^{-1}\left(  F\right)
\end{array}
\right) \\
&  =\sum_{\substack{F\subseteq A;\\\text{the digraph }\left(  V,F\right)
\text{ is }2\text{-step-free}}}\left(  -1\right)  ^{\left\vert F\right\vert
}x^{\operatorname*{conn}\left(  V,\operatorname*{set}F\right)  }%
\end{align*}
(here, we have renamed the summation index $B$ as $F$). This proves
Proposition \ref{prop.digraph.2pf-chrom}.
\end{proof}
\end{vershort}

\begin{verlong}
We prepare for the proof of this proposition by stating two simple lemmas:

\begin{lemma}
\label{lem.digraph.2pf-chrom.3cyc}Let $G=\left(  V,E\right)  $ be a graph. Let
$u$, $v$ and $w$ be three elements of $V$ such that $\left\{  u,v\right\}  \in
E$, $\left\{  v,w\right\}  \in E$ and $\left\{  u,w\right\}  \in E$. Let $C$
be the set $\left\{  \left\{  u,v\right\}  ,\left\{  v,w\right\}  ,\left\{
u,w\right\}  \right\}  $. Then, the set $C$ is a circuit of $G$ and satisfies
$C\setminus\left\{  \left\{  u,w\right\}  \right\}  =\left\{  \left\{
u,v\right\}  ,\left\{  v,w\right\}  \right\}  $.
\end{lemma}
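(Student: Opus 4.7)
The plan is to construct an explicit cycle $(u, v, w, u)$ of $G$ whose associated set of edges is exactly $C$, and then verify the set-theoretic identity $C \setminus \{\{u,w\}\} = \{\{u,v\},\{v,w\}\}$ by checking distinctness.

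First, I would observe that the hypotheses $\{u,v\} \in E$, $\{v,w\} \in E$ and $\{u,w\} \in E$ force the three vertices $u$, $v$, $w$ to be pairwise distinct. Indeed, every element of $E$ is a subset of $\dbinom{V}{2}$, hence a $2$-element subset of $V$; so the two elements listed in each of these edges must differ, giving $u \neq v$, $v \neq w$ and $u \neq w$.

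Second, I would apply Definition \ref{def.cycle} to the list $(v_1, v_2, v_3, v_4) := (u, v, w, u)$ with $m = 3$. The four defining properties of a cycle are immediate: $m = 3 > 1$; $v_{m+1} = v_4 = u = v_1$; the vertices $v_1, v_2, v_3 = u, v, w$ are pairwise distinct by the previous step; and the edges $\{v_1, v_2\} = \{u,v\}$, $\{v_2, v_3\} = \{v,w\}$, $\{v_3, v_4\} = \{w,u\} = \{u,w\}$ all lie in $E$ by hypothesis. Hence $(u, v, w, u)$ is a cycle of $G$, and its associated circuit (as defined in Definition \ref{def.cycle}) is $\{\{v_1, v_2\}, \{v_2, v_3\}, \{v_3, v_4\}\} = \{\{u,v\}, \{v,w\}, \{u,w\}\} = C$.

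Finally, for the set difference I would use that $\{u,w\} \neq \{u,v\}$ (otherwise $w = v$, contradicting $v \neq w$) and $\{u,w\} \neq \{v,w\}$ (otherwise $u = v$, contradicting $u \neq v$). Therefore removing $\{u,w\}$ from $C$ leaves precisely $\{\{u,v\},\{v,w\}\}$. The only place where anything could go wrong is the pairwise distinctness of $u, v, w$; once that is in hand, the lemma is a mechanical application of Definition \ref{def.cycle}.
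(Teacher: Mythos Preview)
Your proposal is correct and follows essentially the same approach as the paper: both establish pairwise distinctness of $u$, $v$, $w$ from the fact that edges are $2$-element subsets, then verify that $(u,v,w,u)$ satisfies the four defining properties of a cycle in Definition~\ref{def.cycle}, and finally check that $\{u,w\}$ differs from both $\{u,v\}$ and $\{v,w\}$ to obtain the set difference.
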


\begin{proof}
[Proof of Lemma \ref{lem.digraph.2pf-chrom.3cyc}.]We have $E\subseteq
\dbinom{V}{2}$ (since $\left(  V,E\right)  $ is a graph). For any $a\in V$ and
$b\in V$ satisfying $\left\{  a,b\right\}  \in E$, we have%
\begin{equation}
a\neq b \label{pf.lem.digraph.2pf-chrom.3cyc.1}%
\end{equation}
\footnote{\textit{Proof of (\ref{pf.lem.digraph.2pf-chrom.3cyc.1}):} Let $a\in
V$ and $b\in V$ be such that $\left\{  a,b\right\}  \in E$. We need to prove
that $a\neq b$.
\par
Assume the contrary (for the sake of contradiction). Thus, $a=b$. Hence,
$\left\{  a,b\right\}  =\left\{  b,b\right\}  =\left\{  b\right\}  $, so that
$\left\vert \left\{  a,b\right\}  \right\vert =\left\vert \left\{  b\right\}
\right\vert =1$.
\par
But $\left\{  a,b\right\}  \in E\subseteq\dbinom{V}{2}$. In other words,
$\left\{  a,b\right\}  $ is a $2$-element subset of $V$ (since $\dbinom{V}{2}$
is the set of all $2$-element subsets of $V$). Thus, $\left\vert \left\{
a,b\right\}  \right\vert =2$. This contradicts $\left\vert \left\{
a,b\right\}  \right\vert =1<2$. This contradiction proves that our assumption
was wrong. Hence, $a\neq b$ holds. This proves
(\ref{pf.lem.digraph.2pf-chrom.3cyc.1}).}.

Now, (\ref{pf.lem.digraph.2pf-chrom.3cyc.1}) (applied to $a=u$ and $b=v$)
yields $u\neq v$ (since $\left\{  u,v\right\}  \in E$). Also,
(\ref{pf.lem.digraph.2pf-chrom.3cyc.1}) (applied to $a=v$ and $b=w$) yields
$v\neq w$ (since $\left\{  v,w\right\}  \in E$). Also,
(\ref{pf.lem.digraph.2pf-chrom.3cyc.1}) (applied to $a=u$ and $b=w$) yields
$u\neq w$ (since $\left\{  u,w\right\}  \in E$).

Now, set $m=3$. Thus, $m+1=4$. Now, $u$, $v$, $w$ and $u$ are elements of $V$.
Hence, $\left(  u,v,w,u\right)  \in V^{4}=V^{m+1}$ (since $4=m+1$). In other
words, $\left(  u,v,w,u\right)  $ is an $\left(  m+1\right)  $-tuple of
elements of $V$. Denote this $\left(  m+1\right)  $-tuple by $\left(
v_{1},v_{2},\ldots,v_{m+1}\right)  $. Hence, $\left(  v_{1},v_{2}%
,\ldots,v_{m+1}\right)  =\left(  u,v,w,u\right)  $. Therefore, $v_{1}=u$,
$v_{2}=v$, $v_{3}=w$ and $v_{4}=u$. Now, we have $m=3>2>1$ and%
\begin{align*}
v_{m+1}  &  =v_{4}\ \ \ \ \ \ \ \ \ \ \left(  \text{since }m+1=4\right) \\
&  =u=v_{1}.
\end{align*}
Also, the vertices $v_{1},v_{2},\ldots,v_{m}$ are pairwise
distinct\footnote{\textit{Proof.} We have $m=3$, and thus $\left(  v_{1}%
,v_{2},\ldots,v_{m}\right)  =\left(  \underbrace{v_{1}}_{=u},\underbrace{v_{2}%
}_{=v},\underbrace{v_{3}}_{=w}\right)  =\left(  u,v,w\right)  $.
\par
The vertices $u,v,w$ are pairwise distinct (since $u\neq v$, $u\neq w$ and
$v\neq w$). Since $\left(  u,v,w\right)  =\left(  v_{1},v_{2},\ldots
,v_{m}\right)  $, this rewrites as follows: The vertices $v_{1},v_{2}%
,\ldots,v_{m}$ are pairwise distinct. Qed.}. Finally, we have $\left\{
v_{i},v_{i+1}\right\}  \in E$ for every $i\in\left\{  1,2,\ldots,m\right\}
$\ \ \ \ \footnote{\textit{Proof.} Let $i\in\left\{  1,2,\ldots,m\right\}  $.
We want to prove that $\left\{  v_{i},v_{i+1}\right\}  \in E$. We have%
\begin{align*}
i  &  \in\left\{  1,2,\ldots,m\right\}  =\left\{  1,2,\ldots,3\right\}
\ \ \ \ \ \ \ \ \ \ \left(  \text{since }m=3\right) \\
&  =\left\{  1,2,3\right\}  .
\end{align*}
In other words, $i=1$ or $i=2$ or $i=3$. We are thus in one of the following
three cases:
\par
\textit{Case 1:} We have $i=1$.
\par
\textit{Case 2:} We have $i=2$.
\par
\textit{Case 3:} We have $i=3$.
\par
Let us first consider Case 1. In this case, we have $i=1$. Hence, $v_{i}%
=v_{1}=u$ and $v_{i+1}=v_{1+1}=v_{2}=v$. Now, $\left\{  \underbrace{v_{i}%
}_{=u},\underbrace{v_{i+1}}_{=v}\right\}  =\left\{  u,v\right\}  \in E$. Thus,
$\left\{  v_{i},v_{i+1}\right\}  \in E$ holds in Case 1.
\par
Let us next consider Case 2. In this case, we have $i=2$. Hence, $v_{i}%
=v_{2}=v$ and $v_{i+1}=v_{2+1}=v_{3}=w$. Now, $\left\{  \underbrace{v_{i}%
}_{=v},\underbrace{v_{i+1}}_{=w}\right\}  =\left\{  v,w\right\}  \in E$. Thus,
$\left\{  v_{i},v_{i+1}\right\}  \in E$ holds in Case 2.
\par
Let us finally consider Case 3. In this case, we have $i=3$. Hence,
$v_{i}=v_{3}=w$ and $v_{i+1}=v_{3+1}=v_{4}=u$. Now, $\left\{
\underbrace{v_{i}}_{=w},\underbrace{v_{i+1}}_{=u}\right\}  =\left\{
w,u\right\}  =\left\{  u,w\right\}  \in E$. Thus, $\left\{  v_{i}%
,v_{i+1}\right\}  \in E$ holds in Case 3.
\par
We have now proven $\left\{  v_{i},v_{i+1}\right\}  \in E$ in each of the
three Cases 1, 2 and 3. Thus, $\left\{  v_{i},v_{i+1}\right\}  \in E$ always
holds (since the Cases 1, 2 and 3 cover all possibilities). This completes our
proof.}.

Altogether, we now know that $\left(  v_{1},v_{2},\ldots,v_{m+1}\right)  $ is
a list of elements of $V$ satisfying the following four properties:

\begin{itemize}
\item We have $m>2$.

\item We have $v_{m+1}=v_{1}$.

\item The vertices $v_{1},v_{2},\ldots,v_{m}$ are pairwise distinct.

\item We have $\left\{  v_{i},v_{i+1}\right\}  \in E$ for every $i\in\left\{
1,2,\ldots,m\right\}  $.
\end{itemize}

According to the definition of a \textquotedblleft cycle\textquotedblright,
this means that the list $\left(  v_{1},v_{2},\ldots,v_{m+1}\right)  $ is a
cycle of $\left(  V,E\right)  $.

Thus, we conclude that the list $\left(  v_{1},v_{2},\ldots,v_{m+1}\right)  $
is a cycle of $\left(  V,E\right)  $. In other words, the list $\left(
v_{1},v_{2},\ldots,v_{m+1}\right)  $ is a cycle of $G$ (since $G=\left(
V,E\right)  $). Thus, the set $\left\{  \left\{  v_{1},v_{2}\right\}
,\left\{  v_{2},v_{3}\right\}  ,\ldots,\left\{  v_{m},v_{m+1}\right\}
\right\}  $ is a circuit of $G$ (by the definition of a \textquotedblleft
circuit\textquotedblright). Since%
\begin{align*}
&  \left\{  \left\{  v_{1},v_{2}\right\}  ,\left\{  v_{2},v_{3}\right\}
,\ldots,\left\{  v_{m},v_{m+1}\right\}  \right\} \\
&  =\left\{  \left\{  v_{1},v_{2}\right\}  ,\left\{  v_{2},v_{3}\right\}
,\ldots,\left\{  v_{3},v_{3+1}\right\}  \right\}  \ \ \ \ \ \ \ \ \ \ \left(
\text{since }m=3\right) \\
&  =\left\{  \left\{  \underbrace{v_{1}}_{=u},\underbrace{v_{2}}_{=v}\right\}
,\left\{  \underbrace{v_{2}}_{=v},\underbrace{v_{3}}_{=w}\right\}  ,\left\{
\underbrace{v_{3}}_{=w},\underbrace{v_{4}}_{=u}\right\}  \right\} \\
&  =\left\{  \left\{  u,v\right\}  ,\left\{  v,w\right\}
,\underbrace{\left\{  w,u\right\}  }_{=\left\{  u,w\right\}  }\right\}
=\left\{  \left\{  u,v\right\}  ,\left\{  v,w\right\}  ,\left\{  u,w\right\}
\right\}  =C
\end{align*}
(because $C=\left\{  \left\{  u,v\right\}  ,\left\{  v,w\right\}  ,\left\{
u,w\right\}  \right\}  $), this rewrites as follows: The set $C$ is a circuit
of $G$.

It remains to prove that $C\setminus\left\{  \left\{  u,w\right\}  \right\}
=\left\{  \left\{  u,v\right\}  ,\left\{  v,w\right\}  \right\}  $. Indeed, we
have $\left\{  u,w\right\}  \notin\left\{  \left\{  u,v\right\}  ,\left\{
v,w\right\}  \right\}  $\ \ \ \ \footnote{\textit{Proof.} Assume the contrary.
Thus, $\left\{  u,w\right\}  \in\left\{  \left\{  u,v\right\}  ,\left\{
v,w\right\}  \right\}  $. In other words, $\left\{  u,w\right\}  $ equals
either $\left\{  u,v\right\}  $ or $\left\{  v,w\right\}  $. In other words,
we must be in one of the following two cases:
\par
\textit{Case 1:} We have $\left\{  u,w\right\}  =\left\{  u,v\right\}  $.
\par
\textit{Case 2:} We have $\left\{  u,w\right\}  =\left\{  v,w\right\}  $.
\par
Let us first consider Case 1. In this case, we have $\left\{  u,w\right\}
=\left\{  u,v\right\}  $. Hence, $w\in\left\{  u,w\right\}  =\left\{
u,v\right\}  $. Combining this with $w\neq u$ (since $u\neq w$), we obtain
$w\in\left\{  u,v\right\}  \setminus\left\{  u\right\}  \subseteq\left\{
v\right\}  $. In other words, $w=v$. Hence, $v=w$. But this contradicts $v\neq
w$. Thus, we have found a contradiction in Case 1.
\par
Let us now consider Case 2. In this case, we have $\left\{  u,w\right\}
=\left\{  v,w\right\}  $. Hence, $u\in\left\{  u,w\right\}  =\left\{
v,w\right\}  $. Combining this with $u\neq v$, we obtain $u\in\left\{
v,w\right\}  \setminus\left\{  v\right\}  \subseteq\left\{  w\right\}  $. In
other words, $u=w$. This contradicts $u\neq w$. Thus, we have found a
contradiction in Case 2.
\par
We have now found a contradiction in each of the two Cases 1 and 2. Since
these two Cases cover all possibilities, we thus always have a contradiction.
This contradiction shows that our assumption was wrong, qed.}. Now,%
\[
C=\left\{  \left\{  u,v\right\}  ,\left\{  v,w\right\}  ,\left\{  u,w\right\}
\right\}  =\left\{  \left\{  u,v\right\}  ,\left\{  v,w\right\}  \right\}
\cup\left\{  \left\{  u,w\right\}  \right\}  .
\]
Hence,%
\begin{align*}
&  \underbrace{C}_{=\left\{  \left\{  u,v\right\}  ,\left\{  v,w\right\}
\right\}  \cup\left\{  \left\{  u,w\right\}  \right\}  }\setminus\left\{
\left\{  u,w\right\}  \right\} \\
&  =\left(  \left\{  \left\{  u,v\right\}  ,\left\{  v,w\right\}  \right\}
\cup\left\{  \left\{  u,w\right\}  \right\}  \right)  \setminus\left\{
\left\{  u,w\right\}  \right\} \\
&  =\left\{  \left\{  u,v\right\}  ,\left\{  v,w\right\}  \right\}
\ \ \ \ \ \ \ \ \ \ \left(  \text{since }\left\{  u,w\right\}  \notin\left\{
\left\{  u,v\right\}  ,\left\{  v,w\right\}  \right\}  \right)  .
\end{align*}
This completes the proof of Lemma \ref{lem.digraph.2pf-chrom.3cyc}.
\end{proof}

\begin{lemma}
\label{lem.digraph.2pf-chrom.K-free}Let $D=\left(  V,A\right)  $ be a finite
transitive loopless digraph. Let $E=\operatorname*{set}A$. Every $a\in A$
satisfies $\operatorname*{set}\underbrace{a}_{\in A}\in\operatorname*{set}%
A=E$. Hence, we can define a map $\pi:A\rightarrow E$ by%
\[
\left(  \pi\left(  a\right)  =\operatorname*{set}%
a\ \ \ \ \ \ \ \ \ \ \text{for every }a\in A\right)  .
\]
Consider this map $\pi$.

\textbf{(a)} The map $\pi:A\rightarrow E$ is bijective. Thus, an inverse map
$\pi^{-1}:E\rightarrow A$ is well-defined.

\textbf{(b)} Let $Z$ be the set $\left\{  \left(  i,k,j\right)  \in
V^{3}\ \mid\ \left(  i,k\right)  \in A\text{ and }\left(  k,j\right)  \in
A\right\}  $. Define a set $\mathfrak{K}$ by
\[
\mathfrak{K}=\left\{  \left\{  \left\{  i,k\right\}  ,\left\{  k,j\right\}
\right\}  \ \mid\ \left(  i,k,j\right)  \in Z\right\}  .
\]
Then, $\mathfrak{K}\subseteq\mathcal{P}\left(  E\right)  $.

\textbf{(c)} Let $G$ be the graph $\left(  V,E\right)  $. For every $\left(
i,j\right)  \in V^{2}$, define a subset $\operatorname*{inter}\left(
i,j\right)  $ of $V$ by%
\[
\operatorname*{inter}\left(  i,j\right)  =\left\{  k\in V\ \mid\ \left(
i,k\right)  \in A\text{ and }\left(  k,j\right)  \in A\right\}  .
\]
Define a map $\ell^{\prime}:A\rightarrow\mathbb{N}$ by setting%
\[
\left(  \ell^{\prime}\left(  i,j\right)  =\left\vert \operatorname*{inter}%
\left(  i,j\right)  \right\vert \ \ \ \ \ \ \ \ \ \ \text{for every }\left(
i,j\right)  \in A\right)  .
\]
Recall that a map $\pi^{-1}:E\rightarrow A$ is well-defined (by Lemma
\ref{lem.digraph.2pf-chrom.K-free} \textbf{(a)}). Define a map $\ell
:E\rightarrow\mathbb{N}$ by $\ell=\ell^{\prime}\circ\pi^{-1}$. We shall refer
to $\ell$ as the \emph{labeling function}. For every edge $e$ of $G$, we shall
refer to $\ell\left(  e\right)  $ as the \emph{label} of $e$. If $u$, $v$ and
$w$ are three elements of $V$ satisfying $\left(  u,v\right)  \in A$ and
$\left(  v,w\right)  \in A$, then%
\begin{align}
\left\{  u,w\right\}   &  \in E,\ \ \ \ \ \ \ \ \ \ \left\{  u,v\right\}  \in
E,\ \ \ \ \ \ \ \ \ \ \left\{  v,w\right\}  \in
E,\label{eq.lem.digraph.2pf-chrom.K-free.c.1}\\
\ell\left(  \left\{  u,w\right\}  \right)   &  >\ell\left(  \left\{
u,v\right\}  \right)  \ \ \ \ \ \ \ \ \ \ \text{and}\ \ \ \ \ \ \ \ \ \ \ell
\left(  \left\{  u,w\right\}  \right)  >\ell\left(  \left\{  v,w\right\}
\right)  . \label{eq.lem.digraph.2pf-chrom.K-free.c.2}%
\end{align}

\textbf{(d)} Consider the set $\mathfrak{K}$ defined in Lemma
\ref{lem.digraph.2pf-chrom.K-free} \textbf{(b)}. Consider the labeling
function $\ell$ defined in Lemma \ref{lem.digraph.2pf-chrom.K-free}
\textbf{(c)}. Definition \ref{def.BC} (applied to $X=\mathbb{N}$) shows that
the notion of a broken circuit of $G$ is well-defined (since a labeling
function $\ell:E\rightarrow\mathbb{N}$ is given).

Every element of $\mathfrak{K}$ is a broken circuit of $G$.

\textbf{(e)} Consider the set $\mathfrak{K}$ defined in Lemma
\ref{lem.digraph.2pf-chrom.K-free} \textbf{(b)}. Let $F$ be a subset of $A$.
Then, we have the following logical equivalence:%
\[
\ \left(  \text{the digraph }\left(  V,F\right)  \text{ is }2\text{-step-free}%
\right)  \ \Longleftrightarrow\ \left(  \text{the set }\pi\left(  F\right)
\text{ is }\mathfrak{K}\text{-free}\right)  .
\]

\end{lemma}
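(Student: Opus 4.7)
The plan is to prove the equivalence by contraposition in both directions, treating it essentially as a translation along the bijection $\pi$ established in part \textbf{(a)}. The one observation I would first state and use throughout is the following: since $\pi: A \to E$ is bijective and $\pi(u,v) = \{u,v\}$, for any arc $(u,v) \in A$ the condition $(u,v) \in F$ is equivalent to $\{u,v\} \in \pi(F)$. This is immediate from $\pi$ restricting to a bijection between the subsets of $A$ and those of $E$.

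First I would handle the direction ``$\pi(F)$ is $\mathfrak{K}$-free $\Rightarrow$ $(V,F)$ is $2$-path-free'' by contraposition. Starting from three vertices $u,v,w \in V$ with $(u,v), (v,w) \in F \subseteq A$, the triple $(u,v,w)$ satisfies $(u,v) \in A$ and $(v,w) \in A$, hence lies in $Z$; the definition of $\mathfrak{K}$ then gives $\{\{u,v\},\{v,w\}\} \in \mathfrak{K}$. Applying the translation above to each of $(u,v)$ and $(v,w)$ yields $\{u,v\}, \{v,w\} \in \pi(F)$, so $\{\{u,v\},\{v,w\}\} \subseteq \pi(F)$, which means $\pi(F)$ is not $\mathfrak{K}$-free.

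Next I would handle the reverse direction, again contrapositively. Suppose $\pi(F)$ contains some $K \in \mathfrak{K}$ as a subset. By the definition of $\mathfrak{K}$, we can write $K = \{\{i,k\},\{k,j\}\}$ for some $(i,k,j) \in Z$; in particular $(i,k), (k,j) \in A$. From $K \subseteq \pi(F)$ we obtain $\{i,k\}, \{k,j\} \in \pi(F)$, and the translation lemma pulls these back to $(i,k), (k,j) \in F$. Setting $u=i$, $v=k$, $w=j$ exhibits a $2$-path in $(V,F)$, so $(V,F)$ is not $2$-path-free.

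I do not anticipate any real obstacle; the statement is essentially a relabeling along the bijection $\pi$. The only point that deserves care, and would be the main thing to get right, is the use of injectivity of $\pi$ in the pullback step of the second direction: a priori $\{i,k\} \in \pi(F)$ could conceivably come from $(k,i) \in F$ rather than $(i,k) \in F$, but the injectivity of $\pi$ (which in turn relies on the fact that a transitive loopless digraph cannot contain both $(i,k)$ and $(k,i)$) forces the preimage to be exactly the intended arc.
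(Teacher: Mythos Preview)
Your proposal is correct and follows essentially the same approach as the paper: both directions are handled by contraposition, the key translation lemma $(u,v)\in F \Longleftrightarrow \{u,v\}\in\pi(F)$ (for $(u,v)\in A$) is established first using the injectivity of $\pi$, and the rest is a direct unwinding of the definitions of $Z$, $\mathfrak{K}$, and $2$-path-freeness. You have also correctly identified the one subtle point, namely that injectivity of $\pi$ is what guarantees the preimage of $\{i,k\}$ in $F$ is the arc $(i,k)$ rather than $(k,i)$.
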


\begin{proof}
[Proof of Lemma \ref{lem.digraph.2pf-chrom.K-free}.]We have $A\subseteq V^{2}$
(since $\left(  V,A\right)  $ is a digraph). The set $V$ is finite (since the
digraph $\left(  V,A\right)  $ is finite).

Recall that the digraph $\left(  V,A\right)  $ is transitive if and only if,
for any $u\in V$, $v\in V$ and $w\in V$ satisfying $\left(  u,v\right)  \in A$
and $\left(  v,w\right)  \in A$, we have $\left(  u,w\right)  \in A$ (by the
definition of \textquotedblleft transitive\textquotedblright). Thus, for any
$u\in V$, $v\in V$ and $w\in V$ satisfying $\left(  u,v\right)  \in A$ and
$\left(  v,w\right)  \in A$, we have
\begin{equation}
\left(  u,w\right)  \in A \label{pf.lem.digraph.2pf-chrom.K-free.transitive}%
\end{equation}
(because the digraph $\left(  V,A\right)  $ is transitive).

Recall that the digraph $\left(  V,A\right)  $ is loopless if and only if
every $v\in V$ satisfies $\left(  v,v\right)  \notin A$. Thus, every $v\in V$
satisfies%
\begin{equation}
\left(  v,v\right)  \notin A \label{pf.lem.digraph.2pf-chrom.K-free.loopless}%
\end{equation}
(since the digraph $\left(  V,A\right)  $ is loopless).

\textbf{(a)} We have
\[
\pi\left(  A\right)  =\left\{  \underbrace{\pi\left(  a\right)  }%
_{\substack{=\operatorname*{set}a\\\text{(by the definition of }\pi\text{)}%
}}\ \mid\ a\in A\right\}  =\left\{  \operatorname*{set}a\ \mid\ a\in
A\right\}  =\operatorname*{set}A=E.
\]
Thus, the map $\pi$ is surjective.

Furthermore, if $a$ and $b$ are two elements of $A$ such that $\pi\left(
a\right)  =\pi\left(  b\right)  $, then $a=b$\ \ \ \ \footnote{\textit{Proof.}
Let $a$ and $b$ be two elements of $A$ such that $\pi\left(  a\right)
=\pi\left(  b\right)  $. We must show that $a=b$.
\par
Assume the contrary. Thus, $a\neq b$.
\par
We have $a\in A\subseteq V^{2}$. Thus, we can write $a$ in the form $a=\left(
i,j\right)  $ with $i\in V$ and $j\in V$. Consider these $i$ and $j$.
\par
We have $b\in A\subseteq V^{2}$. Thus, we can write $b$ in the form $b=\left(
i^{\prime},j^{\prime}\right)  $ with $i^{\prime}\in V$ and $j^{\prime}\in V$.
Consider these $i^{\prime}$ and $j^{\prime}$.
\par
Applying (\ref{pf.lem.digraph.2pf-chrom.K-free.loopless}) to $v=i$, we obtain
$\left(  i,i\right)  \notin A$. If we had $i=j$, then we would have $\left(
i,\underbrace{i}_{=j}\right)  =\left(  i,j\right)  =a\in A$, which would
contradict $\left(  i,i\right)  \notin A$. Thus, we cannot have $i=j$. Hence,
we have $i\neq j$, so that $j\neq i$.
\par
Applying the map $\pi$ to both sides of the equality $a=\left(  i,j\right)  $,
we obtain
\begin{align*}
\pi\left(  a\right)   &  =\pi\left(  i,j\right)  =\operatorname*{set}\left(
i,j\right)  \ \ \ \ \ \ \ \ \ \ \left(  \text{by the definition of }\pi\right)
\\
&  =\left\{  i,j\right\}  \ \ \ \ \ \ \ \ \ \ \left(  \text{by the definition
of the map }\operatorname*{set}\right)  .
\end{align*}
The same argument (but applied to $b$, $i^{\prime}$ and $j^{\prime}$ instead
of $a$, $i$ and $j$) shows that $\pi\left(  b\right)  =\left\{  i^{\prime
},j^{\prime}\right\}  $. Hence, $\left\{  i,j\right\}  =\pi\left(  a\right)
=\pi\left(  b\right)  =\left\{  i^{\prime},j^{\prime}\right\}  $.
\par
We have $i\in\left\{  i,j\right\}  =\left\{  i^{\prime},j^{\prime}\right\}  $.
In other words, either $i=i^{\prime}$ or $i=j^{\prime}$. In other words, we
are in one of the following two cases:
\par
\textit{Case 1:} We have $i=i^{\prime}$.
\par
\textit{Case 2:} We have $i=j^{\prime}$.
\par
Let us first consider Case 1. In this case, we have $i=i^{\prime}$. Now,
$j\in\left\{  i,j\right\}  =\left\{  i^{\prime},j^{\prime}\right\}  $.
Combining this with $j\neq i=i^{\prime}$, we obtain $j\in\left\{  i^{\prime
},j^{\prime}\right\}  \setminus\left\{  i^{\prime}\right\}  \subseteq\left\{
j^{\prime}\right\}  $. Thus, $j=j^{\prime}$. Now, $a=\left(  \underbrace{i}%
_{=i^{\prime}},\underbrace{j}_{=j^{\prime}}\right)  =\left(  i^{\prime
},j^{\prime}\right)  =b$; this contradicts $a\neq b$. Hence, we have found a
contradiction in Case 1.
\par
Let us now consider Case 2. In this case, we have $i=j^{\prime}$. Now,
$j\in\left\{  i,j\right\}  =\left\{  i^{\prime},j^{\prime}\right\}  $.
Combining this with $j\neq i=j^{\prime}$, we obtain $j\in\left\{  i^{\prime
},j^{\prime}\right\}  \setminus\left\{  j^{\prime}\right\}  \subseteq\left\{
i^{\prime}\right\}  $. Thus, $j=i^{\prime}$. Hence, $\left(  \underbrace{j}%
_{=i^{\prime}},\underbrace{i}_{=j^{\prime}}\right)  =\left(  i^{\prime
},j^{\prime}\right)  =b\in A$. Also, $\left(  i,j\right)  =a\in A$. Hence,
(\ref{pf.lem.digraph.2pf-chrom.K-free.transitive}) (applied to $u=i$, $v=j$
and $w=i$) yields $\left(  i,i\right)  \in A$. This contradicts $\left(
i,i\right)  \notin A$. Thus, we have found a contradiction in Case 2.
\par
We have now found a contradiction in each of the two Cases 1 and 2. Hence, we
always have a contradiction (since Cases 1 and 2 cover all possibilities).
Thus, our assumption was wrong. Hence, $a=b$ is proven. Qed.}. In other words,
the map $\pi$ is injective. Now, the map $\pi$ is bijective (since $\pi$ is
surjective and injective). Thus, an inverse map $\pi^{-1}:E\rightarrow A$ is
well-defined. This proves Lemma \ref{lem.digraph.2pf-chrom.K-free}
\textbf{(a)}.

\textbf{(b)} The definition of $Z$ yields%
\begin{align*}
Z  &  =\left\{  \left(  i,k,j\right)  \in V^{3}\ \mid\ \left(  i,k\right)  \in
A\text{ and }\left(  k,j\right)  \in A\right\} \\
&  =\left\{  \left(  u,v,w\right)  \in V^{3}\ \mid\ \left(  u,v\right)  \in
A\text{ and }\left(  v,w\right)  \in A\right\}
\end{align*}
(here, we have renamed the index $\left(  i,k,j\right)  $ as $\left(
u,v,w\right)  $). Now, let $K\in\mathfrak{K}$. We shall prove that
$K\in\mathcal{P}\left(  E\right)  $.

We have $K\in\mathfrak{K}=\left\{  \left\{  \left\{  i,k\right\}  ,\left\{
k,j\right\}  \right\}  \ \mid\ \left(  i,k,j\right)  \in Z\right\}  $. Hence,
$K=\left\{  \left\{  i,k\right\}  ,\left\{  k,j\right\}  \right\}  $ for some
$\left(  i,k,j\right)  \in Z$. Consider this $\left(  i,k,j\right)  $. We
have
\[
\left(  i,k,j\right)  \in Z=\left\{  \left(  u,v,w\right)  \in V^{3}%
\ \mid\ \left(  u,v\right)  \in A\text{ and }\left(  v,w\right)  \in
A\right\}  .
\]
In other words, $\left(  i,k,j\right)  $ is an $\left(  u,v,w\right)  \in
V^{3}$ satisfying $\left(  u,v\right)  \in A$ and $\left(  v,w\right)  \in A$.
In other words, $\left(  i,k,j\right)  $ is an element of $V^{3}$ and
satisfies $\left(  i,k\right)  \in A$ and $\left(  k,j\right)  \in A$.

Now, $\left(  i,k\right)  \in A$. The definition of $\pi$ therefore yields
$\pi\left(  i,k\right)  =\operatorname*{set}\left(  i,k\right)  =\left\{
i,k\right\}  $ (by the definition of the map $\operatorname*{set}$). Hence,
$\left\{  i,k\right\}  =\pi\underbrace{\left(  i,k\right)  }_{\in A}\in
\pi\left(  A\right)  \subseteq E$.

Also, $\left(  k,j\right)  \in A$. The definition of $\pi$ therefore yields
$\pi\left(  k,j\right)  =\operatorname*{set}\left(  k,j\right)  =\left\{
k,j\right\}  $ (by the definition of the map $\operatorname*{set}$). Hence,
$\left\{  k,j\right\}  =\pi\underbrace{\left(  k,j\right)  }_{\in A}\in
\pi\left(  A\right)  \subseteq E$.

Now, we know that both $\left\{  i,k\right\}  $ and $\left\{  k,j\right\}  $
belong to $E$. Hence, $\left\{  \left\{  i,k\right\}  ,\left\{  k,j\right\}
\right\}  \subseteq E$. Hence, $K=\left\{  \left\{  i,k\right\}  ,\left\{
k,j\right\}  \right\}  \subseteq E$, so that $K\in\mathcal{P}\left(  E\right)
$.

Now, forget that we fixed $K$. We thus have shown that $K\in\mathcal{P}\left(
E\right)  $ for every $K\in\mathfrak{K}$. In other words, $\mathfrak{K}%
\subseteq\mathcal{P}\left(  E\right)  $. This proves Lemma
\ref{lem.digraph.2pf-chrom.K-free} \textbf{(b)}.

\textbf{(c)} We only need to prove that if $u$, $v$ and $w$ are three elements
of $V$ satisfying $\left(  u,v\right)  \in A$ and $\left(  v,w\right)  \in A$,
then (\ref{eq.lem.digraph.2pf-chrom.K-free.c.1}) and
(\ref{eq.lem.digraph.2pf-chrom.K-free.c.2}) hold.

So let $u$, $v$ and $w$ be three elements of $V$ satisfying $\left(
u,v\right)  \in A$ and $\left(  v,w\right)  \in A$. We must prove
(\ref{eq.lem.digraph.2pf-chrom.K-free.c.1}) and
(\ref{eq.lem.digraph.2pf-chrom.K-free.c.2}).

The definition of $\operatorname*{inter}\left(  u,v\right)  $ yields%
\[
\operatorname*{inter}\left(  u,v\right)  =\left\{  k\in V\ \mid\ \left(
u,k\right)  \in A\text{ and }\left(  k,v\right)  \in A\right\}  .
\]
The definition of $\operatorname*{inter}\left(  v,w\right)  $ yields%
\[
\operatorname*{inter}\left(  v,w\right)  =\left\{  k\in V\ \mid\ \left(
v,k\right)  \in A\text{ and }\left(  k,w\right)  \in A\right\}  .
\]
The definition of $\operatorname*{inter}\left(  u,w\right)  $ yields%
\[
\operatorname*{inter}\left(  u,w\right)  =\left\{  k\in V\ \mid\ \left(
u,k\right)  \in A\text{ and }\left(  k,w\right)  \in A\right\}  .
\]

We have $v\in\operatorname*{inter}\left(  u,w\right)  $%
\ \ \ \ \footnote{\textit{Proof.} We know that $v$ is an element of $V$ and
satisfies $\left(  u,v\right)  \in A$ and $\left(  v,w\right)  \in A$. In
other words, $v$ is an element $k$ of $V$ satisfying $\left(  u,k\right)  \in
A$ and $\left(  k,w\right)  \in A$. Hence,%
\[
v\in\left\{  k\in V\ \mid\ \left(  u,k\right)  \in A\text{ and }\left(
k,w\right)  \in A\right\}  =\operatorname*{inter}\left(  u,w\right)  .
\]
Qed.}.

All three sets $\operatorname*{inter}\left(  u,v\right)  $,
$\operatorname*{inter}\left(  v,w\right)  $ and $\operatorname*{inter}\left(
u,w\right)  $ are subsets of the finite set $V$, and thus are finite.

Now, $\operatorname*{inter}\left(  u,v\right)  $ is a proper subset of
$\operatorname*{inter}\left(  u,w\right)  $\ \ \ \ \footnote{\textit{Proof.}
Let $g\in\operatorname*{inter}\left(  u,v\right)  $. Thus,%
\[
g\in\operatorname*{inter}\left(  u,v\right)  =\left\{  k\in V\ \mid\ \left(
u,k\right)  \in A\text{ and }\left(  k,v\right)  \in A\right\}  .
\]
In other words, $g$ is an element $k$ of $V$ satisfying $\left(  u,k\right)
\in A$ and $\left(  k,v\right)  \in A$. In other words, $g$ is an element of
$V$ and satisfies $\left(  u,g\right)  \in A$ and $\left(  g,v\right)  \in A$.
\par
Now, (\ref{pf.lem.digraph.2pf-chrom.K-free.transitive}) (applied to $g$
instead of $u$) yields $\left(  g,w\right)  \in A$ (since $\left(  g,v\right)
\in A$ and $\left(  v,w\right)  \in A$). Hence, we now know that $g$ is an
element of $V$ and satisfies $\left(  u,g\right)  \in A$ and $\left(
g,w\right)  \in A$. In other words, $g$ is an element $k$ of $V$ satisfying
$\left(  u,k\right)  \in A$ and $\left(  k,w\right)  \in A$. Hence,%
\[
g\in\left\{  k\in V\ \mid\ \left(  u,k\right)  \in A\text{ and }\left(
k,w\right)  \in A\right\}  =\operatorname*{inter}\left(  u,w\right)  .
\]
\par
Now, forget that we fixed $g$. We therefore have proven that $g\in
\operatorname*{inter}\left(  u,w\right)  $ for every $g\in
\operatorname*{inter}\left(  u,v\right)  $. In other words,
$\operatorname*{inter}\left(  u,v\right)  \subseteq\operatorname*{inter}%
\left(  u,w\right)  $.
\par
Next, we shall prove that $\operatorname*{inter}\left(  u,v\right)
\neq\operatorname*{inter}\left(  u,w\right)  $. Indeed, assume the contrary.
Thus, $\operatorname*{inter}\left(  u,v\right)  =\operatorname*{inter}\left(
u,w\right)  $. Now,%
\[
v\in\operatorname*{inter}\left(  u,w\right)  =\operatorname*{inter}\left(
u,v\right)  =\left\{  k\in V\ \mid\ \left(  u,k\right)  \in A\text{ and
}\left(  k,v\right)  \in A\right\}  .
\]
In other words, $v$ is an element $k$ of $V$ satisfying $\left(  u,k\right)
\in A$ and $\left(  k,v\right)  \in A$. In other words, $v$ is an element of
$V$ and satisfies $\left(  u,v\right)  \in A$ and $\left(  v,v\right)  \in A$.
But (\ref{pf.lem.digraph.2pf-chrom.K-free.loopless}) yields $\left(
v,v\right)  \notin A$. This contradicts $\left(  v,v\right)  \in A$. This
contradiction proves that our assumption was false. Hence,
$\operatorname*{inter}\left(  u,v\right)  \neq\operatorname*{inter}\left(
u,w\right)  $ is proven. Combining this with $\operatorname*{inter}\left(
u,v\right)  \subseteq\operatorname*{inter}\left(  u,w\right)  $, we conclude
that $\operatorname*{inter}\left(  u,v\right)  $ is a proper subset of
$\operatorname*{inter}\left(  u,w\right)  $. Qed.}. Hence,
\begin{equation}
\left\vert \operatorname*{inter}\left(  u,v\right)  \right\vert <\left\vert
\operatorname*{inter}\left(  u,w\right)  \right\vert
\label{pf.lem.digraph.2pf-chrom.K-free.c.ieq1}%
\end{equation}
(since $\operatorname*{inter}\left(  u,v\right)  $ and $\operatorname*{inter}%
\left(  u,w\right)  $ are finite sets).

Furthermore, $\operatorname*{inter}\left(  v,w\right)  $ is a proper subset of
$\operatorname*{inter}\left(  u,w\right)  $\ \ \ \ \footnote{\textit{Proof.}
Let $g\in\operatorname*{inter}\left(  v,w\right)  $. Thus,%
\[
g\in\operatorname*{inter}\left(  v,w\right)  =\left\{  k\in V\ \mid\ \left(
v,k\right)  \in A\text{ and }\left(  k,w\right)  \in A\right\}  .
\]
In other words, $g$ is an element $k$ of $V$ satisfying $\left(  v,k\right)
\in A$ and $\left(  k,w\right)  \in A$. In other words, $g$ is an element of
$V$ and satisfies $\left(  v,g\right)  \in A$ and $\left(  g,w\right)  \in A$.
\par
Now, (\ref{pf.lem.digraph.2pf-chrom.K-free.transitive}) (applied to $g$
instead of $w$) yields $\left(  u,g\right)  \in A$ (since $\left(  u,v\right)
\in A$ and $\left(  v,g\right)  \in A$). Hence, we now know that $g$ is an
element of $V$ and satisfies $\left(  u,g\right)  \in A$ and $\left(
g,w\right)  \in A$. In other words, $g$ is an element $k$ of $V$ satisfying
$\left(  u,k\right)  \in A$ and $\left(  k,w\right)  \in A$. Hence,%
\[
g\in\left\{  k\in V\ \mid\ \left(  u,k\right)  \in A\text{ and }\left(
k,w\right)  \in A\right\}  =\operatorname*{inter}\left(  u,w\right)  .
\]
\par
Now, forget that we fixed $g$. We therefore have proven that $g\in
\operatorname*{inter}\left(  u,w\right)  $ for every $g\in
\operatorname*{inter}\left(  v,w\right)  $. In other words,
$\operatorname*{inter}\left(  v,w\right)  \subseteq\operatorname*{inter}%
\left(  u,w\right)  $.
\par
Next, we shall prove that $\operatorname*{inter}\left(  v,w\right)
\neq\operatorname*{inter}\left(  u,w\right)  $. Indeed, assume the contrary.
Thus, $\operatorname*{inter}\left(  v,w\right)  =\operatorname*{inter}\left(
u,w\right)  $. Now,%
\[
v\in\operatorname*{inter}\left(  u,w\right)  =\operatorname*{inter}\left(
v,w\right)  =\left\{  k\in V\ \mid\ \left(  v,k\right)  \in A\text{ and
}\left(  k,w\right)  \in A\right\}  .
\]
In other words, $v$ is an element $k$ of $V$ satisfying $\left(  v,k\right)
\in A$ and $\left(  k,w\right)  \in A$. In other words, $v$ is an element of
$V$ and satisfies $\left(  v,v\right)  \in A$ and $\left(  v,w\right)  \in A$.
But (\ref{pf.lem.digraph.2pf-chrom.K-free.loopless}) yields $\left(
v,v\right)  \notin A$. This contradicts $\left(  v,v\right)  \in A$. This
contradiction proves that our assumption was false. Hence,
$\operatorname*{inter}\left(  v,w\right)  \neq\operatorname*{inter}\left(
u,w\right)  $ is proven. Combining this with $\operatorname*{inter}\left(
v,w\right)  \subseteq\operatorname*{inter}\left(  u,w\right)  $, we conclude
that $\operatorname*{inter}\left(  v,w\right)  $ is a proper subset of
$\operatorname*{inter}\left(  u,w\right)  $. Qed.}. Hence,
\begin{equation}
\left\vert \operatorname*{inter}\left(  v,w\right)  \right\vert <\left\vert
\operatorname*{inter}\left(  u,w\right)  \right\vert
\label{pf.lem.digraph.2pf-chrom.K-free.c.ieq2}%
\end{equation}
(since $\operatorname*{inter}\left(  v,w\right)  $ and $\operatorname*{inter}%
\left(  u,w\right)  $ are finite sets).

From (\ref{pf.lem.digraph.2pf-chrom.K-free.transitive}), we obtain $\left(
u,w\right)  \in A$. The definition of $\pi$ thus shows that%
\[
\pi\left(  u,w\right)  =\operatorname*{set}\left(  u,w\right)  =\left\{
u,w\right\}  \ \ \ \ \ \ \ \ \ \ \left(  \text{by the definition of the map
}\operatorname*{set}\right)  .
\]
Hence, $\left\{  u,w\right\}  =\pi\underbrace{\left(  u,w\right)  }_{\in A}%
\in\pi\left(  A\right)  \subseteq E$. Thus, $\ell\left(  \left\{  u,w\right\}
\right)  $ is well-defined.

Also, recall that $\left(  u,v\right)  \in A$. The definition of $\pi$ thus
shows that%
\[
\pi\left(  u,v\right)  =\operatorname*{set}\left(  u,v\right)  =\left\{
u,v\right\}  \ \ \ \ \ \ \ \ \ \ \left(  \text{by the definition of the map
}\operatorname*{set}\right)  .
\]
Hence, $\left\{  u,v\right\}  =\pi\underbrace{\left(  u,v\right)  }_{\in A}%
\in\pi\left(  A\right)  \subseteq E$. Thus, $\ell\left(  \left\{  u,v\right\}
\right)  $ is well-defined.

Also, recall that $\left(  v,w\right)  \in A$. The definition of $\pi$ thus
shows that%
\[
\pi\left(  v,w\right)  =\operatorname*{set}\left(  v,w\right)  =\left\{
v,w\right\}  \ \ \ \ \ \ \ \ \ \ \left(  \text{by the definition of the map
}\operatorname*{set}\right)  .
\]
Hence, $\left\{  v,w\right\}  =\pi\underbrace{\left(  v,w\right)  }_{\in A}%
\in\pi\left(  A\right)  \subseteq E$. Thus, $\ell\left(  \left\{  v,w\right\}
\right)  $ is well-defined. We have also proven
(\ref{eq.lem.digraph.2pf-chrom.K-free.c.1}) by now.

Now,%
\begin{align*}
\underbrace{\ell}_{=\ell^{\prime}\circ\pi^{-1}}\left(  \left\{  u,w\right\}
\right)   &  =\left(  \ell^{\prime}\circ\pi^{-1}\right)  \left(  \left\{
u,w\right\}  \right)  =\ell^{\prime}\left(  \underbrace{\pi^{-1}\left(
\left\{  u,w\right\}  \right)  }_{\substack{=\left(  u,w\right)
\\\text{(since }\left\{  u,w\right\}  =\pi\left(  u,w\right)  \text{)}%
}}\right)  =\ell^{\prime}\left(  u,w\right) \\
&  =\left\vert \operatorname*{inter}\left(  u,w\right)  \right\vert
\ \ \ \ \ \ \ \ \ \ \left(  \text{by the definition of }\ell^{\prime}\right)
.
\end{align*}
Also,%
\begin{align*}
\underbrace{\ell}_{=\ell^{\prime}\circ\pi^{-1}}\left(  \left\{  u,v\right\}
\right)   &  =\left(  \ell^{\prime}\circ\pi^{-1}\right)  \left(  \left\{
u,v\right\}  \right)  =\ell^{\prime}\left(  \underbrace{\pi^{-1}\left(
\left\{  u,v\right\}  \right)  }_{\substack{=\left(  u,v\right)
\\\text{(since }\left\{  u,v\right\}  =\pi\left(  u,v\right)  \text{)}%
}}\right)  =\ell^{\prime}\left(  u,v\right) \\
&  =\left\vert \operatorname*{inter}\left(  u,v\right)  \right\vert
\ \ \ \ \ \ \ \ \ \ \left(  \text{by the definition of }\ell^{\prime}\right)
,
\end{align*}
and%
\begin{align*}
\underbrace{\ell}_{=\ell^{\prime}\circ\pi^{-1}}\left(  \left\{  v,w\right\}
\right)   &  =\left(  \ell^{\prime}\circ\pi^{-1}\right)  \left(  \left\{
v,w\right\}  \right)  =\ell^{\prime}\left(  \underbrace{\pi^{-1}\left(
\left\{  v,w\right\}  \right)  }_{\substack{=\left(  v,w\right)
\\\text{(since }\left\{  v,w\right\}  =\pi\left(  v,w\right)  \text{)}%
}}\right)  =\ell^{\prime}\left(  v,w\right) \\
&  =\left\vert \operatorname*{inter}\left(  v,w\right)  \right\vert
\ \ \ \ \ \ \ \ \ \ \left(  \text{by the definition of }\ell^{\prime}\right)
.
\end{align*}
Now,%
\begin{align*}
\ell\left(  \left\{  u,w\right\}  \right)   &  =\left\vert
\operatorname*{inter}\left(  u,w\right)  \right\vert >\left\vert
\operatorname*{inter}\left(  u,v\right)  \right\vert
\ \ \ \ \ \ \ \ \ \ \left(  \text{by
(\ref{pf.lem.digraph.2pf-chrom.K-free.c.ieq1})}\right) \\
&  =\ell\left(  \left\{  u,v\right\}  \right)
\end{align*}
and%
\begin{align*}
\ell\left(  \left\{  u,w\right\}  \right)   &  =\left\vert
\operatorname*{inter}\left(  u,w\right)  \right\vert >\left\vert
\operatorname*{inter}\left(  v,w\right)  \right\vert
\ \ \ \ \ \ \ \ \ \ \left(  \text{by
(\ref{pf.lem.digraph.2pf-chrom.K-free.c.ieq2})}\right) \\
&  =\ell\left(  \left\{  v,w\right\}  \right)  .
\end{align*}
Thus, (\ref{eq.lem.digraph.2pf-chrom.K-free.c.2}) holds. This proves Lemma
\ref{lem.digraph.2pf-chrom.K-free} \textbf{(c)}.

\textbf{(d)} Let $K\in\mathfrak{K}$. We shall show that $K$ is a broken
circuit of $G$.

We have
\begin{align*}
K  &  \in\mathfrak{K}=\left\{  \left\{  \left\{  i,k\right\}  ,\left\{
k,j\right\}  \right\}  \ \mid\ \left(  i,k,j\right)  \in Z\right\} \\
&  =\left\{  \left\{  \left\{  u,v\right\}  ,\left\{  v,w\right\}  \right\}
\ \mid\ \left(  u,v,w\right)  \in Z\right\}
\end{align*}
(here, we renamed the index $\left(  i,k,j\right)  $ as $\left(  u,v,w\right)
$). Hence, $K=\left\{  \left\{  u,v\right\}  ,\left\{  v,w\right\}  \right\}
$ for some $\left(  u,v,w\right)  \in Z$. Consider this $\left(  u,v,w\right)
$. We have%
\[
\left(  u,v,w\right)  \in Z=\left\{  \left(  i,k,j\right)  \in V^{3}%
\ \mid\ \left(  i,k\right)  \in A\text{ and }\left(  k,j\right)  \in
A\right\}  .
\]
In other words, $\left(  u,v,w\right)  $ is an $\left(  i,k,j\right)  \in
V^{3}$ satisfying $\left(  i,k\right)  \in A$ and $\left(  k,j\right)  \in A$.
In other words, $\left(  u,v,w\right)  $ is an element of $V^{3}$ and
satisfies $\left(  u,v\right)  \in A$ and $\left(  v,w\right)  \in A$. Hence,
Lemma \ref{lem.digraph.2pf-chrom.K-free} \textbf{(c)} shows that we have%
\begin{align*}
\left\{  u,w\right\}   &  \in E,\ \ \ \ \ \ \ \ \ \ \left\{  u,v\right\}  \in
E,\ \ \ \ \ \ \ \ \ \ \left\{  u,w\right\}  \in E,\\
\ell\left(  \left\{  u,w\right\}  \right)   &  >\ell\left(  \left\{
u,v\right\}  \right)  \ \ \ \ \ \ \ \ \ \ \text{and}\ \ \ \ \ \ \ \ \ \ \ell
\left(  \left\{  u,w\right\}  \right)  >\ell\left(  \left\{  v,w\right\}
\right)  .
\end{align*}
Let $D$ be the set $\left\{  \left\{  u,v\right\}  ,\left\{  v,w\right\}
,\left\{  u,w\right\}  \right\}  $. Now, Lemma
\ref{lem.digraph.2pf-chrom.3cyc} (applied to $C=D$) shows that the set $D$ is
a circuit of $G$ and satisfies $D\setminus\left\{  \left\{  u,w\right\}
\right\}  =\left\{  \left\{  u,v\right\}  ,\left\{  v,w\right\}  \right\}  $.
Hence, $K=\left\{  \left\{  u,v\right\}  ,\left\{  v,w\right\}  \right\}
=D\setminus\left\{  \left\{  u,w\right\}  \right\}  $. Moreover, the edge
$\left\{  u,w\right\}  $ is the unique edge in $D$ having maximum label (among
the edges in $D$)\ \ \ \ \footnote{\textit{Proof.} We have $D=\left\{
\left\{  u,v\right\}  ,\left\{  v,w\right\}  ,\left\{  u,w\right\}  \right\}
$. Now, $\left\{  u,w\right\}  \in\left\{  \left\{  u,v\right\}  ,\left\{
v,w\right\}  ,\left\{  u,w\right\}  \right\}  =D$. Thus, $\left\{
u,w\right\}  $ is an edge in $D$.
\par
Now, we need to show that the edge $\left\{  u,w\right\}  $ is the unique edge
in $D$ having maximum label (among the edges in $D$). Indeed, assume the
contrary (for the sake of contradiction). Thus, $\left\{  u,w\right\}  $ is
\textbf{not} the unique edge in $D$ having maximum label (among the edges in
$D$). In other words, there exists an edge $e\in D$ distinct from $\left\{
u,w\right\}  $ such that the label of $e$ is greater or equal to the label of
$\left\{  u,w\right\}  $ (because we already know that $\left\{  u,w\right\}
$ is an edge in $D$). Consider such an $e$.
\par
The label of $e$ is greater or equal to the label of $\left\{  u,w\right\}  $.
In other words, $\ell\left(  e\right)  $ is greater or equal to $\ell\left(
\left\{  u,w\right\}  \right)  $ (since the label of $e$ is $\ell\left(
e\right)  $ (by the definition of the \textquotedblleft
label\textquotedblright) and since the label of $\left\{  u,w\right\}  $ is
$\ell\left(  \left\{  u,w\right\}  \right)  $ (by the definition of the
\textquotedblleft label\textquotedblright)). In other words, $\ell\left(
e\right)  \geq\ell\left(  \left\{  u,w\right\}  \right)  $.
\par
We have $e\in D$ and $e\notin\left\{  \left\{  u,w\right\}  \right\}  $ (since
$e$ is distinct from $\left\{  u,w\right\}  $). Thus, $e\in D\setminus\left\{
\left\{  u,w\right\}  \right\}  =\left\{  \left\{  u,v\right\}  ,\left\{
v,w\right\}  \right\}  $.
\par
But $\ell\left(  e\right)  \geq\ell\left(  \left\{  u,w\right\}  \right)
>\ell\left(  \left\{  u,v\right\}  \right)  $, so that $\ell\left(  e\right)
\neq\ell\left(  \left\{  u,v\right\}  \right)  $ and thus $e\neq\left\{
u,v\right\}  $. Also, $\ell\left(  e\right)  \geq\ell\left(  \left\{
u,w\right\}  \right)  >\ell\left(  \left\{  v,w\right\}  \right)  $, so that
$\ell\left(  e\right)  \neq\ell\left(  \left\{  v,w\right\}  \right)  $ and
thus $e\neq\left\{  v,w\right\}  $. Thus, $e$ equals neither of the two edges
$\left\{  u,v\right\}  $ and $\left\{  v,w\right\}  $ (since $e\neq\left\{
u,v\right\}  $ and $e\neq\left\{  v,w\right\}  $). In other words,
$e\notin\left\{  \left\{  u,v\right\}  ,\left\{  v,w\right\}  \right\}  $.
This contradicts $e\in\left\{  \left\{  u,v\right\}  ,\left\{  v,w\right\}
\right\}  $. This contradiction shows that our assumption was wrong. Hence, we
have shown that the edge $\left\{  u,w\right\}  $ is the unique edge in $D$
having maximum label (among the edges in $D$). Qed.}.

Also, $K=\left\{  \left\{  u,v\right\}  ,\left\{  v,w\right\}  \right\}
=\underbrace{\left\{  \left\{  u,v\right\}  \right\}  }_{\substack{\subseteq
E\\\text{(since }\left\{  u,v\right\}  \in E\text{)}}}\cup\underbrace{\left\{
\left\{  v,w\right\}  \right\}  }_{\substack{\subseteq E\\\text{(since
}\left\{  v,w\right\}  \in E\text{)}}}\subseteq E\cup E=E$. Thus, $K$ is a
subset of $E$.

Altogether, we have now shown that $K$ is a subset of $E$ and satisfies
$K=D\setminus\left\{  \left\{  u,w\right\}  \right\}  $; we also know that $D$
is a circuit of $G$, and that $\left\{  u,w\right\}  $ is the unique edge in
$D$ having maximum label (among the edges in $D$). Hence, $K$ is a subset of
$E$ having the form $C\setminus\left\{  e\right\}  $, where $C$ is a circuit
of $G$, and where $e$ is the unique edge in $C$ having maximum label (among
the edges in $C$)\ \ \ \ \footnote{Namely, $K$ has this form for $C=D$ and
$e=\left\{  u,w\right\}  $.}. In other words, $K$ is a broken circuit of $G$
(since $K$ is a broken circuit of $G$ if and only if $K$ is a subset of $E$
having the form $C\setminus\left\{  e\right\}  $, where $C$ is a circuit of
$G$, and where $e$ is the unique edge in $C$ having maximum label (among the
edges in $C$)\ \ \ \ \footnote{by the definition of a \textquotedblleft broken
circuit\textquotedblright})).

Now, forget that we fixed $K$. We thus have shown that every $K\in
\mathfrak{K}$ is a broken circuit of $G$. In other words, every element of
$\mathfrak{K}$ is a broken circuit of $G$. This proves Lemma
\ref{lem.digraph.2pf-chrom.K-free} \textbf{(d)}.

\textbf{(e)} The map $\pi:A\rightarrow E$ is bijective (by Lemma
\ref{lem.digraph.2pf-chrom.K-free} \textbf{(a)}). In other words, the map
$\pi$ is surjective and injective.

We first observe a simple fact: If $u$ and $v$ are two elements of $V$ such
that $\left(  u,v\right)  \in A$ and $\left\{  u,v\right\}  \in\pi\left(
F\right)  $, then%
\begin{equation}
\left(  u,v\right)  \in F \label{pf.lem.digraph.2pf-chrom.K-free.e.lem1}%
\end{equation}
\footnote{\textit{Proof of (\ref{pf.lem.digraph.2pf-chrom.K-free.e.lem1}):}
Let $u$ and $v$ be two elements of $V$ such that $\left(  u,v\right)  \in A$
and $\left\{  u,v\right\}  \in\pi\left(  F\right)  $. We must show that
$\left(  u,v\right)  \in F$.
\par
We have $\left\{  u,v\right\}  \in\pi\left(  F\right)  $. In other words,
there exists some $f\in F$ such that $\left\{  u,v\right\}  =\pi\left(
f\right)  $. Consider this $f$.
\par
But $\left(  u,v\right)  \in A$; therefore, the definition of $\pi$ yields%
\begin{align*}
\pi\left(  u,v\right)   &  =\operatorname*{set}\left(  u,v\right)  =\left\{
u,v\right\}  \ \ \ \ \ \ \ \ \ \ \left(  \text{by the definition of the map
}\operatorname*{set}\right) \\
&  =\pi\left(  f\right)  .
\end{align*}
Since $\pi$ is injective, this entails that $\left(  u,v\right)  =f\in F$.
This proves (\ref{pf.lem.digraph.2pf-chrom.K-free.e.lem1}).}.

Also, $\pi\left(  \underbrace{F}_{\subseteq A}\right)  \subseteq\pi\left(
A\right)  \subseteq E$. Thus, $\pi\left(  F\right)  $ is a subset of $E$.

Let us now prove the implication%
\begin{equation}
\ \left(  \text{the digraph }\left(  V,F\right)  \text{ is }2\text{-step-free}%
\right)  \ \Longrightarrow\ \left(  \text{the set }\pi\left(  F\right)  \text{
is }\mathfrak{K}\text{-free}\right)  .
\label{pf.lem.digraph.2pf-chrom.K-free.e.imp1}%
\end{equation}

\textit{Proof of (\ref{pf.lem.digraph.2pf-chrom.K-free.e.imp1}):} Assume that
the digraph $\left(  V,F\right)  $ is $2$-step-free. We shall show that the
set $\pi\left(  F\right)  $ is $\mathfrak{K}$-free.

Indeed, let $K\in\mathfrak{K}$ be such that $K\subseteq\pi\left(  F\right)  $.
We shall obtain a contradiction.

The digraph $\left(  V,F\right)  $ is $2$-step-free if and only if there exist
no three elements $u$, $v$ and $w$ of $V$ satisfying $\left(  u,v\right)  \in
F$ and $\left(  v,w\right)  \in F$ (by the definition of \textquotedblleft%
$2$-step-free\textquotedblright). Therefore, there exist no three elements
$u$, $v$ and $w$ of $V$ satisfying $\left(  u,v\right)  \in F$ and $\left(
v,w\right)  \in F$ (since we know that the digraph $\left(  V,F\right)  $ is
$2$-step-free). In other words, if $u$, $v$ and $w$ are three elements of $V$,
then%
\begin{equation}
\left(  \left(  u,v\right)  \in F\text{ and }\left(  v,w\right)  \in F\right)
\text{ is false.} \label{pf.lem.digraph.2pf-chrom.K-free.e.imp1.pf.2}%
\end{equation}

But%
\begin{align*}
K  &  \in\mathfrak{K}=\left\{  \left\{  \left\{  i,k\right\}  ,\left\{
k,j\right\}  \right\}  \ \mid\ \left(  i,k,j\right)  \in Z\right\} \\
&  =\left\{  \left\{  \left\{  u,v\right\}  ,\left\{  v,w\right\}  \right\}
\ \mid\ \left(  u,v,w\right)  \in Z\right\}
\end{align*}
(here, we renamed the index $\left(  i,k,j\right)  $ as $\left(  u,v,w\right)
$). Hence, $K=\left\{  \left\{  u,v\right\}  ,\left\{  v,w\right\}  \right\}
$ for some $\left(  u,v,w\right)  \in Z$. Consider this $\left(  u,v,w\right)
$. We have%
\[
\left(  u,v,w\right)  \in Z=\left\{  \left(  i,k,j\right)  \in V^{3}%
\ \mid\ \left(  i,k\right)  \in A\text{ and }\left(  k,j\right)  \in
A\right\}  .
\]
In other words, $\left(  u,v,w\right)  $ is an $\left(  i,k,j\right)  \in
V^{3}$ satisfying $\left(  i,k\right)  \in A$ and $\left(  k,j\right)  \in A$.
In other words, $\left(  u,v,w\right)  $ is an element of $V^{3}$ and
satisfies $\left(  u,v\right)  \in A$ and $\left(  v,w\right)  \in A$.

Now, $\left(  u,v\right)  \in A$ and $\left\{  u,v\right\}  \in\left\{
\left\{  u,v\right\}  ,\left\{  v,w\right\}  \right\}  =K\subseteq\pi\left(
F\right)  $. Hence, (\ref{pf.lem.digraph.2pf-chrom.K-free.e.lem1}) shows that
$\left(  u,v\right)  \in F$.

Also, $\left(  v,w\right)  \in A$ and $\left\{  v,w\right\}  \in\left\{
\left\{  u,v\right\}  ,\left\{  v,w\right\}  \right\}  =K\subseteq\pi\left(
F\right)  $. Hence, (\ref{pf.lem.digraph.2pf-chrom.K-free.e.lem1}) (applied to
$v$ and $w$ instead of $u$ and $v$) shows that $\left(  v,w\right)  \in F$.
Thus, we have $\left(  \left(  u,v\right)  \in F\text{ and }\left(
v,w\right)  \in F\right)  $.

But (\ref{pf.lem.digraph.2pf-chrom.K-free.e.imp1.pf.2}) shows that $\left(
\left(  u,v\right)  \in F\text{ and }\left(  v,w\right)  \in F\right)  $ is
false. This contradicts the fact that we have $\left(  \left(  u,v\right)  \in
F\text{ and }\left(  v,w\right)  \in F\right)  $.

Now, let us forget that we fixed $K$. We thus have obtained a contradiction
for each $K\in\mathfrak{K}$ satisfying $K\subseteq\pi\left(  F\right)  $.
Hence, there exists no $K\in\mathfrak{K}$ satisfying $K\subseteq\pi\left(
F\right)  $. In other words, the set $\pi\left(  F\right)  $ contains no
$K\in\mathfrak{K}$ as a subset.

The subset $\pi\left(  F\right)  $ of $E$ is $\mathfrak{K}$-free if and only
if $\pi\left(  F\right)  $ contains no $K\in\mathfrak{K}$ as a subset (by the
definition of \textquotedblleft$\mathfrak{K}$-free\textquotedblright). Thus,
the subset $\pi\left(  F\right)  $ of $E$ is $\mathfrak{K}$-free (since
$\pi\left(  F\right)  $ contains no $K\in\mathfrak{K}$ as a subset).

Now, let us forget that we assumed that the digraph $\left(  V,F\right)  $ is
$2$-step-free. We thus have shown that the set $\pi\left(  F\right)  $ is
$\mathfrak{K}$-free if the digraph $\left(  V,F\right)  $ is $2$-step-free. In
other words, we have proven the implication
(\ref{pf.lem.digraph.2pf-chrom.K-free.e.imp1}).

Next, we shall prove the following implication:%
\begin{equation}
\ \left(  \text{the set }\pi\left(  F\right)  \text{ is }\mathfrak{K}%
\text{-free}\right)  \ \Longrightarrow\ \left(  \text{the digraph }\left(
V,F\right)  \text{ is }2\text{-step-free}\right)  .
\label{pf.lem.digraph.2pf-chrom.K-free.e.imp2}%
\end{equation}

\textit{Proof of (\ref{pf.lem.digraph.2pf-chrom.K-free.e.imp2}):} Assume that
the set $\pi\left(  F\right)  $ is $\mathfrak{K}$-free. We shall show that the
digraph $\left(  V,F\right)  $ is $2$-step-free.

Let $u$, $v$ and $w$ be three elements of $V$ satisfying $\left(  u,v\right)
\in F$ and $\left(  v,w\right)  \in F$. We shall derive a contradiction.

Clearly, $\left(  u,v\right)  \in F\subseteq A$ and $\left(  v,w\right)  \in
F\subseteq A$.

The subset $\pi\left(  F\right)  $ of $E$ is $\mathfrak{K}$-free if and only
if $\pi\left(  F\right)  $ contains no $K\in\mathfrak{K}$ as a subset (by the
definition of \textquotedblleft$\mathfrak{K}$-free\textquotedblright). Thus,
$\pi\left(  F\right)  $ contains no $K\in\mathfrak{K}$ as a subset (since the
subset $\pi\left(  F\right)  $ of $E$ is $\mathfrak{K}$-free). In other words,
there exists no $K\in\mathfrak{K}$ such that $K\subseteq\pi\left(  F\right)  $.

But $\left(  u,v,w\right)  $ is an element of $V^{3}$ and satisfies $\left(
u,v\right)  \in A$ and $\left(  v,w\right)  \in A$. In other words, $\left(
u,v,w\right)  $ is an $\left(  i,k,j\right)  \in V^{3}$ satisfying $\left(
i,k\right)  \in A$ and $\left(  k,j\right)  \in A$. Hence,%
\[
\left(  u,v,w\right)  \in\left\{  \left(  i,k,j\right)  \in V^{3}%
\ \mid\ \left(  i,k\right)  \in A\text{ and }\left(  k,j\right)  \in
A\right\}  =Z.
\]
Thus, $\left\{  \left\{  u,v\right\}  ,\left\{  v,w\right\}  \right\}  $ is a
set of the form $\left\{  \left\{  i,k\right\}  ,\left\{  k,j\right\}
\right\}  $ for some $\left(  i,k,j\right)  \in Z$ (namely, for $\left(
i,k,j\right)  =\left(  u,v,w\right)  $). Hence,%
\begin{equation}
\left\{  \left\{  u,v\right\}  ,\left\{  v,w\right\}  \right\}  \in\left\{
\left\{  \left\{  i,k\right\}  ,\left\{  k,j\right\}  \right\}  \ \mid
\ \left(  i,k,j\right)  \in Z\right\}  =\mathfrak{K}.
\label{pf.lem.digraph.2pf-chrom.K-free.e.imp2.pf.2}%
\end{equation}

But $\left\{  u,v\right\}  \in\pi\left(  F\right)  $%
\ \ \ \ \footnote{\textit{Proof.} We have $\left(  u,v\right)  \in A$;
therefore, the definition of $\pi$ yields%
\[
\pi\left(  u,v\right)  =\operatorname*{set}\left(  u,v\right)  =\left\{
u,v\right\}  \ \ \ \ \ \ \ \ \ \ \left(  \text{by the definition of the map
}\operatorname*{set}\right)  .
\]
Hence, $\left\{  u,v\right\}  =\pi\underbrace{\left(  u,v\right)  }_{\in F}%
\in\pi\left(  F\right)  $, qed.} and $\left\{  v,w\right\}  \in\pi\left(
F\right)  $\ \ \ \ \footnote{\textit{Proof.} We have $\left(  v,w\right)  \in
A$; therefore, the definition of $\pi$ yields%
\[
\pi\left(  v,w\right)  =\operatorname*{set}\left(  v,w\right)  =\left\{
v,w\right\}  \ \ \ \ \ \ \ \ \ \ \left(  \text{by the definition of the map
}\operatorname*{set}\right)  .
\]
Hence, $\left\{  v,w\right\}  =\pi\underbrace{\left(  v,w\right)  }_{\in F}%
\in\pi\left(  F\right)  $, qed.}. Thus, both $\left\{  u,v\right\}  $ and
$\left\{  v,w\right\}  $ belong to the set $\pi\left(  F\right)  $. Therefore,%
\[
\left\{  \left\{  u,v\right\}  ,\left\{  v,w\right\}  \right\}  \subseteq
\pi\left(  F\right)  .
\]
Combining this with (\ref{pf.lem.digraph.2pf-chrom.K-free.e.imp2.pf.2}), we
see that the set $\left\{  \left\{  u,v\right\}  ,\left\{  v,w\right\}
\right\}  \in\mathfrak{K}$ satisfies $\left\{  \left\{  u,v\right\}  ,\left\{
v,w\right\}  \right\}  \subseteq\pi\left(  F\right)  $. Thus, there exists an
$K\in\mathfrak{K}$ such that $K\subseteq\pi\left(  F\right)  $ (namely,
$K=\left\{  \left\{  u,v\right\}  ,\left\{  v,w\right\}  \right\}  $). This
contradicts the fact that there exists no $K\in\mathfrak{K}$ such that
$K\subseteq\pi\left(  F\right)  $.

Now, forget that we fixed $u$, $v$ and $w$. We thus have obtained a
contradiction for every three elements $u$, $v$ and $w$ of $V$ satisfying
$\left(  u,v\right)  \in F$ and $\left(  v,w\right)  \in F$. Hence, there
exist no three elements $u$, $v$ and $w$ of $V$ satisfying $\left(
u,v\right)  \in F$ and $\left(  v,w\right)  \in F$.

But the digraph $\left(  V,F\right)  $ is $2$-step-free if and only if there
exist no three elements $u$, $v$ and $w$ of $V$ satisfying $\left(
u,v\right)  \in F$ and $\left(  v,w\right)  \in F$ (by the definition of
\textquotedblleft$2$-step-free\textquotedblright). Thus, the digraph $\left(
V,F\right)  $ is $2$-step-free (since there exist no three elements $u$, $v$
and $w$ of $V$ satisfying $\left(  u,v\right)  \in F$ and $\left(  v,w\right)
\in F$).

Now, let us forget that we have assumed that the set $\pi\left(  F\right)  $
is $\mathfrak{K}$-free. We thus have shown that if the set $\pi\left(
F\right)  $ is $\mathfrak{K}$-free, then the digraph $\left(  V,F\right)  $ is
$2$-step-free. In other words, we have proven the implication
(\ref{pf.lem.digraph.2pf-chrom.K-free.e.imp2}).

Now, by combining the two implications
(\ref{pf.lem.digraph.2pf-chrom.K-free.e.imp1}) and
(\ref{pf.lem.digraph.2pf-chrom.K-free.e.imp2}), we obtain the logical
equivalence%
\[
\ \left(  \text{the digraph }\left(  V,F\right)  \text{ is }2\text{-step-free}%
\right)  \ \Longleftrightarrow\ \left(  \text{the set }\pi\left(  F\right)
\text{ is }\mathfrak{K}\text{-free}\right)  .
\]
This proves Lemma \ref{lem.digraph.2pf-chrom.K-free} \textbf{(e)}.
\end{proof}

\begin{proof}
[Proof of Proposition \ref{prop.digraph.2pf-chrom}.]First, let us introduce a
general notation: If $X$ and $Y$ are two sets, and if $f:X\rightarrow Y$ is
any map, then we define a map $\mathcal{P}\left(  f\right)  :\mathcal{P}%
\left(  X\right)  \rightarrow\mathcal{P}\left(  Y\right)  $ by%
\[
\left(  \left(  \mathcal{P}\left(  f\right)  \right)  \left(  Z\right)
=f\left(  Z\right)  \ \ \ \ \ \ \ \ \ \ \text{for every }Z\in\mathcal{P}%
\left(  X\right)  \right)  .
\]
If the map $f:X\rightarrow Y$ is bijective, then%
\begin{equation}
\text{the map }\mathcal{P}\left(  f\right)  :\mathcal{P}\left(  X\right)
\rightarrow\mathcal{P}\left(  Y\right)  \text{ is bijective as well}
\label{pf.prop.digraph.2pf-chrom.P(f)bij}%
\end{equation}
(and, in fact, its inverse is $\mathcal{P}\left(  f^{-1}\right)  $).

Let $E=\operatorname*{set}A$ and $G=\underline{D}$. The definition of
$\underline{D}$ shows that $\underline{D}=\left(
V,\underbrace{\operatorname*{set}A}_{=E}\right)  =\left(  V,E\right)  $. Thus,
$G=\underline{D}=\left(  V,E\right)  $.

Define the map $\pi:A\rightarrow E$ as in Lemma
\ref{lem.digraph.2pf-chrom.K-free}. Lemma \ref{lem.digraph.2pf-chrom.K-free}
\textbf{(a)} shows that this map $\pi:A\rightarrow E$ is bijective. Hence, the
map $\mathcal{P}\left(  \pi\right)  :\mathcal{P}\left(  A\right)
\rightarrow\mathcal{P}\left(  E\right)  $ is bijective (according to
(\ref{pf.prop.digraph.2pf-chrom.P(f)bij}), applied to $X=A$, $Y=E$ and $f=\pi
$). We notice that every $F\in\mathcal{P}\left(  A\right)  $ satisfies%
\begin{align}
\left(  \mathcal{P}\left(  \pi\right)  \right)  \left(  F\right)   &
=\pi\left(  F\right)  \ \ \ \ \ \ \ \ \ \ \left(  \text{by the definition of
the map }\mathcal{P}\left(  \pi\right)  \right)
\label{pf.prop.digraph.2pf-chrom.1}\\
&  =\left\{  \underbrace{\pi\left(  a\right)  }%
_{\substack{=\operatorname*{set}a\\\text{(by the definition of }\pi\text{)}%
}}\ \mid\ a\in F\right\}  =\left\{  \operatorname*{set}a\ \mid\ a\in F\right\}
\nonumber\\
&  =\operatorname*{set}F \label{pf.prop.digraph.2pf-chrom.2}%
\end{align}
and%
\begin{equation}
\left\vert \underbrace{\left(  \mathcal{P}\left(  \pi\right)  \right)  \left(
F\right)  }_{\substack{=\pi\left(  F\right)  \\\text{(by
(\ref{pf.prop.digraph.2pf-chrom.1}))}}}\right\vert =\left\vert \pi\left(
F\right)  \right\vert =\left\vert F\right\vert
\label{pf.prop.digraph.2pf-chrom.4}%
\end{equation}
(since the map $\pi$ is injective (since the map $\pi$ is bijective)).

Define two sets $Z$ and $\mathfrak{K}$ as in Lemma
\ref{lem.digraph.2pf-chrom.K-free} \textbf{(b)}. Define a map $\ell
:E\rightarrow\mathbb{N}$ as in Lemma \ref{lem.digraph.2pf-chrom.K-free}
\textbf{(c)}. Definition \ref{def.BC} (applied to $X=\mathbb{N}$) shows that
the notion of a broken circuit of $G$ is well-defined (since a labeling
function $\ell:E\rightarrow\mathbb{N}$ is given).

Lemma \ref{lem.digraph.2pf-chrom.K-free} \textbf{(d)} shows that every element
of $\mathfrak{K}$ is a broken circuit of $G$. Thus, $\mathfrak{K}$ is a set of
broken circuits of $G$ (not necessarily containing all of them). Hence,
Corollary \ref{cor.chrompol.K-free} (applied to $X=\mathbb{N}$) shows that%
\begin{align*}
\chi_{G}  &  =\underbrace{\sum_{\substack{F\subseteq E;\\F\text{ is
}\mathfrak{K}\text{-free}}}}_{=\sum_{\substack{F\in\mathcal{P}\left(
E\right)  ;\\F\text{ is }\mathfrak{K}\text{-free}}}}\left(  -1\right)
^{\left\vert F\right\vert }x^{\operatorname*{conn}\left(  V,F\right)  }%
=\sum_{\substack{F\in\mathcal{P}\left(  E\right)  ;\\F\text{ is }%
\mathfrak{K}\text{-free}}}\left(  -1\right)  ^{\left\vert F\right\vert
}x^{\operatorname*{conn}\left(  V,F\right)  }\\
&  =\underbrace{\sum_{\substack{F\in\mathcal{P}\left(  A\right)  ;\\\left(
\mathcal{P}\left(  \pi\right)  \right)  \left(  F\right)  \text{ is
}\mathfrak{K}\text{-free}}}}_{\substack{=\sum_{\substack{F\in\mathcal{P}%
\left(  A\right)  ;\\\pi\left(  F\right)  \text{ is }\mathfrak{K}\text{-free}%
}}\\\text{(since every }F\in\mathcal{P}\left(  A\right)  \text{ satisfies}%
\\\left(  \mathcal{P}\left(  \pi\right)  \right)  \left(  F\right)
=\pi\left(  F\right)  \\\text{(by (\ref{pf.prop.digraph.2pf-chrom.1})))}%
}}\underbrace{\left(  -1\right)  ^{\left\vert \left(  \mathcal{P}\left(
\pi\right)  \right)  \left(  F\right)  \right\vert }}_{\substack{=\left(
-1\right)  ^{\left\vert F\right\vert }\\\text{(since }\left\vert \left(
\mathcal{P}\left(  \pi\right)  \right)  \left(  F\right)  \right\vert
=\left\vert F\right\vert \\\text{(by (\ref{pf.prop.digraph.2pf-chrom.4})))}%
}}\underbrace{x^{\operatorname*{conn}\left(  V,\left(  \mathcal{P}\left(
\pi\right)  \right)  \left(  F\right)  \right)  }}%
_{\substack{=x^{\operatorname*{conn}\left(  V,\operatorname*{set}F\right)
}\\\text{(since }\left(  \mathcal{P}\left(  \pi\right)  \right)  \left(
F\right)  =\operatorname*{set}F\\\text{(by (\ref{pf.prop.digraph.2pf-chrom.2}%
)))}}}\\
&  \ \ \ \ \ \ \ \ \ \ \left(
\begin{array}
[c]{c}%
\text{here, we have substituted }\left(  \mathcal{P}\left(  \pi\right)
\right)  \left(  F\right)  \text{ for }F\text{ in the sum,}\\
\text{since the map }\mathcal{P}\left(  \pi\right)  :\mathcal{P}\left(
A\right)  \rightarrow\mathcal{P}\left(  E\right)  \text{ is bijective}%
\end{array}
\right) \\
&  =\underbrace{\sum_{\substack{F\in\mathcal{P}\left(  A\right)  ;\\\pi\left(
F\right)  \text{ is }\mathfrak{K}\text{-free}}}}_{=\sum_{\substack{F\subseteq
A;\\\pi\left(  F\right)  \text{ is }\mathfrak{K}\text{-free}}}=\sum
_{\substack{F\subseteq A;\\\text{the set }\pi\left(  F\right)  \text{ is
}\mathfrak{K}\text{-free}}}}\left(  -1\right)  ^{\left\vert F\right\vert
}x^{\operatorname*{conn}\left(  V,\operatorname*{set}F\right)  }\\
&  =\underbrace{\sum_{\substack{F\subseteq A;\\\text{the set }\pi\left(
F\right)  \text{ is }\mathfrak{K}\text{-free}}}}_{\substack{=\sum
_{\substack{F\subseteq A;\\\text{the digraph }\left(  V,F\right)  \text{ is
}2\text{-step-free}}}\\\text{(because for any subset }F\text{ of }A\text{, the
condition}\\\left(  \text{the set }\pi\left(  F\right)  \text{ is
}\mathfrak{K}\text{-free}\right)  \text{ is equivalent to the}%
\\\text{condition }\left(  \text{the digraph }\left(  V,F\right)  \text{ is
}2\text{-step-free}\right)  \\\text{(by Lemma
\ref{lem.digraph.2pf-chrom.K-free} \textbf{(e)}))}}}\left(  -1\right)
^{\left\vert F\right\vert }x^{\operatorname*{conn}\left(
V,\operatorname*{set}F\right)  }\\
&  =\sum_{\substack{F\subseteq A;\\\text{the digraph }\left(  V,F\right)
\text{ is }2\text{-step-free}}}\left(  -1\right)  ^{\left\vert F\right\vert
}x^{\operatorname*{conn}\left(  V,\operatorname*{set}F\right)  }.
\end{align*}
Since $\underline{D}=G$, this rewrites as
\[
\chi_{\underline{D}}=\sum_{\substack{F\subseteq A;\\\text{the digraph }\left(
V,F\right)  \text{ is }2\text{-step-free}}}\left(  -1\right)  ^{\left\vert
F\right\vert }x^{\operatorname*{conn}\left(  V,\operatorname*{set}F\right)
}.
\]
This proves Proposition \ref{prop.digraph.2pf-chrom}.
\end{proof}
\end{verlong}

\section{\label{sec.ambi}Ambigraphs}

\subsection{Definitions of ambigraphs and proper colorings}

We now move on to study various generalizations of the chromatic symmetric function.

The first generalization replaces the finite graph $G$ by what we call an
\emph{ambigraph} (short for \textquotedblleft ambiguous
graph\textquotedblright). To our knowledge, this is a new notion, but it
serves to unify two rather well-known concepts:

\begin{itemize}
\item that of a \emph{multigraph} (see \cite[Definition 6.1.1]{21f-lec6}),
which is like a graph but allows for multiple parallel edges\footnote{More
precisely, our notion of an ambigraph generalizes \emph{loopless} multigraphs,
i.e., multigraphs with no loops. Loops would be a trivial but technically
awkward distraction in the study of chromatic polynomials, so we prefer to
leave them out of our notions of graphs.};

\item that of a \emph{hypergraph} (see \cite[Chapter 17]{Berge73}), which is
like a graph but allows its \textquotedblleft edges\textquotedblright\ to have
any number of endpoints instead of two.
\end{itemize}

In both of these settings, chromatic polynomials have been defined long ago
(for multigraphs perhaps since the introduction of the
concept\footnote{Authors often leave it vague whether their graphs are simple
graphs or multigraphs.}; for hypergraphs since Dohmen's \cite{Dohmen95}), and
it is fairly straightforward to define chromatic symmetric functions at the
same levels of generality. However, we shall instead define them for
\emph{ambigraphs}, a concept which we now introduce:

\begin{definition}
\label{def.ambigraph}\textbf{(a)} An \emph{ambigraph} shall mean a triple
$\left(  V,E,\varphi\right)  $, where $V$ and $E$ are two sets, and where
$\varphi:E\rightarrow\mathcal{P}\left(  \dbinom{V}{2}\right)  $ is a map.
(Thus, the map $\varphi$ sends each $e\in E$ to a set of $2$-element subsets
of $V$.)

\textbf{(b)} An ambigraph $\left(  V,E,\varphi\right)  $ is said to be
\emph{finite} if $V$ and $E$ are finite.

\textbf{(c)} Let $G=\left(  V,E,\varphi\right)  $ be an ambigraph. Then, the
elements of $V$ are called the \emph{vertices} of $G$, whereas the elements of
$E$ are called the \emph{edgeries} of $G$. If $e\in E$ is any edgery, then the
elements of $\varphi\left(  e\right)  $ are called the \emph{edges} of $e$.
Note that these edges are $2$-element subsets of $V$.

\textbf{(d)} Let $G=\left(  V,E,\varphi\right)  $ be an ambigraph. An edgery
$e\in E$ is said to be \emph{singleton} if it has exactly one edge (i.e., if
$\left\vert \varphi\left(  e\right)  \right\vert =1$).
\end{definition}

We view an ambigraph $\left(  V,E,\varphi\right)  $ as something akin to a
graph, except that instead of having edges, it has edgeries -- i.e., packages
of edges. (This can be equivalently viewed as an edge-colored graph, but we
eschew such an interpretation as we shall be using colors for other purposes.)

\begin{example}
\label{exa.ambigraph.1}Let $V$ be the set $\left\{  1,2,3,4,5\right\}  $. Let
$E$ be the $6$-element set $\left\{  e_{1},e_{2},e_{3},e_{4},e_{5}%
,e_{6}\right\}  $. Let $\varphi:E\rightarrow\mathcal{P}\left(  \dbinom{V}%
{2}\right)  $ be the map defined as follows:%
\begin{align*}
\varphi\left(  e_{1}\right)   &  =\left\{  \left\{  1,3\right\}  ,\ \left\{
2,5\right\}  \right\}  ,\\
\varphi\left(  e_{2}\right)   &  =\left\{  \left\{  1,2\right\}  ,\ \left\{
2,3\right\}  ,\ \left\{  3,4\right\}  \right\}  ,\\
\varphi\left(  e_{3}\right)   &  =\left\{  \left\{  2,5\right\}  \right\}  ,\\
\varphi\left(  e_{4}\right)   &  =\left\{  \left\{  1,3\right\}  ,\ \left\{
2,5\right\}  \right\}  ,\\
\varphi\left(  e_{5}\right)   &  =\left\{  {}\right\}  =\varnothing,\\
\varphi\left(  e_{6}\right)   &  =\left\{  \left\{  2,3\right\}  ,\ \left\{
3,4\right\}  \right\}  .
\end{align*}
Let $G$ be the triple $\left(  V,E,\varphi\right)  $. Then, $G$ is an
ambigraph. Its edgeries are $e_{1},e_{2},e_{3},e_{4},e_{5},e_{6}$. The edgery
$e_{3}$ is singleton, while the other edgeries are not. The edgeries $e_{1}$
and $e_{4}$ contain the same edges, namely $\left\{  1,3\right\}  $ and
$\left\{  2,5\right\}  $.
\end{example}

Both multigraphs and hypergraphs can now be encoded as ambigraphs:

\begin{itemize}
\item A multigraph can be viewed as an ambigraph whose all edgeries are
singleton\footnote{To be more precise, this is true for \emph{loopless}
multigraphs (i.e., multigraphs that have no loops). Loops can be encoded as
edgeries that have no edges.}.

\item A hypergraph can be encoded as an ambigraph by replacing each edge
$\left\{  v_{1},v_{2},\ldots,v_{k}\right\}  $ with an edgery consisting of all
edges $\left\{  v_{i},v_{j}\right\}  $ with $i<j$. (Note that this encoding
turns $1$-element edges into empty edgeries\footnote{i.e., edgeries that have
no edges}. Empty edgeries trivialize most of our results, but do not
invalidate any of our proofs, so we have no reason to exclude them.)
\end{itemize}

We can now define $X$-colorings and proper $X$-colorings for ambigraphs:

\begin{definition}
Let $G=\left(  V,E,\varphi\right)  $ be an ambigraph. Let $X$ be a set.

\textbf{(a)} An $X$\emph{-coloring} of $G$ is defined to mean a map
$V\rightarrow X$.

\textbf{(b)} If $f:V\rightarrow X$ is an $X$-coloring of $G$, and if $\left\{
s,t\right\}  $ is a $2$-element subset of $V$, then this subset $\left\{
s,t\right\}  $ is said to be $f$\emph{-dichromatic} if $f\left(  s\right)
\neq f\left(  t\right)  $.

\textbf{(c)} An $X$-coloring $f$ of $G$ is said to be \emph{proper} if each
edgery $e\in E$ has at least one $f$-dichromatic edge (i.e., for each edgery
$e\in E$, there exists at least one edge $\left\{  s,t\right\}  \in
\varphi\left(  e\right)  $ satisfying $f\left(  s\right)  \neq f\left(
t\right)  $).
\end{definition}

\begin{example}
\label{exa.ambigraph.coloring.1}Let $G=\left(  V,E,\varphi\right)  $ be the
ambigraph from Example \ref{exa.ambigraph.1}. Then, $G$ has no proper
$X$-coloring for any $X$, since the edgery $e_{5}$ will never have an
$f$-dichromatic edge, no matter what $f$ is (because $e_{5}$ has no edge to
begin with).

However, let us now modify $\varphi$ by replacing $\varphi\left(
e_{5}\right)  $ by the set $\left\{  \left\{  1,4\right\}  ,\ \left\{
2,4\right\}  ,\ \left\{  3,4\right\}  \right\}  $. Then, for example, the
$X$-coloring $f:V\rightarrow\left\{  1,2,3,4\right\}  $ given by%
\begin{align*}
f\left(  1\right)   &  =1,\ \ \ \ \ \ \ \ \ \ f\left(  2\right)
=2,\ \ \ \ \ \ \ \ \ \ f\left(  3\right)  =1,\\
f\left(  4\right)   &  =1,\ \ \ \ \ \ \ \ \ \ f\left(  5\right)  =3
\end{align*}
is proper. For instance, the edgery $e_{1}$ has the $f$-dichromatic edge
$\left\{  2,5\right\}  $, whereas the edgery $e_{2}$ has the two
$f$-dichromatic edges $\left\{  1,2\right\}  $ and $\left\{  2,3\right\}  $.
On the other hand, the $X$-coloring $f:V\rightarrow\left\{  1,2,3,4\right\}  $
given by%
\begin{align*}
f\left(  1\right)   &  =1,\ \ \ \ \ \ \ \ \ \ f\left(  2\right)
=2,\ \ \ \ \ \ \ \ \ \ f\left(  3\right)  =2,\\
f\left(  4\right)   &  =2,\ \ \ \ \ \ \ \ \ \ f\left(  5\right)  =3
\end{align*}
is not proper, since the edgery $e_{6}$ has no $f$-dichromatic edge.
\end{example}

\begin{example}
\label{exa.ambigraph.coloring.2}Let $G=\left(  V,E,\varphi\right)  $ be the
ambigraph with $V=\left\{  1,2,3,4\right\}  $ and $E=\left\{  a,b\right\}  $
and%
\[
\varphi\left(  a\right)  =\left\{  \left\{  2,3\right\}  \right\}
\ \ \ \ \ \ \ \ \ \ \text{and}\ \ \ \ \ \ \ \ \ \ \varphi\left(  b\right)
=\left\{  \left\{  1,2\right\}  ,\ \left\{  3,4\right\}  \right\}  .
\]
Let $X$ be a set. Then, a map $f:V\rightarrow X$ is a proper $X$-coloring of
$G$ if and only if it satisfies%
\[
f\left(  2\right)  \neq f\left(  3\right)  \ \ \ \ \ \ \ \ \ \ \text{and}%
\ \ \ \ \ \ \ \ \ \ \left(  f\left(  1\right)  \neq f\left(  2\right)  \text{
or }f\left(  3\right)  \neq f\left(  4\right)  \right)  .
\]
Indeed, the statement \textquotedblleft$f\left(  2\right)  \neq f\left(
3\right)  $\textquotedblright\ is saying that the edgery $a$ has an
$f$-dichromatic edge, whereas the statement \textquotedblleft$f\left(
1\right)  \neq f\left(  2\right)  $ or $f\left(  3\right)  \neq f\left(
4\right)  $\textquotedblright\ is saying that the edgery $b$ has an
$f$-dichromatic edge.
\end{example}

As Example \ref{exa.ambigraph.coloring.2} illustrates, the condition on an
$X$-coloring of $G$ to be proper is a conjunction of disjunctions of
inequalities of the form $f\left(  v\right)  \neq f\left(  w\right)  $ for
$\left(  v,w\right)  \in V^{2}$.

\begin{remark}
\label{rmk.ambigraph.Gamb}Any graph $G=\left(  V,E\right)  $ can be viewed as
an ambigraph $\left(  V,E,\varphi\right)  $ in a fairly obvious way: viz., by
setting $\varphi\left(  e\right)  =\left\{  e\right\}  $ for each edge $e\in
E$. We shall denote the latter ambigraph by $G^{\operatorname*{amb}}$. The
proper $X$-colorings of this ambigraph $G^{\operatorname*{amb}}$ are precisely
the proper $X$-colorings of the original graph $G$.
\end{remark}

\begin{remark}
\label{rmk.ambigraph.l00p}Let $G=\left(  V,E,\varphi\right)  $ be an
ambigraph, and let $X$ be a set. If there exists an edgery $e\in E$ satisfying
$\varphi\left(  e\right)  =\varnothing$, then there exists no proper
$X$-coloring $f$ of $G$ (since the edgery $e$ will never have an
$f$-dichromatic edge).
\end{remark}

\subsection{The chromatic symmetric function of an ambigraph}

We can now define the chromatic symmetric function of an ambigraph, by
imitating Definition \ref{def.chromsym}:

\begin{definition}
\label{def.ambichromsym}Let $G=\left(  V,E,\varphi\right)  $ be a finite ambigraph.

\textbf{(a)} For every $\mathbb{N}_{+}$-coloring $f:V\rightarrow\mathbb{N}%
_{+}$ of $G$, we let $\mathbf{x}_{f}$ denote the monomial $\prod_{v\in
V}x_{f\left(  v\right)  }$ in the indeterminates $x_{1},x_{2},x_{3},\ldots$.

\textbf{(b)} We define a power series $X_{G}\in\mathbf{k}\left[  \left[
x_{1},x_{2},x_{3},\ldots\right]  \right]  $ by%
\[
X_{G}=\sum_{\substack{f:V\rightarrow\mathbb{N}_{+}\text{ is a}\\\text{proper
}\mathbb{N}_{+}\text{-coloring of }G}}\mathbf{x}_{f}.
\]

This power series $X_{G}$ is called the \emph{chromatic symmetric function} of
$G$.
\end{definition}

\begin{example}
Let $G=\left(  V,E,\varphi\right)  $ be the ambigraph from Example
\ref{exa.ambigraph.coloring.2}. Then,%
\begin{align*}
X_{G}  &  =\sum_{\substack{f:V\rightarrow\mathbb{N}_{+}\text{ is
a}\\\text{proper }\mathbb{N}_{+}\text{-coloring of }G}%
}\ \ \underbrace{\mathbf{x}_{f}}_{=x_{f\left(  1\right)  }x_{f\left(
2\right)  }x_{f\left(  3\right)  }x_{f\left(  4\right)  }}\\
&  =\sum_{\substack{f:V\rightarrow\mathbb{N}_{+}\text{ is a}\\\text{proper
}\mathbb{N}_{+}\text{-coloring of }G}}x_{f\left(  1\right)  }x_{f\left(
2\right)  }x_{f\left(  3\right)  }x_{f\left(  4\right)  }\\
&  =\sum_{\substack{f:\left\{  1,2,3,4\right\}  \rightarrow\mathbb{N}%
_{+};\\f\left(  2\right)  \neq f\left(  3\right)  \text{ and }\left(  f\left(
1\right)  \neq f\left(  2\right)  \text{ or }f\left(  3\right)  \neq f\left(
4\right)  \right)  }}x_{f\left(  1\right)  }x_{f\left(  2\right)  }x_{f\left(
3\right)  }x_{f\left(  4\right)  }%
\end{align*}
(since a map $f:V\rightarrow\mathbb{N}_{+}$ is a proper $\mathbb{N}_{+}%
$-coloring of $G$ if and only if it satisfies $f\left(  2\right)  \neq
f\left(  3\right)  $ and $\left(  f\left(  1\right)  \neq f\left(  2\right)
\text{ or }f\left(  3\right)  \neq f\left(  4\right)  \right)  $). If we
re-encode each map $f:\left\{  1,2,3,4\right\}  \rightarrow\mathbb{N}_{+}$ as
the $4$-tuple $\left(  i,j,k,\ell\right)  =\left(  f\left(  1\right)
,f\left(  2\right)  ,f\left(  3\right)  ,f\left(  4\right)  \right)  $ of its
values, then we can rewrite this equality as%
\[
X_{G}=\sum_{\substack{\left(  i,j,k,\ell\right)  \in\left(  \mathbb{N}%
_{+}\right)  ^{4};\\j\neq k\text{ and }\left(  i\neq j\text{ or }k\neq
\ell\right)  }}x_{i}x_{j}x_{k}x_{\ell}.
\]

\end{example}

\begin{remark}
Let $G=\left(  V,E,\varphi\right)  $ be an ambigraph that has an edgery $e\in
E$ satisfying $\varphi\left(  e\right)  =\varnothing$. Then, there exists no
proper $\mathbb{N}_{+}$-coloring $f$ of $G$ (by Remark
\ref{rmk.ambigraph.l00p}), and thus we have $X_{G}=0$.
\end{remark}

\subsection{The union of a set of edgeries}

An ambigraph $\left(  V,E,\varphi\right)  $ can be transformed into a simple
graph $\left(  V,E^{\prime}\right)  $ by taking the union of some of its
edgeries -- i.e., by setting $E^{\prime}:=\bigcup_{e\in F}\varphi\left(
e\right)  $ for some subset $F$ of $E$. Let us give this construction a name:

\begin{definition}
\label{def.ambigraph.union}Let $G=\left(  V,E,\varphi\right)  $ be an
ambigraph. Let $F$ be a subset of $E$. Then, $\operatorname*{union}F$ shall
denote the subset $\bigcup_{e\in F}\varphi\left(  e\right)  $ of $\dbinom
{V}{2}$. Thus, we obtain a graph $\left(  V,\operatorname*{union}F\right)  $.
\end{definition}

\begin{example}
Let $G=\left(  V,E,\varphi\right)  $ be the ambigraph from Example
\ref{exa.ambigraph.1}. Then,%
\[
\operatorname*{union}\left\{  e_{2},e_{3}\right\}  =\left\{  \left\{
1,2\right\}  ,\ \left\{  2,3\right\}  ,\ \left\{  3,4\right\}  ,\ \left\{
2,5\right\}  \right\}
\]
and%
\[
\operatorname*{union}\left\{  e_{1},e_{2},e_{4}\right\}  =\left\{  \left\{
1,3\right\}  ,\ \left\{  2,5\right\}  ,\ \left\{  1,2\right\}  ,\ \left\{
2,3\right\}  ,\ \left\{  3,4\right\}  \right\}
\]
and $\operatorname*{union}\left\{  {}\right\}  =\varnothing$.
\end{example}

We can use this notion to state our first result about ambigraphs -- an
analogue to Theorem \ref{thm.chromsym.empty}. We shall prove this result at
the end of the next subsection.

\begin{theorem}
\label{thm.ambichromsym.empty}Let $G=\left(  V,E,\varphi\right)  $ be a finite
ambigraph. Then,%
\[
X_{G}=\sum_{F\subseteq E}\left(  -1\right)  ^{\left\vert F\right\vert
}p_{\lambda\left(  V,\operatorname*{union}F\right)  }.
\]
(Here, of course, the pair $\left(  V,\operatorname*{union}F\right)  $ is
regarded as a graph, and the expression $\lambda\left(
V,\operatorname*{union}F\right)  $ is understood according to Definition
\ref{def.connectedness} \textbf{(b)}.)
\end{theorem}

\subsection{Circuits and broken circuits}

Let us now define the notions of cycles, circuits and broken circuits of an ambigraph.

\begin{definition}
\label{def.ambigraph.cycle}Let $G=\left(  V,E,\varphi\right)  $ be an
ambigraph. A \emph{cycle} of $G$ denotes a list
\[
\left(  v_{1},e_{1},v_{2},e_{2},\ldots,v_{m},e_{m},v_{m+1}\right)
\]
with the following properties:

\begin{itemize}
\item The entries $v_{1},v_{2},\ldots,v_{m+1}$ at the odd positions of this
list belong to $V$, whereas the entries $e_{1},e_{2},\ldots,e_{m}$ at its even
positions belong to $E$.

\item We have $m\geq1$.

\item We have $v_{m+1}=v_{1}$.

\item The vertices $v_{1},v_{2},\ldots,v_{m}$ are pairwise distinct.

\item The edgeries $e_{1},e_{2},\ldots,e_{m}$ are pairwise distinct.

\item We have $\left\{  v_{i},v_{i+1}\right\}  \in\varphi\left(  e_{i}\right)
$ for every $i\in\left\{  1,2,\ldots,m\right\}  $.
\end{itemize}

If $\left(  v_{1},e_{1},v_{2},e_{2},\ldots,v_{m},e_{m},v_{m+1}\right)  $ is a
cycle of $G$, then the set $\left\{  e_{1},e_{2},\ldots,e_{m}\right\}  $ is
called a \emph{circuit} of $G$.
\end{definition}

\begin{example}
\label{exa.ambigraph.cycle.1}Let $G=\left(  V,E,\varphi\right)  $ be the
ambigraph from Example \ref{exa.ambigraph.1}. Then, the tuple%
\[
\left(  1,e_{2},2,e_{6},3,e_{4},1\right)
\]
is a cycle of $G$ (chiefly because $\left\{  1,2\right\}  \in\varphi\left(
e_{2}\right)  $ and $\left\{  2,3\right\}  \in\varphi\left(  e_{6}\right)  $
and $\left\{  3,1\right\}  \in\varphi\left(  e_{4}\right)  $). The circuit
corresponding to this cycle is $\left\{  e_{2},e_{6},e_{4}\right\}  $.

The tuple $\left(  1,e_{2},2,e_{6},3,e_{1},1\right)  $ is a cycle of $G$ as
well, and leads to the circuit $\left\{  e_{2},e_{6},e_{1}\right\}  $.

For comparison, the similar-looking tuple $\left(  1,e_{2},2,e_{2}%
,3,e_{4},1\right)  $ is not a cycle, since its edgeries $e_{2},e_{2},e_{4}$
are not distinct.
\end{example}

It is easy to see that any cycle $\left(  v_{1},e_{1},v_{2},e_{2},\ldots
,v_{m},e_{m},v_{m+1}\right)  $ of an ambigraph $G=\left(  V,E,\varphi\right)
$ must have $m\geq2$ (because if it had $m=1$, then we would have $v_{1}%
=v_{2}$ and thus $\left\{  v_{1}\right\}  =\left\{  v_{1},v_{2}\right\}
\in\varphi\left(  e_{1}\right)  $, contradicting the fact that $\varphi\left(
e_{1}\right)  \in\mathcal{P}\left(  \dbinom{V}{2}\right)  $ contains only
$2$-element sets).

\begin{definition}
\label{def.ambigraph.BC}Let $G=\left(  V,E,\varphi\right)  $ be an ambigraph.
Let $X$ be a totally ordered set. Let $\ell:E\rightarrow X$ be a function. We
shall refer to $\ell$ as the \emph{labeling function}. For every edgery $e$ of
$G$, we shall refer to $\ell\left(  e\right)  $ as the \emph{label} of $e$.

A \emph{broken circuit} of $G$ means a subset of $E$ having the form
$C\setminus\left\{  e\right\}  $, where $C$ is a circuit of $G$, and where $e$
is the unique singleton edgery in $C$ having maximum label (among the
singleton edgeries in $C$). Of course, the notion of a broken circuit of $G$
depends on the function $\ell$; however, we suppress the mention of $\ell$ in
our notation, since we will not consider situations where two different $\ell
$'s coexist.
\end{definition}

Thus, if $G$ is an ambigraph with a labeling function $\ell$, then any circuit
$C$ of $G$ gives rise to a broken circuit provided that

\begin{itemize}
\item at least one edgery in $C$ is singleton, and

\item among the singleton edgeries in $C$, only one attains the maximum label.
\end{itemize}

\noindent In all other cases, $C$ does not give rise to a broken circuit.
Notice that two different circuits may give rise to one and the same broken circuit.

\begin{example}
\textbf{(a)} Let $G=\left(  V,E,\varphi\right)  $ be the ambigraph from
Example \ref{exa.ambigraph.1}. Let $X$ and $\ell:E\rightarrow X$ be arbitrary.
Then, the circuit $\left\{  e_{2},e_{6},e_{4}\right\}  $ we found in Example
\ref{exa.ambigraph.cycle.1} does not give rise to a broken circuit, since it
contains no singleton edgery. However, the circuit $\left\{  e_{3}%
,e_{1}\right\}  $ (coming from the cycle $\left(  2,e_{3},5,e_{1},2\right)  $)
does give rise to a broken circuit (namely, $\left\{  e_{1}\right\}  $), since
its unique singleton edgery is $e_{3}$.

\textbf{(b)} For better examples, we can try an ambigraph having more
singleton edgeries. For instance, we can choose some graph $G$ and consider
the corresponding ambigraph $G^{\operatorname*{amb}}$ as defined in Remark
\ref{rmk.ambigraph.Gamb}. Then, the broken circuits of $G^{\operatorname*{amb}%
}$ are precisely the broken circuits of $G$.

\textbf{(c)} Here is another example: Let $G=\left(  V,E,\varphi\right)  $ be
the ambigraph with $V=\left\{  1,2,3,4\right\}  $, $E=\left\{  e_{1}%
,e_{2},e_{3}\right\}  $ and%
\[
\varphi\left(  e_{1}\right)  =\left\{  \left\{  1,2\right\}  \right\}
,\ \ \ \ \ \ \ \ \ \ \varphi\left(  e_{2}\right)  =\left\{  \left\{
2,3\right\}  \right\}  ,\ \ \ \ \ \ \ \ \ \ \varphi\left(  e_{3}\right)
=\left\{  \left\{  3,4\right\}  ,\ \left\{  1,3\right\}  \right\}  .
\]
Let $X$ and $\ell:E\rightarrow X$ be arbitrary. Then, the cycle $\left(
1,e_{1},2,e_{2},3,e_{3},1\right)  $ of $G$ gives rise to the circuit $\left\{
e_{1},e_{2},e_{3}\right\}  $. This circuit gives rise to

\begin{itemize}
\item the broken circuit $\left\{  e_{2},e_{3}\right\}  $ if $\ell\left(
e_{1}\right)  >\ell\left(  e_{2}\right)  $;

\item the broken circuit $\left\{  e_{1},e_{3}\right\}  $ if $\ell\left(
e_{1}\right)  <\ell\left(  e_{2}\right)  $;

\item no broken circuit if $\ell\left(  e_{1}\right)  =\ell\left(
e_{2}\right)  $.
\end{itemize}

\noindent Note that $\ell\left(  e_{3}\right)  $ does not matter, since the
edgery $e_{3}$ is not singleton.
\end{example}

The notion of a broken circuit always depends on a labeling function
$\ell:E\rightarrow X$. Any time we speak about broken circuits, we shall
tacitly understand that the function $\ell:E\rightarrow X$ is used as the
labeling function.

\subsection{The main results for ambigraphs}

We can now generalize Theorem \ref{thm.chromsym.varis} to ambigraphs:

\begin{theorem}
\label{thm.ambichromsym.varis}Let $G=\left(  V,E,\varphi\right)  $ be a finite
ambigraph. Let $X$ be a totally ordered set. Let $\ell:E\rightarrow X$ be a
labeling function. Let $\mathfrak{K}$ be some set of broken circuits of $G$
(not necessarily containing all of them). Let $a_{K}$ be an element of
$\mathbf{k}$ for every $K\in\mathfrak{K}$. Then,%
\[
X_{G}=\sum_{F\subseteq E}\left(  -1\right)  ^{\left\vert F\right\vert }\left(
\prod_{\substack{K\in\mathfrak{K};\\K\subseteq F}}a_{K}\right)  p_{\lambda
\left(  V,\operatorname*{union}F\right)  }.
\]
(Here, of course, the pair $\left(  V,\operatorname*{union}F\right)  $ is
regarded as a graph, and the expression $\lambda\left(
V,\operatorname*{union}F\right)  $ is understood according to Definition
\ref{def.connectedness} \textbf{(b)}.)
\end{theorem}

This theorem generalizes Theorem \ref{thm.chromsym.varis} (in fact, the latter
is easily obtained by applying the former to $G^{\operatorname*{amb}}$ instead
of $G$). Before we prove it, let us first explore some particular cases. Using
Definition \ref{def.K-free}, we can obtain the following consequences of
Theorem \ref{thm.ambichromsym.varis}:

\begin{corollary}
\label{cor.ambichromsym.K-free}Let $G=\left(  V,E,\varphi\right)  $ be a
finite ambigraph. Let $X$ be a totally ordered set. Let $\ell:E\rightarrow X$
be a labeling function. Let $\mathfrak{K}$ be some set of broken circuits of
$G$ (not necessarily containing all of them). Then,%
\[
X_{G}=\sum_{\substack{F\subseteq E;\\F\text{ is }\mathfrak{K}\text{-free}%
}}\left(  -1\right)  ^{\left\vert F\right\vert }p_{\lambda\left(
V,\operatorname*{union}F\right)  }.
\]

\end{corollary}

\begin{corollary}
\label{cor.ambichromsym.NBC}Let $G=\left(  V,E,\varphi\right)  $ be a finite
ambigraph. Let $X$ be a totally ordered set. Let $\ell:E\rightarrow X$ be a
labeling function. Then,%
\[
X_{G}=\sum_{\substack{F\subseteq E;\\F\text{ contains no broken}%
\\\text{circuit of }G\text{ as a subset}}}\left(  -1\right)  ^{\left\vert
F\right\vert }p_{\lambda\left(  V,\operatorname*{union}F\right)  }.
\]

\end{corollary}

\subsection{Proofs}

Our proof of Theorem \ref{thm.ambichromsym.varis} is mostly similar to our
above proof of Theorem \ref{thm.chromsym.varis}, but there are some
complications due to the possibility of non-singleton edgeries.

We shall use the Iverson bracket notation (Definition \ref{def.iverson}). We
begin with a basic cancellation lemma (see, e.g., \cite[Proposition
7.8.10]{mps}):

\begin{lemma}
\label{lem.cancel}Let $S$ be a finite set. Then, $\sum_{I\subseteq S}\left(
-1\right)  ^{\left\vert I\right\vert }=\left[  S=\varnothing\right]  $.
\end{lemma}

In Definition \ref{def.Eqs}, we defined a set $\operatorname*{Eqs}f$ for any
map $f:V\rightarrow X$. This set helped us find the edges of a graph whose
endpoints received the same color under a coloring $f$. We shall now introduce
a similar notion for ambigraphs:

\begin{definition}
\label{def.ambiEQS}Let $G=\left(  V,E,\varphi\right)  $ be an ambigraph. Let
$X$ be a set. Let $f:V\rightarrow X$ be a map. We let $\operatorname*{EQS}%
\left(  G,f\right)  $ denote the subset%
\[
\left\{  e\in E\ \mid\ \varphi\left(  e\right)  \subseteq\operatorname*{Eqs}%
f\right\}
\]
of $E$.
\end{definition}

\begin{example}
Let $G=\left(  V,E,\varphi\right)  $ be the ambigraph with $V=\left\{
1,2,3,4,5,6\right\}  $ and $E=\left\{  a,b,c\right\}  $ and%
\begin{align*}
\varphi\left(  a\right)   &  =\left\{  \left\{  1,3\right\}  ,\ \left\{
2,4\right\}  ,\ \left\{  3,6\right\}  \right\}  ,\\
\varphi\left(  b\right)   &  =\left\{  \left\{  1,3\right\}  ,\ \left\{
2,6\right\}  \right\}  ,\\
\varphi\left(  c\right)   &  =\varnothing.
\end{align*}
Let $X=\mathbb{N}$, and let $f:V\rightarrow X$ be the map that sends
$1,2,3,4,5,6$ to $1,2,1,2,1,2$, respectively. Then,
\[
\operatorname*{Eqs}f=\left\{  \left\{  1,3\right\}  ,\ \left\{  1,5\right\}
,\ \left\{  3,5\right\}  ,\ \left\{  2,4\right\}  ,\ \left\{  2,6\right\}
,\ \left\{  4,6\right\}  \right\}
\]
and $\operatorname*{EQS}\left(  G,f\right)  =\left\{  b,c\right\}  $. Indeed,
we have $b\in\operatorname*{EQS}\left(  G,f\right)  $ since $\varphi\left(
b\right)  =\left\{  \left\{  1,3\right\}  ,\ \left\{  2,6\right\}  \right\}
\subseteq\operatorname*{Eqs}f$, and we have $c\in\operatorname*{EQS}\left(
G,f\right)  $ since $\varphi\left(  c\right)  =\varnothing\subseteq
\operatorname*{Eqs}f$. On the other hand, $a\notin\operatorname*{EQS}\left(
G,f\right)  $ since $\varphi\left(  a\right)  \not \subseteq
\operatorname*{Eqs}f$ (because $\left\{  3,6\right\}  $ belongs to
$\varphi\left(  a\right)  $ but not to $\operatorname*{Eqs}f$).
\end{example}

\begin{remark}
\label{rmk.ambiEQS.rewrite}Let $G=\left(  V,E,\varphi\right)  $ be an
ambigraph. Let $X$ be a set. Let $f:V\rightarrow X$ be a map. The definition
of $\operatorname*{EQS}\left(  G,f\right)  $ yields%
\begin{align}
\operatorname*{EQS}\left(  G,f\right)   &  =\left\{  e\in E\ \mid
\ \varphi\left(  e\right)  \subseteq\operatorname*{Eqs}f\right\}
\label{eq.rmk.ambiEQS.rewrite.1}\\
&  =\left\{  d\in E\ \mid\ \varphi\left(  d\right)  \subseteq
\operatorname*{Eqs}f\right\} \label{eq.rmk.ambiEQS.rewrite.2}\\
&  \ \ \ \ \ \ \ \ \ \ \ \ \ \ \ \ \ \ \ \ \left(  \text{here, we have renamed
the index }e\text{ as }d\right) \nonumber\\
&  =\left\{  d\in E\ \mid\ \text{no edge of }d\text{ is }f\text{-dichromatic}%
\right\}  . \label{eq.rmk.ambiEQS.rewrite.3}%
\end{align}
(The last equality is easy to check from the definitions.)
\end{remark}

\begin{verlong}
\begin{proof}
[Proof of Remark \ref{rmk.ambiEQS.rewrite}.]The equalities
(\ref{eq.rmk.ambiEQS.rewrite.1}) and (\ref{eq.rmk.ambiEQS.rewrite.2}) are
obvious. It remains to prove (\ref{eq.rmk.ambiEQS.rewrite.3}).

Indeed, let $d\in E$. Then, the edges of $d$ are defined to be the elements of
$\varphi\left(  d\right)  $. Thus, the elements of $\varphi\left(  d\right)  $
are precisely the edges of $d$. Hence, it is easy to prove the implication%
\[
\left(  \varphi\left(  d\right)  \subseteq\operatorname*{Eqs}f\right)
\ \Longrightarrow\ \left(  \text{no edge of }d\text{ is }f\text{-dichromatic}%
\right)
\]
\footnote{\textit{Proof.} Assume that $\varphi\left(  d\right)  \subseteq
\operatorname*{Eqs}f$. We must prove that no edge of $d$ is $f$-dichromatic.
\par
Indeed, let $g$ be an edge of $d$. Thus, $g\in\varphi\left(  d\right)  $
(since the edges of $d$ are the elements of $\varphi\left(  d\right)  $).
Hence,
\[
g\in\varphi\left(  d\right)  \subseteq\operatorname*{Eqs}f=\left\{  \left\{
s,t\right\}  \ \mid\ \left(  s,t\right)  \in V^{2},\ s\neq t\text{ and
}f\left(  s\right)  =f\left(  t\right)  \right\}  .
\]
In other words, $g$ can be written as $g=\left\{  s,t\right\}  $ for some pair
$\left(  s,t\right)  \in V^{2}$ satisfying $s\neq t$ and $f\left(  s\right)
=f\left(  t\right)  $. Consider this pair $\left(  s,t\right)  $. Then,
$\left\{  s,t\right\}  $ is a $2$-element subset of $V$ (since $\left(
s,t\right)  \in V^{2}$ and $s\neq t$), and satisfies $f\left(  s\right)
=f\left(  t\right)  $. Hence, this subset $\left\{  s,t\right\}  $ is not
$f$-dichromatic (because if it was $f$-dichromatic, then it would satisfy
$f\left(  s\right)  \neq f\left(  t\right)  $ (by the definition of
\textquotedblleft$f$-dichromatic\textquotedblright), but this would contradict
$f\left(  s\right)  =f\left(  t\right)  $). In other words, $g$ is not
$f$-dichromatic (since $g=\left\{  s,t\right\}  $).
\par
Forget that we fixed $g$. We thus have shown that if $g$ is any edge of $d$,
then $g$ is not $f$-dichromatic. In other words, no edge of $d$ is
$f$-dichromatic. Qed.} and the implication%
\[
\left(  \text{no edge of }d\text{ is }f\text{-dichromatic}\right)
\ \Longrightarrow\ \left(  \varphi\left(  d\right)  \subseteq
\operatorname*{Eqs}f\right)
\]
\footnote{\textit{Proof.} Assume that no edge of $d$ is $f$-dichromatic. We
must prove that $\varphi\left(  d\right)  \subseteq\operatorname*{Eqs}f$.
\par
Indeed, let $g\in\varphi\left(  d\right)  $. Thus, $g$ is an edge of $d$
(since the edges of $d$ are the elements of $\varphi\left(  d\right)  $).
Hence, $g$ is not $f$-dichromatic (since no edge of $d$ is $f$-dichromatic).
\par
However, $g\in\varphi\left(  d\right)  \subseteq\dbinom{V}{2}$. In other
words, $g$ is a $2$-element subset of $V$. Thus, we can write $g$ as
$g=\left\{  x,y\right\}  $ for two distinct elements $x$ and $y$ of $V$.
Consider these $x$ and $y$. Then, $\left(  x,y\right)  \in V^{2}$ and $x\neq
y$ (since $x$ and $y$ are distinct). Thus, $\left\{  x,y\right\}  $ is a
$2$-element subset of $V$.
\par
Recall that $g$ is not $f$-dichromatic. In other words, $\left\{  x,y\right\}
$ is not $f$-dichromatic (since $g=\left\{  x,y\right\}  $).
\par
The $2$-element subset $\left\{  x,y\right\}  $ of $V$ is $f$-dichromatic if
and only if $f\left(  x\right)  \neq f\left(  y\right)  $ (by the definition
of \textquotedblleft$f$-dichromatic\textquotedblright). Hence, we don't have
$f\left(  x\right)  \neq f\left(  y\right)  $ (since $\left\{  x,y\right\}  $
is not $f$-dichromatic). In other words, we have $f\left(  x\right)  =f\left(
y\right)  $.
\par
Hence, $\left\{  x,y\right\}  $ is a set of the form $\left\{  s,t\right\}  $
for some pair $\left(  s,t\right)  \in V^{2}$ satisfying $s\neq t$ and
$f\left(  s\right)  =f\left(  t\right)  $ (namely, for $\left(  s,t\right)
=\left(  x,y\right)  $). In other words,%
\[
g\in\left\{  \left\{  s,t\right\}  \ \mid\ \left(  s,t\right)  \in
V^{2},\ s\neq t\text{ and }f\left(  s\right)  =f\left(  t\right)  \right\}
=\operatorname*{Eqs}f
\]
(since $\operatorname*{Eqs}f$ is defined to be $\left\{  \left\{  s,t\right\}
\ \mid\ \left(  s,t\right)  \in V^{2},\ s\neq t\text{ and }f\left(  s\right)
=f\left(  t\right)  \right\}  $).
\par
Forget that we fixed $g$. We thus have shown that $g\in\operatorname*{Eqs}f$
for each $g\in\varphi\left(  d\right)  $. In other words, $\varphi\left(
d\right)  \subseteq\operatorname*{Eqs}f$. Qed.}. Combining these two
implications, we obtain the equivalence%
\[
\left(  \varphi\left(  d\right)  \subseteq\operatorname*{Eqs}f\right)
\ \Longleftrightarrow\ \left(  \text{no edge of }d\text{ is }%
f\text{-dichromatic}\right)  .
\]

Forget that we fixed $d$. We thus have proved the equivalence%
\[
\left(  \varphi\left(  d\right)  \subseteq\operatorname*{Eqs}f\right)
\ \Longleftrightarrow\ \left(  \text{no edge of }d\text{ is }%
f\text{-dichromatic}\right)
\]
for each $d\in E$. Therefore,%
\[
\left\{  d\in E\ \mid\ \varphi\left(  d\right)  \subseteq\operatorname*{Eqs}%
f\right\}  =\left\{  d\in E\ \mid\ \text{no edge of }d\text{ is }%
f\text{-dichromatic}\right\}  .
\]
Hence, (\ref{eq.rmk.ambiEQS.rewrite.2}) can be rewritten as%
\[
\operatorname*{EQS}\left(  G,f\right)  =\left\{  d\in E\ \mid\ \text{no edge
of }d\text{ is }f\text{-dichromatic}\right\}  .
\]
This proves (\ref{eq.rmk.ambiEQS.rewrite.3}). Thus, the proof of Remark
\ref{rmk.ambiEQS.rewrite} is complete.
\end{proof}
\end{verlong}

In analogy to Lemma \ref{lem.Eqs.proper}, we can use $\operatorname*{EQS}%
\left(  G,f\right)  $ to characterize when an $X$-coloring $f$ is proper:

\begin{lemma}
\label{lem.ambiEqs.proper}Let $G=\left(  V,E,\varphi\right)  $ be an
ambigraph. Let $X$ be a set. Let $f:V\rightarrow X$ be a map. Then, the
$X$-coloring $f$ of $G$ is proper if and only if $\operatorname*{EQS}\left(
G,f\right)  =\varnothing$.
\end{lemma}

\begin{vershort}
\begin{proof}
[Proof of Lemma \ref{lem.ambiEqs.proper}.]An exercise in unfolding definitions
and applying de Morgan's laws.
\end{proof}
\end{vershort}

\begin{verlong}
\begin{proof}
[Proof of Lemma \ref{lem.ambiEqs.proper}.]The definition of
$\operatorname*{Eqs}f$ shows that%
\begin{align}
\operatorname*{Eqs}f  &  =\left\{  \left\{  s,t\right\}  \ \mid\ \left(
s,t\right)  \in V^{2},\ s\neq t\text{ and }f\left(  s\right)  =f\left(
t\right)  \right\} \nonumber\\
&  =\left\{  \left\{  x,y\right\}  \ \mid\ \left(  x,y\right)  \in
V^{2},\ x\neq y\text{ and }f\left(  x\right)  =f\left(  y\right)  \right\}
\label{pf.lem.ambiEqs.proper.0}%
\end{align}
(here, we renamed the index $\left(  s,t\right)  $ as $\left(  x,y\right)  $).

The definition of $\operatorname*{EQS}\left(  G,f\right)  $ yields
$\operatorname*{EQS}\left(  G,f\right)  =\left\{  e\in E\ \mid\ \varphi\left(
e\right)  \subseteq\operatorname*{Eqs}f\right\}  $. Thus, an edgery $e\in E$
belongs to $\operatorname*{EQS}\left(  G,f\right)  $ if and only if it
satisfies $\varphi\left(  e\right)  \subseteq\operatorname*{Eqs}f$. In other
words, for any edgery $e\in E$, we have the logical equivalence%
\begin{equation}
\left(  e\in\operatorname*{EQS}\left(  G,f\right)  \right)
\ \Longleftrightarrow\ \left(  \varphi\left(  e\right)  \subseteq
\operatorname*{Eqs}f\right)  . \label{pf.lem.ambiEqs.proper.EQS}%
\end{equation}

We shall first prove the logical implication%
\begin{equation}
\left(  \text{the }X\text{-coloring }f\text{ of }G\text{ is proper}\right)
\ \Longrightarrow\ \left(  \operatorname*{EQS}\left(  G,f\right)
=\varnothing\right)  . \label{pf.lem.ambiEqs.proper.im1}%
\end{equation}

\textit{Proof of (\ref{pf.lem.ambiEqs.proper.im1}):} Assume that the
$X$-coloring $f$ of $G$ is proper. We must show that $\operatorname*{EQS}%
\left(  G,f\right)  =\varnothing$.

Recall that the $X$-coloring $f$ of $G$ is proper if and only if each edgery
$e\in E$ has at least one $f$-dichromatic edge (by the definition of
\textquotedblleft proper\textquotedblright). Thus, each edgery $e\in E$ has at
least one $f$-dichromatic edge (since the $X$-coloring $f$ of $G$ is proper).

Now, let $d\in\operatorname*{EQS}\left(  G,f\right)  $. Thus,%
\[
d\in\operatorname*{EQS}\left(  G,f\right)  =\left\{  e\in E\ \mid
\ \varphi\left(  e\right)  \subseteq\operatorname*{Eqs}f\right\}
\]
(by the definition of $\operatorname*{EQS}\left(  G,f\right)  $). In other
words, $d$ is an $e\in E$ satisfying $\varphi\left(  e\right)  \subseteq
\operatorname*{Eqs}f$. In other words, $d\in E$ and $\varphi\left(  d\right)
\subseteq\operatorname*{Eqs}f$.

However, recall that each edgery $e\in E$ has at least one $f$-dichromatic
edge. Applying this to $e=d$, we conclude that $d$ has at least one
$f$-dichromatic edge. Let $g$ be this edge. Thus, $g\in\varphi\left(
d\right)  $ (since $g$ is an edge of $d$). Hence,%
\[
g\in\varphi\left(  d\right)  \subseteq\operatorname*{Eqs}f=\left\{  \left\{
x,y\right\}  \ \mid\ \left(  x,y\right)  \in V^{2},\ x\neq y\text{ and
}f\left(  x\right)  =f\left(  y\right)  \right\}
\]
(by (\ref{pf.lem.ambiEqs.proper.0})). In other words, $g$ has the form
$\left\{  x,y\right\}  $ for some pair $\left(  x,y\right)  \in V^{2}$
satisfying $x\neq y$ and $f\left(  x\right)  =f\left(  y\right)  $. Consider
this pair $\left(  x,y\right)  $. Thus, $g=\left\{  x,y\right\}  $.

We have $f\left(  x\right)  =f\left(  y\right)  $. In other words, we don't
have $f\left(  x\right)  \neq f\left(  y\right)  $. Moreover, $\left\{
x,y\right\}  $ is a $2$-element subset of $V$ (since $\left(  x,y\right)  \in
V^{2}$ and $x\neq y$).

Now, recall that the subset $\left\{  x,y\right\}  $ of $V$ is $f$-dichromatic
if and only if $f\left(  x\right)  \neq f\left(  y\right)  $ (by the
definition of \textquotedblleft$f$-dichromatic\textquotedblright). Thus, this
subset $\left\{  x,y\right\}  $ is not $f$-dichromatic (since we don't have
$f\left(  x\right)  \neq f\left(  y\right)  $). In other words, $g$ is not
$f$-dichromatic (since $g=\left\{  x,y\right\}  $). This contradicts the fact
that $g$ is $f$-dichromatic.

Forget that we fixed $d$. We thus have obtained a contradiction for each
$d\in\operatorname*{EQS}\left(  G,f\right)  $. Hence, there exists no
$d\in\operatorname*{EQS}\left(  G,f\right)  $. In other words, the set
$\operatorname*{EQS}\left(  G,f\right)  $ is empty. In other words,
$\operatorname*{EQS}\left(  G,f\right)  =\varnothing$. Thus, the implication
(\ref{pf.lem.ambiEqs.proper.im1}) is proven.

Now, we shall prove the implication%
\begin{equation}
\left(  \operatorname*{EQS}\left(  G,f\right)  =\varnothing\right)
\ \Longrightarrow\ \left(  \text{the }X\text{-coloring }f\text{ of }G\text{ is
proper}\right)  . \label{pf.lem.ambiEqs.proper.im2}%
\end{equation}

\textit{Proof of (\ref{pf.lem.ambiEqs.proper.im2}):} Assume that
$\operatorname*{EQS}\left(  G,f\right)  =\varnothing$. We have to show that
the $X$-coloring $f$ of $G$ is proper.

Let $e\in E$ be an edgery. We shall show that $e$ has at least one
$f$-dichromatic edge.

Indeed, assume the contrary. Thus, $e$ has no $f$-dichromatic edge. In other
words, an edge of $e$ cannot be $f$-dichromatic.

Now, let $g\in\varphi\left(  e\right)  $ be arbitrary. Thus, $g$ is an edge of
$e$. Hence, $g$ is not $f$-dichromatic (since an edge of $e$ cannot be
$f$-dichromatic). But $g\in\varphi\left(  e\right)  \subseteq\dbinom{V}{2}$,
so that $g$ is a $2$-element subset of $V$. In other words, we can write $g$
as $g=\left\{  s,t\right\}  $ for two distinct elements $s$ and $t$ of $V$.
Consider these two elements $s$ and $t$. We have $s\neq t$ (since $s$ and $t$
are distinct).

We have $g=\left\{  s,t\right\}  $. Thus, $\left\{  s,t\right\}  $ is not
$f$-dichromatic (since $g$ is not $f$-dichromatic).

However, $\left\{  s,t\right\}  $ is $f$-dichromatic if and only if $f\left(
s\right)  \neq f\left(  t\right)  $ (by the definition of \textquotedblleft%
$f$-dichromatic\textquotedblright). Hence, we cannot have $f\left(  s\right)
\neq f\left(  t\right)  $ (since $g$ is not $f$-dichromatic). In other words,
we have $f\left(  s\right)  =f\left(  t\right)  $. Hence, $\left\{
s,t\right\}  $ is a set of the form $\left\{  x,y\right\}  $ for some $\left(
x,y\right)  \in V^{2}$ satisfying $x\neq y$ and $f\left(  x\right)  =f\left(
y\right)  $ (namely, for $\left(  x,y\right)  =\left(  s,t\right)  $).

Now,%
\begin{align*}
g  &  =\left\{  s,t\right\} \\
&  \in\left\{  \left\{  x,y\right\}  \ \mid\ \left(  x,y\right)  \in
V^{2},\ x\neq y\text{ and }f\left(  x\right)  =f\left(  y\right)  \right\} \\
&  \ \ \ \ \ \ \ \ \ \ \ \ \ \ \ \ \ \ \ \ \left(
\begin{array}
[c]{c}%
\text{since }\left\{  s,t\right\}  \text{ is a set of the form }\left\{
x,y\right\}  \text{ for}\\
\text{some }\left(  x,y\right)  \in V^{2}\text{ satisfying }x\neq y\text{ and
}f\left(  x\right)  =f\left(  y\right)
\end{array}
\right) \\
&  =\operatorname*{Eqs}f\ \ \ \ \ \ \ \ \ \ \left(  \text{by
(\ref{pf.lem.ambiEqs.proper.0})}\right)  .
\end{align*}

Forget that we fixed $g$. We thus have shown that $g\in\operatorname*{Eqs}f$
for each $g\in\varphi\left(  e\right)  $. In other words, $\varphi\left(
e\right)  \subseteq\operatorname*{Eqs}f$. According to the equivalence
(\ref{pf.lem.ambiEqs.proper.EQS}), this entails that $e\in\operatorname*{EQS}%
\left(  G,f\right)  $ (since $e\in E$). In other words, $e\in\varnothing$
(since $\operatorname*{EQS}\left(  G,f\right)  =\varnothing$). But this is
absurd (since the empty set $\varnothing$ has no elements). This contradiction
shows that our assumption was false. Hence, $e$ has at least one
$f$-dichromatic edge.

Forget that we fixed $e$. We thus have shown that each edgery $e\in E$ has at
least one $f$-dichromatic edge. In other words, the $X$-coloring $f$ of $G$ is
proper (by the definition of \textquotedblleft proper\textquotedblright). This
proves the implication (\ref{pf.lem.ambiEqs.proper.im2}).

Now we have proven the two implications (\ref{pf.lem.ambiEqs.proper.im1}) and
(\ref{pf.lem.ambiEqs.proper.im2}). Combining these two implications, we obtain
the equivalence%
\[
\left(  \text{the }X\text{-coloring }f\text{ of }G\text{ is proper}\right)
\ \Longleftrightarrow\ \left(  \operatorname*{EQS}\left(  G,f\right)
=\varnothing\right)  .
\]
This proves Lemma \ref{lem.ambiEqs.proper}.
\end{proof}
\end{verlong}

The following simple lemma connects $\operatorname*{EQS}\left(  G,f\right)  $
with the $\operatorname*{union}F$ construction from Definition
\ref{def.ambigraph.union}:

\begin{lemma}
\label{lem.ambiEqs.union}Let $G=\left(  V,E,\varphi\right)  $ be an ambigraph.
Let $X$ be a set. Let $f:V\rightarrow X$ be a map. Let $B$ be a subset of $E$.
Then, $B\subseteq\operatorname*{EQS}\left(  G,f\right)  $ holds if and only if
$\operatorname*{union}B\subseteq\operatorname*{Eqs}f$.
\end{lemma}

\begin{proof}
[Proof of Lemma \ref{lem.ambiEqs.union}.]The definition of
$\operatorname*{union}B$ yields $\operatorname*{union}B=\bigcup_{e\in
B}\varphi\left(  e\right)  $. Hence, we have the following chain of logical
equivalences:%
\begin{align}
&  \ \left(  \operatorname*{union}B\subseteq\operatorname*{Eqs}f\right)
\nonumber\\
&  \Longleftrightarrow\ \left(  \bigcup_{e\in B}\varphi\left(  e\right)
\subseteq\operatorname*{Eqs}f\right) \nonumber\\
&  \Longleftrightarrow\ \left(  \varphi\left(  e\right)  \subseteq
\operatorname*{Eqs}f\text{ for each }e\in B\right)  .
\label{pf.lem.ambiEqs.union.0}%
\end{align}

\begin{vershort}
However, for an edgery $e\in E$, the condition $\varphi\left(  e\right)
\subseteq\operatorname*{Eqs}f$ is equivalent to $e\in\operatorname*{EQS}%
\left(  G,f\right)  $ (by the definition of $\operatorname*{EQS}\left(
G,f\right)  $). Hence, we can rewrite the equivalence
(\ref{pf.lem.ambiEqs.union.0}) as follows:%
\begin{align*}
\left(  \operatorname*{union}B\subseteq\operatorname*{Eqs}f\right)  \  &
\Longleftrightarrow\ \left(  e\in\operatorname*{EQS}\left(  G,f\right)  \text{
for each }e\in B\right) \\
&  \Longleftrightarrow\ \left(  B\subseteq\operatorname*{EQS}\left(
G,f\right)  \right)  .
\end{align*}

\end{vershort}

\begin{verlong}
The definition of $\operatorname*{EQS}\left(  G,f\right)  $ yields
$\operatorname*{EQS}\left(  G,f\right)  =\left\{  e\in E\ \mid\ \varphi\left(
e\right)  \subseteq\operatorname*{Eqs}f\right\}  $. Thus, an edgery $e\in E$
belongs to $\operatorname*{EQS}\left(  G,f\right)  $ if and only if it
satisfies $\varphi\left(  e\right)  \subseteq\operatorname*{Eqs}f$. In other
words, for any edgery $e\in E$, we have the logical equivalence%
\begin{equation}
\left(  e\in\operatorname*{EQS}\left(  G,f\right)  \right)
\ \Longleftrightarrow\ \left(  \varphi\left(  e\right)  \subseteq
\operatorname*{Eqs}f\right)  . \label{pf.lem.ambiEqs.union.1}%
\end{equation}

Now, we have the following chain of logical equivalences:%
\begin{align*}
&  \ \left(  B\subseteq\operatorname*{EQS}\left(  G,f\right)  \right) \\
&  \Longleftrightarrow\ \left(  \underbrace{e\in\operatorname*{EQS}\left(
G,f\right)  }_{\substack{\Longleftrightarrow\ \left(  \varphi\left(  e\right)
\subseteq\operatorname*{Eqs}f\right)  \\\text{(by
(\ref{pf.lem.ambiEqs.union.1}))}}}\text{ for each }e\in B\right) \\
&  \Longleftrightarrow\ \left(  \varphi\left(  e\right)  \subseteq
\operatorname*{Eqs}f\text{ for each }e\in B\right) \\
&  \Longleftrightarrow\ \left(  \operatorname*{union}B\subseteq
\operatorname*{Eqs}f\right)  \ \ \ \ \ \ \ \ \ \ \left(  \text{by
(\ref{pf.lem.ambiEqs.union.0})}\right)  .
\end{align*}

\end{verlong}

In other words, $B\subseteq\operatorname*{EQS}\left(  G,f\right)  $ holds if
and only if $\operatorname*{union}B\subseteq\operatorname*{Eqs}f$. This proves
Lemma \ref{lem.ambiEqs.union}.
\end{proof}

Next, let us show an analogue of Lemma \ref{lem.Eqs.circuit}:

\begin{lemma}
\label{lem.ambiEqs.circuit}Let $G=\left(  V,E,\varphi\right)  $ be an
ambigraph. Let $X$ be a set. Let $f:V\rightarrow X$ be a map. Let $C$ be a
circuit of $G$. Let $e\in C$ be a singleton edgery such that $C\setminus
\left\{  e\right\}  \subseteq\operatorname*{EQS}\left(  G,f\right)  $. Then,
$e\in\operatorname*{EQS}\left(  G,f\right)  $.
\end{lemma}

\begin{vershort}
\begin{proof}
[Proof of Lemma \ref{lem.ambiEqs.circuit}.]We have assumed that $e$ is
singleton. In other words, $e$ has exactly one edge. In other words,
$\varphi\left(  e\right)  $ is a $1$-element set.

The set $C$ is a circuit of $G$. Hence, we can write $C$ in the form%
\[
C=\left\{  e_{1},e_{2},\ldots,e_{m}\right\}
\]
for some cycle $\left(  v_{1},e_{1},v_{2},e_{2},\ldots,v_{m},e_{m}%
,v_{m+1}\right)  $ of $G$. Consider this cycle \newline$\left(  v_{1}%
,e_{1},v_{2},e_{2},\ldots,v_{m},e_{m},v_{m+1}\right)  $. According to the
definition of a \textquotedblleft cycle\textquotedblright, the cycle $\left(
v_{1},e_{1},v_{2},e_{2},\ldots,v_{m},e_{m},v_{m+1}\right)  $ is a list having
the following properties:

\begin{itemize}
\item The entries $v_{1},v_{2},\ldots,v_{m+1}$ at the odd positions of this
list belong to $V$, whereas the entries $e_{1},e_{2},\ldots,e_{m}$ at its even
positions belong to $E$.

\item We have $m\geq1$.

\item We have $v_{m+1}=v_{1}$.

\item The vertices $v_{1},v_{2},\ldots,v_{m}$ are pairwise distinct.

\item The edgeries $e_{1},e_{2},\ldots,e_{m}$ are pairwise distinct.

\item We have $\left\{  v_{i},v_{i+1}\right\}  \in\varphi\left(  e_{i}\right)
$ for every $i\in\left\{  1,2,\ldots,m\right\}  $.
\end{itemize}

Recall that $e\in C=\left\{  e_{1},e_{2},\ldots,e_{m}\right\}  $. We can thus
WLOG assume that $e=e_{m}$ (since otherwise, we can simply cyclically relabel
the vertices and the edgeries along our cycle). Assume this. Since $C=\left\{
e_{1},e_{2},\ldots,e_{m}\right\}  $ and $e=e_{m}$, we have%
\[
C\setminus\left\{  e\right\}  =\left\{  e_{1},e_{2},\ldots,e_{m-1}\right\}
\]
(since the $m$ edgeries $e_{1},e_{2},\ldots,e_{m}$ are distinct). For every
$i\in\left\{  1,2,\ldots,m-1\right\}  $, we thus have $e_{i}\in C\setminus
\left\{  e\right\}  \subseteq\operatorname*{EQS}\left(  G,f\right)  $ and
therefore $\varphi\left(  e_{i}\right)  \subseteq\operatorname*{Eqs}f$ (by the
definition of $\operatorname*{EQS}\left(  G,f\right)  $), so that $\left\{
v_{i},v_{i+1}\right\}  \in\varphi\left(  e_{i}\right)  \subseteq
\operatorname*{Eqs}f$ and therefore $f\left(  v_{i}\right)  =f\left(
v_{i+1}\right)  $ (by the definition of $\operatorname*{Eqs}f$). Hence,
$f\left(  v_{1}\right)  =f\left(  v_{2}\right)  =\cdots=f\left(  v_{m}\right)
$, so that $f\left(  v_{m}\right)  =f\left(  v_{1}\right)  $. Thus, $\left\{
v_{m},v_{1}\right\}  \in\operatorname*{Eqs}f$ (since $v_{m}\neq v_{1}$
(because the vertices $v_{1},v_{2},\ldots,v_{m}$ are pairwise distinct, and we
have $m\geq2$)).

However, recall again that $\left\{  v_{i},v_{i+1}\right\}  \in\varphi\left(
e_{i}\right)  $ for every $i\in\left\{  1,2,\ldots,m\right\}  $. Applying this
to $i=m$, we obtain $\left\{  v_{m},v_{m+1}\right\}  \in\varphi\left(
e_{m}\right)  $. Since $v_{m+1}=v_{1}$ and $e_{m}=e$, we can rewrite this as
$\left\{  v_{m},v_{1}\right\}  \in\varphi\left(  e\right)  $. Since
$\varphi\left(  e\right)  $ is a $1$-element set, this entails that
$\varphi\left(  e\right)  =\left\{  \left\{  v_{m},v_{1}\right\}  \right\}
\subseteq\operatorname*{Eqs}f$ (since $\left\{  v_{m},v_{1}\right\}
\in\operatorname*{Eqs}f$). In other words, $e\in\operatorname*{EQS}\left(
G,f\right)  $ (by the definition of $\operatorname*{EQS}\left(  G,f\right)
$). This proves Lemma \ref{lem.ambiEqs.circuit}.
\end{proof}
\end{vershort}

\begin{verlong}
\begin{proof}
[Proof of Lemma \ref{lem.ambiEqs.circuit}.]We have assumed that the edgery $e$
is singleton. In other words, $e$ has exactly one edge (by the definition of
\textquotedblleft singleton\textquotedblright). In other words, $\varphi
\left(  e\right)  $ is a $1$-element set (since the edges of $e$ are defined
to be the elements of $\varphi\left(  e\right)  $).

The set $C$ is a circuit of $G$. In other words, the set $C$ has the form
$\left\{  e_{1},e_{2},\ldots,e_{m}\right\}  $, where $\left(  v_{1}%
,e_{1},v_{2},e_{2},\ldots,v_{m},e_{m},v_{m+1}\right)  $ is a cycle of $G$ (by
the definition of a \textquotedblleft circuit\textquotedblright). Consider
this cycle $\left(  v_{1},e_{1},v_{2},e_{2},\ldots,v_{m},e_{m},v_{m+1}\right)
$. We thus have
\begin{equation}
C=\left\{  e_{1},e_{2},\ldots,e_{m}\right\}  .
\label{pf.lem.ambiEqs.circuit.1}%
\end{equation}

The list $\left(  v_{1},e_{1},v_{2},e_{2},\ldots,v_{m},e_{m},v_{m+1}\right)  $
is a cycle of $G$. According to the definition of a \textquotedblleft
cycle\textquotedblright, this means that this list is a list satisfying the
following six properties:

\begin{itemize}
\item The entries $v_{1},v_{2},\ldots,v_{m+1}$ at the odd positions of this
list belong to $V$, whereas the entries $e_{1},e_{2},\ldots,e_{m}$ at its even
positions belong to $E$.

\item We have $m\geq1$.

\item We have $v_{m+1}=v_{1}$.

\item The vertices $v_{1},v_{2},\ldots,v_{m}$ are pairwise distinct.

\item The edgeries $e_{1},e_{2},\ldots,e_{m}$ are pairwise distinct.

\item We have $\left\{  v_{i},v_{i+1}\right\}  \in\varphi\left(  e_{i}\right)
$ for every $i\in\left\{  1,2,\ldots,m\right\}  $.
\end{itemize}

Thus, $\left(  v_{1},e_{1},v_{2},e_{2},\ldots,v_{m},e_{m},v_{m+1}\right)  $ is
a list satisfying the six properties that we have just mentioned. In
particular, $v_{m+1}=v_{1}$. Also,%
\begin{equation}
\left\{  v_{i},v_{i+1}\right\}  \in\varphi\left(  e_{i}\right)
\ \ \ \ \ \ \ \ \ \ \text{for every }i\in\left\{  1,2,\ldots,m\right\}  .
\label{pf.lem.ambiEqs.circuit.inphi}%
\end{equation}

We have $e\in C=\left\{  e_{1},e_{2},\ldots,e_{m}\right\}  $. Thus, $e=e_{i}$
for some $i\in\left\{  1,2,\ldots,m\right\}  $. Consider this $i$.

Now, we have%
\begin{equation}
f\left(  v_{j}\right)  =f\left(  v_{j+1}\right)  \ \ \ \ \ \ \ \ \ \ \text{for
every }j\in\left\{  1,2,\ldots,m\right\}  \setminus\left\{  i\right\}
\label{pf.lem.ambiEqs.circuit.2}%
\end{equation}
\footnote{\textit{Proof of (\ref{pf.lem.ambiEqs.circuit.2}):} Let
$j\in\left\{  1,2,\ldots,m\right\}  \setminus\left\{  i\right\}  $. Thus,
$j\in\left\{  1,2,\ldots,m\right\}  $ and $j\notin\left\{  i\right\}  $. From
$j\notin\left\{  i\right\}  $, we obtain $j\neq i$. Thus, $e_{j}\neq e_{i}$
(since the edgeries $e_{1},e_{2},\ldots,e_{m}$ are pairwise distinct). In
other words, $e_{j}\neq e$ (since $e=e_{i}$).
\par
Now, $e_{j}\in\left\{  e_{1},e_{2},\ldots,e_{m}\right\}  =C$ (by
(\ref{pf.lem.ambiEqs.circuit.1})). Combining this with $e_{j}\neq e$, we
obtain
\[
e_{j}\in C\setminus\left\{  e\right\}  \subseteq\operatorname*{EQS}\left(
G,f\right)  =\left\{  d\in E\ \mid\ \varphi\left(  d\right)  \subseteq
\operatorname*{Eqs}f\right\}
\]
(by (\ref{eq.rmk.ambiEQS.rewrite.2})). In other words, $e_{j}$ is a $d\in E$
satisfying $\varphi\left(  d\right)  \subseteq\operatorname*{Eqs}f$. In other
words, $e_{j}\in E$ and $\varphi\left(  e_{j}\right)  \subseteq
\operatorname*{Eqs}f$.
\par
However, (\ref{pf.lem.ambiEqs.circuit.inphi}) (applied to $j$ instead of $i$)
yields
\begin{align*}
\left\{  v_{j},v_{j+1}\right\}   &  \in\varphi\left(  e_{j}\right)
\subseteq\operatorname*{Eqs}f\\
&  =\left\{  \left\{  s,t\right\}  \ \mid\ \left(  s,t\right)  \in
V^{2},\ s\neq t\text{ and }f\left(  s\right)  =f\left(  t\right)  \right\}
\end{align*}
(by the definition of $\operatorname*{Eqs}f$). In other words, the set
$\left\{  v_{j},v_{j+1}\right\}  $ has the form $\left\{  s,t\right\}  $ for
some $\left(  s,t\right)  \in V^{2}$ satisfying $s\neq t$ and $f\left(
s\right)  =f\left(  t\right)  $. Consider this $\left(  s,t\right)  $. Thus,
$\left\{  v_{j},v_{j+1}\right\}  =\left\{  s,t\right\}  $.
\par
We have $f\left(  s\right)  =f\left(  t\right)  $. Therefore, set $g=f\left(
s\right)  =f\left(  t\right)  $. We have%
\[
f\left(  \left\{  s,t\right\}  \right)  =\left\{  \underbrace{f\left(
s\right)  }_{=g},\underbrace{f\left(  t\right)  }_{=g}\right\}  =\left\{
g,g\right\}  =\left\{  g\right\}  .
\]
Now, $v_{j}\in\left\{  v_{j},v_{j+1}\right\}  =\left\{  s,t\right\}  $, and
thus $f\left(  \underbrace{v_{j}}_{\in\left\{  s,t\right\}  }\right)  \in
f\left(  \left\{  s,t\right\}  \right)  =\left\{  g\right\}  $. In other
words, $f\left(  v_{j}\right)  =g$. Also, $v_{j+1}\in\left\{  v_{j}%
,v_{j+1}\right\}  =\left\{  s,t\right\}  $, and thus $f\left(
\underbrace{v_{j+1}}_{\in\left\{  s,t\right\}  }\right)  \in f\left(  \left\{
s,t\right\}  \right)  =\left\{  g\right\}  $. In other words, $f\left(
v_{j+1}\right)  =g$. Comparing this with $f\left(  v_{j}\right)  =g$, we
obtain $f\left(  v_{j}\right)  =f\left(  v_{j+1}\right)  $. This proves
(\ref{pf.lem.ambiEqs.circuit.2}).}. Hence,%
\begin{equation}
f\left(  v_{1}\right)  =f\left(  v_{i}\right)
\label{pf.lem.ambiEqs.circuit.3a}%
\end{equation}
\footnote{\textit{Proof of (\ref{pf.lem.ambiEqs.circuit.3a}):} Let
$j\in\left\{  1,2,\ldots,i-1\right\}  $. Thus, $j\in\left\{  1,2,\ldots
,i-1\right\}  \subseteq\left\{  1,2,\ldots,m\right\}  $. Combining this with
$j\neq i$ (since $j<i$ (since $j\in\left\{  1,2,\ldots,i-1\right\}  $)), we
obtain $j\in\left\{  1,2,\ldots,m\right\}  \setminus\left\{  i\right\}  $.
Hence, $f\left(  v_{j}\right)  =f\left(  v_{j+1}\right)  $ (by
(\ref{pf.lem.ambiEqs.circuit.2})).
\par
Now, let us forget that we fixed $j$. We thus have proven that $f\left(
v_{j}\right)  =f\left(  v_{j+1}\right)  $ for every $j\in\left\{
1,2,\ldots,i-1\right\}  $. In other words, $f\left(  v_{1}\right)  =f\left(
v_{2}\right)  =\cdots=f\left(  v_{i}\right)  $. Hence, $f\left(  v_{1}\right)
=f\left(  v_{i}\right)  $. This proves (\ref{pf.lem.ambiEqs.circuit.3a}).}.
Also,%
\begin{equation}
f\left(  v_{i+1}\right)  =f\left(  v_{m+1}\right)
\label{pf.lem.ambiEqs.circuit.3b}%
\end{equation}
\footnote{\textit{Proof of (\ref{pf.lem.ambiEqs.circuit.3b}):} Let
$j\in\left\{  i+1,i+2,\ldots,m\right\}  $. Thus, $j\in\left\{  i+1,i+2,\ldots
,m\right\}  \subseteq\left\{  1,2,\ldots,m\right\}  $. Combining this with
$j\neq i$ (since $j>i$ (since $j\in\left\{  i+1,i+2,\ldots,m\right\}  $)), we
obtain $j\in\left\{  1,2,\ldots,m\right\}  \setminus\left\{  i\right\}  $.
Hence, $f\left(  v_{j}\right)  =f\left(  v_{j+1}\right)  $ (by
(\ref{pf.lem.ambiEqs.circuit.2})).
\par
Now, let us forget that we fixed $j$. We thus have proven that $f\left(
v_{j}\right)  =f\left(  v_{j+1}\right)  $ for every $j\in\left\{
i+1,i+2,\ldots,m\right\}  $. In other words, $f\left(  v_{i+1}\right)
=f\left(  v_{i+2}\right)  =\cdots=f\left(  v_{m+1}\right)  $. Hence, $f\left(
v_{i+1}\right)  =f\left(  v_{m+1}\right)  $. This proves
(\ref{pf.lem.ambiEqs.circuit.3b}).}. Now, (\ref{pf.lem.ambiEqs.circuit.3a})
yields $f\left(  v_{i}\right)  =f\left(  v_{m+1}\right)  =f\left(
v_{1}\right)  $ (since $v_{m+1}=v_{1}$), so that
\[
f\left(  v_{i}\right)  =f\left(  v_{m+1}\right)  =f\left(  v_{i+1}\right)
\ \ \ \ \ \ \ \ \ \ \left(  \text{by (\ref{pf.lem.ambiEqs.circuit.3b}%
)}\right)  .
\]

Moreover, $v_{i}$ and $v_{i+1}$ are elements of $V$ (since $v_{1},v_{2}%
,\ldots,v_{m+1}$ are elements of $V$). In other words, $v_{i}\in V$ and
$v_{i+1}\in V$. Hence, $\left(  v_{i},v_{i+1}\right)  \in V^{2}$.

Furthermore, $v_{i}\neq v_{i+1}$\ \ \ \ \footnote{\textit{Proof.} Assume the
contrary. Thus, $v_{i}=v_{i+1}$. Thus, $\left\{  v_{i},v_{i+1}\right\}
=\left\{  v_{i+1},v_{i+1}\right\}  =\left\{  v_{i+1}\right\}  $, so that
$\left\vert \left\{  v_{i},v_{i+1}\right\}  \right\vert =\left\vert \left\{
v_{i+1}\right\}  \right\vert =1$.
\par
However, (\ref{pf.lem.ambiEqs.circuit.inphi}) yields that $\left\{
v_{i},v_{i+1}\right\}  \in\varphi\left(  e_{i}\right)  \subseteq\dbinom{V}{2}%
$. Thus, $\left\{  v_{i},v_{i+1}\right\}  $ is a $2$-element subset of $V$. In
particular, this entails $\left\vert \left\{  v_{i},v_{i+1}\right\}
\right\vert =2$. But this contradicts $\left\vert \left\{  v_{i}%
,v_{i+1}\right\}  \right\vert =1\neq2$. This contradiction shows that our
assumption was wrong. Qed.}.

Now, the definition of $\operatorname*{Eqs}f$ shows that%
\begin{equation}
\operatorname*{Eqs}f=\left\{  \left\{  s,t\right\}  \ \mid\ \left(
s,t\right)  \in V^{2},\ s\neq t\text{ and }f\left(  s\right)  =f\left(
t\right)  \right\}  . \label{pf.lem.ambiEqs.circuit.Eqs-def}%
\end{equation}

But we have $\left(  v_{i},v_{i+1}\right)  \in V^{2}$, $v_{i}\neq v_{i+1}$ and
$f\left(  v_{i}\right)  =f\left(  v_{i+1}\right)  $. Hence, the set $\left\{
v_{i},v_{i+1}\right\}  $ has the form $\left\{  s,t\right\}  $ for some
$\left(  s,t\right)  \in V^{2}$ satisfying $s\neq t$ and $f\left(  s\right)
=f\left(  t\right)  $ (namely, for $\left(  s,t\right)  =\left(  v_{i}%
,v_{i+1}\right)  $). Thus,%
\[
\left\{  v_{i},v_{i+1}\right\}  \in\left\{  \left\{  s,t\right\}
\ \mid\ \left(  s,t\right)  \in V^{2},\ s\neq t\text{ and }f\left(  s\right)
=f\left(  t\right)  \right\}  =\operatorname*{Eqs}f
\]
(by (\ref{pf.lem.ambiEqs.circuit.Eqs-def})).

Now, (\ref{pf.lem.ambiEqs.circuit.inphi}) yields that $\left\{  v_{i}%
,v_{i+1}\right\}  \in\varphi\left(  e_{i}\right)  $. We can rewrite this as
$\left\{  v_{i},v_{i+1}\right\}  \in\varphi\left(  e\right)  $ (since
$e=e_{i}$). However, we also know that $\varphi\left(  e\right)  $ is a
$1$-element set. In other words, $\varphi\left(  e\right)  =\left\{
u\right\}  $ for some element $u$. Consider this $u$. From $\left\{
v_{i},v_{i+1}\right\}  \in\varphi\left(  e\right)  =\left\{  u\right\}  $, we
obtain $\left\{  v_{i},v_{i+1}\right\}  =u$, so that $u=\left\{  v_{i}%
,v_{i+1}\right\}  \in\operatorname*{Eqs}f$. Hence, $\left\{  u\right\}
\subseteq\operatorname*{Eqs}f$.

Thus, altogether, $\varphi\left(  e\right)  =\left\{  u\right\}
\subseteq\operatorname*{Eqs}f$. Thus, $e\in E$ and $\varphi\left(  e\right)
\subseteq\operatorname*{Eqs}f$. In other words, $e$ is an element $d\in E$
satisfying $\varphi\left(  d\right)  \subseteq\operatorname*{Eqs}f$. Hence,%
\[
e\in\left\{  d\in E\ \mid\ \varphi\left(  d\right)  \subseteq
\operatorname*{Eqs}f\right\}  =\operatorname*{EQS}\left(  G,f\right)
\]
(by (\ref{eq.rmk.ambiEQS.rewrite.2})). This proves Lemma
\ref{lem.ambiEqs.circuit}.
\end{proof}
\end{verlong}

Our next lemma will play a role in our proof of Theorem
\ref{thm.ambichromsym.varis} that is similar to the role of Lemma
\ref{lem.BC.nonempty} in the proof of Theorem \ref{thm.chromsym.varis}
(although it is different in its claim).

\begin{lemma}
\label{lem.ambiEqs.no-bc}Let $G=\left(  V,E,\varphi\right)  $ be an ambigraph.
Let $X$ be a totally ordered set. Let $\ell:E\rightarrow X$ be a labeling function.

Let $Y$ be any set. Let $f:V\rightarrow Y$ be any map. Assume that the set
$\operatorname*{EQS}\left(  G,f\right)  $ contains no singleton edgery. Then,
there exists no broken circuit $K$ of $G$ satisfying $K\subseteq
\operatorname*{EQS}\left(  G,f\right)  $.
\end{lemma}

\begin{proof}
[Proof of Lemma \ref{lem.ambiEqs.no-bc}.]Assume the contrary. Thus, there
exists some broken circuit $K$ of $G$ satisfying $K\subseteq
\operatorname*{EQS}\left(  G,f\right)  $.

\begin{vershort}
The set $K$ is a broken circuit of $G$. According to the definition of a
broken circuit, this means that $K$ can be written as $K=C\setminus\left\{
e\right\}  $, where $C$ is a circuit of $G$, and where $e$ is the unique
singleton edgery in $C$ having maximum label (among the singleton edgeries in
$C$). Consider these $C$ and $e$.
\end{vershort}

\begin{verlong}
We recall that a broken circuit of $G$ means a subset of $E$ having the form
$C\setminus\left\{  e\right\}  $, where $C$ is a circuit of $G$, and where $e$
is the unique singleton edgery in $C$ having maximum label (among the
singleton edgeries in $C$). Thus, in particular, $K$ is a subset of $E$ having
this form (since $K$ is a broken circuit of $G$). In other words, we can write
$K$ as $K=C\setminus\left\{  e\right\}  $, where $C$ is a circuit of $G$, and
where $e$ is the unique singleton edgery in $C$ having maximum label (among
the singleton edgeries in $C$). Consider these $C$ and $e$.
\end{verlong}

We have $C\setminus\left\{  e\right\}  =K\subseteq\operatorname*{EQS}\left(
G,f\right)  $. Thus, Lemma \ref{lem.ambiEqs.circuit} (applied to $Y$ instead
of $X$) yields that $e\in\operatorname*{EQS}\left(  G,f\right)  $. Thus, the
set $\operatorname*{EQS}\left(  G,f\right)  $ contains a singleton edgery
(namely, $e$). But this contradicts the fact that the set $\operatorname*{EQS}%
\left(  G,f\right)  $ contains no singleton edgery.

This contradiction shows that our assumption was false. Hence, Lemma
\ref{lem.ambiEqs.no-bc} is proven.
\end{proof}

We are now ready to prove the keystone lemma, which of course is an analogue
of Lemma \ref{lem.NBCm.moeb}:

\begin{lemma}
\label{lem.ambiNBCm.moeb}Let $G=\left(  V,E,\varphi\right)  $ be a finite
ambigraph. Let $X$ be a totally ordered set. Let $\ell:E\rightarrow X$ be a
labeling function. Let $\mathfrak{K}$ be some set of broken circuits of $G$
(not necessarily containing all of them). Let $a_{K}$ be an element of
$\mathbf{k}$ for every $K\in\mathfrak{K}$.

Let $Y$ be any set. Let $f:V\rightarrow Y$ be any map. Then,%
\[
\sum_{B\subseteq\operatorname*{EQS}\left(  G,f\right)  }\left(  -1\right)
^{\left\vert B\right\vert }\prod_{\substack{K\in\mathfrak{K};\\K\subseteq
B}}a_{K}=\left[  \operatorname*{EQS}\left(  G,f\right)  =\varnothing\right]
.
\]

\end{lemma}

\begin{proof}
[Proof of Lemma \ref{lem.ambiNBCm.moeb}.]We are in one of the following two cases:

\textit{Case 1:} The set $\operatorname*{EQS}\left(  G,f\right)  $ contains no
singleton edgery.

\textit{Case 2:} The set $\operatorname*{EQS}\left(  G,f\right)  $ contains at
least one singleton edgery.

Let us first consider Case 1. In this case, the set $\operatorname*{EQS}%
\left(  G,f\right)  $ contains no singleton edgery. Hence, using Lemma
\ref{lem.ambiEqs.no-bc}, it is easy to see that every subset $B$ of
$\operatorname*{EQS}\left(  G,f\right)  $ satisfies%
\begin{equation}
\prod_{\substack{K\in\mathfrak{K};\\K\subseteq B}}a_{K}=1.
\label{pf.lem.ambiNBCm.moeb.c1.1}%
\end{equation}

\begin{vershort}
\textit{Proof of (\ref{pf.lem.ambiNBCm.moeb.c1.1}):} Let $B$ be a subset of
$\operatorname*{EQS}\left(  G,f\right)  $. Then, Lemma \ref{lem.ambiEqs.no-bc}
yields that there exists no broken circuit $K$ of $G$ satisfying
$K\subseteq\operatorname*{EQS}\left(  G,f\right)  $. Hence, there exists no
$K\in\mathfrak{K}$ satisfying $K\subseteq\operatorname*{EQS}\left(
G,f\right)  $ (since each $K\in\mathfrak{K}$ is a broken circuit of $G$).
Therefore, there exists no $K\in\mathfrak{K}$ satisfying $K\subseteq B$ either
(since $K\subseteq B$ would entail $K\subseteq B\subseteq\operatorname*{EQS}%
\left(  G,f\right)  $, which would contradict the previous sentence). Hence,
the product $\prod_{\substack{K\in\mathfrak{K};\\K\subseteq B}}a_{K}$ is
empty, and thus equals $1$ by definition. This proves
(\ref{pf.lem.ambiNBCm.moeb.c1.1}).
\end{vershort}

\begin{verlong}
\textit{Proof of (\ref{pf.lem.ambiNBCm.moeb.c1.1}):} Let $B$ be a subset of
$\operatorname*{EQS}\left(  G,f\right)  $. Then, Lemma \ref{lem.ambiEqs.no-bc}
yields that there exists no broken circuit $K$ of $G$ satisfying
$K\subseteq\operatorname*{EQS}\left(  G,f\right)  $. Hence, there exists no
$K\in\mathfrak{K}$ satisfying $K\subseteq B$\ \ \ \ \footnote{\textit{Proof.}
Assume the contrary. Thus, there exists some $K\in\mathfrak{K}$ satisfying
$K\subseteq B$. Consider such a $K$, and denote it by $L$. Thus, $L$ is a
$K\in\mathfrak{K}$ satisfying $K\subseteq B$. In other words, $L\in
\mathfrak{K}$ and $L\subseteq B$. From $L\in\mathfrak{K}$, we see that $L$ is
a broken circuit of $G$ (because $\mathfrak{K}$ is a set of broken circuits of
$G$). However, $B$ is a subset of $\operatorname*{EQS}\left(  G,f\right)  $;
in other words, $B\subseteq\operatorname*{EQS}\left(  G,f\right)  $. Thus,
$L\subseteq B\subseteq\operatorname*{EQS}\left(  G,f\right)  $.
\par
Hence, $L$ is a broken circuit $K$ of $G$ satisfying $K\subseteq
\operatorname*{EQS}\left(  G,f\right)  $ (since $L$ is a broken circuit of $G$
and since $L\subseteq\operatorname*{EQS}\left(  G,f\right)  $). Thus, there
exists a broken circuit $K$ of $G$ satisfying $K\subseteq\operatorname*{EQS}%
\left(  G,f\right)  $ (namely, $L$). But this contradicts the fact that there
exists no broken circuit $K$ of $G$ satisfying $K\subseteq\operatorname*{EQS}%
\left(  G,f\right)  $. This contradiction shows that our assumption was false.
Qed.}. Therefore, the product $\prod_{\substack{K\in\mathfrak{K};\\K\subseteq
B}}a_{K}$ is empty. Thus, $\prod_{\substack{K\in\mathfrak{K};\\K\subseteq
B}}a_{K}=\left(  \text{empty product}\right)  =1$. This proves
(\ref{pf.lem.ambiNBCm.moeb.c1.1}).
\end{verlong}

\begin{vershort}
Now,
\begin{align*}
\sum_{B\subseteq\operatorname*{EQS}\left(  G,f\right)  }\left(  -1\right)
^{\left\vert B\right\vert }\underbrace{\prod_{\substack{K\in\mathfrak{K}%
;\\K\subseteq B}}a_{K}}_{\substack{=1\\\text{(by
(\ref{pf.lem.ambiNBCm.moeb.c1.1}))}}}  &  =\sum_{B\subseteq\operatorname*{EQS}%
\left(  G,f\right)  }\left(  -1\right)  ^{\left\vert B\right\vert }%
=\sum_{I\subseteq\operatorname*{EQS}\left(  G,f\right)  }\left(  -1\right)
^{\left\vert I\right\vert }\\
&  =\left[  \operatorname*{EQS}\left(  G,f\right)  =\varnothing\right]
\end{align*}
(by Lemma \ref{lem.cancel}, applied to $S=\operatorname*{EQS}\left(
G,f\right)  $). Thus, Lemma \ref{lem.ambiNBCm.moeb} is proved in Case 1.
\end{vershort}

\begin{verlong}
The set $E$ is finite (since $\left(  V,E,\varphi\right)  $ is finite). Hence,
its subset $\operatorname*{EQS}\left(  G,f\right)  $ is also finite. Now,
\begin{align*}
&  \sum_{B\subseteq\operatorname*{EQS}\left(  G,f\right)  }\left(  -1\right)
^{\left\vert B\right\vert }\underbrace{\prod_{\substack{K\in\mathfrak{K}%
;\\K\subseteq B}}a_{K}}_{\substack{=1\\\text{(by
(\ref{pf.lem.ambiNBCm.moeb.c1.1}))}}}\\
&  =\sum_{B\subseteq\operatorname*{EQS}\left(  G,f\right)  }\left(  -1\right)
^{\left\vert B\right\vert }\\
&  =\sum_{I\subseteq\operatorname*{EQS}\left(  G,f\right)  }\left(  -1\right)
^{\left\vert I\right\vert }\ \ \ \ \ \ \ \ \ \ \left(
\begin{array}
[c]{c}%
\text{here, we have renamed the summation}\\
\text{index }B\text{ as }I
\end{array}
\right) \\
&  =\left[  \operatorname*{EQS}\left(  G,f\right)  =\varnothing\right]
\ \ \ \ \ \ \ \ \ \ \left(  \text{by Lemma \ref{lem.cancel}, applied to
}S=\operatorname*{EQS}\left(  G,f\right)  \right)  .
\end{align*}
Thus, Lemma \ref{lem.ambiNBCm.moeb} is proved in Case 1.
\end{verlong}

\begin{vershort}
Let us now consider Case 2. In this case, the set $\operatorname*{EQS}\left(
G,f\right)  $ contains at least one singleton edgery. Pick any such singleton
edgery $d\in\operatorname*{EQS}\left(  G,f\right)  $ with maximum $\ell\left(
d\right)  $ (among all such singleton edgeries).

Define two subsets $\mathcal{U}$ and $\mathcal{V}$ of $\mathcal{P}\left(
\operatorname*{EQS}\left(  G,f\right)  \right)  $ as follows:%
\begin{align*}
\mathcal{U}  &  =\left\{  F\in\mathcal{P}\left(  \operatorname*{EQS}\left(
G,f\right)  \right)  \ \mid\ d\notin F\right\}  ;\\
\mathcal{V}  &  =\left\{  F\in\mathcal{P}\left(  \operatorname*{EQS}\left(
G,f\right)  \right)  \ \mid\ d\in F\right\}  .
\end{align*}
Thus, we have $\mathcal{P}\left(  \operatorname*{EQS}\left(  G,f\right)
\right)  =\mathcal{U}\cup\mathcal{V}$, and the sets $\mathcal{U}$ and
$\mathcal{V}$ are disjoint. Now, we define a map $\Phi:\mathcal{U}%
\rightarrow\mathcal{V}$ by%
\[
\left(  \Phi\left(  B\right)  =B\cup\left\{  d\right\}
\ \ \ \ \ \ \ \ \ \ \text{for every }B\in\mathcal{U}\right)  .
\]
As in our above proof of Lemma \ref{lem.NBCm.moeb}, we can see that this map
$\Phi$ is well-defined and a bijection, and that every $B\in\mathcal{U}$
satisfies%
\begin{equation}
\left(  -1\right)  ^{\left\vert \Phi\left(  B\right)  \right\vert }=-\left(
-1\right)  ^{\left\vert B\right\vert }.
\label{pf.lem.ambiNBCm.moeb.short.Phi.-1}%
\end{equation}

Furthermore, we claim that, for every $B\in\mathcal{U}$ and every
$K\in\mathfrak{K}$, we have the following logical equivalence:%
\begin{equation}
\left(  K\subseteq B\right)  \ \Longleftrightarrow\ \left(  K\subseteq
\Phi\left(  B\right)  \right)  . \label{pf.lem.ambiNBCm.moeb.short.Phi.equiv}%
\end{equation}

Indeed, our above proof of (\ref{pf.lem.NBCm.moeb.short.Phi.equiv}) can be
easily transformed into a proof of (\ref{pf.lem.ambiNBCm.moeb.short.Phi.equiv}%
) by some simple changes\footnote{Here are the changes that we need to make to
the above proof: We need to replace \textquotedblleft edge\textquotedblright%
\ by \textquotedblleft singleton edgery\textquotedblright; replace both sets
$E\cap\operatorname*{Eqs}f$ and $\operatorname*{Eqs}f$ by $\operatorname*{EQS}%
\left(  G,f\right)  $; and replace the reference to Lemma
\ref{lem.Eqs.circuit} by a reference to Lemma \ref{lem.ambiEqs.circuit}.}.
Thus, (\ref{pf.lem.ambiNBCm.moeb.short.Phi.equiv}) holds.

Now, just as in the proof of Lemma \ref{lem.NBCm.moeb}, we can conclude that
\begin{equation}
\sum_{B\subseteq\operatorname*{EQS}\left(  G,f\right)  }\left(  -1\right)
^{\left\vert B\right\vert }\prod_{\substack{K\in\mathfrak{K};\\K\subseteq
B}}a_{K}=0. \label{pf.lem.ambiNBCm.moeb.short.Phi.c2.at}%
\end{equation}

However, the set $\operatorname*{EQS}\left(  G,f\right)  $ contains at least
one singleton edgery, and thus is nonempty. Hence, $\left[
\operatorname*{EQS}\left(  G,f\right)  =\varnothing\right]  =0$. Comparing
this with (\ref{pf.lem.ambiNBCm.moeb.short.Phi.c2.at}), we obtain
$\sum_{B\subseteq\operatorname*{EQS}\left(  G,f\right)  }\left(  -1\right)
^{\left\vert B\right\vert }\prod_{\substack{K\in\mathfrak{K};\\K\subseteq
B}}a_{K}=\left[  \operatorname*{EQS}\left(  G,f\right)  =\varnothing\right]
$. Thus, Lemma \ref{lem.ambiNBCm.moeb} is proved in Case 2.
\end{vershort}

\begin{verlong}
Let us now consider Case 2. In this case, the set $\operatorname*{EQS}\left(
G,f\right)  $ contains at least one singleton edgery.

The set $E$ is finite (since $\left(  V,E,\varphi\right)  $ is finite). Hence,
its subset $\operatorname*{EQS}\left(  G,f\right)  $ is also finite. Moreover,
this subset $\operatorname*{EQS}\left(  G,f\right)  $ is nonempty (since it
contains at least one singleton edgery). In other words, $\operatorname*{EQS}%
\left(  G,f\right)  \neq\varnothing$. Hence, $\left[  \operatorname*{EQS}%
\left(  G,f\right)  =\varnothing\right]  =0$.

We know that the set $\operatorname*{EQS}\left(  G,f\right)  $ contains at
least one singleton edgery. Pick any such singleton edgery $d\in
\operatorname*{EQS}\left(  G,f\right)  $ with maximum $\ell\left(  d\right)  $
(among all such singleton edgeries). (If there are several such $d$, then it
does not matter which one we pick.)

We have chosen $d$ to be the singleton edgery in $\operatorname*{EQS}\left(
G,f\right)  $ with maximum $\ell\left(  d\right)  $ (among all singleton
edgeries in $\operatorname*{EQS}\left(  G,f\right)  $). Thus, for any
singleton edgery $e\in\operatorname*{EQS}\left(  G,f\right)  $, we have%
\begin{equation}
\ell\left(  d\right)  \geq\ell\left(  e\right)  .
\label{pf.lem.ambiNBCm.moeb.maxl}%
\end{equation}

As usual, we let $\mathcal{P}\left(  S\right)  $ denote the powerset of any
set $S$. We now define two subsets $\mathcal{U}$ and $\mathcal{V}$ of
$\mathcal{P}\left(  \operatorname*{EQS}\left(  G,f\right)  \right)  $ as
follows:%
\begin{align*}
\mathcal{U}  &  =\left\{  F\in\mathcal{P}\left(  \operatorname*{EQS}\left(
G,f\right)  \right)  \ \mid\ d\notin F\right\}  ;\\
\mathcal{V}  &  =\left\{  F\in\mathcal{P}\left(  \operatorname*{EQS}\left(
G,f\right)  \right)  \ \mid\ d\in F\right\}  .
\end{align*}
Every $B\in\mathcal{U}$ satisfies $B\cup\left\{  d\right\}  \in\mathcal{V}%
$\ \ \ \ \footnote{\textit{Proof.} Let $B\in\mathcal{U}$. Thus, $B\in
\mathcal{U}=\left\{  F\in\mathcal{P}\left(  \operatorname*{EQS}\left(
G,f\right)  \right)  \ \mid\ d\notin F\right\}  $. In other words, $B$ is an
element $F$ of $\mathcal{P}\left(  \operatorname*{EQS}\left(  G,f\right)
\right)  $ satisfying $d\notin F$. In other words, $B$ is an element of
$\mathcal{P}\left(  \operatorname*{EQS}\left(  G,f\right)  \right)  $ and
satisfies $d\notin B$. We have $B\in\mathcal{P}\left(  \operatorname*{EQS}%
\left(  G,f\right)  \right)  $; in other words, $B$ is a subset of
$\operatorname*{EQS}\left(  G,f\right)  $. Also, $\left\{  d\right\}
\subseteq\operatorname*{EQS}\left(  G,f\right)  $ (since $d\in
\operatorname*{EQS}\left(  G,f\right)  $). Thus, both $B$ and $\left\{
d\right\}  $ are subsets of $\operatorname*{EQS}\left(  G,f\right)  $. Hence,
their union $B\cup\left\{  d\right\}  $ is a subset of $\operatorname*{EQS}%
\left(  G,f\right)  $. In other words, $B\cup\left\{  d\right\}
\in\mathcal{P}\left(  \operatorname*{EQS}\left(  G,f\right)  \right)  $. Also,
$d\in\left\{  d\right\}  \subseteq B\cup\left\{  d\right\}  $. Hence,
$B\cup\left\{  d\right\}  $ is an element of $\mathcal{P}\left(
\operatorname*{EQS}\left(  G,f\right)  \right)  $ and satisfies $d\in
B\cup\left\{  d\right\}  $. In other words, $B\cup\left\{  d\right\}  $ is an
element $F$ of $\mathcal{P}\left(  \operatorname*{EQS}\left(  G,f\right)
\right)  $ satisfying $d\in F$. In other words, $B\cup\left\{  d\right\}
\in\left\{  F\in\mathcal{P}\left(  \operatorname*{EQS}\left(  G,f\right)
\right)  \ \mid\ d\in F\right\}  =\mathcal{V}$, qed.}. Thus, we can define a
map $\Phi:\mathcal{U}\rightarrow\mathcal{V}$ by%
\[
\left(  \Phi\left(  B\right)  =B\cup\left\{  d\right\}
\ \ \ \ \ \ \ \ \ \ \text{for every }B\in\mathcal{U}\right)  .
\]
Consider this map $\Phi$.

Every $B\in\mathcal{V}$ satisfies $B\setminus\left\{  d\right\}
\in\mathcal{U}$\ \ \ \ \footnote{\textit{Proof.} Let $B\in\mathcal{V}$. Thus,
$B\in\mathcal{V}=\left\{  F\in\mathcal{P}\left(  \operatorname*{EQS}\left(
G,f\right)  \right)  \ \mid\ d\in F\right\}  $. In other words, $B$ is an
element $F$ of $\mathcal{P}\left(  \operatorname*{EQS}\left(  G,f\right)
\right)  $ satisfying $d\in F$. In other words, $B$ is an element of
$\mathcal{P}\left(  \operatorname*{EQS}\left(  G,f\right)  \right)  $ and
satisfies $d\in B$. We have $B\in\mathcal{P}\left(  \operatorname*{EQS}\left(
G,f\right)  \right)  $; in other words, $B$ is a subset of
$\operatorname*{EQS}\left(  G,f\right)  $. Hence, $B\setminus\left\{
d\right\}  $ is a subset of $\operatorname*{EQS}\left(  G,f\right)  $ (since
$B\setminus\left\{  d\right\}  \subseteq B$). In other words, $B\setminus
\left\{  d\right\}  \in\mathcal{P}\left(  \operatorname*{EQS}\left(
G,f\right)  \right)  $. Also, $d\notin B\setminus\left\{  d\right\}  $ (since
$d\in\left\{  d\right\}  $). Hence, $B\setminus\left\{  d\right\}  $ is an
element of $\mathcal{P}\left(  \operatorname*{EQS}\left(  G,f\right)  \right)
$ and satisfies $d\notin B\setminus\left\{  d\right\}  $. In other words,
$B\setminus\left\{  d\right\}  $ is an element $F$ of $\mathcal{P}\left(
\operatorname*{EQS}\left(  G,f\right)  \right)  $ satisfying $d\notin F$. In
other words, $B\setminus\left\{  d\right\}  \in\left\{  F\in\mathcal{P}\left(
\operatorname*{EQS}\left(  G,f\right)  \right)  \ \mid\ d\notin F\right\}
=\mathcal{U}$, qed.}. Thus, we can define a map $\Psi:\mathcal{V}%
\rightarrow\mathcal{U}$ by%
\[
\left(  \Psi\left(  B\right)  =B\setminus\left\{  d\right\}
\ \ \ \ \ \ \ \ \ \ \text{for every }B\in\mathcal{V}\right)  .
\]
Consider this map $\Psi$.

We have $\Phi\circ\Psi=\operatorname*{id}$\ \ \ \ \footnote{\textit{Proof.}
Let $B\in\mathcal{V}$. We have%
\[
\left(  \Phi\circ\Psi\right)  \left(  B\right)  =\Phi\left(  \underbrace{\Psi
\left(  B\right)  }_{\substack{=B\setminus\left\{  d\right\}  \\\text{(by the
definition of }\Psi\text{)}}}\right)  =\Phi\left(  B\setminus\left\{
d\right\}  \right)  =\left(  B\setminus\left\{  d\right\}  \right)
\cup\left\{  d\right\}
\]
(by the definition of $\Phi$).
\par
We have $B\in\mathcal{V}=\left\{  F\in\mathcal{P}\left(  \operatorname*{EQS}%
\left(  G,f\right)  \right)  \ \mid\ d\in F\right\}  $. In other words, $B$ is
an element $F$ of $\mathcal{P}\left(  \operatorname*{EQS}\left(  G,f\right)
\right)  $ satisfying $d\in F$. In other words, $B$ is an element of
$\mathcal{P}\left(  \operatorname*{EQS}\left(  G,f\right)  \right)  $ and
satisfies $d\in B$. From $d\in B$, we obtain $\left\{  d\right\}  \subseteq
B$. Now, $\left(  \Phi\circ\Psi\right)  \left(  B\right)  =\left(
B\setminus\left\{  d\right\}  \right)  \cup\left\{  d\right\}  =B$ (since
$\left\{  d\right\}  \subseteq B$). Thus, $\left(  \Phi\circ\Psi\right)
\left(  B\right)  =B=\operatorname*{id}\left(  B\right)  $.
\par
Now, let us forget that we fixed $B$. We thus have proven that $\left(
\Phi\circ\Psi\right)  \left(  B\right)  =\operatorname*{id}\left(  B\right)  $
for every $B\in\mathcal{V}$. In other words, $\Phi\circ\Psi=\operatorname*{id}%
$, qed.} and $\Psi\circ\Phi=\operatorname*{id}$%
\ \ \ \ \footnote{\textit{Proof.} Let $B\in\mathcal{U}$. We have%
\[
\left(  \Psi\circ\Phi\right)  \left(  B\right)  =\Psi\left(  \underbrace{\Phi
\left(  B\right)  }_{\substack{=B\cup\left\{  d\right\}  \\\text{(by the
definition of }\Phi\text{)}}}\right)  =\Psi\left(  B\cup\left\{  d\right\}
\right)  =\left(  B\cup\left\{  d\right\}  \right)  \setminus\left\{
d\right\}
\]
(by the definition of $\Psi$).
\par
We have $B\in\mathcal{U}=\left\{  F\in\mathcal{P}\left(  \operatorname*{EQS}%
\left(  G,f\right)  \right)  \ \mid\ d\notin F\right\}  $. In other words, $B$
is an element $F$ of $\mathcal{P}\left(  \operatorname*{EQS}\left(
G,f\right)  \right)  $ satisfying $d\notin F$. In other words, $B$ is an
element of $\mathcal{P}\left(  \operatorname*{EQS}\left(  G,f\right)  \right)
$ and satisfies $d\notin B$. Now, $\left(  \Psi\circ\Phi\right)  \left(
B\right)  =\left(  B\cup\left\{  d\right\}  \right)  \setminus\left\{
d\right\}  =B$ (since $d\notin B$). Thus, $\left(  \Psi\circ\Phi\right)
\left(  B\right)  =B=\operatorname*{id}\left(  B\right)  $.
\par
Now, let us forget that we fixed $B$. We thus have proven that $\left(
\Psi\circ\Phi\right)  \left(  B\right)  =\operatorname*{id}\left(  B\right)  $
for every $B\in\mathcal{U}$. In other words, $\Psi\circ\Phi=\operatorname*{id}%
$, qed.}. Thus, the maps $\Phi$ and $\Psi$ are mutually inverse. Hence, the
map $\Phi$ is a bijection.

Moreover, for every $B\in\mathcal{U}$ and every $K\in\mathfrak{K}$, we have
the following logical equivalence:%
\begin{equation}
\left(  K\subseteq B\right)  \ \Longleftrightarrow\ \left(  K\subseteq
\Phi\left(  B\right)  \right)  . \label{pf.lem.ambiNBCm.moeb.equiv}%
\end{equation}

\textit{Proof of (\ref{pf.lem.ambiNBCm.moeb.equiv}):} Let $B\in\mathcal{U}$
and $K\in\mathfrak{K}$. We need to prove the logical equivalence
(\ref{pf.lem.ambiNBCm.moeb.equiv}).

The definition of $\Phi$ yields $\Phi\left(  B\right)  =B\cup\left\{
d\right\}  \supseteq B$, so that $B\subseteq\Phi\left(  B\right)  $.

We have $B\in\mathcal{U}=\left\{  F\in\mathcal{P}\left(  \operatorname*{EQS}%
\left(  G,f\right)  \right)  \ \mid\ d\notin F\right\}  $. In other words, $B$
is an element $F$ of $\mathcal{P}\left(  \operatorname*{EQS}\left(
G,f\right)  \right)  $ satisfying $d\notin F$. In other words, $B$ is an
element of $\mathcal{P}\left(  \operatorname*{EQS}\left(  G,f\right)  \right)
$ and satisfies $d\notin B$. We have $B\in\mathcal{P}\left(
\operatorname*{EQS}\left(  G,f\right)  \right)  $; in other words, $B$ is a
subset of $\operatorname*{EQS}\left(  G,f\right)  $.

Also, $\Phi\left(  B\right)  \in\mathcal{V}=\left\{  F\in\mathcal{P}\left(
\operatorname*{EQS}\left(  G,f\right)  \right)  \ \mid\ d\in F\right\}
\subseteq\mathcal{P}\left(  \operatorname*{EQS}\left(  G,f\right)  \right)  $.
In other words, $\Phi\left(  B\right)  \subseteq\operatorname*{EQS}\left(
G,f\right)  $.

We have $K\in\mathfrak{K}$. Thus, $K$ is a broken circuit of $G$ (since
$\mathfrak{K}$ is a set of broken circuits of $G$). In other words, $K$ is a
subset of $E$ having the form $C\setminus\left\{  e\right\}  $, where $C$ is a
circuit of $G$, and where $e$ is the unique singleton edgery in $C$ having
maximum label (among the singleton edgeries in $C$)\ \ \ \ \footnote{because a
broken circuit of $G$ is the same as a subset of $E$ having the form
$C\setminus\left\{  e\right\}  $, where $C$ is a circuit of $G$, and where $e$
is the unique singleton edgery in $C$ having maximum label (among the
singleton edgeries in $C$) (by the definition of a \textquotedblleft broken
circuit\textquotedblright)}. Consider this $C$ and this $e$. Thus, we have the
following facts:

\begin{itemize}
\item The set $C$ is a circuit of $G$.

\item The element $e$ is the unique singleton edgery in $C$ having maximum
label (among the singleton edgeries in $C$).

\item We have $K=C\setminus\left\{  e\right\}  $.
\end{itemize}

The element $e$ is the unique singleton edgery in $C$ having maximum label
(among the singleton edgeries in $C$). Thus, the only singleton edgery in $C$
whose label is greater or equal to the label of $e$ is $e$ itself. In other
words, if $e^{\prime}$ is any singleton edgery in $C$ satisfying $\ell\left(
e^{\prime}\right)  \geq\ell\left(  e\right)  $, then%
\begin{equation}
e^{\prime}=e. \label{pf.lem.ambiNBCm.moeb.equiv.pf.maxlab}%
\end{equation}

Let us now assume that $K\subseteq\Phi\left(  B\right)  $. Thus,
$K\subseteq\Phi\left(  B\right)  =B\cup\left\{  d\right\}  $. Hence,
$\underbrace{K}_{\subseteq B\cup\left\{  d\right\}  }\setminus\left\{
d\right\}  \subseteq\left(  B\cup\left\{  d\right\}  \right)  \setminus
\left\{  d\right\}  \subseteq B$.

We shall now prove that $K\subseteq B$.

Indeed, assume the contrary. Thus, $K\not \subseteq B$. If we had $d\notin K$,
then we would have $K\setminus\left\{  d\right\}  =K$ and therefore
$K=K\setminus\left\{  d\right\}  \subseteq B$; this would contradict
$K\not \subseteq B$. Hence, we cannot have $d\notin K$. We thus must have
$d\in K$. Hence, $d\in K=C\setminus\left\{  e\right\}  $. Hence, $d\in C$ and
$d\notin\left\{  e\right\}  $. From $d\notin\left\{  e\right\}  $, we obtain
$d\neq e$.

But $C\setminus\left\{  e\right\}  =K\subseteq\Phi\left(  B\right)
\subseteq\operatorname*{EQS}\left(  G,f\right)  $. Hence, Lemma
\ref{lem.ambiEqs.circuit} (applied to $Y$ instead of $X$) shows that
$e\in\operatorname*{EQS}\left(  G,f\right)  $. Thus,
(\ref{pf.lem.ambiNBCm.moeb.maxl}) shows that $\ell\left(  d\right)  \geq
\ell\left(  e\right)  $ (since $e$ is a singleton edgery).

Also, $d\in C$. Hence, $d$ is a singleton edgery in $C$ (since $d$ is a
singleton edgery). Since $\ell\left(  d\right)  \geq\ell\left(  e\right)  $,
we can therefore apply (\ref{pf.lem.ambiNBCm.moeb.equiv.pf.maxlab}) to
$e^{\prime}=d$. We thus obtain $d=e$. This contradicts $d\neq e$. This
contradiction proves that our assumption was wrong. Hence, $K\subseteq B$ is proven.

Now, let us forget that we assumed that $K\subseteq\Phi\left(  B\right)  $. We
thus have proven that $K\subseteq B$ under the assumption that $K\subseteq
\Phi\left(  B\right)  $. In other words, we have proven the implication%
\begin{equation}
\left(  K\subseteq\Phi\left(  B\right)  \right)  \ \Longrightarrow\ \left(
K\subseteq B\right)  . \label{pf.lem.ambiNBCm.moeb.equiv.pf.dir1}%
\end{equation}

On the other hand, if $K\subseteq B$, then $K\subseteq B\subseteq\Phi\left(
B\right)  $. Hence, the implication%
\[
\left(  K\subseteq B\right)  \ \Longrightarrow\ \left(  K\subseteq\Phi\left(
B\right)  \right)
\]
holds. Combining this implication with
(\ref{pf.lem.ambiNBCm.moeb.equiv.pf.dir1}), we obtain the logical equivalence
$\left(  K\subseteq B\right)  \ \Longleftrightarrow\ \left(  K\subseteq
\Phi\left(  B\right)  \right)  $. Thus, (\ref{pf.lem.ambiNBCm.moeb.equiv}) is proven.

Also, every $B\in\mathcal{U}$ satisfies%
\begin{equation}
\left(  -1\right)  ^{\left\vert B\right\vert }=-\left(  -1\right)
^{\left\vert \Phi\left(  B\right)  \right\vert }
\label{pf.lem.ambiNBCm.moeb.phisize}%
\end{equation}
\footnote{\textit{Proof of (\ref{pf.lem.ambiNBCm.moeb.phisize}):} Let
$B\in\mathcal{U}$. We have $B\in\mathcal{U}=\left\{  F\in\mathcal{P}\left(
\operatorname*{EQS}\left(  G,f\right)  \right)  \ \mid\ d\notin F\right\}  $.
In other words, $B$ is an element $F$ of $\mathcal{P}\left(
\operatorname*{EQS}\left(  G,f\right)  \right)  $ satisfying $d\notin F$. In
other words, $B$ is an element of $\mathcal{P}\left(  \operatorname*{EQS}%
\left(  G,f\right)  \right)  $ and satisfies $d\notin B$. From $d\notin B$, we
see that $\left\vert B\cup\left\{  d\right\}  \right\vert =\left\vert
B\right\vert +1$. Now, the definition of $\Phi$ yields $\Phi\left(  B\right)
=B\cup\left\{  d\right\}  $. Hence, $\left\vert \Phi\left(  B\right)
\right\vert =\left\vert B\cup\left\{  d\right\}  \right\vert =\left\vert
B\right\vert +1$. Thus, $\left(  -1\right)  ^{\left\vert \Phi\left(  B\right)
\right\vert }=\left(  -1\right)  ^{\left\vert B\right\vert +1}=-\left(
-1\right)  ^{\left\vert B\right\vert }$. Therefore, $\left(  -1\right)
^{\left\vert B\right\vert }=-\left(  -1\right)  ^{\left\vert \Phi\left(
B\right)  \right\vert }$. This proves (\ref{pf.lem.ambiNBCm.moeb.phisize}).}.

Now,%
\begin{align*}
&  \sum_{B\subseteq\operatorname*{EQS}\left(  G,f\right)  }\left(  -1\right)
^{\left\vert B\right\vert }\prod_{\substack{K\in\mathfrak{K};\\K\subseteq
B}}a_{K}\\
&  =\underbrace{\sum_{\substack{B\subseteq\operatorname*{EQS}\left(
G,f\right)  ;\\d\in B}}}_{\substack{=\sum_{B\in\left\{  F\in\mathcal{P}\left(
\operatorname*{EQS}\left(  G,f\right)  \right)  \ \mid\ d\in F\right\}  }%
=\sum_{B\in\mathcal{V}}\\\text{(since }\left\{  F\in\mathcal{P}\left(
\operatorname*{EQS}\left(  G,f\right)  \right)  \ \mid\ d\in F\right\}
=\mathcal{V}\text{)}}}\left(  -1\right)  ^{\left\vert B\right\vert }%
\prod_{\substack{K\in\mathfrak{K};\\K\subseteq B}}a_{K}\\
&  \ \ \ \ \ \ \ \ \ \ +\underbrace{\sum_{\substack{B\subseteq
\operatorname*{EQS}\left(  G,f\right)  ;\\d\notin B}}}_{\substack{=\sum
_{B\in\left\{  F\in\mathcal{P}\left(  \operatorname*{EQS}\left(  G,f\right)
\right)  \ \mid\ d\notin F\right\}  }=\sum_{B\in\mathcal{U}}\\\text{(since
}\left\{  F\in\mathcal{P}\left(  \operatorname*{EQS}\left(  G,f\right)
\right)  \ \mid\ d\notin F\right\}  =\mathcal{U}\text{)}}}\left(  -1\right)
^{\left\vert B\right\vert }\prod_{\substack{K\in\mathfrak{K};\\K\subseteq
B}}a_{K}\\
&  \ \ \ \ \ \ \ \ \ \ \ \ \ \ \ \ \ \ \ \ \left(
\begin{array}
[c]{c}%
\text{here, we have split the sum into two parts:}\\
\text{one containing all addends with }d\in B\text{,}\\
\text{and one containing all addends with }d\notin B
\end{array}
\right) \\
&  =\underbrace{\sum_{B\in\mathcal{V}}\left(  -1\right)  ^{\left\vert
B\right\vert }\prod_{\substack{K\in\mathfrak{K};\\K\subseteq B}}a_{K}%
}_{\substack{=\sum_{B\in\mathcal{U}}\left(  -1\right)  ^{\left\vert
\Phi\left(  B\right)  \right\vert }\prod_{\substack{K\in\mathfrak{K}%
;\\K\subseteq\Phi\left(  B\right)  }}a_{K}\\\text{(here, we have}%
\\\text{substituted }\Phi\left(  B\right)  \text{ for }B\text{ in the
sum,}\\\text{since the map }\Phi:\mathcal{U}\rightarrow\mathcal{V}\text{ is a
bijection)}}}+\sum_{B\in\mathcal{U}}\underbrace{\left(  -1\right)
^{\left\vert B\right\vert }}_{\substack{=-\left(  -1\right)  ^{\left\vert
\Phi\left(  B\right)  \right\vert }\\\text{(by
(\ref{pf.lem.ambiNBCm.moeb.phisize}))}}}\underbrace{\prod_{\substack{K\in
\mathfrak{K};\\K\subseteq B}}}_{\substack{=\prod_{\substack{K\in
\mathfrak{K};\\K\subseteq\Phi\left(  B\right)  }}\\\text{(because for every
}K\in\mathfrak{K}\text{,}\\\text{the condition }\left(  K\subseteq B\right)
\\\text{is equivalent to }\left(  K\subseteq\Phi\left(  B\right)  \right)
\\\text{(by (\ref{pf.lem.ambiNBCm.moeb.equiv})))}}}a_{K}\\
&  =\sum_{B\in\mathcal{U}}\left(  -1\right)  ^{\left\vert \Phi\left(
B\right)  \right\vert }\prod_{\substack{K\in\mathfrak{K};\\K\subseteq
\Phi\left(  B\right)  }}a_{K}+\sum_{B\in\mathcal{U}}\left(  -\left(
-1\right)  ^{\left\vert \Phi\left(  B\right)  \right\vert }\right)
\prod_{\substack{K\in\mathfrak{K};\\K\subseteq\Phi\left(  B\right)  }}a_{K}\\
&  =\sum_{B\in\mathcal{U}}\left(  -1\right)  ^{\left\vert \Phi\left(
B\right)  \right\vert }\prod_{\substack{K\in\mathfrak{K};\\K\subseteq
\Phi\left(  B\right)  }}a_{K}-\sum_{B\in\mathcal{U}}\left(  -1\right)
^{\left\vert \Phi\left(  B\right)  \right\vert }\prod_{\substack{K\in
\mathfrak{K};\\K\subseteq\Phi\left(  B\right)  }}a_{K}\\
&  =0=\left[  \operatorname*{EQS}\left(  G,f\right)  =\varnothing\right]
\ \ \ \ \ \ \ \ \ \ \left(  \text{since }\left[  \operatorname*{EQS}\left(
G,f\right)  =\varnothing\right]  =0\right)  .
\end{align*}
Thus, Lemma \ref{lem.ambiNBCm.moeb} is proved in Case 2.
\end{verlong}

We have now proved Lemma \ref{lem.ambiNBCm.moeb} in both Cases 1 and 2. Since
these two cases cover all possibilities, we thus have proved Lemma
\ref{lem.ambiNBCm.moeb}.
\end{proof}

We are now ready to prove Theorem \ref{thm.ambichromsym.varis} and Corollaries
\ref{cor.ambichromsym.K-free} and \ref{cor.ambichromsym.NBC} as well as
Theorem \ref{thm.ambichromsym.empty}:

\begin{vershort}
\begin{proof}
[Proof of Theorem \ref{thm.ambichromsym.varis}.]The definition of $X_{G}$
shows that
\begin{align*}
X_{G}  &  =\sum_{\substack{f:V\rightarrow\mathbb{N}_{+}\text{ is
a}\\\text{proper }\mathbb{N}_{+}\text{-coloring of }G}}\mathbf{x}_{f}\\
&  =\sum_{f:V\rightarrow\mathbb{N}_{+}}\left[  \underbrace{f\text{ is a proper
}\mathbb{N}_{+}\text{-coloring of }G}_{\substack{\Longleftrightarrow\ \left(
\text{the }\mathbb{N}_{+}\text{-coloring }f\text{ of }G\text{ is
proper}\right)  \\\Longleftrightarrow\ \left(  \operatorname*{EQS}\left(
G,f\right)  =\varnothing\right)  \\\text{(by Lemma \ref{lem.ambiEqs.proper},
applied to }\mathbb{N}_{+}\text{ instead of }X\text{)}}}\right]
\mathbf{x}_{f}\\
&  =\sum_{f:V\rightarrow\mathbb{N}_{+}}\underbrace{\left[  \operatorname*{EQS}%
\left(  G,f\right)  =\varnothing\right]  }_{\substack{=\sum_{B\subseteq
\operatorname*{EQS}\left(  G,f\right)  }\left(  -1\right)  ^{\left\vert
B\right\vert }\prod_{\substack{K\in\mathfrak{K};\\K\subseteq B}}a_{K}%
\\\text{(by Lemma \ref{lem.ambiNBCm.moeb}, applied to }Y=\mathbb{N}%
_{+}\text{)}}}\mathbf{x}_{f}\\
&  =\sum_{f:V\rightarrow\mathbb{N}_{+}}\ \ \underbrace{\sum_{B\subseteq
\operatorname*{EQS}\left(  G,f\right)  }}_{\substack{=\sum
_{\substack{B\subseteq E;\\B\subseteq\operatorname*{EQS}\left(  G,f\right)
}}\\\text{(since }\operatorname*{EQS}\left(  G,f\right)  \text{ is a subset of
}E\text{)}}}\left(  -1\right)  ^{\left\vert B\right\vert }\left(
\prod_{\substack{K\in\mathfrak{K};\\K\subseteq B}}a_{K}\right)  \mathbf{x}%
_{f}\\
&  =\underbrace{\sum_{f:V\rightarrow\mathbb{N}_{+}}\ \ \sum
_{\substack{B\subseteq E;\\B\subseteq\operatorname*{EQS}\left(  G,f\right)
}}}_{=\sum_{B\subseteq E}\ \ \sum_{\substack{f:V\rightarrow\mathbb{N}%
_{+};\\B\subseteq\operatorname*{EQS}\left(  G,f\right)  }}}\left(  -1\right)
^{\left\vert B\right\vert }\left(  \prod_{\substack{K\in\mathfrak{K}%
;\\K\subseteq B}}a_{K}\right)  \mathbf{x}_{f}\\
&  =\sum_{B\subseteq E}\ \ \sum_{\substack{f:V\rightarrow\mathbb{N}%
_{+};\\B\subseteq\operatorname*{EQS}\left(  G,f\right)  }}\left(  -1\right)
^{\left\vert B\right\vert }\left(  \prod_{\substack{K\in\mathfrak{K}%
;\\K\subseteq B}}a_{K}\right)  \mathbf{x}_{f}%
\end{align*}%
\begin{align*}
&  =\sum_{B\subseteq E}\left(  -1\right)  ^{\left\vert B\right\vert }\left(
\prod_{\substack{K\in\mathfrak{K};\\K\subseteq B}}a_{K}\right)
\underbrace{\sum_{\substack{f:V\rightarrow\mathbb{N}_{+};\\B\subseteq
\operatorname*{EQS}\left(  G,f\right)  }}}_{\substack{=\sum
_{\substack{f:V\rightarrow\mathbb{N}_{+};\\\operatorname*{union}%
B\subseteq\operatorname*{Eqs}f}}\\\text{(since Lemma \ref{lem.ambiEqs.union}%
}\\\text{(applied to }X=\mathbb{N}_{+}\text{) yields that}\\\text{the
condition }B\subseteq\operatorname*{EQS}\left(  G,f\right)  \text{
(on}\\\text{a map }f:V\rightarrow\mathbb{N}_{+}\text{) is}\\\text{equivalent
to }\operatorname*{union}B\subseteq\operatorname*{Eqs}f\text{)}}%
}\mathbf{x}_{f}\\
&  =\sum_{B\subseteq E}\left(  -1\right)  ^{\left\vert B\right\vert }\left(
\prod_{\substack{K\in\mathfrak{K};\\K\subseteq B}}a_{K}\right)
\underbrace{\sum_{\substack{f:V\rightarrow\mathbb{N}_{+}%
;\\\operatorname*{union}B\subseteq\operatorname*{Eqs}f}}\mathbf{x}_{f}%
}_{\substack{=p_{\lambda\left(  V,\operatorname*{union}B\right)  }\\\text{(by
Lemma \ref{lem.Eqs.sum}}\\\text{(applied to }\operatorname*{union}B\text{
instead of }B\text{))}}}\\
&  =\sum_{B\subseteq E}\left(  -1\right)  ^{\left\vert B\right\vert }\left(
\prod_{\substack{K\in\mathfrak{K};\\K\subseteq B}}a_{K}\right)  p_{\lambda
\left(  V,\operatorname*{union}B\right)  }=\sum_{F\subseteq E}\left(
-1\right)  ^{\left\vert F\right\vert }\left(  \prod_{\substack{K\in
\mathfrak{K};\\K\subseteq F}}a_{K}\right)  p_{\lambda\left(
V,\operatorname*{union}F\right)  }%
\end{align*}
(here, we have renamed the summation index $B$ as $F$). This proves Theorem
\ref{thm.ambichromsym.varis}.
\end{proof}
\end{vershort}

\begin{verlong}
\begin{proof}
[Proof of Theorem \ref{thm.ambichromsym.varis}.]We have%
\begin{equation}
X_{G}=\sum_{\substack{f:V\rightarrow\mathbb{N}_{+}\text{ is a}\\\text{proper
}\mathbb{N}_{+}\text{-coloring of }G}}\mathbf{x}_{f}
\label{pf.thm.ambichromsym.varis.XG-def}%
\end{equation}
(by the definition of $X_{G}$). Now, if $f:V\rightarrow\mathbb{N}_{+}$ is a
map, then we have the following logical equivalence:%
\begin{equation}
\left(  \text{the }\mathbb{N}_{+}\text{-coloring }f\text{ of }G\text{ is
proper}\right)  \ \Longleftrightarrow\ \left(  \operatorname*{EQS}\left(
G,f\right)  =\varnothing\right)  \label{pf.thm.ambichromsym.varis.equiv}%
\end{equation}
(because the $\mathbb{N}_{+}$-coloring $f$ of $G$ is proper if and only if
$\operatorname*{EQS}\left(  G,f\right)  =\varnothing$\ \ \ \ \footnote{by
Lemma \ref{lem.ambiEqs.proper} (applied to $\mathbb{N}_{+}$ instead of $X$)}).
Now,%
\begin{align}
&  \sum_{f:V\rightarrow\mathbb{N}_{+}}\left[  \underbrace{\operatorname*{EQS}%
\left(  G,f\right)  =\varnothing}_{\substack{\Longleftrightarrow\ \left(
\text{the }\mathbb{N}_{+}\text{-coloring }f\text{ of }G\text{ is
proper}\right)  \\\text{(by (\ref{pf.thm.ambichromsym.varis.equiv}))}%
}}\right]  \mathbf{x}_{f}\nonumber\\
&  =\sum_{f:V\rightarrow\mathbb{N}_{+}}\left[  \underbrace{\text{the
}\mathbb{N}_{+}\text{-coloring }f\text{ of }G\text{ is proper}}%
_{\Longleftrightarrow\ \left(  f\text{ is a proper }\mathbb{N}_{+}%
\text{-coloring of }G\right)  }\right]  \mathbf{x}_{f}\nonumber\\
&  =\sum_{f:V\rightarrow\mathbb{N}_{+}}\left[  f\text{ is a proper }%
\mathbb{N}_{+}\text{-coloring of }G\right]  \mathbf{x}_{f}\nonumber\\
&  =\sum_{\substack{f:V\rightarrow\mathbb{N}_{+}\text{ is a}\\\text{proper
}\mathbb{N}_{+}\text{-coloring of }G}}\underbrace{\left[  f\text{ is a proper
}\mathbb{N}_{+}\text{-coloring of }G\right]  }_{\substack{=1\\\text{(since
}f\text{ is a proper }\mathbb{N}_{+}\text{-coloring of }G\text{)}}%
}\mathbf{x}_{f}\nonumber\\
&  \ \ \ \ \ \ \ \ \ \ +\sum_{\substack{f:V\rightarrow\mathbb{N}_{+}\text{ is
not a}\\\text{proper }\mathbb{N}_{+}\text{-coloring of }G}}\underbrace{\left[
f\text{ is a proper }\mathbb{N}_{+}\text{-coloring of }G\right]
}_{\substack{=0\\\text{(since }f\text{ is not a proper }\mathbb{N}%
_{+}\text{-coloring of }G\text{)}}}\mathbf{x}_{f}\nonumber\\
&  \ \ \ \ \ \ \ \ \ \ \ \ \ \ \ \ \ \ \ \ \left(  \text{since each
}f:V\rightarrow\mathbb{N}_{+}\text{ either is a proper }\mathbb{N}%
_{+}\text{-coloring of }G\text{ or not}\right) \nonumber\\
&  =\sum_{\substack{f:V\rightarrow\mathbb{N}_{+}\text{ is a}\\\text{proper
}\mathbb{N}_{+}\text{-coloring of }G}}\mathbf{x}_{f}+\underbrace{\sum
_{\substack{f:V\rightarrow\mathbb{N}_{+}\text{ is not a}\\\text{proper
}\mathbb{N}_{+}\text{-coloring of }G}}0\mathbf{x}_{f}}_{=0}=\sum
_{\substack{f:V\rightarrow\mathbb{N}_{+}\text{ is a}\\\text{proper }%
\mathbb{N}_{+}\text{-coloring of }G}}\mathbf{x}_{f}\nonumber\\
&  =X_{G} \label{pf.thm.ambichromsym.varis.step1}%
\end{align}
(by (\ref{pf.thm.ambichromsym.varis.XG-def})).

However, for every $f:V\rightarrow\mathbb{N}_{+}$, we have%
\begin{equation}
\sum_{B\subseteq\operatorname*{EQS}\left(  G,f\right)  }\left(  -1\right)
^{\left\vert B\right\vert }\prod_{\substack{K\in\mathfrak{K};\\K\subseteq
B}}a_{K}=\left[  \operatorname*{EQS}\left(  G,f\right)  =\varnothing\right]
\label{pf.thm.ambichromsym.varis.moeb}%
\end{equation}
(by Lemma \ref{lem.ambiNBCm.moeb} (applied to $\mathbb{N}_{+}$ instead of $Y$)).

For every $f:V\rightarrow\mathbb{N}_{+}$, we have%
\begin{equation}
\left\{  F\subseteq E\ \mid\ \operatorname*{union}F\subseteq
\operatorname*{Eqs}f\right\}  =\mathcal{P}\left(  \operatorname*{EQS}\left(
G,f\right)  \right)  \label{pf.thm.ambichromsym.varis.cut}%
\end{equation}
\footnote{\textit{Proof of (\ref{pf.thm.ambichromsym.varis.cut}):} Let
$f:V\rightarrow\mathbb{N}_{+}$.
\par
Let $B\in\left\{  F\subseteq E\ \mid\ \operatorname*{union}F\subseteq
\operatorname*{Eqs}f\right\}  $. Thus, $B$ is a subset $F$ of $E$ satisfying
$\operatorname*{union}F\subseteq\operatorname*{Eqs}f$. In other words, $B$ is
a subset of $E$ and satisfies $\operatorname*{union}B\subseteq
\operatorname*{Eqs}f$. However, Lemma \ref{lem.ambiEqs.union} yields that
$B\subseteq\operatorname*{EQS}\left(  G,f\right)  $ holds if and only if
$\operatorname*{union}B\subseteq\operatorname*{Eqs}f$. Hence, we have
$B\subseteq\operatorname*{EQS}\left(  G,f\right)  $ (since we have
$\operatorname*{union}B\subseteq\operatorname*{Eqs}f$). In other words,
$B\in\mathcal{P}\left(  \operatorname*{EQS}\left(  G,f\right)  \right)  $.
\par
Let us now forget that we fixed $B$. We thus have proven that every
$B\in\left\{  F\subseteq E\ \mid\ \operatorname*{union}F\subseteq
\operatorname*{Eqs}f\right\}  $ satisfies $B\in\mathcal{P}\left(
\operatorname*{EQS}\left(  G,f\right)  \right)  $. In other words,%
\begin{equation}
\left\{  F\subseteq E\ \mid\ \operatorname*{union}F\subseteq
\operatorname*{Eqs}f\right\}  \subseteq\mathcal{P}\left(  \operatorname*{EQS}%
\left(  G,f\right)  \right)  . \label{pf.thm.ambichromsym.varis.cut.pf.1}%
\end{equation}
\par
On the other hand, let $C\in\mathcal{P}\left(  \operatorname*{EQS}\left(
G,f\right)  \right)  $. Thus, $C$ is a subset of $\operatorname*{EQS}\left(
G,f\right)  $. Hence, $C\subseteq\operatorname*{EQS}\left(  G,f\right)
\subseteq E$, so that $C$ is a subset of $E$. Hence, Lemma
\ref{lem.ambiEqs.union} (applied to $B=C$) yields that $C\subseteq
\operatorname*{EQS}\left(  G,f\right)  $ holds if and only if
$\operatorname*{union}C\subseteq\operatorname*{Eqs}f$. Hence, we have
$\operatorname*{union}C\subseteq\operatorname*{Eqs}f$ (since we have
$C\subseteq\operatorname*{EQS}\left(  G,f\right)  $). Thus, $C$ is a subset of
$E$ and satisfies $\operatorname*{union}C\subseteq\operatorname*{Eqs}f$. In
other words, $C$ is a subset $F$ of $E$ satisfying $\operatorname*{union}%
F\subseteq\operatorname*{Eqs}f$. In other words, $C\in\left\{  F\subseteq
E\ \mid\ \operatorname*{union}F\subseteq\operatorname*{Eqs}f\right\}  $.
\par
Let us now forget that we fixed $C$. We thus have proven that every
$C\in\mathcal{P}\left(  \operatorname*{EQS}\left(  G,f\right)  \right)  $
satisfies $C\in\left\{  F\subseteq E\ \mid\ \operatorname*{union}%
F\subseteq\operatorname*{Eqs}f\right\}  $. In other words,%
\[
\mathcal{P}\left(  \operatorname*{EQS}\left(  G,f\right)  \right)
\subseteq\left\{  F\subseteq E\ \mid\ \operatorname*{union}F\subseteq
\operatorname*{Eqs}f\right\}  .
\]
Combining this inclusion with (\ref{pf.thm.ambichromsym.varis.cut.pf.1}), we
obtain $\left\{  F\subseteq E\ \mid\ \operatorname*{union}F\subseteq
\operatorname*{Eqs}f\right\}  =\mathcal{P}\left(  \operatorname*{EQS}\left(
G,f\right)  \right)  $. This proves (\ref{pf.thm.ambichromsym.varis.cut}).}.

For every $f:V\rightarrow\mathbb{N}_{+}$, we have%
\begin{align}
&  \underbrace{\sum_{\substack{B\subseteq E;\\\operatorname*{union}%
B\subseteq\operatorname*{Eqs}f}}}_{\substack{=\sum_{B\in\left\{  F\subseteq
E\ \mid\ \operatorname*{union}F\subseteq\operatorname*{Eqs}f\right\}  }%
=\sum_{B\in\mathcal{P}\left(  \operatorname*{EQS}\left(  G,f\right)  \right)
}\\\text{(because }\left\{  F\subseteq E\ \mid\ \operatorname*{union}%
F\subseteq\operatorname*{Eqs}f\right\}  =\mathcal{P}\left(
\operatorname*{EQS}\left(  G,f\right)  \right)  \\\text{(by
(\ref{pf.thm.ambichromsym.varis.cut})))}}}\left(  -1\right)  ^{\left\vert
B\right\vert }\prod_{\substack{K\in\mathfrak{K};\\K\subseteq B}}a_{K}%
\nonumber\\
&  =\underbrace{\sum_{B\in\mathcal{P}\left(  \operatorname*{EQS}\left(
G,f\right)  \right)  }}_{=\sum_{B\subseteq\operatorname*{EQS}\left(
G,f\right)  }}\left(  -1\right)  ^{\left\vert B\right\vert }\prod
_{\substack{K\in\mathfrak{K};\\K\subseteq B}}a_{K}=\sum_{B\subseteq
\operatorname*{EQS}\left(  G,f\right)  }\left(  -1\right)  ^{\left\vert
B\right\vert }\prod_{\substack{K\in\mathfrak{K};\\K\subseteq B}}a_{K}%
\nonumber\\
&  =\left[  \operatorname*{EQS}\left(  G,f\right)  =\varnothing\right]
\label{pf.thm.ambichromsym.varis.moeb2}%
\end{align}
(by (\ref{pf.thm.ambichromsym.varis.moeb})).

Now, (\ref{pf.thm.ambichromsym.varis.step1}) yields%
\begin{align}
X_{G}  &  =\sum_{f:V\rightarrow\mathbb{N}_{+}}\underbrace{\left[
\operatorname*{EQS}\left(  G,f\right)  =\varnothing\right]  }_{\substack{=\sum
_{\substack{B\subseteq E;\\\operatorname*{union}B\subseteq\operatorname*{Eqs}%
f}}\left(  -1\right)  ^{\left\vert B\right\vert }\prod_{\substack{K\in
\mathfrak{K};\\K\subseteq B}}a_{K}\\\text{(by
(\ref{pf.thm.ambichromsym.varis.moeb2}))}}}\mathbf{x}_{f}\nonumber\\
&  =\sum_{f:V\rightarrow\mathbb{N}_{+}}\left(  \sum_{\substack{B\subseteq
E;\\\operatorname*{union}B\subseteq\operatorname*{Eqs}f}}\left(  -1\right)
^{\left\vert B\right\vert }\prod_{\substack{K\in\mathfrak{K};\\K\subseteq
B}}a_{K}\right)  \mathbf{x}_{f}\nonumber\\
&  =\underbrace{\sum_{f:V\rightarrow\mathbb{N}_{+}}\ \ \sum
_{\substack{B\subseteq E;\\\operatorname*{union}B\subseteq\operatorname*{Eqs}%
f}}}_{=\sum_{B\subseteq E}\ \ \sum_{\substack{f:V\rightarrow\mathbb{N}%
_{+};\\\operatorname*{union}B\subseteq\operatorname*{Eqs}f}}}\left(
-1\right)  ^{\left\vert B\right\vert }\left(  \prod_{\substack{K\in
\mathfrak{K};\\K\subseteq B}}a_{K}\right)  \mathbf{x}_{f}\nonumber\\
&  =\sum_{B\subseteq E}\ \ \sum_{\substack{f:V\rightarrow\mathbb{N}%
_{+};\\\operatorname*{union}B\subseteq\operatorname*{Eqs}f}}\left(  -1\right)
^{\left\vert B\right\vert }\left(  \prod_{\substack{K\in\mathfrak{K}%
;\\K\subseteq B}}a_{K}\right)  \mathbf{x}_{f}\nonumber\\
&  =\sum_{B\subseteq E}\left(  -1\right)  ^{\left\vert B\right\vert }\left(
\prod_{\substack{K\in\mathfrak{K};\\K\subseteq B}}a_{K}\right)  \sum
_{\substack{f:V\rightarrow\mathbb{N}_{+};\\\operatorname*{union}%
B\subseteq\operatorname*{Eqs}f}}\mathbf{x}_{f}.
\label{pf.thm.ambichromsym.varis.step4}%
\end{align}

However, if $B$ is a subset of $E$, then the pair $\left(
V,\operatorname*{union}B\right)  $ is a finite graph (since $V$ is a finite
set and since $\operatorname*{union}B\subseteq\dbinom{V}{2}$), and thus we
have%
\begin{equation}
\sum_{\substack{f:V\rightarrow\mathbb{N}_{+};\\\operatorname*{union}%
B\subseteq\operatorname*{Eqs}f}}\mathbf{x}_{f}=p_{\lambda\left(
V,\operatorname*{union}B\right)  } \label{pf.thm.ambichromsym.varis.p}%
\end{equation}
(by Lemma \ref{lem.Eqs.sum}, applied to $\operatorname*{union}B$ instead of
$B$).

Hence, (\ref{pf.thm.ambichromsym.varis.step4}) becomes%
\begin{align*}
X_{G}  &  =\sum_{B\subseteq E}\left(  -1\right)  ^{\left\vert B\right\vert
}\left(  \prod_{\substack{K\in\mathfrak{K};\\K\subseteq B}}a_{K}\right)
\underbrace{\sum_{\substack{f:V\rightarrow\mathbb{N}_{+}%
;\\\operatorname*{union}B\subseteq\operatorname*{Eqs}f}}\mathbf{x}_{f}%
}_{\substack{=p_{\lambda\left(  V,\operatorname*{union}B\right)  }\\\text{(by
(\ref{pf.thm.ambichromsym.varis.p}))}}}\\
&  =\sum_{B\subseteq E}\left(  -1\right)  ^{\left\vert B\right\vert }\left(
\prod_{\substack{K\in\mathfrak{K};\\K\subseteq B}}a_{K}\right)  p_{\lambda
\left(  V,\operatorname*{union}B\right)  }=\sum_{F\subseteq E}\left(
-1\right)  ^{\left\vert F\right\vert }\left(  \prod_{\substack{K\in
\mathfrak{K};\\K\subseteq F}}a_{K}\right)  p_{\lambda\left(
V,\operatorname*{union}F\right)  }%
\end{align*}
(here, we have renamed the summation index $B$ as $F$). This proves Theorem
\ref{thm.ambichromsym.varis}.
\end{proof}
\end{verlong}

\begin{vershort}
\begin{proof}
[Proof of Corollary \ref{cor.ambichromsym.K-free}.]Analogous to the proof of
Corollary \ref{cor.chromsym.K-free}.
\end{proof}
\end{vershort}

\begin{verlong}
\begin{proof}
[Proof of Corollary \ref{cor.ambichromsym.K-free}.]We can apply Theorem
\ref{thm.ambichromsym.varis} to $0$ instead of $a_{K}$. As a result, we obtain%
\begin{equation}
X_{G}=\sum_{F\subseteq E}\left(  -1\right)  ^{\left\vert F\right\vert }\left(
\prod_{\substack{K\in\mathfrak{K};\\K\subseteq F}}0\right)  p_{\lambda\left(
V,\operatorname*{union}F\right)  }. \label{pf.cor.ambichromsym.K-free.0}%
\end{equation}
Now, if $F$ is any subset of $E$, then%
\begin{equation}
\prod_{\substack{K\in\mathfrak{K};\\K\subseteq F}}0=%
\begin{cases}
1, & \text{if }F\text{ is }\mathfrak{K}\text{-free;}\\
0, & \text{if }F\text{ is not }\mathfrak{K}\text{-free}%
\end{cases}
\label{pf.cor.ambichromsym.K-free.1}%
\end{equation}
\footnote{\textit{Proof of (\ref{pf.cor.ambichromsym.K-free.1}):} The proof of
(\ref{pf.cor.ambichromsym.K-free.1}) is completely analogous to the proof of
(\ref{pf.cor.chromsym.K-free.1}).}.

Thus, (\ref{pf.cor.ambichromsym.K-free.0}) becomes%
\begin{align*}
X_{G}  &  =\sum_{F\subseteq E}\left(  -1\right)  ^{\left\vert F\right\vert
}\underbrace{\left(  \prod_{\substack{K\in\mathfrak{K};\\K\subseteq
F}}0\right)  }_{\substack{=%
\begin{cases}
1, & \text{if }F\text{ is }\mathfrak{K}\text{-free;}\\
0, & \text{if }F\text{ is not }\mathfrak{K}\text{-free}%
\end{cases}
\\\text{(by (\ref{pf.cor.ambichromsym.K-free.1}))}}}p_{\lambda\left(
V,\operatorname*{union}F\right)  }\\
&  =\sum_{F\subseteq E}\left(  -1\right)  ^{\left\vert F\right\vert }%
\begin{cases}
1, & \text{if }F\text{ is }\mathfrak{K}\text{-free;}\\
0, & \text{if }F\text{ is not }\mathfrak{K}\text{-free}%
\end{cases}
\ \ p_{\lambda\left(  V,\operatorname*{union}F\right)  }\\
&  =\sum_{\substack{F\subseteq E;\\F\text{ is }\mathfrak{K}\text{-free}%
}}\left(  -1\right)  ^{\left\vert F\right\vert }\underbrace{%
\begin{cases}
1, & \text{if }F\text{ is }\mathfrak{K}\text{-free;}\\
0, & \text{if }F\text{ is not }\mathfrak{K}\text{-free}%
\end{cases}
}_{\substack{=1\\\text{(since }F\text{ is }\mathfrak{K}\text{-free)}%
}}\ \ p_{\lambda\left(  V,\operatorname*{union}F\right)  }\\
&  \ \ \ \ \ \ \ \ \ \ +\sum_{\substack{F\subseteq E;\\F\text{ is not
}\mathfrak{K}\text{-free}}}\left(  -1\right)  ^{\left\vert F\right\vert
}\underbrace{%
\begin{cases}
1, & \text{if }F\text{ is }\mathfrak{K}\text{-free;}\\
0, & \text{if }F\text{ is not }\mathfrak{K}\text{-free}%
\end{cases}
}_{\substack{=0\\\text{(since }F\text{ is not }\mathfrak{K}\text{-free)}%
}}\ \ p_{\lambda\left(  V,\operatorname*{union}F\right)  }\\
&  \ \ \ \ \ \ \ \ \ \ \ \ \ \ \ \ \ \ \ \ \left(  \text{since each subset
}F\text{ of }E\text{ either is }\mathfrak{K}\text{-free or is not}\right) \\
&  =\sum_{\substack{F\subseteq E;\\F\text{ is }\mathfrak{K}\text{-free}%
}}\left(  -1\right)  ^{\left\vert F\right\vert }p_{\lambda\left(
V,\operatorname*{union}F\right)  }+\underbrace{\sum_{\substack{F\subseteq
E;\\F\text{ is not }\mathfrak{K}\text{-free}}}\left(  -1\right)  ^{\left\vert
F\right\vert }0p_{\lambda\left(  V,\operatorname*{union}F\right)  }}_{=0}\\
&  =\sum_{\substack{F\subseteq E;\\F\text{ is }\mathfrak{K}\text{-free}%
}}\left(  -1\right)  ^{\left\vert F\right\vert }p_{\lambda\left(
V,\operatorname*{union}F\right)  }.
\end{align*}
This proves Corollary \ref{cor.ambichromsym.K-free}.
\end{proof}
\end{verlong}

\begin{vershort}
\begin{proof}
[Proof of Corollary \ref{cor.ambichromsym.NBC}.]Corollary
\ref{cor.ambichromsym.NBC} follows from Corollary
\ref{cor.ambichromsym.K-free} when $\mathfrak{K}$ is set to be the set of
\textbf{all} broken circuits of $G$.
\end{proof}
\end{vershort}

\begin{verlong}
\begin{proof}
[Proof of Corollary \ref{cor.ambichromsym.NBC}.]Let $\mathfrak{K}$ be the set
of all broken circuits of $G$.

Now, just as in the proof of Corollary \ref{cor.chromsym.NBC}, we can prove
the following equality:
\[
\sum_{\substack{F\subseteq E;\\F\text{ is }\mathfrak{K}\text{-free}}%
}=\sum_{\substack{F\subseteq E;\\F\text{ contains no broken}\\\text{circuit of
}G\text{ as a subset}}}
\]
(an equality between summation signs). Now, Corollary
\ref{cor.ambichromsym.K-free} yields%
\[
X_{G}=\underbrace{\sum_{\substack{F\subseteq E;\\F\text{ is }\mathfrak{K}%
\text{-free}}}}_{=\sum_{\substack{F\subseteq E;\\F\text{ contains no
broken}\\\text{circuit of }G\text{ as a subset}}}}\left(  -1\right)
^{\left\vert F\right\vert }p_{\lambda\left(  V,\operatorname*{union}F\right)
}=\sum_{\substack{F\subseteq E;\\F\text{ contains no broken}\\\text{circuit of
}G\text{ as a subset}}}\left(  -1\right)  ^{\left\vert F\right\vert
}p_{\lambda\left(  V,\operatorname*{union}F\right)  }.
\]
This proves Corollary \ref{cor.ambichromsym.NBC}.
\end{proof}
\end{verlong}

\begin{vershort}
\begin{proof}
[Proof of Theorem \ref{thm.ambichromsym.empty}.]This follows from Theorem
\ref{thm.ambichromsym.varis} in the same way as Theorem
\ref{thm.chromsym.empty} follows from Theorem \ref{thm.chromsym.varis}.
\end{proof}
\end{vershort}

\begin{verlong}
\begin{proof}
[Proof of Theorem \ref{thm.ambichromsym.empty}.]Let $X$ be the totally ordered
set $\left\{  1\right\}  $ (equipped with the only possible order on this
set). Let $\ell:E\rightarrow X$ be the function sending each $e\in E$ to $1\in
X$. Let $\mathfrak{K}$ be the empty set. Clearly, $\mathfrak{K}$ is a set of
broken circuits of $G$. Theorem \ref{thm.ambichromsym.varis} (applied to $0$
instead of $a_{K}$) yields%
\begin{align*}
X_{G}  &  =\sum_{F\subseteq E}\left(  -1\right)  ^{\left\vert F\right\vert
}\underbrace{\left(  \prod_{\substack{K\in\mathfrak{K};\\K\subseteq
F}}0\right)  }_{\substack{=\left(  \text{empty product}\right)  \\\text{(since
}\mathfrak{K}\text{ is the empty set)}}}p_{\lambda\left(
V,\operatorname*{union}F\right)  }\\
&  =\sum_{F\subseteq E}\left(  -1\right)  ^{\left\vert F\right\vert
}\underbrace{\left(  \text{empty product}\right)  }_{=1}p_{\lambda\left(
V,\operatorname*{union}F\right)  }=\sum_{F\subseteq E}\left(  -1\right)
^{\left\vert F\right\vert }p_{\lambda\left(  V,\operatorname*{union}F\right)
}.
\end{align*}
This proves Theorem \ref{thm.ambichromsym.empty}.
\end{proof}
\end{verlong}

\subsection{The chromatic polynomial}

We have thus proved analogues of Theorems \ref{thm.chromsym.empty} and
\ref{thm.chromsym.varis} and Corollaries \ref{cor.chromsym.K-free} and
\ref{cor.chromsym.NBC} for ambigraphs. We can just as easily prove analogues
of Theorem \ref{thm.chrompol.exist}, Definition \ref{def.chrompol}, Theorems
\ref{thm.chrompol.empty} and \ref{thm.chrompol.varis} and Corollaries
\ref{cor.chrompol.K-free} and \ref{cor.chrompol.NBC}. Here they are, in the
order in which we have just mentioned them:

\begin{theorem}
\label{thm.ambichrompol.exist}Let $G=\left(  V,E,\varphi\right)  $ be a finite
ambigraph. Then, there exists a unique polynomial $P\in\mathbb{Z}\left[
x\right]  $ such that every $q\in\mathbb{N}$ satisfies%
\[
P\left(  q\right)  =\left(  \text{the number of all proper }\left\{
1,2,\ldots,q\right\}  \text{-colorings of }G\right)  .
\]

\end{theorem}

\begin{definition}
\label{def.ambichrompol}Let $G=\left(  V,E,\varphi\right)  $ be a finite
ambigraph. Theorem \ref{thm.ambichrompol.exist} shows that there exists a
polynomial $P\in\mathbb{Z}\left[  x\right]  $ such that every $q\in\mathbb{N}$
satisfies $P\left(  q\right)  =\left(  \text{the number of all proper
}\left\{  1,2,\ldots,q\right\}  \text{-colorings of }G\right)  $. This
polynomial $P$ is called the \emph{chromatic polynomial} of $G$, and will be
denoted by $\chi_{G}$.
\end{definition}

\begin{theorem}
\label{thm.ambichrompol.empty}Let $G=\left(  V,E,\varphi\right)  $ be a finite
ambigraph. Then,%
\[
\chi_{G}=\sum_{F\subseteq E}\left(  -1\right)  ^{\left\vert F\right\vert
}x^{\operatorname*{conn}\left(  V,\operatorname*{union}F\right)  }.
\]
(Here, of course, the pair $\left(  V,\operatorname*{union}F\right)  $ is
regarded as a graph, and the expression $\operatorname*{conn}\left(
V,\operatorname*{union}F\right)  $ is understood according to Definition
\ref{def.conn}.)
\end{theorem}

\begin{theorem}
\label{thm.ambichrompol.varis}Let $G=\left(  V,E,\varphi\right)  $ be a finite
ambigraph. Let $X$ be a totally ordered set. Let $\ell:E\rightarrow X$ be a
labeling function. Let $\mathfrak{K}$ be some set of broken circuits of $G$
(not necessarily containing all of them). Let $a_{K}$ be an element of
$\mathbf{k}$ for every $K\in\mathfrak{K}$. Then,%
\[
\chi_{G}=\sum_{F\subseteq E}\left(  -1\right)  ^{\left\vert F\right\vert
}\left(  \prod_{\substack{K\in\mathfrak{K};\\K\subseteq F}}a_{K}\right)
x^{\operatorname*{conn}\left(  V,\operatorname*{union}F\right)  }.
\]
(Here, of course, the pair $\left(  V,\operatorname*{union}F\right)  $ is
regarded as a graph, and the expression $\operatorname*{conn}\left(
V,\operatorname*{union}F\right)  $ is understood according to Definition
\ref{def.conn}. Moreover, the polynomial $\chi_{G}\in\mathbb{Z}\left[
x\right]  $ on the left-hand side is regarded as an element of $\mathbf{k}%
\left[  x\right]  $ via the canonical ring morphism $\mathbb{Z}\left[
x\right]  \rightarrow\mathbf{k}\left[  x\right]  $.)
\end{theorem}

\begin{corollary}
\label{cor.ambichrompol.K-free}Let $G=\left(  V,E,\varphi\right)  $ be a
finite ambigraph. Let $X$ be a totally ordered set. Let $\ell:E\rightarrow X$
be a labeling function. Let $\mathfrak{K}$ be some set of broken circuits of
$G$ (not necessarily containing all of them). Then,%
\[
\chi_{G}=\sum_{\substack{F\subseteq E;\\F\text{ is }\mathfrak{K}\text{-free}%
}}\left(  -1\right)  ^{\left\vert F\right\vert }x^{\operatorname*{conn}\left(
V,\operatorname*{union}F\right)  }.
\]

\end{corollary}

\begin{corollary}
\label{cor.ambichrompol.NBC}Let $G=\left(  V,E,\varphi\right)  $ be a finite
ambigraph. Let $X$ be a totally ordered set. Let $\ell:E\rightarrow X$ be a
labeling function. Then,%
\[
\chi_{G}=\sum_{\substack{F\subseteq E;\\F\text{ contains no broken}%
\\\text{circuit of }G\text{ as a subset}}}\left(  -1\right)  ^{\left\vert
F\right\vert }x^{\operatorname*{conn}\left(  V,\operatorname*{union}F\right)
}.
\]

\end{corollary}

The proofs of all these results are analogous to the proofs of the
corresponding results from Section \ref{sec.chrompol}, so we leave them all to
the reader.

One may reasonably wonder whether Corollary \ref{cor.chrompol.NBCfor} has an
analogue for ambigraphs as well, i.e., whether one can replace the exponent
$\operatorname*{conn}\left(  V,\operatorname*{union}F\right)  $ in Corollary
\ref{cor.ambichrompol.NBC} by something simpler when $\ell$ is injective. In
the case of a hypergraph, Dohmen has obtained such a result (\cite[Theorem
2.1]{Dohmen95}) under the additional condition that each cycle of $G$ have at
least one singleton edgery. Unfortunately, for ambigraphs, such a
simplification does not appear possible (even under a condition like Dohmen's).

\section{\label{sec.weigh}Weighted and noncommutative versions}

In the recent decades, the chromatic symmetric function of a graph has been
generalized in several directions. Two of them are the \emph{chromatic
symmetric function of a weighted graph} as defined by Crew and Spirkl
(\cite[\S 3]{CreSpi19}), and the \emph{noncommutative chromatic symmetric
function} of Gebhard and Sagan (\cite[\S 3]{GebSag01}). In this section, we
will recall the definitions of both of these generalizations, and extend our
results to them. (The extensions will be fairly mechanical, as all the hard
work has already been done.)

\subsection{Weighted graphs and their chromatic symmetric functions}

For us, a \emph{weighted graph} will just mean a pair consisting of a graph
$G=\left(  V,E\right)  $ and a weight function on $V$. Weight functions are
defined as follows:

\begin{definition}
\label{def.weight}Let $V$ be a set. A \emph{weight function} on $V$ means a
function $w:V\rightarrow\mathbb{N}_{+}$. If $w:V\rightarrow\mathbb{N}_{+}$ is
a weight function on $V$, then the \emph{weight} of an element $v\in V$ is
defined to be the positive integer $w\left(  v\right)  \in\mathbb{N}_{+}$.
\end{definition}

Thus, a weight function on a set $V$ just assigns a \textquotedblleft
weight\textquotedblright\ (a positive integer) to each element of $V$. Given
such a weight function for a graph $G=\left(  V,E\right)  $, we can define a
\textquotedblleft weighted chromatic symmetric function\textquotedblright:

\begin{definition}
\label{def.wchromsym}Let $G=\left(  V,E\right)  $ be a finite graph. Let
$w:V\rightarrow\mathbb{N}_{+}$ be a weight function on $V$.

\textbf{(a)} For every $\mathbb{N}_{+}$-coloring $f:V\rightarrow\mathbb{N}%
_{+}$ of $G$, we let $\mathbf{x}_{f,w}$ denote the monomial $\prod_{v\in
V}x_{f\left(  v\right)  }^{w\left(  v\right)  }$ in the indeterminates
$x_{1},x_{2},x_{3},\ldots$.

\textbf{(b)} We define a power series $X_{G,w}\in\mathbf{k}\left[  \left[
x_{1},x_{2},x_{3},\ldots\right]  \right]  $ by%
\[
X_{G,w}=\sum_{\substack{f:V\rightarrow\mathbb{N}_{+}\text{ is a}\\\text{proper
}\mathbb{N}_{+}\text{-coloring of }G}}\mathbf{x}_{f,w}.
\]

This power series $X_{G,w}$ is called the \emph{chromatic symmetric function}
of $\left(  G,w\right)  $.
\end{definition}

This chromatic symmetric function $X_{G,w}$ was introduced by Crew and Spirkl
in \cite[(1)]{CreSpi19} (where it was denoted $X_{\left(  G,w\right)  }$). It
generalizes the original chromatic symmetric function $X_{G}$, which is
obtained when all the weights $w\left(  v\right)  $ are $1$:

\begin{example}
\label{exa.wchromsym.w=1}Let $G=\left(  V,E\right)  $ be a finite graph. Let
$w:V\rightarrow\mathbb{N}_{+}$ be the weight function that sends each $v\in V$
to $1$. Then, for every $\mathbb{N}_{+}$-coloring $f:V\rightarrow
\mathbb{N}_{+}$ of $G$, we have%
\begin{align}
\mathbf{x}_{f,w}  &  =\prod_{v\in V}\underbrace{x_{f\left(  v\right)
}^{w\left(  v\right)  }}_{\substack{=x_{f\left(  v\right)  }\\\text{(since
}w\left(  v\right)  =1\\\text{(by the definition of }w\text{))}}%
}\ \ \ \ \ \ \ \ \ \ \left(  \text{by Definition \ref{def.wchromsym}
\textbf{(a)}}\right) \nonumber\\
&  =\prod_{v\in V}x_{f\left(  v\right)  }=\mathbf{x}_{f}
\label{eq.exa.wchromsym.w=1.1}%
\end{align}
(see Definition \ref{def.chromsym} \textbf{(a)} for the definition of
$\mathbf{x}_{f}$). Thus, Definition \ref{def.wchromsym} \textbf{(b)} yields%
\[
X_{G,w}=\sum_{\substack{f:V\rightarrow\mathbb{N}_{+}\text{ is a}\\\text{proper
}\mathbb{N}_{+}\text{-coloring of }G}}\underbrace{\mathbf{x}_{f,w}%
}_{\substack{=\mathbf{x}_{f}\\\text{(by (\ref{eq.exa.wchromsym.w=1.1}))}%
}}=\sum_{\substack{f:V\rightarrow\mathbb{N}_{+}\text{ is a}\\\text{proper
}\mathbb{N}_{+}\text{-coloring of }G}}\mathbf{x}_{f}=X_{G}%
\]
(by Definition \ref{def.chromsym} \textbf{(b)}).
\end{example}

\subsection{The weight of a subset and the partition $\lambda\left(
G,w\right)  $}

To state Whitney-like formulas for $X_{G,w}$, we need to adapt the partition
$\lambda\left(  G\right)  $ defined in Definition \ref{def.connectedness}
\textbf{(b)} to the case of a weighted graph. This adaptation consists in
replacing the size of each connected component by its \emph{weight}. Here, the
weight of a subset of $V$ is defined as follows:

\begin{definition}
\label{def.weight-of-subset}Let $V$ be a finite set. Let $w:V\rightarrow
\mathbb{N}_{+}$ be a weight function on $V$. Let $S$ be a subset of $V$. Then,
the \emph{weight} of $S$ means the nonnegative integer $\sum_{v\in S}w\left(
v\right)  $. This weight is denoted by $w\left(  S\right)  $.
\end{definition}

Note that if the subset $S$ in this definition is nonempty, then its weight
$w\left(  S\right)  $ is a positive integer, since it is defined as the
nonempty sum $\sum_{v\in S}w\left(  v\right)  $ of the positive weights
$w\left(  v\right)  $.

For example, $w\left(  \left\{  2,5,6\right\}  \right)  =w\left(  2\right)
+w\left(  5\right)  +w\left(  6\right)  $ (if $\left\{  2,5,6\right\}  $ is a
subset of $V$).

Now, we can define the analogue of the partition $\lambda\left(  G\right)  $
for a weighted graph:

\begin{definition}
\label{def.wlambda}Let $G=\left(  V,E\right)  $ be a finite graph. Let
$w:V\rightarrow\mathbb{N}_{+}$ be a weight function on $V$. We let
$\lambda\left(  G,w\right)  $ denote the list of the weights of all connected
components of $G$, in weakly decreasing order. (Each connected component
should contribute only one entry to the list.) We view $\lambda\left(
G,w\right)  $ as a partition (since $\lambda\left(  G,w\right)  $ is a weakly
decreasing finite list of positive integers).
\end{definition}

\begin{example}
Let $G=\left(  V,E\right)  $ be the finite graph with $V=\left\{
1,2,3,4,5\right\}  $ and $E=\left\{  \left\{  1,3\right\}  ,\ \left\{
2,4\right\}  ,\ \left\{  2,5\right\}  \right\}  $. Then, the connected
components of $G$ are $A=\left\{  1,3\right\}  $ and $B=\left\{
2,4,5\right\}  $.

Let $w:V\rightarrow\mathbb{N}_{+}$ be the weight function given by $w\left(
1\right)  =6$ and $w\left(  2\right)  =9$ and $w\left(  3\right)  =6$ and
$w\left(  4\right)  =1$ and $w\left(  5\right)  =3$. Then, the weights of the
connected components $A$ and $B$ are
\begin{align*}
w\left(  A\right)   &  =w\left(  \left\{  1,3\right\}  \right)  =w\left(
1\right)  +w\left(  3\right)  =6+6=12\ \ \ \ \ \ \ \ \ \ \text{and}\\
w\left(  B\right)   &  =w\left(  \left\{  2,4,5\right\}  \right)  =w\left(
2\right)  +w\left(  4\right)  +w\left(  5\right)  =9+1+3=13.
\end{align*}
Hence, the partition $\lambda\left(  G,w\right)  $ is the list of these two
weights $12$ and $13$, in weakly decreasing order. In other words,
$\lambda\left(  G,w\right)  =\left(  13,12\right)  $.
\end{example}

\subsection{Formulas for $X_{G,w}$}

We are now ready to state analogues of Theorems \ref{thm.chromsym.empty} and
\ref{thm.chromsym.varis} and Corollaries \ref{cor.chromsym.K-free} and
\ref{cor.chromsym.NBC} for weighted graphs:

\begin{theorem}
\label{thm.wchromsym.empty}Let $G=\left(  V,E\right)  $ be a finite graph. Let
$w:V\rightarrow\mathbb{N}_{+}$ be a weight function on $V$. Then,%
\[
X_{G,w}=\sum_{F\subseteq E}\left(  -1\right)  ^{\left\vert F\right\vert
}p_{\lambda\left(  \left(  V,F\right)  ,w\right)  }.
\]
(Here, of course, the pair $\left(  V,F\right)  $ is regarded as a graph, and
the expression $\lambda\left(  \left(  V,F\right)  ,w\right)  $ is understood
according to Definition \ref{def.wlambda}.)
\end{theorem}

\begin{theorem}
\label{thm.wchromsym.varis}Let $G=\left(  V,E\right)  $ be a finite graph. Let
$w:V\rightarrow\mathbb{N}_{+}$ be a weight function on $V$. Let $X$ be a
totally ordered set. Let $\ell:E\rightarrow X$ be a labeling function. Let
$\mathfrak{K}$ be some set of broken circuits of $G$ (not necessarily
containing all of them). Let $a_{K}$ be an element of $\mathbf{k}$ for every
$K\in\mathfrak{K}$. Then,%
\[
X_{G,w}=\sum_{F\subseteq E}\left(  -1\right)  ^{\left\vert F\right\vert
}\left(  \prod_{\substack{K\in\mathfrak{K};\\K\subseteq F}}a_{K}\right)
p_{\lambda\left(  \left(  V,F\right)  ,w\right)  }.
\]
(Here, of course, the pair $\left(  V,F\right)  $ is regarded as a graph, and
the expression $\lambda\left(  \left(  V,F\right)  ,w\right)  $ is understood
according to Definition \ref{def.wlambda}.)
\end{theorem}

\begin{corollary}
\label{cor.wchromsym.K-free}Let $G=\left(  V,E\right)  $ be a finite graph.
Let $w:V\rightarrow\mathbb{N}_{+}$ be a weight function on $V$. Let $X$ be a
totally ordered set. Let $\ell:E\rightarrow X$ be a labeling function. Let
$\mathfrak{K}$ be some set of broken circuits of $G$ (not necessarily
containing all of them). Then,%
\[
X_{G,w}=\sum_{\substack{F\subseteq E;\\F\text{ is }\mathfrak{K}\text{-free}%
}}\left(  -1\right)  ^{\left\vert F\right\vert }p_{\lambda\left(  \left(
V,F\right)  ,w\right)  }.
\]

\end{corollary}

\begin{corollary}
\label{cor.wchromsym.NBC}Let $G=\left(  V,E\right)  $ be a finite graph. Let
$w:V\rightarrow\mathbb{N}_{+}$ be a weight function on $V$. Let $X$ be a
totally ordered set. Let $\ell:E\rightarrow X$ be a labeling function. Then,%
\[
X_{G,w}=\sum_{\substack{F\subseteq E;\\F\text{ contains no broken}%
\\\text{circuit of }G\text{ as a subset}}}\left(  -1\right)  ^{\left\vert
F\right\vert }p_{\lambda\left(  \left(  V,F\right)  ,w\right)  }.
\]

\end{corollary}

Note that Theorem \ref{thm.wchromsym.empty} is a result by Crew and Spirkl
(namely, \cite[Lemma 3]{CreSpi19}).

\subsection{Proofs}

In this section, we shall prove Theorem \ref{thm.wchromsym.varis}, Corollary
\ref{cor.wchromsym.K-free}, Corollary \ref{cor.wchromsym.NBC} and Theorem
\ref{thm.wchromsym.empty}. This will be fairly easy, as many of our above
lemmas can be reused without any change. However, we need the following
weighted version of Lemma \ref{lem.Eqs.sum}:

\begin{lemma}
\label{lem.wEqs.sum}Let $\left(  V,B\right)  $ be a finite graph. Let
$w:V\rightarrow\mathbb{N}_{+}$ be a weight function on $V$. Then,%
\[
\sum_{\substack{f:V\rightarrow\mathbb{N}_{+};\\B\subseteq\operatorname*{Eqs}%
f}}\mathbf{x}_{f,w}=p_{\lambda\left(  \left(  V,B\right)  ,w\right)  }.
\]
(Here, $\mathbf{x}_{f,w}$ is defined as in Definition \ref{def.wchromsym}
\textbf{(a)}, and the expression $\lambda\left(  \left(  V,B\right)
,w\right)  $ is understood according to Definition \ref{def.wlambda}.)
\end{lemma}

\begin{vershort}
\begin{proof}
[Proof of Lemma \ref{lem.wEqs.sum}.]This is almost completely analogous to the
proof of Lemma \ref{lem.Eqs.sum} that we gave long ago. (Replace each size
$\left\vert C_{i}\right\vert $ by the weight $w\left(  C_{i}\right)  $;
replace all monomials $\mathbf{x}_{g}$ by their weighted analogues
$\mathbf{x}_{g,w}$; and of course, replace $\lambda\left(  V,B\right)  $ by
$\lambda\left(  \left(  V,B\right)  ,w\right)  $.)
\end{proof}
\end{vershort}

\begin{verlong}
\begin{proof}
[Proof of Lemma \ref{lem.wEqs.sum}.]Let $\sim$ denote the equivalence relation
$\sim_{\left(  V,B\right)  }$ (defined as in Definition
\ref{def.connectedness} \textbf{(a)}). Then, the connected components of
$\left(  V,B\right)  $ are the elements of $V/\left(  \sim\right)  $. (This
has already been shown in our proof of Lemma \ref{lem.Eqs.sum} above.)

A set $\left(  \mathbb{N}_{+}\right)  _{\sim}^{V}$ is defined (according to
Definition \ref{def.relquot.maps} \textbf{(b)}).

Proposition \ref{prop.relquot.uniprop} \textbf{(b)} (applied to $X=V$ and
$Y=\mathbb{N}_{+}$) shows that the map\footnote{Here, the map $\pi_{V}$ is
defined as in Definition \ref{def.relquot}.}%
\[
\left(  \mathbb{N}_{+}\right)  ^{V/\left(  \sim\right)  }\rightarrow\left(
\mathbb{N}_{+}\right)  _{\sim}^{V},\ \ \ \ \ \ \ \ \ \ f\mapsto f\circ\pi_{V}%
\]
is a bijection.

We have the following equality of summation signs:
\[
\sum_{\substack{f:V\rightarrow\mathbb{N}_{+};\\B\subseteq\operatorname*{Eqs}%
f}}=\sum_{f\in\left(  \mathbb{N}_{+}\right)  _{\sim}^{V}}%
\]
(indeed, this has already been shown in our proof of Lemma \ref{lem.Eqs.sum}
above). Hence,%
\begin{equation}
\underbrace{\sum_{\substack{f:V\rightarrow\mathbb{N}_{+};\\B\subseteq
\operatorname*{Eqs}f}}}_{=\sum_{f\in\left(  \mathbb{N}_{+}\right)  _{\sim}%
^{V}}}\mathbf{x}_{f,w}=\sum_{f\in\left(  \mathbb{N}_{+}\right)  _{\sim}^{V}%
}\mathbf{x}_{f,w}=\sum_{f\in\left(  \mathbb{N}_{+}\right)  ^{V/\left(
\sim\right)  }}\mathbf{x}_{f\circ\pi_{V},w} \label{pf.lem.wEqs.sum.1}%
\end{equation}
(here, we have substituted $f\circ\pi_{V}$ for $f$ in the sum, since the map
$\left(  \mathbb{N}_{+}\right)  ^{V/\left(  \sim\right)  }\rightarrow\left(
\mathbb{N}_{+}\right)  _{\sim}^{V},\ f\mapsto f\circ\pi_{V}$ is a bijection).

Now, let $\left(  C_{1},C_{2},\ldots,C_{k}\right)  $ be a list of all
connected components of $\left(  V,B\right)  $, ordered such that $w\left(
C_{1}\right)  \geq w\left(  C_{2}\right)  \geq\cdots\geq w\left(
C_{k}\right)  $.\ \ \ \ \footnote{Every connected component of $\left(
V,B\right)  $ should appear exactly once in this list.} Then, $\left(
w\left(  C_{1}\right)  ,w\left(  C_{2}\right)  ,\ldots,w\left(  C_{k}\right)
\right)  $ is the list of the weights of all connected components of $\left(
V,B\right)  $, in weakly decreasing order (since $w\left(  C_{1}\right)  \geq
w\left(  C_{2}\right)  \geq\cdots\geq w\left(  C_{k}\right)  $). In other
words, $\left(  w\left(  C_{1}\right)  ,w\left(  C_{2}\right)  ,\ldots
,w\left(  C_{k}\right)  \right)  $ is $\lambda\left(  \left(  V,B\right)
,w\right)  $ (since $\lambda\left(  \left(  V,B\right)  ,w\right)  $ is the
list of the weights of all connected components of $\left(  V,B\right)  $, in
weakly decreasing order (by the definition of $\lambda\left(  \left(
V,B\right)  ,w\right)  $)). In other words,
\[
\lambda\left(  \left(  V,B\right)  ,w\right)  =\left(  w\left(  C_{1}\right)
,w\left(  C_{2}\right)  ,\ldots,w\left(  C_{k}\right)  \right)  .
\]
Thus, (\ref{eq.def.powersum2.finite-expression}) (applied to $\lambda\left(
\left(  V,B\right)  ,w\right)  $ and $w\left(  C_{i}\right)  $ instead of
$\lambda$ and $\lambda_{i}$) shows that{}%
\begin{equation}
p_{\lambda\left(  \left(  V,B\right)  ,w\right)  }=p_{w\left(  C_{1}\right)
}p_{w\left(  C_{2}\right)  }\cdots p_{w\left(  C_{k}\right)  }=\prod_{i=1}%
^{k}p_{w\left(  C_{i}\right)  }. \label{pf.lem.wEqs.sum.p1}%
\end{equation}

However, for every $i\in\left\{  1,2,\ldots,k\right\}  $, we have%
\begin{equation}
p_{w\left(  C_{i}\right)  }=\sum_{s\in\mathbb{N}_{+}}x_{s}^{w\left(
C_{i}\right)  } \label{pf.lem.wEqs.sum.pwCi=}%
\end{equation}
\footnote{\textit{Proof.} Let $i\in\left\{  1,2,\ldots,k\right\}  $. Then,
$C_{i}$ is a connected component of $\left(  V,B\right)  $ (since $\left(
C_{1},C_{2},\ldots,C_{k}\right)  $ is a list of all connected components of
$\left(  V,B\right)  $). Hence, $C_{i}$ is a nonempty subset of $V$ (since
every connected component of $\left(  V,B\right)  $ is a nonempty subset of
$V$). Hence, $w\left(  C_{i}\right)  $ is a positive integer (since $w\left(
S\right)  $ is a positive integer whenever $S$ is a nonempty subset of $V$).
Thus, (\ref{eq.def.powersum.pn}) (applied to $n=w\left(  C_{i}\right)  $)
shows that $p_{w\left(  C_{i}\right)  }=\underbrace{\sum_{j\geq1}}%
_{=\sum_{j\in\mathbb{N}_{+}}}x_{j}^{w\left(  C_{i}\right)  }=\sum
_{j\in\mathbb{N}_{+}}x_{j}^{w\left(  C_{i}\right)  }=\sum_{s\in\mathbb{N}_{+}%
}x_{s}^{w\left(  C_{i}\right)  }$ (here, we have renamed the summation index
$j$ as $s$). Qed.}. Hence, (\ref{pf.lem.wEqs.sum.p1}) becomes%
\begin{align}
p_{\lambda\left(  \left(  V,B\right)  ,w\right)  }  &  =\prod_{i=1}%
^{k}\underbrace{p_{w\left(  C_{i}\right)  }}_{\substack{=\sum_{s\in
\mathbb{N}_{+}}x_{s}^{w\left(  C_{i}\right)  }\\\text{(by
(\ref{pf.lem.wEqs.sum.pwCi=}))}}}=\prod_{i=1}^{k}\ \ \sum_{s\in\mathbb{N}_{+}%
}x_{s}^{w\left(  C_{i}\right)  }\nonumber\\
&  =\sum_{\left(  s_{1},s_{2},\ldots,s_{k}\right)  \in\left(  \mathbb{N}%
_{+}\right)  ^{k}}\ \ \prod_{i=1}^{k}x_{s_{i}}^{w\left(  C_{i}\right)  }
\label{pf.lem.wEqs.sum.p2}%
\end{align}
(by the product rule).

Recall that $\left(  C_{1},C_{2},\ldots,C_{k}\right)  $ is a list of all
connected components of $\left(  V,B\right)  $. In other words, $\left(
C_{1},C_{2},\ldots,C_{k}\right)  $ is a list of all elements of $V/\left(
\sim\right)  $ (since the elements of $V/\left(  \sim\right)  $ are the
connected components of $\left(  V,B\right)  $). Moreover, every element of
$V/\left(  \sim\right)  $ appears exactly once in this list $\left(
C_{1},C_{2},\ldots,C_{k}\right)  $ (since the entries of the list $\left(
C_{1},C_{2},\ldots,C_{k}\right)  $ are pairwise distinct\footnote{since every
connected component of $\left(  V,B\right)  $ appears exactly once in this
list}). Thus, $\left(  C_{1},C_{2},\ldots,C_{k}\right)  $ is a list of all
elements of $V/\left(  \sim\right)  $, and contains each of these elements
exactly once. Hence, the map%
\begin{align*}
\left(  \mathbb{N}_{+}\right)  ^{V/\left(  \sim\right)  }  &  \rightarrow
\left(  \mathbb{N}_{+}\right)  ^{k},\\
f  &  \mapsto\left(  f\left(  C_{1}\right)  ,f\left(  C_{2}\right)
,\ldots,f\left(  C_{k}\right)  \right)
\end{align*}
is a bijection (by Lemma \ref{lem.function-count}, applied to $W=V/\left(
\sim\right)  $ and $Y=\mathbb{N}_{+}$).

For every $\gamma\in V/\left(  \sim\right)  $, we have%
\begin{equation}
\pi_{V}^{-1}\left(  \gamma\right)  =\gamma. \label{pf.lem.wEqs.sum.pi-1}%
\end{equation}
(This has already been shown in our proof of Lemma \ref{lem.Eqs.sum} above.)

Also, the map $\left\{  1,2,\ldots,k\right\}  \rightarrow V/\left(
\sim\right)  ,\ i\mapsto C_{i}$ is a bijection (since $\left(  C_{1}%
,C_{2},\ldots,C_{k}\right)  $ is a list of all elements of $V/\left(
\sim\right)  $, and contains each of these elements exactly once).

We have%
\begin{equation}
\mathbf{x}_{f\circ\pi_{V},w}=\prod_{i=1}^{k}x_{f\left(  C_{i}\right)
}^{w\left(  C_{i}\right)  }\ \ \ \ \ \ \ \ \ \ \text{for every }f\in\left(
\mathbb{N}_{+}\right)  ^{V/\left(  \sim\right)  } \label{pf.lem.wEqs.sum.3}%
\end{equation}
\footnote{\textit{Proof of (\ref{pf.lem.wEqs.sum.3}):} Let $f\in\left(
\mathbb{N}_{+}\right)  ^{V/\left(  \sim\right)  }$. Then, the definition of
$\mathbf{x}_{f\circ\pi_{V},w}$ yields%
\begin{align*}
\mathbf{x}_{f\circ\pi_{V},w}  &  =\prod_{v\in V}x_{\left(  f\circ\pi
_{V}\right)  \left(  v\right)  }^{w\left(  v\right)  }=\prod_{\gamma\in
V/\left(  \sim\right)  }\ \ \underbrace{\prod_{\substack{v\in V;\\\pi
_{V}\left(  v\right)  =\gamma}}}_{\substack{=\prod_{v\in\pi_{V}^{-1}\left(
\gamma\right)  }=\prod_{v\in\gamma}\\\text{(since }\pi_{V}^{-1}\left(
\gamma\right)  =\gamma\\\text{(by (\ref{pf.lem.wEqs.sum.pi-1})))}%
}}\underbrace{x_{\left(  f\circ\pi_{V}\right)  \left(  v\right)  }^{w\left(
v\right)  }}_{\substack{=x_{f\left(  \gamma\right)  }^{w\left(  v\right)
}\\\text{(since }\left(  f\circ\pi_{V}\right)  \left(  v\right)  =f\left(
\pi_{V}\left(  v\right)  \right)  =f\left(  \gamma\right)  \\\text{(since }%
\pi_{V}\left(  v\right)  =\gamma\text{))}}}\\
&  \ \ \ \ \ \ \ \ \ \ \left(
\begin{array}
[c]{c}%
\text{because for every }v\in V\text{, there exists a unique }\gamma\in
V/\left(  \sim\right) \\
\text{such that }\pi_{V}\left(  v\right)  =\gamma\text{ (since }\pi_{V}\text{
is a map }V\rightarrow V/\left(  \sim\right)  \text{)}%
\end{array}
\right) \\
&  =\prod_{\gamma\in V/\left(  \sim\right)  }\ \ \underbrace{\prod_{v\in
\gamma}x_{f\left(  \gamma\right)  }^{w\left(  v\right)  }}%
_{\substack{=x_{f\left(  \gamma\right)  }^{\sum_{v\in\gamma}w\left(  v\right)
}=x_{f\left(  \gamma\right)  }^{w\left(  \gamma\right)  }\\\text{(because
}\sum_{v\in\gamma}w\left(  v\right)  =w\left(  \gamma\right)  \\\text{(since
the weight }w\left(  \gamma\right)  \text{ of the subset }\gamma\\\text{is
defined to be }\sum_{v\in\gamma}w\left(  v\right)  \text{))}}}=\prod
_{\gamma\in V/\left(  \sim\right)  }x_{f\left(  \gamma\right)  }^{w\left(
\gamma\right)  }=\prod_{i\in\left\{  1,2,\ldots,k\right\}  }x_{f\left(
C_{i}\right)  }^{w\left(  C_{i}\right)  }%
\end{align*}
(here, we have substituted $C_{i}$ for $\gamma$ in the product, since the map
$\left\{  1,2,\ldots,k\right\}  \rightarrow V/\left(  \sim\right)  ,\ i\mapsto
C_{i}$ is a bijection). Thus,%
\[
\mathbf{x}_{f\circ\pi_{V},w}=\underbrace{\prod_{i\in\left\{  1,2,\ldots
,k\right\}  }}_{=\prod_{i=1}^{k}}x_{f\left(  C_{i}\right)  }^{w\left(
C_{i}\right)  }=\prod_{i=1}^{k}x_{f\left(  C_{i}\right)  }^{w\left(
C_{i}\right)  }.
\]
This proves (\ref{pf.lem.wEqs.sum.3}).}.

Now, (\ref{pf.lem.wEqs.sum.1}) becomes%
\begin{align*}
\sum_{\substack{f:V\rightarrow\mathbb{N}_{+};\\B\subseteq\operatorname*{Eqs}%
f}}\mathbf{x}_{f,w}  &  =\sum_{f\in\left(  \mathbb{N}_{+}\right)  ^{V/\left(
\sim\right)  }}\underbrace{\mathbf{x}_{f\circ\pi_{V},w}}_{\substack{=\prod
_{i=1}^{k}x_{f\left(  C_{i}\right)  }^{w\left(  C_{i}\right)  }\\\text{(by
(\ref{pf.lem.wEqs.sum.3}))}}}=\sum_{f\in\left(  \mathbb{N}_{+}\right)
^{V/\left(  \sim\right)  }}\ \ \prod_{i=1}^{k}x_{f\left(  C_{i}\right)
}^{w\left(  C_{i}\right)  }\\
&  =\sum_{\left(  s_{1},s_{2},\ldots,s_{k}\right)  \in\left(  \mathbb{N}%
_{+}\right)  ^{k}}\ \ \prod_{i=1}^{k}x_{s_{i}}^{w\left(  C_{i}\right)  }%
\end{align*}
(here, we have substituted $\left(  s_{1},s_{2},\ldots,s_{k}\right)  $ for
$\left(  f\left(  C_{1}\right)  ,f\left(  C_{2}\right)  ,\ldots,f\left(
C_{k}\right)  \right)  $ in the sum, since the map $\left(  \mathbb{N}%
_{+}\right)  ^{V/\left(  \sim\right)  }\rightarrow\left(  \mathbb{N}%
_{+}\right)  ^{k},\ f\mapsto\left(  f\left(  C_{1}\right)  ,f\left(
C_{2}\right)  ,\ldots,f\left(  C_{k}\right)  \right)  $ is a bijection).
Comparing this with (\ref{pf.lem.wEqs.sum.p2}), we obtain $\sum
_{\substack{f:V\rightarrow\mathbb{N}_{+};\\B\subseteq\operatorname*{Eqs}%
f}}\mathbf{x}_{f,w}=p_{\lambda\left(  \left(  V,B\right)  ,w\right)  }$. This
proves Lemma \ref{lem.wEqs.sum}.
\end{proof}
\end{verlong}

\begin{vershort}
It is now straightforward to adapt our above proofs of Theorem
\ref{thm.chromsym.varis}, Corollary \ref{cor.chromsym.K-free}, Corollary
\ref{cor.chromsym.NBC} and Theorem \ref{thm.chromsym.empty} to obtain proofs
of Theorem \ref{thm.wchromsym.varis}, Corollary \ref{cor.wchromsym.K-free},
Corollary \ref{cor.wchromsym.NBC} and Theorem \ref{thm.wchromsym.empty}. Of
course, Lemma \ref{lem.wEqs.sum} needs to be used instead of Lemma
\ref{lem.Eqs.sum}, but everything else stays almost completely unchanged. We
leave the details to the reader.
\end{vershort}

\begin{verlong}
It is now straightforward to adapt our above proofs of Theorem
\ref{thm.chromsym.varis}, Corollary \ref{cor.chromsym.K-free}, Corollary
\ref{cor.chromsym.NBC} and Theorem \ref{thm.chromsym.empty} to obtain proofs
of Theorem \ref{thm.wchromsym.varis}, Corollary \ref{cor.wchromsym.K-free},
Corollary \ref{cor.wchromsym.NBC} and Theorem \ref{thm.wchromsym.empty}:

\begin{proof}
[Proof of Theorem \ref{thm.wchromsym.varis}.]We have
\begin{equation}
X_{G,w}=\sum_{\substack{f:V\rightarrow\mathbb{N}_{+}\text{ is a}\\\text{proper
}\mathbb{N}_{+}\text{-coloring of }G}}\mathbf{x}_{f,w}
\label{pf.thm.wchromsym.varis.XG-def}%
\end{equation}
(by the definition of $X_{G,w}$). Now, if $f:V\rightarrow\mathbb{N}_{+}$ is a
map, then we have the following logical equivalence:%
\begin{equation}
\left(  \text{the }\mathbb{N}_{+}\text{-coloring }f\text{ of }G\text{ is
proper}\right)  \ \Longleftrightarrow\ \left(  E\cap\operatorname*{Eqs}%
f=\varnothing\right)  \label{pf.thm.wchromsym.varis.equiv}%
\end{equation}
(because the $\mathbb{N}_{+}$-coloring $f$ of $G$ is proper if and only if
$E\cap\operatorname*{Eqs}f=\varnothing$\ \ \ \ \footnote{by Lemma
\ref{lem.Eqs.proper} (applied to $\mathbb{N}_{+}$ instead of $X$)}). Now,%
\begin{align}
&  \sum_{f:V\rightarrow\mathbb{N}_{+}}\left[  \underbrace{E\cap
\operatorname*{Eqs}f=\varnothing}_{\substack{\Longleftrightarrow\ \left(
\text{the }\mathbb{N}_{+}\text{-coloring }f\text{ of }G\text{ is
proper}\right)  \\\text{(by (\ref{pf.thm.wchromsym.varis.equiv}))}}}\right]
\mathbf{x}_{f,w}\nonumber\\
&  =\sum_{f:V\rightarrow\mathbb{N}_{+}}\left[  \underbrace{\text{the
}\mathbb{N}_{+}\text{-coloring }f\text{ of }G\text{ is proper}}%
_{\Longleftrightarrow\ \left(  f\text{ is a proper }\mathbb{N}_{+}%
\text{-coloring of }G\right)  }\right]  \mathbf{x}_{f,w}\nonumber\\
&  =\sum_{f:V\rightarrow\mathbb{N}_{+}}\left[  f\text{ is a proper }%
\mathbb{N}_{+}\text{-coloring of }G\right]  \mathbf{x}_{f,w}\nonumber\\
&  =\sum_{\substack{f:V\rightarrow\mathbb{N}_{+}\text{ is a}\\\text{proper
}\mathbb{N}_{+}\text{-coloring of }G}}\underbrace{\left[  f\text{ is a proper
}\mathbb{N}_{+}\text{-coloring of }G\right]  }_{\substack{=1\\\text{(since
}f\text{ is a proper }\mathbb{N}_{+}\text{-coloring of }G\text{)}}%
}\mathbf{x}_{f,w}\nonumber\\
&  \ \ \ \ \ \ \ \ \ \ +\sum_{\substack{f:V\rightarrow\mathbb{N}_{+}\text{ is
not a}\\\text{proper }\mathbb{N}_{+}\text{-coloring of }G}}\underbrace{\left[
f\text{ is a proper }\mathbb{N}_{+}\text{-coloring of }G\right]
}_{\substack{=0\\\text{(since }f\text{ is not a proper }\mathbb{N}%
_{+}\text{-coloring of }G\text{)}}}\mathbf{x}_{f,w}\nonumber\\
&  \ \ \ \ \ \ \ \ \ \ \ \ \ \ \ \ \ \ \ \ \left(  \text{since each
}f:V\rightarrow\mathbb{N}_{+}\text{ is either a proper }\mathbb{N}%
_{+}\text{-coloring of }G\text{ or not}\right) \nonumber\\
&  =\sum_{\substack{f:V\rightarrow\mathbb{N}_{+}\text{ is a}\\\text{proper
}\mathbb{N}_{+}\text{-coloring of }G}}\mathbf{x}_{f,w}+\underbrace{\sum
_{\substack{f:V\rightarrow\mathbb{N}_{+}\text{ is not a}\\\text{proper
}\mathbb{N}_{+}\text{-coloring of }G}}0\mathbf{x}_{f,w}}_{=0}=\sum
_{\substack{f:V\rightarrow\mathbb{N}_{+}\text{ is a}\\\text{proper }%
\mathbb{N}_{+}\text{-coloring of }G}}\mathbf{x}_{f,w}\nonumber\\
&  =X_{G,w} \label{pf.thm.wchromsym.varis.step1}%
\end{align}
(by (\ref{pf.thm.wchromsym.varis.XG-def})).

However, for every $f:V\rightarrow\mathbb{N}_{+}$, we have%
\begin{equation}
\sum_{\substack{B\subseteq E;\\B\subseteq\operatorname*{Eqs}f}}\left(
-1\right)  ^{\left\vert B\right\vert }\prod_{\substack{K\in\mathfrak{K}%
;\\K\subseteq B}}a_{K}=\left[  E\cap\operatorname*{Eqs}f=\varnothing\right]
\label{pf.thm.wchromsym.varis.moeb2}%
\end{equation}
(indeed, this can be shown just as in our above proof of Theorem
\ref{thm.chromsym.varis}).

Now, (\ref{pf.thm.wchromsym.varis.step1}) yields%
\begin{align*}
X_{G,w}  &  =\sum_{f:V\rightarrow\mathbb{N}_{+}}\underbrace{\left[
E\cap\operatorname*{Eqs}f=\varnothing\right]  }_{\substack{=\sum
_{\substack{B\subseteq E;\\B\subseteq\operatorname*{Eqs}f}}\left(  -1\right)
^{\left\vert B\right\vert }\prod_{\substack{K\in\mathfrak{K};\\K\subseteq
B}}a_{K}\\\text{(by (\ref{pf.thm.wchromsym.varis.moeb2}))}}}\mathbf{x}_{f,w}\\
&  =\sum_{f:V\rightarrow\mathbb{N}_{+}}\left(  \sum_{\substack{B\subseteq
E;\\B\subseteq\operatorname*{Eqs}f}}\left(  -1\right)  ^{\left\vert
B\right\vert }\prod_{\substack{K\in\mathfrak{K};\\K\subseteq B}}a_{K}\right)
\mathbf{x}_{f,w}=\underbrace{\sum_{f:V\rightarrow\mathbb{N}_{+}}%
\ \ \sum_{\substack{B\subseteq E;\\B\subseteq\operatorname*{Eqs}f}}}%
_{=\sum_{B\subseteq E}\ \ \sum_{\substack{f:V\rightarrow\mathbb{N}%
_{+};\\B\subseteq\operatorname*{Eqs}f}}}\left(  -1\right)  ^{\left\vert
B\right\vert }\left(  \prod_{\substack{K\in\mathfrak{K};\\K\subseteq B}%
}a_{K}\right)  \mathbf{x}_{f,w}\\
&  =\sum_{B\subseteq E}\ \ \sum_{\substack{f:V\rightarrow\mathbb{N}%
_{+};\\B\subseteq\operatorname*{Eqs}f}}\left(  -1\right)  ^{\left\vert
B\right\vert }\left(  \prod_{\substack{K\in\mathfrak{K};\\K\subseteq B}%
}a_{K}\right)  \mathbf{x}_{f,w}\\
&  =\sum_{B\subseteq E}\left(  -1\right)  ^{\left\vert B\right\vert }\left(
\prod_{\substack{K\in\mathfrak{K};\\K\subseteq B}}a_{K}\right)
\underbrace{\sum_{\substack{f:V\rightarrow\mathbb{N}_{+};\\B\subseteq
\operatorname*{Eqs}f}}\mathbf{x}_{f,w}}_{\substack{=p_{\lambda\left(  \left(
V,B\right)  ,w\right)  }\\\text{(by Lemma \ref{lem.wEqs.sum}}\\\text{(since
}\left(  V,B\right)  \text{ is a finite graph}\\\text{(since }V\text{ is a
finite set and }B\subseteq E\subseteq\dbinom{V}{2}\text{)))}}}\\
&  =\sum_{B\subseteq E}\left(  -1\right)  ^{\left\vert B\right\vert }\left(
\prod_{\substack{K\in\mathfrak{K};\\K\subseteq B}}a_{K}\right)  p_{\lambda
\left(  \left(  V,B\right)  ,w\right)  }=\sum_{F\subseteq E}\left(  -1\right)
^{\left\vert F\right\vert }\left(  \prod_{\substack{K\in\mathfrak{K}%
;\\K\subseteq F}}a_{K}\right)  p_{\lambda\left(  \left(  V,F\right)
,w\right)  }%
\end{align*}
(here, we have renamed the summation index $B$ as $F$). This proves Theorem
\ref{thm.wchromsym.varis}.
\end{proof}

\begin{proof}
[Proof of Corollary \ref{cor.wchromsym.K-free}.]We can apply Theorem
\ref{thm.wchromsym.varis} to $0$ instead of $a_{K}$. As a result, we obtain%
\begin{equation}
X_{G,w}=\sum_{F\subseteq E}\left(  -1\right)  ^{\left\vert F\right\vert
}\left(  \prod_{\substack{K\in\mathfrak{K};\\K\subseteq F}}0\right)
p_{\lambda\left(  \left(  V,F\right)  ,w\right)  }.
\label{pf.cor.wchromsym.K-free.0}%
\end{equation}
Now, if $F$ is any subset of $E$, then%
\begin{equation}
\prod_{\substack{K\in\mathfrak{K};\\K\subseteq F}}0=%
\begin{cases}
1, & \text{if }F\text{ is }\mathfrak{K}\text{-free;}\\
0, & \text{if }F\text{ is not }\mathfrak{K}\text{-free}%
\end{cases}
\label{pf.cor.wchromsym.K-free.1}%
\end{equation}
(indeed, this was already shown in our above proof of Corollary
\ref{cor.chromsym.K-free}).

Thus, (\ref{pf.cor.wchromsym.K-free.0}) becomes%
\begin{align*}
X_{G,w}  &  =\sum_{F\subseteq E}\left(  -1\right)  ^{\left\vert F\right\vert
}\underbrace{\left(  \prod_{\substack{K\in\mathfrak{K};\\K\subseteq
F}}0\right)  }_{\substack{=%
\begin{cases}
1, & \text{if }F\text{ is }\mathfrak{K}\text{-free;}\\
0, & \text{if }F\text{ is not }\mathfrak{K}\text{-free}%
\end{cases}
\\\text{(by (\ref{pf.cor.wchromsym.K-free.1}))}}}p_{\lambda\left(  \left(
V,F\right)  ,w\right)  }\\
&  =\sum_{F\subseteq E}\left(  -1\right)  ^{\left\vert F\right\vert }%
\begin{cases}
1, & \text{if }F\text{ is }\mathfrak{K}\text{-free;}\\
0, & \text{if }F\text{ is not }\mathfrak{K}\text{-free}%
\end{cases}
\ \ p_{\lambda\left(  \left(  V,F\right)  ,w\right)  }\\
&  =\sum_{\substack{F\subseteq E;\\F\text{ is }\mathfrak{K}\text{-free}%
}}\left(  -1\right)  ^{\left\vert F\right\vert }\underbrace{%
\begin{cases}
1, & \text{if }F\text{ is }\mathfrak{K}\text{-free;}\\
0, & \text{if }F\text{ is not }\mathfrak{K}\text{-free}%
\end{cases}
}_{\substack{=1\\\text{(since }F\text{ is }\mathfrak{K}\text{-free)}%
}}\ \ p_{\lambda\left(  \left(  V,F\right)  ,w\right)  }\\
&  \ \ \ \ \ \ \ \ \ \ +\sum_{\substack{F\subseteq E;\\F\text{ is not
}\mathfrak{K}\text{-free}}}\left(  -1\right)  ^{\left\vert F\right\vert
}\underbrace{%
\begin{cases}
1, & \text{if }F\text{ is }\mathfrak{K}\text{-free;}\\
0, & \text{if }F\text{ is not }\mathfrak{K}\text{-free}%
\end{cases}
}_{\substack{=0\\\text{(since }F\text{ is not }\mathfrak{K}\text{-free)}%
}}\ \ p_{\lambda\left(  \left(  V,F\right)  ,w\right)  }\\
&  \ \ \ \ \ \ \ \ \ \ \ \ \ \ \ \ \ \ \ \ \left(  \text{since each subset
}F\text{ of }E\text{ either is }\mathfrak{K}\text{-free or is not}\right) \\
&  =\sum_{\substack{F\subseteq E;\\F\text{ is }\mathfrak{K}\text{-free}%
}}\left(  -1\right)  ^{\left\vert F\right\vert }p_{\lambda\left(  \left(
V,F\right)  ,w\right)  }+\underbrace{\sum_{\substack{F\subseteq E;\\F\text{ is
not }\mathfrak{K}\text{-free}}}\left(  -1\right)  ^{\left\vert F\right\vert
}0p_{\lambda\left(  \left(  V,F\right)  ,w\right)  }}_{=0}\\
&  =\sum_{\substack{F\subseteq E;\\F\text{ is }\mathfrak{K}\text{-free}%
}}\left(  -1\right)  ^{\left\vert F\right\vert }p_{\lambda\left(  \left(
V,F\right)  ,w\right)  }.
\end{align*}
This proves Corollary \ref{cor.wchromsym.K-free}.
\end{proof}

\begin{proof}
[Proof of Corollary \ref{cor.wchromsym.NBC}.]Let $\mathfrak{K}$ be the set of
all broken circuits of $G$. Then, we have the following equality between
summation signs:
\[
\sum_{\substack{F\subseteq E;\\F\text{ is }\mathfrak{K}\text{-free}}%
}=\sum_{\substack{F\subseteq E;\\F\text{ contains no broken}\\\text{circuit of
}G\text{ as a subset}}}
\]
(indeed, this was already shown in our above proof of Corollary
\ref{cor.chromsym.NBC}). Now, Corollary \ref{cor.wchromsym.K-free} yields%
\[
X_{G,w}=\underbrace{\sum_{\substack{F\subseteq E;\\F\text{ is }\mathfrak{K}%
\text{-free}}}}_{=\sum_{\substack{F\subseteq E;\\F\text{ contains no
broken}\\\text{circuit of }G\text{ as a subset}}}}\left(  -1\right)
^{\left\vert F\right\vert }p_{\lambda\left(  \left(  V,F\right)  ,w\right)
}=\sum_{\substack{F\subseteq E;\\F\text{ contains no broken}\\\text{circuit of
}G\text{ as a subset}}}\left(  -1\right)  ^{\left\vert F\right\vert
}p_{\lambda\left(  \left(  V,F\right)  ,w\right)  }.
\]
This proves Corollary \ref{cor.wchromsym.NBC}.
\end{proof}

\begin{proof}
[Proof of Theorem \ref{thm.wchromsym.empty}.]Let $X$ be the totally ordered
set $\left\{  1\right\}  $ (equipped with the only possible order on this
set). Let $\ell:E\rightarrow X$ be the function sending each $e\in E$ to $1\in
X$. Let $\mathfrak{K}$ be the empty set. Clearly, $\mathfrak{K}$ is a set of
broken circuits of $G$. Theorem \ref{thm.wchromsym.varis} (applied to $0$
instead of $a_{K}$) yields%
\begin{align*}
X_{G,w}  &  =\sum_{F\subseteq E}\left(  -1\right)  ^{\left\vert F\right\vert
}\underbrace{\left(  \prod_{\substack{K\in\mathfrak{K};\\K\subseteq
F}}0\right)  }_{\substack{=\left(  \text{empty product}\right)  \\\text{(since
}\mathfrak{K}\text{ is the empty set)}}}p_{\lambda\left(  \left(  V,F\right)
,w\right)  }\\
&  =\sum_{F\subseteq E}\left(  -1\right)  ^{\left\vert F\right\vert
}\underbrace{\left(  \text{empty product}\right)  }_{=1}p_{\lambda\left(
\left(  V,F\right)  ,w\right)  }=\sum_{F\subseteq E}\left(  -1\right)
^{\left\vert F\right\vert }p_{\lambda\left(  \left(  V,F\right)  ,w\right)  }.
\end{align*}
This proves Theorem \ref{thm.wchromsym.empty}.
\end{proof}
\end{verlong}

\subsection{Ambigraphs redux}

Just as we have imposed weights on the vertices of a graph, we can do the same
to the vertices of an ambigraph. This leads to a generalization of the
chromatic symmetric function $X_{G}$ we introduced in Definition
\ref{def.ambichromsym}:

\begin{definition}
\label{def.wambichromsym}Let $G=\left(  V,E,\varphi\right)  $ be a finite
ambigraph. Let $w:V\rightarrow\mathbb{N}_{+}$ be a weight function on $V$.

\textbf{(a)} For every $\mathbb{N}_{+}$-coloring $f:V\rightarrow\mathbb{N}%
_{+}$ of $G$, we let $\mathbf{x}_{f,w}$ denote the monomial $\prod_{v\in
V}x_{f\left(  v\right)  }^{w\left(  v\right)  }$ in the indeterminates
$x_{1},x_{2},x_{3},\ldots$.

\textbf{(b)} We define a power series $X_{G,w}\in\mathbf{k}\left[  \left[
x_{1},x_{2},x_{3},\ldots\right]  \right]  $ by%
\[
X_{G,w}=\sum_{\substack{f:V\rightarrow\mathbb{N}_{+}\text{ is a}\\\text{proper
}\mathbb{N}_{+}\text{-coloring of }G}}\mathbf{x}_{f,w}.
\]

This power series $X_{G,w}$ is called the \emph{chromatic symmetric function}
of $\left(  G,w\right)  $.
\end{definition}

We can now state generalizations of Theorems \ref{thm.ambichromsym.empty} and
\ref{thm.ambichromsym.varis} and Corollaries \ref{cor.ambichromsym.K-free} and
\ref{cor.ambichromsym.NBC}:

\begin{theorem}
\label{thm.wambichromsym.empty}Let $G=\left(  V,E,\varphi\right)  $ be a
finite ambigraph. Let $w:V\rightarrow\mathbb{N}_{+}$ be a weight function on
$V$. Then,%
\[
X_{G,w}=\sum_{F\subseteq E}\left(  -1\right)  ^{\left\vert F\right\vert
}p_{\lambda\left(  \left(  V,\operatorname*{union}F\right)  ,w\right)  }.
\]
(Here, of course, the pair $\left(  V,\operatorname*{union}F\right)  $ is
regarded as a graph, and the expression $\lambda\left(  \left(
V,\operatorname*{union}F\right)  ,w\right)  $ is understood according to
Definition \ref{def.wlambda}.)
\end{theorem}

\begin{theorem}
\label{thm.wambichromsym.varis}Let $G=\left(  V,E,\varphi\right)  $ be a
finite ambigraph. Let $w:V\rightarrow\mathbb{N}_{+}$ be a weight function on
$V$. Let $X$ be a totally ordered set. Let $\ell:E\rightarrow X$ be a labeling
function. Let $\mathfrak{K}$ be some set of broken circuits of $G$ (not
necessarily containing all of them). Let $a_{K}$ be an element of $\mathbf{k}$
for every $K\in\mathfrak{K}$. Then,%
\[
X_{G,w}=\sum_{F\subseteq E}\left(  -1\right)  ^{\left\vert F\right\vert
}\left(  \prod_{\substack{K\in\mathfrak{K};\\K\subseteq F}}a_{K}\right)
p_{\lambda\left(  \left(  V,\operatorname*{union}F\right)  ,w\right)  }.
\]
(Here, of course, the pair $\left(  V,\operatorname*{union}F\right)  $ is
regarded as a graph, and the expression $\lambda\left(  \left(
V,\operatorname*{union}F\right)  ,w\right)  $ is understood according to
Definition \ref{def.wlambda}.)
\end{theorem}

\begin{corollary}
\label{cor.wambichromsym.K-free}Let $G=\left(  V,E,\varphi\right)  $ be a
finite ambigraph. Let $w:V\rightarrow\mathbb{N}_{+}$ be a weight function on
$V$. Let $X$ be a totally ordered set. Let $\ell:E\rightarrow X$ be a labeling
function. Let $\mathfrak{K}$ be some set of broken circuits of $G$ (not
necessarily containing all of them). Then,%
\[
X_{G,w}=\sum_{\substack{F\subseteq E;\\F\text{ is }\mathfrak{K}\text{-free}%
}}\left(  -1\right)  ^{\left\vert F\right\vert }p_{\lambda\left(  \left(
V,\operatorname*{union}F\right)  ,w\right)  }.
\]

\end{corollary}

\begin{corollary}
\label{cor.wambichromsym.NBC}Let $G=\left(  V,E,\varphi\right)  $ be a finite
ambigraph. Let $w:V\rightarrow\mathbb{N}_{+}$ be a weight function on $V$. Let
$X$ be a totally ordered set. Let $\ell:E\rightarrow X$ be a labeling
function. Then,%
\[
X_{G,w}=\sum_{\substack{F\subseteq E;\\F\text{ contains no broken}%
\\\text{circuit of }G\text{ as a subset}}}\left(  -1\right)  ^{\left\vert
F\right\vert }p_{\lambda\left(  \left(  V,\operatorname*{union}F\right)
,w\right)  }.
\]

\end{corollary}

\begin{vershort}
The proofs of these four results proceed precisely like their non-weighted
counterparts, again using Lemma \ref{lem.wEqs.sum} instead of Lemma
\ref{lem.Eqs.sum}.
\end{vershort}

\begin{verlong}
The proofs of these four results proceed precisely like their non-weighted
counterparts, again using Lemma \ref{lem.wEqs.sum} instead of Lemma
\ref{lem.Eqs.sum}. For the sake of completeness, here they are in detail:

\begin{proof}
[Proof of Theorem \ref{thm.wambichromsym.varis}.]We have%
\begin{equation}
X_{G,w}=\sum_{\substack{f:V\rightarrow\mathbb{N}_{+}\text{ is a}\\\text{proper
}\mathbb{N}_{+}\text{-coloring of }G}}\mathbf{x}_{f,w}
\label{pf.thm.wambichromsym.varis.XG-def}%
\end{equation}
(by the definition of $X_{G,w}$). Now, if $f:V\rightarrow\mathbb{N}_{+}$ is a
map, then we have the following logical equivalence:%
\begin{equation}
\left(  \text{the }\mathbb{N}_{+}\text{-coloring }f\text{ of }G\text{ is
proper}\right)  \ \Longleftrightarrow\ \left(  \operatorname*{EQS}\left(
G,f\right)  =\varnothing\right)  \label{pf.thm.wambichromsym.varis.equiv}%
\end{equation}
(because the $\mathbb{N}_{+}$-coloring $f$ of $G$ is proper if and only if
$\operatorname*{EQS}\left(  G,f\right)  =\varnothing$\ \ \ \ \footnote{by
Lemma \ref{lem.ambiEqs.proper} (applied to $\mathbb{N}_{+}$ instead of $X$)}).
Now,%
\begin{align}
&  \sum_{f:V\rightarrow\mathbb{N}_{+}}\left[  \underbrace{\operatorname*{EQS}%
\left(  G,f\right)  =\varnothing}_{\substack{\Longleftrightarrow\ \left(
\text{the }\mathbb{N}_{+}\text{-coloring }f\text{ of }G\text{ is
proper}\right)  \\\text{(by (\ref{pf.thm.wambichromsym.varis.equiv}))}%
}}\right]  \mathbf{x}_{f,w}\nonumber\\
&  =\sum_{f:V\rightarrow\mathbb{N}_{+}}\left[  \underbrace{\text{the
}\mathbb{N}_{+}\text{-coloring }f\text{ of }G\text{ is proper}}%
_{\Longleftrightarrow\ \left(  f\text{ is a proper }\mathbb{N}_{+}%
\text{-coloring of }G\right)  }\right]  \mathbf{x}_{f,w}\nonumber\\
&  =\sum_{f:V\rightarrow\mathbb{N}_{+}}\left[  f\text{ is a proper }%
\mathbb{N}_{+}\text{-coloring of }G\right]  \mathbf{x}_{f,w}\nonumber\\
&  =\sum_{\substack{f:V\rightarrow\mathbb{N}_{+}\text{ is a}\\\text{proper
}\mathbb{N}_{+}\text{-coloring of }G}}\underbrace{\left[  f\text{ is a proper
}\mathbb{N}_{+}\text{-coloring of }G\right]  }_{\substack{=1\\\text{(since
}f\text{ is a proper }\mathbb{N}_{+}\text{-coloring of }G\text{)}}%
}\mathbf{x}_{f,w}\nonumber\\
&  \ \ \ \ \ \ \ \ \ \ +\sum_{\substack{f:V\rightarrow\mathbb{N}_{+}\text{ is
not a}\\\text{proper }\mathbb{N}_{+}\text{-coloring of }G}}\underbrace{\left[
f\text{ is a proper }\mathbb{N}_{+}\text{-coloring of }G\right]
}_{\substack{=0\\\text{(since }f\text{ is not a proper }\mathbb{N}%
_{+}\text{-coloring of }G\text{)}}}\mathbf{x}_{f,w}\nonumber\\
&  \ \ \ \ \ \ \ \ \ \ \ \ \ \ \ \ \ \ \ \ \left(  \text{since each
}f:V\rightarrow\mathbb{N}_{+}\text{ either is a proper }\mathbb{N}%
_{+}\text{-coloring of }G\text{ or not}\right) \nonumber\\
&  =\sum_{\substack{f:V\rightarrow\mathbb{N}_{+}\text{ is a}\\\text{proper
}\mathbb{N}_{+}\text{-coloring of }G}}\mathbf{x}_{f,w}+\underbrace{\sum
_{\substack{f:V\rightarrow\mathbb{N}_{+}\text{ is not a}\\\text{proper
}\mathbb{N}_{+}\text{-coloring of }G}}0\mathbf{x}_{f,w}}_{=0}=\sum
_{\substack{f:V\rightarrow\mathbb{N}_{+}\text{ is a}\\\text{proper }%
\mathbb{N}_{+}\text{-coloring of }G}}\mathbf{x}_{f,w}\nonumber\\
&  =X_{G,w} \label{pf.thm.wambichromsym.varis.step1}%
\end{align}
(by (\ref{pf.thm.wambichromsym.varis.XG-def})).

However, for every $f:V\rightarrow\mathbb{N}_{+}$, we have%
\begin{equation}
\sum_{\substack{B\subseteq E;\\\operatorname*{union}B\subseteq
\operatorname*{Eqs}f}}\left(  -1\right)  ^{\left\vert B\right\vert }%
\prod_{\substack{K\in\mathfrak{K};\\K\subseteq B}}a_{K}=\left[
\operatorname*{EQS}\left(  G,f\right)  =\varnothing\right]
\label{pf.thm.wambichromsym.varis.moeb2}%
\end{equation}
(indeed, we have already shown this in the above proof of Theorem
\ref{thm.ambichromsym.varis}).

Now, (\ref{pf.thm.wambichromsym.varis.step1}) yields%
\begin{align}
X_{G,w}  &  =\sum_{f:V\rightarrow\mathbb{N}_{+}}\underbrace{\left[
\operatorname*{EQS}\left(  G,f\right)  =\varnothing\right]  }_{\substack{=\sum
_{\substack{B\subseteq E;\\\operatorname*{union}B\subseteq\operatorname*{Eqs}%
f}}\left(  -1\right)  ^{\left\vert B\right\vert }\prod_{\substack{K\in
\mathfrak{K};\\K\subseteq B}}a_{K}\\\text{(by
(\ref{pf.thm.wambichromsym.varis.moeb2}))}}}\mathbf{x}_{f,w}\nonumber\\
&  =\sum_{f:V\rightarrow\mathbb{N}_{+}}\left(  \sum_{\substack{B\subseteq
E;\\\operatorname*{union}B\subseteq\operatorname*{Eqs}f}}\left(  -1\right)
^{\left\vert B\right\vert }\prod_{\substack{K\in\mathfrak{K};\\K\subseteq
B}}a_{K}\right)  \mathbf{x}_{f,w}\nonumber\\
&  =\underbrace{\sum_{f:V\rightarrow\mathbb{N}_{+}}\ \ \sum
_{\substack{B\subseteq E;\\\operatorname*{union}B\subseteq\operatorname*{Eqs}%
f}}}_{=\sum_{B\subseteq E}\ \ \sum_{\substack{f:V\rightarrow\mathbb{N}%
_{+};\\\operatorname*{union}B\subseteq\operatorname*{Eqs}f}}}\left(
-1\right)  ^{\left\vert B\right\vert }\left(  \prod_{\substack{K\in
\mathfrak{K};\\K\subseteq B}}a_{K}\right)  \mathbf{x}_{f,w}\nonumber\\
&  =\sum_{B\subseteq E}\ \ \sum_{\substack{f:V\rightarrow\mathbb{N}%
_{+};\\\operatorname*{union}B\subseteq\operatorname*{Eqs}f}}\left(  -1\right)
^{\left\vert B\right\vert }\left(  \prod_{\substack{K\in\mathfrak{K}%
;\\K\subseteq B}}a_{K}\right)  \mathbf{x}_{f,w}\nonumber\\
&  =\sum_{B\subseteq E}\left(  -1\right)  ^{\left\vert B\right\vert }\left(
\prod_{\substack{K\in\mathfrak{K};\\K\subseteq B}}a_{K}\right)  \sum
_{\substack{f:V\rightarrow\mathbb{N}_{+};\\\operatorname*{union}%
B\subseteq\operatorname*{Eqs}f}}\mathbf{x}_{f,w}.
\label{pf.thm.wambichromsym.varis.step4}%
\end{align}

However, if $B$ is a subset of $E$, then the pair $\left(
V,\operatorname*{union}B\right)  $ is a finite graph (since $V$ is a finite
set and since $\operatorname*{union}B\subseteq\dbinom{V}{2}$), and thus we
have%
\begin{equation}
\sum_{\substack{f:V\rightarrow\mathbb{N}_{+};\\\operatorname*{union}%
B\subseteq\operatorname*{Eqs}f}}\mathbf{x}_{f,w}=p_{\lambda\left(  \left(
V,\operatorname*{union}B\right)  ,w\right)  }
\label{pf.thm.wambichromsym.varis.p}%
\end{equation}
(by Lemma \ref{lem.wEqs.sum}, applied to $\operatorname*{union}B$ instead of
$B$).

Hence, (\ref{pf.thm.wambichromsym.varis.step4}) becomes%
\begin{align*}
X_{G,w}  &  =\sum_{B\subseteq E}\left(  -1\right)  ^{\left\vert B\right\vert
}\left(  \prod_{\substack{K\in\mathfrak{K};\\K\subseteq B}}a_{K}\right)
\underbrace{\sum_{\substack{f:V\rightarrow\mathbb{N}_{+}%
;\\\operatorname*{union}B\subseteq\operatorname*{Eqs}f}}\mathbf{x}_{f,w}%
}_{\substack{=p_{\lambda\left(  \left(  V,\operatorname*{union}B\right)
,w\right)  }\\\text{(by (\ref{pf.thm.wambichromsym.varis.p}))}}}\\
&  =\sum_{B\subseteq E}\left(  -1\right)  ^{\left\vert B\right\vert }\left(
\prod_{\substack{K\in\mathfrak{K};\\K\subseteq B}}a_{K}\right)  p_{\lambda
\left(  \left(  V,\operatorname*{union}B\right)  ,w\right)  }\\
&  =\sum_{F\subseteq E}\left(  -1\right)  ^{\left\vert F\right\vert }\left(
\prod_{\substack{K\in\mathfrak{K};\\K\subseteq F}}a_{K}\right)  p_{\lambda
\left(  \left(  V,\operatorname*{union}F\right)  ,w\right)  }%
\end{align*}
(here, we have renamed the summation index $B$ as $F$). This proves Theorem
\ref{thm.wambichromsym.varis}.
\end{proof}

\begin{proof}
[Proof of Corollary \ref{cor.wambichromsym.K-free}.]We can apply Theorem
\ref{thm.wambichromsym.varis} to $0$ instead of $a_{K}$. As a result, we
obtain%
\begin{equation}
X_{G,w}=\sum_{F\subseteq E}\left(  -1\right)  ^{\left\vert F\right\vert
}\left(  \prod_{\substack{K\in\mathfrak{K};\\K\subseteq F}}0\right)
p_{\lambda\left(  \left(  V,\operatorname*{union}F\right)  ,w\right)  }.
\label{pf.cor.wambichromsym.K-free.0}%
\end{equation}
Now, if $F$ is any subset of $E$, then%
\begin{equation}
\prod_{\substack{K\in\mathfrak{K};\\K\subseteq F}}0=%
\begin{cases}
1, & \text{if }F\text{ is }\mathfrak{K}\text{-free;}\\
0, & \text{if }F\text{ is not }\mathfrak{K}\text{-free}%
\end{cases}
\label{pf.cor.wambichromsym.K-free.1}%
\end{equation}
(indeed, this was already shown in our above proof of Corollary
\ref{cor.ambichromsym.K-free}).

Thus, (\ref{pf.cor.wambichromsym.K-free.0}) becomes%
\begin{align*}
X_{G,w}  &  =\sum_{F\subseteq E}\left(  -1\right)  ^{\left\vert F\right\vert
}\underbrace{\left(  \prod_{\substack{K\in\mathfrak{K};\\K\subseteq
F}}0\right)  }_{\substack{=%
\begin{cases}
1, & \text{if }F\text{ is }\mathfrak{K}\text{-free;}\\
0, & \text{if }F\text{ is not }\mathfrak{K}\text{-free}%
\end{cases}
\\\text{(by (\ref{pf.cor.wambichromsym.K-free.1}))}}}p_{\lambda\left(  \left(
V,\operatorname*{union}F\right)  ,w\right)  }\\
&  =\sum_{F\subseteq E}\left(  -1\right)  ^{\left\vert F\right\vert }%
\begin{cases}
1, & \text{if }F\text{ is }\mathfrak{K}\text{-free;}\\
0, & \text{if }F\text{ is not }\mathfrak{K}\text{-free}%
\end{cases}
\ \ p_{\lambda\left(  \left(  V,\operatorname*{union}F\right)  ,w\right)  }\\
&  =\sum_{\substack{F\subseteq E;\\F\text{ is }\mathfrak{K}\text{-free}%
}}\left(  -1\right)  ^{\left\vert F\right\vert }\underbrace{%
\begin{cases}
1, & \text{if }F\text{ is }\mathfrak{K}\text{-free;}\\
0, & \text{if }F\text{ is not }\mathfrak{K}\text{-free}%
\end{cases}
}_{\substack{=1\\\text{(since }F\text{ is }\mathfrak{K}\text{-free)}%
}}\ \ p_{\lambda\left(  \left(  V,\operatorname*{union}F\right)  ,w\right)
}\\
&  \ \ \ \ \ \ \ \ \ \ +\sum_{\substack{F\subseteq E;\\F\text{ is not
}\mathfrak{K}\text{-free}}}\left(  -1\right)  ^{\left\vert F\right\vert
}\underbrace{%
\begin{cases}
1, & \text{if }F\text{ is }\mathfrak{K}\text{-free;}\\
0, & \text{if }F\text{ is not }\mathfrak{K}\text{-free}%
\end{cases}
}_{\substack{=0\\\text{(since }F\text{ is not }\mathfrak{K}\text{-free)}%
}}\ \ p_{\lambda\left(  \left(  V,\operatorname*{union}F\right)  ,w\right)
}\\
&  \ \ \ \ \ \ \ \ \ \ \ \ \ \ \ \ \ \ \ \ \left(  \text{since each subset
}F\text{ of }E\text{ either is }\mathfrak{K}\text{-free or is not}\right) \\
&  =\sum_{\substack{F\subseteq E;\\F\text{ is }\mathfrak{K}\text{-free}%
}}\left(  -1\right)  ^{\left\vert F\right\vert }p_{\lambda\left(  \left(
V,\operatorname*{union}F\right)  ,w\right)  }+\underbrace{\sum
_{\substack{F\subseteq E;\\F\text{ is not }\mathfrak{K}\text{-free}}}\left(
-1\right)  ^{\left\vert F\right\vert }0p_{\lambda\left(  \left(
V,\operatorname*{union}F\right)  ,w\right)  }}_{=0}\\
&  =\sum_{\substack{F\subseteq E;\\F\text{ is }\mathfrak{K}\text{-free}%
}}\left(  -1\right)  ^{\left\vert F\right\vert }p_{\lambda\left(  \left(
V,\operatorname*{union}F\right)  ,w\right)  }.
\end{align*}
This proves Corollary \ref{cor.wambichromsym.K-free}.
\end{proof}

\begin{proof}
[Proof of Corollary \ref{cor.wambichromsym.NBC}.]Let $\mathfrak{K}$ be the set
of all broken circuits of $G$.

Now, just as in the proof of Corollary \ref{cor.chromsym.NBC}, we can prove
the following equality:
\[
\sum_{\substack{F\subseteq E;\\F\text{ is }\mathfrak{K}\text{-free}}%
}=\sum_{\substack{F\subseteq E;\\F\text{ contains no broken}\\\text{circuit of
}G\text{ as a subset}}}
\]
(an equality between summation signs). Now, Corollary
\ref{cor.wambichromsym.K-free} yields%
\begin{align*}
X_{G,w}  &  =\underbrace{\sum_{\substack{F\subseteq E;\\F\text{ is
}\mathfrak{K}\text{-free}}}}_{=\sum_{\substack{F\subseteq E;\\F\text{ contains
no broken}\\\text{circuit of }G\text{ as a subset}}}}\left(  -1\right)
^{\left\vert F\right\vert }p_{\lambda\left(  \left(  V,\operatorname*{union}%
F\right)  ,w\right)  }\\
&  =\sum_{\substack{F\subseteq E;\\F\text{ contains no broken}\\\text{circuit
of }G\text{ as a subset}}}\left(  -1\right)  ^{\left\vert F\right\vert
}p_{\lambda\left(  \left(  V,\operatorname*{union}F\right)  ,w\right)  }.
\end{align*}
This proves Corollary \ref{cor.wambichromsym.NBC}.
\end{proof}

\begin{proof}
[Proof of Theorem \ref{thm.wambichromsym.empty}.]Let $X$ be the totally
ordered set $\left\{  1\right\}  $ (equipped with the only possible order on
this set). Let $\ell:E\rightarrow X$ be the function sending each $e\in E$ to
$1\in X$. Let $\mathfrak{K}$ be the empty set. Clearly, $\mathfrak{K}$ is a
set of broken circuits of $G$. Theorem \ref{thm.wambichromsym.varis} (applied
to $0$ instead of $a_{K}$) yields%
\begin{align*}
X_{G,w}  &  =\sum_{F\subseteq E}\left(  -1\right)  ^{\left\vert F\right\vert
}\underbrace{\left(  \prod_{\substack{K\in\mathfrak{K};\\K\subseteq
F}}0\right)  }_{\substack{=\left(  \text{empty product}\right)  \\\text{(since
}\mathfrak{K}\text{ is the empty set)}}}p_{\lambda\left(  \left(
V,\operatorname*{union}F\right)  ,w\right)  }\\
&  =\sum_{F\subseteq E}\left(  -1\right)  ^{\left\vert F\right\vert
}\underbrace{\left(  \text{empty product}\right)  }_{=1}p_{\lambda\left(
\left(  V,\operatorname*{union}F\right)  ,w\right)  }=\sum_{F\subseteq
E}\left(  -1\right)  ^{\left\vert F\right\vert }p_{\lambda\left(  \left(
V,\operatorname*{union}F\right)  ,w\right)  }.
\end{align*}
This proves Theorem \ref{thm.wambichromsym.empty}.
\end{proof}
\end{verlong}

\subsection{Noncommutative chromatic symmetric functions}

The \emph{noncommutative chromatic symmetric function} $Y_{G}$ of a graph $G$
has been introduced by Gebhard and Sagan in \cite[\S 3]{GebSag01} as a lift of
the chromatic symmetric function $X_{G}$ to a noncommutative polynomial ring.
In order to define it, we need to lift the monomials $\mathbf{x}_{f}$ to
noncommutative monomials, which requires fixing a list of the vertices of $G$
(since a noncommutative product is only defined if its factors appear in a
chosen order). Unlike \cite{GebSag01}, we shall not require this list to
contain each vertex of $G$ exactly once; thus, we obtain a more general notion
that refines not only Stanley's original $X_{G}$ but also its weighted version
$X_{G,w}$ discussed above.

We consider the $\mathbf{k}$-algebra $\mathbf{k}\left\langle \left\langle
X_{1},X_{2},X_{3},\ldots\right\rangle \right\rangle $ of noncommutative power
series in countably many distinct indeterminates $X_{1},X_{2},X_{3},\ldots$
over $\mathbf{k}$. It is a topological $\mathbf{k}$-algebra\footnote{Its
topology is defined in the same way as the topology on $\mathbf{k}\left[
\left[  x_{1},x_{2},x_{3},\ldots\right]  \right]  $ (but of course the
monomials are now noncommutative monomials).}. A noncommutative power series
$P\in\mathbf{k}\left\langle \left\langle X_{1},X_{2},X_{3},\ldots\right\rangle
\right\rangle $ is said to be \emph{bounded-degree} if there exists an
$N\in\mathbb{N}$ such that every noncommutative monomial of degree $>N$
appears with coefficient $0$ in $P$. A noncommutative power series
$P\in\mathbf{k}\left\langle \left\langle X_{1},X_{2},X_{3},\ldots\right\rangle
\right\rangle $ is said to be \emph{symmetric} if and only if $P$ is invariant
under any permutation of the indeterminates. We let $\Lambda
_{\operatorname*{NC}}$ be the subset of $\mathbf{k}\left\langle \left\langle
X_{1},X_{2},X_{3},\ldots\right\rangle \right\rangle $ consisting of all
symmetric bounded-degree power series $P\in\mathbf{k}\left\langle \left\langle
X_{1},X_{2},X_{3},\ldots\right\rangle \right\rangle $. This subset
$\Lambda_{\operatorname*{NC}}$ is a $\mathbf{k}$-subalgebra of $\mathbf{k}%
\left\langle \left\langle X_{1},X_{2},X_{3},\ldots\right\rangle \right\rangle
$, and is called the $\mathbf{k}$\emph{-algebra of symmetric functions in
noncommutative indeterminates} over $\mathbf{k}$.

This $\mathbf{k}$-algebra $\Lambda_{\operatorname*{NC}}$ is called $\Pi\left(
\mathbf{x}\right)  $ in \cite{RosSag04}, and should not be mistaken for the
algebra $\operatorname*{NSym}$ of noncommutative symmetric functions (which is
studied, e.g., in \cite[\S 5.4]{Reiner}).\footnote{The latter algebra
$\operatorname*{NSym}$ can too be viewed as a subalgebra of $\mathbf{k}%
\left\langle \left\langle X_{1},X_{2},X_{3},\ldots\right\rangle \right\rangle
$, but it does not consist of symmetric power series (despite its name).}

We can now define noncommutative chromatic symmetric functions of ambigraphs:

\begin{definition}
\label{def.ncambichromsym}Let $G=\left(  V,E,\varphi\right)  $ be a finite
ambigraph. Let $\mathbf{t}=\left(  t_{1},t_{2},\ldots,t_{N}\right)  $ be a
finite list of elements of $V$ that contains each element of $V$ at least once.

\textbf{(a)} For every $\mathbb{N}_{+}$-coloring $f:V\rightarrow\mathbb{N}%
_{+}$ of $G$, we let $\mathbf{X}_{f,\mathbf{t}}$ denote the noncommutative
monomial $X_{f\left(  t_{1}\right)  }X_{f\left(  t_{2}\right)  }\cdots
X_{f\left(  t_{N}\right)  }$ in the indeterminates $X_{1},X_{2},X_{3},\ldots$.
(This does not actually depend on the ambigraph $G$, but only depends on the
set $V$.)

\textbf{(b)} We define a noncommutative power series $Y_{G,\mathbf{t}}%
\in\mathbf{k}\left\langle \left\langle X_{1},X_{2},X_{3},\ldots\right\rangle
\right\rangle $ by%
\begin{equation}
Y_{G,\mathbf{t}}=\sum_{\substack{f:V\rightarrow\mathbb{N}_{+}\text{ is
a}\\\text{proper }\mathbb{N}_{+}\text{-coloring of }G}}\mathbf{X}%
_{f,\mathbf{t}}. \label{eq.def.ncambichromsym.YGf=}%
\end{equation}

This power series $Y_{G,\mathbf{t}}$ is called the \emph{noncommutative
chromatic symmetric function} of $\left(  G,\mathbf{t}\right)  $.
\end{definition}

\begin{remark}
Why did we require the list $\mathbf{t}$ to contain each element of $V$ at
least once in Definition \ref{def.ncambichromsym}?

Otherwise, there could be a vertex $v\in V$ that does not appear in
$\mathbf{t}$. In that case, the monomials $\mathbf{X}_{f,\mathbf{t}}$ would be
independent of the color $f\left(  v\right)  $ of this vertex, and thus we
would obtain the same monomial $\mathbf{X}_{f,\mathbf{t}}$ for infinitely many
different proper $\mathbb{N}_{+}$-colorings $f$ (since we could arbitrarily
change the color $f\left(  v\right)  $ without affecting the monomial
$\mathbf{X}_{f,\mathbf{t}}$, as long as the $\mathbb{N}_{+}$-coloring $f$
remains proper). Hence, the sum on the right-hand side of
(\ref{eq.def.ncambichromsym.YGf=}) would contain infinitely many identical
monomials, and this would render $Y_{G,\mathbf{t}}$ undefined.
\end{remark}

The noncommutative chromatic symmetric function $Y_{G,\mathbf{t}}$ is a lift
of the weighted chromatic symmetric function $X_{G,w}$ introduced in
Definition \ref{def.wambichromsym} \textbf{(b)}. This can be made precise as follows:

\begin{remark}
\label{rmk.ncambichromsym.lift}Let $G=\left(  V,E,\varphi\right)  $ be a
finite ambigraph. Let $\mathbf{t}=\left(  t_{1},t_{2},\ldots,t_{N}\right)  $
be a finite list of elements of $V$ that contains each element of $V$ at least
once. Let $w:V\rightarrow\mathbb{N}_{+}$ be the weight function on $V$ that is
defined by%
\begin{align*}
w\left(  v\right)   &  =\left(  \text{number of times that }v\text{ appears in
the list }\mathbf{t}\right) \\
&  =\left(  \text{number of all }i\in\left\{  1,2,\ldots,N\right\}  \text{
such that }t_{i}=v\right)  \ \ \ \ \ \ \ \ \ \ \text{for each }v\in V.
\end{align*}

Let $\pi:\mathbf{k}\left\langle \left\langle X_{1},X_{2},X_{3},\ldots
\right\rangle \right\rangle \rightarrow\mathbf{k}\left[  \left[  x_{1}%
,x_{2},x_{3},\ldots\right]  \right]  $ be the topological $\mathbf{k}$-algebra
homomorphism that sends the noncommuting indeterminates $X_{1},X_{2}%
,X_{3},\ldots$ to the respective commuting indeterminates $x_{1},x_{2}%
,x_{3},\ldots$. Then:

\textbf{(a)} For every $\mathbb{N}_{+}$-coloring $f:V\rightarrow\mathbb{N}%
_{+}$ of $G$, we have $\pi\left(  \mathbf{X}_{f,\mathbf{t}}\right)
=\mathbf{x}_{f,w}$. (See Definition \ref{def.wambichromsym} \textbf{(a)} and
Definition \ref{def.ncambichromsym} \textbf{(a)} for the meanings of
$\mathbf{x}_{f,w}$ and $\mathbf{X}_{f,\mathbf{t}}$.)

\textbf{(b)} We have $\pi\left(  Y_{G,\mathbf{t}}\right)  =X_{G,w}$.
\end{remark}

We omit the simple proofs of these claims.

Our goal is now to state noncommutative analogues of our main theorems for
ambigraphs (specifically, Theorems \ref{thm.wambichromsym.empty} and
\ref{thm.wambichromsym.varis} and Corollaries \ref{cor.wambichromsym.K-free}
and \ref{cor.wambichromsym.NBC}). To do so, we need a noncommutative analogue
of the power-sum symmetric functions $p_{\lambda}$. In the commutative case,
we defined $p_{\lambda}$ as the product $\prod_{i\geq1}p_{\lambda_{i}}$ (see
Definition \ref{def.powersum2}). The noncommutative case, however, will not be
such a product, so we need to define it differently. This will require some preparations.

We begin by recalling the notion of a set partition:

\begin{definition}
Let $X$ be a set.

\textbf{(a)} A \emph{set partition} of $X$ means a set $\mathbf{P}$ of
disjoint nonempty subsets of $X$ such that $\bigcup_{S\in\mathbf{P}}S=X$.

\textbf{(b)} If $\mathbf{P}$ is a set partition of $X$, then the sets
$S\in\mathbf{P}$ are called the \emph{blocks} of $\mathbf{P}$.
\end{definition}

For example:

\begin{itemize}
\item The set $\left\{  \left\{  1,3\right\}  ,\ \left\{  2,4,5\right\}
\right\}  $ is a set partition of the set $\left\{  1,2,3,4,5\right\}  $,
since $\left\{  1,3\right\}  $ and $\left\{  2,4,5\right\}  $ are two disjoint
nonempty subsets of $\left\{  1,2,3,4,5\right\}  $ whose union is $\left\{
1,3\right\}  \cup\left\{  2,4,5\right\}  =\left\{  1,2,3,4,5\right\}  $.

\item The set $\left\{  \left\{  1,4\right\}  ,\ \left\{  2,5\right\}
,\ \left\{  3,6\right\}  \right\}  $ is a set partition of the set $\left\{
1,2,3,4,5,6\right\}  $. The blocks of this set partition are $\left\{
1,4\right\}  $ and $\left\{  2,5\right\}  $ and $\left\{  3,6\right\}  $.

\item The set $\left\{  \left\{  1,2,3,4,5\right\}  \right\}  $ is a set
partition of the set $\left\{  1,2,3,4,5\right\}  $. It has only one block,
namely $\left\{  1,2,3,4,5\right\}  $.

\item The set $\left\{  \left\{  1\right\}  ,\ \left\{  2\right\}  ,\ \left\{
3\right\}  ,\ \left\{  4\right\}  ,\ \left\{  5\right\}  \right\}  $ is a set
partition of the set $\left\{  1,2,3,4,5\right\}  $. It has five blocks,
namely $\left\{  1\right\}  ,\ \left\{  2\right\}  ,\ \left\{  3\right\}
,\ \left\{  4\right\}  ,\ \left\{  5\right\}  $.
\end{itemize}

There is a well-known relation (actually a one-to-one correspondence) between
the set partitions of a given set $X$ and the equivalence relations on $X$. It
can be summarized in the following theorem:

\begin{theorem}
\label{thm.setpar.eqrel}Let $X$ be a set.

\begin{vershort}
\textbf{(a)} If $\sim$ is an equivalence relation on $X$, then the set%
\[
X/\left(  \sim\right)  :=\left\{  \text{all }\sim\text{-equivalence
classes}\right\}
\]
is a set partition of $X$.
\end{vershort}

\begin{verlong}
\textbf{(a)} If $\sim$ is an equivalence relation on $X$, then the set%
\[
X/\left(  \sim\right)  =\left\{  \text{all }\sim\text{-equivalence
classes}\right\}
\]
(see Definition \ref{def.relquot} for its definition) is a set partition of
$X$.
\end{verlong}

\textbf{(b)} If $\mathbf{P}$ is a set partition of $X$, then we can define an
equivalence relation $\sim$ on the set $X$ as follows: For any two elements
$a$ and $b$ of $X$, we shall have $a\sim b$ if and only if the elements $a$
and $b$ belong to the same block of $\mathbf{P}$. This relation $\sim$ will be
called $\sim_{\mathbf{P}}$.

\textbf{(c)} The maps%
\begin{align*}
\left\{  \text{equivalence relations on }X\right\}   &  \rightarrow\left\{
\text{set partitions of }X\right\}  ,\\
\left(  \sim\right)   &  \mapsto X/\left(  \sim\right)
\end{align*}
and%
\begin{align*}
\left\{  \text{set partitions of }X\right\}   &  \rightarrow\left\{
\text{equivalence relations on }X\right\}  ,\\
\mathbf{P}  &  \mapsto\left(  \sim_{\mathbf{P}}\right)
\end{align*}
are mutually inverse bijections.
\end{theorem}

\begin{example}
Let $X$ be the set $\left\{  1,2,3,4,5,6\right\}  $.

\textbf{(a)} If $\sim$ is the equivalence relation on $X$ given by
\[
\left(  a\sim b\right)  \ \Longleftrightarrow\ \left(  a\equiv
b\operatorname{mod}2\right)  ,
\]
then the corresponding set partition $X/\left(  \sim\right)  $ of $X$ is
$\left\{  \left\{  1,3,5\right\}  ,\ \left\{  2,4,6\right\}  \right\}  $.

\textbf{(b)} If $\mathbf{P}$ is the set partition $\left\{  \left\{
1,2\right\}  ,\ \left\{  3,4,6\right\}  ,\ \left\{  5\right\}  \right\}  $ of
$X$, then the corresponding equivalence relation $\sim_{\mathbf{P}}$ on $X$ is
given by%
\[
1\sim_{\mathbf{P}}2,\ \ \ \ \ \ \ \ \ \ 3\sim_{\mathbf{P}}4\sim_{\mathbf{P}}6
\]
and no further relations (except, of course, for the ones that follow from the
relations just given by reflexivity, symmetry and transitivity).
\end{example}

We can now define the noncommutative analogues of the power-sum symmetric
functions $p_{\lambda}$. These are indexed not by integer partitions $\lambda$
but by set partitions $\mathbf{P}$:

\begin{definition}
Let $N\in\mathbb{N}$. Let $\mathbf{P}$ be a set partition of the set $\left\{
1,2,\ldots,N\right\}  $. Recall the relation $\sim_{\mathbf{P}}$ defined in
Theorem \ref{thm.setpar.eqrel} \textbf{(b)}.

Then, we define a noncommutative power series $P_{\mathbf{P}}\in
\mathbf{k}\left\langle \left\langle X_{1},X_{2},X_{3},\ldots\right\rangle
\right\rangle $ by%
\[
P_{\mathbf{P}}=\sum_{\substack{\left(  i_{1},i_{2},\ldots,i_{N}\right)
\in\left(  \mathbb{N}_{+}\right)  ^{N};\\i_{a}=i_{b}\text{ whenever }%
a\sim_{\mathbf{P}}b}}X_{i_{1}}X_{i_{2}}\cdots X_{i_{N}}.
\]
Here, the condition \textquotedblleft$i_{a}=i_{b}$ whenever $a\sim
_{\mathbf{P}}b$\textquotedblright\ under the summation sign is shorthand for
\textquotedblleft$i_{a}=i_{b}$ for any two elements $a,b\in\left\{
1,2,\ldots,N\right\}  $ that satisfy $a\sim_{\mathbf{P}}b$\textquotedblright.

This power series $P_{\mathbf{P}}$ is called the \emph{power-sum symmetric
function in noncommutative variables} corresponding to the set partition
$\mathbf{P}$. It is not hard to see that it belongs to $\Lambda
_{\operatorname*{NC}}$.
\end{definition}

Note that $P_{\mathbf{P}}$ is called $p_{\mathbf{P}}$ in \cite[\S 2]{GebSag01}
and in \cite{RosSag04}.

\begin{example}
\textbf{(a)} If $\mathbf{P}$ is the set partition $\left\{  \left\{
1,3\right\}  ,\ \left\{  2,4,5\right\}  \right\}  $ of $\left\{
1,2,3,4,5\right\}  $, then
\begin{align*}
P_{\mathbf{P}}  &  =\sum_{\substack{\left(  i_{1},i_{2},i_{3},i_{4}%
,i_{5}\right)  \in\left(  \mathbb{N}_{+}\right)  ^{5};\\i_{a}=i_{b}\text{
whenever }a\sim_{\mathbf{P}}b}}X_{i_{1}}X_{i_{2}}X_{i_{3}}X_{i_{4}}X_{i_{5}}\\
&  =\sum_{\substack{\left(  i_{1},i_{2},i_{3},i_{4},i_{5}\right)  \in\left(
\mathbb{N}_{+}\right)  ^{5};\\i_{1}=i_{3}\text{ and }i_{2}=i_{4}=i_{5}%
}}X_{i_{1}}X_{i_{2}}X_{i_{3}}X_{i_{4}}X_{i_{5}}\\
&  =\sum_{\left(  u,v\right)  \in\left(  \mathbb{N}_{+}\right)  ^{2}}%
X_{u}X_{v}X_{u}X_{v}X_{v}%
\end{align*}
(here, we have substituted $\left(  u,v,u,v,v\right)  $ for the summation
index $\left(  i_{1},i_{2},i_{3},i_{4},i_{5}\right)  $, since the condition
\textquotedblleft$i_{1}=i_{3}$ and $i_{2}=i_{4}=i_{5}$\textquotedblright\ is
saying precisely that the $5$-tuple $\left(  i_{1},i_{2},i_{3},i_{4}%
,i_{5}\right)  $ can be written in the form $\left(  u,v,u,v,v\right)  $).
This is a noncommutative power series that contains terms such as $X_{4}%
X_{7}X_{4}X_{7}X_{7}$ or $X_{5}X_{5}X_{5}X_{5}X_{5}$ (we are allowed to have
$u=v$ in the above sum), but not terms such as $X_{1}X_{2}X_{2}X_{1}X_{1}$
(since the indeterminates don't commute).

\textbf{(b)} If $\mathbf{P}$ is the set partition $\left\{  \left\{
1,4\right\}  ,\ \left\{  2,5\right\}  ,\ \left\{  3,6\right\}  \right\}  $ of
$\left\{  1,2,3,4,5,6\right\}  $, then
\begin{align*}
P_{\mathbf{P}}  &  =\sum_{\substack{\left(  i_{1},i_{2},i_{3},i_{4}%
,i_{5},i_{6}\right)  \in\left(  \mathbb{N}_{+}\right)  ^{6};\\i_{a}%
=i_{b}\text{ whenever }a\sim_{\mathbf{P}}b}}X_{i_{1}}X_{i_{2}}X_{i_{3}%
}X_{i_{4}}X_{i_{5}}X_{i_{6}}\\
&  =\sum_{\substack{\left(  i_{1},i_{2},i_{3},i_{4},i_{5},i_{6}\right)
\in\left(  \mathbb{N}_{+}\right)  ^{6};\\i_{1}=i_{4}\text{ and }i_{2}%
=i_{5}\text{ and }i_{3}=i_{6}}}X_{i_{1}}X_{i_{2}}X_{i_{3}}X_{i_{4}}X_{i_{5}%
}X_{i_{6}}\\
&  =\sum_{\left(  u,v,w\right)  \in\left(  \mathbb{N}_{+}\right)  ^{3}%
}\underbrace{X_{u}X_{v}X_{w}X_{u}X_{v}X_{w}}_{=\left(  X_{u}X_{v}X_{w}\right)
^{2}}=\sum_{\left(  u,v,w\right)  \in\left(  \mathbb{N}_{+}\right)  ^{3}%
}\left(  X_{u}X_{v}X_{w}\right)  ^{2}.
\end{align*}
Note that this cannot be simplified to $\left(  \sum_{u\in\mathbb{N}_{+}}%
X_{u}^{2}\right)  ^{3}$, since the indeterminates don't commute.

\textbf{(c)} If $\mathbf{P}$ is the set partition $\left\{  \left\{
1,2,3,4\right\}  \right\}  $ of $\left\{  1,2,3,4\right\}  $, then
\begin{align*}
P_{\mathbf{P}}  &  =\sum_{\substack{\left(  i_{1},i_{2},i_{3},i_{4}\right)
\in\left(  \mathbb{N}_{+}\right)  ^{4};\\i_{a}=i_{b}\text{ whenever }%
a\sim_{\mathbf{P}}b}}X_{i_{1}}X_{i_{2}}X_{i_{3}}X_{i_{4}}\\
&  =\sum_{\substack{\left(  i_{1},i_{2},i_{3},i_{4}\right)  \in\left(
\mathbb{N}_{+}\right)  ^{4};\\i_{1}=i_{2}=i_{3}=i_{4}}}X_{i_{1}}X_{i_{2}%
}X_{i_{3}}X_{i_{4}}=\sum_{u\in\mathbb{N}_{+}}X_{u}X_{u}X_{u}X_{u}=\sum
_{u\in\mathbb{N}_{+}}X_{u}^{4}.
\end{align*}

\textbf{(d)} If $\mathbf{P}$ is the set partition $\varnothing$ of $\left\{
{}\right\}  $ (so we have $N=0$), then%
\begin{align*}
P_{\mathbf{P}}  &  =\sum_{\substack{\left(  {}\right)  \in\left(
\mathbb{N}_{+}\right)  ^{0};\\i_{a}=i_{b}\text{ whenever }a\sim_{\mathbf{P}}%
b}}\left(  \text{empty product}\right) \\
&  =\left(  \text{empty product}\right)  \ \ \ \ \ \ \ \ \ \ \left(
\text{since there is only one }0\text{-tuple}\right) \\
&  =1.
\end{align*}

\end{example}

We shall now assign an equivalence relation to any finite graph $\left(
V,E\right)  $ and any list $\mathbf{t}$ of its vertices:

\begin{proposition}
\label{prop.graph-setpar}Let $\left(  V,B\right)  $ be a finite graph. Then,
according to Definition \ref{def.connectedness} \textbf{(a)}, an equivalence
relation $\sim_{\left(  V,B\right)  }$ is defined on the set $V$.

Let $\mathbf{t}=\left(  t_{1},t_{2},\ldots,t_{N}\right)  $ be a finite list of
elements of $V$. Let $\approx$ be the relation on the set $\left\{
1,2,\ldots,N\right\}  $ defined as follows: Two elements $i$ and $j$ of
$\left\{  1,2,\ldots,N\right\}  $ shall satisfy $i\approx j$ if and only if
$t_{i}\sim_{\left(  V,B\right)  }t_{j}$.

Then, this relation $\approx$ is an equivalence relation.
\end{proposition}

\begin{vershort}
\begin{proof}
Straightforward and easy.
\end{proof}
\end{vershort}

\begin{verlong}
\begin{proof}
[Proof of Proposition \ref{prop.graph-setpar}.]This is a straightforward and
easy argument, but we give it nevertheless for the sake of completeness.

We shall show that the relation $\approx$ is reflexive, symmetric and transitive.

\textit{Proof of reflexivity:} Let $i\in\left\{  1,2,\ldots,N\right\}  $. We
shall show that $i\approx i$.

The relation $\sim_{\left(  V,B\right)  }$ is an equivalence relation, and
thus is reflexive. Hence, we have $t_{i}\sim_{\left(  V,B\right)  }t_{i}$.
However, $i\approx i$ holds if and only if we have $t_{i}\sim_{\left(
V,B\right)  }t_{i}$ (by the definition of the relation $\approx$). Hence,
$i\approx i$ holds (since we have $t_{i}\sim_{\left(  V,B\right)  }t_{i}$).

Forget that we fixed $i$. We thus have shown that $i\approx i$ for each
$i\in\left\{  1,2,\ldots,N\right\}  $. In other words, the relation $\approx$
is reflexive.

\textit{Proof of symmetry:} Let $i$ and $j$ be two elements of $\left\{
1,2,\ldots,N\right\}  $ satisfying $i\approx j$. We shall show that $j\approx
i$.

We have $i\approx j$ if and only if we have $t_{i}\sim_{\left(  V,B\right)
}t_{j}$ (by the definition of the relation $\approx$). Hence, we have
$t_{i}\sim_{\left(  V,B\right)  }t_{j}$ (since we have $i\approx j$).

The relation $\sim_{\left(  V,B\right)  }$ is an equivalence relation, and
thus is symmetric. Hence, from $t_{i}\sim_{\left(  V,B\right)  }t_{j}$, we
obtain $t_{j}\sim_{\left(  V,B\right)  }t_{i}$. However, $j\approx i$ holds if
and only if we have $t_{j}\sim_{\left(  V,B\right)  }t_{i}$ (by the definition
of the relation $\approx$). Hence, $j\approx i$ holds (since we have
$t_{j}\sim_{\left(  V,B\right)  }t_{i}$).

Forget that we fixed $i$ and $j$. We thus have shown that if $i$ and $j$ are
two elements of $\left\{  1,2,\ldots,N\right\}  $ satisfying $i\approx j$,
then $j\approx i$. In other words, the relation $\approx$ is symmetric.

\textit{Proof of transitivity:} Let $i$, $j$ and $k$ be three elements of
$\left\{  1,2,\ldots,N\right\}  $ satisfying $i\approx j$ and $j\approx k$. We
shall show that $i\approx k$.

We have $i\approx j$ if and only if we have $t_{i}\sim_{\left(  V,B\right)
}t_{j}$ (by the definition of the relation $\approx$). Hence, we have
$t_{i}\sim_{\left(  V,B\right)  }t_{j}$ (since we have $i\approx j$).

We have $j\approx k$ if and only if we have $t_{j}\sim_{\left(  V,B\right)
}t_{k}$ (by the definition of the relation $\approx$). Hence, we have
$t_{j}\sim_{\left(  V,B\right)  }t_{k}$ (since we have $j\approx k$).

The relation $\sim_{\left(  V,B\right)  }$ is an equivalence relation, and
thus is transitive. Hence, from $t_{i}\sim_{\left(  V,B\right)  }t_{j}$ and
$t_{j}\sim_{\left(  V,B\right)  }t_{k}$, we obtain $t_{i}\sim_{\left(
V,B\right)  }t_{k}$. However, $i\approx k$ holds if and only if we have
$t_{i}\sim_{\left(  V,B\right)  }t_{k}$ (by the definition of the relation
$\approx$). Hence, $i\approx k$ holds (since we have $t_{i}\sim_{\left(
V,B\right)  }t_{k}$).

Forget that we fixed $i$, $j$ and $k$. We thus have shown that if $i$, $j$ and
$k$ are three elements of $\left\{  1,2,\ldots,N\right\}  $ satisfying
$i\approx j$ and $j\approx k$, then $i\approx k$. In other words, the relation
$\approx$ is transitive.

We have now shown that the relation $\approx$ is reflexive, symmetric and
transitive. In other words, this relation $\approx$ is an equivalence relation
(because an equivalence relation is defined to be a binary relation that is
reflexive, symmetric and transitive). This proves Proposition
\ref{prop.graph-setpar}.
\end{proof}
\end{verlong}

As we know from Theorem \ref{thm.setpar.eqrel}, an equivalence relation is
\textquotedblleft essentially the same as\textquotedblright\ a set partition.
Thus, in particular, we can turn the equivalence relation defined in
Proposition \ref{prop.graph-setpar} into a set partition:

\begin{definition}
\label{def.graph-setpar}Let $\left(  V,B\right)  $ be a finite graph. Let
$\mathbf{t}=\left(  t_{1},t_{2},\ldots,t_{N}\right)  $ be a finite list of
elements of $V$.

\textbf{(a)} Let $\approx_{\left(  V,B,\mathbf{t}\right)  }$ be the relation
$\approx$ on the set $\left\{  1,2,\ldots,N\right\}  $ defined in Proposition
\ref{prop.graph-setpar}. As we know from Proposition \ref{prop.graph-setpar},
this relation $\approx$ is an equivalence relation. In other words, the
relation $\approx_{\left(  V,B,\mathbf{t}\right)  }$ is an equivalence relation.

\textbf{(b)} Therefore, Theorem \ref{thm.setpar.eqrel} \textbf{(a)} (applied
to $X=\left\{  1,2,\ldots,N\right\}  $ and $\left(  \sim\right)  =\left(
\approx_{\left(  V,B,\mathbf{t}\right)  }\right)  $) shows that the set%
\[
\left\{  1,2,\ldots,N\right\}  /\left(  \approx_{\left(  V,B,\mathbf{t}%
\right)  }\right)  =\left\{  \text{all }\approx_{\left(  V,B,\mathbf{t}%
\right)  }\text{-equivalence classes}\right\}
\]
is a set partition of $\left\{  1,2,\ldots,N\right\}  $. We shall denote this
set partition by $\mathbf{P}\left(  V,B,\mathbf{t}\right)  $.
\end{definition}

\begin{example}
Let $\left(  V,B\right)  $ be the finite graph with $V=\left\{
u,v,w,x,y\right\}  $ and $B=\left\{  \left\{  u,v\right\}  ,\ \left\{
v,w\right\}  ,\ \left\{  x,y\right\}  \right\}  $. Then, the equivalence
relation $\sim_{\left(  V,B\right)  }$ from Definition \ref{def.connectedness}
\textbf{(a)} satisfies $u\sim_{\left(  V,B\right)  }v\sim_{\left(  V,B\right)
}w$ and $x\sim_{\left(  V,B\right)  }y$.

Let $\mathbf{t}=\left(  t_{1},t_{2},\ldots,t_{N}\right)  $ be the list
$\left(  u,v,x,y,x,u\right)  $ of elements of $V$ (so that $N=6$ and $t_{1}=u$
and $t_{2}=v$ and $t_{3}=x$ and $t_{4}=y$ and $t_{5}=x$ and $t_{6}=u$). Then,
the equivalence relation $\approx_{\left(  V,B,\mathbf{t}\right)  }$ from
Definition \ref{def.graph-setpar} \textbf{(a)} satisfies%
\begin{align*}
1  &  \approx_{\left(  V,B,\mathbf{t}\right)  }2\approx_{\left(
V,B,\mathbf{t}\right)  }6\ \ \ \ \ \ \ \ \ \ \left(  \text{since }t_{1}%
\sim_{\left(  V,B\right)  }t_{2}\sim_{\left(  V,B\right)  }t_{6}\right)
\ \ \ \ \ \ \ \ \ \ \text{and}\\
3  &  \approx_{\left(  V,B,\mathbf{t}\right)  }4\approx_{\left(
V,B,\mathbf{t}\right)  }5\ \ \ \ \ \ \ \ \ \ \left(  \text{since }t_{3}%
\sim_{\left(  V,B\right)  }t_{4}\sim_{\left(  V,B\right)  }t_{5}\right)
\end{align*}
and no further relations (except for the ones that follow from the relations
just given using reflexivity, symmetry and transitivity). Thus, the set
partition $\mathbf{P}\left(  V,B,\mathbf{t}\right)  $ from Definition
\ref{def.graph-setpar} \textbf{(b)} is%
\[
\left\{  \left\{  1,2,6\right\}  ,\ \left\{  3,4,5\right\}  \right\}  .
\]

\end{example}

We now have all notations in place to state noncommutative analogues of
Theorems \ref{thm.wambichromsym.empty} and \ref{thm.wambichromsym.varis} and
Corollaries \ref{cor.wambichromsym.K-free} and \ref{cor.wambichromsym.NBC}:

\begin{theorem}
\label{thm.ncambichromsym.empty}Let $G=\left(  V,E,\varphi\right)  $ be a
finite ambigraph. Let $\mathbf{t}=\left(  t_{1},t_{2},\ldots,t_{N}\right)  $
be a finite list of elements of $V$ that contains each element of $V$ at least
once. Then,%
\[
Y_{G,\mathbf{t}}=\sum_{F\subseteq E}\left(  -1\right)  ^{\left\vert
F\right\vert }P_{\mathbf{P}\left(  V,\operatorname*{union}F,\mathbf{t}\right)
}.
\]
(Here, of course, the pair $\left(  V,\operatorname*{union}F\right)  $ is
regarded as a graph, and the expression $\mathbf{P}\left(
V,\operatorname*{union}F,\mathbf{t}\right)  $ is understood according to
Definition \ref{def.graph-setpar}.)
\end{theorem}

\begin{theorem}
\label{thm.ncambichromsym.varis}Let $G=\left(  V,E,\varphi\right)  $ be a
finite ambigraph. Let $\mathbf{t}=\left(  t_{1},t_{2},\ldots,t_{N}\right)  $
be a finite list of elements of $V$ that contains each element of $V$ at least
once. Let $X$ be a totally ordered set. Let $\ell:E\rightarrow X$ be a
labeling function. Let $\mathfrak{K}$ be some set of broken circuits of $G$
(not necessarily containing all of them). Let $a_{K}$ be an element of
$\mathbf{k}$ for every $K\in\mathfrak{K}$. Then,%
\[
Y_{G,\mathbf{t}}=\sum_{F\subseteq E}\left(  -1\right)  ^{\left\vert
F\right\vert }\left(  \prod_{\substack{K\in\mathfrak{K};\\K\subseteq F}%
}a_{K}\right)  P_{\mathbf{P}\left(  V,\operatorname*{union}F,\mathbf{t}%
\right)  }.
\]

\end{theorem}

\begin{corollary}
\label{cor.ncambichromsym.K-free}Let $G=\left(  V,E,\varphi\right)  $ be a
finite ambigraph. Let $\mathbf{t}=\left(  t_{1},t_{2},\ldots,t_{N}\right)  $
be a finite list of elements of $V$ that contains each element of $V$ at least
once. Let $X$ be a totally ordered set. Let $\ell:E\rightarrow X$ be a
labeling function. Let $\mathfrak{K}$ be some set of broken circuits of $G$
(not necessarily containing all of them). Then,%
\[
Y_{G,\mathbf{t}}=\sum_{\substack{F\subseteq E;\\F\text{ is }\mathfrak{K}%
\text{-free}}}\left(  -1\right)  ^{\left\vert F\right\vert }P_{\mathbf{P}%
\left(  V,\operatorname*{union}F,\mathbf{t}\right)  }.
\]

\end{corollary}

\begin{corollary}
\label{cor.ncambichromsym.NBC}Let $G=\left(  V,E,\varphi\right)  $ be a finite
ambigraph. Let $\mathbf{t}=\left(  t_{1},t_{2},\ldots,t_{N}\right)  $ be a
finite list of elements of $V$ that contains each element of $V$ at least
once. Let $X$ be a totally ordered set. Let $\ell:E\rightarrow X$ be a
labeling function. Then,%
\[
Y_{G,\mathbf{t}}=\sum_{\substack{F\subseteq E;\\F\text{ contains no
broken}\\\text{circuit of }G\text{ as a subset}}}\left(  -1\right)
^{\left\vert F\right\vert }P_{\mathbf{P}\left(  V,\operatorname*{union}%
F,\mathbf{t}\right)  }.
\]

\end{corollary}

Since any graph or loopless multigraph can be viewed as an ambigraph, it is
easy to see that Theorem \ref{thm.ncambichromsym.empty} and Corollary
\ref{cor.ncambichromsym.NBC} generalize \cite[Theorem 3.6]{GebSag01} and
\cite[Theorem 3.8]{GebSag01}, respectively.

In order to prove these four results, we proceed similarly to the commutative
case, which we have studied to exhaustion. Instead of Lemma \ref{lem.wEqs.sum}%
, we need the following noncommutative analogue:

\begin{lemma}
\label{lem.ncEqs.sum}Let $\left(  V,B\right)  $ be a finite graph. Let
$\mathbf{t}=\left(  t_{1},t_{2},\ldots,t_{N}\right)  $ be a finite list of
elements of $V$ that contains each element of $V$ at least once. Then,%
\[
\sum_{\substack{f:V\rightarrow\mathbb{N}_{+};\\B\subseteq\operatorname*{Eqs}%
f}}\mathbf{X}_{f,\mathbf{t}}=P_{\mathbf{P}\left(  V,B,\mathbf{t}\right)  }.
\]

\end{lemma}

\begin{proof}
[Proof of Lemma \ref{lem.ncEqs.sum}.]The definition of $P_{\mathbf{P}\left(
V,B,\mathbf{t}\right)  }$ yields%
\begin{equation}
P_{\mathbf{P}\left(  V,B,\mathbf{t}\right)  }=\sum_{\substack{\left(
i_{1},i_{2},\ldots,i_{N}\right)  \in\left(  \mathbb{N}_{+}\right)
^{N};\\i_{a}=i_{b}\text{ whenever }a\sim_{\mathbf{P}\left(  V,B,\mathbf{t}%
\right)  }b}}X_{i_{1}}X_{i_{2}}\cdots X_{i_{N}} \label{pf.lem.ncEqs.sum.P=}%
\end{equation}
(where the condition \textquotedblleft$i_{a}=i_{b}$ whenever $a\sim
_{\mathbf{P}\left(  V,B,\mathbf{t}\right)  }b$\textquotedblright\ is shorthand
for \textquotedblleft$i_{a}=i_{b}$ for any two elements $a,b\in\left\{
1,2,\ldots,N\right\}  $ that satisfy $a\sim_{\mathbf{P}\left(  V,B,\mathbf{t}%
\right)  }b$\textquotedblright).

\begin{vershort}
However, we know from Theorem \ref{thm.setpar.eqrel} that every equivalence
relation $\sim$ on a set $X$ can be canonically transformed into a set
partition of this set (namely, the set partition $X/\left(  \sim\right)  $,
which consists of the $\sim$-equivalence classes), and conversely, every set
partition $\mathbf{P}$ of a set $X$ can be canonically transformed into an
equivalence relation $\sim_{\mathbf{P}}$ on this set. These two
transformations are mutually inverse; in particular, if an equivalence
relation $\sim$ gives rise to a set partition $X/\left(  \sim\right)  $, then
the equivalence relation $\sim_{X/\left(  \sim\right)  }$ constructed from the
latter set partition is again the original relation $\sim$.

Thus, it follows that the relation $\sim_{\mathbf{P}\left(  V,B,\mathbf{t}%
\right)  }$ is precisely the relation $\approx_{\left(  V,B,\mathbf{t}\right)
}$ (because the relation $\sim_{\mathbf{P}\left(  V,B,\mathbf{t}\right)  }$ is
constructed from the set partition $\mathbf{P}\left(  V,B,\mathbf{t}\right)
$, but the latter set partition $\mathbf{P}\left(  V,B,\mathbf{t}\right)  $ is
in turn constructed from the equivalence relation $\approx_{\left(
V,B,\mathbf{t}\right)  }$).

On the other hand, the relation $\approx_{\left(  V,B,\mathbf{t}\right)  }$ is
defined as the relation $\approx$ on the set $\left\{  1,2,\ldots,N\right\}  $
for which two elements satisfy $i\approx j$ if and only if $t_{i}\sim_{\left(
V,B\right)  }t_{j}$. Hence, for any two elements $a,b\in\left\{
1,2,\ldots,N\right\}  $, we have the equivalence%
\[
\left(  a\approx_{\left(  V,B,\mathbf{t}\right)  }b\right)
\ \Longleftrightarrow\ \left(  t_{a}\sim_{\left(  V,B\right)  }t_{b}\right)
.
\]
Since the relation $\sim_{\mathbf{P}\left(  V,B,\mathbf{t}\right)  }$ is
precisely the relation $\approx_{\left(  V,B,\mathbf{t}\right)  }$, we can
rewrite this as follows: For any two elements $a,b\in\left\{  1,2,\ldots
,N\right\}  $, we have the equivalence
\[
\left(  a\sim_{\mathbf{P}\left(  V,B,\mathbf{t}\right)  }b\right)
\ \Longleftrightarrow\ \left(  t_{a}\sim_{\left(  V,B\right)  }t_{b}\right)
.
\]

Therefore, we can replace the condition \textquotedblleft$a\sim_{\mathbf{P}%
\left(  V,B,\mathbf{t}\right)  }b$\textquotedblright\ under the summation sign
in (\ref{pf.lem.ncEqs.sum.P=}) by \textquotedblleft$t_{a}\sim_{\left(
V,B\right)  }t_{b}$\textquotedblright. As a result, (\ref{pf.lem.ncEqs.sum.P=}%
) rewrites as follows:%
\begin{equation}
P_{\mathbf{P}\left(  V,B,\mathbf{t}\right)  }=\sum_{\substack{\left(
i_{1},i_{2},\ldots,i_{N}\right)  \in\left(  \mathbb{N}_{+}\right)
^{N};\\i_{a}=i_{b}\text{ whenever }t_{a}\sim_{\left(  V,B\right)  }t_{b}%
}}X_{i_{1}}X_{i_{2}}\cdots X_{i_{N}}. \label{pf.lem.ncEqs.sum.short.P=2}%
\end{equation}

\end{vershort}

\begin{verlong}
However, for any two elements $a,b\in\left\{  1,2,\ldots,N\right\}  $, we have
the equivalence
\begin{equation}
\left(  a\sim_{\mathbf{P}\left(  V,B,\mathbf{t}\right)  }b\right)
\ \Longleftrightarrow\ \left(  t_{a}\sim_{\left(  V,B\right)  }t_{b}\right)
\label{pf.lem.ncEqs.sum.equivalence}%
\end{equation}
\footnote{\textit{Proof.} Let $X=\left\{  1,2,\ldots,N\right\}  $. Then,
$\mathbf{P}\left(  V,B,\mathbf{t}\right)  $ is a set partition of $X$ (since
$\mathbf{P}\left(  V,B,\mathbf{t}\right)  $ is a set partition of $\left\{
1,2,\ldots,N\right\}  $).
\par
Let us recall how the relation $\sim_{\mathbf{P}\left(  V,B,\mathbf{t}\right)
}$ is defined (according to Theorem \ref{thm.setpar.eqrel} \textbf{(b)}): This
relation is the relation $\sim$ on the set $X$ such that for any two elements
$a$ and $b$ of $X$, we have $a\sim b$ if and only if the elements $a$ and $b$
belong to the same block of $\mathbf{P}\left(  V,B,\mathbf{t}\right)  $.
Hence, for any two elements $a$ and $b$ of $X$, we have $a\sim_{\mathbf{P}%
\left(  V,B,\mathbf{t}\right)  }b$ if and only if the elements $a$ and $b$
belong to the same block of $\mathbf{P}\left(  V,B,\mathbf{t}\right)  $. In
other words, for any two elements $a$ and $b$ of $X$, we have the equivalence%
\begin{align}
&  \ \left(  a\sim_{\mathbf{P}\left(  V,B,\mathbf{t}\right)  }b\right)
\nonumber\\
&  \Longleftrightarrow\ \left(  \text{the elements }a\text{ and }b\text{
belong to the same block of }\mathbf{P}\left(  V,B,\mathbf{t}\right)  \right)
. \label{pf.lem.ncEqs.sum.equivalence.pf.1}%
\end{align}
\par
Let $\approx_{\left(  V,B,\mathbf{t}\right)  }$ be the relation $\approx$ on
the set $\left\{  1,2,\ldots,N\right\}  $ defined in Proposition
\ref{prop.graph-setpar}. As we know from Proposition \ref{prop.graph-setpar},
this relation $\approx$ is an equivalence relation. In other words, the
relation $\approx_{\left(  V,B,\mathbf{t}\right)  }$ is an equivalence
relation.
\par
Now, recall how the set partition $\mathbf{P}\left(  V,B,\mathbf{t}\right)  $
is defined (according to Definition \ref{def.graph-setpar}): It is defined by%
\[
\mathbf{P}\left(  V,B,\mathbf{t}\right)  =\left\{  1,2,\ldots,N\right\}
/\left(  \approx_{\left(  V,B,\mathbf{t}\right)  }\right)  =\left\{  \text{all
}\approx_{\left(  V,B,\mathbf{t}\right)  }\text{-equivalence classes}\right\}
.
\]
Hence, the blocks of $\mathbf{P}\left(  V,B,\mathbf{t}\right)  $ are the
$\approx_{\left(  V,B,\mathbf{t}\right)  }$-equivalence classes.
\par
Now, let $a$ and $b$ be two elements of $\left\{  1,2,\ldots,N\right\}  $.
Thus, $a$ and $b$ are two elements of $X$ (since $X=\left\{  1,2,\ldots
,N\right\}  $). Therefore, we have the following chain of equivalences:%
\begin{align}
&  \ \left(  a\sim_{\mathbf{P}\left(  V,B,\mathbf{t}\right)  }b\right)
\nonumber\\
&  \Longleftrightarrow\ \left(  \text{the elements }a\text{ and }b\text{
belong to the same block of }\mathbf{P}\left(  V,B,\mathbf{t}\right)  \right)
\nonumber\\
&  \ \ \ \ \ \ \ \ \ \ \ \ \ \ \ \ \ \ \ \ \left(  \text{by
(\ref{pf.lem.ncEqs.sum.equivalence.pf.1})}\right) \nonumber\\
&  \Longleftrightarrow\ \left(  \text{the elements }a\text{ and }b\text{
belong to the same }\approx_{\left(  V,B,\mathbf{t}\right)  }%
\text{-equivalence class}\right) \nonumber\\
&  \ \ \ \ \ \ \ \ \ \ \ \ \ \ \ \ \ \ \ \ \left(  \text{since the blocks of
}\mathbf{P}\left(  V,B,\mathbf{t}\right)  \text{ are the }\approx_{\left(
V,B,\mathbf{t}\right)  }\text{-equivalence classes}\right) \nonumber\\
&  \Longleftrightarrow\ \left(  a\approx_{\left(  V,B,\mathbf{t}\right)
}b\right)  \label{pf.lem.ncEqs.sum.equivalence.pf.2}%
\end{align}
(because two elements of $\left\{  1,2,\ldots,N\right\}  $ belong to the same
$\approx_{\left(  V,B,\mathbf{t}\right)  }$-equivalence class if and only if
they are related by the relation $\approx_{\left(  V,B,\mathbf{t}\right)  }$).
\par
However, the relation $\approx_{\left(  V,B,\mathbf{t}\right)  }$ was defined
as the relation $\approx$ on the set $\left\{  1,2,\ldots,N\right\}  $ defined
in Proposition \ref{prop.graph-setpar}. But the latter relation $\approx$ was
defined as follows: Two elements $i$ and $j$ of $\left\{  1,2,\ldots
,N\right\}  $ shall satisfy $i\approx j$ if and only if $t_{i}\sim_{\left(
V,B\right)  }t_{j}$.
\par
Since we have denoted this relation $\approx$ by $\approx_{\left(
V,B,\mathbf{t}\right)  }$, we can rewrite this fact as follows: Two elements
$i$ and $j$ of $\left\{  1,2,\ldots,N\right\}  $ shall satisfy $i\approx
_{\left(  V,B,\mathbf{t}\right)  }j$ if and only if $t_{i}\sim_{\left(
V,B\right)  }t_{j}$. In other words, for any two elements $i$ and $j$ of
$\left\{  1,2,\ldots,N\right\}  $, we have the equivalence $\left(
i\approx_{\left(  V,B,\mathbf{t}\right)  }j\right)  \ \Longleftrightarrow
\ \left(  t_{i}\sim_{\left(  V,B\right)  }t_{j}\right)  $.
\par
Applying this to $i=a$ and $j=b$, we obtain the equivalence $\left(
a\approx_{\left(  V,B,\mathbf{t}\right)  }b\right)  \ \Longleftrightarrow
\ \left(  t_{a}\sim_{\left(  V,B\right)  }t_{b}\right)  $. Hence, we have the
equivalence
\begin{align*}
\left(  a\sim_{\mathbf{P}\left(  V,B,\mathbf{t}\right)  }b\right)  \  &
\Longleftrightarrow\ \left(  a\approx_{\left(  V,B,\mathbf{t}\right)
}b\right)  \ \ \ \ \ \ \ \ \ \ \left(  \text{by
(\ref{pf.lem.ncEqs.sum.equivalence.pf.2})}\right) \\
&  \Longleftrightarrow\ \left(  t_{a}\sim_{\left(  V,B\right)  }t_{b}\right)
.
\end{align*}
This proves (\ref{pf.lem.ncEqs.sum.equivalence}).}.
\end{verlong}

\begin{vershort}
Now, we define two sets%
\[
\mathcal{F}=\left\{  g:V\rightarrow\mathbb{N}_{+}\text{ is a map }\mid\text{
}B\subseteq\operatorname*{Eqs}g\right\}
\]
and%
\[
\mathcal{I}=\left\{  \left(  i_{1},i_{2},\ldots,i_{N}\right)  \in\left(
\mathbb{N}_{+}\right)  ^{N}\ \mid\ i_{a}=i_{b}\text{ whenever }t_{a}%
\sim_{\left(  V,B\right)  }t_{b}\right\}  .
\]

\end{vershort}

\begin{verlong}
Now, we define two sets%
\[
\mathcal{F}=\left\{  g:V\rightarrow\mathbb{N}_{+}\text{ is a map }\mid\text{
}B\subseteq\operatorname*{Eqs}g\right\}
\]
and%
\[
\mathcal{I}=\left\{  \left(  i_{1},i_{2},\ldots,i_{N}\right)  \in\left(
\mathbb{N}_{+}\right)  ^{N}\ \mid\ i_{a}=i_{b}\text{ whenever }a\sim
_{\mathbf{P}\left(  V,B,\mathbf{t}\right)  }b\right\}  .
\]

\end{verlong}

We claim the following:

\begin{statement}
\textit{Claim 1:} For any $f\in\mathcal{F}$, we have $\left(  f\left(
t_{1}\right)  ,f\left(  t_{2}\right)  ,\ldots,f\left(  t_{N}\right)  \right)
\in\mathcal{I}$.
\end{statement}

\begin{vershort}
[\textit{Proof of Claim 1:} Let $f\in\mathcal{F}$. Thus, by the definition of
$\mathcal{F}$, we conclude that $f:V\rightarrow\mathbb{N}_{+}$ is a map
satisfying $B\subseteq\operatorname*{Eqs}f$.

We need to show that $\left(  f\left(  t_{1}\right)  ,f\left(  t_{2}\right)
,\ldots,f\left(  t_{N}\right)  \right)  \in\mathcal{I}$. By the definition of
$\mathcal{I}$, this requires us to show that $f\left(  t_{a}\right)  =f\left(
t_{b}\right)  $ whenever $t_{a}\sim_{\left(  V,B\right)  }t_{b}$ (that is,
whenever $a,b\in\left\{  1,2,\ldots,N\right\}  $ are two elements satisfying
$t_{a}\sim_{\left(  V,B\right)  }t_{b}$). So let us show this.

Let $a,b\in\left\{  1,2,\ldots,N\right\}  $ be two elements satisfying
$t_{a}\sim_{\left(  V,B\right)  }t_{b}$. We must show that $f\left(
t_{a}\right)  =f\left(  t_{b}\right)  $.

We have $B\subseteq\operatorname*{Eqs}f$. Hence, if $x$ and $y$ are two
elements of $V$ lying in the same connected component of $\left(  V,B\right)
$, then%
\begin{equation}
f\left(  x\right)  =f\left(  y\right)  .
\label{pf.lem.ncEqs.sum.short.c1.pf.1}%
\end{equation}
(Indeed, this can be shown in the same way as we established
(\ref{pf.lem.Eqs.sum.short.surj.2}) during our proof of Lemma
\ref{lem.Eqs.sum}.)

However, we have $t_{a}\sim_{\left(  V,B\right)  }t_{b}$. In other words, the
elements $t_{a}$ and $t_{b}$ lie in the same connected component of $\left(
V,B\right)  $ (since the connected components of $\left(  V,B\right)  $ are
the $\sim_{\left(  V,B\right)  }$-equivalence classes). Thus,
(\ref{pf.lem.ncEqs.sum.short.c1.pf.1}) (applied to $x=t_{a}$ and $y=t_{b}$)
yields that $f\left(  t_{a}\right)  =f\left(  t_{b}\right)  $. As we
explained, this completes the proof of Claim 1.]
\end{vershort}

\begin{verlong}
[\textit{Proof of Claim 1:} Let $f\in\mathcal{F}$. Then,%
\[
f\in\mathcal{F}=\left\{  g:V\rightarrow\mathbb{N}_{+}\text{ is a map }%
\mid\text{ }B\subseteq\operatorname*{Eqs}g\right\}  .
\]
In other words, $f$ is a map $g:V\rightarrow\mathbb{N}_{+}$ satisfying
$B\subseteq\operatorname*{Eqs}g$. In other words, $f$ is a map from $V$ to
$\mathbb{N}_{+}$ and satisfies $B\subseteq\operatorname*{Eqs}f$.

Let $a,b\in\left\{  1,2,\ldots,N\right\}  $ be two elements that satisfy
$a\sim_{\mathbf{P}\left(  V,B,\mathbf{t}\right)  }b$. We shall show that
$f\left(  t_{a}\right)  =f\left(  t_{b}\right)  $.

From (\ref{pf.lem.ncEqs.sum.equivalence}), we know that the statements
\textquotedblleft$a\sim_{\mathbf{P}\left(  V,B,\mathbf{t}\right)  }%
b$\textquotedblright\ and \textquotedblleft$t_{a}\sim_{\left(  V,B\right)
}t_{b}$\textquotedblright\ are equivalent. Hence, we have $t_{a}\sim_{\left(
V,B\right)  }t_{b}$ (since we have $a\sim_{\mathbf{P}\left(  V,B,\mathbf{t}%
\right)  }b$).

Now, let $\sim$ denote the equivalence relation $\sim_{\left(  V,B\right)  }$.
Thus, we have $t_{a}\sim t_{b}$ (since $t_{a}\sim_{\left(  V,B\right)  }t_{b}$).

Let $Y=\mathbb{N}_{+}$. Thus, $f:V\rightarrow Y$ is a map (since
$f:V\rightarrow\mathbb{N}_{+}$ is a map). A set $Y_{\sim}^{V}$ is defined
(according to Definition \ref{def.relquot.maps} \textbf{(b)}). Lemma
\ref{lem.Eqs.sum-aux} yields that we have the following logical equivalence of
statements:%
\[
\left(  B\subseteq\operatorname*{Eqs}f\right)  \ \Longleftrightarrow\ \left(
f\in Y_{\sim}^{V}\right)  .
\]
Hence, we have $f\in Y_{\sim}^{V}$ (since we have $B\subseteq
\operatorname*{Eqs}f$). Therefore,%
\[
f\in Y_{\sim}^{V}=\left\{  g\in Y^{V}\ \mid\ g\left(  x\right)  =g\left(
y\right)  \text{ for any }x\in V\text{ and }y\in V\text{ satisfying }x\sim
y\right\}
\]
(by the definition of $Y_{\sim}^{V}$). In other words, $f$ is a $g\in Y^{V}$
satisfying
\[
g\left(  x\right)  =g\left(  y\right)  \text{ for any }x\in V\text{ and }y\in
V\text{ satisfying }x\sim y.
\]
In other words, $f$ is an element of $Y^{V}$ and satisfies%
\begin{equation}
f\left(  x\right)  =f\left(  y\right)  \text{ for any }x\in V\text{ and }y\in
V\text{ satisfying }x\sim y. \label{pf.lem.ncEqs.sum.c1.pf.5}%
\end{equation}

We can apply (\ref{pf.lem.ncEqs.sum.c1.pf.5}) to $x=t_{a}$ and $y=t_{b}$
(since $t_{a}\in V$ and $t_{b}\in V$ and $t_{a}\sim t_{b}$). Thus, we obtain
$f\left(  t_{a}\right)  =f\left(  t_{b}\right)  $.

Now, forget that we fixed $a$ and $b$. We thus have shown that $f\left(
t_{a}\right)  =f\left(  t_{b}\right)  $ for any two elements $a,b\in\left\{
1,2,\ldots,N\right\}  $ that satisfy $a\sim_{\mathbf{P}\left(  V,B,\mathbf{t}%
\right)  }b$. In other words, we have $f\left(  t_{a}\right)  =f\left(
t_{b}\right)  $ whenever $a\sim_{\mathbf{P}\left(  V,B,\mathbf{t}\right)  }b$.

Hence, the $N$-tuple $\left(  f\left(  t_{1}\right)  ,f\left(  t_{2}\right)
,\ldots,f\left(  t_{N}\right)  \right)  $ is an $N$-tuple $\left(  i_{1}%
,i_{2},\ldots,i_{N}\right)  \in\left(  \mathbb{N}_{+}\right)  ^{N}$ satisfying
$i_{a}=i_{b}$ whenever $a\sim_{\mathbf{P}\left(  V,B,\mathbf{t}\right)  }b$.
In other words,%
\begin{align*}
\left(  f\left(  t_{1}\right)  ,f\left(  t_{2}\right)  ,\ldots,f\left(
t_{N}\right)  \right)   &  \in\left\{  \left(  i_{1},i_{2},\ldots
,i_{N}\right)  \in\left(  \mathbb{N}_{+}\right)  ^{N}\ \mid\ i_{a}=i_{b}\text{
whenever }a\sim_{\mathbf{P}\left(  V,B,\mathbf{t}\right)  }b\right\} \\
&  =\mathcal{I}\ \ \ \ \ \ \ \ \ \ \left(  \text{by the definition of
}\mathcal{I}\right)  .
\end{align*}
This proves Claim 1.]
\end{verlong}

Thanks to Claim 1, we can define a map%
\begin{align*}
\Psi:\mathcal{F}  &  \rightarrow\mathcal{I},\\
f  &  \mapsto\left(  f\left(  t_{1}\right)  ,f\left(  t_{2}\right)
,\ldots,f\left(  t_{N}\right)  \right)  .
\end{align*}
Consider this map $\Psi$. We claim the following:

\begin{statement}
\textit{Claim 2:} The map $\Psi$ is injective.
\end{statement}

\begin{vershort}
[\textit{Proof of Claim 2:} The list $\left(  t_{1},t_{2},\ldots,t_{N}\right)
$ contains each element of $V$ at least once (according to the hypotheses of
Lemma \ref{lem.ncEqs.sum}). Thus, if $f\in\mathcal{F}$ is arbitrary, then the
list $\left(  f\left(  t_{1}\right)  ,f\left(  t_{2}\right)  ,\ldots,f\left(
t_{N}\right)  \right)  $ contains each value of $f$ at least once. Therefore,
any $f\in\mathcal{F}$ can be uniquely reconstructed from this list $\left(
f\left(  t_{1}\right)  ,f\left(  t_{2}\right)  ,\ldots,f\left(  t_{N}\right)
\right)  $. In other words, any $f\in\mathcal{F}$ can be uniquely
reconstructed from $\Psi\left(  f\right)  $ (since the definition of $\Psi$
yields $\Psi\left(  f\right)  =\left(  f\left(  t_{1}\right)  ,f\left(
t_{2}\right)  ,\ldots,f\left(  t_{N}\right)  \right)  $). In other words, the
map $\Psi$ is injective. This proves Claim 2.]
\end{vershort}

\begin{verlong}
[\textit{Proof of Claim 2:} Let $f$ and $g$ be two elements of $\mathcal{F}$
that satisfy $\Psi\left(  f\right)  =\Psi\left(  g\right)  $. We shall show
that $f=g$.

Note that both $f$ and $g$ are elements of $\mathcal{F}$, and thus are maps
$V\rightarrow\mathbb{N}_{+}$.

The definition of $\Psi$ yields $\Psi\left(  f\right)  =\left(  f\left(
t_{1}\right)  ,f\left(  t_{2}\right)  ,\ldots,f\left(  t_{N}\right)  \right)
$ and $\Psi\left(  g\right)  =\left(  g\left(  t_{1}\right)  ,g\left(
t_{2}\right)  ,\ldots,g\left(  t_{N}\right)  \right)  $. Hence,%
\[
\left(  f\left(  t_{1}\right)  ,f\left(  t_{2}\right)  ,\ldots,f\left(
t_{N}\right)  \right)  =\Psi\left(  f\right)  =\Psi\left(  g\right)  =\left(
g\left(  t_{1}\right)  ,g\left(  t_{2}\right)  ,\ldots,g\left(  t_{N}\right)
\right)  .
\]
In other words,%
\begin{equation}
f\left(  t_{i}\right)  =g\left(  t_{i}\right)  \ \ \ \ \ \ \ \ \ \ \text{for
each }i\in\left\{  1,2,\ldots,N\right\}  . \label{pf.lem.ncEqs.sum.c2.pf.1}%
\end{equation}

Now, let $v\in V$. We shall show that $f\left(  v\right)  =g\left(  v\right)
$.

The list $\left(  t_{1},t_{2},\ldots,t_{N}\right)  $ contains each element of
$V$ at least once (according to the hypotheses of Lemma \ref{lem.ncEqs.sum}).
In particular, this list contains $v$ at least once (since $v$ is an element
of $V$). In other words, there exists an $i\in\left\{  1,2,\ldots,N\right\}  $
such that $t_{i}=v$. Consider this $i$. From (\ref{pf.lem.ncEqs.sum.c2.pf.1}),
we obtain $f\left(  t_{i}\right)  =g\left(  t_{i}\right)  $. In view of
$t_{i}=v$, we can rewrite this as $f\left(  v\right)  =g\left(  v\right)  $.

Forget that we fixed $v$. We thus have shown that $f\left(  v\right)
=g\left(  v\right)  $ for each $v\in V$. In other words, $f=g$ (since both $f$
and $g$ are maps $V\rightarrow\mathbb{N}_{+}$).

Forget that we fixed $f$ and $g$. We thus have shown that if $f$ and $g$ are
two elements of $\mathcal{F}$ that satisfy $\Psi\left(  f\right)  =\Psi\left(
g\right)  $, then $f=g$. In other words, the map $\Psi$ is injective. This
proves Claim 2.]
\end{verlong}

\begin{statement}
\textit{Claim 3:} The map $\Psi$ is surjective.
\end{statement}

[\textit{Proof of Claim 3:} Let $\mathbf{i}\in\mathcal{I}$. We shall construct
an $f\in\mathcal{F}$ satisfying $\Psi\left(  f\right)  =\mathbf{i}$.

\begin{vershort}
Indeed, $\mathbf{i}\in\mathcal{I}$. By the definition of $\mathcal{I}$, this
means that $\mathbf{i}$ has the form $\mathbf{i}=\left(  i_{1},i_{2}%
,\ldots,i_{N}\right)  $ for some $N$-tuple $\left(  i_{1},i_{2},\ldots
,i_{N}\right)  \in\left(  \mathbb{N}_{+}\right)  ^{N}$ that satisfies
\begin{equation}
i_{a}=i_{b}\text{ whenever }t_{a}\sim_{\left(  V,B\right)  }t_{b}.
\label{pf.lem.ncEqs.sum.short.c3.pf.1}%
\end{equation}
Consider this $N$-tuple $\left(  i_{1},i_{2},\ldots,i_{N}\right)  $.
\end{vershort}

\begin{verlong}
Indeed, $\mathbf{i}\in\mathcal{I}=\left\{  \left(  i_{1},i_{2},\ldots
,i_{N}\right)  \in\left(  \mathbb{N}_{+}\right)  ^{N}\ \mid\ i_{a}=i_{b}\text{
whenever }a\sim_{\mathbf{P}\left(  V,B,\mathbf{t}\right)  }b\right\}  $ (by
the definition of $\mathcal{I}$). In other words, $\mathbf{i}$ has the form
$\mathbf{i}=\left(  i_{1},i_{2},\ldots,i_{N}\right)  $ for some $N$-tuple
$\left(  i_{1},i_{2},\ldots,i_{N}\right)  \in\left(  \mathbb{N}_{+}\right)
^{N}$ that satisfies $i_{a}=i_{b}$ whenever $a\sim_{\mathbf{P}\left(
V,B,\mathbf{t}\right)  }b$. Consider this $N$-tuple $\left(  i_{1}%
,i_{2},\ldots,i_{N}\right)  $.

We have $i_{a}=i_{b}$ whenever $a\sim_{\mathbf{P}\left(  V,B,\mathbf{t}%
\right)  }b$. In other words, we have
\begin{equation}
i_{a}=i_{b} \label{pf.lem.ncEqs.sum.c3.pf.1}%
\end{equation}
for any two elements $a,b\in\left\{  1,2,\ldots,N\right\}  $ that satisfy
$a\sim_{\mathbf{P}\left(  V,B,\mathbf{t}\right)  }b$ (since this is what
\textquotedblleft$i_{a}=i_{b}$ whenever $a\sim_{\mathbf{P}\left(
V,B,\mathbf{t}\right)  }b$\textquotedblright\ means).
\end{verlong}

We shall now define a map $f:V\rightarrow\mathbb{N}_{+}$ as follows:

Let $v\in V$. The list $\left(  t_{1},t_{2},\ldots,t_{N}\right)  $ contains
each element of $V$ at least once (according to the hypotheses of Lemma
\ref{lem.ncEqs.sum}). In particular, this list contains $v$ at least once
(since $v$ is an element of $V$). In other words, there exists a $k\in\left\{
1,2,\ldots,N\right\}  $ such that $t_{k}=v$. Pick the smallest such $k$, and
set $f\left(  v\right)  :=i_{k}$.

\begin{vershort}
Thus, we have defined a positive integer $f\left(  v\right)  \in\mathbb{N}%
_{+}$ for each $v\in V$. In other words, we have defined a map $f:V\rightarrow
\mathbb{N}_{+}$.
\end{vershort}

\begin{verlong}
Thus, we have defined a positive integer $f\left(  v\right)  \in\mathbb{N}%
_{+}$ for each $v\in V$. In other words, we have defined a map $f:V\rightarrow
\mathbb{N}_{+}$. According to its definition, this map can be computed as
follows: If $v\in V$ is any vertex, then%
\begin{equation}
f\left(  v\right)  =i_{k}, \label{pf.lem.ncEqs.sum.c3.pf.fv=}%
\end{equation}
where $k$ is the smallest element of $\left\{  1,2,\ldots,N\right\}  $ such
that $t_{k}=v$.
\end{verlong}

We shall now show that $B\subseteq\operatorname*{Eqs}f$.

\begin{vershort}
Indeed, let $e\in B$ be arbitrary. Then, $e\in B\subseteq\dbinom{V}{2}$, so
that $e=\left\{  x,y\right\}  $ for two distinct vertices $x,y\in V$. Consider
these $x,y$. Thus, $\left(  x,e,y\right)  $ is a walk from $x$ to $y$ in the
graph $\left(  V,B\right)  $. Therefore, $x$ is connected to $y$ in this
graph. In other words, $x\sim_{\left(  V,B\right)  }y$.

The definition of $f$ shows that $f\left(  x\right)  =i_{a}$, where $a$ is the
smallest element of $\left\{  1,2,\ldots,N\right\}  $ such that $t_{a}=x$.
Similarly, $f\left(  y\right)  =i_{b}$, where $b$ is the smallest element of
$\left\{  1,2,\ldots,N\right\}  $ such that $t_{b}=y$. Consider these $a$ and
$b$.

However, we have $x\sim_{\left(  V,B\right)  }y$. In other words, $t_{a}%
\sim_{\left(  V,B\right)  }t_{b}$ (since $t_{a}=x$ and $t_{b}=y$). Hence, from
(\ref{pf.lem.ncEqs.sum.short.c3.pf.1}), we obtain $i_{a}=i_{b}$. In other
words, $f\left(  x\right)  =f\left(  y\right)  $ (since $f\left(  x\right)
=i_{a}$ and $f\left(  y\right)  =i_{b}$). In other words, $\left\{
x,y\right\}  \in\operatorname*{Eqs}f$ (by the definition of
$\operatorname*{Eqs}f$). Hence, $e=\left\{  x,y\right\}  \in
\operatorname*{Eqs}f$. Now, forget that we fixed $e$. We thus have shown that
$e\in\operatorname*{Eqs}f$ for each $e\in B$. In other words, $B\subseteq
\operatorname*{Eqs}f$.
\end{vershort}

\begin{verlong}
Indeed, let $\sim$ denote the equivalence relation $\sim_{\left(  V,B\right)
}$. We shall first show that%
\begin{equation}
f\left(  x\right)  =f\left(  y\right)  \text{ for any }x\in V\text{ and }y\in
V\text{ satisfying }x\sim y. \label{pf.lem.ncEqs.sum.c3.pf.3}%
\end{equation}

[\textit{Proof of (\ref{pf.lem.ncEqs.sum.c3.pf.3}):} Let $x\in V$ and $y\in V$
be such that $x\sim y$. We must show that $f\left(  x\right)  =f\left(
y\right)  $.

We know that the list $\left(  t_{1},t_{2},\ldots,t_{N}\right)  $ contains
each element of $V$ at least once. In particular, this list contains $x$ at
least once (since $x$ is an element of $V$). In other words, there exists an
$a\in\left\{  1,2,\ldots,N\right\}  $ such that $t_{a}=x$. Pick the smallest
such $a$. Thus, $a$ is the smallest element of $\left\{  1,2,\ldots,N\right\}
$ such that $t_{a}=x$. Hence, $f\left(  x\right)  =i_{a}$ (by
(\ref{pf.lem.ncEqs.sum.c3.pf.fv=}), applied to $v=x$ and $k=a$).

We know that the list $\left(  t_{1},t_{2},\ldots,t_{N}\right)  $ contains
each element of $V$ at least once. In particular, this list contains $y$ at
least once (since $y$ is an element of $V$). In other words, there exists a
$b\in\left\{  1,2,\ldots,N\right\}  $ such that $t_{b}=y$. Pick the smallest
such $b$. Thus, $b$ is the smallest element of $\left\{  1,2,\ldots,N\right\}
$ such that $t_{b}=y$. Hence, $f\left(  y\right)  =i_{b}$ (by
(\ref{pf.lem.ncEqs.sum.c3.pf.fv=}), applied to $v=y$ and $k=b$).

We have $x\sim y$. In other words, $x\sim_{\left(  V,B\right)  }y$ (since
$\sim$ is the relation $\sim_{\left(  V,B\right)  }$). In other words,
$t_{a}\sim_{\left(  V,B\right)  }t_{b}$ (since $t_{a}=x$ and $t_{b}=y$).
However, from (\ref{pf.lem.ncEqs.sum.equivalence}), we know that the
statements \textquotedblleft$a\sim_{\mathbf{P}\left(  V,B,\mathbf{t}\right)
}b$\textquotedblright\ and \textquotedblleft$t_{a}\sim_{\left(  V,B\right)
}t_{b}$\textquotedblright\ are equivalent. Hence, we have $a\sim
_{\mathbf{P}\left(  V,B,\mathbf{t}\right)  }b$ (since we have $t_{a}%
\sim_{\left(  V,B\right)  }t_{b}$). Thus, from (\ref{pf.lem.ncEqs.sum.c3.pf.1}%
), we conclude that $i_{a}=i_{b}$. In view of $f\left(  x\right)  =i_{a}$ and
$f\left(  y\right)  =i_{b}$, we can rewrite this as $f\left(  x\right)
=f\left(  y\right)  $. Thus, (\ref{pf.lem.ncEqs.sum.c3.pf.3}) is proved.]

Now, let $Y=\mathbb{N}_{+}$. Thus, $f:V\rightarrow Y$ is a map (since
$f:V\rightarrow\mathbb{N}_{+}$ is a map). A set $Y_{\sim}^{V}$ is defined
(according to Definition \ref{def.relquot.maps} \textbf{(b)}). Its definition
yields that
\[
Y_{\sim}^{V}=\left\{  g\in Y^{V}\ \mid\ g\left(  x\right)  =g\left(  y\right)
\text{ for any }x\in V\text{ and }y\in V\text{ satisfying }x\sim y\right\}  .
\]

However, $f$ is a $g\in Y^{V}$ satisfying
\[
g\left(  x\right)  =g\left(  y\right)  \text{ for any }x\in V\text{ and }y\in
V\text{ satisfying }x\sim y
\]
(since (\ref{pf.lem.ncEqs.sum.c3.pf.3}) shows that $f\left(  x\right)
=f\left(  y\right)  $ for any $x\in V$ and $y\in V$ satisfying $x\sim y$). In
other words,%
\begin{align*}
f  &  \in\left\{  g\in Y^{V}\ \mid\ g\left(  x\right)  =g\left(  y\right)
\text{ for any }x\in V\text{ and }y\in V\text{ satisfying }x\sim y\right\} \\
&  =Y_{\sim}^{V}%
\end{align*}
(since $Y_{\sim}^{V}=\left\{  g\in Y^{V}\ \mid\ g\left(  x\right)  =g\left(
y\right)  \text{ for any }x\in V\text{ and }y\in V\text{ satisfying }x\sim
y\right\}  $).

However, Lemma \ref{lem.Eqs.sum-aux} yields that we have the following logical
equivalence of statements:%
\[
\left(  B\subseteq\operatorname*{Eqs}f\right)  \ \Longleftrightarrow\ \left(
f\in Y_{\sim}^{V}\right)  .
\]
Hence, we have $B\subseteq\operatorname*{Eqs}f$ (since we have $f\in Y_{\sim
}^{V}$).
\end{verlong}

\begin{vershort}
Thus, we know that $f$ is a map $V\rightarrow\mathbb{N}_{+}$ and satisfies
$B\subseteq\operatorname*{Eqs}f$. In other words, $f\in\mathcal{F}$ (by the
definition of $\mathcal{F}$).
\end{vershort}

\begin{verlong}
Thus, we know that $f$ is a map $V\rightarrow\mathbb{N}_{+}$ and satisfies
$B\subseteq\operatorname*{Eqs}f$. In other words, $f$ is a map $g:V\rightarrow
\mathbb{N}_{+}$ satisfying $B\subseteq\operatorname*{Eqs}g$. In other words,
\[
f\in\left\{  g:V\rightarrow\mathbb{N}_{+}\text{ is a map }\mid\text{
}B\subseteq\operatorname*{Eqs}g\right\}  =\mathcal{F}%
\]
(by the definition of $\mathcal{F}$).
\end{verlong}

We shall now show that $\Psi\left(  f\right)  =\mathbf{i}$.

Indeed, the definition of $\Psi$ yields $\Psi\left(  f\right)  =\left(
f\left(  t_{1}\right)  ,f\left(  t_{2}\right)  ,\ldots,f\left(  t_{N}\right)
\right)  $.

Now, let $j\in\left\{  1,2,\ldots,N\right\}  $. We shall show that $f\left(
t_{j}\right)  =i_{j}$.

\begin{vershort}
Indeed, the definition of $f$ shows that $f\left(  t_{j}\right)  =i_{k}$,
where $k$ is the smallest element of $\left\{  1,2,\ldots,N\right\}  $ such
that $t_{k}=t_{j}$. Consider this $k$. From $t_{k}=t_{j}$, we obtain
$t_{k}\sim_{\left(  V,B\right)  }t_{j}$ (since the relation $\sim_{\left(
V,B\right)  }$ is an equivalence relation). Hence,
(\ref{pf.lem.ncEqs.sum.short.c3.pf.1}) (applied to $a=k$ and $b=j$) yields
$i_{k}=i_{j}$. Thus, $f\left(  t_{j}\right)  =i_{k}=i_{j}$.
\end{vershort}

\begin{verlong}
Indeed, there exists a $k\in\left\{  1,2,\ldots,N\right\}  $ such that
$t_{k}=t_{j}$ (for example, $k=j$ qualifies). Pick the smallest such $k$.
Thus, $k$ is the smallest element of $\left\{  1,2,\ldots,N\right\}  $ such
that $t_{k}=t_{j}$. Hence, (\ref{pf.lem.ncEqs.sum.c3.pf.fv=}) (applied to
$v=t_{j}$) yields $f\left(  t_{j}\right)  =i_{k}$. However, the relation
$\sim_{\left(  V,B\right)  }$ is an equivalence relation, and thus is
reflexive. Hence, $t_{k}\sim_{\left(  V,B\right)  }t_{k}$. In other words,
$t_{k}\sim_{\left(  V,B\right)  }t_{j}$ (since $t_{k}=t_{j}$). However,
(\ref{pf.lem.ncEqs.sum.equivalence}) (applied to $a=k$ and $b=j$) shows that
we have the equivalence
\[
\left(  k\sim_{\mathbf{P}\left(  V,B,\mathbf{t}\right)  }j\right)
\ \Longleftrightarrow\ \left(  t_{k}\sim_{\left(  V,B\right)  }t_{j}\right)
.
\]
Thus, we have $k\sim_{\mathbf{P}\left(  V,B,\mathbf{t}\right)  }j$ (since we
have $t_{k}\sim_{\left(  V,B\right)  }t_{j}$). Hence,
(\ref{pf.lem.ncEqs.sum.c3.pf.1}) (applied to $a=k$ and $b=j$) yields
$i_{k}=i_{j}$. Hence, $f\left(  t_{j}\right)  =i_{k}=i_{j}$.
\end{verlong}

Forget that we fixed $j$. We thus have shown that $f\left(  t_{j}\right)
=i_{j}$ for each $j\in\left\{  1,2,\ldots,N\right\}  $. In other words,%
\[
\left(  f\left(  t_{1}\right)  ,f\left(  t_{2}\right)  ,\ldots,f\left(
t_{N}\right)  \right)  =\left(  i_{1},i_{2},\ldots,i_{N}\right)  .
\]
In view of $\Psi\left(  f\right)  =\left(  f\left(  t_{1}\right)  ,f\left(
t_{2}\right)  ,\ldots,f\left(  t_{N}\right)  \right)  $ and $\mathbf{i}%
=\left(  i_{1},i_{2},\ldots,i_{N}\right)  $, we can rewrite this as
$\Psi\left(  f\right)  =\mathbf{i}$. Hence,%
\[
\mathbf{i}=\Psi\left(  \underbrace{f}_{\in\mathcal{F}}\right)  \in\Psi\left(
\mathcal{F}\right)  .
\]

Forget that we fixed $\mathbf{i}$. We thus have shown that $\mathbf{i}\in
\Psi\left(  \mathcal{F}\right)  $ for each $\mathbf{i}\in\mathcal{I}$. In
other words, $\mathcal{I}\subseteq\Psi\left(  \mathcal{F}\right)  $. In other
words, the map $\Psi$ is surjective. This proves Claim 3.]

We now know that the map $\Psi$ is injective (by Claim 2) and surjective (by
Claim 3). In other words, this map $\Psi$ is bijective, i.e., is a bijection.

\begin{vershort}
In other words, the map%
\begin{align*}
\mathcal{F}  &  \rightarrow\mathcal{I},\\
f  &  \mapsto\left(  f\left(  t_{1}\right)  ,f\left(  t_{2}\right)
,\ldots,f\left(  t_{N}\right)  \right)
\end{align*}
is a bijection (since the map is $\Psi$). Now,
(\ref{pf.lem.ncEqs.sum.short.P=2}) becomes%
\begin{align*}
P_{\mathbf{P}\left(  V,B,\mathbf{t}\right)  }  &  =\underbrace{\sum
_{\substack{\left(  i_{1},i_{2},\ldots,i_{N}\right)  \in\left(  \mathbb{N}%
_{+}\right)  ^{N};\\i_{a}=i_{b}\text{ whenever }t_{a}\sim_{\left(  V,B\right)
}t_{b}}}}_{\substack{=\sum_{\left(  i_{1},i_{2},\ldots,i_{N}\right)
\in\mathcal{I}}\\\text{(by the definition of }\mathcal{I}\text{)}}}X_{i_{1}%
}X_{i_{2}}\cdots X_{i_{N}}\\
&  =\sum_{\left(  i_{1},i_{2},\ldots,i_{N}\right)  \in\mathcal{I}}X_{i_{1}%
}X_{i_{2}}\cdots X_{i_{N}}\\
&  =\underbrace{\sum_{f\in\mathcal{F}}}_{\substack{=\sum
_{\substack{f:V\rightarrow\mathbb{N}_{+};\\B\subseteq\operatorname*{Eqs}%
f}}\\\text{(by the definition of }\mathcal{F}\text{)}}}X_{f\left(
t_{1}\right)  }X_{f\left(  t_{2}\right)  }\cdots X_{f\left(  t_{N}\right)  }\\
&  \ \ \ \ \ \ \ \ \ \ \ \ \ \ \ \ \ \ \ \ \left(
\begin{array}
[c]{c}%
\text{here, we have substituted }\left(  f\left(  t_{1}\right)  ,f\left(
t_{2}\right)  ,\ldots,f\left(  t_{N}\right)  \right) \\
\text{for }\left(  i_{1},i_{2},\ldots,i_{N}\right)  \text{ in the sum, since
the}\\
\text{map }\mathcal{F}\rightarrow\mathcal{I},\ f\mapsto\left(  f\left(
t_{1}\right)  ,f\left(  t_{2}\right)  ,\ldots,f\left(  t_{N}\right)  \right)
\\
\text{is a bijection}%
\end{array}
\right) \\
&  =\sum_{\substack{f:V\rightarrow\mathbb{N}_{+};\\B\subseteq
\operatorname*{Eqs}f}}\underbrace{X_{f\left(  t_{1}\right)  }X_{f\left(
t_{2}\right)  }\cdots X_{f\left(  t_{N}\right)  }}_{\substack{=\mathbf{X}%
_{f,\mathbf{t}}\\\text{(since }\mathbf{X}_{f,\mathbf{t}}\text{ was defined to
be }X_{f\left(  t_{1}\right)  }X_{f\left(  t_{2}\right)  }\cdots X_{f\left(
t_{N}\right)  }\\\text{in Definition \ref{def.ncambichromsym} \textbf{(a)})}%
}}\\
&  =\sum_{\substack{f:V\rightarrow\mathbb{N}_{+};\\B\subseteq
\operatorname*{Eqs}f}}\mathbf{X}_{f,\mathbf{t}}.
\end{align*}

\end{vershort}

\begin{verlong}
However, the map $\Psi$ is the map%
\begin{align*}
\mathcal{F}  &  \rightarrow\mathcal{I},\\
f  &  \mapsto\left(  f\left(  t_{1}\right)  ,f\left(  t_{2}\right)
,\ldots,f\left(  t_{N}\right)  \right)
\end{align*}
(by its definition). Hence, the map%
\begin{align*}
\mathcal{F}  &  \rightarrow\mathcal{I},\\
f  &  \mapsto\left(  f\left(  t_{1}\right)  ,f\left(  t_{2}\right)
,\ldots,f\left(  t_{N}\right)  \right)
\end{align*}
is a bijection (since $\Psi$ is a bijection).

Now, recall that%
\[
\mathcal{F}=\left\{  g:V\rightarrow\mathbb{N}_{+}\text{ is a map }\mid\text{
}B\subseteq\operatorname*{Eqs}g\right\}  .
\]
Hence,%
\begin{equation}
\sum_{f\in\mathcal{F}}=\sum_{f\in\left\{  g:V\rightarrow\mathbb{N}_{+}\text{
is a map }\mid\text{ }B\subseteq\operatorname*{Eqs}g\right\}  }=\sum
_{\substack{f:V\rightarrow\mathbb{N}_{+};\\B\subseteq\operatorname*{Eqs}f}}
\label{pf.lem.ncEqs.sum.sumF=}%
\end{equation}
(an equality between summation signs).

Furthermore, recall that%
\[
\mathcal{I}=\left\{  \left(  i_{1},i_{2},\ldots,i_{N}\right)  \in\left(
\mathbb{N}_{+}\right)  ^{N}\ \mid\ i_{a}=i_{b}\text{ whenever }a\sim
_{\mathbf{P}\left(  V,B,\mathbf{t}\right)  }b\right\}  .
\]
Hence,
\[
\sum_{\left(  i_{1},i_{2},\ldots,i_{N}\right)  \in\mathcal{I}}=\sum
_{\substack{\left(  i_{1},i_{2},\ldots,i_{N}\right)  \in\left(  \mathbb{N}%
_{+}\right)  ^{N};\\i_{a}=i_{b}\text{ whenever }a\sim_{\mathbf{P}\left(
V,B,\mathbf{t}\right)  }b}}
\]
(an equality between summation signs). In light of this, we can rewrite
(\ref{pf.lem.ncEqs.sum.P=}) as follows:%
\begin{align*}
P_{\mathbf{P}\left(  V,B,\mathbf{t}\right)  }  &  =\sum_{\left(  i_{1}%
,i_{2},\ldots,i_{N}\right)  \in\mathcal{I}}X_{i_{1}}X_{i_{2}}\cdots X_{i_{N}%
}\\
&  =\underbrace{\sum_{f\in\mathcal{F}}}_{\substack{=\sum
_{\substack{f:V\rightarrow\mathbb{N}_{+};\\B\subseteq\operatorname*{Eqs}%
f}}\\\text{(by (\ref{pf.lem.ncEqs.sum.sumF=}))}}}X_{f\left(  t_{1}\right)
}X_{f\left(  t_{2}\right)  }\cdots X_{f\left(  t_{N}\right)  }\\
&  \ \ \ \ \ \ \ \ \ \ \ \ \ \ \ \ \ \ \ \ \left(
\begin{array}
[c]{c}%
\text{here, we have substituted }\left(  f\left(  t_{1}\right)  ,f\left(
t_{2}\right)  ,\ldots,f\left(  t_{N}\right)  \right) \\
\text{for }\left(  i_{1},i_{2},\ldots,i_{N}\right)  \text{ in the sum, since
the}\\
\text{map }\mathcal{F}\rightarrow\mathcal{I},\ f\mapsto\left(  f\left(
t_{1}\right)  ,f\left(  t_{2}\right)  ,\ldots,f\left(  t_{N}\right)  \right)
\\
\text{is a bijection}%
\end{array}
\right) \\
&  =\sum_{\substack{f:V\rightarrow\mathbb{N}_{+};\\B\subseteq
\operatorname*{Eqs}f}}\underbrace{X_{f\left(  t_{1}\right)  }X_{f\left(
t_{2}\right)  }\cdots X_{f\left(  t_{N}\right)  }}_{\substack{=\mathbf{X}%
_{f,\mathbf{t}}\\\text{(since }\mathbf{X}_{f,\mathbf{t}}\text{ was defined to
be }X_{f\left(  t_{1}\right)  }X_{f\left(  t_{2}\right)  }\cdots X_{f\left(
t_{N}\right)  }\\\text{in Definition \ref{def.ncambichromsym} \textbf{(a)})}%
}}\\
&  =\sum_{\substack{f:V\rightarrow\mathbb{N}_{+};\\B\subseteq
\operatorname*{Eqs}f}}\mathbf{X}_{f,\mathbf{t}}.
\end{align*}

\end{verlong}

This proves Lemma \ref{lem.ncEqs.sum}.
\end{proof}

\begin{vershort}
We can now prove Theorems \ref{thm.ncambichromsym.varis} and
\ref{thm.ncambichromsym.empty} and Corollaries \ref{cor.ncambichromsym.K-free}
and \ref{cor.ncambichromsym.NBC} by making straightforward changes to the
above proofs of Theorems \ref{thm.wambichromsym.varis} and
\ref{thm.wambichromsym.empty} and Corollaries \ref{cor.wambichromsym.K-free}
and \ref{cor.wambichromsym.NBC} (replacing, in particular, the use of Lemma
\ref{lem.wEqs.sum} by a use of Lemma \ref{lem.ncEqs.sum}). We leave the
details to the reader.
\end{vershort}

\begin{verlong}
We can now prove Theorems \ref{thm.ncambichromsym.varis} and
\ref{thm.ncambichromsym.empty} and Corollaries \ref{cor.ncambichromsym.K-free}
and \ref{cor.ncambichromsym.NBC} by making straightforward changes to the
above proofs of Theorems \ref{thm.wambichromsym.varis} and
\ref{thm.wambichromsym.empty} and Corollaries \ref{cor.wambichromsym.K-free}
and \ref{cor.wambichromsym.NBC} (replacing, in particular, the use of Lemma
\ref{lem.wEqs.sum} by a use of Lemma \ref{lem.ncEqs.sum}). Here are the details:

\begin{proof}
[Proof of Theorem \ref{thm.ncambichromsym.varis}.]We have%
\begin{equation}
Y_{G,\mathbf{t}}=\sum_{\substack{f:V\rightarrow\mathbb{N}_{+}\text{ is
a}\\\text{proper }\mathbb{N}_{+}\text{-coloring of }G}}\mathbf{X}%
_{f,\mathbf{t}} \label{pf.thm.ncambichromsym.varis.XG-def}%
\end{equation}
(by the definition of $Y_{G,\mathbf{t}}$). Now, if $f:V\rightarrow
\mathbb{N}_{+}$ is a map, then we have the following logical equivalence:%
\begin{equation}
\left(  \text{the }\mathbb{N}_{+}\text{-coloring }f\text{ of }G\text{ is
proper}\right)  \ \Longleftrightarrow\ \left(  \operatorname*{EQS}\left(
G,f\right)  =\varnothing\right)  \label{pf.thm.ncambichromsym.varis.equiv}%
\end{equation}
(because the $\mathbb{N}_{+}$-coloring $f$ of $G$ is proper if and only if
$\operatorname*{EQS}\left(  G,f\right)  =\varnothing$\ \ \ \ \footnote{by
Lemma \ref{lem.ambiEqs.proper} (applied to $\mathbb{N}_{+}$ instead of $X$)}).
Now,%
\begin{align}
&  \sum_{f:V\rightarrow\mathbb{N}_{+}}\left[  \underbrace{\operatorname*{EQS}%
\left(  G,f\right)  =\varnothing}_{\substack{\Longleftrightarrow\ \left(
\text{the }\mathbb{N}_{+}\text{-coloring }f\text{ of }G\text{ is
proper}\right)  \\\text{(by (\ref{pf.thm.ncambichromsym.varis.equiv}))}%
}}\right]  \mathbf{X}_{f,\mathbf{t}}\nonumber\\
&  =\sum_{f:V\rightarrow\mathbb{N}_{+}}\left[  \underbrace{\text{the
}\mathbb{N}_{+}\text{-coloring }f\text{ of }G\text{ is proper}}%
_{\Longleftrightarrow\ \left(  f\text{ is a proper }\mathbb{N}_{+}%
\text{-coloring of }G\right)  }\right]  \mathbf{X}_{f,\mathbf{t}}\nonumber\\
&  =\sum_{f:V\rightarrow\mathbb{N}_{+}}\left[  f\text{ is a proper }%
\mathbb{N}_{+}\text{-coloring of }G\right]  \mathbf{X}_{f,\mathbf{t}%
}\nonumber\\
&  =\sum_{\substack{f:V\rightarrow\mathbb{N}_{+}\text{ is a}\\\text{proper
}\mathbb{N}_{+}\text{-coloring of }G}}\underbrace{\left[  f\text{ is a proper
}\mathbb{N}_{+}\text{-coloring of }G\right]  }_{\substack{=1\\\text{(since
}f\text{ is a proper }\mathbb{N}_{+}\text{-coloring of }G\text{)}}%
}\mathbf{X}_{f,\mathbf{t}}\nonumber\\
&  \ \ \ \ \ \ \ \ \ \ +\sum_{\substack{f:V\rightarrow\mathbb{N}_{+}\text{ is
not a}\\\text{proper }\mathbb{N}_{+}\text{-coloring of }G}}\underbrace{\left[
f\text{ is a proper }\mathbb{N}_{+}\text{-coloring of }G\right]
}_{\substack{=0\\\text{(since }f\text{ is not a proper }\mathbb{N}%
_{+}\text{-coloring of }G\text{)}}}\mathbf{X}_{f,\mathbf{t}}\nonumber\\
&  \ \ \ \ \ \ \ \ \ \ \ \ \ \ \ \ \ \ \ \ \left(  \text{since each
}f:V\rightarrow\mathbb{N}_{+}\text{ either is a proper }\mathbb{N}%
_{+}\text{-coloring of }G\text{ or not}\right) \nonumber\\
&  =\sum_{\substack{f:V\rightarrow\mathbb{N}_{+}\text{ is a}\\\text{proper
}\mathbb{N}_{+}\text{-coloring of }G}}\mathbf{X}_{f,\mathbf{t}}%
+\underbrace{\sum_{\substack{f:V\rightarrow\mathbb{N}_{+}\text{ is not
a}\\\text{proper }\mathbb{N}_{+}\text{-coloring of }G}}0\mathbf{X}%
_{f,\mathbf{t}}}_{=0}=\sum_{\substack{f:V\rightarrow\mathbb{N}_{+}\text{ is
a}\\\text{proper }\mathbb{N}_{+}\text{-coloring of }G}}\mathbf{X}%
_{f,\mathbf{t}}\nonumber\\
&  =Y_{G,\mathbf{t}} \label{pf.thm.ncambichromsym.varis.step1}%
\end{align}
(by (\ref{pf.thm.ncambichromsym.varis.XG-def})).

However, for every $f:V\rightarrow\mathbb{N}_{+}$, we have%
\begin{equation}
\sum_{\substack{B\subseteq E;\\\operatorname*{union}B\subseteq
\operatorname*{Eqs}f}}\left(  -1\right)  ^{\left\vert B\right\vert }%
\prod_{\substack{K\in\mathfrak{K};\\K\subseteq B}}a_{K}=\left[
\operatorname*{EQS}\left(  G,f\right)  =\varnothing\right]
\label{pf.thm.ncambichromsym.varis.moeb2}%
\end{equation}
(indeed, we have already shown this in the above proof of Theorem
\ref{thm.ambichromsym.varis}).

Now, (\ref{pf.thm.ncambichromsym.varis.step1}) yields%
\begin{align}
Y_{G,\mathbf{t}}  &  =\sum_{f:V\rightarrow\mathbb{N}_{+}}\underbrace{\left[
\operatorname*{EQS}\left(  G,f\right)  =\varnothing\right]  }_{\substack{=\sum
_{\substack{B\subseteq E;\\\operatorname*{union}B\subseteq\operatorname*{Eqs}%
f}}\left(  -1\right)  ^{\left\vert B\right\vert }\prod_{\substack{K\in
\mathfrak{K};\\K\subseteq B}}a_{K}\\\text{(by
(\ref{pf.thm.ncambichromsym.varis.moeb2}))}}}\mathbf{X}_{f,\mathbf{t}%
}\nonumber\\
&  =\sum_{f:V\rightarrow\mathbb{N}_{+}}\left(  \sum_{\substack{B\subseteq
E;\\\operatorname*{union}B\subseteq\operatorname*{Eqs}f}}\left(  -1\right)
^{\left\vert B\right\vert }\prod_{\substack{K\in\mathfrak{K};\\K\subseteq
B}}a_{K}\right)  \mathbf{X}_{f,\mathbf{t}}\nonumber\\
&  =\underbrace{\sum_{f:V\rightarrow\mathbb{N}_{+}}\ \ \sum
_{\substack{B\subseteq E;\\\operatorname*{union}B\subseteq\operatorname*{Eqs}%
f}}}_{=\sum_{B\subseteq E}\ \ \sum_{\substack{f:V\rightarrow\mathbb{N}%
_{+};\\\operatorname*{union}B\subseteq\operatorname*{Eqs}f}}}\left(
-1\right)  ^{\left\vert B\right\vert }\left(  \prod_{\substack{K\in
\mathfrak{K};\\K\subseteq B}}a_{K}\right)  \mathbf{X}_{f,\mathbf{t}%
}\nonumber\\
&  =\sum_{B\subseteq E}\ \ \sum_{\substack{f:V\rightarrow\mathbb{N}%
_{+};\\\operatorname*{union}B\subseteq\operatorname*{Eqs}f}}\left(  -1\right)
^{\left\vert B\right\vert }\left(  \prod_{\substack{K\in\mathfrak{K}%
;\\K\subseteq B}}a_{K}\right)  \mathbf{X}_{f,\mathbf{t}}\nonumber\\
&  =\sum_{B\subseteq E}\left(  -1\right)  ^{\left\vert B\right\vert }\left(
\prod_{\substack{K\in\mathfrak{K};\\K\subseteq B}}a_{K}\right)  \sum
_{\substack{f:V\rightarrow\mathbb{N}_{+};\\\operatorname*{union}%
B\subseteq\operatorname*{Eqs}f}}\mathbf{X}_{f,\mathbf{t}}.
\label{pf.thm.ncambichromsym.varis.step4}%
\end{align}

However, if $B$ is a subset of $E$, then the pair $\left(
V,\operatorname*{union}B\right)  $ is a finite graph (since $V$ is a finite
set and since $\operatorname*{union}B\subseteq\dbinom{V}{2}$), and thus we
have%
\begin{equation}
\sum_{\substack{f:V\rightarrow\mathbb{N}_{+};\\\operatorname*{union}%
B\subseteq\operatorname*{Eqs}f}}\mathbf{X}_{f,\mathbf{t}}=P_{\mathbf{P}\left(
V,\operatorname*{union}B,\mathbf{t}\right)  }
\label{pf.thm.ncambichromsym.varis.p}%
\end{equation}
(by Lemma \ref{lem.ncEqs.sum}, applied to $\operatorname*{union}B$ instead of
$B$).

Hence, (\ref{pf.thm.ncambichromsym.varis.step4}) becomes%
\begin{align*}
Y_{G,\mathbf{t}}  &  =\sum_{B\subseteq E}\left(  -1\right)  ^{\left\vert
B\right\vert }\left(  \prod_{\substack{K\in\mathfrak{K};\\K\subseteq B}%
}a_{K}\right)  \underbrace{\sum_{\substack{f:V\rightarrow\mathbb{N}%
_{+};\\\operatorname*{union}B\subseteq\operatorname*{Eqs}f}}\mathbf{X}%
_{f,\mathbf{t}}}_{\substack{=P_{\mathbf{P}\left(  V,\operatorname*{union}%
B,\mathbf{t}\right)  }\\\text{(by (\ref{pf.thm.ncambichromsym.varis.p}))}}}\\
&  =\sum_{B\subseteq E}\left(  -1\right)  ^{\left\vert B\right\vert }\left(
\prod_{\substack{K\in\mathfrak{K};\\K\subseteq B}}a_{K}\right)  P_{\mathbf{P}%
\left(  V,\operatorname*{union}B,\mathbf{t}\right)  }\\
&  =\sum_{F\subseteq E}\left(  -1\right)  ^{\left\vert F\right\vert }\left(
\prod_{\substack{K\in\mathfrak{K};\\K\subseteq F}}a_{K}\right)  P_{\mathbf{P}%
\left(  V,\operatorname*{union}F,\mathbf{t}\right)  }%
\end{align*}
(here, we have renamed the summation index $B$ as $F$). This proves Theorem
\ref{thm.ncambichromsym.varis}.
\end{proof}

\begin{proof}
[Proof of Corollary \ref{cor.ncambichromsym.K-free}.]We can apply Theorem
\ref{thm.ncambichromsym.varis} to $0$ instead of $a_{K}$. As a result, we
obtain%
\begin{equation}
Y_{G,\mathbf{t}}=\sum_{F\subseteq E}\left(  -1\right)  ^{\left\vert
F\right\vert }\left(  \prod_{\substack{K\in\mathfrak{K};\\K\subseteq
F}}0\right)  P_{\mathbf{P}\left(  V,\operatorname*{union}F,\mathbf{t}\right)
}. \label{pf.cor.ncambichromsym.K-free.0}%
\end{equation}
Now, if $F$ is any subset of $E$, then%
\begin{equation}
\prod_{\substack{K\in\mathfrak{K};\\K\subseteq F}}0=%
\begin{cases}
1, & \text{if }F\text{ is }\mathfrak{K}\text{-free;}\\
0, & \text{if }F\text{ is not }\mathfrak{K}\text{-free}%
\end{cases}
\label{pf.cor.ncambichromsym.K-free.1}%
\end{equation}
(indeed, this was already shown in our above proof of Corollary
\ref{cor.ambichromsym.K-free}).

Thus, (\ref{pf.cor.ncambichromsym.K-free.0}) becomes%
\begin{align*}
Y_{G,\mathbf{t}}  &  =\sum_{F\subseteq E}\left(  -1\right)  ^{\left\vert
F\right\vert }\underbrace{\left(  \prod_{\substack{K\in\mathfrak{K}%
;\\K\subseteq F}}0\right)  }_{\substack{=%
\begin{cases}
1, & \text{if }F\text{ is }\mathfrak{K}\text{-free;}\\
0, & \text{if }F\text{ is not }\mathfrak{K}\text{-free}%
\end{cases}
\\\text{(by (\ref{pf.cor.ncambichromsym.K-free.1}))}}}P_{\mathbf{P}\left(
V,\operatorname*{union}F,\mathbf{t}\right)  }\\
&  =\sum_{F\subseteq E}\left(  -1\right)  ^{\left\vert F\right\vert }%
\begin{cases}
1, & \text{if }F\text{ is }\mathfrak{K}\text{-free;}\\
0, & \text{if }F\text{ is not }\mathfrak{K}\text{-free}%
\end{cases}
\ \ P_{\mathbf{P}\left(  V,\operatorname*{union}F,\mathbf{t}\right)  }\\
&  =\sum_{\substack{F\subseteq E;\\F\text{ is }\mathfrak{K}\text{-free}%
}}\left(  -1\right)  ^{\left\vert F\right\vert }\underbrace{%
\begin{cases}
1, & \text{if }F\text{ is }\mathfrak{K}\text{-free;}\\
0, & \text{if }F\text{ is not }\mathfrak{K}\text{-free}%
\end{cases}
}_{\substack{=1\\\text{(since }F\text{ is }\mathfrak{K}\text{-free)}%
}}\ \ P_{\mathbf{P}\left(  V,\operatorname*{union}F,\mathbf{t}\right)  }\\
&  \ \ \ \ \ \ \ \ \ \ +\sum_{\substack{F\subseteq E;\\F\text{ is not
}\mathfrak{K}\text{-free}}}\left(  -1\right)  ^{\left\vert F\right\vert
}\underbrace{%
\begin{cases}
1, & \text{if }F\text{ is }\mathfrak{K}\text{-free;}\\
0, & \text{if }F\text{ is not }\mathfrak{K}\text{-free}%
\end{cases}
}_{\substack{=0\\\text{(since }F\text{ is not }\mathfrak{K}\text{-free)}%
}}\ \ P_{\mathbf{P}\left(  V,\operatorname*{union}F,\mathbf{t}\right)  }\\
&  \ \ \ \ \ \ \ \ \ \ \ \ \ \ \ \ \ \ \ \ \left(  \text{since each subset
}F\text{ of }E\text{ either is }\mathfrak{K}\text{-free or is not}\right) \\
&  =\sum_{\substack{F\subseteq E;\\F\text{ is }\mathfrak{K}\text{-free}%
}}\left(  -1\right)  ^{\left\vert F\right\vert }P_{\mathbf{P}\left(
V,\operatorname*{union}F,\mathbf{t}\right)  }+\underbrace{\sum
_{\substack{F\subseteq E;\\F\text{ is not }\mathfrak{K}\text{-free}}}\left(
-1\right)  ^{\left\vert F\right\vert }0P_{\mathbf{P}\left(
V,\operatorname*{union}F,\mathbf{t}\right)  }}_{=0}\\
&  =\sum_{\substack{F\subseteq E;\\F\text{ is }\mathfrak{K}\text{-free}%
}}\left(  -1\right)  ^{\left\vert F\right\vert }P_{\mathbf{P}\left(
V,\operatorname*{union}F,\mathbf{t}\right)  }.
\end{align*}
This proves Corollary \ref{cor.ncambichromsym.K-free}.
\end{proof}

\begin{proof}
[Proof of Corollary \ref{cor.ncambichromsym.NBC}.]Let $\mathfrak{K}$ be the
set of all broken circuits of $G$.

Now, just as in the proof of Corollary \ref{cor.chromsym.NBC}, we can prove
the following equality:
\[
\sum_{\substack{F\subseteq E;\\F\text{ is }\mathfrak{K}\text{-free}}%
}=\sum_{\substack{F\subseteq E;\\F\text{ contains no broken}\\\text{circuit of
}G\text{ as a subset}}}
\]
(an equality between summation signs). Now, Corollary
\ref{cor.ncambichromsym.K-free} yields%
\begin{align*}
Y_{G,\mathbf{t}}  &  =\underbrace{\sum_{\substack{F\subseteq E;\\F\text{ is
}\mathfrak{K}\text{-free}}}}_{=\sum_{\substack{F\subseteq E;\\F\text{ contains
no broken}\\\text{circuit of }G\text{ as a subset}}}}\left(  -1\right)
^{\left\vert F\right\vert }P_{\mathbf{P}\left(  V,\operatorname*{union}%
F,\mathbf{t}\right)  }\\
&  =\sum_{\substack{F\subseteq E;\\F\text{ contains no broken}\\\text{circuit
of }G\text{ as a subset}}}\left(  -1\right)  ^{\left\vert F\right\vert
}P_{\mathbf{P}\left(  V,\operatorname*{union}F,\mathbf{t}\right)  }.
\end{align*}
This proves Corollary \ref{cor.ncambichromsym.NBC}.
\end{proof}

\begin{proof}
[Proof of Theorem \ref{thm.ncambichromsym.empty}.]Let $X$ be the totally
ordered set $\left\{  1\right\}  $ (equipped with the only possible order on
this set). Let $\ell:E\rightarrow X$ be the function sending each $e\in E$ to
$1\in X$. Let $\mathfrak{K}$ be the empty set. Clearly, $\mathfrak{K}$ is a
set of broken circuits of $G$. Theorem \ref{thm.ncambichromsym.varis} (applied
to $0$ instead of $a_{K}$) yields%
\begin{align*}
Y_{G,\mathbf{t}}  &  =\sum_{F\subseteq E}\left(  -1\right)  ^{\left\vert
F\right\vert }\underbrace{\left(  \prod_{\substack{K\in\mathfrak{K}%
;\\K\subseteq F}}0\right)  }_{\substack{=\left(  \text{empty product}\right)
\\\text{(since }\mathfrak{K}\text{ is the empty set)}}}P_{\mathbf{P}\left(
V,\operatorname*{union}F,\mathbf{t}\right)  }\\
&  =\sum_{F\subseteq E}\left(  -1\right)  ^{\left\vert F\right\vert
}\underbrace{\left(  \text{empty product}\right)  }_{=1}P_{\mathbf{P}\left(
V,\operatorname*{union}F,\mathbf{t}\right)  }=\sum_{F\subseteq E}\left(
-1\right)  ^{\left\vert F\right\vert }P_{\mathbf{P}\left(
V,\operatorname*{union}F,\mathbf{t}\right)  }.
\end{align*}
This proves Theorem \ref{thm.ncambichromsym.empty}.
\end{proof}
\end{verlong}

\subsection{An abstract setting}

The reader will by now have realized that we have been making the same
arguments in a series of slightly different settings. In particular, the
chromatic symmetric function $X_{G}$, its weighted version $X_{G,w}$ and its
noncommutative version $Y_{G,\mathbf{t}}$ are all defined as sums over proper
$\mathbb{N}_{+}$-colorings of $G$; they differ only in the addends being
summed. We can generalize them all by allowing these addends to be arbitrary,
i.e., replacing them by arbitrary elements $\alpha_{f}$ of a $\mathbf{k}%
$-module $M$, provided that the resulting (potentially infinite) sums are
still well-defined. While we are at it, we can also replace $\mathbb{N}_{+}%
$-colorings by $Y$-colorings for an arbitrary set $Y$. Thus, we are led to the
following general setting:

\begin{definition}
\label{def.genambichrom}Let $G=\left(  V,E,\varphi\right)  $ be a finite
ambigraph. Let $Y$ be any set.

Let $M$ be a topological $\mathbf{k}$-module. Let $\alpha_{f}\in M$ be an
element for each $Y$-coloring $f:V\rightarrow Y$. Assume that the family
$\left(  \alpha_{f}\right)  _{f:V\rightarrow Y}$ of these elements is summable
(so that the sum $\sum_{f:V\rightarrow Y}\alpha_{f}$ and any of its subsums is well-defined).

Then:

\textbf{(a)} We define an element%
\[
\Xi_{G}:=\sum_{\substack{f:V\rightarrow Y\text{ is a}\\\text{proper
}Y\text{-coloring of }G}}\alpha_{f}\in M.
\]

\textbf{(b)} Furthermore, if $B$ is a subset of $\dbinom{V}{2}$, then we set%
\[
\pi_{B}:=\sum_{\substack{f:V\rightarrow Y;\\B\subseteq\operatorname*{Eqs}%
f}}\alpha_{f}\in M.
\]
(This does not actually depend on the ambigraph $G$, but only depends on the
set $V$.)
\end{definition}

Through appropriate choices of $\alpha_{f}$, we recover the previously defined
power series $X_{G}$, $X_{G,w}$ and $Y_{G,\mathbf{t}}$:

\begin{itemize}
\item If $Y=\mathbb{N}_{+}$ and $\alpha_{f}=\mathbf{x}_{f}$, then $\Xi
_{G}=X_{G}$ and $\pi_{B}=p_{\lambda\left(  V,B\right)  }$.

\item If $Y=\mathbb{N}_{+}$ and $\alpha_{f}=\mathbf{x}_{f,w}$ (for a given
weight function $w:V\rightarrow\mathbb{N}_{+}$), then $\Xi_{G}=X_{G,w}$ and
$\pi_{B}=p_{\lambda\left(  \left(  V,B\right)  ,w\right)  }$.

\item If $Y=\mathbb{N}_{+}$ and $\alpha_{f}=\mathbf{X}_{f,\mathbf{t}}$ (for a
given list $\mathbf{t}$ of elements of $V$ that contains each element at least
once), then $\Xi_{G}=Y_{G,\mathbf{t}}$ and $\pi_{B}=P_{\mathbf{P}\left(
V,B,\mathbf{t}\right)  }$.
\end{itemize}

We can now state analogues of Theorems \ref{thm.ncambichromsym.empty} and
\ref{thm.ncambichromsym.varis} and Corollaries \ref{cor.ncambichromsym.K-free}
and \ref{cor.ncambichromsym.NBC} in this general context:

\begin{theorem}
\label{thm.genambichromsym.empty}Let $G$, $V$, $E$, $\varphi$, $Y$, $M$ and
$\alpha_{f}$ be as in Definition \ref{def.genambichrom}. Then, using the
notations of Definition \ref{def.genambichrom}, we have%
\[
\Xi_{G}=\sum_{F\subseteq E}\left(  -1\right)  ^{\left\vert F\right\vert }%
\pi_{\operatorname*{union}F}.
\]

\end{theorem}

\begin{theorem}
\label{thm.genambichromsym.varis}Let $G$, $V$, $E$, $\varphi$, $Y$, $M$ and
$\alpha_{f}$ be as in Definition \ref{def.genambichrom}. Let $X$ be a totally
ordered set. Let $\ell:E\rightarrow X$ be a labeling function. Let
$\mathfrak{K}$ be some set of broken circuits of $G$ (not necessarily
containing all of them). Let $a_{K}$ be an element of $\mathbf{k}$ for every
$K\in\mathfrak{K}$. Then, using the notations of Definition
\ref{def.genambichrom}, we have%
\[
\Xi_{G}=\sum_{F\subseteq E}\left(  -1\right)  ^{\left\vert F\right\vert
}\left(  \prod_{\substack{K\in\mathfrak{K};\\K\subseteq F}}a_{K}\right)
\pi_{\operatorname*{union}F}.
\]

\end{theorem}

\begin{corollary}
\label{cor.genambichromsym.K-free}Let $G$, $V$, $E$, $\varphi$, $Y$, $M$ and
$\alpha_{f}$ be as in Definition \ref{def.genambichrom}. Let $X$ be a totally
ordered set. Let $\ell:E\rightarrow X$ be a labeling function. Let
$\mathfrak{K}$ be some set of broken circuits of $G$ (not necessarily
containing all of them). Then, using the notations of Definition
\ref{def.genambichrom}, we have%
\[
\Xi_{G}=\sum_{\substack{F\subseteq E;\\F\text{ is }\mathfrak{K}\text{-free}%
}}\left(  -1\right)  ^{\left\vert F\right\vert }\pi_{\operatorname*{union}F}.
\]

\end{corollary}

\begin{corollary}
\label{cor.genambichromsym.NBC}Let $G$, $V$, $E$, $\varphi$, $Y$, $M$ and
$\alpha_{f}$ be as in Definition \ref{def.genambichrom}. Let $X$ be a totally
ordered set. Let $\ell:E\rightarrow X$ be a labeling function. Then, using the
notations of Definition \ref{def.genambichrom}, we have%
\[
\Xi_{G}=\sum_{\substack{F\subseteq E;\\F\text{ contains no broken}%
\\\text{circuit of }G\text{ as a subset}}}\left(  -1\right)  ^{\left\vert
F\right\vert }\pi_{\operatorname*{union}F}.
\]

\end{corollary}

\begin{vershort}
The reader will have no difficulty proving these four results by following the
same well-trodden path that led us to their particular cases.
\end{vershort}

\begin{verlong}
These four results can be proved through arguments very similar to the ones we
used in earlier proofs (e.g., in the proofs of Theorems
\ref{thm.wambichromsym.varis} and \ref{thm.wambichromsym.empty} and
Corollaries \ref{cor.wambichromsym.K-free} and \ref{cor.wambichromsym.NBC}).
Here are the details:

\begin{proof}
[Proof of Theorem \ref{thm.genambichromsym.varis}.]We have%
\begin{equation}
\Xi_{G}=\sum_{\substack{f:V\rightarrow Y\text{ is a}\\\text{proper
}Y\text{-coloring of }G}}\alpha_{f}
\label{pf.thm.genambichromsym.varis.XG-def}%
\end{equation}
(by the definition of $\Xi_{G}$). Now, if $f:V\rightarrow Y$ is a map, then we
have the following logical equivalence:%
\begin{equation}
\left(  \text{the }Y\text{-coloring }f\text{ of }G\text{ is proper}\right)
\ \Longleftrightarrow\ \left(  \operatorname*{EQS}\left(  G,f\right)
=\varnothing\right)  \label{pf.thm.genambichromsym.varis.equiv}%
\end{equation}
(because the $Y$-coloring $f$ of $G$ is proper if and only if
$\operatorname*{EQS}\left(  G,f\right)  =\varnothing$\ \ \ \ \footnote{by
Lemma \ref{lem.ambiEqs.proper} (applied to $Y$ instead of $X$)}). Now,%
\begin{align}
&  \sum_{f:V\rightarrow Y}\left[  \underbrace{\operatorname*{EQS}\left(
G,f\right)  =\varnothing}_{\substack{\Longleftrightarrow\ \left(  \text{the
}Y\text{-coloring }f\text{ of }G\text{ is proper}\right)  \\\text{(by
(\ref{pf.thm.genambichromsym.varis.equiv}))}}}\right]  \alpha_{f}\nonumber\\
&  =\sum_{f:V\rightarrow Y}\left[  \underbrace{\text{the }Y\text{-coloring
}f\text{ of }G\text{ is proper}}_{\Longleftrightarrow\ \left(  f\text{ is a
proper }Y\text{-coloring of }G\right)  }\right]  \alpha_{f}\nonumber\\
&  =\sum_{f:V\rightarrow Y}\left[  f\text{ is a proper }Y\text{-coloring of
}G\right]  \alpha_{f}\nonumber\\
&  =\sum_{\substack{f:V\rightarrow Y\text{ is a}\\\text{proper }%
Y\text{-coloring of }G}}\underbrace{\left[  f\text{ is a proper }%
Y\text{-coloring of }G\right]  }_{\substack{=1\\\text{(since }f\text{ is a
proper }Y\text{-coloring of }G\text{)}}}\alpha_{f}\nonumber\\
&  \ \ \ \ \ \ \ \ \ \ +\sum_{\substack{f:V\rightarrow Y\text{ is not
a}\\\text{proper }Y\text{-coloring of }G}}\underbrace{\left[  f\text{ is a
proper }Y\text{-coloring of }G\right]  }_{\substack{=0\\\text{(since }f\text{
is not a proper }Y\text{-coloring of }G\text{)}}}\alpha_{f}\nonumber\\
&  \ \ \ \ \ \ \ \ \ \ \ \ \ \ \ \ \ \ \ \ \left(  \text{since each
}f:V\rightarrow Y\text{ either is a proper }Y\text{-coloring of }G\text{ or
not}\right) \nonumber\\
&  =\sum_{\substack{f:V\rightarrow Y\text{ is a}\\\text{proper }%
Y\text{-coloring of }G}}\alpha_{f}+\underbrace{\sum_{\substack{f:V\rightarrow
Y\text{ is not a}\\\text{proper }Y\text{-coloring of }G}}0\alpha_{f}}%
_{=0}=\sum_{\substack{f:V\rightarrow Y\text{ is a}\\\text{proper
}Y\text{-coloring of }G}}\alpha_{f}\nonumber\\
&  =\Xi_{G} \label{pf.thm.genambichromsym.varis.step1}%
\end{align}
(by (\ref{pf.thm.genambichromsym.varis.XG-def})).

However, for every $f:V\rightarrow Y$, we have%
\begin{equation}
\sum_{\substack{B\subseteq E;\\\operatorname*{union}B\subseteq
\operatorname*{Eqs}f}}\left(  -1\right)  ^{\left\vert B\right\vert }%
\prod_{\substack{K\in\mathfrak{K};\\K\subseteq B}}a_{K}=\left[
\operatorname*{EQS}\left(  G,f\right)  =\varnothing\right]
\label{pf.thm.genambichromsym.varis.moeb2}%
\end{equation}
(indeed, we have already shown this in the above proof of Theorem
\ref{thm.ambichromsym.varis}).

Now, (\ref{pf.thm.genambichromsym.varis.step1}) yields%
\begin{align*}
\Xi_{G}  &  =\sum_{f:V\rightarrow Y}\underbrace{\left[  \operatorname*{EQS}%
\left(  G,f\right)  =\varnothing\right]  }_{\substack{=\sum
_{\substack{B\subseteq E;\\\operatorname*{union}B\subseteq\operatorname*{Eqs}%
f}}\left(  -1\right)  ^{\left\vert B\right\vert }\prod_{\substack{K\in
\mathfrak{K};\\K\subseteq B}}a_{K}\\\text{(by
(\ref{pf.thm.genambichromsym.varis.moeb2}))}}}\alpha_{f}\\
&  =\sum_{f:V\rightarrow Y}\left(  \sum_{\substack{B\subseteq
E;\\\operatorname*{union}B\subseteq\operatorname*{Eqs}f}}\left(  -1\right)
^{\left\vert B\right\vert }\prod_{\substack{K\in\mathfrak{K};\\K\subseteq
B}}a_{K}\right)  \alpha_{f}\\
&  =\underbrace{\sum_{f:V\rightarrow Y}\ \ \sum_{\substack{B\subseteq
E;\\\operatorname*{union}B\subseteq\operatorname*{Eqs}f}}}_{=\sum_{B\subseteq
E}\ \ \sum_{\substack{f:V\rightarrow Y;\\\operatorname*{union}B\subseteq
\operatorname*{Eqs}f}}}\left(  -1\right)  ^{\left\vert B\right\vert }\left(
\prod_{\substack{K\in\mathfrak{K};\\K\subseteq B}}a_{K}\right)  \alpha_{f}\\
&  =\sum_{B\subseteq E}\ \ \sum_{\substack{f:V\rightarrow
Y;\\\operatorname*{union}B\subseteq\operatorname*{Eqs}f}}\left(  -1\right)
^{\left\vert B\right\vert }\left(  \prod_{\substack{K\in\mathfrak{K}%
;\\K\subseteq B}}a_{K}\right)  \alpha_{f}\\
&  =\sum_{B\subseteq E}\left(  -1\right)  ^{\left\vert B\right\vert }\left(
\prod_{\substack{K\in\mathfrak{K};\\K\subseteq B}}a_{K}\right)
\underbrace{\sum_{\substack{f:V\rightarrow Y;\\\operatorname*{union}%
B\subseteq\operatorname*{Eqs}f}}\alpha_{f}}_{\substack{=\pi
_{\operatorname*{union}B}\\\text{(since }\pi_{\operatorname*{union}B}\text{
was defined}\\\text{to be }\sum_{\substack{f:V\rightarrow
Y;\\\operatorname*{union}B\subseteq\operatorname*{Eqs}f}}\alpha_{f}\text{)}%
}}\\
&  =\sum_{B\subseteq E}\left(  -1\right)  ^{\left\vert B\right\vert }\left(
\prod_{\substack{K\in\mathfrak{K};\\K\subseteq B}}a_{K}\right)  \pi
_{\operatorname*{union}B}=\sum_{F\subseteq E}\left(  -1\right)  ^{\left\vert
F\right\vert }\left(  \prod_{\substack{K\in\mathfrak{K};\\K\subseteq F}%
}a_{K}\right)  \pi_{\operatorname*{union}F}%
\end{align*}
(here, we have renamed the summation index $B$ as $F$). This proves Theorem
\ref{thm.genambichromsym.varis}.
\end{proof}

\begin{proof}
[Proof of Corollary \ref{cor.genambichromsym.K-free}.]We can apply Theorem
\ref{thm.genambichromsym.varis} to $0$ instead of $a_{K}$. As a result, we
obtain%
\begin{equation}
\Xi_{G}=\sum_{F\subseteq E}\left(  -1\right)  ^{\left\vert F\right\vert
}\left(  \prod_{\substack{K\in\mathfrak{K};\\K\subseteq F}}0\right)
\pi_{\operatorname*{union}F}. \label{pf.cor.genambichromsym.K-free.0}%
\end{equation}
Now, if $F$ is any subset of $E$, then%
\begin{equation}
\prod_{\substack{K\in\mathfrak{K};\\K\subseteq F}}0=%
\begin{cases}
1, & \text{if }F\text{ is }\mathfrak{K}\text{-free;}\\
0, & \text{if }F\text{ is not }\mathfrak{K}\text{-free}%
\end{cases}
\label{pf.cor.genambichromsym.K-free.1}%
\end{equation}
(indeed, this was already shown in our above proof of Corollary
\ref{cor.ambichromsym.K-free}).

Thus, (\ref{pf.cor.genambichromsym.K-free.0}) becomes%
\begin{align*}
\Xi_{G}  &  =\sum_{F\subseteq E}\left(  -1\right)  ^{\left\vert F\right\vert
}\underbrace{\left(  \prod_{\substack{K\in\mathfrak{K};\\K\subseteq
F}}0\right)  }_{\substack{=%
\begin{cases}
1, & \text{if }F\text{ is }\mathfrak{K}\text{-free;}\\
0, & \text{if }F\text{ is not }\mathfrak{K}\text{-free}%
\end{cases}
\\\text{(by (\ref{pf.cor.genambichromsym.K-free.1}))}}}\pi
_{\operatorname*{union}F}\\
&  =\sum_{F\subseteq E}\left(  -1\right)  ^{\left\vert F\right\vert }%
\begin{cases}
1, & \text{if }F\text{ is }\mathfrak{K}\text{-free;}\\
0, & \text{if }F\text{ is not }\mathfrak{K}\text{-free}%
\end{cases}
\ \ \pi_{\operatorname*{union}F}\\
&  =\sum_{\substack{F\subseteq E;\\F\text{ is }\mathfrak{K}\text{-free}%
}}\left(  -1\right)  ^{\left\vert F\right\vert }\underbrace{%
\begin{cases}
1, & \text{if }F\text{ is }\mathfrak{K}\text{-free;}\\
0, & \text{if }F\text{ is not }\mathfrak{K}\text{-free}%
\end{cases}
}_{\substack{=1\\\text{(since }F\text{ is }\mathfrak{K}\text{-free)}}%
}\ \ \pi_{\operatorname*{union}F}\\
&  \ \ \ \ \ \ \ \ \ \ +\sum_{\substack{F\subseteq E;\\F\text{ is not
}\mathfrak{K}\text{-free}}}\left(  -1\right)  ^{\left\vert F\right\vert
}\underbrace{%
\begin{cases}
1, & \text{if }F\text{ is }\mathfrak{K}\text{-free;}\\
0, & \text{if }F\text{ is not }\mathfrak{K}\text{-free}%
\end{cases}
}_{\substack{=0\\\text{(since }F\text{ is not }\mathfrak{K}\text{-free)}%
}}\ \ \pi_{\operatorname*{union}F}\\
&  \ \ \ \ \ \ \ \ \ \ \ \ \ \ \ \ \ \ \ \ \left(  \text{since each subset
}F\text{ of }E\text{ either is }\mathfrak{K}\text{-free or is not}\right) \\
&  =\sum_{\substack{F\subseteq E;\\F\text{ is }\mathfrak{K}\text{-free}%
}}\left(  -1\right)  ^{\left\vert F\right\vert }\pi_{\operatorname*{union}%
F}+\underbrace{\sum_{\substack{F\subseteq E;\\F\text{ is not }\mathfrak{K}%
\text{-free}}}\left(  -1\right)  ^{\left\vert F\right\vert }0\pi
_{\operatorname*{union}F}}_{=0}\\
&  =\sum_{\substack{F\subseteq E;\\F\text{ is }\mathfrak{K}\text{-free}%
}}\left(  -1\right)  ^{\left\vert F\right\vert }\pi_{\operatorname*{union}F}.
\end{align*}
This proves Corollary \ref{cor.genambichromsym.K-free}.
\end{proof}

\begin{proof}
[Proof of Corollary \ref{cor.genambichromsym.NBC}.]Let $\mathfrak{K}$ be the
set of all broken circuits of $G$.

Now, just as in the proof of Corollary \ref{cor.chromsym.NBC}, we can prove
the following equality:
\[
\sum_{\substack{F\subseteq E;\\F\text{ is }\mathfrak{K}\text{-free}}%
}=\sum_{\substack{F\subseteq E;\\F\text{ contains no broken}\\\text{circuit of
}G\text{ as a subset}}}
\]
(an equality between summation signs). Now, Corollary
\ref{cor.genambichromsym.K-free} yields%
\[
\Xi_{G}=\underbrace{\sum_{\substack{F\subseteq E;\\F\text{ is }\mathfrak{K}%
\text{-free}}}}_{=\sum_{\substack{F\subseteq E;\\F\text{ contains no
broken}\\\text{circuit of }G\text{ as a subset}}}}\left(  -1\right)
^{\left\vert F\right\vert }\pi_{\operatorname*{union}F}=\sum
_{\substack{F\subseteq E;\\F\text{ contains no broken}\\\text{circuit of
}G\text{ as a subset}}}\left(  -1\right)  ^{\left\vert F\right\vert }%
\pi_{\operatorname*{union}F}.
\]
This proves Corollary \ref{cor.genambichromsym.NBC}.
\end{proof}

\begin{proof}
[Proof of Theorem \ref{thm.genambichromsym.empty}.]Let $X$ be the totally
ordered set $\left\{  1\right\}  $ (equipped with the only possible order on
this set). Let $\ell:E\rightarrow X$ be the function sending each $e\in E$ to
$1\in X$. Let $\mathfrak{K}$ be the empty set. Clearly, $\mathfrak{K}$ is a
set of broken circuits of $G$. Theorem \ref{thm.genambichromsym.varis}
(applied to $0$ instead of $a_{K}$) yields%
\begin{align*}
\Xi_{G}  &  =\sum_{F\subseteq E}\left(  -1\right)  ^{\left\vert F\right\vert
}\underbrace{\left(  \prod_{\substack{K\in\mathfrak{K};\\K\subseteq
F}}0\right)  }_{\substack{=\left(  \text{empty product}\right)  \\\text{(since
}\mathfrak{K}\text{ is the empty set)}}}\pi_{\operatorname*{union}F}\\
&  =\sum_{F\subseteq E}\left(  -1\right)  ^{\left\vert F\right\vert
}\underbrace{\left(  \text{empty product}\right)  }_{=1}\pi
_{\operatorname*{union}F}=\sum_{F\subseteq E}\left(  -1\right)  ^{\left\vert
F\right\vert }\pi_{\operatorname*{union}F}.
\end{align*}
This proves Theorem \ref{thm.genambichromsym.empty}.
\end{proof}
\end{verlong}

Corollary \ref{cor.genambichromsym.NBC} can be used to prove certain results
about list colorings (i.e., colorings of a graph or ambigraph that are not
allowed to use certain colors for certain vertices); in particular,
\cite[Lemma 3.2]{Erey19} follows easily from Corollary
\ref{cor.genambichromsym.NBC} (just turn the graph $G$ into an ambigraph, and
set $\alpha_{f}$ to be the Iverson bracket $\left[  f\left(  v\right)  \neq
r\left(  v\right)  \text{ for each }v\in V\right]  $).

\section{\label{sec.alt-sum}Application: A vanishing alternating sum}

Chromatic symmetric functions of different graphs are far from being linearly
independent; they satisfy several linear relations. One such relation was
observed by Dahlberg and van Willigenburg in 2018 \cite[Proposition
5]{DahWil18}:

\begin{theorem}
\label{thm.dahwil.graph}Let $G=\left(  V,E\right)  $ be a finite graph. Let
$C$ be a circuit of $G$, and let $e\in C$ be arbitrary. Then,%
\[
\sum_{F\subseteq C\setminus\left\{  e\right\}  }\left(  -1\right)
^{\left\vert F\right\vert }X_{G\setminus F}=0.
\]
Here, whenever $F$ is a subset of $E$, the notation $G\setminus F$ denotes the
graph $\left(  V,\ E\setminus F\right)  $ (that is, the graph obtained from
$G$ by removing the edges in $F$).
\end{theorem}

This was extended to noncommutative chromatic symmetric functions
$Y_{G,\mathbf{t}}$ by Dahlberg and van Willigenburg in \cite[Proposition
3.6]{DahWil19}, and to weighted chromatic symmetric functions $X_{G,w}$ by
Crew and Spirkl in \cite[Theorem 6]{CreSpi19}. Again, we shall now one-up
these results by generalizing them from graphs to ambigraphs and by moving to
the abstract setting of Definition \ref{def.genambichrom}. Thus, we claim the following:

\begin{theorem}
\label{thm.dahwil.gen}Let $G$, $V$, $E$, $\varphi$, $Y$, $M$ and $\alpha_{f}$
be as in Definition \ref{def.genambichrom}. Let $C$ be a circuit of $G$, and
let $e\in C$ be a singleton edgery. Then, using the notations of Definition
\ref{def.genambichrom} \textbf{(a)}, we have%
\[
\sum_{F\subseteq C\setminus\left\{  e\right\}  }\left(  -1\right)
^{\left\vert F\right\vert }\Xi_{G\setminus F}=0.
\]
Here, whenever $F$ is a subset of $E$, the notation $G\setminus F$ denotes the
ambigraph $\left(  V,\ E\setminus F,\ \varphi\mid_{E\setminus F}\right)  $
(that is, the ambigraph obtained from $G$ by removing the edges in $F$).
\end{theorem}

Applying this to a graph instead of an ambigraph, and setting $Y=\mathbb{N}%
_{+}$ and $\alpha_{f}=\mathbf{x}_{f}$, we recover Theorem
\ref{thm.dahwil.graph}.

Theorem \ref{thm.dahwil.gen} is quite easy to prove despite its generality; in
fact, the beautiful sign-reversing involution argument from \cite[proof of
Proposition 3.6]{DahWil19} still does the trick. However, by way of
illustration, we shall now demonstrate how Theorem \ref{thm.dahwil.gen} can be
derived from Theorem \ref{thm.genambichromsym.empty} and Corollary
\ref{cor.genambichromsym.K-free}.

\begin{proof}
[Proof of Theorem \ref{thm.dahwil.gen}.]Let us set $B:=C\setminus\left\{
e\right\}  $. Thus, $B=C\setminus\left\{  e\right\}  \subseteq C\subseteq E$.

Now, we shall show the following (using the notations of Definition
\ref{def.genambichrom} \textbf{(b)}):

\begin{statement}
\textit{Claim 1:} Let $J$ be a subset of $B$. Then,%
\[
\Xi_{G\setminus J}=\sum_{\substack{F\subseteq E;\\J\subseteq E\setminus
F}}\left(  -1\right)  ^{\left\vert F\right\vert }\pi_{\operatorname*{union}%
F}.
\]

\end{statement}

\begin{vershort}
[\textit{Proof of Claim 1:} We know that $G\setminus J=\left(  V,\ E\setminus
J,\ \varphi\mid_{E\setminus J}\right)  $ is an ambigraph. Thus, we can apply
Theorem \ref{thm.genambichromsym.empty} to $G\setminus J$, $E\setminus J$ and
$\varphi\mid_{E\setminus J}$ instead of $G$, $E$ and $\varphi$. As a result,
we obtain%
\begin{equation}
\Xi_{G\setminus J}=\sum_{F\subseteq E\setminus J}\left(  -1\right)
^{\left\vert F\right\vert }\pi_{\operatorname*{union}F}
\label{pf.thm.dahwil.gen.c1.pf.short.1}%
\end{equation}
(since the meaning of $\operatorname*{union}F$ is the same whether we consider
$F$ as a set of edgeries of $G\setminus J$ or as a set of edgeries of $G$).
However, a subset $F$ of $E\setminus J$ is the same thing as a subset $F$ of
$E$ that is disjoint from $J$, and this is in turn the same as a subset $F$ of
$E$ that satisfies $J\subseteq E\setminus F$ (since $J\subseteq B\subseteq
E$). Hence, we can replace the summation sign \textquotedblleft$\sum
_{F\subseteq E\setminus J}$\textquotedblright\ in
(\ref{pf.thm.dahwil.gen.c1.pf.short.1}) by \textquotedblleft$\sum
_{\substack{F\subseteq E;\\J\subseteq E\setminus F}}$\textquotedblright. As a
result, (\ref{pf.thm.dahwil.gen.c1.pf.short.1}) becomes%
\[
\Xi_{G\setminus J}=\sum_{\substack{F\subseteq E;\\J\subseteq E\setminus
F}}\left(  -1\right)  ^{\left\vert F\right\vert }\pi_{\operatorname*{union}%
F}.
\]
Thus, Claim 1 is proved.]
\end{vershort}

\begin{verlong}
[\textit{Proof of Claim 1:} The definition of the ambigraph $G\setminus J$
shows that $G\setminus J=\left(  V,\ E\setminus J,\ \varphi\mid_{E\setminus
J}\right)  $. Thus, we can apply Theorem \ref{thm.genambichromsym.empty} to
$G\setminus J$, $E\setminus J$ and $\varphi\mid_{E\setminus J}$ instead of
$G$, $E$ and $\varphi$. As a result, we obtain%
\begin{equation}
\Xi_{G\setminus J}=\sum_{F\subseteq E\setminus J}\left(  -1\right)
^{\left\vert F\right\vert }\pi_{\operatorname*{union}F}
\label{pf.thm.dahwil.gen.c1.pf.1}%
\end{equation}
\footnote{We need to be careful here: Theorem \ref{thm.genambichromsym.empty}
uses the notation $\operatorname*{union}F$, which implicitly depends on the
map $\varphi$ that is part of the ambigraph $G=\left(  V,E,\varphi\right)  $.
When we apply Theorem \ref{thm.genambichromsym.empty} to $G\setminus J$,
$E\setminus J$ and $\varphi\mid_{E\setminus J}$ instead of $G$, $E$ and
$\varphi$, we are thus using this notation $\operatorname*{union}F$ in a
slightly different sense than it is used in Claim 1. Namely, in Claim 1, the
notation $\operatorname*{union}F$ means $\bigcup_{e\in F}\varphi\left(
e\right)  $ (since it is defined with respect to the ambigraph $G=\left(
V,E,\varphi\right)  $), whereas in (\ref{pf.thm.dahwil.gen.c1.pf.1}) it means
$\bigcup_{e\in F}\left(  \varphi\mid_{E\setminus J}\right)  \left(  e\right)
$ (since it is defined with respect to the ambigraph $G\setminus J=\left(
V,\ E\setminus J,\ \varphi\mid_{E\setminus J}\right)  $). Fortunately,
however, these two meanings of $\operatorname*{union}F$ are equivalent, since
any subset $F$ of $E\setminus J$ satisfies%
\[
\bigcup_{e\in F}\underbrace{\left(  \varphi\mid_{E\setminus J}\right)  \left(
e\right)  }_{=\varphi\left(  e\right)  }=\bigcup_{e\in F}\varphi\left(
e\right)  .
\]
Thus, we have no need to distinguish between these two meanings of
$\operatorname*{union}F$.}. However, we have $J\subseteq B\subseteq E$. Thus,
it is easy to see that
\begin{equation}
\mathcal{P}\left(  E\setminus J\right)  =\left\{  Z\in\mathcal{P}\left(
E\right)  \ \mid\ J\subseteq E\setminus Z\right\}
\label{pf.thm.dahwil.gen.c1.pf.2}%
\end{equation}
\footnote{\textit{Proof:} Let $H\in\mathcal{P}\left(  E\setminus J\right)  $.
Thus, $H$ is a subset of $E\setminus J$. Hence, $H\subseteq E\setminus
J\subseteq E$, so that $H\in\mathcal{P}\left(  E\right)  $. Also, from
$H\subseteq E\setminus J$, we conclude that $H$ is disjoint from $J$. In other
words, $J$ is disjoint from $H$. Combining this with $J\subseteq E$, we obtain
$J\subseteq E\setminus H$.
\par
Now, we know that $H\in\mathcal{P}\left(  E\right)  $ and $J\subseteq
E\setminus H$. In other words, $H$ is a $Z\in\mathcal{P}\left(  E\right)  $
satisfying $J\subseteq E\setminus Z$. In other words, $H\in\left\{
Z\in\mathcal{P}\left(  E\right)  \ \mid\ J\subseteq E\setminus Z\right\}  $.
\par
Forget that we fixed $H$. We thus have shown that $H\in\left\{  Z\in
\mathcal{P}\left(  E\right)  \ \mid\ J\subseteq E\setminus Z\right\}  $ for
each $H\in\mathcal{P}\left(  E\setminus J\right)  $. In other words,%
\begin{equation}
\mathcal{P}\left(  E\setminus J\right)  \subseteq\left\{  Z\in\mathcal{P}%
\left(  E\right)  \ \mid\ J\subseteq E\setminus Z\right\}  .
\label{pf.thm.dahwil.gen.c1.pf.2.pf.1}%
\end{equation}
\par
On the other hand, let $U\in\left\{  Z\in\mathcal{P}\left(  E\right)
\ \mid\ J\subseteq E\setminus Z\right\}  $. Thus, $U$ is a $Z\in
\mathcal{P}\left(  E\right)  $ satisfying $J\subseteq E\setminus Z$. In other
words, $U\in\mathcal{P}\left(  E\right)  $ and $J\subseteq E\setminus U$.
\par
From $J\subseteq E\setminus U$, we conclude that $J$ is disjoint from $U$. In
other words, $U$ is disjoint from $J$. However, from $U\in\mathcal{P}\left(
E\right)  $, we conclude that $U$ is a subset of $E$. Thus, $U$ is a subset of
$E$ that is disjoint from $J$. In other words, $U$ is a subset of $E\setminus
J$. In other words, $U\in\mathcal{P}\left(  E\setminus J\right)  $.
\par
Forget that we fixed $U$. We thus have shown that $U\in\mathcal{P}\left(
E\setminus J\right)  $ for each $U\in\left\{  Z\in\mathcal{P}\left(  E\right)
\ \mid\ J\subseteq E\setminus Z\right\}  $. In other words,%
\[
\left\{  Z\in\mathcal{P}\left(  E\right)  \ \mid\ J\subseteq E\setminus
Z\right\}  \subseteq\mathcal{P}\left(  E\setminus J\right)  .
\]
Combining this with (\ref{pf.thm.dahwil.gen.c1.pf.2.pf.1}), we obtain
\[
\mathcal{P}\left(  E\setminus J\right)  =\left\{  Z\in\mathcal{P}\left(
E\right)  \ \mid\ J\subseteq E\setminus Z\right\}  .
\]
This proves (\ref{pf.thm.dahwil.gen.c1.pf.2}).}.

Now, we have the following equality between summation signs:%
\begin{align*}
\sum_{F\subseteq E\setminus J}  &  =\sum_{F\in\mathcal{P}\left(  E\setminus
J\right)  }=\sum_{F\in\left\{  Z\in\mathcal{P}\left(  E\right)  \ \mid
\ J\subseteq E\setminus Z\right\}  }\ \ \ \ \ \ \ \ \ \ \left(  \text{by
(\ref{pf.thm.dahwil.gen.c1.pf.2})}\right) \\
&  =\sum_{\substack{F\in\mathcal{P}\left(  E\right)  ;\\J\subseteq E\setminus
F}}=\sum_{\substack{F\subseteq E;\\J\subseteq E\setminus F}}.
\end{align*}
Hence, (\ref{pf.thm.dahwil.gen.c1.pf.1}) becomes%
\[
\Xi_{G\setminus J}=\underbrace{\sum_{F\subseteq E\setminus J}}_{=\sum
_{\substack{F\subseteq E;\\J\subseteq E\setminus F}}}\left(  -1\right)
^{\left\vert F\right\vert }\pi_{\operatorname*{union}F}=\sum
_{\substack{F\subseteq E;\\J\subseteq E\setminus F}}\left(  -1\right)
^{\left\vert F\right\vert }\pi_{\operatorname*{union}F}.
\]
Thus, Claim 1 is proved.]
\end{verlong}

\begin{statement}
\textit{Claim 2:} We have
\begin{equation}
\sum_{\substack{F\subseteq E;\\B\subseteq F}}\left(  -1\right)  ^{\left\vert
F\right\vert }\pi_{\operatorname*{union}F}=0. \label{pf.thm.dahwil.gen.0=}%
\end{equation}

\end{statement}

\begin{vershort}
[\textit{Proof of Claim 2:} Theorem \ref{thm.genambichromsym.empty} yields%
\begin{equation}
\Xi_{G}=\sum_{F\subseteq E}\left(  -1\right)  ^{\left\vert F\right\vert }%
\pi_{\operatorname*{union}F}. \label{pf.thm.dahwil.gen.XiG.short.1}%
\end{equation}

On the other hand, let us define a labeling function $\ell:E\rightarrow
\mathbb{N}$ by setting $\ell\left(  e\right)  =1$ and setting $\ell\left(
f\right)  =0$ for all $f\in E\setminus\left\{  e\right\}  $. Then, the edgery
$e$ is the unique singleton edgery in $C$ having maximum label. Hence,
$C\setminus\left\{  e\right\}  $ is a broken circuit of $G$. In other words,
$B$ is a broken circuit of $G$ (since $B=C\setminus\left\{  e\right\}  $).
Hence, $\left\{  B\right\}  $ is a set of broken circuits of $G$. Therefore,
Corollary \ref{cor.genambichromsym.K-free} (applied to $\mathfrak{K}=\left\{
B\right\}  $) yields
\[
\Xi_{G}=\sum_{\substack{F\subseteq E;\\F\text{ is }\left\{  B\right\}
\text{-free}}}\left(  -1\right)  ^{\left\vert F\right\vert }\pi
_{\operatorname*{union}F}=\sum_{\substack{F\subseteq E;\\B\not \subseteq
F}}\left(  -1\right)  ^{\left\vert F\right\vert }\pi_{\operatorname*{union}F}%
\]
(since the condition \textquotedblleft$F$ is $\left\{  B\right\}
$-free\textquotedblright\ is easily seen to be equivalent to \textquotedblleft%
$B\not \subseteq F$\textquotedblright). Subtracting this equality from
(\ref{pf.thm.dahwil.gen.XiG.short.1}), we obtain%
\begin{equation}
0=\sum_{F\subseteq E}\left(  -1\right)  ^{\left\vert F\right\vert }%
\pi_{\operatorname*{union}F}-\sum_{\substack{F\subseteq E;\\B\not \subseteq
F}}\left(  -1\right)  ^{\left\vert F\right\vert }\pi_{\operatorname*{union}%
F}=\sum_{\substack{F\subseteq E;\\B\subseteq F}}\left(  -1\right)
^{\left\vert F\right\vert }\pi_{\operatorname*{union}F}.\nonumber
\end{equation}
This proves Claim 2.]
\end{vershort}

\begin{verlong}
[\textit{Proof of Claim 2:} Theorem \ref{thm.genambichromsym.empty} yields%
\begin{align}
\Xi_{G}  &  =\sum_{F\subseteq E}\left(  -1\right)  ^{\left\vert F\right\vert
}\pi_{\operatorname*{union}F}\nonumber\\
&  =\sum_{\substack{F\subseteq E;\\B\subseteq F}}\left(  -1\right)
^{\left\vert F\right\vert }\pi_{\operatorname*{union}F}+\sum
_{\substack{F\subseteq E;\\B\not \subseteq F}}\left(  -1\right)  ^{\left\vert
F\right\vert }\pi_{\operatorname*{union}F}
\label{pf.thm.dahwil.gen.XiG.c2.pf.2}%
\end{align}
(since each subset $F$ of $E$ satisfies either $B\subseteq F$ or
$B\not \subseteq F$, but not both at the same time).

Let us now find a different formula for $\Xi_{G}$. We define a function
$\ell:E\rightarrow\mathbb{N}$ by setting%
\[
\ell\left(  f\right)  =\left[  f=e\right]  \ \ \ \ \ \ \ \ \ \ \text{for each
}f\in E.
\]
We shall use this function $\ell$ as our labeling function (where the role of
the totally ordered set $X$ is played by $\mathbb{N}$ equipped with the usual
total order). It is easy to see that the edgery $e$ is the unique singleton
edgery in $C$ having maximum label\footnote{\textit{Proof.} Clearly, $e$ is a
singleton edgery in $C$ (since $e\in C$ is a singleton edgery). We shall now
show that $e$ has a larger label than any other singleton edgery in $C$.
\par
Indeed, let $f$ be any singleton edgery in $C$ distinct from $e$. Then, the
definition of $\ell$ yields $\ell\left(  f\right)  =\left[  f=e\right]  =0$
(since we don't have $f=e$ (because $f$ is distinct from $e$)). On the other
hand, the definition of $\ell$ yields $\ell\left(  e\right)  =\left[
e=e\right]  =1$ (since $e=e$). Hence, $\ell\left(  e\right)  =1>0=\ell\left(
f\right)  $. In other words, $e$ has a larger label than $f$ (since the label
of $e$ is $\ell\left(  e\right)  $, whereas the label of $f$ is $\ell\left(
f\right)  $).
\par
Forget that we fixed $f$. We thus have shown that $e$ has a larger label than
$f$ whenever $f$ is any singleton edgery in $C$ distinct from $e$. In other
words, $e$ has a larger label than any other singleton edgery in $C$. Hence,
the edgery $e$ is the unique singleton edgery in $C$ having maximum label
(since $e$ itself is a singleton edgery in $C$).}. Therefore, $C\setminus
\left\{  e\right\}  $ is a broken circuit of $G$ (by the definition of a
\textquotedblleft broken circuit\textquotedblright\ in Definition
\ref{def.ambigraph.BC}). In other words, $B$ is a broken circuit of $G$ (since
$B=C\setminus\left\{  e\right\}  $). Hence, $\left\{  B\right\}  $ is a set of
broken circuits of $G$. Therefore, Corollary \ref{cor.genambichromsym.K-free}
(applied to $\mathfrak{K}=\left\{  B\right\}  $) yields
\begin{equation}
\Xi_{G}=\sum_{\substack{F\subseteq E;\\F\text{ is }\left\{  B\right\}
\text{-free}}}\left(  -1\right)  ^{\left\vert F\right\vert }\pi
_{\operatorname*{union}F}. \label{pf.thm.dahwil.gen.XiG.c2.pf.3}%
\end{equation}

However, if $F$ is a subset of $E$, then the condition \textquotedblleft$F$ is
$\left\{  B\right\}  $-free\textquotedblright\ is equivalent to
\textquotedblleft$B\not \subseteq F$\textquotedblright%
\ \ \ \ \footnote{\textit{Proof.} Let $F$ be a subset of $E$. Then, we have
the following chain of logical equivalences:%
\begin{align*}
&  \ \left(  F\text{ is }\left\{  B\right\}  \text{-free}\right) \\
&  \Longleftrightarrow\ \left(  F\text{ contains no }K\in\left\{  B\right\}
\text{ as a subset}\right) \\
&  \ \ \ \ \ \ \ \ \ \ \ \ \ \ \ \ \ \ \ \ \left(  \text{by the definition of
\textquotedblleft}\left\{  B\right\}  \text{-free\textquotedblright\ in
Definition \ref{def.K-free}}\right) \\
&  \Longleftrightarrow\ \left(  \text{there exists no }K\in\left\{  B\right\}
\text{ such that }F\text{ contains }K\text{ as a subset}\right) \\
&  \Longleftrightarrow\ \left(  \text{there exists no }K\in\left\{  B\right\}
\text{ such that }K\subseteq F\right) \\
&  \Longleftrightarrow\ \left(  \text{each }K\in\left\{  B\right\}  \text{
satisfies }K\not \subseteq F\right) \\
&  \Longleftrightarrow\ \left(  B\not \subseteq F\right)
\ \ \ \ \ \ \ \ \ \ \left(  \text{since the only }K\in\left\{  B\right\}
\text{ is }B\right)  .
\end{align*}
Hence, the condition \textquotedblleft$F$ is $\left\{  B\right\}
$-free\textquotedblright\ is equivalent to \textquotedblleft$B\not \subseteq
F$\textquotedblright. Qed.}. Hence, the summation sign \textquotedblleft%
$\sum_{\substack{F\subseteq E;\\F\text{ is }\left\{  B\right\}  \text{-free}%
}}$\textquotedblright\ can be rewritten as \textquotedblleft$\sum
_{\substack{F\subseteq E;\\B\not \subseteq F}}$\textquotedblright. Therefore,
we can rewrite (\ref{pf.thm.dahwil.gen.XiG.c2.pf.3}) as%
\begin{equation}
\Xi_{G}=\sum_{\substack{F\subseteq E;\\B\not \subseteq F}}\left(  -1\right)
^{\left\vert F\right\vert }\pi_{\operatorname*{union}F}.
\label{pf.thm.dahwil.gen.XiG.c2.pf.4}%
\end{equation}

Subtracting this equality from (\ref{pf.thm.dahwil.gen.XiG.c2.pf.2}), we
obtain%
\begin{align*}
\Xi_{G}-\Xi_{G}  &  =\left(  \sum_{\substack{F\subseteq E;\\B\subseteq
F}}\left(  -1\right)  ^{\left\vert F\right\vert }\pi_{\operatorname*{union}%
F}+\sum_{\substack{F\subseteq E;\\B\not \subseteq F}}\left(  -1\right)
^{\left\vert F\right\vert }\pi_{\operatorname*{union}F}\right)  -\sum
_{\substack{F\subseteq E;\\B\not \subseteq F}}\left(  -1\right)  ^{\left\vert
F\right\vert }\pi_{\operatorname*{union}F}\\
&  =\sum_{\substack{F\subseteq E;\\B\subseteq F}}\left(  -1\right)
^{\left\vert F\right\vert }\pi_{\operatorname*{union}F}.
\end{align*}
Comparing this with $\Xi_{G}-\Xi_{G}=0$, we obtain%
\[
\sum_{\substack{F\subseteq E;\\B\subseteq F}}\left(  -1\right)  ^{\left\vert
F\right\vert }\pi_{\operatorname*{union}F}=0.
\]
This proves Claim 2.]
\end{verlong}

\begin{vershort}
However, from $C\setminus\left\{  e\right\}  =B$, we obtain%
\begin{align}
\sum_{F\subseteq C\setminus\left\{  e\right\}  }\left(  -1\right)
^{\left\vert F\right\vert }\Xi_{G\setminus F}  &  =\sum_{F\subseteq B}\left(
-1\right)  ^{\left\vert F\right\vert }\Xi_{G\setminus F}=\sum_{J\subseteq
B}\left(  -1\right)  ^{\left\vert J\right\vert }\underbrace{\Xi_{G\setminus
J}}_{\substack{=\sum_{\substack{F\subseteq E;\\J\subseteq E\setminus
F}}\left(  -1\right)  ^{\left\vert F\right\vert }\pi_{\operatorname*{union}%
F}\\\text{(by Claim 1)}}}\nonumber\\
&  \ \ \ \ \ \ \ \ \ \ \ \ \ \ \ \ \ \ \ \ \left(
\begin{array}
[c]{c}%
\text{here, we have renamed the}\\
\text{summation index }F\text{ as }J
\end{array}
\right) \nonumber\\
&  =\sum_{J\subseteq B}\left(  -1\right)  ^{\left\vert J\right\vert }%
\sum_{\substack{F\subseteq E;\\J\subseteq E\setminus F}}\left(  -1\right)
^{\left\vert F\right\vert }\pi_{\operatorname*{union}F}\nonumber\\
&  =\underbrace{\sum_{J\subseteq B}\ \ \sum_{\substack{F\subseteq
E;\\J\subseteq E\setminus F}}}_{=\sum_{F\subseteq E}\ \ \sum
_{\substack{J\subseteq B;\\J\subseteq E\setminus F}}}\left(  -1\right)
^{\left\vert J\right\vert }\left(  -1\right)  ^{\left\vert F\right\vert }%
\pi_{\operatorname*{union}F}\nonumber\\
&  =\sum_{F\subseteq E}\ \ \underbrace{\sum_{\substack{J\subseteq
B;\\J\subseteq E\setminus F}}}_{\substack{=\sum_{J\subseteq B\cap\left(
E\setminus F\right)  }\\=\sum_{J\subseteq B\setminus F}\\\text{(since
}B\subseteq E\text{ entails}\\B\cap\left(  E\setminus F\right)  =B\setminus
F\text{)}}}\left(  -1\right)  ^{\left\vert J\right\vert }\left(  -1\right)
^{\left\vert F\right\vert }\pi_{\operatorname*{union}F}\nonumber\\
&  =\sum_{F\subseteq E}\ \ \underbrace{\sum_{J\subseteq B\setminus F}\left(
-1\right)  ^{\left\vert J\right\vert }}_{\substack{=\sum_{I\subseteq
B\setminus F}\left(  -1\right)  ^{\left\vert I\right\vert }\\=\left[
B\setminus F=\varnothing\right]  \\\text{(by Lemma \ref{lem.cancel})}}}\left(
-1\right)  ^{\left\vert F\right\vert }\pi_{\operatorname*{union}F}\nonumber\\
&  =\sum_{F\subseteq E}\underbrace{\left[  B\setminus F=\varnothing\right]
}_{=\left[  B\subseteq F\right]  }\left(  -1\right)  ^{\left\vert F\right\vert
}\pi_{\operatorname*{union}F}\nonumber\\
&  =\sum_{F\subseteq E}\left[  B\subseteq F\right]  \left(  -1\right)
^{\left\vert F\right\vert }\pi_{\operatorname*{union}F}.
\label{pf.thm.dahwil.gen.5}%
\end{align}
In the sum on the right-hand side, we can clearly remove all addends that
don't satisfy $B\subseteq F$, since the presence of the $\left[  B\subseteq
F\right]  =0$ factor renders all these addends equal to $0$. Thus, we are left
with only the addends that do satisfy $B\subseteq F$. Hence,
(\ref{pf.thm.dahwil.gen.5}) rewrites as%
\begin{align*}
\sum_{F\subseteq C\setminus\left\{  e\right\}  }\left(  -1\right)
^{\left\vert F\right\vert }\Xi_{G\setminus F}  &  =\sum_{\substack{F\subseteq
E;\\B\subseteq F}}\underbrace{\left[  B\subseteq F\right]  }%
_{\substack{=1\\\text{(since }B\subseteq F\text{)}}}\left(  -1\right)
^{\left\vert F\right\vert }\pi_{\operatorname*{union}F}\\
&  =\sum_{\substack{F\subseteq E;\\B\subseteq F}}\left(  -1\right)
^{\left\vert F\right\vert }\pi_{\operatorname*{union}F}%
=0\ \ \ \ \ \ \ \ \ \ \left(  \text{by (\ref{pf.thm.dahwil.gen.0=})}\right)  .
\end{align*}

\end{vershort}

\begin{verlong}
However, from $C\setminus\left\{  e\right\}  =B$, we obtain%
\begin{align}
\sum_{F\subseteq C\setminus\left\{  e\right\}  }\left(  -1\right)
^{\left\vert F\right\vert }\Xi_{G\setminus F}  &  =\sum_{F\subseteq B}\left(
-1\right)  ^{\left\vert F\right\vert }\Xi_{G\setminus F}=\sum_{J\subseteq
B}\left(  -1\right)  ^{\left\vert J\right\vert }\underbrace{\Xi_{G\setminus
J}}_{\substack{=\sum_{\substack{F\subseteq E;\\J\subseteq E\setminus
F}}\left(  -1\right)  ^{\left\vert F\right\vert }\pi_{\operatorname*{union}%
F}\\\text{(by Claim 1)}}}\nonumber\\
&  \ \ \ \ \ \ \ \ \ \ \ \ \ \ \ \ \ \ \ \ \left(
\begin{array}
[c]{c}%
\text{here, we have renamed the}\\
\text{summation index }F\text{ as }J
\end{array}
\right) \nonumber\\
&  =\sum_{J\subseteq B}\left(  -1\right)  ^{\left\vert J\right\vert }%
\sum_{\substack{F\subseteq E;\\J\subseteq E\setminus F}}\left(  -1\right)
^{\left\vert F\right\vert }\pi_{\operatorname*{union}F}\nonumber\\
&  =\underbrace{\sum_{J\subseteq B}\ \ \sum_{\substack{F\subseteq
E;\\J\subseteq E\setminus F}}}_{=\sum_{F\subseteq E}\ \ \sum
_{\substack{J\subseteq B;\\J\subseteq E\setminus F}}}\left(  -1\right)
^{\left\vert J\right\vert }\left(  -1\right)  ^{\left\vert F\right\vert }%
\pi_{\operatorname*{union}F}\nonumber\\
&  =\sum_{F\subseteq E}\ \ \underbrace{\sum_{\substack{J\subseteq
B;\\J\subseteq E\setminus F}}}_{\substack{=\sum_{J\subseteq B\cap\left(
E\setminus F\right)  }}}\left(  -1\right)  ^{\left\vert J\right\vert }\left(
-1\right)  ^{\left\vert F\right\vert }\pi_{\operatorname*{union}F}\nonumber\\
&  =\sum_{F\subseteq E}\ \ \underbrace{\sum_{J\subseteq B\cap\left(
E\setminus F\right)  }\left(  -1\right)  ^{\left\vert J\right\vert }%
}_{\substack{=\sum_{I\subseteq B\cap\left(  E\setminus F\right)  }\left(
-1\right)  ^{\left\vert I\right\vert }\\\text{(here, we have renamed}%
\\\text{the summation index }J\text{ as }I\text{)}}}\left(  -1\right)
^{\left\vert F\right\vert }\pi_{\operatorname*{union}F}\nonumber\\
&  =\sum_{F\subseteq E}\ \ \underbrace{\sum_{I\subseteq B\cap\left(
E\setminus F\right)  }\left(  -1\right)  ^{\left\vert I\right\vert }%
}_{\substack{=\left[  B\cap\left(  E\setminus F\right)  =\varnothing\right]
\\\text{(by Lemma \ref{lem.cancel},}\\\text{applied to }S=B\cap\left(
E\setminus F\right)  \text{)}}}\left(  -1\right)  ^{\left\vert F\right\vert
}\pi_{\operatorname*{union}F}\nonumber\\
&  =\sum_{F\subseteq E}\left[  \underbrace{B\cap\left(  E\setminus F\right)
}_{=\left(  B\cap E\right)  \setminus F}=\varnothing\right]  \left(
-1\right)  ^{\left\vert F\right\vert }\pi_{\operatorname*{union}F}\nonumber\\
&  =\sum_{F\subseteq E}\left[  \underbrace{\left(  B\cap E\right)
}_{\substack{=B\\\text{(since }B\subseteq E\text{)}}}\setminus F=\varnothing
\right]  \left(  -1\right)  ^{\left\vert F\right\vert }\pi
_{\operatorname*{union}F}\nonumber
\end{align}%
\begin{align*}
&  =\sum_{F\subseteq E}\underbrace{\left[  B\setminus F=\varnothing\right]
}_{\substack{=\left[  B\subseteq F\right]  \\\text{(since the statement
\textquotedblleft}B\setminus F=\varnothing\text{\textquotedblright}\\\text{is
equivalent to \textquotedblleft}B\subseteq F\text{\textquotedblright)}%
}}\left(  -1\right)  ^{\left\vert F\right\vert }\pi_{\operatorname*{union}F}\\
&  =\sum_{F\subseteq E}\left[  B\subseteq F\right]  \left(  -1\right)
^{\left\vert F\right\vert }\pi_{\operatorname*{union}F}\\
&  =\sum_{\substack{F\subseteq E;\\B\subseteq F}}\underbrace{\left[
B\subseteq F\right]  }_{\substack{=1\\\text{(since }B\subseteq F\text{)}%
}}\left(  -1\right)  ^{\left\vert F\right\vert }\pi_{\operatorname*{union}%
F}+\sum_{\substack{F\subseteq E;\\\text{we don't have }B\subseteq
F}}\underbrace{\left[  B\subseteq F\right]  }_{\substack{=0\\\text{(since we
don't}\\\text{have }B\subseteq F\text{)}}}\left(  -1\right)  ^{\left\vert
F\right\vert }\pi_{\operatorname*{union}F}\\
&  \ \ \ \ \ \ \ \ \ \ \ \ \ \ \ \ \ \ \ \ \left(  \text{since each subset
}F\text{ of }E\text{ either satisfies }B\subseteq F\text{ or does not}\right)
\\
&  =\sum_{\substack{F\subseteq E;\\B\subseteq F}}\left(  -1\right)
^{\left\vert F\right\vert }\pi_{\operatorname*{union}F}+\underbrace{\sum
_{\substack{F\subseteq E;\\\text{we don't have }B\subseteq F}}0\left(
-1\right)  ^{\left\vert F\right\vert }\pi_{\operatorname*{union}F}}_{=0}\\
&  =\sum_{\substack{F\subseteq E;\\B\subseteq F}}\left(  -1\right)
^{\left\vert F\right\vert }\pi_{\operatorname*{union}F}%
=0\ \ \ \ \ \ \ \ \ \ \left(  \text{by (\ref{pf.thm.dahwil.gen.0=})}\right)  .
\end{align*}

\end{verlong}

\noindent This proves Theorem \ref{thm.dahwil.gen}.
\end{proof}

\section{\label{sec.matroid}The characteristic polynomial of a matroid}

\subsection{An introduction to matroids}

We shall now present a result that can be considered as a generalization of
Theorem \ref{thm.chrompol.varis} in a different direction than Theorem
\ref{thm.chromsym.varis}: namely, a formula for the characteristic polynomial
of a matroid. Let us first recall the basic notions from the theory of
matroids that will be needed to state it.

\begin{verlong}
[TODO: Make the following proofs more detailed.]
\end{verlong}

First, we introduce some basic poset-related terminology:

\begin{definition}
\label{def.poset.max}Let $P$ be a poset.

\textbf{(a)} An element $v$ of $P$ is said to be \emph{maximal} (with respect
to $P$) if and only if every $w\in P$ satisfying $w\geq v$ must satisfy $w=v$.

\textbf{(b)} An element $v$ of $P$ is said to be \emph{minimal} (with respect
to $P$) if and only if every $w\in P$ satisfying $w\leq v$ must satisfy $w=v$.
\end{definition}

\begin{definition}
\label{def.poset.P(E)}For any set $E$, we shall regard the powerset
$\mathcal{P}\left(  E\right)  $ as a poset (with respect to inclusion). Thus,
any subset $\mathcal{S}$ of $\mathcal{P}\left(  E\right)  $ also becomes a
poset, and therefore the notions of \textquotedblleft
minimal\textquotedblright\ and \textquotedblleft maximal\textquotedblright%
\ elements in $\mathcal{S}$ make sense. Beware that these notions are not
related to size; i.e., a maximal element of $\mathcal{S}$ might not be a
maximum-size element of $\mathcal{S}$.
\end{definition}

Now, let us define the notion of \textquotedblleft matroid\textquotedblright%
\ that we will use:

\begin{definition}
\label{def.matroid}\textbf{(a)} A \emph{matroid} means a pair $\left(
E,\mathcal{I}\right)  $ consisting of a finite set $E$ and a set
$\mathcal{I}\subseteq\mathcal{P}\left(  E\right)  $ satisfying the following axioms:

\begin{itemize}
\item \textit{Matroid axiom 1:} We have $\varnothing\in\mathcal{I}$.

\item \textit{Matroid axiom 2:} If $Y\in\mathcal{I}$ and $Z\in\mathcal{P}%
\left(  E\right)  $ are such that $Z\subseteq Y$, then $Z\in\mathcal{I}$.

\item \textit{Matroid axiom 3:} If $Y\in\mathcal{I}$ and $Z\in\mathcal{I}$ are
such that $\left\vert Y\right\vert <\left\vert Z\right\vert $, then there
exists some $x\in Z\setminus Y$ such that $Y\cup\left\{  x\right\}
\in\mathcal{I}$.
\end{itemize}

\textbf{(b)} Let $\left(  E,\mathcal{I}\right)  $ be a matroid. A subset $S$
of $E$ is said to be \emph{independent} (for this matroid) if and only if
$S\in\mathcal{I}$. The set $E$ is called the \emph{ground set} of the matroid
$\left(  E,\mathcal{I}\right)  $.
\end{definition}

There are different definitions of a matroid in the literature; these
definitions are (mostly) equivalent, but not always in the obvious
way\footnote{Indeed, most of these definitions define a matroid as a pair
$\left(  E,U\right)  $ consisting of a finite set $E$ and a subset
$U\subseteq\mathcal{P}\left(  E\right)  $ satisfying a certain set of axioms,
but these sets of axioms are not always equivalent, so they define different
classes of pairs $\left(  E,U\right)  $. Thus, a matroid in the sense of one
definition is not necessarily a matroid in the sense of another definition.
However, there are canonical bijections between one type of matroids and
another (see, e.g., \cite[\S 10.2]{Schrij13}); these are commonly known as
\textquotedblleft cryptomorphisms\textquotedblright.}. Definition
\ref{def.matroid} is how a matroid is defined in \cite[\S 10.1]{Schrij13} and
in \cite[Definition 3.4.1]{Martin22} (where it is called a \textquotedblleft%
(matroid) independence system\textquotedblright). The definition of a matroid
given in Stanley's \cite[Definition 3.8]{Stanley} is directly equivalent to
Definition \ref{def.matroid}, with the only differences that

\begin{itemize}
\item Stanley replaces Matroid axiom 1 by the requirement that $\mathcal{I}%
\neq\varnothing$ (which is, of course, equivalent to Matroid axiom 1 as long
as Matroid axiom 2 is assumed), and

\item Stanley replaces Matroid axiom 3 by the requirement that for every
$T\in\mathcal{P}\left(  E\right)  $, all maximal elements of $\mathcal{I}%
\cap\mathcal{P}\left(  T\right)  $ have the same cardinality\footnote{Here, we
regard $\mathcal{I}\cap\mathcal{P}\left(  T\right)  $ as a poset with respect
to inclusion (as explained in Definition \ref{def.poset.P(E)}). Thus, an
element $Y$ of this poset is maximal if and only if there exists no
$Z\in\mathcal{I}\cap\mathcal{P}\left(  T\right)  $ such that $Y$ is a proper
subset of $Z$.} (this requirement is equivalent to Matroid axiom 3 as long as
Matroid axiom 2 is assumed).
\end{itemize}

We now introduce some terminology related to matroids:

\begin{definition}
\label{def.matroid.notions}Let $M=\left(  E,\mathcal{I}\right)  $ be a matroid.

\textbf{(a)} We define a function $r_{M}:\mathcal{P}\left(  E\right)
\rightarrow\mathbb{N}$ by setting%
\begin{equation}
r_{M}\left(  S\right)  =\max\left\{  \left\vert Z\right\vert \ \mid
\ Z\in\mathcal{I}\text{ and }Z\subseteq S\right\}
\ \ \ \ \ \ \ \ \ \ \text{for every }S\subseteq E. \label{eq.def.rank.def}%
\end{equation}
(Note that the right-hand side of (\ref{eq.def.rank.def}) is well-defined,
because there exists at least one $Z\in\mathcal{I}$ satisfying $Z\subseteq S$
(namely, $Z=\varnothing$).) If $S$ is a subset of $E$, then the nonnegative
integer $r_{M}\left(  S\right)  $ is called the \emph{rank} of $S$ (with
respect to $M$). It is clear that $r_{M}$ is a weakly increasing function from
the poset $\mathcal{P}\left(  E\right)  $ to $\mathbb{N}$.

\textbf{(b)} If $k\in\mathbb{N}$, then a $k$\emph{-flat} of $M$ means a subset
of $E$ that has rank $k$ and is maximal among all such subsets (i.e., it is
not a proper subset of any other subset having rank $k$). (Beware: Not all
$k$-flats have the same size.) A \emph{flat} of $M$ is a subset of $E$ which
is a $k$-flat for some $k\in\mathbb{N}$. We let $\operatorname*{Flats}M$
denote the set of all flats of $M$; thus, $\operatorname*{Flats}M$ is a
subposet of $\mathcal{P}\left(  E\right)  $.

\textbf{(c)} A \emph{circuit} of $M$ means a minimal element of $\mathcal{P}%
\left(  E\right)  \setminus\mathcal{I}$. (That is, a circuit of $M$ means a
subset of $E$ which is not independent (for $M$) and which is minimal among
such subsets.)

\textbf{(d)} An element $e$ of $E$ is said to be a \emph{loop} (of $M$) if
$\left\{  e\right\}  \notin\mathcal{I}$. The matroid $M$ is said to be
\emph{loopless} if no loops (of $M$) exist.
\end{definition}

Notice that the function that we called $r_{M}$ in Definition
\ref{def.matroid.notions} \textbf{(a)} is called the \emph{rank function} of
$M$, and is denoted by $\operatorname*{rk}$ in Stanley's \cite[Lecture
3]{Stanley}.

One of the most classical examples of a matroid is the \emph{graphical
matroid} of a graph:

\begin{example}
\label{exam.matroid.graphical}Let $G=\left(  V,E\right)  $ be a finite graph.
Define a subset $\mathcal{I}$ of $\mathcal{P}\left(  E\right)  $ by%
\[
\mathcal{I}=\left\{  T\in\mathcal{P}\left(  E\right)  \ \mid\ T\text{ contains
no circuit of }G\text{ as a subset}\right\}  .
\]
Then, $\left(  E,\mathcal{I}\right)  $ is a matroid; it is called the
\emph{graphical matroid} (or the \emph{cycle matroid}) of $G$. It has the
following properties:

\begin{itemize}
\item The matroid $\left(  E,\mathcal{I}\right)  $ is loopless.

\item For each $T\in\mathcal{P}\left(  E\right)  $, we have%
\[
r_{\left(  E,\mathcal{I}\right)  }\left(  T\right)  =\left\vert V\right\vert
-\operatorname*{conn}\left(  V,T\right)
\]
(where $\operatorname*{conn}\left(  V,T\right)  $ is defined as in Definition
\ref{def.conn}).

\item The circuits of the matroid $\left(  E,\mathcal{I}\right)  $ are
precisely the circuits of the graph $G$.

\item The flats of the matroid $\left(  E,\mathcal{I}\right)  $ are related to
colorings of $G$. More precisely: For each set $X$ and each $X$-coloring $f$
of $G$, the set $E\cap\operatorname*{Eqs}f$ is a flat of $\left(
E,\mathcal{I}\right)  $. Every flat of $\left(  E,\mathcal{I}\right)  $ can be
obtained in this way when $X$ is chosen large enough; but often, several
distinct $X$-colorings $f$ lead to one and the same flat $E\cap
\operatorname*{Eqs}f$.
\end{itemize}
\end{example}

We recall three basic facts that are used countless times in arguing about matroids:

\begin{lemma}
\label{lem.matroid.rI}Let $M=\left(  E,\mathcal{I}\right)  $ be a matroid. Let
$T\in\mathcal{I}$. Then, $r_{M}\left(  T\right)  =\left\vert T\right\vert $.
\end{lemma}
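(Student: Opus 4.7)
The proof will be essentially immediate from the definition of $r_M$, so my plan is simply to unfold the definition and establish two matching inequalities.

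First, I would recall that by (\ref{eq.def.rank.def}), we have
\[
r_{M}\left(  T\right)  =\max\left\{  \left\vert Z\right\vert \ \mid
\ Z\in\mathcal{I}\text{ and }Z\subseteq T\right\}  .
\]
For the inequality $r_{M}\left(  T\right)  \geq\left\vert T\right\vert $, I would observe that $T$ itself is an element of the set over which the maximum is taken (since $T\in\mathcal{I}$ by hypothesis, and $T\subseteq T$). Therefore $\left\vert T\right\vert $ appears among the values $\left\vert Z\right\vert $ whose maximum defines $r_{M}\left(  T\right)  $, and so $r_{M}\left(  T\right)  \geq\left\vert T\right\vert $.

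For the reverse inequality $r_{M}\left(  T\right)  \leq\left\vert T\right\vert $, I would note that every $Z\in\mathcal{I}$ with $Z\subseteq T$ satisfies $\left\vert Z\right\vert \leq\left\vert T\right\vert $ (this uses the finiteness of $T$, which holds because $T\subseteq E$ and $E$ is finite). Taking the maximum over all such $Z$, we get $r_{M}\left(  T\right)  \leq\left\vert T\right\vert $.

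Combining the two inequalities yields $r_{M}\left(  T\right)  =\left\vert T\right\vert $, which proves Lemma \ref{lem.matroid.rI}. There is no real obstacle here; the only subtlety is verifying that $T$ qualifies as a candidate $Z$ in the maximization (which uses the hypothesis $T\in\mathcal{I}$) and that the maximum is an actual maximum (which uses that $\mathcal{P}\left(  E\right)  $ is finite, so the set of candidates is finite and nonempty).
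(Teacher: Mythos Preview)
Your proof is correct and follows essentially the same approach as the paper's: both establish the two inequalities $r_M(T) \geq |T|$ and $r_M(T) \leq |T|$ by observing that $T$ itself appears among the candidates $Z$ in the maximum defining $r_M(T)$, and that every such candidate has size at most $|T|$. Your argument for the upper bound is even slightly more direct than the paper's (which picks a specific $Z$ attaining the maximum rather than bounding all candidates at once), but the difference is purely stylistic.
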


\begin{proof}
[Proof of Lemma \ref{lem.matroid.rI}.]We have $T\in\mathcal{I}$ and
$T\subseteq T$. Thus, $T$ is a $Z\in\mathcal{I}$ satisfying $Z\subseteq T$.
Therefore, $\left\vert T\right\vert \in\left\{  \left\vert Z\right\vert
\ \mid\ Z\in\mathcal{I}\text{ and }Z\subseteq T\right\}  $, so that%
\begin{equation}
\left\vert T\right\vert \leq\max\left\{  \left\vert Z\right\vert \ \mid
\ Z\in\mathcal{I}\text{ and }Z\subseteq T\right\}  \label{pf.lem.matroid.rI.1}%
\end{equation}
(since any element of a set of integers is smaller or equal to the maximum of
this set).

On the other hand, the definition of $r_{M}$ yields%
\[
r_{M}\left(  T\right)  =\max\left\{  \left\vert Z\right\vert \ \mid
\ Z\in\mathcal{I}\text{ and }Z\subseteq T\right\}  .
\]
Hence, (\ref{pf.lem.matroid.rI.1}) rewrites as follows:%
\[
\left\vert T\right\vert \leq r_{M}\left(  T\right)  .
\]

Also,
\begin{align*}
r_{M}\left(  T\right)   &  =\max\left\{  \left\vert Z\right\vert \ \mid
\ Z\in\mathcal{I}\text{ and }Z\subseteq T\right\}  \ \ \ \ \ \ \ \ \ \ \left(
\text{by the definition of }r_{M}\right) \\
&  \in\left\{  \left\vert Z\right\vert \ \mid\ Z\in\mathcal{I}\text{ and
}Z\subseteq T\right\}
\end{align*}
(since the maximum of any set belongs to this set). Thus, there exists a
$Z\in\mathcal{I}$ satisfying $Z\subseteq T$ and $r_{M}\left(  T\right)
=\left\vert Z\right\vert $. Consider this $Z$. From $Z\subseteq T$, we obtain
$\left\vert Z\right\vert \leq\left\vert T\right\vert $, so that $r_{M}\left(
T\right)  =\left\vert Z\right\vert \leq\left\vert T\right\vert $. Combining
this with $\left\vert T\right\vert \leq r_{M}\left(  T\right)  $, we obtain
$r_{M}\left(  T\right)  =\left\vert T\right\vert $. This proves Lemma
\ref{lem.matroid.rI}.
\end{proof}

\begin{lemma}
\label{lem.matroid.Cin}Let $M=\left(  E,\mathcal{I}\right)  $ be a matroid.
Let $Q\in\mathcal{P}\left(  E\right)  \setminus\mathcal{I}$. Then, there
exists a circuit $C$ of $M$ such that $C\subseteq Q$.
\end{lemma}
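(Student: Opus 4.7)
The plan is to take $C$ to be any minimal element (with respect to inclusion) of the nonempty finite set
\[
\mathcal{S} = \{ R \in \mathcal{P}(E) \setminus \mathcal{I} \;\mid\; R \subseteq Q \},
\]
and then verify that $C$ is in fact a circuit of $M$. First I would observe that $\mathcal{S}$ is nonempty: indeed, $Q \in \mathcal{P}(E) \setminus \mathcal{I}$ and $Q \subseteq Q$, so $Q \in \mathcal{S}$. Since $E$ is finite, $\mathcal{S}$ is a finite nonempty subset of the poset $\mathcal{P}(E)$, and therefore it has at least one minimal element. Pick such a minimal element and call it $C$.

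By construction, $C \in \mathcal{S}$, which means $C \in \mathcal{P}(E) \setminus \mathcal{I}$ and $C \subseteq Q$; the latter already gives the containment we want. It remains to prove that $C$ is a circuit of $M$, i.e., that $C$ is a minimal element of $\mathcal{P}(E) \setminus \mathcal{I}$ (and not merely minimal among such subsets contained in $Q$). So suppose, for contradiction, that there exists $C' \in \mathcal{P}(E) \setminus \mathcal{I}$ with $C' \subsetneq C$. Then $C' \subseteq C \subseteq Q$, so $C' \in \mathcal{S}$, which contradicts the minimality of $C$ in $\mathcal{S}$. Hence no such $C'$ exists, and $C$ is minimal in $\mathcal{P}(E) \setminus \mathcal{I}$, i.e., a circuit of $M$ by Definition \ref{def.matroid.notions} \textbf{(c)}.

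There is essentially no obstacle here; the main subtlety to spell out is the distinction between \emph{minimality within} $\mathcal{S}$ (which we immediately get by finiteness) and \emph{minimality within} $\mathcal{P}(E) \setminus \mathcal{I}$ (which is what \textquotedblleft circuit\textquotedblright\ requires), and the short downward-closure argument that bridges the two. Note that Matroid axioms 1, 2, and 3 are not actually needed here: the result uses nothing beyond the finiteness of $E$ and the definition of a circuit.
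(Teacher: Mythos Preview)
Your proof is correct and follows essentially the same approach as the paper: pick a minimal element of $\{R\in\mathcal{P}(E)\setminus\mathcal{I}\mid R\subseteq Q\}$ and observe that any strictly smaller dependent set would also lie in this family, contradicting minimality. The paper phrases the last step as showing that any $D\in\mathcal{P}(E)\setminus\mathcal{I}$ with $D\subseteq C$ must equal $C$, while you phrase it as a contradiction with a proper subset $C'$, but these are the same argument.
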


\begin{proof}
[Proof of Lemma \ref{lem.matroid.Cin}.]We have $Q\in\mathcal{P}\left(
E\right)  \setminus\mathcal{I}$. Thus, there exists at least one
$C\in\mathcal{P}\left(  E\right)  \setminus\mathcal{I}$ such that $C\subseteq
Q$ (namely, $C=Q$). Thus, there also exists a \textbf{minimal} such $C$.
Consider this minimal $C$. We know that $C$ is a minimal element of
$\mathcal{P}\left(  E\right)  \setminus\mathcal{I}$ such that $C\subseteq Q$.
In other words, $C$ is an element of $\mathcal{P}\left(  E\right)
\setminus\mathcal{I}$ satisfying $C\subseteq Q$, and moreover,%
\begin{equation}
\text{every }D\in\mathcal{P}\left(  E\right)  \setminus\mathcal{I}\text{
satisfying }D\subseteq Q\text{ and }D\subseteq C\text{ must satisfy }D=C.
\label{pf.lem.matroid.Cin.1}%
\end{equation}
Thus, $C$ is a minimal element of $\mathcal{P}\left(  E\right)  \setminus
\mathcal{I}$\ \ \ \ \footnote{\textit{Proof.} We need to show that every
$D\in\mathcal{P}\left(  E\right)  \setminus\mathcal{I}$ satisfying $D\subseteq
C$ must satisfy $D=C$ (since we already know that $C\in\mathcal{P}\left(
E\right)  \setminus\mathcal{I}$).
\par
So let $D\in\mathcal{P}\left(  E\right)  \setminus\mathcal{I}$ be such that
$D\subseteq C$. Then, $D\subseteq C\subseteq Q$. Hence,
(\ref{pf.lem.matroid.Cin.1}) shows that $D=C$. This completes our proof.}. In
other words, $C$ is a circuit of $M$ (by the definition of a \textquotedblleft
circuit\textquotedblright). This circuit $C$ satisfies $C\subseteq Q$. Thus,
we have constructed a circuit $C$ of $M$ satisfying $C\subseteq Q$. Lemma
\ref{lem.matroid.Cin} is thus proven.
\end{proof}

\begin{lemma}
\label{lem.matroid.basext}Let $M=\left(  E,\mathcal{I}\right)  $ be a matroid.
Let $T$ be a subset of $E$. Let $S\in\mathcal{I}$ be such that $S\subseteq T$.
Then, there exists an $S^{\prime}\in\mathcal{I}$ satisfying $S\subseteq
S^{\prime}\subseteq T$ and $\left\vert S^{\prime}\right\vert =r_{M}\left(
T\right)  $.
\end{lemma}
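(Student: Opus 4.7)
The plan is to use Matroid axiom 3 (the augmentation axiom) in the standard way one proves that any independent set in a matroid can be extended to a basis of any superset.

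First I would unpack the definition of $r_M(T)$: by (\ref{eq.def.rank.def}), the rank $r_M(T)$ is the maximum cardinality of a $Z \in \mathcal{I}$ with $Z \subseteq T$. Choose such a $Z$, so that $Z \in \mathcal{I}$, $Z \subseteq T$, and $|Z| = r_M(T)$. Separately, let $\mathcal{F}$ denote the (nonempty, since $S \in \mathcal{F}$) collection of all $Y \in \mathcal{I}$ satisfying $S \subseteq Y \subseteq T$; since $\mathcal{F}$ consists of subsets of the finite set $E$, it has an element of maximum cardinality. Pick such a maximum-cardinality element $S' \in \mathcal{F}$. Then $S' \in \mathcal{I}$ and $S \subseteq S' \subseteq T$.

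It remains to show $|S'| = r_M(T)$. The inequality $|S'| \le r_M(T)$ is immediate from (\ref{eq.def.rank.def}), since $S' \in \mathcal{I}$ and $S' \subseteq T$ make $|S'|$ one of the values over which the maximum in (\ref{eq.def.rank.def}) is taken. For the reverse inequality, suppose for contradiction that $|S'| < r_M(T) = |Z|$. Then Matroid axiom 3, applied to $Y = S'$ and to $Z$, yields some $x \in Z \setminus S'$ with $S' \cup \{x\} \in \mathcal{I}$. Since $x \in Z \subseteq T$ and $S' \subseteq T$, we get $S' \cup \{x\} \subseteq T$; we also have $S \subseteq S' \subseteq S' \cup \{x\}$. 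Therefore $S' \cup \{x\} \in \mathcal{F}$, but $|S' \cup \{x\}| = |S'| + 1 > |S'|$ (since $x \notin S'$), contradicting the choice of $S'$ as a maximum-cardinality element of $\mathcal{F}$. Hence $|S'| = r_M(T)$, and $S'$ is the desired set.

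The proof is essentially routine once one sets up the maximum-cardinality element $S'$ of $\mathcal{F}$; the only nontrivial ingredient is Matroid axiom 3, which is exactly what is designed to exclude the failure case $|S'| < r_M(T)$. There is no real obstacle; the main thing to be careful about is to extend $S$ (not merely produce some maximum-cardinality independent subset of $T$), which is why one must maximize within $\mathcal{F}$ rather than over all independent subsets of $T$.
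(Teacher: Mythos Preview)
Your proof is correct and follows essentially the same approach as the paper's: both pick an extremal independent set between $S$ and $T$, then use Matroid axiom 3 against a witness $Z$ (the paper calls it $W$) realizing $r_M(T)$ to derive a contradiction. The only cosmetic difference is that you maximize cardinality over $\mathcal{F}$, whereas the paper takes an inclusion-maximal element; both choices lead to the same contradiction in the same way.
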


\begin{proof}
[Proof of Lemma \ref{lem.matroid.basext}.]Clearly, there exists at least one
$S^{\prime}\in\mathcal{I}$ satisfying $S\subseteq S^{\prime}\subseteq T$
(namely, $S^{\prime}=S$). Hence, there exists a \textbf{maximal} such
$S^{\prime}$. Let $Q$ be such a maximal $S^{\prime}$. Thus, $Q$ is an element
of $\mathcal{I}$ satisfying $S\subseteq Q\subseteq T$.

Recall that
\begin{align*}
r_{M}\left(  T\right)   &  =\max\left\{  \left\vert Z\right\vert \ \mid
\ Z\in\mathcal{I}\text{ and }Z\subseteq T\right\}  \ \ \ \ \ \ \ \ \ \ \left(
\text{by the definition of }r_{M}\right) \\
&  \in\left\{  \left\vert Z\right\vert \ \mid\ Z\in\mathcal{I}\text{ and
}Z\subseteq T\right\}
\end{align*}
(since the maximum of any set must belong to this set). Hence, there exists
some $Z\in\mathcal{I}$ satisfying $Z\subseteq T$ and $r_{M}\left(  T\right)
=\left\vert Z\right\vert $. Denote such a $Z$ by $W$. Thus, $W$ is an element
of $\mathcal{I}$ satisfying $W\subseteq T$ and $r_{M}\left(  T\right)
=\left\vert W\right\vert $.

We have $\left\vert Q\right\vert \in\left\{  \left\vert Z\right\vert
\ \mid\ Z\in\mathcal{I}\text{ and }Z\subseteq T\right\}  $ (since
$Q\in\mathcal{I}$ and $Q\subseteq T$). Since any element of a set is smaller
or equal to the maximum of this set, this entails that $\left\vert
Q\right\vert \leq\max\left\{  \left\vert Z\right\vert \ \mid\ Z\in
\mathcal{I}\text{ and }Z\subseteq T\right\}  =r_{M}\left(  T\right)
=\left\vert W\right\vert $.

Now, assume (for the sake of contradiction) that $\left\vert Q\right\vert
\neq\left\vert W\right\vert $. Thus, $\left\vert Q\right\vert <\left\vert
W\right\vert $ (since $\left\vert Q\right\vert \leq\left\vert W\right\vert $).
Hence, Matroid axiom 3 (applied to $Y=Q$ and $Z=W$) shows that there exists
some $x\in W\setminus Q$ such that $Q\cup\left\{  x\right\}  \in\mathcal{I}$.
Consider this $x$. We have $x\in W\setminus Q\subseteq W\subseteq T$, so that
$Q\cup\left\{  x\right\}  \subseteq T$ (since $Q\subseteq T$). Also, $x\notin
Q$ (since $x\in W\setminus Q$).

Recall that $Q$ is a \textbf{maximal} $S^{\prime}\in\mathcal{I}$ satisfying
$S\subseteq S^{\prime}\subseteq T$. Thus, if some $S^{\prime}\in\mathcal{I}$
satisfies $S\subseteq S^{\prime}\subseteq T$ and $S^{\prime}\supseteq Q$, then
$S^{\prime}=Q$. Applying this to $S^{\prime}=Q\cup\left\{  x\right\}  $, we
obtain $Q\cup\left\{  x\right\}  =Q$ (since $S\subseteq Q\subseteq
Q\cup\left\{  x\right\}  \subseteq T$ and $Q\cup\left\{  x\right\}  \supseteq
Q$). Thus, $x\in Q$. But this contradicts $x\notin Q$. This contradiction
shows that our assumption (that $\left\vert Q\right\vert \neq\left\vert
W\right\vert $) was wrong. Hence, $\left\vert Q\right\vert =\left\vert
W\right\vert =r_{M}\left(  T\right)  $. Thus, there exists an $S^{\prime}%
\in\mathcal{I}$ satisfying $S\subseteq S^{\prime}\subseteq T$ and $\left\vert
S^{\prime}\right\vert =r_{M}\left(  T\right)  $ (namely, $S^{\prime}=Q$). This
proves Lemma \ref{lem.matroid.basext}.
\end{proof}

\subsection{The lattice of flats}

We shall now show a lemma that can be regarded as an alternative criterion for
a subset of $E$ to be a flat:

\begin{lemma}
\label{lem.matroid.flat-crit}Let $M=\left(  E,\mathcal{I}\right)  $ be a
matroid. Let $T$ be a subset of $E$. Then, the following statements are equivalent:

\textit{Statement }$\mathfrak{F}_{1}$\textit{:} The set $T$ is a flat of $M$.

\textit{Statement }$\mathfrak{F}_{2}$\textit{:} If $C$ is a circuit of $M$,
and if $e\in C$ is such that $C\setminus\left\{  e\right\}  \subseteq T$, then
$C\subseteq T$.
\end{lemma}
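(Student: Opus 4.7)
The plan is to prove the equivalence by using the following standard reformulation of the flat condition: a subset $T \subseteq E$ is a flat if and only if $r_M(T \cup \{e\}) > r_M(T)$ for every $e \in E \setminus T$. (One direction is obvious; the other follows since rank is monotone and any proper superset of $T$ contains some $e \notin T$.) I would state and briefly justify this reformulation first, and then prove both implications.

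For the direction $\mathfrak{F}_1 \Rightarrow \mathfrak{F}_2$, I assume $T$ is a flat and suppose for contradiction that some circuit $C$ and some $e \in C$ satisfy $C \setminus \{e\} \subseteq T$ but $e \notin T$. Since $C$ is a circuit (a minimal element of $\mathcal{P}(E) \setminus \mathcal{I}$), the proper subset $C \setminus \{e\}$ lies in $\mathcal{I}$. I apply Lemma \ref{lem.matroid.basext} to extend $C \setminus \{e\}$ to some $S \in \mathcal{I}$ with $C \setminus \{e\} \subseteq S \subseteq T$ and $|S| = r_M(T)$. Applying Lemma \ref{lem.matroid.basext} once more to $S \subseteq T \cup \{e\}$ gives $S'' \in \mathcal{I}$ with $S \subseteq S'' \subseteq T \cup \{e\}$ and $|S''| = r_M(T \cup \{e\}) = r_M(T) + 1 = |S| + 1$. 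The only way to add a single element to $S$ inside $T \cup \{e\}$ is to adjoin $e$ itself (adjoining any $x \in T \setminus S$ would give an independent subset of $T$ of size greater than $r_M(T)$, which is impossible by Lemma \ref{lem.matroid.rI} and the definition of $r_M$), so $S'' = S \cup \{e\} \supseteq (C \setminus \{e\}) \cup \{e\} = C$. By Matroid axiom 2, this forces $C \in \mathcal{I}$, contradicting that $C$ is a circuit.

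For the direction $\mathfrak{F}_2 \Rightarrow \mathfrak{F}_1$, I take any $e \in E \setminus T$ and show $r_M(T \cup \{e\}) > r_M(T)$, which by the reformulation above suffices. Using Lemma \ref{lem.matroid.basext} (starting from $\varnothing \in \mathcal{I}$), I pick $S \in \mathcal{I}$ with $S \subseteq T$ and $|S| = r_M(T)$. If $S \cup \{e\} \in \mathcal{I}$, then $r_M(T \cup \{e\}) \geq |S| + 1 > r_M(T)$ and we are done. Otherwise $S \cup \{e\} \in \mathcal{P}(E) \setminus \mathcal{I}$, so Lemma \ref{lem.matroid.Cin} yields a circuit $C \subseteq S \cup \{e\}$. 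Since $S \in \mathcal{I}$, we must have $e \in C$ (else $C \subseteq S$ would give $C \in \mathcal{I}$ by Matroid axiom 2). Then $C \setminus \{e\} \subseteq S \subseteq T$, and $\mathfrak{F}_2$ forces $C \subseteq T$, hence $e \in T$, contradicting $e \in E \setminus T$.

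The main obstacle is not really difficult: it is the forward direction, where one must produce a single independent set that simultaneously witnesses the rank of $T$ and contains $C \setminus \{e\}$, so that after adjoining $e$ one obtains a superset of $C$ which is forced to be independent. This is handled cleanly by two applications of the basis-extension Lemma \ref{lem.matroid.basext}, the first to anchor $C \setminus \{e\}$ inside a maximal independent subset of $T$ and the second to extract the contradiction from the flat condition. The reverse direction is essentially a direct application of Lemma \ref{lem.matroid.Cin}.
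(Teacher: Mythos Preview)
Your proof is correct and follows essentially the same approach as the paper's: both directions use the same key lemmas (two applications of Lemma~\ref{lem.matroid.basext} for $\mathfrak{F}_1 \Rightarrow \mathfrak{F}_2$, and Lemma~\ref{lem.matroid.Cin} for $\mathfrak{F}_2 \Rightarrow \mathfrak{F}_1$). The only cosmetic difference is that you first rephrase the flat condition as ``adding any $e \notin T$ strictly increases rank,'' whereas the paper works directly with the definition of a $k$-flat; your use of $r_M(T\cup\{e\}) = r_M(T)+1$ quietly invokes the unit-increase property of rank, but the argument works just as well with the weaker $|S''|>r_M(T)$ that the paper uses.
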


\begin{proof}
[Proof of Lemma \ref{lem.matroid.flat-crit}.]\textit{Proof of the implication
$\mathfrak{F}_{1}\Longrightarrow\mathfrak{F}_{2}$:} Assume that Statement
$\mathfrak{F}_{1}$ holds. We must prove that Statement $\mathfrak{F}_{2}$ holds.

Let $C$ be a circuit of $M$. Let $e\in C$ be such that $C\setminus\left\{
e\right\}  \subseteq T$. We must prove that $C\subseteq T$.

Assume the contrary. Thus, $C\not \subseteq T$. Combining this with
$C\setminus\left\{  e\right\}  \subseteq T$, we obtain $e\notin T$. Hence, $T$
is a proper subset of $T\cup\left\{  e\right\}  $.

We have assumed that Statement $\mathfrak{F}_{1}$ holds. In other words, the
set $T$ is a flat of $M$. In other words, there exists some $k\in\mathbb{N}$
such that $T$ is a $k$-flat of $M$. Consider this $k$.

The set $T$ is a $k$-flat of $M$, thus a subset of $E$ that has rank $k$ and
is maximal among all such subsets. In other words, $r_{M}\left(  T\right)
=k$, but every subset $S$ of $E$ for which $T$ is a proper subset of $S$ must
satisfy%
\begin{equation}
r_{M}\left(  S\right)  \neq k. \label{pf.lem.matroid.flat-crit.1.1}%
\end{equation}
Applying (\ref{pf.lem.matroid.flat-crit.1.1}) to $S=T\cup\left\{  e\right\}
$, we obtain $r_{M}\left(  T\cup\left\{  e\right\}  \right)  \neq k$. Since
$T\cup\left\{  e\right\}  \supseteq T$ (and since the function $r_{M}%
:\mathcal{P}\left(  E\right)  \rightarrow\mathbb{N}$ is weakly increasing), we
have $r_{M}\left(  T\cup\left\{  e\right\}  \right)  \geq r_{M}\left(
T\right)  =k$. Combined with $r_{M}\left(  T\cup\left\{  e\right\}  \right)
\neq k$, this yields $r_{M}\left(  T\cup\left\{  e\right\}  \right)
>k=r_{M}\left(  T\right)  $.

Notice that $C\setminus\left\{  e\right\}  $ is a proper subset of $C$ (since
$e\in C$). The set $C$ is a circuit of $M$, thus a minimal element of
$\mathcal{P}\left(  E\right)  \setminus\mathcal{I}$ (by the definition of a
\textquotedblleft circuit\textquotedblright). Hence, no proper subset of $C$
belongs to $\mathcal{P}\left(  E\right)  \setminus\mathcal{I}$ (because $C$ is
minimal). In other words, every proper subset of $C$ belongs to $\mathcal{I}$.
Applying this to the proper subset $C\setminus\left\{  e\right\}  $ of $C$, we
conclude that $C\setminus\left\{  e\right\}  $ belongs to $\mathcal{I}$.
Hence, Lemma \ref{lem.matroid.basext} (applied to $S=C\setminus\left\{
e\right\}  $) shows that there exists an $S^{\prime}\in\mathcal{I}$ satisfying
$C\setminus\left\{  e\right\}  \subseteq S^{\prime}\subseteq T$ and
$\left\vert S^{\prime}\right\vert =r_{M}\left(  T\right)  $. Denote this
$S^{\prime}$ by $S$. Thus, $S$ is an element of $\mathcal{I}$ satisfying
$C\setminus\left\{  e\right\}  \subseteq S\subseteq T$ and $\left\vert
S\right\vert =r_{M}\left(  T\right)  $.

Furthermore, $S\subseteq T\subseteq T\cup\left\{  e\right\}  $. Thus, Lemma
\ref{lem.matroid.basext} (applied to $T\cup\left\{  e\right\}  $ instead of
$T$) shows that there exists an $S^{\prime}\in\mathcal{I}$ satisfying
$S\subseteq S^{\prime}\subseteq T\cup\left\{  e\right\}  $ and $\left\vert
S^{\prime}\right\vert =r_{M}\left(  T\cup\left\{  e\right\}  \right)  $.
Consider this $S^{\prime}$.

We have $\left\vert S^{\prime}\right\vert =r_{M}\left(  T\cup\left\{
e\right\}  \right)  >r_{M}\left(  T\right)  $. Hence, $S^{\prime
}\not \subseteq T$\ \ \ \ \footnote{\textit{Proof.} Assume the contrary. Thus,
$S^{\prime}\subseteq T$. Hence, $S^{\prime}$ is an element of $\mathcal{I}$
and satisfies $S^{\prime}\subseteq T$. Thus, $\left\vert S^{\prime}\right\vert
\in\left\{  \left\vert Z\right\vert \ \mid\ Z\in\mathcal{I}\text{ and
}Z\subseteq T\right\}  $.
\par
Now, the definition of $r_{M}$ yields%
\[
r_{M}\left(  T\right)  =\max\left\{  \left\vert Z\right\vert \ \mid
\ Z\in\mathcal{I}\text{ and }Z\subseteq T\right\}  \geq\left\vert S^{\prime
}\right\vert
\]
(since $\left\vert S^{\prime}\right\vert \in\left\{  \left\vert Z\right\vert
\ \mid\ Z\in\mathcal{I}\text{ and }Z\subseteq T\right\}  $). This contradicts
$\left\vert S^{\prime}\right\vert >r_{M}\left(  T\right)  $. This
contradiction proves that our assumption was wrong, qed.}. Combining this with
$S^{\prime}\subseteq T\cup\left\{  e\right\}  $, we obtain $e\in S^{\prime}$.
Combining this with $C\setminus\left\{  e\right\}  \subseteq S\subseteq
S^{\prime}$, we find that $\left(  C\setminus\left\{  e\right\}  \right)
\cup\left\{  e\right\}  \subseteq S^{\prime}$. Thus, $C=\left(  C\setminus
\left\{  e\right\}  \right)  \cup\left\{  e\right\}  \subseteq S^{\prime}$.
Since $S^{\prime}\in\mathcal{I}$, this entails that $C\in\mathcal{I}$ (by
Matroid axiom 2). But $C\in\mathcal{P}\left(  E\right)  \setminus\mathcal{I}$
(since $C$ is a minimal element of $\mathcal{P}\left(  E\right)
\setminus\mathcal{I}$), so that $C\notin\mathcal{I}$. This contradicts
$C\in\mathcal{I}$. This contradiction shows that our assumption was wrong.
Hence, $C\subseteq T$ is proven. Therefore, Statement $\mathfrak{F}_{2}$
holds. Thus, the implication $\mathfrak{F}_{1}\Longrightarrow\mathfrak{F}_{2}$
is proven.

\textit{Proof of the implication $\mathfrak{F}_{2}\Longrightarrow
\mathfrak{F}_{1}$:} Assume that Statement $\mathfrak{F}_{2}$ holds. We must
prove that Statement $\mathfrak{F}_{1}$ holds.

Let $k=r_{M}\left(  T\right)  $. We shall show that $T$ is a $k$-flat of $M$.

Let $W$ be a subset of $E$ that has rank $k$ and satisfies $T\subseteq W$. We
shall show that $T=W$.

Indeed, assume the contrary. Thus, $T\neq W$. Combined with $T\subseteq W$,
this shows that $T$ is a proper subset of $W$. Thus, there exists an $e\in
W\setminus T$. Consider this $e$. We have $e\notin T$ (since $e\in W\setminus
T$).

We have
\begin{align*}
k  &  =r_{M}\left(  T\right)  =\max\left\{  \left\vert Z\right\vert
\ \mid\ Z\in\mathcal{I}\text{ and }Z\subseteq T\right\}
\ \ \ \ \ \ \ \ \ \ \left(  \text{by the definition of }r_{M}\right) \\
&  \in\left\{  \left\vert Z\right\vert \ \mid\ Z\in\mathcal{I}\text{ and
}Z\subseteq T\right\}
\end{align*}
(since the maximum of a set must belong to that set). Hence, there exists some
$Z\in\mathcal{I}$ satisfying $Z\subseteq T$ and $k=\left\vert Z\right\vert $.
Denote this $Z$ by $K$. Thus, $K$ is an element of $\mathcal{I}$ and satisfies
$K\subseteq T$ and $k=\left\vert K\right\vert $. Notice that $e\notin T$, so
that $e\notin K$ (since $K\subseteq T$).

We have $r_{M}\left(  W\right)  =k$ (since $W$ has rank $k$). Hence,
$K\cup\left\{  e\right\}  \notin\mathcal{I}$\ \ \ \ \footnote{\textit{Proof.}
Assume the contrary. Thus, $K\cup\left\{  e\right\}  \in\mathcal{I}$. Thus,
$r_{M}\left(  K\cup\left\{  e\right\}  \right)  =\left\vert K\cup\left\{
e\right\}  \right\vert $ (by Lemma \ref{lem.matroid.rI}). Thus, $r_{M}\left(
K\cup\left\{  e\right\}  \right)  =\left\vert K\cup\left\{  e\right\}
\right\vert >\left\vert K\right\vert $ (since $e\notin K$).
\par
But $K\cup\left\{  e\right\}  \subseteq W$ (since $K\subseteq T\subseteq W$
and $e\in W\setminus T\subseteq W$). Since the function $r_{M}$ is weakly
increasing, this yields $r_{M}\left(  K\cup\left\{  e\right\}  \right)  \leq
r_{M}\left(  W\right)  =k=\left\vert K\right\vert $. This contradicts
$r_{M}\left(  K\cup\left\{  e\right\}  \right)  >\left\vert K\right\vert $.
This contradiction proves that our assumption was wrong, qed.}. In other
words, $K\cup\left\{  e\right\}  \in\mathcal{P}\left(  E\right)
\setminus\mathcal{I}$. Hence, Lemma \ref{lem.matroid.Cin} (applied to
$Q=K\cup\left\{  e\right\}  $) shows that there exists a circuit $C$ of $M$
such that $C\subseteq K\cup\left\{  e\right\}  $. Consider this $C$. From
$C\subseteq K\cup\left\{  e\right\}  $, we obtain $C\setminus\left\{
e\right\}  \subseteq K\subseteq T$.

From $C\setminus\left\{  e\right\}  \subseteq K$, we conclude (using Matroid
axiom 2) that $C\setminus\left\{  e\right\}  \in\mathcal{I}$ (since
$K\in\mathcal{I}$). On the other hand, $C$ is a circuit of $M$. In other
words, $C$ is a minimal element of $\mathcal{P}\left(  E\right)
\setminus\mathcal{I}$ (by the definition of a \textquotedblleft
circuit\textquotedblright). Hence, $C\in\mathcal{P}\left(  E\right)
\setminus\mathcal{I}$, so that $C\notin\mathcal{I}$. Hence, $e\in C$ (since
otherwise, we would have $C\setminus\left\{  e\right\}  =C\notin\mathcal{I}$,
which would contradict $C\setminus\left\{  e\right\}  \in\mathcal{I}$). Now,
Statement $\mathfrak{F}_{2}$ shows that $C\subseteq T$. Hence, $e\in
C\subseteq T$, which contradicts $e\notin T$.

This contradiction shows that our assumption was wrong. Hence, $T=W$ is proven.

Now, forget that we fixed $W$. Thus, we have shown that if $W$ is a subset of
$E$ that has rank $k$ and satisfies $T\subseteq W$, then $T=W$. In other
words, $T$ is a subset of $E$ that has rank $k$ and is maximal among all such
subsets (because we already know that $T$ has rank $r_{M}\left(  T\right)
=k$). In other words, $T$ is a $k$-flat of $M$ (by the definition of a
\textquotedblleft$k$-flat\textquotedblright). Thus, $T$ is a flat of $M$. In
other words, Statement $\mathfrak{F}_{1}$ holds. This proves the implication
$\mathfrak{F}_{2}\Longrightarrow\mathfrak{F}_{1}$.

We have now proven the implications $\mathfrak{F}_{1}\Longrightarrow
\mathfrak{F}_{2}$ and $\mathfrak{F}_{2}\Longrightarrow\mathfrak{F}_{1}$.
Together, these implications show that Statements $\mathfrak{F}_{1}$ and
$\mathfrak{F}_{2}$ are equivalent. This proves Lemma
\ref{lem.matroid.flat-crit}.
\end{proof}

\begin{corollary}
\label{cor.matroid.flat-cap}Let $M=\left(  E,\mathcal{I}\right)  $ be a
matroid. Let $F_{1},F_{2},\ldots,F_{k}$ be flats of $M$. Then, $F_{1}\cap
F_{2}\cap\cdots\cap F_{k}$ is a flat of $M$. (Notice that $k$ is allowed to be
$0$ here; in this case, the empty intersection $F_{1}\cap F_{2}\cap\cdots\cap
F_{k}$ is to be interpreted as $E$.)
\end{corollary}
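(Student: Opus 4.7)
The plan is to reduce the claim to the circuit-based characterization of flats provided by Lemma \ref{lem.matroid.flat-crit}, namely the equivalence $\mathfrak{F}_{1} \Longleftrightarrow \mathfrak{F}_{2}$. That lemma says that a subset $T \subseteq E$ is a flat if and only if, whenever $C$ is a circuit of $M$ and $e \in C$ satisfies $C \setminus \{e\} \subseteq T$, we have $C \subseteq T$. This criterion is manifestly preserved under arbitrary intersections, which is exactly the content of the corollary. Set $T = F_{1} \cap F_{2} \cap \cdots \cap F_{k}$; the plan is to verify Statement $\mathfrak{F}_{2}$ for $T$ and then invoke the lemma to conclude that $T$ is a flat.

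To carry this out, I would first dispose of the edge case $k = 0$: the empty intersection is $E$ itself, and $E$ is a flat because no circuit $C$ of $M$ can fail to be contained in $E$, so Statement $\mathfrak{F}_{2}$ holds vacuously for $T = E$. (Alternatively, $r_{M}(E)$-flat-ness of $E$ is immediate from the definition, since no subset of $E$ properly contains $E$.) So I would assume $k \geq 1$ and let $C$ be any circuit of $M$ and $e \in C$ an element with $C \setminus \{e\} \subseteq T$. For each $i \in \{1, 2, \ldots, k\}$, we have $T \subseteq F_{i}$, so $C \setminus \{e\} \subseteq F_{i}$. Since $F_{i}$ is a flat of $M$, Lemma \ref{lem.matroid.flat-crit} (applied to $T = F_{i}$) tells us that Statement $\mathfrak{F}_{2}$ holds for $F_{i}$, and therefore $C \subseteq F_{i}$. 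Taking the intersection over $i$, we obtain $C \subseteq F_{1} \cap F_{2} \cap \cdots \cap F_{k} = T$.

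This verifies Statement $\mathfrak{F}_{2}$ for $T$, so Lemma \ref{lem.matroid.flat-crit} (applied in the reverse direction, $\mathfrak{F}_{2} \Longrightarrow \mathfrak{F}_{1}$) yields that $T$ is a flat of $M$, which is the desired conclusion. There is no real obstacle here beyond remembering to address the empty-intersection convention $k = 0$; the core of the argument is a one-line application of the circuit criterion to each factor, and the intersection of circuit-closed sets is obviously circuit-closed.
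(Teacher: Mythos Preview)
Your proof is correct and follows essentially the same approach as the paper: both verify Statement $\mathfrak{F}_{2}$ of Lemma \ref{lem.matroid.flat-crit} for the intersection by applying that lemma to each $F_{i}$ separately and then intersecting. Your explicit treatment of the $k=0$ case is a minor addition; the paper's argument handles it uniformly since Statement $\mathfrak{F}_{2}$ is vacuously satisfied for $T=E$.
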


\begin{proof}
[Proof of Corollary \ref{cor.matroid.flat-cap}.]Lemma
\ref{lem.matroid.flat-crit} gives a necessary and sufficient criterion for a
subset $T$ of $E$ to be a flat of $M$. It is easy to see that if this
criterion is satisfied for $T=F_{1}$, for $T=F_{2}$, etc., and for $T=F_{k}$,
then it is satisfied for $T=F_{1}\cap F_{2}\cap\cdots\cap F_{k}$. In other
words, if $F_{1},F_{2},\ldots,F_{k}$ are flats of $M$, then $F_{1}\cap
F_{2}\cap\cdots\cap F_{k}$ is a flat of $M$.\ \ \ \ \footnote{Here is this
argument in slightly more detail:
\par
For every $i\in\left\{  1,2,\ldots,k\right\}  $, the following statement
holds: If $C$ is a circuit of $M$, and if $e\in C$ is such that $C\setminus
\left\{  e\right\}  \subseteq F_{i}$, then%
\begin{equation}
C\subseteq F_{i}. \label{pf.cor.matroid.flat-cap.fn1.1}%
\end{equation}
\par
\textit{Proof of (\ref{pf.cor.matroid.flat-cap.fn1.1}):} Let $i\in\left\{
1,2,\ldots,k\right\}  $. Then, the set $F_{i}$ is a flat of $M$. In other
words, Statement $\mathfrak{F}_{1}$ of Lemma \ref{lem.matroid.flat-crit} is
satisfied for $T=F_{i}$. Therefore, Statement $\mathfrak{F}_{2}$ of Lemma
\ref{lem.matroid.flat-crit} must also be satisfied for $T=F_{i}$ (since Lemma
\ref{lem.matroid.flat-crit} shows that the Statements $\mathfrak{F}_{1}$ and
$\mathfrak{F}_{2}$ are equivalent). In other words, if $C$ is a circuit of
$M$, and if $e\in C$ is such that $C\setminus\left\{  e\right\}  \subseteq
F_{i}$, then $C\subseteq F_{i}$. This proves
(\ref{pf.cor.matroid.flat-cap.fn1.1}).
\par
Now, let $C$ be a circuit of $M$, and let $e\in C$ be such that $C\setminus
\left\{  e\right\}  \subseteq F_{1}\cap F_{2}\cap\cdots\cap F_{k}$. For every
$i\in\left\{  1,2,\ldots,k\right\}  $, we have $C\setminus\left\{  e\right\}
\subseteq F_{1}\cap F_{2}\cap\cdots\cap F_{k}\subseteq F_{i}$, and therefore
$C\subseteq F_{i}$ (by (\ref{pf.cor.matroid.flat-cap.fn1.1})). So we have
shown the inclusion $C\subseteq F_{i}$ for each $i\in\left\{  1,2,\ldots
,k\right\}  $. Combining these $k$ inclusions, we obtain $C\subseteq F_{1}\cap
F_{2}\cap\cdots\cap F_{k}$.
\par
Now, forget that we fixed $C$. We thus have shown that if $C$ is a circuit of
$M$, and if $e\in C$ is such that $C\setminus\left\{  e\right\}  \subseteq
F_{1}\cap F_{2}\cap\cdots\cap F_{k}$, then $C\subseteq F_{1}\cap F_{2}%
\cap\cdots\cap F_{k}$. In other words, Statement $\mathfrak{F}_{2}$ of Lemma
\ref{lem.matroid.flat-crit} is satisfied for $T=F_{1}\cap F_{2}\cap\cdots\cap
F_{k}$. Therefore, Statement $\mathfrak{F}_{1}$ of Lemma
\ref{lem.matroid.flat-crit} must also be satisfied for $T=F_{1}\cap F_{2}%
\cap\cdots\cap F_{k}$ (since Lemma \ref{lem.matroid.flat-crit} shows that the
Statements $\mathfrak{F}_{1}$ and $\mathfrak{F}_{2}$ are equivalent). In other
words, the set $F_{1}\cap F_{2}\cap\cdots\cap F_{k}$ is a flat of $M$. Qed.}
This proves Corollary \ref{cor.matroid.flat-cap}.
\end{proof}

Corollary \ref{cor.matroid.flat-cap} (a well-known fact, which is left to the
reader to prove in \cite[\S 3.1]{Stanley}) allows us to define the
\emph{closure} of a set in a matroid:

\begin{definition}
\label{def.matroid.closure}Let $M=\left(  E,\mathcal{I}\right)  $ be a
matroid. Let $T$ be a subset of $E$. The \emph{closure} of $T$ is defined to
be the intersection of all flats of $M$ which contain $T$ as a subset. In
other words, the closure of $T$ is defined to be $\bigcap_{\substack{F\in
\operatorname*{Flats}M;\\T\subseteq F}}F$. The closure of $T$ is denoted by
$\overline{T}$.
\end{definition}

The following proposition gathers some simple properties of closures in matroids:

\begin{proposition}
\label{prop.matroid.closure.props}Let $M=\left(  E,\mathcal{I}\right)  $ be a matroid.

\textbf{(a)} If $T$ is a subset of $E$, then $\overline{T}$ is a flat of $M$
satisfying $T\subseteq\overline{T}$.

\textbf{(b)} If $G$ is a flat of $M$, then $\overline{G}=G$.

\textbf{(c)} If $T$ is a subset of $E$ and if $G$ is a flat of $M$ satisfying
$T\subseteq G$, then $\overline{T}\subseteq G$.

\textbf{(d)} If $S$ and $T$ are two subsets of $E$ satisfying $S\subseteq T$,
then $\overline{S}\subseteq\overline{T}$.

\textbf{(e)} If the matroid $M$ is loopless, then $\overline{\varnothing
}=\varnothing$.

\textbf{(f)} Every subset $T$ of $E$ satisfies $r_{M}\left(  T\right)
=r_{M}\left(  \overline{T}\right)  $.

\textbf{(g)} If $T$ is a subset of $E$ and if $G$ is a flat of $M$, then the
conditions $\left(  \overline{T}\subseteq G\right)  $ and $\left(  T\subseteq
G\right)  $ are equivalent.
\end{proposition}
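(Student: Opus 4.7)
My plan is to prove the seven parts roughly in the order (a), (c), (b), (d), (g), (e), (f), as each part builds on the previous ones. Parts (a)--(d) and (g) are essentially bookkeeping using Corollary~\ref{cor.matroid.flat-cap} and Definition~\ref{def.matroid.closure}, while (e) and (f) require actual matroid arguments.

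For part (a), I would first observe that $E$ is itself a flat of $M$: it has some rank $k = r_M(E)$ and admits no strict superset in $E$, so it is trivially maximal among rank-$k$ subsets. Hence the intersection $\overline{T} = \bigcap_{F \in \operatorname{Flats} M; T \subseteq F} F$ is nonempty. By Corollary~\ref{cor.matroid.flat-cap}, this intersection is itself a flat, and $T \subseteq \overline{T}$ because $T$ is contained in every $F$ appearing in the intersection. Part (c) is then immediate since any flat $G$ with $T \subseteq G$ appears in the intersection defining $\overline{T}$. Part (b) follows by applying (a) to get $G \subseteq \overline{G}$ and (c) with $T = G$ to get the reverse. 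For (d), note that any flat containing $T$ also contains $S$, so in particular $\overline{T}$ is a flat containing $S$, and (c) (applied to $S$ and $G = \overline{T}$) gives $\overline{S} \subseteq \overline{T}$. Part (g) is the combination of (a) (for the forward direction: $T \subseteq \overline{T} \subseteq G$) and (c) (for the reverse).

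For part (e), I would use Lemma~\ref{lem.matroid.flat-crit} to check that $\varnothing$ is itself a flat; combined with (b), this gives $\overline{\varnothing} = \varnothing$. The verification of Statement~$\mathfrak{F}_{2}$ for $T = \varnothing$ is vacuous: if $C$ is a circuit of $M$ and $e \in C$ satisfies $C \setminus \{e\} \subseteq \varnothing$, then $C \subseteq \{e\}$, and since $\varnothing \in \mathcal{I}$ cannot be a circuit (as circuits lie in $\mathcal{P}(E) \setminus \mathcal{I}$), we must have $C = \{e\}$; but looplessness forces $\{e\} \in \mathcal{I}$, contradicting $C \notin \mathcal{I}$. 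So no such $C$ exists.

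The main work is part (f). The inequality $r_M(T) \leq r_M(\overline{T})$ is immediate from $T \subseteq \overline{T}$ and monotonicity of $r_M$. For the reverse, set $k = r_M(T)$ and consider the family $\mathcal{W} = \{W \subseteq E \mid T \subseteq W \text{ and } r_M(W) = k\}$; this contains $T$ itself and so is nonempty, and since $E$ is finite, I may pick a $W \in \mathcal{W}$ that is maximal under inclusion. I will argue that $W$ is a $k$-flat: it has rank $k$ by definition, and if $W'$ were a proper superset of $W$ with $r_M(W') = k$, then $T \subseteq W \subsetneq W'$ would force $W' \in \mathcal{W}$, contradicting the maximality of $W$. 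So every strict superset of $W$ has rank different from $k$, and by monotonicity must have rank strictly greater than $k$. Hence $W$ is a $k$-flat containing $T$, so $\overline{T} \subseteq W$ (by (g)), giving $r_M(\overline{T}) \leq r_M(W) = k = r_M(T)$.

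The main obstacle is part (f); the subtle point is ensuring that the $W$ produced is maximal among \emph{all} rank-$k$ subsets of $E$ (which is what being a flat demands), not only among rank-$k$ subsets containing $T$. This works because any rank-$k$ strict superset of $W$ automatically contains $T$ and therefore lies in $\mathcal{W}$, so the two notions of maximality coincide for $W$.
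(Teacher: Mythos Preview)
Your proposal is correct and follows essentially the same approach as the paper for parts (a)--(d), (f), and (g); in particular, your argument for (f) (choosing a maximal rank-$k$ superset of $T$ and verifying it is a $k$-flat) is exactly the paper's argument. The only minor deviation is in part (e): the paper shows directly that $\varnothing$ is a $0$-flat via a rank computation (any nonempty $W$ with a non-loop element has $r_M(W)\geq 1$), whereas you invoke the circuit criterion of Lemma~\ref{lem.matroid.flat-crit}; both routes are short and equally valid.
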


\begin{proof}
[Proof of Proposition \ref{prop.matroid.closure.props}.]\textbf{(a)} The set
$\operatorname*{Flats}M$ is a subset of the finite set $\mathcal{P}\left(
E\right)  $, and thus itself finite.

Let $T$ be a subset of $E$. The closure $\overline{T}$ of $T$ is defined as
$\bigcap_{\substack{F\in\operatorname*{Flats}M;\\T\subseteq F}}F$. Now,
Corollary \ref{cor.matroid.flat-cap} shows that any intersection of finitely
many flats of $M$ is a flat of $M$. Hence, $\bigcap_{\substack{F\in
\operatorname*{Flats}M;\\T\subseteq F}}F$ (being an intersection of finitely
many flats of $M$\ \ \ \ \footnote{\textquotedblleft Finitely
many\textquotedblright\ since the set $\operatorname*{Flats}M$ is finite.}) is
a flat of $M$. In other words, $\overline{T}$ is a flat of $M$ (since
$\overline{T}=\bigcap_{\substack{F\in\operatorname*{Flats}M;\\T\subseteq F}}F$).

Also, $T\subseteq F$ for every $F\in\operatorname*{Flats}M$ satisfying
$T\subseteq F$. Hence, $T\subseteq\bigcap_{\substack{F\in\operatorname*{Flats}%
M;\\T\subseteq F}}F=\overline{T}$. This completes the proof of Proposition
\ref{prop.matroid.closure.props} \textbf{(a)}.

\textbf{(c)} Let $T$ be a subset of $E$, and let $G$ be a flat of $M$
satisfying $T\subseteq G$. Then, $G$ is an element of $\operatorname*{Flats}M$
satisfying $T\subseteq G$. Hence, $G$ is one term in the intersection
$\bigcap_{\substack{F\in\operatorname*{Flats}M;\\T\subseteq F}}F$. Thus,
$\bigcap_{\substack{F\in\operatorname*{Flats}M;\\T\subseteq F}}F\subseteq G$.
But the definition of $\overline{T}$ yields $\overline{T}=\bigcap
_{\substack{F\in\operatorname*{Flats}M;\\T\subseteq F}}F\subseteq G$. This
proves Proposition \ref{prop.matroid.closure.props} \textbf{(c)}.

\textbf{(b)} Let $G$ be a flat of $M$. Proposition
\ref{prop.matroid.closure.props} \textbf{(c)} (applied to $T=G$) yields
$\overline{G}\subseteq G$ (since $G\subseteq G$). But Proposition
\ref{prop.matroid.closure.props} \textbf{(a)} (applied to $T=G$) shows that
$\overline{G}$ is a flat of $M$ satisfying $G\subseteq\overline{G}$. Combining
$G\subseteq\overline{G}$ with $\overline{G}\subseteq G$, we obtain
$\overline{G}=G$. This proves Proposition \ref{prop.matroid.closure.props}
\textbf{(b)}.

\textbf{(d)} Let $S$ and $T$ be two subsets of $E$ satisfying $S\subseteq T$.
Proposition \ref{prop.matroid.closure.props} \textbf{(a)} shows that
$\overline{T}$ is a flat of $M$ satisfying $T\subseteq\overline{T}$. Now,
$S\subseteq T\subseteq\overline{T}$. Hence, Proposition
\ref{prop.matroid.closure.props} \textbf{(c)} (applied to $S$ and
$\overline{T}$ instead of $T$ and $G$) shows $\overline{S}\subseteq
\overline{T}$. This proves Proposition \ref{prop.matroid.closure.props}
\textbf{(d)}.

\textbf{(e)} Assume that the matroid $M$ is loopless. In other words, no loops
(of $M$) exist.

The definition of $r_{M}$ quickly yields $r_{M}\left(  \varnothing\right)
=0$. In other words, the set $\varnothing$ has rank $0$. We shall now show
that $\varnothing$ is a $0$-flat of $M$.

Indeed, let $W$ be a subset of $E$ that has rank $0$ and satisfies
$\varnothing\subseteq W$. We shall show that $\varnothing=W$.

Assume the contrary. Thus, $\varnothing\neq W$. Hence, $W$ has an element $w$.
Consider this $w$. The element $w$ of $E$ is not a loop (since no loops
exist). In other words, we cannot have $\left\{  w\right\}  \notin\mathcal{I}$
(since $w$ is a loop if and only if $\left\{  w\right\}  \notin\mathcal{I}$
(by the definition of a loop)). In other words, we must have $\left\{
w\right\}  \in\mathcal{I}$. Clearly, $\left\{  w\right\}  \subseteq W$ (since
$w\in W$). Thus, $\left\{  w\right\}  $ is a $Z\in\mathcal{I}$ satisfying
$Z\subseteq W$. Thus, $\left\vert \left\{  w\right\}  \right\vert \in\left\{
\left\vert Z\right\vert \ \mid\ Z\in\mathcal{I}\text{ and }Z\subseteq
W\right\}  $.

But $W$ has rank $0$. In other words,%
\begin{align*}
0  &  =r_{M}\left(  W\right)  =\max\left\{  \left\vert Z\right\vert
\ \mid\ Z\in\mathcal{I}\text{ and }Z\subseteq W\right\}
\ \ \ \ \ \ \ \ \ \ \left(  \text{by the definition of }r_{M}\right) \\
&  \geq\left\vert \left\{  w\right\}  \right\vert \ \ \ \ \ \ \ \ \ \ \left(
\text{since }\left\vert \left\{  w\right\}  \right\vert \in\left\{  \left\vert
Z\right\vert \ \mid\ Z\in\mathcal{I}\text{ and }Z\subseteq W\right\}  \right)
\\
&  =1,
\end{align*}
which is absurd. This contradiction shows that our assumption was wrong.
Hence, $\varnothing=W$ is proven.

Let us now forget that we fixed $W$. We thus have proven that if $W$ is any
subset of $E$ that has rank $0$ and satisfies $\varnothing\subseteq W$, then
$\varnothing=W$. Thus, $\varnothing$ is a subset of $E$ that has rank $0$ and
is maximal among all such subsets (because we already know that $\varnothing$
has rank $0$). In other words, $\varnothing$ is a $0$-flat of $M$ (by the
definition of a \textquotedblleft$0$-flat\textquotedblright). Thus,
$\varnothing$ is a flat of $M$. Thus, Proposition
\ref{prop.matroid.closure.props} \textbf{(b)} (applied to $G=\varnothing$)
yields $\overline{\varnothing}=\varnothing$. This proves Proposition
\ref{prop.matroid.closure.props} \textbf{(e)}.

\textbf{(f)} Let $T$ be a subset of $E$. We have $T\subseteq\overline{T}$ (by
Proposition \ref{prop.matroid.closure.props} \textbf{(a)}), and thus
$r_{M}\left(  T\right)  \leq r_{M}\left(  \overline{T}\right)  $ (since the
function $r_{M}$ is weakly increasing).

Let $k=r_{M}\left(  T\right)  $. Thus, there exists a $Q\in\mathcal{P}\left(
E\right)  $ satisfying $T\subseteq Q$ and $k=r_{M}\left(  Q\right)  $ (namely,
$Q=T$). Hence, there exists a \textbf{maximal} such $Q$. Denote this $Q$ by
$R$. Thus, $R$ is a maximal $Q\in\mathcal{P}\left(  E\right)  $ satisfying
$T\subseteq Q$ and $k=r_{M}\left(  Q\right)  $. In particular, $R$ is an
element of $\mathcal{P}\left(  E\right)  $ and satisfies $T\subseteq R$ and
$k=r_{M}\left(  R\right)  $.

Now, $R$ is a subset of $E$ (since $R\in\mathcal{P}\left(  E\right)  $) and
has rank $r_{M}\left(  R\right)  =k$. Thus, $R$ is a subset of $E$ that has
rank $k$. Furthermore, $R$ is maximal among all such
subsets\footnote{\textit{Proof.} Let $W$ be any subset of $E$ that has rank
$k$ and satisfies $W\supseteq R$. We must prove that $W=R$.
\par
We have $W\in\mathcal{P}\left(  E\right)  $, $T\subseteq R\subseteq W$ and
$k=r_{M}\left(  W\right)  $ (since $W$ has rank $k$). Thus, $W$ is a
$Q\in\mathcal{P}\left(  E\right)  $ satisfying $T\subseteq Q$ and
$k=r_{M}\left(  Q\right)  $. But recall that $R$ is a \textbf{maximal} such
$Q$. Hence, if $W\supseteq R$, then $W=R$. Therefore, $W=R$ (since we know
that $W\supseteq R$). Qed.}. Thus, $R$ is a $k$-flat of $M$ (by the definition
of a \textquotedblleft$k$-flat\textquotedblright), and therefore a flat of
$M$. Now, Proposition \ref{prop.matroid.closure.props} \textbf{(c)} (applied
to $G=R$) shows that $\overline{T}\subseteq R$. Since the function $r_{M}$ is
weakly increasing, this yields $r_{M}\left(  \overline{T}\right)  \leq
r_{M}\left(  R\right)  =k$. Combining this with $k=r_{M}\left(  T\right)  \leq
r_{M}\left(  \overline{T}\right)  $, we obtain $r_{M}\left(  \overline
{T}\right)  =k=r_{M}\left(  T\right)  $. This proves Proposition
\ref{prop.matroid.closure.props} \textbf{(f)}.

\textbf{(g)} Let $T$ be a subset of $E$. Let $G$ be a flat of $M$. Proposition
\ref{prop.matroid.closure.props} \textbf{(a)} shows that $T\subseteq
\overline{T}$. Hence, if $\overline{T}\subseteq G$, then $T\subseteq
\overline{T}\subseteq G$. Thus, we have proven the implication $\left(
\overline{T}\subseteq G\right)  \Longrightarrow\left(  T\subseteq G\right)  $.
The reverse implication (i.e., the implication $\left(  T\subseteq G\right)
\Longrightarrow\left(  \overline{T}\subseteq G\right)  $) follows from
Proposition \ref{prop.matroid.closure.props} \textbf{(c)}. Combining these two
implications, we obtain the equivalence $\left(  \overline{T}\subseteq
G\right)  \Longleftrightarrow\left(  T\subseteq G\right)  $. This proves
Proposition \ref{prop.matroid.closure.props} \textbf{(g)}.
\end{proof}

We shall now recall a few more classical notions related to posets:

\begin{definition}
\label{def.lattice}Let $P$ be a poset.

\textbf{(a)} An element $p\in P$ is said to be a \emph{global minimum} of $P$
if every $q\in P$ satisfies $p\leq q$. Clearly, a global minimum of $P$ is
unique if it exists.

\textbf{(b)} An element $p\in P$ is said to be a \emph{global maximum} of $P$
if every $q\in P$ satisfies $p\geq q$. Clearly, a global maximum of $P$ is
unique if it exists.

\textbf{(c)} Let $x$ and $y$ be two elements of $P$. An \emph{upper bound} of
$x$ and $y$ (in $P$) means an element $z\in P$ satisfying $z\geq x$ and $z\geq
y$. A \emph{join} (or \emph{least upper bound}) of $x$ and $y$ (in $P$) means
an upper bound $z$ of $x$ and $y$ such that every upper bound $z^{\prime}$ of
$x$ and $y$ satisfies $z^{\prime}\geq z$. In other words, a join of $x$ and
$y$ is a global minimum of the subposet $\left\{  w\in P\ \mid\ w\geq x\text{
and }w\geq y\right\}  $ of $P$. Thus, a join of $x$ and $y$ is unique if it exists.

\textbf{(d)} Let $x$ and $y$ be two elements of $P$. A \emph{lower bound} of
$x$ and $y$ (in $P$) means an element $z\in P$ satisfying $z\leq x$ and $z\leq
y$. A \emph{meet} (or \emph{greatest lower bound}) of $x$ and $y$ (in $P$)
means a lower bound $z$ of $x$ and $y$ such that every lower bound $z^{\prime
}$ of $x$ and $y$ satisfies $z^{\prime}\leq z$. In other words, a meet of $x$
and $y$ is a global maximum of the subposet $\left\{  w\in P\ \mid\ w\leq
x\text{ and }w\leq y\right\}  $ of $P$. Thus, a meet of $x$ and $y$ is unique
if it exists.

\textbf{(e)} The poset $P$ is said to be a \emph{lattice} if and only if it
has a global minimum and a global maximum, and every two elements of $P$ have
a meet and a join.
\end{definition}

\begin{proposition}
\label{prop.matroid.flat-lat}Let $M=\left(  E,\mathcal{I}\right)  $ be a
matroid. The subposet $\operatorname*{Flats}M$ of the poset $\mathcal{P}%
\left(  E\right)  $ is a lattice.
\end{proposition}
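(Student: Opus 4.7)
The plan is to verify the four ingredients in the definition of a lattice (global minimum, global maximum, pairwise meets, pairwise joins) one by one, using the closure operator $\overline{\cdot}$ and Corollary \ref{cor.matroid.flat-cap} as the main tools. All four are essentially bookkeeping given the machinery already developed.

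First, I would take the global minimum to be $\overline{\varnothing}$. By Proposition \ref{prop.matroid.closure.props} \textbf{(a)} (applied to $T = \varnothing$), this is a flat of $M$. For any flat $G$ of $M$, we have $\varnothing \subseteq G$, so Proposition \ref{prop.matroid.closure.props} \textbf{(c)} gives $\overline{\varnothing} \subseteq G$. Hence $\overline{\varnothing}$ is the global minimum of $\operatorname{Flats} M$. For the global maximum, I would simply use $E$ itself: setting $k = r_M(E)$, the set $E$ has rank $k$, and $E$ is trivially maximal among subsets of $E$ of rank $k$ (there being no proper superset of $E$ in $\mathcal{P}(E)$); therefore $E$ is a $k$-flat, hence a flat, and every flat is by definition a subset of $E$.

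Next, for any two flats $F_{1}, F_{2}$ of $M$, I would claim that their meet is $F_{1} \cap F_{2}$ and their join is $\overline{F_{1} \cup F_{2}}$. For the meet: Corollary \ref{cor.matroid.flat-cap} (applied with $k = 2$) shows that $F_{1} \cap F_{2}$ is a flat of $M$. It is a lower bound of $F_{1}$ and $F_{2}$ (trivially), and any lower bound $G$ (a flat with $G \subseteq F_{1}$ and $G \subseteq F_{2}$) satisfies $G \subseteq F_{1} \cap F_{2}$ by the universal property of intersection; thus $F_{1} \cap F_{2}$ is the meet.

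For the join, Proposition \ref{prop.matroid.closure.props} \textbf{(a)} (applied to $T = F_{1} \cup F_{2}$) shows that $\overline{F_{1} \cup F_{2}}$ is a flat of $M$ containing $F_{1} \cup F_{2}$, so it is an upper bound of both $F_{1}$ and $F_{2}$. If $G$ is any other upper bound (a flat containing both $F_{1}$ and $F_{2}$), then $F_{1} \cup F_{2} \subseteq G$, and Proposition \ref{prop.matroid.closure.props} \textbf{(c)} gives $\overline{F_{1} \cup F_{2}} \subseteq G$; thus $\overline{F_{1} \cup F_{2}}$ is the join. This exhausts the four conditions in Definition \ref{def.lattice} \textbf{(e)}, and there is no real obstacle here: every step is a direct application of a previously established result. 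The only thing worth being careful about is the choice of global minimum --- it is $\overline{\varnothing}$ rather than $\varnothing$ itself, since $\varnothing$ need not be a flat when $M$ has loops (it is a flat precisely when $M$ is loopless, by Proposition \ref{prop.matroid.closure.props} \textbf{(e)}).
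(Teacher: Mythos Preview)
Your proof is correct and follows essentially the same approach as the paper's: both verify the four lattice axioms by taking $\overline{\varnothing}$ as the global minimum, $E$ as the global maximum, $F_{1}\cap F_{2}$ as the meet (via Corollary \ref{cor.matroid.flat-cap}), and $\overline{F_{1}\cup F_{2}}$ as the join (via Proposition \ref{prop.matroid.closure.props} \textbf{(a)} and \textbf{(c)}). The only cosmetic difference is that the paper shows $E$ is a flat by observing $\overline{E}=E$, whereas you argue directly that $E$ is an $r_{M}(E)$-flat; both are equally valid.
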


\begin{proof}
[Proof of Proposition \ref{prop.matroid.flat-lat}.]By the definition of a
lattice, it suffices to check the following four claims:

\textit{Claim 1:} The poset $\operatorname*{Flats}M$ has a global minimum.

\textit{Claim 2:} The poset $\operatorname*{Flats}M$ has a global maximum.

\textit{Claim 3:} Every two elements of $\operatorname*{Flats}M$ have a meet
(in $\operatorname*{Flats}M$).

\textit{Claim 4:} Every two elements of $\operatorname*{Flats}M$ have a join
(in $\operatorname*{Flats}M$).

\textit{Proof of Claim 1:} Applying Proposition
\ref{prop.matroid.closure.props} \textbf{(a)} to $T=\varnothing$, we see that
$\overline{\varnothing}$ is a flat of $M$ satisfying $\varnothing
\subseteq\overline{\varnothing}$. In particular, $\overline{\varnothing}$ is a
flat of $M$, so that $\overline{\varnothing}\in\operatorname*{Flats}M$. If $G$
is a flat of $M$, then $\overline{\varnothing}\subseteq G$ (by Proposition
\ref{prop.matroid.closure.props} \textbf{(c)}, applied to $T=\varnothing$).
Hence, $\overline{\varnothing}$ is a global minimum of the poset
$\operatorname*{Flats}M$. Thus, the poset $\operatorname*{Flats}M$ has a
global minimum. This proves Claim 1.

\textit{Proof of Claim 2:} Applying Proposition
\ref{prop.matroid.closure.props} \textbf{(a)} to $T=E$, we see that
$\overline{E}$ is a flat of $M$ satisfying $E\subseteq\overline{E}$. From
$E\subseteq\overline{E}$, we conclude that $\overline{E}=E$. Thus, $E$ is a
flat of $M$ (since $\overline{E}$ is a flat of $M$). In other words,
$E\in\operatorname*{Flats}M$. If $G$ is a flat of $M$, then $E\supseteq G$
(obviously). Hence, $E$ is a global maximum of the poset
$\operatorname*{Flats}M$. Thus, the poset $\operatorname*{Flats}M$ has a
global maximum. This proves Claim 2.

\textit{Proof of Claim 3:} Let $F$ and $G$ be two elements of
$\operatorname*{Flats}M$. We have to prove that $F$ and $G$ have a meet.

We know that $F$ and $G$ are elements of $\operatorname*{Flats}M$, thus flats
of $M$. Hence, Corollary \ref{cor.matroid.flat-cap} shows that $F\cap G$ is a
flat of $M$. In other words, $F\cap G\in\operatorname*{Flats}M$. Clearly,
$F\cap G\subseteq F$ and $F\cap G\subseteq G$; thus, $F\cap G$ is a lower
bound of $F$ and $G$ in $\operatorname*{Flats}M$. Also, every lower bound $H$
of $F$ and $G$ in $\operatorname*{Flats}M$ satisfies $H\subseteq F\cap
G$\ \ \ \ \footnote{\textit{Proof.} Let $H$ be a lower bound of $F$ and $G$ in
$\operatorname*{Flats}M$. Thus, $H\subseteq F$ and $H\subseteq G$. Combining
these two inclusions, we obtain $H\subseteq F\cap G$, qed.}. Hence, $F\cap G$
is a meet of $F$ and $G$. Thus, $F$ and $G$ have a meet. This proves Claim 3.

\textit{Proof of Claim 4:} Let $F$ and $G$ be two elements of
$\operatorname*{Flats}M$. We have to prove that $F$ and $G$ have a join.

We know that $F$ and $G$ are elements of $\operatorname*{Flats}M$, thus flats
of $M$. Proposition \ref{prop.matroid.closure.props} \textbf{(a)} (applied to
$T=F\cup G$) shows that $\overline{F\cup G}$ is a flat of $M$ satisfying
$F\cup G\subseteq\overline{F\cup G}$. Now, $\overline{F\cup G}\in
\operatorname*{Flats}M$ (since $\overline{F\cup G}$ is a flat of $M$).
Clearly, $F\subseteq F\cup G\subseteq\overline{F\cup G}$ and $G\subseteq F\cup
G\subseteq\overline{F\cup G}$; thus, $\overline{F\cup G}$ is an upper bound of
$F$ and $G$ in $\operatorname*{Flats}M$. Also, every upper bound $H$ of $F$
and $G$ in $\operatorname*{Flats}M$ satisfies $H\supseteq\overline{F\cup G}%
$\ \ \ \ \footnote{\textit{Proof.} Let $H$ be an upper bound of $F$ and $G$ in
$\operatorname*{Flats}M$. Thus, $H\supseteq F$ and $H\supseteq G$. Combining
these two inclusions, we obtain $H\supseteq F\cup G$. But $H\in
\operatorname*{Flats}M$; thus, $H$ is a flat of $M$. Since $H$ satisfies
$F\cup G\subseteq H$, we therefore obtain $\overline{F\cup G}\subseteq H$ (by
Proposition \ref{prop.matroid.closure.props} \textbf{(c)}, applied to $F\cup
G$ and $H$ instead of $T$ and $G$). In other words, $H\supseteq\overline{F\cup
G}$, qed.}. Hence, $\overline{F\cup G}$ is a join of $F$ and $G$. Thus, $F$
and $G$ have a join. This proves Claim 4.

We have now proven all four Claims 1, 2, 3, and 4. Thus, Proposition
\ref{prop.matroid.flat-lat} is proven.
\end{proof}

\begin{definition}
\label{def.matroid.flat-lat}Let $M=\left(  E,\mathcal{I}\right)  $ be a
matroid. Proposition \ref{prop.matroid.flat-lat} shows that the subposet
$\operatorname*{Flats}M$ of the poset $\mathcal{P}\left(  E\right)  $ is a
lattice. This subposet $\operatorname*{Flats}M$ is called the \emph{lattice of
flats} of $M$. (Beware: It is a subposet, but not a sublattice of
$\mathcal{P}\left(  E\right)  $, since its join is not a restriction of the
join of $\mathcal{P}\left(  E\right)  $.)
\end{definition}

The lattice of flats $\operatorname*{Flats}M$ of a matroid $M$ is denoted by
$L\left(  M\right)  $ in \cite[\S 3.2]{Stanley}.

Next, we recall the definition of the M\"{o}bius function of a poset (see,
e.g., \cite[Definition 1.2]{Stanley} or \cite[\S 2.2]{Martin22}):

\begin{definition}
\label{def.moebius}Let $P$ be a poset.

\textbf{(a)} If $x$ and $y$ are two elements of $P$ satisfying $x\leq y$, then
the set $\left\{  z\in P\ \mid\ x\leq z\leq y\right\}  $ is denoted by
$\left[  x,y\right]  $.

\textbf{(b)} A subset of $P$ is called a \emph{closed interval} of $P$ if it
has the form $\left[  x,y\right]  $ for two elements $x$ and $y$ of $P$
satisfying $x\leq y$.

\textbf{(c)} We denote by $\operatorname*{Int}P$ the set of all closed
intervals of $P$.

\textbf{(d)} If $f:\operatorname*{Int}P\rightarrow\mathbb{Z}$ is any map, then
the image $f\left(  \left[  x,y\right]  \right)  $ of a closed interval
$\left[  x,y\right]  \in\operatorname*{Int}P$ under $f$ will be abbreviated by
$f\left(  x,y\right)  $.

\textbf{(e)} Assume that every closed interval of $P$ is finite. The
\emph{M\"{o}bius function} of the poset $P$ is defined to be the unique
function $\mu:\operatorname*{Int}P\rightarrow\mathbb{Z}$ having the following
two properties:

\begin{itemize}
\item We have
\begin{equation}
\mu\left(  x,x\right)  =1\ \ \ \ \ \ \ \ \ \ \text{for every }x\in P.
\label{eq.def.moebius.rec1}%
\end{equation}

\item We have%
\begin{align}
\mu\left(  x,y\right)   &  =-\sum_{\substack{z\in P;\\x\leq z<y}}\mu\left(
x,z\right) \label{eq.def.moebius.rec2}\\
&  \ \ \ \ \ \ \ \ \ \ \text{for all }x,y\in P\text{ satisfying }x<y.\nonumber
\end{align}

\end{itemize}

(It is easy to see that these two properties indeed determine $\mu$ uniquely.)
This M\"{o}bius function is denoted by $\mu$.
\end{definition}

We can now define the characteristic polynomial of a matroid $M$, following
\cite[(22)]{Stanley}\footnote{Our notation slightly differs from that in
\cite[(22)]{Stanley}. Namely, we use $x$ as the indeterminate, while Stanley
instead uses $t$. Stanley also denotes the global minimum $\overline
{\varnothing}$ of $\operatorname*{Flats}M$ by $\widehat{0}$.}:

\begin{definition}
\label{def.matroid.charpol}Let $M=\left(  E,\mathcal{I}\right)  $ be a
matroid. Let $m=r_{M}\left(  E\right)  $. The \emph{characteristic polynomial}
$\chi_{M}$ of the matroid $M$ is defined to be the polynomial%
\[
\sum_{F\in\operatorname*{Flats}M}\mu\left(  \overline{\varnothing},F\right)
x^{m-r_{M}\left(  F\right)  }\in\mathbb{Z}\left[  x\right]
\]
(where $\mu$ is the M\"{o}bius function of the lattice $\operatorname*{Flats}%
M$). We further define a polynomial $\widetilde{\chi}_{M}\in\mathbb{Z}\left[
x\right]  $ by $\widetilde{\chi}_{M}=\left[  \overline{\varnothing
}=\varnothing\right]  \chi_{M}$. Here, we are using the Iverson bracket
notation (as in Definition \ref{def.iverson}). If the matroid $M$ is loopless,
then%
\[
\widetilde{\chi}_{M}=\underbrace{\left[  \overline{\varnothing}=\varnothing
\right]  }_{\substack{=1\\\text{(by Proposition
\ref{prop.matroid.closure.props} \textbf{(e)})}}}\chi_{M}=\chi_{M}.
\]

\end{definition}

\begin{example}
\label{exam.matroid.charpol.graph}Let $G=\left(  V,E\right)  $ be a finite
graph. Consider the graphical matroid $\left(  E,\mathcal{I}\right)  $ defined
as in Example \ref{exam.matroid.graphical}. Then, the characteristic
polynomial $\chi_{\left(  E,\mathcal{I}\right)  }$ of this matroid is
connected to the chromatic polynomial $\chi_{G}$ of the graph $G$ as follows:%
\[
x^{\operatorname*{conn}G}\cdot\chi_{\left(  E,\mathcal{I}\right)  }\left(
x\right)  =\chi_{G}\left(  x\right)  .
\]
This equality is a classical result (see, e.g., \cite[Proposition
7.5.1]{Zaslav87}), but can also be derived from our results below
(specifically, by comparing Theorem \ref{thm.matroid.charpol.empty} with
Theorem \ref{thm.chrompol.empty}).
\end{example}

Note that Zaslavsky, in \cite[\S 7.2]{Zaslav87}, defines the \textquotedblleft
characteristic polynomial\textquotedblright\ of a matroid $M$ to be our
$\widetilde{\chi}_{M}$ instead of our $\chi_{M}$; but this makes no difference
when $M$ is the graphical matroid from Example \ref{exam.matroid.graphical},
since such a matroid $M$ is always loopless.

\subsection{Generalized Whitney formulas}

Let us next define broken circuits of a matroid $M=\left(  E,\mathcal{I}%
\right)  $. Stanley, in \cite[\S 4.1]{Stanley}, defines them in terms of a
total ordering $\mathcal{O}$ on the set $E$, whereas we shall use a
\textquotedblleft labeling function\textquotedblright\ $\ell:E\rightarrow X$
instead (as in the case of graphs); our setting is slightly more general than Stanley's.

\begin{definition}
\label{def.matroid.BC}Let $M=\left(  E,\mathcal{I}\right)  $ be a matroid. Let
$X$ be a totally ordered set. Let $\ell:E\rightarrow X$ be a function. We
shall refer to $\ell$ as the \emph{labeling function}. For every $e\in E$, we
shall refer to $\ell\left(  e\right)  $ as the \emph{label} of $e$.

A \emph{broken circuit} of $M$ means a subset of $E$ having the form
$C\setminus\left\{  e\right\}  $, where $C$ is a circuit of $M$, and where $e$
is the unique element of $C$ having maximum label (among the elements of $C$).
Of course, the notion of a broken circuit of $M$ depends on the function
$\ell$; however, we suppress the mention of $\ell$ in our notation, since we
will not consider situations where two different $\ell$'s coexist.
\end{definition}

We shall now state analogues (and, in light of Example
\ref{exam.matroid.charpol.graph}, generalizations, although we shall not
elaborate on the few minor technicalities of seeing them as such) of Theorem
\ref{thm.chrompol.varis}, Theorem \ref{thm.chrompol.empty}, Corollary
\ref{cor.chrompol.K-free}, Corollary \ref{cor.chrompol.NBC} and Corollary
\ref{cor.chrompol.NBCfor}:

\begin{theorem}
\label{thm.matroid.charpol.varis}Let $M=\left(  E,\mathcal{I}\right)  $ be a
matroid. Let $m=r_{M}\left(  E\right)  $. Let $X$ be a totally ordered set.
Let $\ell:E\rightarrow X$ be a labeling function. Let $\mathfrak{K}$ be some
set of broken circuits of $M$ (not necessarily containing all of them). Let
$a_{K}$ be an element of $\mathbf{k}$ for every $K\in\mathfrak{K}$. Then,%
\[
\widetilde{\chi}_{M}=\sum_{F\subseteq E}\left(  -1\right)  ^{\left\vert
F\right\vert }\left(  \prod_{\substack{K\in\mathfrak{K};\\K\subseteq F}%
}a_{K}\right)  x^{m-r_{M}\left(  F\right)  }.
\]
(Here, the polynomial $\widetilde{\chi}_{M}
\in\mathbb{Z}\left[  x\right]  $ on the left-hand side is regarded as an
element of $\mathbf{k}\left[  x\right]  $ via the canonical ring morphism
$\mathbb{Z}\left[  x\right]  \rightarrow\mathbf{k}\left[  x\right]  $.)
\end{theorem}

\begin{theorem}
\label{thm.matroid.charpol.empty}Let $M=\left(  E,\mathcal{I}\right)  $ be a
matroid. Let $m=r_{M}\left(  E\right)  $. Then,%
\[
\widetilde{\chi}_{M}=\sum_{F\subseteq E}\left(  -1\right)  ^{\left\vert
F\right\vert }x^{m-r_{M}\left(  F\right)  }.
\]

\end{theorem}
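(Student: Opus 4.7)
The plan is to derive Theorem \ref{thm.matroid.charpol.empty} as a straightforward specialization of Theorem \ref{thm.matroid.charpol.varis}, in exactly the same way that Theorem \ref{thm.chrompol.empty} was derived from Theorem \ref{thm.chrompol.varis} earlier in the paper. Since I am allowed to assume Theorem \ref{thm.matroid.charpol.varis}, the only work is to find a choice of auxiliary data for which the product over broken circuits disappears.

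First, I take $X$ to be the singleton totally ordered set $\left\{ 1\right\} $ (with its unique order) and let $\ell:E\rightarrow X$ be the unique function. Next, I set $\mathfrak{K}=\varnothing$; trivially this is a set of broken circuits of $M$ (since it contains none). Because $\mathfrak{K}$ is empty, for every subset $F$ of $E$ there exists no $K\in\mathfrak{K}$ with $K\subseteq F$ at all, so the product $\prod_{\substack{K\in\mathfrak{K};\\K\subseteq F}}a_{K}$ is an empty product and equals $1$, regardless of how one defines the family $\left(a_{K}\right)_{K\in\mathfrak{K}}$. I can then apply Theorem \ref{thm.matroid.charpol.varis} with these choices and with $a_{K}$ taken (vacuously) to be, say, $0$ for every $K\in\mathfrak{K}$.

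Plugging everything into Theorem \ref{thm.matroid.charpol.varis}, every summand on the right-hand side acquires the factor $1$ in place of $\prod_{K\in\mathfrak{K},\,K\subseteq F}a_{K}$, so
\[
\widetilde{\chi}_{M}=\sum_{F\subseteq E}\left(-1\right)^{\left\vert F\right\vert }\cdot 1\cdot x^{m-r_{M}\left(F\right)}=\sum_{F\subseteq E}\left(-1\right)^{\left\vert F\right\vert }x^{m-r_{M}\left(F\right)},
\]
which is the claim of Theorem \ref{thm.matroid.charpol.empty}. There is no genuine obstacle here: the entire content of the theorem has already been absorbed into the more general Theorem \ref{thm.matroid.charpol.varis}, and this proof is purely a bookkeeping exercise of specialising $\mathfrak{K}$ to the empty set. (The only point worth a moment's care is that the hypothesis of Theorem \ref{thm.matroid.charpol.varis} really does allow $\mathfrak{K}=\varnothing$, which it does, since the empty set is trivially a set of broken circuits of $M$.)
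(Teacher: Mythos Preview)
Your proof is correct and is precisely the approach the paper takes: it derives Theorem \ref{thm.matroid.charpol.empty} from Theorem \ref{thm.matroid.charpol.varis} by setting $\mathfrak{K}=\varnothing$ (and choosing a trivial $X$ and $\ell$), so that the product over broken circuits becomes empty and equals $1$. The paper states this in one sentence, referring back to the analogous derivation of Theorem \ref{thm.chromsym.empty} from Theorem \ref{thm.chromsym.varis}.
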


\begin{corollary}
\label{cor.matroid.charpol.K-free}Let $M=\left(  E,\mathcal{I}\right)  $ be a
matroid. Let $m=r_{M}\left(  E\right)  $. Let $X$ be a totally ordered set.
Let $\ell:E\rightarrow X$ be a labeling function. Let $\mathfrak{K}$ be some
set of broken circuits of $M$ (not necessarily containing all of them). Then,%
\[
\widetilde{\chi}_{M}=\sum_{\substack{F\subseteq E;\\F\text{ is }%
\mathfrak{K}\text{-free}}}\left(  -1\right)  ^{\left\vert F\right\vert
}x^{m-r_{M}\left(  F\right)  }.
\]

\end{corollary}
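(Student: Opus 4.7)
The plan is to derive Corollary \ref{cor.matroid.charpol.K-free} from Theorem \ref{thm.matroid.charpol.varis} in exactly the same way as Corollary \ref{cor.chromsym.K-free} was derived from Theorem \ref{thm.chromsym.varis}. Concretely, I would apply Theorem \ref{thm.matroid.charpol.varis} with the particular choice $a_K = 0 \in \mathbf{k}$ for every $K \in \mathfrak{K}$. Then each product $\prod_{K \in \mathfrak{K},\, K \subseteq F} a_K$ appearing in the resulting identity becomes $\prod_{K \in \mathfrak{K},\, K \subseteq F} 0$.

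The key step is to observe that this product equals $1$ when $F$ is $\mathfrak{K}$-free (in which case the product is empty, since no $K \in \mathfrak{K}$ satisfies $K \subseteq F$), and equals $0$ otherwise (in which case the product contains at least one factor, all of which are $0$). Writing this as an Iverson-style case distinction exactly as in the short proof of Corollary \ref{cor.chromsym.K-free}, the sum over all $F \subseteq E$ collapses to a sum over those $F \subseteq E$ which are $\mathfrak{K}$-free, and the remaining terms (those corresponding to non-$\mathfrak{K}$-free $F$) vanish.

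Substituting this back into the formula from Theorem \ref{thm.matroid.charpol.varis}, we obtain
\[
\widetilde{\chi}_M = \sum_{\substack{F \subseteq E;\\ F \text{ is } \mathfrak{K}\text{-free}}} (-1)^{|F|} x^{m - r_M(F)},
\]
which is the desired claim. There is no serious obstacle here: the argument is a direct specialization of a more general identity, and all the work has already been carried out in proving Theorem \ref{thm.matroid.charpol.varis}. The only thing to verify carefully is the case distinction for the product of zeros, but this is verbatim the same bookkeeping as in the earlier proof of Corollary \ref{cor.chromsym.K-free}, so I would simply refer the reader to that proof and leave the straightforward adaptation to them.
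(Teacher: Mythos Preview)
Your proposal is correct and matches the paper's own proof exactly: the paper simply states that Corollary \ref{cor.matroid.charpol.K-free} is derived from Theorem \ref{thm.matroid.charpol.varis} in the same way as Corollary \ref{cor.chromsym.K-free} was derived from Theorem \ref{thm.chromsym.varis}, which is precisely the specialization $a_K = 0$ argument you describe.
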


\begin{corollary}
\label{cor.matroid.charpol.NBC}Let $M=\left(  E,\mathcal{I}\right)  $ be a
matroid. Let $m=r_{M}\left(  E\right)  $. Let $X$ be a totally ordered set.
Let $\ell:E\rightarrow X$ be a labeling function. Then,%
\[
\widetilde{\chi}_{M}=\sum_{\substack{F\subseteq E;\\F\text{ contains no
broken}\\\text{circuit of }M\text{ as a subset}}}\left(  -1\right)
^{\left\vert F\right\vert }x^{m-r_{M}\left(  F\right)  }.
\]

\end{corollary}

\begin{corollary}
\label{cor.matroid.charpol.NBCfor}Let $M=\left(  E,\mathcal{I}\right)  $ be a
matroid. Let $m=r_{M}\left(  E\right)  $. Let $X$ be a totally ordered set.
Let $\ell:E\rightarrow X$ be an injective labeling function. Then,%
\[
\widetilde{\chi}_{M}=\sum_{\substack{F\subseteq E;\\F\text{ contains no
broken}\\\text{circuit of }M\text{ as a subset}}}\left(  -1\right)
^{\left\vert F\right\vert }x^{m-\left\vert F\right\vert }.
\]

\end{corollary}

We notice that Corollary \ref{cor.matroid.charpol.NBCfor} is equivalent to
\cite[Theorem 4.12]{Stanley} (at least when $M$ is loopless).

Before we prove these results, let us state a lemma which will serve as an
analogue of Lemma \ref{lem.NBCm.moeb}:

\begin{lemma}
\label{lem.matroid.NBCm.moeb}Let $M=\left(  E,\mathcal{I}\right)  $ be a
matroid. Let $X$ be a totally ordered set. Let $\ell:E\rightarrow X$ be a
labeling function. Let $\mathfrak{K}$ be some set of broken circuits of $M$
(not necessarily containing all of them). Let $a_{K}$ be an element of
$\mathbf{k}$ for every $K\in\mathfrak{K}$.

Let $F$ be any flat of $M$. Then,%
\begin{equation}
\sum_{B\subseteq F}\left(  -1\right)  ^{\left\vert B\right\vert }%
\prod_{\substack{K\in\mathfrak{K};\\K\subseteq B}}a_{K}=\left[  F=\varnothing
\right]  . \label{eq.lem.matroid.NBCm.moeb.1}%
\end{equation}
(Again, we are using the Iverson bracket notation as in Definition
\ref{def.iverson}.)
\end{lemma}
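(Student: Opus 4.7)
The plan is to mimic the proof of Lemma \ref{lem.NBCm.moeb} almost verbatim, with the flat $F$ playing the role that $E\cap\operatorname*{Eqs}f$ played in the graph setting, and the flat criterion furnished by Lemma \ref{lem.matroid.flat-crit} replacing the use of Lemma \ref{lem.Eqs.circuit}. The case $F=\varnothing$ would be handled separately: the only $B\subseteq F$ is $B=\varnothing$, and since every circuit of $M$ is nonempty (being a \emph{minimal} element of $\mathcal{P}(E)\setminus\mathcal{I}$, with $\varnothing\in\mathcal{I}$), every broken circuit of $M$ is nonempty, so the product $\prod_{K\in\mathfrak{K};\,K\subseteq\varnothing}a_K$ is empty and equals $1=[\varnothing=\varnothing]$.

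So I would assume $F\neq\varnothing$ and pick some $d\in F$ with $\ell(d)$ maximum among all elements of $F$. Define
\[
\mathcal{U}=\{B\in\mathcal{P}(F)\ \mid\ d\notin B\},\qquad \mathcal{V}=\{B\in\mathcal{P}(F)\ \mid\ d\in B\},
\]
so that $\mathcal{P}(F)=\mathcal{U}\sqcup\mathcal{V}$, and introduce the bijection $\Phi:\mathcal{U}\to\mathcal{V}$, $\Phi(B)=B\cup\{d\}$, which satisfies $(-1)^{|\Phi(B)|}=-(-1)^{|B|}$ for every $B\in\mathcal{U}$.

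The heart of the argument would be the analogue of \eqref{pf.lem.NBCm.moeb.equiv}: for every $B\in\mathcal{U}$ and every $K\in\mathfrak{K}$,
\[
(K\subseteq B)\ \Longleftrightarrow\ (K\subseteq\Phi(B)).
\]
The forward direction is trivial. For the reverse, assume $K\subseteq B\cup\{d\}$; if $d\notin K$ then $K\subseteq(B\cup\{d\})\setminus\{d\}\subseteq B$ and we are done, so suppose $d\in K$. Writing $K=C\setminus\{e\}$ with $C$ a circuit of $M$ and $e$ the unique element of $C$ having maximum label, we have $d\in C$ and $d\neq e$. Since $C\setminus\{e\}=K\subseteq B\cup\{d\}\subseteq F$ and $F$ is a flat, the implication $\mathfrak{F}_1\Longrightarrow\mathfrak{F}_2$ of Lemma \ref{lem.matroid.flat-crit} (applied to the circuit $C$ and the element $e\in C$) forces $C\subseteq F$, hence $e\in F$. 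But then $d,e\in F$ with $\ell(d)\geq\ell(e)$ by the maximality of $\ell(d)$ on $F$; combined with $d\in C$ and the uniqueness of $e$ as the maximum-label element of $C$, this gives $d=e$, contradicting $d\neq e$.

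Once this equivalence is in place, the rest is bookkeeping: split the sum as $\sum_{B\subseteq F}=\sum_{B\in\mathcal{U}}+\sum_{B\in\mathcal{V}}$, substitute $B\mapsto\Phi(B)$ in the $\mathcal{V}$-sum, use $(-1)^{|\Phi(B)|}=-(-1)^{|B|}$ in the $\mathcal{U}$-sum, and observe that by the equivalence the products $\prod_{K\in\mathfrak{K};\,K\subseteq B}a_K$ and $\prod_{K\in\mathfrak{K};\,K\subseteq\Phi(B)}a_K$ coincide; the two sums cancel, yielding $0=[F=\varnothing]$. The main obstacle is really only the reverse direction of the equivalence, and this is precisely where the flat hypothesis on $F$ enters the argument through Lemma \ref{lem.matroid.flat-crit}; all other steps are straightforward adaptations of the graph-theoretic proof of Lemma \ref{lem.NBCm.moeb}.
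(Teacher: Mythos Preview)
Your argument is essentially identical to the paper's, and the main sign-reversing involution via $\Phi$ is correct. There is, however, a small gap in your treatment of the base case $F=\varnothing$: the inference ``every circuit of $M$ is nonempty, hence every broken circuit of $M$ is nonempty'' is invalid in general, since a one-element circuit $\{e\}$ (a loop) yields the empty broken circuit $\{e\}\setminus\{e\}=\varnothing$. The paper's argument uses the flat hypothesis even here: if $\varnothing\in\mathfrak{K}$ were a broken circuit arising from a singleton circuit $\{e\}$, then since $\{e\}\setminus\{e\}=\varnothing\subseteq F$ and $F$ is a flat, Lemma \ref{lem.matroid.flat-crit} forces $\{e\}\subseteq F=\varnothing$, a contradiction. (Equivalently: the hypothesis that $\varnothing$ is a flat already rules out loops, so every circuit has at least two elements and every broken circuit is nonempty.) With this correction your proof is complete and matches the paper's.
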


\begin{proof}
[Proof of Lemma \ref{lem.matroid.NBCm.moeb}.]Our proof will imitate the proof
of Lemma \ref{lem.NBCm.moeb} much of the time (with $E\cap\operatorname*{Eqs}%
f$ replaced by $F$); thus, we will allow ourselves some more brevity.

We WLOG assume that $F\neq\varnothing$ (since otherwise, the claim is
obvious\footnote{\textit{Proof.} Assume that $F=\varnothing$. We must show
that the claim is obvious.
\par
Let us first show that no $K\in\mathfrak{K}$ satisfies $K=\varnothing$.
Indeed, assume the contrary. Thus, there exists a $K\in\mathfrak{K}$
satisfying $K=\varnothing$. In other words, $\varnothing\in\mathfrak{K}$.
Thus, $\varnothing$ is a broken circuit of $M$ (since $\mathfrak{K}$ is a set
of broken circuits of $M$). Therefore, $\varnothing$ is obtained from a
circuit of $M$ by removing one element (by the definition of a broken
circuit). This latter circuit must therefore be a one-element set, i.e., it
has the form $\left\{  e\right\}  $ for some $e\in E$. Consider this $e$.
Thus, $\left\{  e\right\}  $ is a circuit of $M$.
\par
But $F$ is a flat of $M$. In other words, Statement $\mathfrak{F}_{1}$ (of
Lemma \ref{lem.matroid.flat-crit}) holds for $T=F$. Hence, Statement
$\mathfrak{F}_{2}$ (of Lemma \ref{lem.matroid.flat-crit}) also holds for $T=F$
(since Lemma \ref{lem.matroid.flat-crit} shows that these two statements are
equivalent). Applying Statement $\mathfrak{F}_{2}$ to $T=F$ and $C=\left\{
e\right\}  $, we thus obtain $\left\{  e\right\}  \subseteq F$ (because
$\left\{  e\right\}  \setminus\left\{  e\right\}  =\varnothing\subseteq F$).
Thus, $e\in\left\{  e\right\}  \subseteq F=\varnothing$, which is absurd. This
contradiction proves that our assumption was wrong.
\par
Hence, we have shown that no $K\in\mathfrak{K}$ satisfies $K=\varnothing$. But
from $F=\varnothing$, we see that the sum $\sum_{B\subseteq F}\left(
-1\right)  ^{\left\vert B\right\vert }\prod_{\substack{K\in\mathfrak{K}%
;\\K\subseteq B}}a_{K}$ has only one addend (namely, the addend for
$B=\varnothing$), and thus simplifies to
\begin{align*}
\underbrace{\left(  -1\right)  ^{\left\vert \varnothing\right\vert }%
}_{=\left(  -1\right)  ^{0}=1}\underbrace{\prod_{\substack{K\in\mathfrak{K}%
;\\K\subseteq\varnothing}}}_{=\prod_{\substack{K\in\mathfrak{K}%
;\\K=\varnothing}}}a_{K}  &  =\prod_{\substack{K\in\mathfrak{K}%
;\\K=\varnothing}}a_{K}=\left(  \text{empty product}\right)
\ \ \ \ \ \ \ \ \ \ \left(  \text{since no }K\in\mathfrak{K}\text{ satisfies
}K=\varnothing\right) \\
&  =1=\left[  F=\varnothing\right]  \ \ \ \ \ \ \ \ \ \ \left(  \text{since
}F=\varnothing\right)  .
\end{align*}
Thus, Lemma \ref{lem.matroid.NBCm.moeb} is proven.}). Thus, $\left[
F=\varnothing\right]  =0$.

Pick any $d\in F$ with maximum $\ell\left(  d\right)  $ (among all $d\in F$).
(This is clearly possible, since $F\neq\varnothing$.) Define two subsets
$\mathcal{U}$ and $\mathcal{V}$ of $\mathcal{P}\left(  F\right)  $ as follows:%
\begin{align*}
\mathcal{U}  &  =\left\{  T\in\mathcal{P}\left(  F\right)  \ \mid\ d\notin
T\right\}  ;\\
\mathcal{V}  &  =\left\{  T\in\mathcal{P}\left(  F\right)  \ \mid\ d\in
T\right\}  .
\end{align*}
Thus, we have $\mathcal{P}\left(  F\right)  =\mathcal{U}\cup\mathcal{V}$, and
the sets $\mathcal{U}$ and $\mathcal{V}$ are disjoint. Now, we define a map
$\Phi:\mathcal{U}\rightarrow\mathcal{V}$ by%
\[
\left(  \Phi\left(  B\right)  =B\cup\left\{  d\right\}
\ \ \ \ \ \ \ \ \ \ \text{for every }B\in\mathcal{U}\right)  .
\]
This map $\Phi$ is well-defined (because for every $B\in\mathcal{U}$, we have
$B\cup\left\{  d\right\}  \in\mathcal{V}$\ \ \ \ \footnote{This follows from
the fact that $d\in F$.}) and a bijection\footnote{Its inverse is the map
$\Psi:\mathcal{V}\rightarrow\mathcal{U}$ defined by $\left(  \Psi\left(
B\right)  =B\setminus\left\{  d\right\}  \ \ \ \ \ \ \ \ \ \ \text{for every
}B\in\mathcal{V}\right)  $.}. Moreover, every $B\in\mathcal{U}$ satisfies%
\begin{equation}
\left(  -1\right)  ^{\left\vert \Phi\left(  B\right)  \right\vert }=-\left(
-1\right)  ^{\left\vert B\right\vert }
\label{pf.lem.matroid.NBCm.moeb.short.Phi.-1}%
\end{equation}

\begin{vershort}
\noindent\footnote{\textit{Proof.} This is proven exactly like we proved
(\ref{pf.lem.NBCm.moeb.short.Phi.-1}).}.
\end{vershort}

\begin{verlong}
\noindent\footnote{\textit{Proof.} This is proven exactly like we proved
(\ref{pf.lem.NBCm.moeb.phisize-1}).}.
\end{verlong}

Now, we claim that, for every $B\in\mathcal{U}$ and every $K\in\mathfrak{K}$,
we have the following logical equivalence:%
\begin{equation}
\left(  K\subseteq B\right)  \ \Longleftrightarrow\ \left(  K\subseteq
\Phi\left(  B\right)  \right)  .
\label{pf.lem.matroid.NBCm.moeb.short.Phi.equiv}%
\end{equation}

\textit{Proof of (\ref{pf.lem.matroid.NBCm.moeb.short.Phi.equiv}):} Let
$B\in\mathcal{U}$ and $K\in\mathfrak{K}$. We must prove the equivalence
(\ref{pf.lem.matroid.NBCm.moeb.short.Phi.equiv}). The definition of $\Phi$
yields $\Phi\left(  B\right)  =B\cup\left\{  d\right\}  \supseteq B$, so that
$B\subseteq\Phi\left(  B\right)  $. Hence, if $K\subseteq B$, then $K\subseteq
B\subseteq\Phi\left(  B\right)  $. Therefore, the forward implication of the
equivalence (\ref{pf.lem.matroid.NBCm.moeb.short.Phi.equiv}) is proven. It
thus remains to prove the backward implication of this equivalence. In other
words, it remains to prove that if $K\subseteq\Phi\left(  B\right)  $, then
$K\subseteq B$. So let us assume that $K\subseteq\Phi\left(  B\right)  $.

We want to prove that $K\subseteq B$. Assume the contrary. Thus,
$K\not \subseteq B$. We have $K\in\mathfrak{K}$. Thus, $K$ is a broken circuit
of $M$ (since $\mathfrak{K}$ is a set of broken circuits of $M$). In other
words, $K$ is a subset of $E$ having the form $C\setminus\left\{  e\right\}
$, where $C$ is a circuit of $M$, and where $e$ is the unique element of $C$
having maximum label (among the elements of $C$) (because this is how a broken
circuit is defined). Consider these $C$ and $e$. Thus, $K=C\setminus\left\{
e\right\}  $.

The element $e$ is the unique element of $C$ having maximum label (among the
elements of $C$). Thus, if $e^{\prime}$ is any element of $C$ satisfying
$\ell\left(  e^{\prime}\right)  \geq\ell\left(  e\right)  $, then%
\begin{equation}
e^{\prime}=e. \label{pf.lem.matroid.NBCm.moeb.short.Phi.equiv.pf.e'}%
\end{equation}

But $\underbrace{K}_{\subseteq\Phi\left(  B\right)  =B\cup\left\{  d\right\}
}\setminus\left\{  d\right\}  \subseteq\left(  B\cup\left\{  d\right\}
\right)  \setminus\left\{  d\right\}  \subseteq B$.

If we had $d\notin K$, then we would have $K\setminus\left\{  d\right\}  =K$
and therefore $K=K\setminus\left\{  d\right\}  \subseteq B$; this would
contradict $K\not \subseteq B$. Hence, we cannot have $d\notin K$. We thus
must have $d\in K$. Hence, $d\in K=C\setminus\left\{  e\right\}  $. Hence,
$d\in C$ and $d\neq e$.

But $C\setminus\left\{  e\right\}  =K\subseteq\Phi\left(  B\right)  \subseteq
F$ (since $\Phi\left(  B\right)  \in\mathcal{P}\left(  F\right)  $). On the
other hand, Statement $\mathfrak{F}_{1}$ (of Lemma \ref{lem.matroid.flat-crit}%
) holds for $T=F$ (since $F$ is a flat of $M$). Hence, Statement
$\mathfrak{F}_{2}$ (of Lemma \ref{lem.matroid.flat-crit}) also holds for $T=F$
(since Lemma \ref{lem.matroid.flat-crit} shows that these two statements are
equivalent). Thus, from $C\setminus\left\{  e\right\}  \subseteq F$, we obtain
$C\subseteq F$. Thus, $e\in C\subseteq F$. Consequently, $\ell\left(
d\right)  \geq\ell\left(  e\right)  $ (since $d$ was defined to be an element
of $F$ with maximum $\ell\left(  d\right)  $ among all $d\in F$).

Also, $d\in C$. Since $\ell\left(  d\right)  \geq\ell\left(  e\right)  $, we
can therefore apply (\ref{pf.lem.matroid.NBCm.moeb.short.Phi.equiv.pf.e'}) to
$e^{\prime}=d$. We thus obtain $d=e$. This contradicts $d\neq e$. This
contradiction proves that our assumption was wrong. Hence, $K\subseteq B$ is
proven. Thus, we have proven the backward implication of the equivalence
(\ref{pf.lem.matroid.NBCm.moeb.short.Phi.equiv}); this completes the proof of
(\ref{pf.lem.matroid.NBCm.moeb.short.Phi.equiv}).

\begin{vershort}
Now, proceeding as in the proof of (\ref{pf.lem.NBCm.moeb.short.there}), we
can show that
\[
\sum_{B\subseteq F}\left(  -1\right)  ^{\left\vert B\right\vert }%
\prod_{\substack{K\in\mathfrak{K};\\K\subseteq B}}a_{K}=\left[  F=\varnothing
\right]  .
\]

\end{vershort}

\begin{verlong}
Now, proceeding as in the proof of (\ref{pf.lem.NBCm.moeb.there}), we can show
that
\[
\sum_{B\subseteq F}\left(  -1\right)  ^{\left\vert B\right\vert }%
\prod_{\substack{K\in\mathfrak{K};\\K\subseteq B}}a_{K}=\left[  F=\varnothing
\right]  .
\]

\end{verlong}

\noindent This proves Lemma \ref{lem.matroid.NBCm.moeb}.
\end{proof}

We shall furthermore use a classical and fundamental result on the M\"{o}bius
function of any finite poset:

\begin{proposition}
\label{prop.moebius.double0}Let $P$ be a finite poset. Let $\mu$ denote the
M\"{o}bius function of $P$.

\textbf{(a)} For any $x\in P$ and $y\in P$, we have%
\begin{equation}
\sum_{\substack{z\in P;\\x\leq z\leq y}}\mu\left(  x,z\right)  =\left[
x=y\right]  . \label{eq.prop.moebius.double0.a}%
\end{equation}

\textbf{(b)} For any $x\in P$ and $y\in P$, we have%
\begin{equation}
\sum_{\substack{z\in P;\\x\leq z\leq y}}\mu\left(  z,y\right)  =\left[
x=y\right]  . \label{eq.prop.moebius.double0.b}%
\end{equation}

\textbf{(c)} Let $\mathbf{k}$ be a $\mathbb{Z}$-module. Let $\left(  \beta
_{x}\right)  _{x\in P}$ be a family of elements of $\mathbf{k}$. Then, every
$z\in P$ satisfies%
\[
\beta_{z}=\sum_{\substack{y\in P;\\y\leq z}}\mu\left(  y,z\right)
\sum_{\substack{x\in P;\\x\leq y}}\beta_{x}.
\]

\end{proposition}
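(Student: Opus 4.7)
The plan is to prove the three parts in order: part (a) follows directly from the defining recurrences of $\mu$; part (b) requires a brief excursion into the incidence algebra of $P$; and part (c) is a routine consequence of (b).

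For part (a), I would split into three cases depending on the relative position of $x$ and $y$. When $x \not\leq y$, the sum is empty and equals $0 = [x=y]$. When $x = y$, only the term $z = x$ survives and the sum reduces to $\mu(x,x) = 1 = [x=y]$ by (\ref{eq.def.moebius.rec1}). When $x < y$, I would separate off the term $z = y$ and rewrite the sum as $\mu(x,y) + \sum_{x \leq z < y} \mu(x,z)$; the defining recurrence (\ref{eq.def.moebius.rec2}) then shows this equals $\mu(x,y) - \mu(x,y) = 0 = [x=y]$.

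For part (b), the obstacle is that $\mu$ is defined by a recurrence in its \emph{first} argument, so one cannot simply mimic the argument of (a). My plan is to pass to the incidence algebra $I(P)$ of functions on $\operatorname{Int}P$ equipped with the convolution $(f * g)(x,y) = \sum_{x \leq z \leq y} f(x,z)\, g(z,y)$ and two-sided identity $\delta(x,y) = [x=y]$. Setting $\zeta(x,y) = 1$ for all $x \leq y$, part (a) reads $\mu * \zeta = \delta$, so $\mu$ is a left inverse of $\zeta$. To show that $\mu$ is also a right inverse, I would introduce an auxiliary $\nu \in I(P)$ defined by the mirror recurrence $\nu(y,y) = 1$ and $\nu(x,y) = -\sum_{x < z \leq y} \nu(z,y)$ for $x < y$ (well-defined since each interval of $P$ is finite); an immediate verification, case-split on whether $x = y$, $x < y$, or $x \not\leq y$, yields $\zeta * \nu = \delta$. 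Associativity of $*$ then gives
\[
\mu = \mu * \delta = \mu * (\zeta * \nu) = (\mu * \zeta) * \nu = \delta * \nu = \nu,
\]
so that $\zeta * \mu = \zeta * \nu = \delta$, which is precisely (\ref{eq.prop.moebius.double0.b}).

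For part (c), I would swap the order of summation to obtain
\[
\sum_{\substack{y\in P;\\y\leq z}}\mu(y,z)\sum_{\substack{x\in P;\\x\leq y}}\beta_{x} = \sum_{x\in P}\beta_{x}\sum_{\substack{y\in P;\\x\leq y\leq z}}\mu(y,z),
\]
then apply part (b) to the inner sum (taking the roles of $x$ and $y$ in (b) to be $x$ and $z$ here) to collapse it to $[x=z]$. The outer sum then telescopes to $\beta_{z}$. The main obstacle is part (b): the asymmetry of the definition of $\mu$ forces one to leave the purely combinatorial setting and work inside the incidence algebra, where associativity of convolution turns a left inverse into a two-sided inverse. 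Parts (a) and (c) are straightforward in comparison.
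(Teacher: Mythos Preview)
Your proof is correct. Parts (a) and (c) match the paper's arguments essentially verbatim. Part (b), however, takes a genuinely different route: you pass to the incidence algebra and use associativity of convolution to upgrade the one-sided inverse relation $\mu * \zeta = \delta$ (which is part (a)) to a two-sided one, by constructing an explicit right inverse $\nu$ and concluding $\mu = \nu$. The paper instead argues by strong induction on $\left\vert [x,y]\right\vert$: for $x < y$ it evaluates the double sum $\sum_{x \leq z \leq t \leq y} \mu(z,t)$ in two ways (grouping first by $z$, then by $t$), invoking part (a) for one grouping and the induction hypothesis for the other, and solves for $\sum_{x \leq z \leq y} \mu(z,y)$. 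Your approach is the classical slicker one (the paper itself remarks that such arguments exist in the literature), and it has the conceptual advantage of explaining \emph{why} a left inverse must be a right inverse; the paper's approach, by contrast, avoids introducing any algebraic machinery beyond the M\"{o}bius function itself and is in that sense more self-contained.
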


For the sake of completeness, let us give a self-contained proof of this
proposition (slicker arguments appear in the literature\footnote{For example,
Proposition \ref{prop.moebius.double0} \textbf{(c)} is equivalent to the
$\Longrightarrow$ implication of \cite[(2.3a)]{Martin22}.}):

\begin{vershort}
\begin{proof}
[Proof of Proposition \ref{prop.moebius.double0}.]\textbf{(a)} Let $x\in P$
and $y\in P$. We must prove the equality (\ref{eq.prop.moebius.double0.a}). We
are in one of the following three cases:

\textit{Case 1:} We have $x=y$.

\textit{Case 2:} We have $x<y$.

\textit{Case 3:} We have neither $x=y$ nor $x<y$.

Let us first consider Case 1. In this case, we have $x=y$. Hence, the sum
$\sum_{\substack{z\in P;\\x\leq z\leq y}}\mu\left(  x,z\right)  $ contains
only one addend -- namely, the addend for $z=x$. Thus,%
\begin{align*}
\sum_{\substack{z\in P;\\x\leq z\leq y}}\mu\left(  x,z\right)   &  =\mu\left(
x,x\right)  =1\ \ \ \ \ \ \ \ \ \ \left(  \text{by the definition of the
M\"{o}bius function}\right) \\
&  =\left[  x=y\right]  \ \ \ \ \ \ \ \ \ \ \left(  \text{since }x=y\right)  .
\end{align*}
Thus, (\ref{eq.prop.moebius.double0.a}) is proven in Case 1.

Let us now consider Case 2. In this case, we have $x<y$. Hence, $x\neq y$, so
that $\left[  x=y\right]  =0$. Now, $y$ is an element of $P$ satisfying $x\leq
y\leq y$. Thus, the sum $\sum_{\substack{z\in P;\\x\leq z\leq y}}\mu\left(
x,z\right)  $ contains an addend for $z=y$. Splitting off this addend, we
obtain%
\begin{align*}
\sum_{\substack{z\in P;\\x\leq z\leq y}}\mu\left(  x,z\right)   &
=\underbrace{\sum_{\substack{z\in P;\\x\leq z\leq y;\ z\neq y}}}%
_{=\sum_{\substack{z\in P;\\x\leq z<y}}}\mu\left(  x,z\right)
+\underbrace{\mu\left(  x,y\right)  }_{\substack{=-\sum_{\substack{z\in
P;\\x\leq z<y}}\mu\left(  x,z\right)  \\\text{(by (\ref{eq.def.moebius.rec2}%
))}}}\\
&  =\sum_{\substack{z\in P;\\x\leq z<y}}\mu\left(  x,z\right)  +\left(
-\sum_{\substack{z\in P;\\x\leq z<y}}\mu\left(  x,z\right)  \right)
=0=\left[  x=y\right]  .
\end{align*}
Hence, (\ref{eq.prop.moebius.double0.a}) is proven in Case 2.

Finally, let us consider Case 3. In this case, we have neither $x=y$ nor
$x<y$. Thus, we do not have $x\leq y$. Hence, there exists no $z\in P$
satisfying $x\leq z\leq y$. Thus,
\[
\sum_{\substack{z\in P;\\x\leq z\leq y}}\mu\left(  x,z\right)  =\left(
\text{empty sum}\right)  =0=\left[  x=y\right]
\]
(since we do not have $x=y$). Thus, (\ref{eq.prop.moebius.double0.a}) is
proven in Case 3.

Hence, (\ref{eq.prop.moebius.double0.a}) is proven in all three cases. This
proves Proposition \ref{prop.moebius.double0} \textbf{(a)}.

\textbf{(b)} For any two elements $u$ and $v$ of $P$, we define a subset
$\left[  u,v\right]  $ of $P$ by%
\[
\left[  u,v\right]  =\left\{  w\in P\ \mid\ u\leq w\leq v\right\}  .
\]
This subset $\left[  u,v\right]  $ is finite (since $P$ is finite), and thus
its size $\left\vert \left[  u,v\right]  \right\vert $ is a nonnegative integer.

We shall now prove Proposition \ref{prop.moebius.double0} \textbf{(b)} by
strong induction on $\left\vert \left[  x,y\right]  \right\vert $:

\textit{Induction step:} Let $N\in\mathbb{N}$. Assume that Proposition
\ref{prop.moebius.double0} \textbf{(b)} holds whenever $\left\vert \left[
x,y\right]  \right\vert <N$. We must now prove that Proposition
\ref{prop.moebius.double0} \textbf{(b)} holds whenever $\left\vert \left[
x,y\right]  \right\vert =N$.

We have assumed that Proposition \ref{prop.moebius.double0} \textbf{(b)} holds
whenever $\left\vert \left[  x,y\right]  \right\vert <N$. In other words, we
have assumed the following claim:

\begin{statement}
\textit{Claim 1:} For any $x\in P$ and $y\in P$ satisfying $\left\vert \left[
x,y\right]  \right\vert <N$, we have%
\[
\sum_{\substack{z\in P;\\x\leq z\leq y}}\mu\left(  z,y\right)  =\left[
x=y\right]  .
\]

\end{statement}

Now, let $x$ and $y$ be two elements of $P$ satisfying $\left\vert \left[
x,y\right]  \right\vert =N$. We are going to prove that%
\begin{equation}
\sum_{\substack{z\in P;\\x\leq z\leq y}}\mu\left(  z,y\right)  =\left[
x=y\right]  . \label{pf.prop.moebius.double0.short.b.goal}%
\end{equation}

We are in one of the following three cases:

\textit{Case 1:} We have $x=y$.

\textit{Case 2:} We have $x<y$.

\textit{Case 3:} We have neither $x=y$ nor $x<y$.

In Case 1 and in Case 3, we can prove
(\ref{pf.prop.moebius.double0.short.b.goal}) in exactly the same way as (in
our above proof of Proposition \ref{prop.moebius.double0} \textbf{(a)}) we
have proven (\ref{eq.prop.moebius.double0.a}). Thus, it remains only to prove
(\ref{pf.prop.moebius.double0.short.b.goal}) in Case 2. In other words, we can
WLOG assume that we are in Case 2.

Assume this. Hence, $x<y$, so that $\left[  x=y\right]  =0$.

For every $t\in P$ satisfying $x\leq t<y$, we have%
\begin{equation}
\left\vert \left[  x,t\right]  \right\vert <N
\label{pf.prop.moebius.double0.short.b.smaller}%
\end{equation}
\footnote{\textit{Proof of (\ref{pf.prop.moebius.double0.short.b.smaller}):}
Let $t\in P$ be such that $x\leq t<y$. We shall proceed in several steps:
\par
\begin{itemize}
\item We have
\begin{align*}
\left[  x,t\right]   &  =\left\{  w\in P\ \mid\ x\leq w\leq t\right\}
\ \ \ \ \ \ \ \ \ \ \left(  \text{by the definition of }\left[  x,t\right]
\right) \\
&  \subseteq\left\{  w\in P\ \mid\ x\leq w\leq y\right\}
\ \ \ \ \ \ \ \ \ \ \left(
\begin{array}
[c]{c}%
\text{because every }w\in P\text{ satisfying }w\leq t\\
\text{must also satisfy }w\leq y\text{ (since }t<y\text{)}%
\end{array}
\right) \\
&  =\left[  x,y\right]  \ \ \ \ \ \ \ \ \ \ \left(  \text{by the definition of
}\left[  x,y\right]  \right)  .
\end{align*}
\par
\item We have $t<y$. Thus, we do not have $y\leq t$. Hence, we do not have
$x\leq y\leq t$. Hence, $y\notin\left[  x,t\right]  $. But $y\in\left[
x,y\right]  $ (since $x\leq y\leq y$). Hence, the sets $\left[  x,t\right]  $
and $\left[  x,y\right]  $ are distinct (since the latter contains $y$ but the
former does not). Combining this with $\left[  x,t\right]  \subseteq\left[
x,y\right]  $, we conclude that $\left[  x,t\right]  $ is a proper subset of
$\left[  x,y\right]  $. Hence, $\left\vert \left[  x,t\right]  \right\vert
<\left\vert \left[  x,y\right]  \right\vert =N$. This proves
(\ref{pf.prop.moebius.double0.short.b.smaller}).
\end{itemize}
}. Therefore, for every $t\in P$ satisfying $x\leq t<y$, we have%
\begin{equation}
\sum_{\substack{z\in P;\\x\leq z\leq t}}\mu\left(  z,t\right)  =\left[
x=t\right]  \label{pf.prop.moebius.double0.short.b.smaller2}%
\end{equation}
(by Claim 1, applied to $t$ instead of $y$). Also, for every $u\in P$ and
$v\in P$, we have%
\begin{equation}
\sum_{\substack{t\in P;\\u\leq t\leq v}}\mu\left(  u,t\right)  =\left[
u=v\right]  \label{pf.prop.moebius.double0.short.b.smaller3}%
\end{equation}
\footnote{\textit{Proof of (\ref{pf.prop.moebius.double0.short.b.smaller3}):}
Let $u\in P$ and $v\in P$. Proposition \ref{prop.moebius.double0} \textbf{(a)}
(applied to $x=u$ and $y=v$) shows that $\sum_{\substack{z\in P;\\u\leq z\leq
v}}\mu\left(  u,z\right)  =\left[  u=v\right]  $. Now,%
\begin{align*}
\sum_{\substack{t\in P;\\u\leq t\leq v}}\mu\left(  u,t\right)   &
=\sum_{\substack{z\in P;\\u\leq z\leq v}}\mu\left(  u,z\right)
\ \ \ \ \ \ \ \ \ \ \left(  \text{here, we have substituted }z\text{ for
}t\text{ in the sum}\right) \\
&  =\left[  u=v\right]  .
\end{align*}
This proves (\ref{pf.prop.moebius.double0.short.b.smaller3}).}.

Now,%
\begin{align*}
&  \underbrace{\sum_{\substack{\left(  z,t\right)  \in P^{2};\\x\leq z\leq
t\leq y}}}_{=\sum_{\substack{z\in P;\\x\leq z\leq y}}\ \ \sum_{\substack{t\in
P;\\z\leq t\leq y}}}\mu\left(  z,t\right) \\
&  =\sum_{\substack{z\in P;\\x\leq z\leq y}}\ \ \underbrace{\sum
_{\substack{t\in P;\\z\leq t\leq y}}\mu\left(  z,t\right)  }%
_{\substack{=\left[  z=y\right]  \\\text{(by
(\ref{pf.prop.moebius.double0.short.b.smaller3})}\\\text{(applied to
}u=z\text{ and }v=y\text{))}}}=\sum_{\substack{z\in P;\\x\leq z\leq y}}\left[
z=y\right] \\
&  =\sum_{\substack{z\in P;\\x\leq z\leq y\text{ and }z=y}}\underbrace{\left[
z=y\right]  }_{\substack{=1\\\text{(since }z=y\text{)}}}+\sum_{\substack{z\in
P;\\x\leq z\leq y\text{ and }z\neq y}}\underbrace{\left[  z=y\right]
}_{\substack{=0\\\text{(since }z\neq y\text{)}}}\\
&  \ \ \ \ \ \ \ \ \ \ \ \ \ \ \ \ \ \ \ \ \left(  \text{since every }z\in
P\text{ satisfies either }z=y\text{ or }z\neq y\text{ (but not both)}\right)
\\
&  =\underbrace{\sum_{\substack{z\in P;\\x\leq z\leq y\text{ and }z=y}%
}}_{=\sum_{z\in\left\{  w\in P\ \mid\ x\leq w\leq y\text{ and }w=y\right\}  }%
}1+\underbrace{\sum_{\substack{z\in P;\\x\leq z\leq y\text{ and }z\neq y}%
}0}_{=0}=\sum_{z\in\left\{  w\in P\ \mid\ x\leq w\leq y\text{ and
}w=y\right\}  }1\\
&  =\left\vert \underbrace{\left\{  w\in P\ \mid\ x\leq w\leq y\text{ and
}w=y\right\}  }_{=\left\{  y\right\}  }\right\vert =\left\vert \left\{
y\right\}  \right\vert =1.
\end{align*}
Hence,%
\begin{align*}
1  &  =\underbrace{\sum_{\substack{\left(  z,t\right)  \in P^{2};\\x\leq z\leq
t\leq y}}}_{=\sum_{\substack{t\in P;\\x\leq t\leq y}}\ \ \sum_{\substack{z\in
P;\\x\leq z\leq t}}}\mu\left(  z,t\right)  =\sum_{\substack{t\in P;\\x\leq
t\leq y}}\ \ \sum_{\substack{z\in P;\\x\leq z\leq t}}\mu\left(  z,t\right) \\
&  =\underbrace{\sum_{\substack{t\in P;\\x\leq t\leq y\text{ and }t=y}%
}}_{\substack{=\sum_{t\in\left\{  w\in P\ \mid\ x\leq w\leq y\text{ and
}w=y\right\}  }=\sum_{t\in\left\{  y\right\}  }\\\text{(since }\left\{  w\in
P\ \mid\ x\leq w\leq y\text{ and }w=y\right\}  =\left\{  y\right\}  \text{)}%
}}\sum_{\substack{z\in P;\\x\leq z\leq t}}\mu\left(  z,t\right) \\
&  \ \ \ \ \ \ \ \ \ \ +\sum_{\substack{t\in P;\\x\leq t\leq y\text{ and
}t\neq y}}\ \ \underbrace{\sum_{\substack{z\in P;\\x\leq z\leq t}}\mu\left(
z,t\right)  }_{\substack{=\left[  x=t\right]  \\\text{(by
(\ref{pf.prop.moebius.double0.short.b.smaller2})}\\\text{(since }t<y\text{
(because }t\leq y\text{ and }t\neq y\text{) and }x\leq t\text{))}}}\\
&  \ \ \ \ \ \ \ \ \ \ \ \ \ \ \ \ \ \ \ \ \left(  \text{since every }t\in
P\text{ satisfies either }t=y\text{ or }t\neq y\text{ (but not both)}\right)
\\
&  =\underbrace{\sum_{t\in\left\{  y\right\}  }\ \ \sum_{\substack{z\in
P;\\x\leq z\leq t}}\mu\left(  z,t\right)  }_{=\sum_{\substack{z\in P;\\x\leq
z\leq y}}\mu\left(  z,y\right)  }+\sum_{\substack{t\in P;\\x\leq t\leq y\text{
and }t\neq y}}\left[  x=t\right] \\
&  =\sum_{\substack{z\in P;\\x\leq z\leq y}}\mu\left(  z,y\right)
+\sum_{\substack{t\in P;\\x\leq t\leq y\text{ and }t\neq y}}\left[
x=t\right]  .
\end{align*}
Subtracting $\sum_{\substack{z\in P;\\x\leq z\leq y}}\mu\left(  z,y\right)  $
from both sides of this equality, we obtain%
\begin{align*}
&  1-\sum_{\substack{z\in P;\\x\leq z\leq y}}\mu\left(  z,y\right) \\
&  =\sum_{\substack{t\in P;\\x\leq t\leq y\text{ and }t\neq y}}\left[
x=t\right] \\
&  =\underbrace{\sum_{\substack{t\in P;\\x\leq t\leq y\text{ and }t=x\text{
and }t\neq y}}}_{\substack{=\sum_{t\in\left\{  z\in P\ \mid\ x\leq z\leq
y\text{ and }z=x\text{ and }z\neq y\right\}  }=\sum_{t\in\left\{  x\right\}
}\\\text{(since }\left\{  z\in P\ \mid\ x\leq z\leq y\text{ and }z=x\text{ and
}z\neq y\right\}  =\left\{  x\right\}  \text{)}}}\underbrace{\left[
x=t\right]  }_{\substack{=1\\\text{(since }x=t\text{)}}}\\
&  \ \ \ \ \ \ \ \ \ \ +\sum_{\substack{t\in P;\\x\leq t\leq y\text{ and
}t\neq x\text{ and }t\neq y}}\underbrace{\left[  x=t\right]  }%
_{\substack{=0\\\text{(since }x\neq t\text{)}}}\\
&  \ \ \ \ \ \ \ \ \ \ \ \ \ \ \ \ \ \ \ \ \left(  \text{since every }t\in
P\text{ satisfies either }t=x\text{ or }t\neq x\text{ (but not both)}\right)
\\
&  =\sum_{t\in\left\{  x\right\}  }1+\underbrace{\sum_{\substack{t\in
P;\\x\leq t\leq y\text{ and }t\neq x\text{ and }t\neq y}}0}_{=0}=\sum
_{t\in\left\{  x\right\}  }1=1.
\end{align*}
Solving this equality for $\sum_{\substack{z\in P;\\x\leq z\leq y}}\mu\left(
z,y\right)  $, we obtain%
\[
\sum_{\substack{z\in P;\\x\leq z\leq y}}\mu\left(  z,y\right)  =1-1=0=\left[
x=y\right]
\]
(since $x<y$). Thus, (\ref{pf.prop.moebius.double0.short.b.goal}) is proven.

Let us now forget that we fixed $x$ and $y$. We thus have proven that for any
$x\in P$ and $y\in P$ satisfying $\left\vert \left[  x,y\right]  \right\vert
=N$, we have%
\[
\sum_{\substack{z\in P;\\x\leq z\leq y}}\mu\left(  z,y\right)  =\left[
x=y\right]  .
\]
In other words, Proposition \ref{prop.moebius.double0} \textbf{(b)} holds
whenever $\left\vert \left[  x,y\right]  \right\vert =N$. This completes the
induction step. Thus, Proposition \ref{prop.moebius.double0} \textbf{(b)} is
proven by induction.

\textbf{(c)} For every $v\in P$, we have%
\begin{align*}
&  \sum_{\substack{y\in P;\\y\leq v}}\mu\left(  y,v\right)  \sum
_{\substack{x\in P;\\x\leq y}}\beta_{x}\\
&  =\sum_{\substack{z\in P;\\z\leq v}}\mu\left(  z,v\right)  \sum
_{\substack{x\in P;\\x\leq z}}\beta_{x}\ \ \ \ \ \ \ \ \ \ \left(
\begin{array}
[c]{c}%
\text{here, we have renamed the summation}\\
\text{index }y\text{ as }z\text{ in the outer sum}%
\end{array}
\right) \\
&  =\underbrace{\sum_{\substack{z\in P;\\z\leq v}}\ \ \sum_{\substack{x\in
P;\\x\leq z}}}_{=\sum_{x\in P}\ \ \sum_{\substack{z\in P;\\x\leq z\leq v}}}%
\mu\left(  z,v\right)  \beta_{x}=\sum_{x\in P}\ \ \underbrace{\sum
_{\substack{z\in P;\\x\leq z\leq v}}\mu\left(  z,v\right)  }%
_{\substack{=\left[  x=v\right]  \\\text{(by Proposition
\ref{prop.moebius.double0} \textbf{(b)}}\\\text{(applied to }y=v\text{))}%
}}\beta_{x}\\
&  =\sum_{x\in P}\left[  x=v\right]  \beta_{x}=\sum_{\substack{x\in
P;\\x=v}}\underbrace{\left[  x=v\right]  }_{\substack{=1\\\text{(since
}x=v\text{)}}}\beta_{x}+\sum_{\substack{x\in P;\\x\neq v}}\underbrace{\left[
x=v\right]  }_{\substack{=0\\\text{(since }x\neq v\text{)}}}\beta_{x}\\
&  \ \ \ \ \ \ \ \ \ \ \ \ \ \ \ \ \ \ \ \ \left(  \text{since every }x\in
P\text{ satisfies either }x=v\text{ or }x\neq v\text{ (but not both)}\right)
\\
&  =\sum_{\substack{x\in P;\\x=v}}\beta_{x}+\underbrace{\sum_{\substack{x\in
P;\\x\neq v}}0\beta_{x}}_{=0}=\sum_{\substack{x\in P;\\x=v}}\beta_{x}%
=\beta_{v}\ \ \ \ \ \ \ \ \ \ \left(  \text{since }v\in P\right)  .
\end{align*}
Renaming $v$ as $z$ in this result, we obtain precisely Proposition
\ref{prop.moebius.double0} \textbf{(c)}.
\end{proof}
\end{vershort}

\begin{verlong}
\begin{proof}
[Proof of Proposition \ref{prop.moebius.double0}.]We first notice that%
\begin{equation}
\left\{  w\in P\ \mid\ x\leq w\leq x\right\}  =\left\{  x\right\}
\label{pf.prop.moebius.double0.xx}%
\end{equation}
for every $x\in P$\ \ \ \ \footnote{\textit{Proof of
(\ref{pf.prop.moebius.double0.xx}):} Let $x\in P$.
\par
Clearly, $x$ is an element of $P$ and satisfies $x\leq x\leq x$. Thus, $x$ is
an element $w$ of $P$ satisfying $x\leq w\leq x$. In other words,
$x\in\left\{  w\in P\ \mid\ x\leq w\leq x\right\}  $. Hence, $\left\{
x\right\}  \subseteq\left\{  w\in P\ \mid\ x\leq w\leq x\right\}  $.
\par
On the other hand, let $z\in\left\{  w\in P\ \mid\ x\leq w\leq x\right\}  $.
Thus, $z$ is an element $w$ of $P$ satisfying $x\leq w\leq x$. In other words,
$z$ is an element of $P$ and satisfies $x\leq z\leq x$. Combining $x\leq z$
with $z\leq x$, we obtain $x=z$ (since the partial order on $P$ is
antisymmetric). Hence, $z=x\in\left\{  x\right\}  $.
\par
Now, forget that we fixed $z$. We thus have proven that $z\in\left\{
x\right\}  $ for every $z\in\left\{  w\in P\ \mid\ x\leq w\leq x\right\}  $.
In other words, $\left\{  w\in P\ \mid\ x\leq w\leq x\right\}  \subseteq
\left\{  x\right\}  $. Combining this with $\left\{  x\right\}  \subseteq
\left\{  w\in P\ \mid\ x\leq w\leq x\right\}  $, we obtain $\left\{  w\in
P\ \mid\ x\leq w\leq x\right\}  =\left\{  x\right\}  $. This proves
(\ref{pf.prop.moebius.double0.xx}).}.

\textbf{(a)} Let $x\in P$ and $y\in P$. We must prove the equality
(\ref{eq.prop.moebius.double0.a}). If we do not have $x\leq y$, then
(\ref{eq.prop.moebius.double0.a}) holds for obvious
reasons\footnote{\textit{Proof.} Assume that we do not have $x\leq y$. Then,
there exists no $z\in P$ satisfying $x\leq z\leq y$ (because if such a $z$
would exist, then it would satisfy $x\leq z\leq y$, which would contradict the
fact that we do not have $x\leq y$). Therefore, the sum $\sum_{\substack{z\in
P;\\x\leq z\leq y}}\mu\left(  x,z\right)  $ is an empty sum. Hence,%
\[
\sum_{\substack{z\in P;\\x\leq z\leq y}}\mu\left(  x,z\right)  =\left(
\text{empty sum}\right)  =0.
\]
But we do not have $x=y$ (because if we had $x=y$, then we would have $x\leq
y$, which would contradict the fact that we do not have $x\leq y$). Thus,
$\left[  x=y\right]  =0$. Now, $\sum_{\substack{z\in P;\\x\leq z\leq y}%
}\mu\left(  x,z\right)  =0=\left[  x=y\right]  $. Hence,
(\ref{eq.prop.moebius.double0.a}) is proven, qed.}. Hence, for the rest of our
proof of (\ref{eq.prop.moebius.double0.a}), we can WLOG assume that $x\leq y$.
Assume this.

We have $x\leq y$. Thus, either $x=y$ or $x<y$. In other words, we are in one
of the following two cases:

\textit{Case 1:} We have $x=y$.

\textit{Case 2:} We have $x<y$.

Let us first consider Case 1. In this case, we have $x=y$, so that $\left[
x=y\right]  =1$. On the other hand,
\[
\left\{  w\in P\ \mid\ x\leq w\leq\underbrace{y}_{=x}\right\}  =\left\{  w\in
P\ \mid\ x\leq w\leq x\right\}  =\left\{  x\right\}
\]
(by (\ref{pf.prop.moebius.double0.xx})). Now,%
\[
\underbrace{\sum_{\substack{z\in P;\\x\leq z\leq y}}}_{\substack{=\sum
_{z\in\left\{  w\in P\ \mid\ x\leq w\leq y\right\}  }=\sum_{z\in\left\{
x\right\}  }\\\text{(since }\left\{  w\in P\ \mid\ x\leq w\leq y\right\}
=\left\{  x\right\}  \text{)}}}\mu\left(  x,z\right)  =\sum_{z\in\left\{
x\right\}  }\mu\left(  x,z\right)  =\mu\left(  x,x\right)  =1
\]
(by (\ref{eq.def.moebius.rec1})). Comparing this with $\left[  x=y\right]
=1$, we obtain $\sum_{\substack{z\in P;\\x\leq z\leq y}}\mu\left(  x,z\right)
=\left[  x=y\right]  $. Hence, (\ref{eq.prop.moebius.double0.a}) is proven in
Case 1.

Let us now consider Case 2. In this case, we have $x<y$. Thus,
(\ref{eq.def.moebius.rec2}) yields
\begin{align*}
\mu\left(  x,y\right)   &  =-\underbrace{\sum_{\substack{z\in P;\\x\leq z<y}%
}}_{\substack{=\sum_{\substack{z\in P;\\x\leq z\text{ and }z<y}}=\sum
_{\substack{z\in P;\\x\leq z\text{ and }z\leq y\text{ and }z\neq
y}}\\\text{(since the condition }\left(  z<y\right)  \text{ is equivalent
to}\\\text{the condition }\left(  z\leq y\text{ and }z\neq y\right)  \text{)}%
}}\mu\left(  x,z\right)  =-\underbrace{\sum_{\substack{z\in P;\\x\leq z\text{
and }z\leq y\text{ and }z\neq y}}}_{=\sum_{\substack{z\in P;\\x\leq z\leq
y\text{ and }z\neq y}}}\mu\left(  x,z\right) \\
&  =-\sum_{\substack{z\in P;\\x\leq z\leq y\text{ and }z\neq y}}\mu\left(
x,z\right)  .
\end{align*}
Hence, $\sum_{\substack{z\in P;\\x\leq z\leq y\text{ and }z\neq y}}\mu\left(
x,z\right)  =-\mu\left(  x,y\right)  $.

On the other hand,
\[
\left\{  w\in P\ \mid\ x\leq w\leq y\text{ and }w=y\right\}  =\left\{
y\right\}
\]
\footnote{\textit{Proof.} Let $z\in\left\{  w\in P\ \mid\ x\leq w\leq y\text{
and }w=y\right\}  $. Thus, $z$ is an element $w$ of $P$ satisfying $x\leq
w\leq y$ and $w=y$. In other words, $z$ is an element of $P$ and satisfies
$x\leq z\leq y$ and $z=y$. Now, $z=y\in\left\{  y\right\}  $. Let us now
forget that we fixed $z$. We thus have proven that $z\in\left\{  y\right\}  $
for every $z\in\left\{  w\in P\ \mid\ x\leq w\leq y\text{ and }w=y\right\}  $.
In other words, $\left\{  w\in P\ \mid\ x\leq w\leq y\text{ and }w=y\right\}
\subseteq\left\{  y\right\}  $.
\par
On the other hand, $y$ is an element of $P$ and satisfies $x\leq y\leq y$ and
$y=y$. In other words, $y$ is an element $w$ of $P$ satisfying $x\leq w\leq y$
and $w=y$. Hence, $y\in\left\{  w\in P\ \mid\ x\leq w\leq y\text{ and
}w=y\right\}  $. Thus, $\left\{  y\right\}  \subseteq\left\{  w\in
P\ \mid\ x\leq w\leq y\text{ and }w=y\right\}  $. Combining this with
$\left\{  w\in P\ \mid\ x\leq w\leq y\text{ and }w=y\right\}  \subseteq
\left\{  y\right\}  $, we obtain $\left\{  w\in P\ \mid\ x\leq w\leq y\text{
and }w=y\right\}  =\left\{  y\right\}  $. Qed.}.

But every $z\in P$ satisfies either $z=y$ or $z\neq y$ (but not both). Thus,%
\begin{align*}
\sum_{\substack{z\in P;\\x\leq z\leq y}}\mu\left(  x,z\right)   &
=\underbrace{\sum_{\substack{z\in P;\\x\leq z\leq y\text{ and }z=y}%
}}_{\substack{=\sum_{z\in\left\{  w\in P\ \mid\ x\leq w\leq y\text{ and
}w=y\right\}  }=\sum_{z\in\left\{  y\right\}  }\\\text{(since }\left\{  w\in
P\ \mid\ x\leq w\leq y\text{ and }w=y\right\}  =\left\{  y\right\}  \text{)}%
}}\mu\left(  x,z\right)  +\underbrace{\sum_{\substack{z\in P;\\x\leq z\leq
y\text{ and }z\neq y}}\mu\left(  x,z\right)  }_{=-\mu\left(  x,y\right)  }\\
&  =\underbrace{\sum_{z\in\left\{  y\right\}  }\mu\left(  x,z\right)  }%
_{=\mu\left(  x,y\right)  }+\left(  -\mu\left(  x,y\right)  \right)
=\mu\left(  x,y\right)  +\left(  -\mu\left(  x,y\right)  \right)  =0.
\end{align*}
Thus, (\ref{eq.prop.moebius.double0.a}) is proven in Case 2.

We have now proven (\ref{eq.prop.moebius.double0.a}) in each of the two Cases
1 and 2. Thus, (\ref{eq.prop.moebius.double0.a}) always holds (since Cases 1
and 2 cover all possibilities). Proposition \ref{prop.moebius.double0}
\textbf{(a)} is thus proven.

\textbf{(b)} For any two elements $u$ and $v$ of $P$, we define a subset
$\left[  u,v\right]  $ of $P$ by%
\[
\left[  u,v\right]  =\left\{  w\in P\ \mid\ u\leq w\leq v\right\}  .
\]
This subset $\left[  u,v\right]  $ is finite (since $P$ is finite), and thus
its size $\left\vert \left[  u,v\right]  \right\vert $ is a nonnegative integer.

We shall now prove Proposition \ref{prop.moebius.double0} \textbf{(b)} by
strong induction on $\left\vert \left[  x,y\right]  \right\vert $:

\textit{Induction step:} Let $N\in\mathbb{N}$. Assume that Proposition
\ref{prop.moebius.double0} \textbf{(b)} holds whenever $\left\vert \left[
x,y\right]  \right\vert <N$. We must now prove that Proposition
\ref{prop.moebius.double0} \textbf{(b)} holds whenever $\left\vert \left[
x,y\right]  \right\vert =N$.

We have assumed that Proposition \ref{prop.moebius.double0} \textbf{(b)} holds
whenever $\left\vert \left[  x,y\right]  \right\vert <N$. In other words, we
have assumed the following claim:

\begin{statement}
\textit{Claim 1:} For any $x\in P$ and $y\in P$ satisfying $\left\vert \left[
x,y\right]  \right\vert <N$, we have%
\[
\sum_{\substack{z\in P;\\x\leq z\leq y}}\mu\left(  z,y\right)  =\left[
x=y\right]  .
\]

\end{statement}

Now, let $x$ and $y$ be two elements of $P$ satisfying $\left\vert \left[
x,y\right]  \right\vert =N$. We are going to prove that%
\begin{equation}
\sum_{\substack{z\in P;\\x\leq z\leq y}}\mu\left(  z,y\right)  =\left[
x=y\right]  . \label{pf.prop.moebius.double0.b.goal}%
\end{equation}

If we do not have $x\leq y$, then (\ref{pf.prop.moebius.double0.b.goal}) holds
for obvious reasons\footnote{\textit{Proof.} Assume that we do not have $x\leq
y$. Then, there exists no $z\in P$ satisfying $x\leq z\leq y$ (because if such
a $z$ would exist, then it would satisfy $x\leq z\leq y$, which would
contradict the fact that we do not have $x\leq y$). Therefore, the sum
$\sum_{\substack{z\in P;\\x\leq z\leq y}}\mu\left(  z,y\right)  $ is an empty
sum. Hence,%
\[
\sum_{\substack{z\in P;\\x\leq z\leq y}}\mu\left(  z,y\right)  =\left(
\text{empty sum}\right)  =0.
\]
But we do not have $x=y$ (because if we had $x=y$, then we would have $x\leq
y$, which would contradict the fact that we do not have $x\leq y$). Thus,
$\left[  x=y\right]  =0$. Now, $\sum_{\substack{z\in P;\\x\leq z\leq y}%
}\mu\left(  z,y\right)  =0=\left[  x=y\right]  $. Hence,
(\ref{pf.prop.moebius.double0.b.goal}) is proven, qed.}. Hence, for the rest
of our proof of (\ref{pf.prop.moebius.double0.b.goal}), we can WLOG assume
that $x\leq y$. Assume this.

If $x=y$, then (\ref{pf.prop.moebius.double0.b.goal}) holds for obvious
reasons\footnote{\textit{Proof.} Assume that $x=y$. Then,%
\[
\left\{  w\in P\ \mid\ x\leq w\leq\underbrace{y}_{=x}\right\}  =\left\{  w\in
P\ \mid\ x\leq w\leq x\right\}  =\left\{  x\right\}
\]
(by (\ref{pf.prop.moebius.double0.xx})). Now,
\[
\underbrace{\sum_{\substack{z\in P;\\x\leq z\leq y}}}_{\substack{=\sum
_{z\in\left\{  w\in P\ \mid\ x\leq w\leq y\right\}  }=\sum_{z\in\left\{
x\right\}  }\\\text{(since }\left\{  w\in P\ \mid\ x\leq w\leq y\right\}
=\left\{  x\right\}  \text{)}}}\mu\left(  z,y\right)  =\sum_{z\in\left\{
x\right\}  }\mu\left(  z,y\right)  =\mu\left(  x,\underbrace{y}_{=x}\right)
=\mu\left(  x,x\right)  =1
\]
(by (\ref{eq.def.moebius.rec1})). Comparing this with
\[
\left[  x=y\right]  =1\ \ \ \ \ \ \ \ \ \ \left(  \text{since }x=y\right)  ,
\]
we obtain $\sum_{\substack{z\in P;\\x\leq z\leq y}}\mu\left(  z,y\right)
=\left[  x=y\right]  $. Hence, (\ref{pf.prop.moebius.double0.b.goal}) is
proven, qed.}. Hence, for the rest of our proof of
(\ref{pf.prop.moebius.double0.b.goal}), we can WLOG assume that we don't have
$x=y$. Assume this. Thus, we don't have $x=y$. In other words, we have $x\neq
y$. Combining this with $x\leq y$, we obtain $x<y$.

Notice that $\left[  x=y\right]  =0$ (since we don't have $x=y$).

For any pair $\left(  z,t\right)  \in P^{2}$, we have the following logical
equivalence:%
\begin{equation}
\left(  x\leq z\leq y\text{ and }z\leq t\leq y\right)  \ \Longleftrightarrow
\ \left(  x\leq t\leq y\text{ and }x\leq z\leq t\right)
\label{pf.prop.moebius.double0.b.equiv}%
\end{equation}
\footnote{\textit{Proof of (\ref{pf.prop.moebius.double0.b.equiv}):} Let
$\left(  z,t\right)  \in P^{2}$. We shall first prove the logical implication%
\begin{equation}
\left(  x\leq z\leq y\text{ and }z\leq t\leq y\right)  \ \Longrightarrow
\ \left(  x\leq t\leq y\text{ and }x\leq z\leq t\right)  .
\label{pf.prop.moebius.double0.b.equiv.pf.1}%
\end{equation}
\par
\textit{Proof of (\ref{pf.prop.moebius.double0.b.equiv.pf.1}):} Assume that
$\left(  x\leq z\leq y\text{ and }z\leq t\leq y\right)  $ holds. We must prove
that $\left(  x\leq t\leq y\text{ and }x\leq z\leq t\right)  $ holds. We have
$x\leq z\leq t$, so that $x\leq t\leq y$. Also, $x\leq z\leq t$. Thus,
$\left(  x\leq t\leq y\text{ and }x\leq z\leq t\right)  $ holds. This proves
the implication (\ref{pf.prop.moebius.double0.b.equiv.pf.1}).
\par
Next, we shall prove the logical implication%
\begin{equation}
\left(  x\leq t\leq y\text{ and }x\leq z\leq t\right)  \ \Longrightarrow
\ \left(  x\leq z\leq y\text{ and }z\leq t\leq y\right)  .
\label{pf.prop.moebius.double0.b.equiv.pf.2}%
\end{equation}
\par
\textit{Proof of (\ref{pf.prop.moebius.double0.b.equiv.pf.2}):} Assume that
$\left(  x\leq t\leq y\text{ and }x\leq z\leq t\right)  $ holds. We must prove
that $\left(  x\leq z\leq y\text{ and }z\leq t\leq y\right)  $ holds. We have
$z\leq t\leq y$, thus $x\leq z\leq y$. Also, $z\leq t\leq y$. Thus, $\left(
x\leq z\leq y\text{ and }z\leq t\leq y\right)  $ holds. This proves the
implication (\ref{pf.prop.moebius.double0.b.equiv.pf.2}).
\par
Combining the two implications (\ref{pf.prop.moebius.double0.b.equiv.pf.1})
and (\ref{pf.prop.moebius.double0.b.equiv.pf.2}), we obtain the logical
equivalence%
\[
\left(  x\leq z\leq y\text{ and }z\leq t\leq y\right)  \ \Longleftrightarrow
\ \left(  x\leq t\leq y\text{ and }x\leq z\leq t\right)  .
\]
This proves (\ref{pf.prop.moebius.double0.b.equiv}).}.

For every $t\in P$ satisfying $x\leq t<y$, we have%
\begin{equation}
\left\vert \left[  x,t\right]  \right\vert <N
\label{pf.prop.moebius.double0.b.smaller}%
\end{equation}
\footnote{\textit{Proof of (\ref{pf.prop.moebius.double0.b.smaller}):} Let
$t\in P$ be such that $x\leq t<y$. We shall proceed in several steps:
\par
\begin{itemize}
\item We have $\left[  x,t\right]  =\left\{  w\in P\ \mid\ x\leq w\leq
t\right\}  $ (by the definition of $\left[  x,t\right]  $) and $\left[
x,y\right]  =\left\{  w\in P\ \mid\ x\leq w\leq y\right\}  $ (by the
definition of $\left[  x,y\right]  $).
\par
\item Let us first prove that $\left[  x,t\right]  \subseteq\left[
x,y\right]  $.
\par
Indeed, let $g\in\left[  x,t\right]  $. Then, $g\in\left[  x,t\right]
=\left\{  w\in P\ \mid\ x\leq w\leq t\right\}  $. In other words, $g$ is an
element $w$ of $P$ satisfying $x\leq w\leq t$. In other words, $g$ is an
element of $P$ and satisfies $x\leq g\leq t$. We have $g\leq t<y$, so that
$g\leq y$. Combining this with $x\leq g$, we obtain $x\leq g\leq y$. Thus, $g$
is an element of $P$ and satisfies $x\leq g\leq y$. In other words, $g$ is an
element $w$ of $P$ satisfying $x\leq w\leq y$. In other words, $g\in\left\{
w\in P\ \mid\ x\leq w\leq y\right\}  $. Since $\left[  x,y\right]  =\left\{
w\in P\ \mid\ x\leq w\leq y\right\}  $, this rewrites as $g\in\left[
x,y\right]  $.
\par
Now, forget that we fixed $g$. Thus, we have shown that $g\in\left[
x,y\right]  $ for each $g\in\left[  x,t\right]  $. In other words, $\left[
x,t\right]  \subseteq\left[  x,y\right]  $.
\par
\item Now, let us prove that $y\notin\left[  x,t\right]  $.
\par
Indeed, assume the contrary. Thus, $y\in\left[  x,t\right]  $. Hence,
$y\in\left[  x,t\right]  =\left\{  w\in P\ \mid\ x\leq w\leq t\right\}  $. In
other words, $y$ is an element $w$ of $P$ satisfying $x\leq w\leq t$. In other
words, $y$ is an element of $P$ and satisfies $x\leq y\leq t$. But $y\leq t$
contradicts $t<y$ (since $P$ is a poset). This contradiction proves that our
assumption was wrong. Hence, $y\notin\left[  x,t\right]  $ is proven.
\par
\item Next, let us prove that $y\in\left[  x,y\right]  $.
\par
Indeed, $y$ is an element of $P$ and satisfies $x\leq y\leq y$. In other
words, $y$ is an element $w$ of $P$ satisfying $x\leq w\leq y$. In other
words, $y\in\left\{  w\in P\ \mid\ x\leq w\leq y\right\}  $. Since $\left[
x,y\right]  =\left\{  w\in P\ \mid\ x\leq w\leq y\right\}  $, this rewrites as
$y\in\left[  x,y\right]  $.
\par
\item Let us now prove that $\left[  x,t\right]  \neq\left[  x,y\right]  $.
\par
Indeed, assume the contrary. Thus, $\left[  x,t\right]  =\left[  x,y\right]
$. Now, $y\notin\left[  x,t\right]  =\left[  x,y\right]  $ contradicts
$y\in\left[  x,y\right]  $. This contradiction proves that our assumption was
wrong. Hence, $\left[  x,t\right]  \neq\left[  x,y\right]  $ is proven.
\par
\item Combining $\left[  x,t\right]  \subseteq\left[  x,y\right]  $ with
$\left[  x,t\right]  \neq\left[  x,y\right]  $, we conclude that $\left[
x,t\right]  $ is a proper subset of $\left[  x,y\right]  $. Thus, $\left\vert
\left[  x,t\right]  \right\vert <\left\vert \left[  x,y\right]  \right\vert $
(since both $\left[  x,t\right]  $ and $\left[  x,y\right]  $ are finite
sets). Hence, $\left\vert \left[  x,t\right]  \right\vert <\left\vert \left[
x,y\right]  \right\vert =N$. This proves
(\ref{pf.prop.moebius.double0.b.smaller}).
\end{itemize}
}. Therefore, for every $t\in P$ satisfying $x\leq t<y$, we have%
\begin{equation}
\sum_{\substack{z\in P;\\x\leq z\leq t}}\mu\left(  z,t\right)  =\left[
x=t\right]  \label{pf.prop.moebius.double0.b.smaller2}%
\end{equation}
\footnote{\textit{Proof of (\ref{pf.prop.moebius.double0.b.smaller2}):} Let
$t\in P$ be such that $x\leq t<y$. Then, $\left\vert \left[  x,t\right]
\right\vert <N$ (by (\ref{pf.prop.moebius.double0.b.smaller})). Hence, Claim 1
(applied to $t$ instead of $y$) shows that $\sum_{\substack{z\in P;\\x\leq
z\leq t}}\mu\left(  z,t\right)  =\left[  x=t\right]  $. This proves
(\ref{pf.prop.moebius.double0.b.smaller2}).}. Also, for every $u\in P$ and
$v\in P$, we have%
\begin{equation}
\sum_{\substack{t\in P;\\u\leq t\leq v}}\mu\left(  u,t\right)  =\left[
u=v\right]  \label{pf.prop.moebius.double0.b.smaller3}%
\end{equation}
\footnote{\textit{Proof of (\ref{pf.prop.moebius.double0.b.smaller3}):} Let
$u\in P$ and $v\in P$. Proposition \ref{prop.moebius.double0} \textbf{(a)}
(applied to $x=u$ and $y=v$) shows that $\sum_{\substack{z\in P;\\u\leq z\leq
v}}\mu\left(  u,z\right)  =\left[  u=v\right]  $. Now,%
\begin{align*}
\sum_{\substack{t\in P;\\u\leq t\leq v}}\mu\left(  u,t\right)   &
=\sum_{\substack{z\in P;\\u\leq z\leq v}}\mu\left(  u,z\right)
\ \ \ \ \ \ \ \ \ \ \left(  \text{here, we have substituted }z\text{ for
}t\text{ in the sum}\right) \\
&  =\left[  u=v\right]  .
\end{align*}
This proves (\ref{pf.prop.moebius.double0.b.smaller3}).}.

Furthermore,
\begin{equation}
\left\{  z\in P\ \mid\ x\leq z\leq y\text{ and }z=y\right\}  =\left\{
y\right\}  \label{pf.prop.moebius.double0.b.set1}%
\end{equation}
\footnote{\textit{Proof of (\ref{pf.prop.moebius.double0.b.set1}):} We know
that $y$ is an element of $P$ and satisfies $x\leq y\leq y$ and $y=y$. In
other words, $y$ is an element $z$ of $P$ satisfying $x\leq z\leq y$ and
$z=y$. In other words, $y\in\left\{  z\in P\ \mid\ x\leq z\leq y\text{ and
}z=y\right\}  $. Thus, $\left\{  y\right\}  \subseteq\left\{  z\in
P\ \mid\ x\leq z\leq y\text{ and }z=y\right\}  $.
\par
On the other hand, let $g\in\left\{  z\in P\ \mid\ x\leq z\leq y\text{ and
}z=y\right\}  $ be arbitrary. Thus, $g$ is an element $z$ of $P$ satisfying
$x\leq z\leq y$ and $z=y$. In other words, $g$ is an element of $P$ and
satisfies $x\leq g\leq y$ and $g=y$. Thus, $g=y\in\left\{  y\right\}  $. Now,
forget that we fixed $g$. We thus have shown that $g\in\left\{  y\right\}  $
for every $g\in\left\{  z\in P\ \mid\ x\leq z\leq y\text{ and }z=y\right\}  $.
In other words, $\left\{  z\in P\ \mid\ x\leq z\leq y\text{ and }z=y\right\}
\subseteq\left\{  y\right\}  $. Combining this with $\left\{  y\right\}
\subseteq\left\{  z\in P\ \mid\ x\leq z\leq y\text{ and }z=y\right\}  $, we
obtain%
\[
\left\{  z\in P\ \mid\ x\leq z\leq y\text{ and }z=y\right\}  =\left\{
y\right\}  .
\]
This proves (\ref{pf.prop.moebius.double0.b.set1}).}. Thus,%
\begin{align}
&  \left\{  w\in P\ \mid\ x\leq w\leq y\text{ and }w=y\right\} \nonumber\\
&  =\left\{  z\in P\ \mid\ x\leq z\leq y\text{ and }z=y\right\} \nonumber\\
&  \ \ \ \ \ \ \ \ \ \ \ \ \ \ \ \ \ \ \ \ \left(  \text{here, we have renamed
the index }w\text{ as }z\right) \nonumber\\
&  =\left\{  y\right\}  . \label{pf.prop.moebius.double0.b.set2}%
\end{align}
Also,%
\begin{equation}
\left\{  z\in P\ \mid\ x\leq z\leq y\text{ and }z=x\text{ and }z\neq
y\right\}  =\left\{  x\right\}  \label{pf.prop.moebius.double0.b.set3}%
\end{equation}
\footnote{\textit{Proof of (\ref{pf.prop.moebius.double0.b.set3}):} We know
that $x$ is an element of $P$ and satisfies $x\leq x\leq y$ and $x=x$ and
$x\neq y$. In other words, $x$ is an element $z$ of $P$ satisfying $x\leq
z\leq y$ and $z=x$ and $z\neq y$. In other words, $x\in\left\{  z\in
P\ \mid\ x\leq z\leq y\text{ and }z=x\text{ and }z\neq y\right\}  $. Thus,
$\left\{  x\right\}  \subseteq\left\{  z\in P\ \mid\ x\leq z\leq y\text{ and
}z=x\text{ and }z\neq y\right\}  $.
\par
On the other hand, let $g\in\left\{  z\in P\ \mid\ x\leq z\leq y\text{ and
}z=x\text{ and }z\neq y\right\}  $ be arbitrary. Thus, $g$ is an element $z$
of $P$ satisfying $x\leq z\leq y$ and $z=x$ and $z\neq y$. In other words, $g$
is an element of $P$ and satisfies $x\leq g\leq y$ and $g=x$ and $g\neq y$.
Thus, $g=x\in\left\{  x\right\}  $. Now, forget that we fixed $g$. We thus
have shown that $g\in\left\{  x\right\}  $ for every $g\in\left\{  z\in
P\ \mid\ x\leq z\leq y\text{ and }z=x\text{ and }z\neq y\right\}  $. In other
words, $\left\{  z\in P\ \mid\ x\leq z\leq y\text{ and }z=x\text{ and }z\neq
y\right\}  \subseteq\left\{  x\right\}  $. Combining this with $\left\{
x\right\}  \subseteq\left\{  z\in P\ \mid\ x\leq z\leq y\text{ and }z=x\text{
and }z\neq y\right\}  $, we obtain%
\[
\left\{  z\in P\ \mid\ x\leq z\leq y\text{ and }z=x\text{ and }z\neq
y\right\}  =\left\{  x\right\}  .
\]
This proves (\ref{pf.prop.moebius.double0.b.set3}).}.

Now,%
\begin{align*}
&  \underbrace{\sum_{\substack{\left(  z,t\right)  \in P^{2};\\x\leq z\leq
y\text{ and }z\leq t\leq y}}}_{=\sum_{\substack{z\in P;\\x\leq z\leq
y}}\ \ \sum_{\substack{t\in P;\\z\leq t\leq y}}}\mu\left(  z,t\right) \\
&  =\sum_{\substack{z\in P;\\x\leq z\leq y}}\ \ \underbrace{\sum
_{\substack{t\in P;\\z\leq t\leq y}}\mu\left(  z,t\right)  }%
_{\substack{=\left[  z=y\right]  \\\text{(by
(\ref{pf.prop.moebius.double0.b.smaller3})}\\\text{(applied to }u=z\text{ and
}v=y\text{))}}}=\sum_{\substack{z\in P;\\x\leq z\leq y}}\left[  z=y\right] \\
&  =\sum_{\substack{z\in P;\\x\leq z\leq y\text{ and }z=y}}\underbrace{\left[
z=y\right]  }_{\substack{=1\\\text{(since }z=y\text{)}}}+\sum_{\substack{z\in
P;\\x\leq z\leq y\text{ and }z\neq y}}\underbrace{\left[  z=y\right]
}_{\substack{=0\\\text{(since }z=y\text{ is false}\\\text{(since }z\neq
y\text{))}}}\\
&  \ \ \ \ \ \ \ \ \ \ \ \ \ \ \ \ \ \ \ \ \left(  \text{since every }z\in
P\text{ satisfies either }z=y\text{ or }z\neq y\text{ (but not both)}\right)
\\
&  =\sum_{\substack{z\in P;\\x\leq z\leq y\text{ and }z=y}}1+\underbrace{\sum
_{\substack{z\in P;\\x\leq z\leq y\text{ and }z\neq y}}0}_{=0}=\sum
_{\substack{z\in P;\\x\leq z\leq y\text{ and }z=y}}1\\
&  =\left\vert \left\{  z\in P\ \mid\ x\leq z\leq y\text{ and }z=y\right\}
\right\vert \cdot1\\
&  =\left\vert \underbrace{\left\{  z\in P\ \mid\ x\leq z\leq y\text{ and
}z=y\right\}  }_{\substack{=\left\{  y\right\}  \\\text{(by
(\ref{pf.prop.moebius.double0.b.set1}))}}}\right\vert =\left\vert \left\{
y\right\}  \right\vert =1.
\end{align*}
Hence,%
\begin{align*}
1  &  =\underbrace{\sum_{\substack{\left(  z,t\right)  \in P^{2};\\x\leq z\leq
y\text{ and }z\leq t\leq y}}}_{\substack{=\sum_{\substack{\left(  z,t\right)
\in P^{2};\\x\leq t\leq y\text{ and }x\leq z\leq t}}\\\text{(because for any
}\left(  z,t\right)  \in P^{2}\text{, the}\\\text{condition }\left(  x\leq
z\leq y\text{ and }z\leq t\leq y\right)  \\\text{is equivalent to }\left(
x\leq t\leq y\text{ and }x\leq z\leq t\right)  \\\text{(by
(\ref{pf.prop.moebius.double0.b.equiv})))}}}\mu\left(  z,t\right)
=\underbrace{\sum_{\substack{\left(  z,t\right)  \in P^{2};\\x\leq t\leq
y\text{ and }x\leq z\leq t}}}_{=\sum_{\substack{t\in P;\\x\leq t\leq
y}}\ \ \sum_{\substack{z\in P;\\x\leq z\leq t}}}\mu\left(  z,t\right) \\
&  =\sum_{\substack{t\in P;\\x\leq t\leq y}}\ \ \sum_{\substack{z\in P;\\x\leq
z\leq t}}\mu\left(  z,t\right) \\
&  =\underbrace{\sum_{\substack{t\in P;\\x\leq t\leq y\text{ and }t=y}%
}}_{\substack{=\sum_{t\in\left\{  w\in P\ \mid\ x\leq w\leq y\text{ and
}w=y\right\}  }=\sum_{t\in\left\{  y\right\}  }\\\text{(since }\left\{  w\in
P\ \mid\ x\leq w\leq y\text{ and }w=y\right\}  =\left\{  y\right\}  \text{)}%
}}\ \ \sum_{\substack{z\in P;\\x\leq z\leq t}}\mu\left(  z,t\right) \\
&  \ \ \ \ \ \ \ \ \ \ +\sum_{\substack{t\in P;\\x\leq t\leq y\text{ and
}t\neq y}}\ \ \underbrace{\sum_{\substack{z\in P;\\x\leq z\leq t}}\mu\left(
z,t\right)  }_{\substack{=\left[  x=t\right]  \\\text{(by
(\ref{pf.prop.moebius.double0.b.smaller2})}\\\text{(since }t<y\text{ (because
}t\leq y\text{ and }t\neq y\text{) and }x\leq t\text{))}}}\\
&  \ \ \ \ \ \ \ \ \ \ \ \ \ \ \ \ \ \ \ \ \left(  \text{since every }t\in
P\text{ satisfies either }t=y\text{ or }t\neq y\text{ (but not both)}\right)
\\
&  =\underbrace{\sum_{t\in\left\{  y\right\}  }\ \ \sum_{\substack{z\in
P;\\x\leq z\leq t}}\mu\left(  z,t\right)  }_{=\sum_{\substack{z\in P;\\x\leq
z\leq y}}\mu\left(  z,y\right)  }+\sum_{\substack{t\in P;\\x\leq t\leq y\text{
and }t\neq y}}\left[  x=t\right] \\
&  =\sum_{\substack{z\in P;\\x\leq z\leq y}}\mu\left(  z,y\right)
+\sum_{\substack{t\in P;\\x\leq t\leq y\text{ and }t\neq y}}\left[
x=t\right]  .
\end{align*}
Subtracting $\sum_{\substack{z\in P;\\x\leq z\leq y}}\mu\left(  z,y\right)  $
from both sides of this equality, we obtain%
\begin{align*}
1-\sum_{\substack{z\in P;\\x\leq z\leq y}}\mu\left(  z,y\right)   &
=\sum_{\substack{t\in P;\\x\leq t\leq y\text{ and }t\neq y}}\left[  x=t\right]
\\
&  =\underbrace{\sum_{\substack{t\in P;\\x\leq t\leq y\text{ and }t=x\text{
and }t\neq y}}}_{\substack{=\sum_{t\in\left\{  z\in P\ \mid\ x\leq z\leq
y\text{ and }z=x\text{ and }z\neq y\right\}  }=\sum_{t\in\left\{  x\right\}
}\\\text{(since }\left\{  z\in P\ \mid\ x\leq z\leq y\text{ and }z=x\text{ and
}z\neq y\right\}  =\left\{  x\right\}  \text{)}}}\underbrace{\left[
x=t\right]  }_{\substack{=1\\\text{(since }x=t\\\text{(since }t=x\text{))}}}\\
&  \ \ \ \ \ \ \ \ \ \ +\sum_{\substack{t\in P;\\x\leq t\leq y\text{ and
}t\neq x\text{ and }t\neq y}}\underbrace{\left[  x=t\right]  }%
_{\substack{=0\\\text{(since }x=t\text{ is false}\\\text{(since }x\neq t\text{
(since }t\neq x\text{)))}}}\\
&  \ \ \ \ \ \ \ \ \ \ \ \ \ \ \ \ \ \ \ \ \left(
\begin{array}
[c]{c}%
\text{since every }t\in P\text{ satisfies}\\
\text{either }t=x\text{ or }t\neq x\text{ (but not both)}%
\end{array}
\right) \\
&  =\sum_{t\in\left\{  x\right\}  }1+\underbrace{\sum_{\substack{t\in
P;\\x\leq t\leq y\text{ and }t\neq x\text{ and }t\neq y}}0}_{=0}=\sum
_{t\in\left\{  x\right\}  }1=1.
\end{align*}
Solving this equality for $\sum_{\substack{z\in P;\\x\leq z\leq y}}\mu\left(
z,y\right)  $, we obtain%
\[
\sum_{\substack{z\in P;\\x\leq z\leq y}}\mu\left(  z,y\right)  =1-1=0=\left[
x=y\right]  .
\]
Thus, (\ref{pf.prop.moebius.double0.b.goal}) is proven.

Let us now forget that we fixed $x$ and $y$. We thus have proven that for any
$x\in P$ and $y\in P$ satisfying $\left\vert \left[  x,y\right]  \right\vert
=N$, we have%
\[
\sum_{\substack{z\in P;\\x\leq z\leq y}}\mu\left(  z,y\right)  =\left[
x=y\right]  .
\]
In other words, Proposition \ref{prop.moebius.double0} \textbf{(b)} holds
whenever $\left\vert \left[  x,y\right]  \right\vert =N$. This completes the
induction step. Thus, Proposition \ref{prop.moebius.double0} \textbf{(b)} is
proven by induction.

\textbf{(c)} For every $v\in P$, we have%
\begin{align*}
&  \sum_{\substack{y\in P;\\y\leq v}}\mu\left(  y,v\right)  \sum
_{\substack{x\in P;\\x\leq y}}\beta_{x}\\
&  =\sum_{\substack{z\in P;\\z\leq v}}\mu\left(  z,v\right)  \sum
_{\substack{x\in P;\\x\leq z}}\beta_{x}\ \ \ \ \ \ \ \ \ \ \left(
\begin{array}
[c]{c}%
\text{here, we have renamed the summation}\\
\text{index }y\text{ as }z\text{ in the outer sum}%
\end{array}
\right) \\
&  =\underbrace{\sum_{\substack{z\in P;\\z\leq v}}\ \ \sum_{\substack{x\in
P;\\x\leq z}}}_{=\sum_{\substack{\left(  x,z\right)  \in P^{2};\\z\leq v\text{
and }x\leq z}}=\sum_{x\in P}\ \ \sum_{\substack{z\in P;\\z\leq v\text{ and
}x\leq z}}}\mu\left(  z,v\right)  \beta_{x}\\
&  =\sum_{x\in P}\ \ \underbrace{\sum_{\substack{z\in P;\\z\leq v\text{ and
}x\leq z}}}_{\substack{=\sum_{\substack{z\in P;\\x\leq z\text{ and }z\leq
v}}\\=\sum_{\substack{z\in P;\\x\leq z\leq v}}}}\mu\left(  z,v\right)
\beta_{x}=\sum_{x\in P}\ \ \sum_{\substack{z\in P;\\x\leq z\leq v}}\mu\left(
z,v\right)  \beta_{x}=\sum_{x\in P}\underbrace{\left(  \sum_{\substack{z\in
P;\\x\leq z\leq v}}\mu\left(  z,v\right)  \right)  }_{\substack{=\left[
x=v\right]  \\\text{(by Proposition \ref{prop.moebius.double0} \textbf{(b)}%
}\\\text{(applied to }y=v\text{))}}}\beta_{x}\\
&  =\sum_{x\in P}\left[  x=v\right]  \beta_{x}=\sum_{\substack{x\in
P;\\x=v}}\underbrace{\left[  x=v\right]  }_{\substack{=1\\\text{(since
}x=v\text{)}}}\beta_{x}+\sum_{\substack{x\in P;\\x\neq v}}\underbrace{\left[
x=v\right]  }_{\substack{=0\\\text{(since }x=v\text{ is false}\\\text{(since
}x\neq v\text{))}}}\beta_{x}\\
&  \ \ \ \ \ \ \ \ \ \ \ \ \ \ \ \ \ \ \ \ \left(  \text{since every }x\in
P\text{ satisfies either }x=v\text{ or }x\neq v\text{ (but not both)}\right)
\\
&  =\sum_{\substack{x\in P;\\x=v}}\beta_{x}+\underbrace{\sum_{\substack{x\in
P;\\x\neq v}}0\beta_{x}}_{=0}=\sum_{\substack{x\in P;\\x=v}}\beta_{x}%
=\beta_{v}\ \ \ \ \ \ \ \ \ \ \left(  \text{since }v\in P\right)  .
\end{align*}
Renaming $v$ as $z$ in this statement, we obtain the following: For every
$z\in P$, we have%
\[
\sum_{\substack{y\in P;\\y\leq z}}\mu\left(  y,z\right)  \sum_{\substack{x\in
P;\\x\leq y}}\beta_{x}=\beta_{z}.
\]
This proves Proposition \ref{prop.moebius.double0} \textbf{(c)}.
\end{proof}
\end{verlong}

\begin{proof}
[Proof of Theorem \ref{thm.matroid.charpol.varis}.]If $T$ is a subset of $E$,
then $\overline{T}$ is a flat of $M$ (by Proposition
\ref{prop.matroid.closure.props} \textbf{(a)}). In other words, if $T$ is a
subset of $E$, then $\overline{T}\in\operatorname*{Flats}M$. Renaming $T$ as
$B$ in this statement, we conclude that if $B$ is a subset of $E$, then
$\overline{B}\in\operatorname*{Flats}M$.

For every $F\in\operatorname*{Flats}M$, define an element $\beta_{F}%
\in\mathbf{k}$ by%
\[
\beta_{F}=\sum_{\substack{B\subseteq E;\\\overline{B}=F}}\left(  -1\right)
^{\left\vert B\right\vert }\left(  \prod_{\substack{K\in\mathfrak{K}%
;\\K\subseteq B}}a_{K}\right)  .
\]
Now, using Lemma \ref{lem.matroid.NBCm.moeb}, we can easily see that%
\begin{equation}
\sum_{\substack{G\in\operatorname*{Flats}M;\\G\subseteq F}}\beta_{G}=\left[
F=\varnothing\right]  \ \ \ \ \ \ \ \ \ \ \text{for every }F\in
\operatorname*{Flats}M \label{pf.thm.matroid.charpol.varis.b-via-a}%
\end{equation}
\footnote{\textit{Proof of (\ref{pf.thm.matroid.charpol.varis.b-via-a}):} Let
$F\in\operatorname*{Flats}M$. Thus, $F$ is a flat of $M$.
\par
If $B$ is a subset of $E$, then the statements $\left(  \overline{B}\subseteq
F\right)  $ and $\left(  B\subseteq F\right)  $ are equivalent. (This follows
from Proposition \ref{prop.matroid.closure.props} \textbf{(g)}, applied to
$T=B$ and $G=F$.)
\par
Now,%
\begin{align*}
&  \sum_{\substack{G\in\operatorname*{Flats}M;\\G\subseteq F}%
}\underbrace{\beta_{G}}_{\substack{=\sum_{\substack{B\subseteq E;\\\overline
{B}=G}}\left(  -1\right)  ^{\left\vert B\right\vert }\left(  \prod
_{\substack{K\in\mathfrak{K};\\K\subseteq B}}a_{K}\right)  \\\text{(by the
definition of }\beta_{G}\text{)}}}\\
&  =\underbrace{\sum_{\substack{G\in\operatorname*{Flats}M;\\G\subseteq
F}}\ \ \sum_{\substack{B\subseteq E;\\\overline{B}=G}}}_{\substack{=\sum
_{\substack{B\subseteq E;\\\overline{B}\subseteq F}}\\\text{(because if
}B\text{ is a subset of }E\text{,}\\\text{then }\overline{B}\in
\operatorname*{Flats}M\text{)}}}\left(  -1\right)  ^{\left\vert B\right\vert
}\left(  \prod_{\substack{K\in\mathfrak{K};\\K\subseteq B}}a_{K}\right) \\
&  =\underbrace{\sum_{\substack{B\subseteq E;\\\overline{B}\subseteq F}%
}}_{\substack{=\sum_{\substack{B\subseteq E;\\B\subseteq F}}\\\text{(because
if }B\text{ is a subset of }E\text{, then}\\\text{the statements }\left(
\overline{B}\subseteq F\right)  \text{ and }\left(  B\subseteq F\right)
\text{ are}\\\text{equivalent)}}}\left(  -1\right)  ^{\left\vert B\right\vert
}\left(  \prod_{\substack{K\in\mathfrak{K};\\K\subseteq B}}a_{K}\right)
=\underbrace{\sum_{\substack{B\subseteq E;\\B\subseteq F}}}_{=\sum_{B\subseteq
F}}\left(  -1\right)  ^{\left\vert B\right\vert }\left(  \prod_{\substack{K\in
\mathfrak{K};\\K\subseteq B}}a_{K}\right) \\
&  =\sum_{B\subseteq F}\left(  -1\right)  ^{\left\vert B\right\vert }\left(
\prod_{\substack{K\in\mathfrak{K};\\K\subseteq B}}a_{K}\right)  =\left[
F=\varnothing\right]  \ \ \ \ \ \ \ \ \ \ \left(  \text{by
(\ref{eq.lem.matroid.NBCm.moeb.1})}\right)  .
\end{align*}
This proves (\ref{pf.thm.matroid.charpol.varis.b-via-a}).}.

Let $\mu$ be the M\"{o}bius function of the lattice $\operatorname*{Flats}M$.
The element $\overline{\varnothing}$ is the global minimum of the poset
$\operatorname*{Flats}M$.\ \ \ \ \footnote{This was proven during our proof of
Proposition \ref{prop.matroid.flat-lat}.} In particular, $\overline
{\varnothing}\in\operatorname*{Flats}M$ and $\overline{\varnothing}\subseteq
F$. Hence, $\mu\left(  \overline{\varnothing},F\right)  $ is well-defined.

Now, fix $F\in\operatorname*{Flats}M$. Proposition \ref{prop.moebius.double0}
\textbf{(c)} (applied to $P=\operatorname*{Flats}M$ and $z=F$) shows that%
\begin{align}
\beta_{F}  &  =\sum_{\substack{y\in\operatorname*{Flats}M;\\y\subseteq F}%
}\mu\left(  y,F\right)  \sum_{\substack{x\in\operatorname*{Flats}%
M;\\x\subseteq y}}\beta_{x}\nonumber\\
&  \ \ \ \ \ \ \ \ \ \ \ \ \ \ \ \ \ \ \ \ \left(  \text{since the relation
}\leq\text{ of the poset }\operatorname*{Flats}M\text{ is }\subseteq\right)
\nonumber\\
&  =\sum_{\substack{H\in\operatorname*{Flats}M;\\H\subseteq F}}\mu\left(
H,F\right)  \underbrace{\sum_{\substack{G\in\operatorname*{Flats}%
M;\\G\subseteq H}}\beta_{G}}_{\substack{=\left[  H=\varnothing\right]
\\\text{(by (\ref{pf.thm.matroid.charpol.varis.b-via-a}), applied to}\\H\text{
instead of }F\text{)}}}\nonumber\\
&  \ \ \ \ \ \ \ \ \ \ \ \ \ \ \ \ \ \ \ \ \left(  \text{here, we renamed the
summation indices }y\text{ and }x\text{ as }H\text{ and }G\right) \nonumber\\
&  =\sum_{\substack{H\in\operatorname*{Flats}M;\\H\subseteq F}}\mu\left(
H,F\right)  \left[  H=\varnothing\right] \nonumber\\
&  =\sum_{\substack{H\in\operatorname*{Flats}M;\\H\subseteq F;\\H=\varnothing
}}\mu\left(  H,F\right)  \underbrace{\left[  H=\varnothing\right]
}_{\substack{=1\\\text{(since }H=\varnothing\text{)}}}+\sum_{\substack{H\in
\operatorname*{Flats}M;\\H\subseteq F;\\H\neq\varnothing}}\mu\left(
H,F\right)  \underbrace{\left[  H=\varnothing\right]  }%
_{\substack{=0\\\text{(since }H\neq\varnothing\text{)}}}\nonumber\\
&  =\underbrace{\sum_{\substack{H\in\operatorname*{Flats}M;\\H\subseteq
F;\\H=\varnothing}}}_{\substack{=\sum_{\substack{H\in\operatorname*{Flats}%
M;\\H=\varnothing}}\\\text{(since the condition }H\subseteq F\\\text{is
automatically implied by}\\\text{the condition }H=\varnothing\text{)}}%
}\mu\left(  H,F\right) \nonumber\\
&  =\sum_{\substack{H\in\operatorname*{Flats}M;\\H=\varnothing}}\mu\left(
H,F\right)  . \label{pf.thm.matroid.charpol.varis.3}%
\end{align}
Now, we shall prove that
\begin{equation}
\beta_{F}=\left[  \overline{\varnothing}=\varnothing\right]  \mu\left(
\overline{\varnothing},F\right)  . \label{pf.thm.matroid.charpol.varis.5}%
\end{equation}

\textit{Proof of (\ref{pf.thm.matroid.charpol.varis.5}):} We are in one of the
following two cases:

\textit{Case 1:} We have $\overline{\varnothing}=\varnothing$.

\textit{Case 2:} We have $\overline{\varnothing}\neq\varnothing$.

Let us consider Case 1 first. In this case, we have $\overline{\varnothing
}=\varnothing$. Hence, $\varnothing=\overline{\varnothing}\in
\operatorname*{Flats}M$. Thus, the sum $\sum_{\substack{H\in
\operatorname*{Flats}M;\\H=\varnothing}}\mu\left(  H,F\right)  $ has exactly
one addend: namely, the addend for $H=\varnothing$. Thus, $\sum
_{\substack{H\in\operatorname*{Flats}M;\\H=\varnothing}}\mu\left(  H,F\right)
=\mu\left(  \underbrace{\varnothing}_{=\overline{\varnothing}},F\right)
=\mu\left(  \overline{\varnothing},F\right)  $. Thus,
(\ref{pf.thm.matroid.charpol.varis.3}) becomes $\beta_{F}=\sum_{\substack{H\in
\operatorname*{Flats}M;\\H=\varnothing}}\mu\left(  H,F\right)  =\mu\left(
\overline{\varnothing},F\right)  $. Comparing this with $\underbrace{\left[
\overline{\varnothing}=\varnothing\right]  }_{\substack{=1\\\text{(since
}\overline{\varnothing}=\varnothing\text{)}}}\mu\left(  \overline{\varnothing
},F\right)  =\mu\left(  \overline{\varnothing},F\right)  $, we obtain
$\beta_{F}=\left[  \overline{\varnothing}=\varnothing\right]  \mu\left(
\overline{\varnothing},F\right)  $. Thus,
(\ref{pf.thm.matroid.charpol.varis.5}) is proven in Case 1.

Let us now consider Case 2. In this case, we have $\overline{\varnothing}%
\neq\varnothing$. Thus, there exists no $H\in\operatorname*{Flats}M$ such that
$H=\varnothing$\ \ \ \ \footnote{\textit{Proof.} Assume the contrary. Thus,
there exists some $H\in\operatorname*{Flats}M$ such that $H=\varnothing$. In
other words, $\varnothing\in\operatorname*{Flats}M$. Hence, $\varnothing$ is a
flat of $M$. Proposition \ref{prop.matroid.closure.props} \textbf{(b)}
(applied to $G=\varnothing$) thus shows that $\overline{\varnothing
}=\varnothing$. This contradicts $\overline{\varnothing}\neq\varnothing$. This
contradiction proves that our assumption was wrong, qed.}. Hence, the sum
$\sum_{\substack{H\in\operatorname*{Flats}M;\\H=\varnothing}}\mu\left(
H,F\right)  $ is empty. Thus, $\sum_{\substack{H\in\operatorname*{Flats}%
M;\\H=\varnothing}}\mu\left(  H,F\right)  =\left(  \text{empty sum}\right)
=0$, so that (\ref{pf.thm.matroid.charpol.varis.3}) becomes $\beta_{F}%
=\sum_{\substack{H\in\operatorname*{Flats}M;\\H=\varnothing}}\mu\left(
H,F\right)  =0$. Comparing this with $\underbrace{\left[  \overline
{\varnothing}=\varnothing\right]  }_{\substack{=0\\\text{(since }%
\overline{\varnothing}\neq\varnothing\text{)}}}\mu\left(  \overline
{\varnothing},F\right)  =0$, we obtain $\beta_{F}=\left[  \overline
{\varnothing}=\varnothing\right]  \mu\left(  \overline{\varnothing},F\right)
$. Thus, (\ref{pf.thm.matroid.charpol.varis.5}) is proven in Case 2.

Now, we have proven (\ref{pf.thm.matroid.charpol.varis.5}) in both possible
Cases 1 and 2. Thus, (\ref{pf.thm.matroid.charpol.varis.5}) always holds.

Now, let us forget that we fixed $F$. We thus have proven
(\ref{pf.thm.matroid.charpol.varis.5}) for each $F\in\operatorname*{Flats}M$.

Now,%
\begin{align}
&  \sum_{F\subseteq E}\left(  -1\right)  ^{\left\vert F\right\vert }\left(
\prod_{\substack{K\in\mathfrak{K};\\K\subseteq F}}a_{K}\right)  x^{m-r_{M}%
\left(  F\right)  }\nonumber\\
&  =\underbrace{\sum_{B\subseteq E}}_{\substack{=\sum_{F\in
\operatorname*{Flats}M}\ \ \sum_{\substack{B\subseteq E;\\\overline{B}%
=F}}\\\text{(because if }B\text{ is a subset of }E\text{,}\\\text{then
}\overline{B}\in\operatorname*{Flats}M\text{)}}}\left(  -1\right)
^{\left\vert B\right\vert }\left(  \prod_{\substack{K\in\mathfrak{K}%
;\\K\subseteq B}}a_{K}\right)  \underbrace{x^{m-r_{M}\left(  B\right)  }%
}_{\substack{=x^{m-r_{M}\left(  \overline{B}\right)  }\\\text{(since
Proposition \ref{prop.matroid.closure.props} \textbf{(f)} (applied to
}T=B\text{)}\\\text{shows that }r_{M}\left(  B\right)  =r_{M}\left(
\overline{B}\right)  \text{)}}}\nonumber\\
&  \ \ \ \ \ \ \ \ \ \ \ \ \ \ \ \ \ \ \ \ \left(  \text{here, we have renamed
the summation index }F\text{ as }B\right) \nonumber\\
&  =\sum_{F\in\operatorname*{Flats}M}\ \ \sum_{\substack{B\subseteq
E;\\\overline{B}=F}}\left(  -1\right)  ^{\left\vert B\right\vert }\left(
\prod_{\substack{K\in\mathfrak{K};\\K\subseteq B}}a_{K}\right)
\underbrace{x^{m-r_{M}\left(  \overline{B}\right)  }}_{\substack{=x^{m-r_{M}%
\left(  F\right)  }\\\text{(since }\overline{B}=F\text{)}}}\nonumber\\
&  =\sum_{F\in\operatorname*{Flats}M}\ \ \underbrace{\sum
_{\substack{B\subseteq E;\\\overline{B}=F}}\left(  -1\right)  ^{\left\vert
B\right\vert }\left(  \prod_{\substack{K\in\mathfrak{K};\\K\subseteq B}%
}a_{K}\right)  }_{\substack{=\beta_{F}=\left[  \overline{\varnothing
}=\varnothing\right]  \mu\left(  \overline{\varnothing},F\right)  \\\text{(by
(\ref{pf.thm.matroid.charpol.varis.5}))}}}x^{m-r_{M}\left(  F\right)
}\nonumber\\
&  =\sum_{F\in\operatorname*{Flats}M}\left[  \overline{\varnothing
}=\varnothing\right]  \mu\left(  \overline{\varnothing},F\right)
x^{m-r_{M}\left(  F\right)  }\nonumber\\
&  =\left[  \overline{\varnothing}=\varnothing\right]  \sum_{F\in
\operatorname*{Flats}M}\mu\left(  \overline{\varnothing},F\right)
x^{m-r_{M}\left(  F\right)  }. \label{pf.thm.matroid.charpol.varis.9}%
\end{align}

But the definition of $\chi_{M}$ yields $\chi_{M}=\sum_{F\in
\operatorname*{Flats}M}\mu\left(  \overline{\varnothing},F\right)
x^{m-r_{M}\left(  F\right)  }$. The definition of $\widetilde{\chi}_{M}$
yields%
\begin{align*}
\widetilde{\chi}_{M}  &  =\left[  \overline{\varnothing}=\varnothing\right]
\underbrace{\chi_{M}}_{=\sum_{F\in\operatorname*{Flats}M}\mu\left(
\overline{\varnothing},F\right)  x^{m-r_{M}\left(  F\right)  }}=\left[
\overline{\varnothing}=\varnothing\right]  \sum_{F\in\operatorname*{Flats}%
M}\mu\left(  \overline{\varnothing},F\right)  x^{m-r_{M}\left(  F\right)  }\\
&  =\sum_{F\subseteq E}\left(  -1\right)  ^{\left\vert F\right\vert }\left(
\prod_{\substack{K\in\mathfrak{K};\\K\subseteq F}}a_{K}\right)  x^{m-r_{M}%
\left(  F\right)  }\ \ \ \ \ \ \ \ \ \ \left(  \text{by
(\ref{pf.thm.matroid.charpol.varis.9})}\right)  .
\end{align*}
This proves Theorem \ref{thm.matroid.charpol.varis}.
\end{proof}

\begin{proof}
[Proof of Corollary \ref{cor.matroid.charpol.K-free}.]Corollary
\ref{cor.matroid.charpol.K-free} can be derived from Theorem
\ref{thm.matroid.charpol.varis} in the same way as Corollary
\ref{cor.chromsym.K-free} was derived from Theorem \ref{thm.chromsym.varis}.
\end{proof}

\begin{proof}
[Proof of Theorem \ref{thm.matroid.charpol.empty}.]Theorem
\ref{thm.matroid.charpol.empty} can be derived from Theorem
\ref{thm.matroid.charpol.varis} in the same way as Theorem
\ref{thm.chromsym.empty} was derived from Theorem \ref{thm.chromsym.varis}.
\end{proof}

\begin{proof}
[Proof of Corollary \ref{cor.matroid.charpol.NBC}.]Corollary
\ref{cor.matroid.charpol.NBC} follows from Corollary
\ref{cor.matroid.charpol.K-free} when $\mathfrak{K}$ is set to be the set of
\textbf{all} broken circuits of $M$.
\end{proof}

\begin{proof}
[Proof of Corollary \ref{cor.matroid.charpol.NBCfor}.]If $F$ is a subset of
$E$ such that $F$ contains no broken circuit of $M$ as a subset, then%
\begin{equation}
r_{M}\left(  F\right)  =\left\vert F\right\vert
\label{pf.cor.matroid.charpol.NBCfor.1}%
\end{equation}
\footnote{\textit{Proof of (\ref{pf.cor.matroid.charpol.NBCfor.1}):} Let $F$
be a subset of $E$ such that $F$ contains no broken circuit of $M$ as a
subset.
\par
We shall show that $F\in\mathcal{I}$. Indeed, assume the contrary. Thus,
$F\notin\mathcal{I}$, so that $F\in\mathcal{P}\left(  E\right)  \setminus
\mathcal{I}$. Hence, there exists a circuit $C$ of $M$ such that $C\subseteq
F$ (according to Lemma \ref{lem.matroid.Cin}, applied to $Q=F$). Consider this
$C$. The set $C$ is a circuit, and thus nonempty (because the empty set is in
$\mathcal{I}$). Let $e$ be the unique element of $C$ having maximum label.
(This is clearly well-defined, since the labeling function $\ell$ is
injective.) Then, $C\setminus\left\{  e\right\}  $ is a broken circuit of $M$
(by the definition of a broken circuit). Thus, $F$ contains a broken circuit
of $M$ as a subset (since $C\setminus\left\{  e\right\}  \subseteq C\subseteq
F$). This contradicts the fact that $F$ contains no broken circuit of $M$ as a
subset. This contradiction shows that our assumption was wrong. Hence,
$F\in\mathcal{I}$ is proven.
\par
Thus, Lemma \ref{lem.matroid.rI} (applied to $T=F$) shows that $r_{M}\left(
F\right)  =\left\vert F\right\vert $, qed.}. Now, Corollary
\ref{cor.matroid.charpol.NBC} yields%
\[
\widetilde{\chi}_{M}=\sum_{\substack{F\subseteq E;\\F\text{ contains no
broken}\\\text{circuit of }M\text{ as a subset}}}\left(  -1\right)
^{\left\vert F\right\vert }\underbrace{x^{m-r_{M}\left(  F\right)  }%
}_{\substack{=x^{m-\left\vert F\right\vert }\\\text{(by
(\ref{pf.cor.matroid.charpol.NBCfor.1}))}}}=\sum_{\substack{F\subseteq
E;\\F\text{ contains no broken}\\\text{circuit of }M\text{ as a subset}%
}}\left(  -1\right)  ^{\left\vert F\right\vert }x^{m-\left\vert F\right\vert
}.
\]
This proves Corollary \ref{cor.matroid.charpol.NBCfor}.
\end{proof}

\subsection{\label{subsec.matroid.alt-sum}A vanishing alternating sum for
matroids}

As an application of the above, we can prove an analogue of the alternating
sum identity of Dahlberg and van Willigenburg (Theorem \ref{thm.dahwil.graph}
above) for the characteristic polynomials of matroids:

\begin{theorem}
\label{thm.dahwil.matroid}Let $M=\left(  E,\mathcal{I}\right)  $ be a matroid.
Let $m=r_{M}\left(  E\right)  $. Let $C$ be a circuit of $M$, and let $e\in C$
be arbitrary. Then,%
\[
\sum_{F\subseteq C\setminus\left\{  e\right\}  }\left(  -1\right)
^{\left\vert F\right\vert }x^{m-r_{M}\left(  E\setminus F\right)  }%
\cdot\widetilde{\chi}_{M\setminus F}=0.
\]
Here, whenever $F$ is a subset of $E$, the notation $M\setminus F$ denotes the
matroid $\left(  E\setminus F,\ \mathcal{I}\cap\mathcal{P}\left(  E\setminus
F\right)  \right)  $ (that is, the matroid whose ground set is $E\setminus F$
and whose independent sets are those subsets of $E\setminus F$ that are
independent in $M$).
\end{theorem}

\begin{vershort}
\begin{proof}
[Proof of Theorem \ref{thm.dahwil.matroid}.]Quite similar to our above proof
of Theorem \ref{thm.dahwil.gen}, but using Theorem
\ref{thm.matroid.charpol.empty} and Corollary \ref{cor.matroid.charpol.K-free}
instead of Theorem \ref{thm.genambichromsym.empty} and Corollary
\ref{cor.genambichromsym.K-free}. (We also need to observe that the rank
function $r_{M\setminus F}$ is a restriction of $r_{M}$ whenever $F$ is a
subset of $E$.) We leave all details to the reader.
\end{proof}
\end{vershort}

\begin{verlong}
\begin{proof}
[Proof of Theorem \ref{thm.dahwil.matroid}.]This proof is rather similar to
our above proof of Theorem \ref{thm.dahwil.gen}, but using Theorem
\ref{thm.matroid.charpol.empty} and Corollary \ref{cor.matroid.charpol.K-free}
instead of Theorem \ref{thm.genambichromsym.empty} and Corollary
\ref{cor.genambichromsym.K-free}. Here is the argument in detail:

Let us set $B:=C\setminus\left\{  e\right\}  $. Thus, $B=C\setminus\left\{
e\right\}  \subseteq C\subseteq E$.

Now, we shall show the following:

\begin{statement}
\textit{Claim 1:} Let $J$ be a subset of $B$. Then,%
\[
x^{m-r_{M}\left(  E\setminus J\right)  }\cdot\widetilde{\chi}_{M\setminus
J}=\sum_{\substack{F\subseteq E;\\J\subseteq E\setminus F}}\left(  -1\right)
^{\left\vert F\right\vert }x^{m-r_{M}\left(  F\right)  }.
\]

\end{statement}

[\textit{Proof of Claim 1:} First, we observe that every subset $F$ of
$E\setminus J$ satisfies
\begin{equation}
r_{M\setminus J}\left(  F\right)  =r_{M}\left(  F\right)
\label{pf.thm.dahlwil.matroid.c1.pf.1}%
\end{equation}
\footnote{\textit{Proof.} Let $F$ be a subset of $E\setminus J$. Then,
$F\subseteq E\setminus J\subseteq E$. Hence, the definition of the rank
function $r_{M}$ yields%
\begin{equation}
r_{M}\left(  F\right)  =\max\left\{  \left\vert Z\right\vert \ \mid
\ Z\in\mathcal{I}\text{ and }Z\subseteq F\right\}  .
\label{pf.thm.dahlwil.matroid.c1.pf.1.pf.1}%
\end{equation}
\par
Furthermore, the definition of the matroid $M\setminus J$ yields $M\setminus
J=\left(  E\setminus J,\ \mathcal{I}\cap\mathcal{P}\left(  E\setminus
J\right)  \right)  $. Hence, the definition of its rank function
$r_{M\setminus J}$ yields%
\begin{equation}
r_{M\setminus J}\left(  F\right)  =\max\left\{  \left\vert Z\right\vert
\ \mid\ Z\in\mathcal{I}\cap\mathcal{P}\left(  E\setminus J\right)  \text{ and
}Z\subseteq F\right\}  . \label{pf.thm.dahlwil.matroid.c1.pf.1.pf.2}%
\end{equation}
\par
However, we can make the following two observations:
\par
\begin{itemize}
\item Each $Z\in\mathcal{I}$ that satisfies $Z\subseteq F$ must automatically
satisfy $Z\in\mathcal{I}\cap\mathcal{P}\left(  E\setminus J\right)  $ (because
$Z\subseteq F\subseteq E\setminus J$ yields $Z\in\mathcal{P}\left(  E\setminus
J\right)  $, and thus we can combine $Z\in\mathcal{I}$ with $Z\in
\mathcal{P}\left(  E\setminus J\right)  $ to obtain $Z\in\mathcal{I}%
\cap\mathcal{P}\left(  E\setminus J\right)  $). Hence, each $Z\in\mathcal{I}$
that satisfies $Z\subseteq F$ must be a $Z\in\mathcal{I}\cap\mathcal{P}\left(
E\setminus J\right)  $ that satisfies $Z\subseteq F$. In other words,%
\[
\left\{  Z\in\mathcal{I}\ \mid\ Z\subseteq F\right\}  \subseteq\left\{
Z\in\mathcal{I}\cap\mathcal{P}\left(  E\setminus J\right)  \ \mid\ Z\subseteq
F\right\}  .
\]
Therefore,%
\begin{align}
&  \left\{  \left\vert Z\right\vert \ \mid\ Z\in\mathcal{I}\text{ and
}Z\subseteq F\right\} \nonumber\\
&  \subseteq\left\{  \left\vert Z\right\vert \ \mid\ Z\in\mathcal{I}%
\cap\mathcal{P}\left(  E\setminus J\right)  \text{ and }Z\subseteq F\right\}
. \label{pf.thm.dahlwil.matroid.c1.pf.1.pf.5}%
\end{align}
\par
\item Each $Z\in\mathcal{I}\cap\mathcal{P}\left(  E\setminus J\right)  $ that
satisfies $Z\subseteq F$ must automatically satisfy $Z\in\mathcal{I}$ (because
$Z\in\mathcal{I}\cap\mathcal{P}\left(  E\setminus J\right)  \subseteq
\mathcal{I}$). Hence, each $Z\in\mathcal{I}\cap\mathcal{P}\left(  E\setminus
J\right)  $ that satisfies $Z\subseteq F$ must be a $Z\in\mathcal{I}$ that
satisfies $Z\subseteq F$. In other words,%
\[
\left\{  Z\in\mathcal{I}\cap\mathcal{P}\left(  E\setminus J\right)
\ \mid\ Z\subseteq F\right\}  \subseteq\left\{  Z\in\mathcal{I}\ \mid
\ Z\subseteq F\right\}  .
\]
Therefore,%
\begin{align}
&  \left\{  \left\vert Z\right\vert \ \mid\ Z\in\mathcal{I}\cap\mathcal{P}%
\left(  E\setminus J\right)  \text{ and }Z\subseteq F\right\} \nonumber\\
&  \subseteq\left\{  \left\vert Z\right\vert \ \mid\ Z\in\mathcal{I}\text{ and
}Z\subseteq F\right\}  . \label{pf.thm.dahlwil.matroid.c1.pf.1.pf.6}%
\end{align}
\end{itemize}
\par
Combining (\ref{pf.thm.dahlwil.matroid.c1.pf.1.pf.5}) with
(\ref{pf.thm.dahlwil.matroid.c1.pf.1.pf.6}), we obtain%
\[
\left\{  \left\vert Z\right\vert \ \mid\ Z\in\mathcal{I}\text{ and }Z\subseteq
F\right\}  =\left\{  \left\vert Z\right\vert \ \mid\ Z\in\mathcal{I}%
\cap\mathcal{P}\left(  E\setminus J\right)  \text{ and }Z\subseteq F\right\}
.
\]
Hence, we can rewrite (\ref{pf.thm.dahlwil.matroid.c1.pf.1.pf.1}) as
\[
r_{M}\left(  F\right)  =\max\left\{  \left\vert Z\right\vert \ \mid
\ Z\in\mathcal{I}\cap\mathcal{P}\left(  E\setminus J\right)  \text{ and
}Z\subseteq F\right\}  .
\]
Comparing this with (\ref{pf.thm.dahlwil.matroid.c1.pf.1.pf.2}), we obtain
$r_{M\setminus J}\left(  F\right)  =r_{M}\left(  F\right)  $. This proves
(\ref{pf.thm.dahlwil.matroid.c1.pf.1}).}.

Applying this to $F=E\setminus J$, we obtain $r_{M\setminus J}\left(
E\setminus J\right)  =r_{M}\left(  E\setminus J\right)  $. In other words,
$r_{M}\left(  E\setminus J\right)  =r_{M\setminus J}\left(  E\setminus
J\right)  $.

However, the definition of the matroid $M\setminus J$ yields $M\setminus
J=\left(  E\setminus J,\ \mathcal{I}\cap\mathcal{P}\left(  E\setminus
J\right)  \right)  $. Hence, Theorem \ref{thm.matroid.charpol.empty} (applied
to $M\setminus J$, $E\setminus J$, $\mathcal{I}\cap\mathcal{P}\left(
E\setminus J\right)  $ and $r_{M}\left(  E\setminus J\right)  $ instead of
$M$, $E$, $\mathcal{I}$ and $m$) yields%
\begin{align}
\widetilde{\chi}_{M\setminus J}  &  =\sum_{F\subseteq E\setminus J}\left(
-1\right)  ^{\left\vert F\right\vert }\underbrace{x^{r_{M}\left(  E\setminus
J\right)  -r_{M\setminus J}\left(  F\right)  }}_{\substack{=x^{r_{M}\left(
E\setminus J\right)  -r_{M}\left(  F\right)  }\\\text{(since }r_{M\setminus
J}\left(  F\right)  =r_{M}\left(  F\right)  \\\text{(by
(\ref{pf.thm.dahlwil.matroid.c1.pf.1}))}}}\ \ \ \ \ \ \ \ \ \ \left(
\text{since }r_{M}\left(  E\setminus J\right)  =r_{M\setminus J}\left(
E\setminus J\right)  \right) \nonumber\\
&  =\sum_{F\subseteq E\setminus J}\left(  -1\right)  ^{\left\vert F\right\vert
}x^{r_{M}\left(  E\setminus J\right)  -r_{M}\left(  F\right)  }.
\label{pf.thm.dahwil.matroid.c1.pf.3}%
\end{align}

However, we have $J\subseteq B\subseteq E$. Thus, it is easy to see that
\begin{equation}
\mathcal{P}\left(  E\setminus J\right)  =\left\{  Z\in\mathcal{P}\left(
E\right)  \ \mid\ J\subseteq E\setminus Z\right\}
\label{pf.thm.dahwil.matroid.c1.pf.2}%
\end{equation}
\footnote{\textit{Proof:} Let $H\in\mathcal{P}\left(  E\setminus J\right)  $.
Thus, $H$ is a subset of $E\setminus J$. Hence, $H\subseteq E\setminus
J\subseteq E$, so that $H\in\mathcal{P}\left(  E\right)  $. Also, from
$H\subseteq E\setminus J$, we conclude that $H$ is disjoint from $J$. In other
words, $J$ is disjoint from $H$. Combining this with $J\subseteq E$, we obtain
$J\subseteq E\setminus H$.
\par
Now, we know that $H\in\mathcal{P}\left(  E\right)  $ and $J\subseteq
E\setminus H$. In other words, $H$ is a $Z\in\mathcal{P}\left(  E\right)  $
satisfying $J\subseteq E\setminus Z$. In other words, $H\in\left\{
Z\in\mathcal{P}\left(  E\right)  \ \mid\ J\subseteq E\setminus Z\right\}  $.
\par
Forget that we fixed $H$. We thus have shown that $H\in\left\{  Z\in
\mathcal{P}\left(  E\right)  \ \mid\ J\subseteq E\setminus Z\right\}  $ for
each $H\in\mathcal{P}\left(  E\setminus J\right)  $. In other words,%
\begin{equation}
\mathcal{P}\left(  E\setminus J\right)  \subseteq\left\{  Z\in\mathcal{P}%
\left(  E\right)  \ \mid\ J\subseteq E\setminus Z\right\}  .
\label{pf.thm.dahwil.matroid.c1.pf.2.pf.1}%
\end{equation}
\par
On the other hand, let $U\in\left\{  Z\in\mathcal{P}\left(  E\right)
\ \mid\ J\subseteq E\setminus Z\right\}  $. Thus, $U$ is a $Z\in
\mathcal{P}\left(  E\right)  $ satisfying $J\subseteq E\setminus Z$. In other
words, $U\in\mathcal{P}\left(  E\right)  $ and $J\subseteq E\setminus U$.
\par
From $J\subseteq E\setminus U$, we conclude that $J$ is disjoint from $U$. In
other words, $U$ is disjoint from $J$. However, from $U\in\mathcal{P}\left(
E\right)  $, we conclude that $U$ is a subset of $E$. Thus, $U$ is a subset of
$E$ that is disjoint from $J$. In other words, $U$ is a subset of $E\setminus
J$. In other words, $U\in\mathcal{P}\left(  E\setminus J\right)  $.
\par
Forget that we fixed $U$. We thus have shown that $U\in\mathcal{P}\left(
E\setminus J\right)  $ for each $U\in\left\{  Z\in\mathcal{P}\left(  E\right)
\ \mid\ J\subseteq E\setminus Z\right\}  $. In other words,%
\[
\left\{  Z\in\mathcal{P}\left(  E\right)  \ \mid\ J\subseteq E\setminus
Z\right\}  \subseteq\mathcal{P}\left(  E\setminus J\right)  .
\]
Combining this with (\ref{pf.thm.dahwil.matroid.c1.pf.2.pf.1}), we obtain
\[
\mathcal{P}\left(  E\setminus J\right)  =\left\{  Z\in\mathcal{P}\left(
E\right)  \ \mid\ J\subseteq E\setminus Z\right\}  .
\]
This proves (\ref{pf.thm.dahwil.matroid.c1.pf.2}).}.

Now, we have the following equality between summation signs:%
\begin{align*}
\sum_{F\subseteq E\setminus J}  &  =\sum_{F\in\mathcal{P}\left(  E\setminus
J\right)  }=\sum_{F\in\left\{  Z\in\mathcal{P}\left(  E\right)  \ \mid
\ J\subseteq E\setminus Z\right\}  }\ \ \ \ \ \ \ \ \ \ \left(  \text{by
(\ref{pf.thm.dahwil.matroid.c1.pf.2})}\right) \\
&  =\sum_{\substack{F\in\mathcal{P}\left(  E\right)  ;\\J\subseteq E\setminus
F}}=\sum_{\substack{F\subseteq E;\\J\subseteq E\setminus F}}.
\end{align*}
Hence, (\ref{pf.thm.dahwil.matroid.c1.pf.3}) becomes%
\[
\widetilde{\chi}_{M\setminus J}=\underbrace{\sum_{F\subseteq E\setminus J}%
}_{=\sum_{\substack{F\subseteq E;\\J\subseteq E\setminus F}}}\left(
-1\right)  ^{\left\vert F\right\vert }x^{r_{M}\left(  E\setminus J\right)
-r_{M}\left(  F\right)  }=\sum_{\substack{F\subseteq E;\\J\subseteq E\setminus
F}}\left(  -1\right)  ^{\left\vert F\right\vert }x^{r_{M}\left(  E\setminus
J\right)  -r_{M}\left(  F\right)  }.
\]
Multiplying both sides of this equality by $x^{m-r_{M}\left(  E\setminus
J\right)  }$, we obtain
\begin{align*}
x^{m-r_{M}\left(  E\setminus J\right)  }\cdot\widetilde{\chi}_{M\setminus J}
&  =x^{m-r_{M}\left(  E\setminus J\right)  }\cdot\sum_{\substack{F\subseteq
E;\\J\subseteq E\setminus F}}\left(  -1\right)  ^{\left\vert F\right\vert
}x^{r_{M}\left(  E\setminus J\right)  -r_{M}\left(  F\right)  }\\
&  =\sum_{\substack{F\subseteq E;\\J\subseteq E\setminus F}}\left(  -1\right)
^{\left\vert F\right\vert }\underbrace{x^{m-r_{M}\left(  E\setminus J\right)
}\cdot x^{r_{M}\left(  E\setminus J\right)  -r_{M}\left(  F\right)  }%
}_{\substack{=x^{\left(  m-r_{M}\left(  E\setminus J\right)  \right)  +\left(
r_{M}\left(  E\setminus J\right)  -r_{M}\left(  F\right)  \right)
}\\=x^{m-r_{M}\left(  F\right)  }\\\text{(since }\left(  m-r_{M}\left(
E\setminus J\right)  \right)  +\left(  r_{M}\left(  E\setminus J\right)
-r_{M}\left(  F\right)  \right)  =m-r_{M}\left(  F\right)  \text{)}}}\\
&  =\sum_{\substack{F\subseteq E;\\J\subseteq E\setminus F}}\left(  -1\right)
^{\left\vert F\right\vert }x^{m-r_{M}\left(  F\right)  }.
\end{align*}
Thus, Claim 1 is proved.]

\begin{statement}
\textit{Claim 2:} We have
\begin{equation}
\sum_{\substack{F\subseteq E;\\B\subseteq F}}\left(  -1\right)  ^{\left\vert
F\right\vert }x^{m-r_{M}\left(  F\right)  }=0.
\label{pf.thm.dahwil.matroid.0=}%
\end{equation}

\end{statement}

[\textit{Proof of Claim 2:} Theorem \ref{thm.matroid.charpol.empty} yields%
\begin{align}
\widetilde{\chi}_{M}  &  =\sum_{F\subseteq E}\left(  -1\right)  ^{\left\vert
F\right\vert }x^{m-r_{M}\left(  F\right)  }\nonumber\\
&  =\sum_{\substack{F\subseteq E;\\B\subseteq F}}\left(  -1\right)
^{\left\vert F\right\vert }x^{m-r_{M}\left(  F\right)  }+\sum
_{\substack{F\subseteq E;\\B\not \subseteq F}}\left(  -1\right)  ^{\left\vert
F\right\vert }x^{m-r_{M}\left(  F\right)  }
\label{pf.thm.dahwil.matroid.XiG.c2.pf.2}%
\end{align}
(since each subset $F$ of $E$ satisfies either $B\subseteq F$ or
$B\not \subseteq F$, but not both at the same time).

Let us now find a different formula for $\widetilde{\chi}_{M}$. We define a
function $\ell:E\rightarrow\mathbb{N}$ by setting%
\[
\ell\left(  f\right)  =\left[  f=e\right]  \ \ \ \ \ \ \ \ \ \ \text{for each
}f\in E.
\]
We shall use this function $\ell$ as our labeling function (where the role of
the totally ordered set $X$ is played by $\mathbb{N}$ equipped with the usual
total order). It is easy to see that the element $e$ is the unique element of
$C$ having maximum label\footnote{\textit{Proof.} Clearly, $e$ is an element
of $C$ (since $e\in C$). We shall now show that $e$ has a larger label than
any other element of $C$.
\par
Indeed, let $f$ be any element of $C$ distinct from $e$. Then, the definition
of $\ell$ yields $\ell\left(  f\right)  =\left[  f=e\right]  =0$ (since we
don't have $f=e$ (because $f$ is distinct from $e$)). On the other hand, the
definition of $\ell$ yields $\ell\left(  e\right)  =\left[  e=e\right]  =1$
(since $e=e$). Hence, $\ell\left(  e\right)  =1>0=\ell\left(  f\right)  $. In
other words, $e$ has a larger label than $f$ (since the label of $e$ is
$\ell\left(  e\right)  $, whereas the label of $f$ is $\ell\left(  f\right)
$).
\par
Forget that we fixed $f$. We thus have shown that $e$ has a larger label than
$f$ whenever $f$ is any element of $C$ distinct from $e$. In other words, $e$
has a larger label than any other element of $C$. Hence, $e$ is the unique
element of $C$ having maximum label (since $e$ itself is an element of $C$).}.
Therefore, $C\setminus\left\{  e\right\}  $ is a broken circuit of $M$ (by the
definition of a \textquotedblleft broken circuit\textquotedblright\ in
Definition \ref{def.matroid.BC}). In other words, $B$ is a broken circuit of
$M$ (since $B=C\setminus\left\{  e\right\}  $). Hence, $\left\{  B\right\}  $
is a set of broken circuits of $M$. Therefore, Corollary
\ref{cor.matroid.charpol.K-free} (applied to $\mathfrak{K}=\left\{  B\right\}
$) yields
\begin{equation}
\widetilde{\chi}_{M}=\sum_{\substack{F\subseteq E;\\F\text{ is }\left\{
B\right\}  \text{-free}}}\left(  -1\right)  ^{\left\vert F\right\vert
}x^{m-r_{M}\left(  F\right)  }. \label{pf.thm.dahwil.matroid.XiG.c2.pf.3}%
\end{equation}

However, if $F$ is a subset of $E$, then the condition \textquotedblleft$F$ is
$\left\{  B\right\}  $-free\textquotedblright\ is equivalent to
\textquotedblleft$B\not \subseteq F$\textquotedblright%
\ \ \ \ \footnote{\textit{Proof.} Let $F$ be a subset of $E$. Then, we have
the following chain of logical equivalences:%
\begin{align*}
&  \ \left(  F\text{ is }\left\{  B\right\}  \text{-free}\right) \\
&  \Longleftrightarrow\ \left(  F\text{ contains no }K\in\left\{  B\right\}
\text{ as a subset}\right) \\
&  \ \ \ \ \ \ \ \ \ \ \ \ \ \ \ \ \ \ \ \ \left(  \text{by the definition of
\textquotedblleft}\left\{  B\right\}  \text{-free\textquotedblright\ in
Definition \ref{def.K-free}}\right) \\
&  \Longleftrightarrow\ \left(  \text{there exists no }K\in\left\{  B\right\}
\text{ such that }F\text{ contains }K\text{ as a subset}\right) \\
&  \Longleftrightarrow\ \left(  \text{there exists no }K\in\left\{  B\right\}
\text{ such that }K\subseteq F\right) \\
&  \Longleftrightarrow\ \left(  \text{each }K\in\left\{  B\right\}  \text{
satisfies }K\not \subseteq F\right) \\
&  \Longleftrightarrow\ \left(  B\not \subseteq F\right)
\ \ \ \ \ \ \ \ \ \ \left(  \text{since the only }K\in\left\{  B\right\}
\text{ is }B\right)  .
\end{align*}
Hence, the condition \textquotedblleft$F$ is $\left\{  B\right\}
$-free\textquotedblright\ is equivalent to \textquotedblleft$B\not \subseteq
F$\textquotedblright. Qed.}. Hence, the summation sign \textquotedblleft%
$\sum_{\substack{F\subseteq E;\\F\text{ is }\left\{  B\right\}  \text{-free}%
}}$\textquotedblright\ can be rewritten as \textquotedblleft$\sum
_{\substack{F\subseteq E;\\B\not \subseteq F}}$\textquotedblright. Therefore,
we can rewrite (\ref{pf.thm.dahwil.matroid.XiG.c2.pf.3}) as%
\begin{equation}
\widetilde{\chi}_{M}=\sum_{\substack{F\subseteq E;\\B\not \subseteq F}}\left(
-1\right)  ^{\left\vert F\right\vert }x^{m-r_{M}\left(  F\right)  }.
\label{pf.thm.dahwil.matroid.XiG.c2.pf.4}%
\end{equation}

Subtracting this equality from (\ref{pf.thm.dahwil.matroid.XiG.c2.pf.2}), we
obtain%
\begin{align*}
\widetilde{\chi}_{M}-\widetilde{\chi}_{M}  &  =\left(  \sum
_{\substack{F\subseteq E;\\B\subseteq F}}\left(  -1\right)  ^{\left\vert
F\right\vert }x^{m-r_{M}\left(  F\right)  }+\sum_{\substack{F\subseteq
E;\\B\not \subseteq F}}\left(  -1\right)  ^{\left\vert F\right\vert
}x^{m-r_{M}\left(  F\right)  }\right)  -\sum_{\substack{F\subseteq
E;\\B\not \subseteq F}}\left(  -1\right)  ^{\left\vert F\right\vert
}x^{m-r_{M}\left(  F\right)  }\\
&  =\sum_{\substack{F\subseteq E;\\B\subseteq F}}\left(  -1\right)
^{\left\vert F\right\vert }x^{m-r_{M}\left(  F\right)  }.
\end{align*}
Comparing this with $\widetilde{\chi}_{M}-\widetilde{\chi}_{M}=0$, we obtain%
\[
\sum_{\substack{F\subseteq E;\\B\subseteq F}}\left(  -1\right)  ^{\left\vert
F\right\vert }x^{m-r_{M}\left(  F\right)  }=0.
\]
This proves Claim 2.]

However, from $C\setminus\left\{  e\right\}  =B$, we obtain%
\begin{align*}
&  \sum_{F\subseteq C\setminus\left\{  e\right\}  }\left(  -1\right)
^{\left\vert F\right\vert }x^{m-r_{M}\left(  E\setminus F\right)  }%
\cdot\widetilde{\chi}_{M\setminus F}\\
&  =\sum_{F\subseteq B}\left(  -1\right)  ^{\left\vert F\right\vert
}x^{m-r_{M}\left(  E\setminus F\right)  }\cdot\widetilde{\chi}_{M\setminus
F}\\
&  =\sum_{J\subseteq B}\left(  -1\right)  ^{\left\vert J\right\vert
}\underbrace{x^{m-r_{M}\left(  E\setminus J\right)  }\cdot\widetilde{\chi
}_{M\setminus J}}_{\substack{=\sum_{\substack{F\subseteq E;\\J\subseteq
E\setminus F}}\left(  -1\right)  ^{\left\vert F\right\vert }x^{m-r_{M}\left(
F\right)  }\\\text{(by Claim 1)}}}\\
&  \ \ \ \ \ \ \ \ \ \ \ \ \ \ \ \ \ \ \ \ \left(  \text{here, we have renamed
the summation index }F\text{ as }J\right) \\
&  =\sum_{J\subseteq B}\left(  -1\right)  ^{\left\vert J\right\vert }%
\sum_{\substack{F\subseteq E;\\J\subseteq E\setminus F}}\left(  -1\right)
^{\left\vert F\right\vert }x^{m-r_{M}\left(  F\right)  }\\
&  =\underbrace{\sum_{J\subseteq B}\ \ \sum_{\substack{F\subseteq
E;\\J\subseteq E\setminus F}}}_{=\sum_{F\subseteq E}\ \ \sum
_{\substack{J\subseteq B;\\J\subseteq E\setminus F}}}\left(  -1\right)
^{\left\vert J\right\vert }\left(  -1\right)  ^{\left\vert F\right\vert
}x^{m-r_{M}\left(  F\right)  }\\
&  =\sum_{F\subseteq E}\ \ \underbrace{\sum_{\substack{J\subseteq
B;\\J\subseteq E\setminus F}}}_{\substack{=\sum_{J\subseteq B\cap\left(
E\setminus F\right)  }}}\left(  -1\right)  ^{\left\vert J\right\vert }\left(
-1\right)  ^{\left\vert F\right\vert }x^{m-r_{M}\left(  F\right)  }\\
&  =\sum_{F\subseteq E}\ \ \underbrace{\sum_{J\subseteq B\cap\left(
E\setminus F\right)  }\left(  -1\right)  ^{\left\vert J\right\vert }%
}_{\substack{=\sum_{I\subseteq B\cap\left(  E\setminus F\right)  }\left(
-1\right)  ^{\left\vert I\right\vert }\\\text{(here, we have renamed}%
\\\text{the summation index }J\text{ as }I\text{)}}}\left(  -1\right)
^{\left\vert F\right\vert }x^{m-r_{M}\left(  F\right)  }\\
&  =\sum_{F\subseteq E}\ \ \underbrace{\sum_{I\subseteq B\cap\left(
E\setminus F\right)  }\left(  -1\right)  ^{\left\vert I\right\vert }%
}_{\substack{=\left[  B\cap\left(  E\setminus F\right)  =\varnothing\right]
\\\text{(by Lemma \ref{lem.cancel},}\\\text{applied to }S=B\cap\left(
E\setminus F\right)  \text{)}}}\left(  -1\right)  ^{\left\vert F\right\vert
}x^{m-r_{M}\left(  F\right)  }\\
&  =\sum_{F\subseteq E}\left[  \underbrace{B\cap\left(  E\setminus F\right)
}_{=\left(  B\cap E\right)  \setminus F}=\varnothing\right]  \left(
-1\right)  ^{\left\vert F\right\vert }x^{m-r_{M}\left(  F\right)  }\\
&  =\sum_{F\subseteq E}\left[  \underbrace{\left(  B\cap E\right)
}_{\substack{=B\\\text{(since }B\subseteq E\text{)}}}\setminus F=\varnothing
\right]  \left(  -1\right)  ^{\left\vert F\right\vert }x^{m-r_{M}\left(
F\right)  }%
\end{align*}%
\begin{align*}
&  =\sum_{F\subseteq E}\underbrace{\left[  B\setminus F=\varnothing\right]
}_{\substack{=\left[  B\subseteq F\right]  \\\text{(since the statement
\textquotedblleft}B\setminus F=\varnothing\text{\textquotedblright}\\\text{is
equivalent to \textquotedblleft}B\subseteq F\text{\textquotedblright)}%
}}\left(  -1\right)  ^{\left\vert F\right\vert }x^{m-r_{M}\left(  F\right)
}\\
&  =\sum_{F\subseteq E}\left[  B\subseteq F\right]  \left(  -1\right)
^{\left\vert F\right\vert }x^{m-r_{M}\left(  F\right)  }\\
&  =\sum_{\substack{F\subseteq E;\\B\subseteq F}}\underbrace{\left[
B\subseteq F\right]  }_{\substack{=1\\\text{(since }B\subseteq F\text{)}%
}}\left(  -1\right)  ^{\left\vert F\right\vert }x^{m-r_{M}\left(  F\right)
}+\sum_{\substack{F\subseteq E;\\\text{we don't have }B\subseteq
F}}\underbrace{\left[  B\subseteq F\right]  }_{\substack{=0\\\text{(since we
don't}\\\text{have }B\subseteq F\text{)}}}\left(  -1\right)  ^{\left\vert
F\right\vert }x^{m-r_{M}\left(  F\right)  }\\
&  \ \ \ \ \ \ \ \ \ \ \ \ \ \ \ \ \ \ \ \ \left(  \text{since each subset
}F\text{ of }E\text{ either satisfies }B\subseteq F\text{ or does not}\right)
\\
&  =\sum_{\substack{F\subseteq E;\\B\subseteq F}}\left(  -1\right)
^{\left\vert F\right\vert }x^{m-r_{M}\left(  F\right)  }+\underbrace{\sum
_{\substack{F\subseteq E;\\\text{we don't have }B\subseteq F}}0\left(
-1\right)  ^{\left\vert F\right\vert }x^{m-r_{M}\left(  F\right)  }}_{=0}\\
&  =\sum_{\substack{F\subseteq E;\\B\subseteq F}}\left(  -1\right)
^{\left\vert F\right\vert }x^{m-r_{M}\left(  F\right)  }%
=0\ \ \ \ \ \ \ \ \ \ \left(  \text{by (\ref{pf.thm.dahwil.matroid.0=}%
)}\right)  .
\end{align*}
This proves Theorem \ref{thm.dahwil.matroid}.
\end{proof}
\end{verlong}

\end{document}